\documentclass{article}

\usepackage{cttools}
\usepackage{imakeidx}
\usepackage{endnotes}
\pgfplotsset{compat=1.18}

\author{Alexander Prähauser}
\date{\today}
\title{Inter-Categorical Subobject Analysis}

\makeindex

\begin{document}
\emergencystretch 3em

\maketitle

\begin{abstract}
  We introduce a systematic way to use the subobject functor on any category to set up a framework for generalized topological and measure-theoretic notions and inferences.
  Concretely, we develop two connected calculi:

  \vspace{1 mm}
  \begin{itemize}
  \item a fourfold calculus derived from notions of smallness and density that generalizes, among other things, topological density and measure-theoretic negligibility,

  \item a twofold calculus that generalizes topological and measure-theoretic inference procedures.
  \end{itemize}
  \vspace{1.4 mm}

  The resulting theory is distinguished in that it is \emph{inter-categorical} in character: its definitions can be applied not just to describe smallness and density of subobjects
  in any category and classify morphisms by how they handle them, but also functors and natural transformations by how they transform them, somewhat like the theory of limits but
  with a simple and intuitive conceptual meaning that applies to each incarnation. We provide examples of these notions from many areas of mathematics and outline their unfolding
  throughout category theory as we add assumptions that bring them closer to classical settings, which we complete by generalizing the topological duality of open and closed subsets and
  outlining a possible framework for a categorical measure theory.

  Central to this interaction is the subobject 2-functor and the 2-category of upward-bounded preorders, so in the last part of the text we provide a study of their function.
  Concretely we derive representation theorems for this 2-functor and 2-category and a conjecture about how to generalize these theorems to \(n\)-categories.
\end{abstract}

\tableofcontents

\pagebreak

\section{Introduction}
\label{sec:introduction}

We ask the reader to picture a simple shape, say, a cone \(C\). ``The same'' cone is an object in quite a lot of categories: it is a combinatorial structure, a continuous one, a
topological one, an algebro-geometric one, and many more, but it also is a subobject of \(ℝ^{3}\), which can have, say, a linear structure that casts its own shadows onto its subsets.
Picture the subobjects of \(C\) in each kind of category, or the equivalences of \(C\) in its topological, smooth, and combinatorial incarnations, or how the different embeddings of \(C\)
cast shadows through outside shapes, such as the sections of \(C\) by planes in \(ℝ^{3}\), or perhaps even how \(C\) degenerates into itself, like a film passing by, illuminating \(C\)
in a different color and shade at each moment.

\begin{quote}
  \label{qo_:2}
  [Reason] is often taken ... [as] something separate from positive understanding. But in truth reason is spirit... It is the negative which characterizes the quality of both dialectical reason and
  understanding: it negates the simple, and thus sets the difference of understanding, but resolves it just as much, thus it is dialectic. It does however not remain in the
  nothingness of this result but restores the first simple through it, but as something general, which is in itself concrete; under this is not something
  particular subsumed, but in this determination and its resolution has the particular already determined itself.
\end{quote}
- G. W. F. Hegel, \cite{WissenschaftLogikLehre}, personally shortened and translated.\endnote{The unshortened quote is:

  \begin{quote}
    \label{qo_:3}
    Der Verstand bestimmt und hält die Bestimmungen fest; die Vernunft ist negativ und dialektisch, weil sie die Bestimmungen des Verstands in nichts auflöst; sie ist positiv, weil sie das
    Allgemeine erzeugt und das Besondere darin begreift. Wie der Verstand als etwas Getrenntes von der Vernunft überhaupt, so pflegt auch die dialektische Vernunft als etwas
    Getrenntes von der positiven Vernunft genommen zu werden. Aber in ihrer Wahrheit ist die Vernunft Geist, der höher als beides, verständige Vernunft oder vernünftiger Verstand ist. Er ist
    das Negative, dasjenige, welches die Qualität sowohl der dialektischen Vernunft als des Verstandes ausmacht; — er negiert das Einfache, so setzt er den bestimmten Unterschied des
    Verstandes; er löst ihn ebensosehr auf, so ist er dialektisch. Er hält sich aber nicht im Nichts dieses Resultates, sondern ist darin ebenso positiv und hat so das erste Einfache damit
    hergestellt, aber als Allgemeines, das in sich konkret ist; unter dieses wird nicht ein gegebenes Besonderes subsumiert, sondern in jenem Bestimmen und in der Auflösung desselben hat
    sich das Besondere schon mit bestimmt.
  \end{quote}}

So what we are after is a kind of re-connection of what appears to be very separate, and a kind of abstraction that is different from that which is more commonplace in category
theory; where we put the focus onto the object again and how it externalizes itself in different categories. For this, we proceed from an elementary but fundamental observation:
the subobjects of each object in each category form a preorder, and not just any preorder but one with an upper bound. That already means that even though all these objects live in
different categories and are thus of different types, they have something in common, which is a subobject preorder. In other words, each \(X{:} \mathcal{C}\) is a \emph{composite} in the
loosest possible sense, consisting of a \emph{whole} \(\top = \mathbf{id}_{X}\) and \emph{parts}, i.e. subobjects of \(X\). So when we consider, say, the subobjects of a cone, these
may live in very different categories depending on the kind of subobject, but they \emph{also} all live in the same 2-category \(\mathbf{UBP}\) of upward-bounded preorders (but
with not necessarily top-preserving order-morphisms). Thus, if we define our notions in \(\mathbf{UBP}\), they are \emph{inter-categorical}: they work in any category, similar to
limits and colimits but on a purely monic level.

What can we do with this? Surprisingly much. In particular, we introduce two synergistic calculi that are formulated in \(\mathbf{UBP}\), ``under'' all categories, which is the
source of their wide applicability.

\subsection{The Calculus of Smallness \& Density, Reducedness \& Satedness}
\label{sec:calculus-smallness-}

This calculus consists of a square of notions:

\begin{center}
  \begin{tabular}[H]{cc}
  Small & Dense \\
  Reduced & Sated
\end{tabular}
\end{center}

These are the four basic notions whose incarnations we study throughout mathematics.

\begin{itemize}
\item The notions in a row are abstractly dual, though this duality breaks when the interaction with category theory is considered, as we will see in \cref{sec:dual-break-point}.

\item The notions in the second row are derived from and in a sense orthogonal to those in the first row.

\item the notions in the first column are applicable in any category while those in the second column are applicable in any category with a strict initial object.
\end{itemize}

We call this overall system the \emph{calculus of smallness \& density, reducedness \& satedness} or the \emph{fourfold calculus} for short.

\paragraph{Smallness}

Consider the most elementary notion that drove this investigation, that of \emph{smallness}. What does it mean for a part to be small? The
answer we provide is simple enough to be stated with barely any conceptual baggage:

\begin{quote}
  \label{qo_:6}
  A part is \emph{small} if it cannot be made whole by a proper part.
\end{quote}

or, in more succinct mathematical language:

\[  a \text{ small} {:}\Leftrightarrow ∀b{:} (a ∨ b = \top \Rightarrow b = \top).\]

In the lattice of closeds of a topological space this is the case if and only if \(a\) is a nowhere dense subset, but our definition can be applied in any preorder with a top
element \(\top\) — an upward-bounded preorder.\footnote{Note that \cref{qo_:6} is definitional whether or not all joins exist, so that it can indeed be applied to the subobjects of
  any object in any category.} In particular, each object of each category features such a structure. The resulting system of smallness, among other things,
generalizes:

\begin{enumerate}
\item \label{item:23} the topological notion of nowhere density (see \cref{xmp:7}),

\item\label{item:24} lower-dimensionality of manifolds with boundary (see \cref{xmp:63}),

\item\label{item:144} a variant of Frattini theory (see \cref{sec:algebraic}),

\item\label{item:27} and provides a sensible notion of smallness for schemes and semialgebraic varieties (see \cref{xmp:18} and \cref{xmp:14}).
\end{enumerate}
 
If we extend the definition in a natural way, it also generalizes finitude (see \cref{sec:monoidal-smallness}).
A further result of ours that will be is that smallness induces a stratification on (the subobjects of) each object of any category (see \cref{■pn_:strat-welldef}). To visualize smallness, it makes sense to think of the
two-dimensional topological case, where for instance a line is lower-dimensional and thus small as a manifold with boundary (see \cref{xmp:60}):

\begin{figure}[H]
  \centering
  \begin{tikzpicture}
  \draw[black] (0, 0) -- (5, 0);
\end{tikzpicture}
  % \caption{\label{fi♭_:19}}
\end{figure}

\paragraph{Density}

What happens when we dualize smallness? We call this \emph{density}:

\[a \text{ dense} {:}\Leftrightarrow ∀b{:} (a ∧ b = \bot \Rightarrow b = \bot).\]

But this dualization changes the relation of the notion to categories and with that the intuitive interpretation slightly but fundamentally:

\begin{quote}
  A part is \emph{dense} if part of it is contained in any other non-trivial part.
\end{quote}

\begin{figure}[H]
  \centering
  \begin{tikzpicture}
    \pgfmathsetseed{42}%
    \foreach \x in {-30,...,30}{% Two indices running over each
      \foreach \y in {-30,...,30}{% node on the grid we have drawn
        \fill (rand,rand) circle[radius=0.6pt];
              } }
  \end{tikzpicture}
\end{figure}

This imposes some rather strong restrictions on dense subobjects. In particular, \cref{■pn_:8} tells us that if each subobject contains some atom, then a subobject is dense
if and only if it contains all atoms. So if each subobject is made of atoms (i.e. if the subobject preorder is atomistic), then density is trivial.
But if a preorder generally does not have atoms, such as the order \(\mathbf{O}(X)\) of opens of a topological space, then density can be non-trivial, and indeed specializes to the
topological notion of density. On the other hand, if the subobjects have structure outside of atoms, then there often exists a smallest dense subobject, given by the union of all
atoms, such as the dual of the Jacobson radical in an affine scheme (\cref{xmp:18}) or the subcategory of some category \(C\) on all objects and no non-trivial morphisms, i.e. the smallest wide
subcategory (\cref{xmp:4}) of \(C\).

Something that has to be heeded in the following is that the most illustrative example, that of topological spaces, is so well-behaved that it can be misleading: each dense open
subset of a topological space is dual to a small (i.e. nowhere dense) closed one, so we might think a similar correspondence holds
in general or is common. But this is far from the truth: generally, the concrete behavior of small and dense subobjects is highly non-dual, even though they are abstractly dual.
Moreover, smallness and density are not exclusionary, so the same ``part'', i.e. subobject can be both small and dense, such as a gas (as in \cref{xmp_:1}). Their contrast is more
conceptual, and nicely illustrated by the fact that dense subobjects form a coverage in any regular category with a strict initial object (see \cref{crl:4}) — smallness stratifies
into an ever-descending sequence, density remains globally stable. The sheaves for such dense covers contain in particular rational maps of varieties (see \cref{xmp_:7}).

\paragraph{Reducedness}

Reducedness is orthogonal to smallness in that a small subobject is to a reduced subobject as it is to the whole \(\top\). So we might say:

\begin{quote}
  \label{qo_:4}
  A part is \emph{reduced} if it cannot be divided into a small and a proper part.
\end{quote}

E.g. this figure should not be reduced (as a subobject of the topological plane):

\begin{figure}[H]
  \centering
  \begin{tikzpicture}
  \node[fill,circle,minimum size=2cm] at (-1, 0) {};
  \draw[black] (0, 0) -- (2, 0);
  \end{tikzpicture}
  % \caption{\label{fi♭_:21}}
\end{figure}

because it has a small line sticking out, whereas the left disk itself would be reduced. In this way, reduced subobjects generalize regular closed subsets. But \cref{qo_:4} is actually not
precisely the right definition for reducedness. The issue is that for the top element \(\top\) it is equivalent to say that ``it cannot be divided into a small part and a proper part''
and that ``it is not \emph{in} a small part and a proper part'', but for elements other than \(\top\) these conditions are different and it is actually the laxer covering condition that
is the correct one. So the correct defining condition for reducedness is

\[c \text{ reduced} {:}\Leftrightarrow ∀a \text{ small}, b{:} (a ∨ b \geq c \Rightarrow b \geq c).\]

In a distributive setting we can reduce each such \(a, b\) to a pair \(c = (a ∨ b) ∧ c = (a ∧ c) ∨ (b ∧ c)\) such that \(a ∧ c\) is small, so that \(c = b\) and both definitions
align. Moreover, in a co-Heyting algebra each subobject decomposes into a reduced part \(¬¬ a\) and a small part \(a ∖ ¬¬ a\). But even for a general subobject \(ι\) in any category it
holds that \(ι\) is reduced if and only if it reflects smallness, which is the way the two are ``orthogonal''.

\paragraph{Satedness}

Density itself has a notion analogous to reducedness called \emph{satedness}, which is again an abstract dual of reducedness:

\[c \text{ sated} {:}\Leftrightarrow ∀a \text{ dense}, b{:} (a ∧ b \leq c \Rightarrow b \leq c).\]

So

\begin{quote}
  \label{qo_:5}
  A part is \emph{sated} if it contains each part that it contains a dense part of.
\end{quote}

This generalizes regular open subsets of topological spaces but also e.g. radical ideals (see \cref{xmp:54}). So geometrically we can for instance say that the submodule of
\(ℝ^{n}_{\geq 0}\) generated by the positive unit circle quadrant is not sated as a submonoid:

\begin{figure}[H]
  \centering
  \begin{tikzpicture}[radius=1.5cm]
    \draw[name path=A] (0,0) -- ({1.5*cos(45)},{+1.5*sin(45)}) arc [start angle=45, delta angle=90] -- cycle;
    \draw[name path=B] (0,0) -- ({3*cos(45)},{+3*sin(45)}) arc [start angle=45, delta angle=90, radius=3cm] -- cycle;
    \tikzfillbetween[of=A and B]{black};
  % \draw (0,0) -- (2,0) arc (-45:45:2cm) -- cycle;
  \end{tikzpicture}
\end{figure}

Because it does not contain the part of the monoid under it, i.e. the interior of the positive unit circle quadrant.\footnote{This also holds for the submonoid generated by the
  positive unit circle quadrant, but the generated submodule is more illustrative. Using tablets we can extend these results to the positive unit circle quadrant itself (as well as
  the circle).}

The abstract duality of satedness with reducedness is again broken by the interaction
with category theory and thus the principal way its orthogonality with density manifests itself is different, such as through \cref{■pn_:11}: a part \(b\) is sated in another part \(a\) if and only
if it is thick enough on \(a\) to contain the intersection with \(a\) of each part it contains a dense part of. With some tablets and scales we can see that this implies
meanings of satedness fairly close to applied mathematics, such as in \cref{xmp_:22}.

\subsection{The Calculus of Tablets and Scales} 
\label{sec:calc-tabl-scal}

The fourfold calculus we have just outlined synergizes with another calculus based on the notion of a subobject, that of \emph{tablets} and \emph{scales}. Both notions are
initially elementary, the main technical difficulty is to capture their relation to smallness, reducedness, density and satedness in such a general setting, which is somewhat
subtle but can be done through contemplation of the provided examples.

\paragraph{Tablets}
\label{sec:tablets-1}

The principal idea of tablets is very simple: if we have a family of nice subobjects of an object \(τ{:} A \hookrightarrow X\) in a category (such as open or closed subobjects), then we can use \(τ\) to
make inferences about all subobjects through their relation to nice subobjects. On a more abstract level, we can make inferences about \(X\) by mapping \emph{into it} with nice
objects — an old category-theoretic idea, but here specialized to subobjects. This generalizes how properties of arbitrary subsets in a topological space are inferred, but does not
assume any topological structure.

The introduction of tablets greatly extends the power of the prior calculus. For instance, they allow us to describe non-open dense subsets, but they also enable notions of \emph{radial
  density} (see \cref{xmp:monoid-action-density}) and a specialization to universal points (see \cref{xmp_:25}), among other things.

\paragraph{Scales}
\label{sec:scales-1}

Dual to tablets, \emph{scales} are a way to infer about the structure of subobjects of an object by mapping \emph{out of it} into some nice object, classically the real numbers. So
we have a scale \(σ{:} P → S\) on a preorder \(P\) that maps the elements of \(P\) into \(S\) according to some ``measure''. We consciously say ``tablets'' and ``a scale'': an element of
the source \(T\) of the tablets \(τ\)  of an object \(X\) is one tablet, whereas an element of \(S\) is a scale degree.

Scales allow us to specify the definitions in the fourfold calculus to hold at a quantitative abstraction of \(P\) itself. In particular, if \(σ\) is the measure of a (real-valued) measure space \(X\), then:

\begin{itemize}
\item a subset \(A\) of \(X\) is dense according to \(σ\) if and only if its complement is negligible,

\item if \(σ\) is a finite measure, then \(A\) is \(σ\)-small if and only if it is negligible; if \(σ\) has infinite measure, then \(A\) is small if and only if it is co-infinite,

\item each measurable subset of \(X\) is \(σ\)-small or \(σ\)-reduced and both if and only if it is negligible,

\item \(σ\)-satedness is trivial.
\end{itemize}

So the introduction of scales also allows us to encompass measure-theoretic smallness, i.e. negligibility (see \cref{xmp:28} for the details),
but also many other phenomena such as rounding (see \cref{xmp_:12}) and nonstandard infinitesimality (see \cref{xmp_:1}). We can use both at the same time, which yields
notions of the fourfold calculus that are relativized both with regard to nice subobjects \emph{in} \(X\) and a scale \emph{on} \(X\). So when we combine the fourfold calculus with
that of tablets and scales we obtain four notions that can be specialized along each part of a pair of tablets and a scale:

\[T \xhookrightarrow{τ} P \xhookrightarrow{σ} S\]

providing us with a total of \(2 * 4 = 8\) parameter combinations, which are inter-categorical in nature in that they can be applied directly at the level of subobject preorders.

\paragraph{Heyting Spaces, Open-Closed Dualities and Categorical Measures}
\label{sec:heyting-spaces-open}

The calculus of tablets and scales can loosely be identified with the duality of space and quantity specialized to a monic
context.\cite{lawvereCategoriesSpaceQuantity1992}\cite{SpaceQuantityNLab} So its usefulness is by no means exhausted by the interaction with the fourfold calculus. We illustrate
this by closing the connection to topology and measure theory. This we do by introducing:

\begin{enumerate}
\item\label{item:81} The notion of a \emph{Heyting space}. We can provide a minimalistic of a Heyting space \(X\) as a lattice embedding \(ω\) of a Heyting
  algebra of \emph{opens} \(\mathbf{O}(X)\) into a Boolean algebra \(\mathbf{B}(X)\). But this already provides us with the basic notions necessary to do a good deal of generalized
  topology. In particular,

  \begin{itemize}
  \item each topological space \(X\) is a Heyting space with \(\mathbf{B}(X)\) the powerset \(\mathbf{P}(X)\) of \(X\),

  \item but semialgebraic sets also form Heyting spaces (see \cref{xmp_:11})
  \end{itemize}
   
  So Heyting spaces unify both the infinitary structures of topology and the calculable ones of algebra. Moreover, the signature of Heyting spaces only consists of binary joins,
  meets and three interrelated types of binary complements, which can be tracked with standard first-order-logic. So the theory of Heyting spaces is all about retaining the
  inference procedures about subsets of topological spaces in a generally finitistic setting. In this work, we use Heyting spaces to derive a general and well-behaved notion of
  constructible closure and to provide a variant of the semialgebraic spaces of Delfs-Knebusch suited for our purposes (see \cref{xmp:112}).

\item\label{item:85} The notion of an \emph{open-closed duality}. This abstracts both from Heyting spaces and from the category of topological spaces and puts them into a larger
  context, assigning to each object \(X\) in a suitable category \(\mathcal{C}\) dual collections of \emph{opens} \(\mathbf{O}(X)\) and \emph{closeds} \(\mathbf{C}(X)\). What is most
  pleasing about this notion is that it applies both to spaces with opens and closeds and to categories of such spaces. In particular, each subset of each topological space \(X\)
  features collections of opens and closeds given by restriction, i.e. pullback, defining an open-closed duality on each topological space (or more generally each Heyting space).
  But we can also associate to each object of the category of topological spaces its opens and closeds, and the definition of a continuous function is precisely such that it preserves opens
  and closeds under pullback. So the structure on the subset lattice of a topological space is mirrored on \(\mathbf{Top}\) itself. But our definition also describes e.g. the open
  and closed subschemes of a scheme (see \cref{xmp_:19}) and the category of schemes (see \cref{xmp_:19}).

\item\label{item:86} A setup for a categorical measure theory. The main idea here is to \emph{not} define some category or type of category formed by measurable spaces, but to use
  the subobject functor to define measures \emph{under} a category \(\mathcal{C}\), on its subobjects. \(\mathcal{C}\) can again be either the algebra of measurable subsets of a measure space itself
  or a category of measure spaces. Both work, as long as the morphisms in \(\mathcal{C}\) interact well with the measure. We describe what it means to ``interact well'' in
  \cref{sec:categ-meas-theory}, and we also show that these measures interact nicely with nice tablets (see \cref{prop:borel-heyting-extension}).
\end{enumerate}
 
So again we have \emph{tablets} (opens or closeds) and a \emph{scale} (a measure), so something(s) that we map \emph{into} our objects and something that we map \emph{them} into;
but now on a categorical level, and thus these two sides are slightly different. That last point is worth ruminating on: each category \(\mathcal{C}\) has a canonical image in
\(\mathbf{UBP}\) through the functor \(\mathbf{SOb}_{\mathcal{C}}\), so that \(\mathcal{C}\) is, in a sense, over \(\mathbf{UBP}\), and this is how the measures we define in \cref{sec:categ-meas-theory}
are under \(\mathcal{C}\): as natural transformations from \(\mathbf{SOb}_{\mathcal{C}}\) into some constant object. On the other hand, we describe tablets in \cref{sec:open-clos-dual} as monic
natural transformations \emph{into} \(\mathbf{SOb}_{\mathcal{C}}\). So there is an overall asymmetry in the structure of tablets and a scale under a category:

\begin{figure}[H]
  \centering
  \begin{tikzpicture}
    \draw[black] (0, 0) -- (0, -2) -- (1, -1) -- cycle;
    \coordinate [label=left:{\(τ\)}]  (A) at (0,0);
    \coordinate [label=left:{\(τ\)}]  (B) at (0,-1);    
    \coordinate [label=left:{\(τ\)}]  (B) at (0,-2);    
    \coordinate [label=right:{\(σ\)}]  (B) at (1,-1);    
    \coordinate [label=left:{\rotatebox{90}{\(…\)}}]  (A) at (0,-0.5);
    \coordinate [label=left:{\rotatebox{90}{\(…\)}}]  (B) at (0,-1.5);    
  \end{tikzpicture}
  % \caption{\label{fi♭_:22}}
\end{figure}

This kind of asymmetry arises repeatedly in category theory and our case can nicely be described in terms of subobjects of the subobject hyperdoctrine (the tablets) in a slice category
over a base object (the scale).

\subsection{Examples of the Calculi}
\label{sec:examples-2}

The gamut between topology and measure theory is one of the main themes of this work and concretely worked out in \cref{sec:topological}, \cref{xmp_:27}, \cref{xmp_:26},
\cref{xmp:28} and \cref{xmp:2}. But the notions of the fourfold calculus really can be applied wherever there are parts and wholes, and can produce very nice results: a particular
early highlight (and also a main source of difficulty in \cref{sec:calculus-smallness-}) is a complete description of smallness and density in a universal-algebraic setting that
also connects it to Frattini theory. Another interesting thread of examples that has not yet been mentioned is a description of smallness and density in a general setting for
convexity that we describe in \cref{sec:other-examples} and \cref{xmp_:15}; here we need to set up some notions for our statement that are of independent interest. There are also patterns in the
examples that cannot (yet) just be described, such as the behavior of smallness with regard to dimensionality (see mainly the examples in \cref{sec:topological} and \cref{xmp:14}).
Finally, the examples in \cref{sec:category-theoretic} will be re-used and extended in \cref{sec:covar-repr-theor}, and \cref{■pn_:64} will be used in later work.

Tablets and scales meanwhile depend on an element of choice that has to be given either by the context or the  mathematician. Sometimes, such as with topological or measure spaces,
or with subobjects in a concrete category as in \cref{xmp:monoid-action-density} or \cref{xmp_:15}, this context is clear. but often we have to pick the tablets and scales
ourselves. But this is also what allows us to be creative, such as in \cref{xmp:21}. Moreover, it is these relativizations that get us to some examples that are very close to
applied mathematics, such as \cref{xmp:1}, \cref{xmp_:1} and \cref{xmp_:22}.

\subsection{The Inter-Categoricity of the Calculus}
\label{sec:inter-dimens-calc}

With these calculi in hand we can return to the intuition from the first paragraph of our introduction and provide it with a rigorous example, as their establisment in
\(\mathbf{UBP}\) allows us to do precisely what we sketched there: \emph{fix an object and vary its categories}, i.e. look at its incarnations throughout different categories and
how they reflect back on the base structure. In particular, \(\mathbf{Set}\) maps into \(\mathbf{UBP}\) through the subobject functor \(\mathbf{SOb}_{\mathbf{UBP}}\), so each
concrete category \(\mathcal{C}\) whose concretification functor \(\mathopen{∣}-\mathclose{∣}\) preserves monomorphisms induces an embedding \(\mathbf{SOb}_{\mathcal{C}}(X) \hookrightarrow \mathbf{SOb}_{\mathbf{Set}}(\mathopen{∣}X\mathclose{∣})\) which we can
use as tablets, and a basic strategy we employ in \cref{sec:examples-1} is to use the subobjects of different concrete categories to infer properties of their underlying subsets.
on the other hand, since \(\mathbf{SOb}_{\mathcal{C}}\) goes into \(\mathbf{UBP}\) for each category \(\mathcal{C}\), the same scale can be used to measure the subobjects of any category.

So if we fix for instance \(ℝ\) as an object, then
\cref{xmp_:26,xmp:monoid-action-density,xmp_:25,xmp:28,xmp_:12,xmp:6,xmp_:15,xmp:30,xmp_:22,xmp_:23} all induce notions
of smallness, density, reducedness and satedness for subsets of \(ℝ\), each illuminating a slightly different aspect of the notions in the same context (the subset preorder
\(\mathbf{P}(ℝ) := \mathbf{SOb}_{\mathbf{Set}}(ℝ)\)), but with respect to different bases for inference and thus a different shade of meaning, and with their own relations between
them. This kind of inter-categorical analysis, where we use the interaction between subobjects of different categories in \(\mathbf{UBP}\) or between the subobjects of a
category and an object in \(\mathbf{UBP}\), is more or less explicitly woven throughout this text, and to a small extent formalizes the usage of subobjects in everyday mathematics,
where categories are often not kept track of and subobjects of different kinds (subsets, algebraic, topological, order-theoretic subobjects…) are mixed with at most implicit
regard for the functorial system connecting these different subobjects into ``the same'' object.

\subsection{The Development of the Fourfold Calculus}
\label{sec:devel-fourf-calc}

The parameters of the combined calculus are not exhausted by our \(2 * 4 = 8\) calculation where the four possible properties of elements of a preorder \(P\) vary along two
parameters: we can also vary \(P\) itself. This raises the question of the behavior of the combined calculus under preorder morphisms, which we investigate in
\cref{sec:smalln-under-morph}.

It is a
simple but fundamental result of inter-categorical subobject analysis that this last parameter, variation under morphisms, splits in half when we consider the interaction with category
theory through the subobject functor:

\begin{itemize}
\item a monomorphism \(m{:} X \hookrightarrow Y\) in a category \(\mathcal{C}\) induces an ideal embedding \(ι_{*}{:} \mathbf{SOb}_{\mathcal{C}}(X) \hookrightarrow \mathbf{SOb}_{\mathcal{C}}(Y)\) in \(\mathbf{UBP}\),

  \item a monomorphism-preserving functor induces for each \(X\) a top-preserving morphism \(F_{*, X}{:} \mathbf{SOb}_{\mathcal{C}}(X) → \mathbf{SOb}_{\mathcal{D}}(X)\).
\end{itemize}

Thus:

\begin{itemize}
\item we can study the behavior of each notion of the fourfold calculus under embeddings \emph{in} categories,

\item but we can also study their behavior under (monomorphism-preserving) functors \emph{between} categories.\footnote{Since natural transformations are
  componentwise given by morphisms in \(\mathcal{D}\), they reduce to the first case.}
\end{itemize}

These two morphism types, ideal embeddings and top-preserving morphisms, are orthogonal in \(\mathbf{UBP}\), so that we can study the notions of the fourfold calculus both in and
between categories as they develop when we add assumptions on them. Moreover, if \(\mathcal{C}\) has pullbacks, we can also study the behavior of the calculus under them, and if
\(\mathcal{C}\) is regular, then non-monomorphisms \(f{:} X → Y\) also induce direct images \(f_{*}{:} \mathbf{SOb}_{\mathcal{C}}(X) → \mathbf{SOb}_{\mathcal{C}}(Y)\). We trace out the resulting development of the fourfold calculus in \cref{sec:inter-categ-size} when we assume, in increasing specificity:

\begin{enumerate}
\item \label{item:31} strict initial objects,

\item\label{item:32} pullbacks,

\item\label{item:33} regularity,

\item\label{item:34} coherence.
\end{enumerate}

This is where the brokenness of the duality between smallness and density comes into full light, which is perhaps the most aesthetically pleasing aspect of the current theory.

\subsection{Diamond Structures}
\label{sec:diamond-structures}

So the subobject 2-functor \(\mathbf{SOb}\) has both the monic part of categories and the monomorphism-preserving functors and monic natural
transformations cast shadows into \(\mathbf{UBP}\), in a way that some things are forgotten while others are cast into even starker relief — like any good projection, with the
crucial difference that \emph{both} the \emph{internal structure in} each category and the \emph{external structure between} categories are on roughly the same footing — a rather unique
feature. The only rigorous point of comparison is to the 2-functor of slicing.

To sketch out this connection, let us quickly remember this 2-functor. The 2-category \(\mathbf{Cat}\) has a property somewhat akin to closure under taking powers: for each category
\(\mathcal{C}\) there exists a 2-functor \(\mathcal{C}_{/(-)}{:} \mathcal{C} → \mathbf{Cat}\) which maps \(X{:} \mathcal{C}\) to the slice category \(\mathcal{C}_{/X}\) and \(f{:} X → Y\) to the functor \(f_{*}\) that acts
through postcomposition, so a kind of representation of \(\mathcal{C}\) in \(\mathbf{Cat}\). So each object of \(\mathbf{Cat}\) is also represented in \(\mathbf{Cat}\), or rather its full
sub-2-category \(\mathbf{Cat}_{*}\) of categories with terminal objects (since \(X\) is terminal in \(\mathcal{C}_{/X}\)). These
representations interact well with functors and natural transformations and thus lift to one \(\mathbf{Cat}\)-parameterized family of representations of categories in
\(\mathbf{Cat}_{*}\), and through that to a representation of \(\mathbf{Cat}\) in some 2-category \(\mathbf{Cat}_{◆ \mathbf{Cat}_{*}}\) of \(\mathbf{Cat}_{*}\)-valued representations of
objects in \(\mathbf{Cat}\) (we pronounce this construction ``diamond-over'') and suitable morphisms and 2-morphisms between them. It is not that strange that the 2-category of
categories should have such a reflection-property, but if \(\mathcal{C}\) is monic, then the functor \(\mathcal{C}_{/(-)}\) factors through the \(\mathbf{UBP}\)-valued functor \(\mathbf{SOb}\).\footnote{Note
that this 2-functor is covariant though, in difference to the 2-functors in indexed category theory or the theory of hyperdoctrines. In fact, the representation theorems which we
derive would not work for the contravariant subobject functor — from this perspective it seems that the universal property of the contravariant subobject functor among other
hyperdoctrines \emph{arises} through its adjointness to the covariant one.}

If \(\mathcal{A}\) is a sub-2-category of \(\mathbf{Cat}\) consisting of monic categories, then it itself parameterizes a family of
representations of objects of \(\mathcal{A}\) in \(\mathbf{UBP}\) with natural transformations between them, meaning that there exists a 2-functor
\(\mathbf{SOb}_{(-)}{:} \mathcal{A} → \mathcal{A}_{◆ \mathbf{UBP}}\) which assigns to each \(\mathcal{C}{:} \mathcal{A}\) the functor \(\mathbf{SOb}_{\mathcal{C}}{:} \mathcal{C} → \mathbf{UBP}\). Now, note that
\(\mathbf{SOb}_{\mathcal{C}}\) takes values in the monic part \(\overset{\hookrightarrow}{\mathbf{UBP}}\) of \(\mathbf{UBP}\) (i.e. its subcategory of monomorphisms). In fact it maps each monomorphism
\(ι{:} A \hookrightarrow X\) in \(\mathcal{C}\) to an ideal embedding \(ι_{*}{:} \mathbf{SOb}_{\mathbf{C}}(A) \hookrightarrow \mathbf{SOb}_{\mathbf{C}}(X)\), which is principal because \(\mathbf{SOb}_{\mathbf{C}}(A)\) has a top \(\mathbf{id}_{A}\) and so designates an element \(ι := ι_{*}(\top)\) of
\(\mathbf{SOb}_{\mathbf{C}}(X)\). So in \(\mathbf{SOb}_{\mathbf{C}}(X)\) the images of subobjects of \(X\) are as densely packed as possible without identifying any of them. If now
\(F{:} \mathcal{C} → \mathbf{UBP}\) is some other representation of \(\mathcal{C}\), then this maps \(ι\) to some other morphism \(F(ι)\), which also designates an element \(F(ι)\big(\top_{F(A)}\big)\) of
\(F(X)\), so that we can define a morphism \(F_{★, X}{:} \mathbf{SOb}_{\mathbf{C}}(X) → F(X){:}ι ↦ F(ι)\big(\top_{F(A)}\big)\). Thus, \(\mathbf{SOb}_{\mathcal{C}}\) is the initial representation of \(\mathcal{C}\) in
\(\mathbf{UBP}\). Taking this together we find that for each such \(\mathcal{A}\) the subobject 2-functor \(\mathbf{SOb}_{(-)}{:} \mathcal{A} → \mathcal{A}_{◆ \mathbf{UBP}}\) is adjoint to the 2-functor
\(\mathbf{s}{:}\mathcal{A}_{◆ \mathbf{UBP}} → \mathcal{A}\), which assigns to each functor
\(F{:} \mathcal{C} → \mathbf{UBP}\) its source, and so is an initial reflection of \(\mathcal{A}\) in \(\mathcal{A}_{◆ \mathbf{UBP}}\).

Proving this is the easy part, the hard part is to show that \(\mathbf{UBP}\), or rather its monic part \(\overset{\hookrightarrow}{\mathbf{UBP}}\), is actually universal as a 2-category
allowing for this kind of reflection property, which we do in \cref{sec:univ-char-mathbf}. What makes our proof particularly interesting is that it works essentially by
re-constructing the involved structure one level higher: what we use is that, if there is another 2-category \(\mathcal{B}\) that can play the same role as \(\mathbf{UBP}\) and if monic
2-cells in \(\mathcal{B}\) are stable under left whiskering (e.g. if \(\mathcal{B}\) consists of monomorphism-preserving functors), then we can \emph{also}
define a 2-functor \(\mathbf{SOb}_{\mathcal{B}}\) from the monic part of \(\mathcal{B}\) that we can use to get back to \(\mathbf{UBP}\). So if we transition to some other monic 2-category \(\mathcal{B}\), we can
use the \(\mathbf{SOb}_{\mathcal{B}}\)-2-functor as if \(\mathcal{B}\) were a category, by suitably truncating the local structure between monomorphisms into objects in \(\mathcal{B}\). We close the paper with
a conjecture that this is because \(\mathbf{UBP}\) is the first of a tower of \((n + 1)\)-categories which can at each level of the categorical ladder provide a common home for the
monic parts of \(n\)-categories through a suitable subobject \((n+1)\)-functor.

\section{Preamble}
\label{sec:preamble}

\subsection{Acknowledgements}
\label{sec:acknowledgements}

I would like to thank my advisor Tobias Fritz for his general support as well as numerous suggestions and observations, but above all for giving me the freedom to pursue the mathematics I
wanted. I would also like to thank the University of Innsbruck for allowing me to write this text as part of its doctoral program,
 as well as Kevin Carlsen and Morgan Rodgers for their feedback. Finally I would like to thank my parents.

This paper was written with the help of various LLMs. Their main contributions were:

\begin{itemize}
\item writing out examples,

\item proving pre-defined statements, often from a proof outline,

\item proof checking.
\end{itemize}

However, the majority of all labor has been undertaken by the author, including the definition of all major concepts, the statements of propositions (possibly not all lemmata), the
entirety of the large-scale architecture, all exposition and the orientation and perspective of the text.

\subsection{Discussion of Prior Literature}
\label{sec:disc-prior-liter}

Our definition of smallness and density generalizes a few notions currently in use:

\begin{itemize}
\item Our notion of a dense element is a generalization of a notion in lattice theory (see e.g. \cite{kumarFILTERSDISTRIBUTIVELATTICES}), where we use partially defined joins
  instead of a globally defined pseudocomplement. That notion in turn generalizes that of density in topological spaces. Similarly, smallness generalizes nowhere density.

\item Our notion also generalizes superfluous and essential modules in algebra as described in \cite{andersonRingsCategoriesModules1992}, which already noticed a similarity to dense
  and nowhere dense subsets in topology. In particular, the definition of an essential element as given in Definition 1 of \cite{grzeszczukGoldieDualGoldie} is essentially
  the same definition as that of a dense element, but restricted to a modular lattice.

\item In the algebraic context, our theory possesses significant overlap with \emph{Frattini theory}, i.e. with extensions of the group-theoretic notion of a Frattini subgroup
  \cite{zotero-item-4568} to 
  other areas of algebra. There are several such extensions such as \cite{kohFrattiniSublatticeLattice1971}, \cite{jinNilpotencyFrattiniTheory2026} and
  \cite{mcalisterFRATTINIPROPERTIESNILPOTENCY2019} and there is an understanding that these are instantiations of a universal-algebraic notion. The paper most significant to us is
  \cite{lippariniNongeneratorsExtensionsInfinitary2022}, which develops a theory of generators in a universal setting. We will investigate the relation of our notions to Frattini
  theory in \cref{sec:algebraic}. 
\end{itemize}

Another connection arises when we extend our considerations to the categorical context provided by the subobject-functor \(\mathbf{SOb}_{\mathcal{C}}\) on some category \(\mathcal{C}\). This functor
is used more or less explicitly in a lot of categorical definitions and constructions. For instance, the definition of a coherent category works by describing
\(\mathbf{SOb}_{\mathcal{C}}(X)\) for each \(X{:} \mathcal{C}\) and how the morphisms act on subobject preorders by pullbacks. The \(\mathbf{SOb}_{\mathcal{C}}\)-functor is a hyperdoctrine in a
general sense, as was already observed for subsets by Lawvere in the introduction of hyperdoctrines \cite{lawvereAdjointnessFoundations1969}, and also the monic part of the
self-indexing of \(\mathcal{C}\) given by slicing (see, e.g. Section B1.2 of \cite{johnstoneSketchesElephantTopos2002}). It was also investigated in \cite{NorthHollandMathematicalLibrary},
starting in Ch. I 1.4. Moreover, a classical result in topos theory (see, e.g. 1.912 in ibid.) states
that a subobject classifier is equivalently an object representing the subobject functor. However, it should be noted that the subobject-functor \(\mathbf{SOb}_{\mathcal{C}}\) comes in a
covariant variant \(\mathbf{SOb}_{\mathcal{C}, *}\) for any category \(\mathcal{C}\) and, if \(\mathcal{C}\) has pullbacks, a contravariant one \(\mathbf{SOb}_{\mathcal{C}}^{*}\), which can be bundled together into one
functor \(\mathbf{SOb}_{\mathcal{C}}\) that maps morphisms in \(\mathcal{C}\) to adjoint pairs of preorder morphisms, and most attention on \(\mathbf{SOb}_{\mathcal{C}}\) has been on its \emph{contravariant}
part. For instance, both hyperdoctrines and indexed categories are taken as contravariant 2-functors.
But to us the \emph{covariant} part is more important because it exists on every category and our considerations in \cref{sec:covar-repr-theor} function solely for the covariant
subobject functor, while the contravariant part seems ``anchored'' mainly through its connection to the covariant one (see also \cref{rmk:4}). 

\cref{sec:inter-categ-size} also builds on a general categorical background. The main imported definitions, those of regular and coherent categories, are well-established. A reference
for regular categories that fits well with our material is Ch. I 1.5 of  \cite{NorthHollandMathematicalLibrary}, while Section A.1.4 of \cite{johnstoneSketchesElephantTopos2002}
provides a dedicated treatment of coherent categories.

A major impetus for the notion of Heyting spaces we introduce in \cref{sec:open-clos-dual} is the theory of semialgebraic spaces developed in \cite{delfsSemialgebraicTopologyReal1981}.

\subsection{Prerequisites}
\label{sec:prerequisites}

The prerequisites of this work vary widely depending on the section:

\begin{enumerate}
\item All of \cref{sec:smallness} can be understood with only minimal category-theoretic knowledge, \emph{except for} the examples in \cref{sec:examples}, which are grouped by area
  and of which some are more difficult to understand than the surrounding text. We provide the relevant definitions and try to introduce the context for each of these areas, but some of the
  significance might be lost without prior understanding of that area. These examples are built upon in the subsequent section but each ``thread'' of examples is largely
  self-contained and so can in principle be ignored, except for the category-theoretic examples, which re-appear in \cref{sec:univ-char-mathbf}. \cref{sec:tablets-spans-scales} is
  largely the same.

\item\label{item:14} \cref{sec:smalln-under-morph} contains some tedious proofs but does not require anything but elementary order-theory, but it is largely setup for
  \cref{sec:inter-categ-size} and \cref{sec:applications}, which don't strictly require but certainly benefit from a solid category-theoretic background. The beginning (up to the
  preservation and reflection statements) is also important for \cref{sec:covar-repr-theor}.

\item\label{item:7} \cref{sec:covar-repr-theor}, and \cref{sec:univ-char-mathbf} in particular, is by far the hardest part of the text and requires at least an elementary
  familiarity with (strict) 2-categories. With that said, the difficulty of \cref{sec:covar-repr-theor} lies not so much in its prerequisites and more in the cumulative complexity of the
  involved 2-cells.
\end{enumerate}

\subsection{Notation}
\label{sec:notation}

Throughout this article we adhere to the following notational conventions:

\begin{itemize}
  \item \textbf{Categories and objects.}
  We use calligraphic font for variables of a higher-categorical level and bold font for particular categories and functors. So we denote an arbitrary category \(\mathcal{C}\) but write
  \(\mathbf{PrO}\) for the 2-category of preorders and \(\mathbf{P}\) for the powerset functor.
    An object \(X\) belonging to a category \(\mathcal{C}\) is written \(X {:} \mathcal{C}\) (type-theoretic style). In preorders \(P\) we generally write \(a \in P\), which allows us to keep preorders notationally apart from
    outside categories.
    The category of small categories is \(\mathbf{Cat}\).
    We denote the monic part of a category \(\mathcal{C}\), i.e. its category of monomorphisms, as \(\overset{\hookrightarrow}{\mathcal{C}}\).

  \item \textbf{Morphisms and composition.}
    A morphism \(f\) from \(X\) to \(Y\) is written \(f {:} X \to Y\).
    Composition is always written in diagrammatic order: \(f ⨾ g\) denotes ``\(f\) then \(g\)''.  If we vertically compose 2-cells, we use the unadorned semicolon: \(φ; ψ\).
    Identities are \(\mathbf{id}_X\) or \(\top_X\) if we reason internally to the subobject preorder of \(X\).

  \item \textbf{Subobjects.}
    We denote subobjects as \(ι {:} A \hookrightarrow X\) and \(κ, λ...\).
    For a category \(\mathcal{C}\), \(\mathbf{SOb}_{\mathcal{C}}(X)\) denotes the preorder of monomorphisms into \(X\) (subobjects of \(X\)).
    When \(\mathcal{C}\) is clear we simply write \(\mathbf{SOb}(X)\).

  \item \textbf{Preorders and lattices.}
    A preorder is a category with at most one arrow between any two objects.
    The top element of a bounded preorder is written \(\top\) (or \(\top_{P}\) to stress the preorder), the bottom \(\bot\) (or \(\bot_P\)).
    Joins are \(∨\), meets \(∧\). If we are taking unions and intersections of sets or set-like subobjects we write \(\cup\) and \(\cap\).
    \(\mathbf{UBP}\) is the 2-category of upward-bounded preorders and all monotone maps; \(\mathbf{UBP}_{w}\) is its wide sub-2-category of top-preserving maps.
    Dually, \(\mathbf{DBP}\) denotes the 2-category of downward-bounded preorders.
    The image of a preorder top \(\top_{P}\) under a morphism \(f{:} P → Q\) is written as \(\top_{f}\) as described in \cref{sec:smalln-under-morph}.

  \item \textbf{Miscellaneous.}
    The Yoneda embedding of a category \(\mathcal{C}\) is \(\widehat{(-)} {:} \mathcal{C} \hookrightarrow \widehat{\mathcal{C}}\).
    The set-theoretic powerset of a set \(X\) is \(\mathbf{P}(X)\).
    The subobject classifier in \(\mathbf{Set}\) is \(\mathbb{T} = \{\top,\bot\}\) with \(\top\) as ``true''.
    We denote the underlying function of a morphism \(f\) as \(\mathopen{∣}f\mathclose{∣}\).
\end{itemize}

All other notation is either standard or introduced locally and explained at its first occurrence.

\subsection{Categorical Assumptions}
\label{sec:size-assumptions}
In \cref{sec:covar-repr-theor} we need to consider 2-categories other than \(\mathbf{UBP}\) as well, which raises the
question of weakness. Our considerations do not use unitors and associators at all, so that our proofs would be needlessly complicated if we used an explicit weak foundation, i.e.
what are called bicategories. But if we were to strictify all 2-categories in question, this would create a disconnect with our use of preorders, which are only strict up to
homotopy. The way we solve this is to implicitly assume univalent foundations,\footnote{See \cite{HomotopyTypeTheory} for an introduction on univalent foundations and homotopy type
  theory more generally.} so that a ``2-category'' is already strict only up to homotopy. Consequently, we use equality signs for considerations in preorders, not equivalence
signs. It should be underlined that this does not matter in any way to our arguments, it merely simplifies their exposition.

\subsection{Subobject Embeddings}
\label{sec:subobject-embeddings}

Part of what allows our investigations is an attention on what it means to be a subobject that might in many situations be argued to be excessive. In particular, we always
separate the domain of a subobject from its embedding morphism. So for a general subobject we write \(ι{:} A \hookrightarrow X\), even if \(ι\) might be clear from the context. This can be
slightly cumbersome but offers several advantages:

\begin{itemize}
\item most pertinently to us, the subobject preorder \(\mathbf{SOb}_{\mathcal{C}}(X)\) of an object \(X\) in a category \(\mathcal{C}\), as the monic part of the slice category over
  \(X\), is defined in terms of the monomorphisms \(ι\) into \(X\).
    
\item Keeping track of a embeddings and their source independently separates properties of \(ι\) from those of \(A\). For instance, assume we study an embedding
  \(ι{:} A \hookrightarrow X\) of manifolds with boundary and want to say that the sub-manifold \(ι\) has no boundary as a manifold. This is actually a property of \(A\), not
  \(ι\), and any other embedding of \(A\) into \(X\) would have the same property. By contrast, if we were to say that \(ι\) has no boundary as a subset of \(X\), i.e. that its
  closure is its interior, this would be a statement about how \(A\) is embedded into \(X\) and is thus dependent on \(ι\). We will frequently encounter such situations in this and
  subsequent work, and distinguishing them notationally will help us navigate them safely.

\item The category that forms the center of our investigations, the category of upward-bounded preorders \(\mathbf{UBP}\), is canonically a (locally thin) 2-category. Thus, there
  is a further structure \(\leq\) between embeddings \(ι, κ{:} A \hookrightarrow P\) of (upward-bounded) preorders that will become increasingly important to us. This is most cleanly expressed
  through the typing we indicated.
\end{itemize}

We only stray from this rule of writing subobjects with their embedding morphisms in examples that concern the interaction of subsets and (more strictly defined) subobjects in a concrete category, where we identify a subset with its
canonical embedding (given set-theoretically by the identity on elements). E.g. in \cref{sec:algebraic} we often consider subobjects generated from subobjects in algebraic structures, so we write
\(A \subseteq X\) for a subset, leaving the embedding implicit, and \(⟨A⟩\) for the subobject generated by \(A\) to simplify notation.

\subsection{Duality}
\label{sec:duality}

The results from \cref{sec:smallness} up to \cref{sec:inter-categ-size} have pure dualizations. But in light of \cref{sec:inter-categ-size} some proficiency has to be built up with thinking on either side
of the duality; to use Grothendieck's vocabulary, to do a yoga it does not suffice to know that the asanas can be dualized, one actually has to do the other side. To deal with this, we provide lots of
examples, do some proofs on either side and provide explicit dualizations of the preservation and reflection results in \cref{sec:smalln-under-morph}.

\section{Smallness \& Density}
\label{sec:smallness}

Here we develop the most elementary notions we are using in more detail, starting with assumptions, or rather their absence:

\begin{Remark}
  \label{rmα_:20}
  In a general preorder, joins and meets don't always exist, but just as not all limits in a category need to exist so that we can talk about them, so too can we still use the notions of joins and
  meets in general preorders. So we don't assume they exist, but do assume that they fulfill their universal property if they do.
  
  Note how this is similar to several category-theoretic notions, e.g. separated presheaves. On a deeper level, the elements of a plain monoid are only connected through its monoid
  multiplication, so
  if we make this operation partial, we suffer a big loss.\footnote{Unless, of course, we add further structure, such as a source-target typing.} But the elements of a preorder have a categorical structure that partial operations can be described through using
  universal properties. This is why it is not particularly problematic to state our definitions without existence assumptions.
\end{Remark}
\begin{Definition}
  \label{defn_:29}
\index{downward-bounded preorder}
\index{upward-bounded preorder}
  A preorder \(P\) is \emph{upward-bounded} if it has a largest element \(\top\). \(P\) is \emph{downward-bounded} if it contains a smallest element \(\bot\). \(P\) is \emph{bounded} if it is
  upward- and downward-bounded.
\end{Definition}

Let us consider some elementary notions in this setting:

\begin{Definition}
  \label{defn_:58}
\index{dense element}
\index{small element}
  Given an upward-bounded preorder \(P\), an element \(a ∈ P\) is \emph{small} if the only \(b\) such that \(a ∨ b = \top\) is \(\top\).
  Dually, an element \(d\) in a downward-bounded preorder \(P\) is \emph{dense} if the only \(c\) such that \(c ∧ d = \bot\) is \(\bot\).
\end{Definition}

\begin{Example}
  \label{xmp:55}
  In each bounded preorder, \(\top\) is dense and \(\bot\) is small.
\end{Example}
\begin{Terminology}
  \label{trm:1}
  If \(a ∨ b = \top\) we say that ``\(a\) joins with \(b\)''. More generally, if \(a ∨ b = c\) we say that ``\(a\) joins with \(b\) to \(c\)''. Dually for meets.
\end{Terminology}

\begin{Proposition}
  \label{prp:1}
\index{small element!ideal property}
  The small objects of an upward-bounded preorder are stable under downward closure and binary unions, i.e. an ideal in the lattice-theoretic sense.
\end{Proposition}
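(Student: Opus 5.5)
The plan is to check the two closure properties separately, directly from \cref{defn_:58}. The one subtlety comes from \cref{rmα_:20}: joins need not exist. So every time a join appears I will use only its universal property, namely that \(x ∨ y = z\) means \(z\) is an upper bound of \(x, y\) lying below every other upper bound. In particular, \(x ∨ y = \top\) is equivalent to the statement that every common upper bound of \(x\) and \(y\) is \(\top\).

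For downward closure, let \(a\) be small, \(a' \leq a\), and suppose \(a' ∨ b = \top\). First I will show that \(a ∨ b\) exists and equals \(\top\). Any common upper bound \(u\) of \(a\) and \(b\) is also a common upper bound of \(a'\) and \(b\), since \(a' \leq a \leq u\). Hence \(u \geq a' ∨ b = \top\). As \(\top\) is itself an upper bound of \(a\) and \(b\), it is their least upper bound, so \(a ∨ b = \top\). Smallness of \(a\) then forces \(b = \top\), so \(a'\) is small. Note that this step works with purely partial joins.

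For binary joins, let \(a, a'\) be small and assume \(a ∨ a'\) exists; otherwise there is nothing to prove. Suppose \((a ∨ a') ∨ b = \top\). By the observation above, this says that every common upper bound of \(a\), \(a'\) and \(b\) is \(\top\). The intended argument is the associativity step \(a ∨ (a' ∨ b) = \top\). Smallness of \(a\) then gives \(a' ∨ b = \top\), and smallness of \(a'\) then gives \(b = \top\).

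The main obstacle is the existence of the intermediate join \(a' ∨ b\). Without it, smallness of \(a\) cannot be applied. Whenever \(a' ∨ b\) exists, associativity of partial joins follows from the universal property: the upper bounds of \(\{a, a' ∨ b\}\) are exactly the upper bounds of \(\{a, a', b\}\). So in that case the argument above goes through, and in particular it does in any join-semilattice, which covers the subobject preorders of interest. What I would try first for the general case is to replace \(a' ∨ b\) by a suitable upper bound. Take any common upper bound \(c\) of \(a'\) and \(b\) for which \(a ∨ c\) exists. Then every upper bound of \(a\) and \(c\) bounds \(a\), \(a'\) and \(b\), so \(a ∨ c = \top\), and smallness of \(a\) gives \(c = \top\). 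If enough such \(c\) exist, this shows that every upper bound of \(a'\) and \(b\) is \(\top\), i.e. \(a' ∨ b = \top\), and the proof finishes with smallness of \(a'\). If this approach breaks down for badly behaved preorders, I would state the join-closure under the hypothesis that the relevant binary joins exist, which is exactly the lattice-theoretic reading of ``ideal'' in the statement.
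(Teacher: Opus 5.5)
Your downward-closure argument is correct; it even derives the existence of \(a \vee b\) explicitly, where the paper just appeals to monotonicity of joins.

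The gap is in the join case. You restrict to common upper bounds \(c\) of \(a'\) and \(b\) \emph{for which \(a \vee c\) exists}. You then conclude only conditionally (``if enough such \(c\) exist'') and propose falling back on an added existence hypothesis. That would prove a weaker statement than the one claimed.

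The missing observation is that the restriction is vacuous. Take \emph{any} common upper bound \(c\) of \(a'\) and \(b\). Every common upper bound \(e\) of \(a\) and \(c\) bounds \(a\), \(a'\) and \(b\), so \(e \geq (a \vee a') \vee b = \top\). Moreover, \(\top\) is itself a common upper bound of \(a\) and \(c\). Hence \(a \vee c\) exists and equals \(\top\). This is exactly the criterion you stated at the outset and already used for downward closure. Smallness of \(a\) then gives \(c = \top\) for every such \(c\), so \(a' \vee b = \top\) unconditionally, and smallness of \(a'\) finishes the proof.

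This is precisely the paper's argument, with its \(d\) in place of your \(c\). It needs no existence assumption beyond that of \(a \vee a'\) itself.
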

\begin{proof}
  \label{prf:20}
  Given \(a \leq b\) with \(b\) small and \(c ∈ P\) such that \(a ∨ c = \top\), then by monotonicity of joins \(b ∨ c = \top\), so that \(c = \top\)  by smallness.

  Now let \(a, b\) be small, \(c\) such that \((a ∨ b) ∨ c = \top\) and \(d\) be some common upper bound of \(b\) and \(c\) (one exists because \(\top\) is one). Then each common bound
  \(e\) of \(a\) and \(d\) is \(\geq (a ∨ b) ∨ c = \top\) and thus equal to \(\top\), so that \(a ∨ d\) exists and is \(\top\). So by smallness of \(a\), \(d = \top\), so \(b ∨ c = \top\), so
  \(c = \top\) by smallness of \(b\).
\end{proof}

\subsection{Examples}
\label{sec:examples}

Let us now begin to study how these notions play out over mathematics using examples. To
avoid disturbing the exposition we will provide some proofs in lemmata after the main example, which introduces the context.

The first thing to note is that if there is too much decomposability, size becomes trivial:

\begin{Example}
  \label{xmp:9}
\index{Boolean algebra!smallness}
  No non-trivial element \(a\) of any Boolean algebra is small or dense, since it has a negation \(¬a\) such that \(a ∨ ¬ a = \top\) and \(a ∧ ¬ a = \bot\). In particular, smallness and density
  in the subobject lattice of every set are trivial.
\end{Example}

So at this point we will not get anything interesting from \(\mathbf{Set}\). What we need is structure.

\subsubsection{Combinatorial}
\label{sec:combinatorial}

Let us start with some simple cases:

\begin{Definition}
  \label{defn_:61}\index{unreachable preorder}
  An upward-bounded preorder \(P\) is \emph{unreachable} if \(\top\) is the only element in \(P\) that is not small.
\end{Definition}
\begin{Example}
  \label{xmp:75}
  Any total order with a top element is unreachable.
\end{Example}

\begin{Observation}
  \label{obs_:1}
\index{globe category}
\index{cube category}
\index{simplex category}
  Let \(⬢\) be any of
  \begin{enumerate}
  \item \label{item:22} the simplex category \(▲\), given by the order-preserving functions between finite ordinals,

  \item\label{item:35} the cube category \(■\), given by the free strict monoidal category generated by an object \(■^{1}\) with two maps \(ι_{0}, ι_{1}{:} ■^{0} \hookrightrightarrows ■^{1}\) from the monoidal
    unit \(■^{0}\), (see e.g. \cite{grandisCubicalSetsTheir})

  \item\label{item:41} the globe category \(●\), given by objects \(●^{n}, n ∈ ℕ\) with maps generated by \(σ, τ{:} ●^{n} → ●^{n + 1}\) subject to the relations \(σ⨾ σ = σ⨾ τ\) and \(τ⨾ σ = τ⨾ τ\),

  \item\label{item:42} the category of finite abstract polytopes and lattice-preserving maps \(⬟\), i.e.:

    \begin{enumerate}
    \item \label{item:43} lattices \(L\) with a top \(\top\) and a bottom \(\bot\),

    \item\label{item:44} such that all total sub-orders between \(\top\) and \(\bot\) have the same length, meaning that there exists a unique ranking function \(\mathbf{rank}{:} L → N\),

    \item\label{item:46} \(L\) is \emph{strongly connected}: for any two elements \(a, b ∈ L\) it holds that in the full sub-lattice \([a, b]\) of elements in \(L\) between \(a\) and \(b\) there exists a
      sequence connecting any two proper faces.

    \item\label{item:45} satisfies the \emph{diamond law}: if \(\mathbf{rank}(a) = \mathbf{rank}(b) - 2\) and \(a \leq b\), then there exist precisely two elements between \(a\) and \(b\).
    \end{enumerate}
  \end{enumerate}

  Then each object of \(⬢\) has only finitely many subobjects.
\end{Observation}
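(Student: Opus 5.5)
The plan is to reduce the statement to a single counting principle: a subobject of \(X\) is an equivalence class of monomorphisms \(ι{:} A \hookrightarrow X\), so if up to isomorphism only finitely many objects \(A\) admit a monomorphism into \(X\), and each hom-set \(\mathbf{Hom}(A, X)\) is finite, then \(\mathbf{SOb}(X)\) is finite. In each of the four categories every hom-set between two objects is finite, because every object is a finite structure and morphisms are determined by finite data. Concretely, morphisms in \(▲\) are functions between finite ordinals. Morphisms in \(●\) are words in \(σ, τ\) of bounded length, which the relations reduce to at most two maps \(●^{m} → ●^{n}\) for \(m < n\). Morphisms in \(■\) are generated by faces, degeneracies and symmetries between finite cubes, and are determined by maps between the finite underlying vertex sets of \(\{0,1\}^{n}\). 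Morphisms in \(⬟\) are functions between finite lattices. So the whole task is the first half: bounding the possible sources of monomorphisms.

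For this I would attach to each object a size invariant that is weakly increased by monomorphisms. In \(▲\), a monomorphism \([m] \hookrightarrow [n]\) is injective, since \(▲\) is concrete over \(\mathbf{Set}\) via a representable-like faithful functor and monos between finite ordinals are the injective order maps, so \(m \leq n\). In \(●\) every morphism \(●^{m} → ●^{n}\) has \(m \leq n\) by construction from the generators, so only \(●^{0}, …, ●^{n}\) can map into \(●^{n}\). In \(■\), I would use that morphisms are generated by faces and degeneracies (and possibly symmetries, depending on convention). A degeneracy composed after anything is not mono, because it identifies \(ι_{0}\) and \(ι_{1}\) after precomposition. Hence monos \(■^{m} \hookrightarrow ■^{n}\) satisfy \(m \leq n\).

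The potentially delicate step is \(⬟\), and also \(■\) if the chosen presentation makes monos hard to identify. There I would argue via the underlying functor to finite sets. The object \(\top\)-\(\bot\) two-element lattice (the \((-1)\)-polytope, or a rank-\(0\) point) should represent, or at least jointly detect, elements through lattice-preserving maps, so that monos are injective. If this fails, I would instead bound the sources directly. A mono \(A \hookrightarrow X\) that is lattice-preserving between finite lattices of fixed rank cannot have source larger than the target in rank, since chains map to chains, so \(\mathbf{rank}(A) \leq \mathbf{rank}(X)\). There are only finitely many abstract polytopes of each rank with a bound on size, and failing that I would note that the image determines the subobject up to the finitely many maps. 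In the worst case I would fall back on the weaker but sufficient claim that equivalence classes of monos into \(X\) inject into pairs (image subset of \(|X|\), kernel-free data). Since \(|X|\) is finite, there are at most \(2^{|X|}\) images, and a mono into a finite thin-structured object is determined by its image once injectivity is secured. I expect the \(⬟\) injectivity argument to be the main obstacle. I would try first to exhibit a small test object mapping onto each element.
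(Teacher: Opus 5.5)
The paper states this as an Observation without proof, so your proposal is judged on its own merits. The reduction is sound: finite hom-sets plus finitely many isomorphism classes of sources admitting a mono into \(X\) give finitely many subobjects. Your treatment of the simplex and globe categories is correct.

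For the cube category as the paper defines it (the free strict monoidal category on the two maps \(■^{0}\rightrightarrows ■^{1}\)), there are no degeneracies or symmetries. Each generator adds exactly one tensor factor, so \(\mathrm{Hom}(■^{m},■^{n})\) is empty for \(m>n\) and has \(\binom{n}{m}2^{n-m}\) elements otherwise. No mono argument is needed there. If you do work with degeneracies, your sentence is backwards: \(s\circ\delta=\mathrm{id}\) is mono. What fails to be mono is a composite \(g\circ s\) in which the degeneracy is applied first, since \(g\circ s\circ\delta_{0}=g\circ s\circ\delta_{1}\).

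The genuine gap is the polytope case. Everything there rests on monomorphisms being injective, and you only say this ``should'' hold. Your fallbacks do not work without it:
\begin{itemize}
\item ``Chains map to chains, so \(\mathbf{rank}(A)\le\mathbf{rank}(X)\)'' is valid only for injective maps, since a non-injective lattice map can collapse a chain.
\item A rank bound would not bound the sources anyway: the \(p\)-gons for all \(p\ge 3\) are lattice polytopes of the same rank.
\item The image-counting fallback again presupposes injectivity.
\end{itemize}

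The missing step is short with the test object you name, provided you take the rank-\(0\) point \(P_{0}=\{\bot<\top\}\). (The \((-1)\)-polytope is the one-element lattice, not the two-element one.)
\begin{itemize}
\item If morphisms preserve \(\bot\), as the face inclusions \([\bot,F]\hookrightarrow L\) do (the paper later treats these as subobjects), then a morphism \(P_{0}\to L\) is exactly a choice of image for \(\top\). Any choice works, so \(\mathrm{Hom}(P_{0},-)\) is the underlying-set functor.
\item If \(\bot\) need not be preserved, the constant maps \(P_{0}\to L\) are still lattice homomorphisms.
\end{itemize}
Either way, if \(f(x)=f(y)\) with \(x\neq y\), you get two distinct maps \(P_{0}\to A\) with equal composites with \(f\), so \(f\) is not mono. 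Hence every mono is injective, \(|A|\le|X|\), there are finitely many sources up to isomorphism, and each has finitely many maps into \(X\).

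This argument uses that \(\top\) need not be preserved. Under bounded lattice homomorphisms, \(\mathrm{Hom}(P_{0},L)\) is a singleton and you would need a different argument. You should therefore fix the convention for ``lattice-preserving'' explicitly.
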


All of these are both atomic and coatomic. Remember the definition of atomic orders:

\begin{Definition}
  \label{defn_:25}
  An upward-bounded preorder \(P\) is \emph{atomic} if for each non-top element \(x\) there exists an atom \(α ∈ P\) such that \(α \leq x\).
  Dually \(P\) is \emph{coatomic} if for each non-bottom element \(x\) there exists a co-atom \(ω ∈ P\) such that \(x \leq ω\).
  \(P\) is \emph{atomistic} if each element of \(P\) is the join of the atoms below it and \emph{coatomistic} dually.
 \end{Definition}
\begin{Example}
  \label{xmp:11}
  Each finite preorder \(P\) is atomic and coatomic.
\end{Example}

\begin{Proposition}
  \label{■pn_:8}
  \index{small element!co-atom characterization} Let \(P\) be a coatomic upward-bounded preorder. Then \(a ∈ P\) is small if and only if it is under all co-atoms in \(P\).
\end{Proposition}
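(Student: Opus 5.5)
The plan is to prove the two implications separately, working directly from \cref{defn_:58} and the notion of a co-atom. Throughout I read coatomicity in the sense actually needed here: every element other than \(\top\) lies under some co-atom. The literal wording of \cref{defn_:25} says ``non-bottom'', but \(\top\) itself can never lie under a co-atom, so the non-top reading must be the intended one. Recall that a co-atom \(ω\) is an element with \(ω \neq \top\) such that every \(x \geq ω\) satisfies \(x = ω\) or \(x = \top\). Following the paper's convention, equalities are meant up to the equivalence of the preorder.

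\emph{Small implies under all co-atoms.} Let \(a\) be small and \(ω\) a co-atom, and suppose for contradiction that \(a \not\leq ω\). I will show that the join \(a ∨ ω\) exists and equals \(\top\). This is the one point needing care, since by \cref{rmα_:20} joins are not assumed to exist.
\begin{itemize}
\item Take any common upper bound \(u\) of \(a\) and \(ω\). Since \(u \geq ω\) and \(ω\) is a co-atom, \(u = ω\) or \(u = \top\).
\item The case \(u = ω\) is excluded, because it would give \(a \leq ω\).
\item Hence \(\top\) is the only upper bound of \(a\) and \(ω\), so it is their least upper bound, i.e. \(a ∨ ω = \top\).
\end{itemize}
Smallness of \(a\) now forces \(ω = \top\), which contradicts \(ω\) being a co-atom. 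Note that this direction does not use coatomicity at all.

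\emph{Under all co-atoms implies small.} Let \(a\) lie under every co-atom, and let \(b\) satisfy \(a ∨ b = \top\). Suppose \(b \neq \top\). By coatomicity there is a co-atom \(ω \geq b\). By hypothesis \(a \leq ω\) as well, so \(ω\) is a common upper bound of \(a\) and \(b\). The universal property of the existing join then gives \(\top = a ∨ b \leq ω\), hence \(ω = \top\), again a contradiction. Therefore \(b = \top\), and \(a\) is small.

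The main obstacle is purely bookkeeping. In the first direction one must establish that the join \(a ∨ ω\) actually exists before smallness can be applied, which is what the ``only upper bound is \(\top\)'' argument does. The other subtle point is fixing the correct reading of coatomicity in the second direction. Everything else is a direct unwinding of the definitions.
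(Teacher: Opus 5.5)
Your proof is correct and follows the same route as the paper's: for the forward direction, assume \(a \not\leq \omega\), get \(a \vee \omega = \top\), and let smallness force \(\omega = \top\); for the converse, pick a co-atom above \(b \neq \top\) to bound \(a \vee b\) strictly below \(\top\). Two points the paper's proof leaves implicit are made explicit in yours. You check that \(a \vee \omega\) actually exists, because \(\top\) is its only upper bound. You also read coatomicity as ``every non-top element lies under a co-atom'', which is what the argument uses even though \cref{defn_:25} literally says ``non-bottom''.
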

\begin{proof}
  Assume \(a\) is small and let \(ω\) be a co-atom. If \(a \nleq ω\), then \(ω ∨ a\) is larger than \(ω\) and thus \(\top\). By smallness, \(ω = \top\), contradicting \(ω < \top\). Hence \(a \leq ω\).

  Conversely, suppose \(a\) lies below every co-atom and let \(b\) be such that  \(a ∨ b = \top\). If \(b \neq \top\), there exists a co-atom \(ω\) with \(b \leq ω\); then \(a ∨ b \leq ω < \top\), a
  contradiction.
  Thus \(b = \top\) and \(a\) is small.
\end{proof}

\begin{Corollary}
  \label{coR_:2}
\index{unreachable preorder!criterion}
Let \(P\) be an upward-bounded preorder bounded by exactly one co-atom. Then \(P\) is unreachable.
\end{Corollary}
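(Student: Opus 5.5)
The plan is to read the hypothesis as saying that \(P\) has a co-atom \(ω\) such that every element \(x \neq \top\) satisfies \(x \leq ω\). Every non-top element then lies under a co-atom. Moreover, \(ω\) is the only co-atom: any co-atom \(ω'\) satisfies \(ω' \leq ω < \top\), so maximality of \(ω'\) among non-top elements forces \(ω' = ω\). Under this reading \(P\) is coatomic (in the sense that matters for \cref{■pn_:8}, namely that non-top elements lie under a co-atom) and has exactly one co-atom, and \cref{■pn_:8} then does all the work.

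The steps are as follows. First, fix \(a ∈ P\) with \(a \neq \top\) (up to the preorder equivalence). We have \(a \leq ω\), and \(ω\) is the unique co-atom, so \(a\) lies under all co-atoms. By \cref{■pn_:8}, \(a\) is small. This shows that every non-top element is small, which is exactly unreachability in the sense of \cref{defn_:61}.

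For safety I would also give the direct argument, which avoids any subtlety in how ``coatomic'' is phrased in \cref{defn_:25}. Suppose \(a \neq \top\) and \(a ∨ b = \top\). If \(b \neq \top\), then \(a, b \leq ω\), and the universal property of the join gives \(a ∨ b \leq ω < \top\), a contradiction. Hence \(b = \top\), so \(a\) is small.

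The only real obstacle is interpretive rather than mathematical. One has to fix that ``bounded by exactly one co-atom'' means every non-top element lies below that co-atom, and note that uniqueness of the co-atom follows from this. Once that is settled, the result is an immediate specialisation of \cref{■pn_:8}. It is also worth remarking that whether \(\top\) itself is small is irrelevant, since unreachability only requires that the non-small elements be among \(\{\top\}\).
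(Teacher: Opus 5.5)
Your direct argument in the second paragraph is exactly the paper's proof: if \(b \neq \top\) then \(a, b \leq ω\), so \(a ∨ b \leq ω < \top\). The route through \cref{■pn_:8} is the same argument in disguise, since the direction of that proposition you use is proved by precisely this join bound. One small correction: under the literal reading of \cref{defn_:61}, \(\top\) itself must also fail to be small, so its smallness is not irrelevant; here this is immediate, because \(ω ∨ \top = \top\) with \(ω \neq \top\).
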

\begin{proof}
  \label{prf_:16}
  Since \(P\) has a co-atom \(ω\), there exists an element \(a\) below \(\top\). for any such \(a\) let \(b\) be such that \(a ∨ b = \top\). If \(b < \top\) then \(a, b \leq ω\) so that
  \(a ∨ b \leq ω < \top\). Thus, \(b = \top\).
\end{proof}
\begin{Corollary}
  \label{coR_:3}
\index{dense element!atomistic}
  Let \(P\) be atomistic with an atom. Then no non-trivial element of \(P\) is dense.
\end{Corollary}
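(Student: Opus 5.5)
The plan is to prove the order-dual of \cref{■pn_:8} and then use atomisticity to push "above all atoms" up to \(\top\). Throughout, \(P\) is taken to be bounded, since density needs a \(\bot\) to make sense. "Non-trivial" is read as "different from \(\top\)", since \(\top\) is always dense by \cref{xmp:55}.

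First I will show that in a downward-bounded preorder, every dense element \(d\) lies above every atom \(α\). Suppose \(α \nleq d\). Let \(e\) be any common lower bound of \(α\) and \(d\). Then \(e \leq α\), and since \(α\) is an atom, \(e\) is either \(\bot\) or \(α\). It cannot be \(α\), because then \(α \leq d\). So \(\bot\) is the only common lower bound, which means the meet \(α ∧ d\) exists and equals \(\bot\). By density of \(d\), this forces \(α = \bot\), contradicting that \(α\) is an atom. Hence \(α \leq d\).

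This is the one point that needs care, in the spirit of \cref{rmα_:20}. We may not assume meets exist, so the argument must exhibit \(\bot\) as the meet through its universal property, as done above. Everything is also read up to the equality convention for preorders, so "\(e = α\)" means equivalence.

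Next, atomisticity gives that \(\top\) is the join of all atoms below it, which is the join of all atoms of \(P\). Since a dense \(d\) is an upper bound of every atom, the universal property of this join gives \(\top \leq d\), hence \(d = \top\).

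The hypothesis that \(P\) has an atom rules out the degenerate case \(\top = \bot\), in which \(\top\) would be the empty join. In that case \(P\) would be trivial and the statement would be vacuous or would need separate phrasing. With an atom present, \(\top \neq \bot\), and the only dense element is \(\top\) itself, as claimed. I expect no real obstacle beyond the meet-existence bookkeeping in the second step.
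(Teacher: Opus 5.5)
Your proof is correct and is essentially the paper's argument: the paper also derives the result from the order-dual of the co-atom characterization of smallness (a dense element lies above every atom) combined with atomisticity, which writes \(\top\) as the join of all atoms. You spell out the details the paper leaves implicit, namely that only the direction ``dense \(\Rightarrow\) above all atoms'' is needed, that this direction holds without any atomicity assumption, and that \(\alpha \wedge d = \bot\) exists as a genuine meet.
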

\begin{proof}
  \label{prf_:4}
  This follows immediately from the dual of \cref{■pn_:8} and \(a = ∨_{α \leq a} α\) with each \(α\) being an atom.
\end{proof}

So a subobject of any of the objects we introduced in \cref{obs_:1} is small if and only if it is contained in all faces, which only happens in trivial cases. Let's put a pin in
that.

\subsubsection{Topological}
\label{sec:topological}

To see how \cref{defn_:58} works in a non-trivial setting we can return to topological spaces:

\begin{Example}
  \index{topological density}\index{nowhere dense subset}
  \label{xmp:7}
  A closed subset \(A\) is small in the lattice (more precisely co-frame) of closeds \(\mathbf{C}(X)\) of a topological space \(X\) if and only if the only closed subset \(B\) such that
  \(A \cup B = X\) is \(X\), so if the closure of its set-theoretic complement is \(X\) itself, and hence if it is
  nowhere dense. Dually, an open subset is dense in our sense if and only if it is dense as a topological subset.

  We can also ask for dense elements of \(\mathbf{C}(X)\). If \(X\) is Hausdorff, this can only happen in trivial cases, but it is common in algebraic geometry. For
  instance, in the spectrum \(X\) of a local ring \(R\) with a maximal ideal \(m\), the closed subscheme corresponding to \(R/m\) is contained in every other closed subscheme and
  thus ``closed-dense''.
\end{Example}
\begin{Remark}
  \label{rmα_:40}
  So \cref{defn_:58} generalizes the topological notion of density. But the reason we use the name ``density'' instead of the formal dual, ``largeness'', is a categorical one: if a subobject
  \(ι{:} A \hookrightarrow X\) in a category \(\mathcal{C}\) is dense in \(\mathbf{SOb}_{\mathcal{C}}(X)\), then per definition \(ι ∧ κ\) is non-empty for each non-initial \(κ{:} B \hookrightarrow X\), which implies that the
  pullback \(κ^{*}(ι)\) of \(ι\) along \(κ\) exists and is non-initial. So a dense subobject is one which is non-empty in every other subobject. We will investigate such categorical
  phenomena in \cref{sec:inter-categ-size}.
\end{Remark}

A lot of examples we will be looking at are variants of this basic one, but we have to be careful: for one, this notion of size depends on the granularity of our preorder. For instance:

\begin{Example}
  \label{xmp:63}
  \index{manifold boundary!smallness in disk category} \index{small disk} Let us study smallness in the category \(\mathbf{TDisk}\) whose objects are \(n\)-dimensional (closed)
  disks \(●^{n}\) and whose morphisms are closed continuous functions. The join \(ι ∨ ι'{:} ●^{n'} \hookrightarrow ●^{n}\) of two closed embeddings
  \(ι{:} ●^{m} \hookrightarrow ●^{n}, ι'{:} ●^{m'} \hookrightarrow ●^{n}\) is the smallest \(n'\)-dimensional disk in \(●^{n}\) containing both \(ι\) and \(ι'\), if it exists. It follows from \cref{le↘_:1}
  below that the image \(\mathbf{Im}(∂ ●^{n'})\) of the manifold-boundary of \(●^{n'}\) is contained in the set-theoretic union of \(ι\) and \(ι'\). So in particular, if
  \(ι ∨ ι' = \top_{●^{n}}\) then \(∂ ●^{n} \subseteq ι \cup ι'\), and conversely, \(\top_{●^{n}}\) is the only disk in \(●^{n}\) containing \(∂ ●^{n}\). Thus,
  \(ι ∨ ι' = \top_{●^{n}}\) if and only if \(ι \cup ι'\) contains \(∂ ●^{n}\).

  Given now \(ι{:} ●^{m} \hookrightarrow ●^{n}\), if \(ι \cap ∂ ●^{n}\) is nowhere dense in \(∂ ●^{n}\), then every closed subset \(ι'\) of \(●^{n}\) such that
  \(ι \cup ι' = ∂ ●^{n}\) has to contain \(∂ ●^{n}\), so that it has to be \(\top_{●^{n}}\). Conversely, if \(ι \cap ∂ ●^{n}\) is not nowhere dense in \(∂ ●^{n}\), then it contains an
  \((n-1)\)-dimensional disk \(κ\). Then the closure \(\overline{∂ ●^{n} ∖ κ}\) of the complement of \(κ\) in \(∂ ●^{n}\) is an \((n - 1)\)-dimensional disk \(ι'\) such that
  \(ι ∨ ι' = \top_{●^{n}}\).

  % Embedding \(●^{n}\) into \(ℝ^{n}\) in such a way that \(∂ κ\) is a flat circle we see that \(ι'\) can be completed to a sub-disk \(ι''\) of \(●^{n}\) which does not contain
  % \(ι\) (by separating \(●^{n}\) along \(∂ κ\)). Thus, \(ι ∨ ι'' = \top_{●^{n}}\), so that \(ι\) is not small.

  So since everything is a disk the size measure becomes one on the boundary.
\end{Example}
\begin{Lemma}
  \label{le↘_:1}
  \index{manifold boundary} Let \(●^{n} \subseteq ℝ^{n}\) be the closed \(n\)-disk, and let \(ι{:} ●^{m} \hookrightarrow ●^{n}\), \(ι'{:} ●^{m'} \hookrightarrow ●^{n}\) be closed embeddings whose join
  \(ι ∨ ι'{:} ●^{n'} \hookrightarrow ●^{n}\) exists in \(\mathbf{SOb}_{\mathbf{TDisk}}(●^{n})\). Then the image of the manifold boundary \(\partial ●^{n'} \subseteq ●^{n'}\) under
  \(ι ∨ ι'\) is contained in the set-theoretic union of the images of \(ι\) and \(ι'\).
\end{Lemma}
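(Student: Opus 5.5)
We argue by contradiction and shrink the join disk. Write \(J := ι ∨ ι'{:} ●^{n'} \hookrightarrow ●^{n}\) and identify all embeddings with their (closed) images. Since \(ι \leq J\) and \(ι' \leq J\) in \(\mathbf{SOb}_{\mathbf{TDisk}}(●^{n})\), both images lie in \(J(●^{n'})\), so we may pull everything back along \(J\) and work inside \(●^{n'}\). Suppose a boundary point \(p \in J(∂ ●^{n'})\) lies outside \(ι(●^{m}) \cup ι'(●^{m'})\). The goal is to build a closed disk \(K \subseteq J(●^{n'})\) that still contains \(ι\) and \(ι'\) but misses \(p\). Then \(K\) is a common upper bound of \(ι\) and \(ι'\) that is strictly below \(J\). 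This contradicts \(J\) being the least upper bound. It also contradicts \(J \leq K\), which would force \(p \in K\).

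The construction goes as follows. The union \(ι \cup ι'\) is compact and misses \(p\), so there is a closed neighbourhood \(U\) of \(p\) in \(●^{n'}\) (after pulling back along \(J\)) that is disjoint from \(J^{-1}(ι \cup ι')\). Because \(p\) is a manifold-boundary point of \(●^{n'}\), we can choose a chart at \(p\) modelled on a half-space. In that chart we take \(U\) to be a small closed half-ball around \(p\), and we set \(K' := \overline{●^{n'} ∖ U}\). Since we only remove a half-ball at the boundary, \(K'\) is homeomorphic to \(●^{n'}\). Concretely, we can use the standard model \(●^{n'} \subseteq ℝ^{n'}\) with a boundary point \(p\) on the sphere, and remove a small cap cut off by a hyperplane. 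The remaining set is a compact convex body with nonempty interior, hence a closed \(n'\)-disk. Composing, \(K := J|_{K'}\) is a closed embedding of an \(n'\)-disk into \(●^{n}\), because restrictions of closed embeddings to closed subsets are closed embeddings.

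Next we verify the order relations. Since \(ι\) factors set-theoretically through \(K'\) and \(K'\) carries the subspace topology, \(ι\) factors continuously through \(K\). The factor map is closed, being a continuous map from a compact space to a Hausdorff one, so \(ι \leq K\) holds in \(\mathbf{SOb}_{\mathbf{TDisk}}(●^{n})\), and likewise \(ι' \leq K\). Hence \(J \leq K\). But \(K\) misses \(J(p)\) while \(J\) contains it, so \(J\) cannot factor through \(K\). This is the contradiction.

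The main obstacle is making the cap removal rigorous when \(J\) is an arbitrary closed embedding rather than a standard one. To handle it, we do the whole construction intrinsically in the source \(●^{n'}\), which is the standard disk, using the convex model with a hyperplane cap. We never need to know how \(J\) sits inside \(●^{n}\). A secondary point is to confirm that \(p \in ∂ ●^{n'}\) really has a cap neighbourhood avoiding the compact set \(J^{-1}(ι \cup ι')\). This follows because sufficiently small caps around \(p\) form a neighbourhood basis of \(p\) in \(●^{n'}\).
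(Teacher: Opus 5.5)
Your proposal is correct and follows essentially the same route as the paper: you assume a boundary point of the join lies outside the closed union of the two images, cut off a small boundary cap around it to get a strictly smaller \(n'\)-disk that still contains both embeddings, and contradict the minimality of the join. The one difference is where you cut. You work in the standard source disk with a hyperplane cap, so the remainder is visibly a compact convex body with nonempty interior and hence an \(n'\)-disk. This justifies a step the paper only asserts, namely that the set left after removing the cap is still a disk up to homeomorphism.
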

\begin{proof}
Write \(D := (ι ∨ ι')(●^{n'})\) for the image of the join and \(S := ι(●^{m}) ∪ ι'(●^{m'})\) for the union of the two images, so that \(D\) is a closed subset of \(●^{n}\)
homeomorphic to the closed \(n'\)-disk; we must show \(\partial D ⊆ S\). Note first that \(S\) is closed in \(●^{n}\): by the closed map lemma, \(ι\) and \(ι'\) are closed maps, since
their domains are compact and codomains are Hausdorff, so both images, and hence their union, are closed.

Suppose, for contradiction, that some boundary point \(x ∈ \partial D ∖ S\) exists. Since \(D\) is a manifold with boundary, \(x\) admits a neighborhood basis in \(D\) by sets homeomorphic to closed half-balls of \(ℝ^{n'}\). Because \(x ∉ S\) and \(S\) is closed, we may therefore choose a closed cap neighborhood \(C ⊆ D\) of \(x\) with the following properties:

\begin{enumerate}
\item \(x ∈ C^{∘}\), the interior of \(C\) in \(D\);
\item \(C ∩ S = ∅\);
\item \(C\) is strictly contained in \(D\).
\end{enumerate}

Concretely, let \(φ\) be a chart carrying a neighborhood \(U\) of \(x\) in \(D\) homeomorphically onto a neighborhood \(V\) of the origin in a closed half-space of \(ℝ^{n'}\), with \(φ(x) = 0\). Then \(φ(U ∩ S)\) is closed in \(V\) and does not contain the origin; since \(ℝ^{n'}\) is \(T_{4}\), a small closed half-ball about the origin avoids it, and pulling this half-ball back along \(φ\) yields \(C\).

Now cut the cap off: set \(D' := D ∖ C^{∘}\). By (2) we have \(S ⊆ D'\), while \(x ∈ C^{∘}\) ensures that \(D'\) is a proper subset of \(D\) — geometrically, we have removed from \(D\) a small boundary cap at a point lying in neither image. Moreover, \(D'\) is closed in \(D\), hence in \(●^{n}\), and up to homeomorphism it is still an \(n'\)-disk; we may therefore regard it as the image of a closed embedding \(k{:} ●^{n'} ↪ ●^{n}\). Since the images of \(ι\) and \(ι'\) both lie within \(D'\), the two embeddings factor through \(k\) as subobjects of \(\mathbf{TDisk}\). Thus \(k\) is an \(n'\)-dimensional disk of \(●^{n}\) containing \(ι\) and \(ι'\) yet strictly smaller than \(ι ∨ ι'\), contradicting the defining property of the join as the smallest such disk.

The contradiction shows that no boundary point of \(D\) lies outside \(S\); that is,
\[\partial D = (ι ∨ ι')(∂ ●^{n'}) ⊆ S = ι(●^{m}) ∪ ι'(●^{m'}),\]
as claimed.
\end{proof}

\begin{Remark}
  \label{rmα_:33}
  In \cite{schreiberDifferentialCohomologyCohesive}, Schreiber uses the category \(\mathbf{CartSp}_{top}\), whose objects are the Cartesian spaces \(ℝ^{n}\) and whose morphisms are continuous
  functions, to construct a topological model for cohesion as developed by Lawvere in e.g. \cite{lawvereCohesiveToposesCantor1994}. The objects of \(\mathbf{TDisk}\) are compactifications of the spaces \(ℝ^{n}\) and the morphisms between them are also continuous
  functions, which are closed since each closed disk is compact Hausdorff. So \(\mathbf{TDisk}\) is the compact, i.e. universally closed, analogue of \(\mathbf{CartSp}_{top}\).
  \(\mathbf{TDisk}\) will be of importance in later texts due to its site structure. From that perspective, what \cref{xmp:63} shows to us is that there is an inherent conflict between a
  rich geometry and a manageable site structure, which was known before, albeit in different contexts.
\end{Remark}

But if the subobject preorder is sufficiently rich, closed and homogeneous, smallness reduces to lower-dimensionality. This is in particular the case for manifolds with boundary. For
this we only need local homeomorphy to a real vector space, Hausdorffness and second-countability are inessential to the question we are studying so we don't assume them:

\begin{Definition}[Topological manifold with boundary]
  \label{def:top-manifold-with-boundary}
  \index{manifold with boundary} Given \(n ∈ ℕ\), a \emph{(not necessarily Hausdorff or second-countable) topological \(n\)-manifold with boundary} is a topological space \(M\)
  such that for every point \(x ∈ M\) there exists

  \begin{itemize}
  \item an open neighborhood \(U \subseteq M\) of \(x\), and
  \item a homeomorphism (a \emph{chart}) \(φ {:} U \xrightarrow{\;\cong\;} V\) onto an open subset \(V \subseteq H^{n}\),
  \end{itemize}

  where the \emph{closed upper half-space} \(H^{n} \subseteq ℝ^{n}\) is defined by
  \[
    H^{n} := \{ (x_1, ..., x_n) ∈ ℝ^{n} \mid x_n ≥ 0 \}
  \]

  with the induced topology. The collection of such charts is called an \emph{atlas} for \(M\).

  A point \(x ∈ M\) is called an \emph{interior point} of \(M\) if there exists a neighborhood \(U_{x}\) of \(x\) and a homeomorphism \(φ{:} U_{x} \cong ℝ^{n}\) and a \emph{boundary
    point} otherwise.

  An \emph{embedding of topological manifolds} \(ι{:} A \hookrightarrow M\) is a regular monomorphism of topological spaces, i.e. a continuous injection such that each open subset of
  \(A\) is the restriction of some open of \(M\). \(ι\) is \emph{closed} if its image in \(M\) is.
\end{Definition}
\begin{Remark}
  \label{rmα_:31}
  The set of all interior points of a manifold with boundary \(M\) is denoted \(M^{\circ}\); it is an open subset of \(M\) and carries the structure of a (boundaryless) \(n\)-manifold.
\end{Remark}

\begin{Example}
  \label{xmp:60}
  \index{small submanifold}
  For a manifold with boundary \(X\), write \(\mathbf{SOb}_{∂ \mathbf{Mfld}}(X)\) for the preorder of closed embedded submanifolds of \(X\), ordered by inclusion.
  Then a closed submanifold \(ι {:} A \hookrightarrow X\) is small in \(\mathbf{SOb}_{∂ \mathbf{Mfld}}(X)\) precisely when \(\mathbf{dim} A < \mathbf{dim} X\).
\end{Example}
\begin{proof}
  Write \(n = \mathbf{dim} X\).

  If \(\mathbf{dim} ι < n\) we have to show that a closed submanifold of lower dimension is nowhere dense in \(X\).
  Suppose \(κ \hookrightarrow X\) is another closed submanifold with \(ι ∨ κ = X\). Then the set-theoretic union \(ι \cup κ\) has to consist of all of \(X\). To see this, assume otherwise. Then there exists a
  \(p ∈ X ∖ ι \cup κ\). Since \(X\) is a manifold, there exists a local homeomorphism \(φ_{p}\) from a neighborhood \(U\) of \(p\) to \(D\), which is either \(ℝ^{n}\) or \(ℝ^{n}_{\geq 0}\) mapping \(p\) to \(0\). Since \(ι\) and \(κ\) are closed,
  \(ι \cup κ\) is as well, so that \(φ_{p}(ι \cup κ)\) is closed in \(D\) and does not contain \(0\). Since \(ℝ\) is regular, there exists a closed disk or half disk \(●_{p}\) around \(0\) that is disjoint
  from \(φ_{p}(ι \cup κ)\), so that \(φ_{p}^{-1}(●_{p})\) is disjoint from \(ι \cup κ\). Assume now that \(Y\) is a closed submanifold with boundary containing \(ι \cup κ\) and \(p\). Then
  \(\overline{Y ∖ ●_{p}}\) is also a closed submanifold with boundary that contains \(ι \cup κ\) but does not contain \(p\). Thus, \(p\) is outside of \(ι ∨ κ\) by the minimality of the join,
  so no such \(p\) can exist.

  Now, because \(ι\) is lower-dimensional, it is nowhere dense in each coordinate chart around its points and thus nowhere dense. Thus, the interior of \(ι ∪ κ\) coincides with the interior of \(κ\).
  Hence \(κ^{\circ}\) is dense in \(X\).  As \(κ\) is closed, this forces \(κ = X\).
  Thus the only subobject joining with \(ι\) to give \(X\) is \(X\) itself, so \(ι\) is small.

  \emph{If \(\mathbf{dim} ι = n\):}
  Then \(ι\) is a closed submanifold of full dimension. Choose a closed \(n\)-disk \(κ{:} ●^{n} → ι^{\circ}\). Then \(\overline{X ∖ κ}\) is a closed submanifold with boundary and \(ι ∨ \overline{X ∖ κ} = X\), showing that \(ι\) is not small.
\end{proof}

So here, unlike in \cref{xmp:63} we have enough regularity to make our definition of smallness local.

\subsubsection{Graph-Theoretic}
\label{sec:graph-theoretic}

Another decently simple but important example is that of graphs:

\begin{Example}
  \label{xmp:17}
\index{dense subgraph}
\index{small subgraph}
  Let \(G\) be a (directed) graph with vertex set \(V(G)\) and edge set \(E(G)\). Let \(\mathbf{SOb}(G)\) denote the preorder of (directed) subgraphs of \(G\), ordered by inclusion of vertices and
  edges. This is an upward-bounded preorder with top element \(\top_G = G\) and bottom element \(\bot_G = ∅\) (the empty graph on the empty vertex set). Joins of finite families of subgraphs are given
  by set-theoretic unions on both vertices and edges:
  \[
    ι_1 ∨ ι_2 \;\;=\;\; ι_1 \cup ι_2,
  \]
  and meets are intersections \(ι_1 ∧ ι_2 = ι_1 \cap ι_2\).

  Applying \cref{defn_:58} to \(\mathbf{SOb}(G)\) gives concrete notions of small and dense subgraphs:

  \begin{enumerate}
  \item \textbf{Small subgraphs.} A subgraph \(ι \hookrightarrow G\) is small in \(\mathbf{SOb}(G)\) if the only subgraph \(κ \hookrightarrow G\) with
    \[
      ι ∨ κ \;=\; ι \cup κ \;=\; G
    \] is \(G\). Assume now that \(ι\) contains no edges or isolated vertices in \(G\). Then, since \(ι \cup κ\) has to contain all edges and isolated vertices in \(G\), \(κ\) has to
    contain them as well, so that \(κ = G\) and
    \(ι\) is small. Conversely, if \(ι\) contains an edge or an isolated vertex \(a\), then \(ι \cup (G ∖ a) = G\), so that \(ι\) is not small.

  \item \textbf{Dense subgraphs.} It follows from the dual of \cref{■pn_:8} that a subgraph is dense if and only if it contains all vertices of \(G\).
  \end{enumerate}
\end{Example}

\subsubsection{Algebraic}
\label{sec:algebraic}

We can give general statements about smallness and density for a whole slew of algebraic theories. For this, we work in a category \(\mathcal{C}\) that is monadic over \(\mathbf{Set}\). Monads provide
a categorical framework for universal algebra and the categories of most algebraic structures are monadic. Let us quickly recount the definitions:

\begin{Definition}[Monad]
  \label{def:monad}
\index{monad}
  A \emph{monad} on a category \(\mathcal{C}\) consists of
  \begin{itemize}
    \item an endofunctor \(T {:} \mathcal{C} \to \mathcal{C}\),
    \item a natural transformation \(η {:} \mathbf{id}_{\mathcal{C}} \Rightarrow T\) (the \emph{unit}), and
    \item a natural transformation \(\mu {:} T^{2} \Rightarrow T\) (the \emph{multiplication})
  \end{itemize}
  such that the following diagrams commute for every \(X {:} \mathcal{C}\):
  \[
  \begin{tikzcd}
    T^{3}(X) \ar[r, "\mu_{T(X)}"] \ar[d, "T(\mu_{X})"'] & T^{2}(X) \ar[d, "\mu_{X}"] \\
    T^{2}(X) \ar[r, "\mu_{X}"'] & T(X)
  \end{tikzcd}
  \qquad
  \begin{tikzcd}
    T(X) \ar[r, "T(η_{X})"] \ar[dr, "\mathbf{id}_{T(X)}"'] & T^{2}(X) \ar[d, "\mu_{X}"] & T(X) \ar[l, "η_{T(X)}"'] \ar[dl, "\mathbf{id}_{T(X)}"] \\
    & T(X) &
  \end{tikzcd}
  \]
  Equivalently, a monad is a monoid in the monoidal category \(([\mathcal{C}, \mathcal{C}], ⨾, \mathbf{id}_{\mathcal{C}})\) of endofunctors with composition.
\end{Definition}
\begin{Definition}[Algebra of a monad]
  \label{def:algebra-monad}
\index{Eilenberg-Moore algebra}
\index{algebra of a monad}
  Let \((T, η, \mu)\) be a monad on \(\mathcal{C}\). A \emph{\(T\)-algebra} (or \emph{Eilenberg-Moore algebra}) is a pair \((X, \xi)\) consisting of an object \(X {:} \mathcal{C}\) and a morphism \(\xi {:} T(X) \to X\) (the \emph{structure map}) such that the diagrams
  \[
  \begin{tikzcd}
    T^{2}(X) \ar[r, "T(\xi)"] \ar[d, "\mu_{X}"'] & T(X) \ar[d, "\xi"] \\
    T(X) \ar[r, "\xi"'] & X
  \end{tikzcd}
  \qquad
  \begin{tikzcd}
    X \ar[r, "η_{X}"] \ar[dr, "\mathbf{id}_{X}"'] & T(X) \ar[d, "\xi"] \\
    & X
  \end{tikzcd}
  \]
  commute.

  A morphism of \(T\)-algebras from \((X, \xi)\) to \((Y, θ)\) is a morphism \(f {:} X \to Y\) in \(\mathcal{C}\) satisfying \(\xi ⨾ f = T(f) ⨾ θ\). This forms the \emph{Eilenberg-Moore category} \(\mathcal{C}^{T}\).
\end{Definition}
\begin{Definition}[Monadic category]
  \label{def:monadic-category}
\index{monadic category}
  A category \(\mathcal{A}\) is called \emph{monadic} over \(\mathcal{C}\) (or \emph{monadic} with respect to a forgetful functor \(U {:} \mathcal{A} \to \mathcal{C}\)) if there exists a monad \(T\) on \(\mathcal{C}\) and an equivalence of
  categories \(\mathcal{A} \simeq \mathcal{C}^{T}\) that commutes with the forgetful functors to \(\mathcal{C}\). A functor \(U {:} \mathcal{A} \to \mathcal{C}\) is \emph{monadic} if it has a left adjoint \(F\) and the comparison functor \(\mathcal{A} \to \mathcal{C}^{T}\) (where \(T\) is the induced monad) is an equivalence.
\end{Definition}
\begin{Remark}
  \label{rmk:2}
  The categories of many algebraic structures such as groups, rings, modules over a fixed ring, etc. are monadic over \(\mathbf{Set}\). \(F(S)\) is given by the free algebra generated by \(S\), \(U(M)\)
  by the projection onto the underlying set and an epimorphism \(F(S) → X\) a generalized quotient by some system of algebraic relations between the elements of \(X\). So what monads really
  describe from our perspective is the idea of generation, and this is what we can use to describe smallness and density.
\end{Remark}

The most crucial consequence of assuming monadicity for our purposes is that \(U\) as a right adjoint creates limits. Hence, monomorphisms are lifted from \(\mathbf{Set}\) and are
thus injective functions, so that subobjects are special subsets (those that are algebraically closed). Moreover, the meet in the subobject lattice is computed as the set-theoretic
intersection, which is again a subalgebra. Colimits, i.e. joins, are more delicate however: suppose we have two subobjects \(ι{:} A \hookrightarrow M\) and
\(κ{:} B \hookrightarrow M\). Their join \(ι ∨ κ\) is the smallest subobject of \(M\) that contains both \(A\) and \(B\). In the category \(\mathcal{C}\) this is obtained by taking the image of the
canonical map \([ι, κ]{:} A + B → M\) from the coproduct \(A + B\) to \(M\) induced by the two inclusions. For \(\mathcal{C} = \mathbf{Set}\) the image of the function underlying that map is
precisely the union \(A ∪ B\) inside \(M\). For the same reason, \(F\) preserves the initial object since it is a colimit, so each subobject preorder
\(\mathbf{SOb}(M)\) has a bottom \(\bot_{M}\). Thus, density is easy to analyse:

\paragraph{Density}

Because the meet is just the intersection, the defining condition can be rephrased as:
\[
  \text{\(A\) is dense} \;\Longleftrightarrow\;
  \bigl(B \neq \bot \;\Rightarrow\; A \cap B \neq \bot\bigr).
\]
\begin{Example}
  \label{xmp:49}
\index{dense submonoid}
  In the lattice of submonoids of a monoid \(M\), \(ι\) is dense
  if and only if for each non-identity element \(m\) of \(M\) a non-identity power \(m^{n}\) of \(m\) is in \(ι\) (or \(M\) is trivial). This is because each non-trivial submonoid of \(M\) has to contain
  a non-identity element and thus its powers, and each submonoid \(B\) generated by elements \(b_{i}\) contains the submonoid generated by each \(b_{i}\).
\end{Example}

Now, the same idea as in \cref{xmp:49} generalizes as soon as the category \(\mathcal{C}\) has free algebras on one generator.
Recall that for a monad \(T\) on \(\mathbf{Set}\), the free \(T\)-algebra on a singleton set is written \(F(1)\).
A \emph{cyclic subalgebra} of \(M\) is the image of a morphism \(F(1) \to M\); equivalently it is the subalgebra \(\langle x \rangle\) generated by a single element \(x \in M\).
In every algebraic theory that is monadic over \(\mathbf{Set}\), every non-trivial subalgebra contains a non-trivial cyclic subalgebra, which we can construct by taking the
generalized orbit of some element \(M\) that is not in the intersection of all subalgebras. Consequently we obtain a fully general ``orbit'' criterion:

\begin{Proposition}[Cyclic-intersection criterion for density]
  \label{prp:density-cyclic}
\index{density!cyclic-intersection criterion}
\index{dense subalgebra!cyclic-intersection criterion}
  Let \(M\) be an algebra in a monadic category over \(\mathbf{Set}\).
  A subalgebra \(A \le M\) is dense in \(M\) if and only if for every element \(x \in M\) that does not lie in the minimal subalgebra \(\bot_{M}\),
  the cyclic subalgebra \(\langle x \rangle\) has non-trivial intersection with \(A\):
  \[
    \langle x \rangle \cap A \;\neq\; \bot_M .
  \]
\end{Proposition}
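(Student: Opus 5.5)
The plan is to work entirely inside the subobject lattice \(\mathbf{SOb}(M)\) and use the facts recorded just before \cref{xmp:49}. Meets are intersections, and there is a bottom \(\bot_M\), namely the image of the initial algebra \(F(\emptyset)\), which is the intersection of all subalgebras. The statement then reduces to comparing ``\(A \cap B \neq \bot_M\) for all non-bottom \(B\)'' with ``\(A \cap \langle x\rangle \neq \bot_M\) for all \(x \notin \bot_M\)''. The only structural input needed is that the cyclic subalgebras \(\langle x\rangle\) with \(x \notin \bot_M\) are non-bottom, and that every non-bottom subalgebra contains such a cyclic one.

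For the direction from density to the criterion, I would take \(x \in M \setminus \bot_M\). Then \(\langle x\rangle\), the image of the map \(F(1)\to M\) classifying \(x\), is a subalgebra containing \(x\), so \(\langle x\rangle \neq \bot_M\). Density of \(A\) (\cref{defn_:58}, rephrased via intersections as in the paragraph on density) then gives \(\langle x\rangle \cap A \neq \bot_M\) directly.

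For the converse, assume the criterion holds and let \(B \le M\) with \(B \neq \bot_M\). Since \(\bot_M \le B\) (it is the least subalgebra) and the inclusion is proper, there is some \(x \in B \setminus \bot_M\). Because \(B\) is a subalgebra containing \(x\), minimality of the generated subalgebra gives \(\langle x\rangle \le B\). This is exactly the ``generalized orbit'' observation stated before the proposition; I would justify it by noting that \(\langle x\rangle\) is the image of \(F(1)\to M\), and that this map factors through \(B \hookrightarrow M\) by freeness, since \(1 \to UM\) factors through \(UB\). Hence \(\langle x\rangle \cap A \le B \cap A\). By hypothesis the left side is not \(\bot_M\), and since \(\bot_M\) is least, \(B \cap A \neq \bot_M\). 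So \(A\) is dense.

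The main point requiring care is the step that sets up the converse. It needs \(\bot_M\) to be a genuine subalgebra that sits below everything, and it needs \(B \neq \bot_M\) in the preorder to mean that \(B\) has an element outside \(\bot_M\). Both follow from monomorphisms being injective, since \(U\) creates limits, and from \(\bot_M\) being the image of the initial algebra. Once these facts are fixed, the argument is a short order-theoretic manipulation using monotonicity of meets.
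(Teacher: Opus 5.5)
Your proof is correct and follows the paper's argument essentially step for step. The forward direction uses \(\langle x\rangle \neq \bot_M\) for \(x \notin \bot_M\). The converse picks \(x \in B \setminus \bot_M\), uses \(\langle x\rangle \le B\), and concludes by monotonicity of meets. Your extra justifications only make explicit what the paper leaves implicit: that \(\bot_M\) is the image of \(F(\emptyset)\), that \(\langle x\rangle \le B\) by freeness, and that \(B \neq \bot_M\) yields an element outside \(\bot_M\) because monomorphisms are injective.
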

\begin{proof}
  If \(A\) is dense and \(x \notin \bot_M\), then \(\langle x \rangle \neq \bot_M\), so \(\langle x \rangle \cap A \neq \bot_M\).
  Conversely, let \(B \le M\) be a non-trivial subalgebra.  Choose any \(x \in B \setminus \bot_M\); then \(\langle x \rangle \le B\).
  The hypothesis gives \(\langle x \rangle \cap A \neq \bot_M\); hence \(B \cap A \neq \bot_M\).  Thus \(A\) is dense.
\end{proof}
\begin{Remark}
  \label{rmα_:29}
  Note that this proof works whether or not \(M\) has enough atoms. If it does however, then the dual of \cref{■pn_:8} shows that a subalgebra is dense if and only if it contains
  all atoms, which can be easier to check.
\end{Remark}

\paragraph{Smallness/Frattini Theory}

However, \(A ∪ B\) need not be a subalgebra; the categorical image closes the union under the algebraic structure it inherits from \(M\)
— that is, it takes the subalgebra generated by the union:

\[ι ∨ κ {:} ⟨A ∪ B⟩ \hookrightarrow M.\]

where \(⟨A \cup B⟩\) is the image of the canonical map \(F(U(A) \cup U(B)) → M\). So if we abuse notation for a moment by identifying a subobject with its image we can make the requirement for smallness a little more concrete:

\begin{Definition}
  \label{dfn:20}
\index{anchored element}
  Let \(A \le M\) be a subalgebra.
  An element \(a \in A\) is called \emph{anchored} (in \(A\)) if there exists a proper subalgebra \(B < M\) such that
  \[
    ⟨A ∪ B⟩ = M \qquad\text{and}\qquad a \notin B .
  \]
\end{Definition}
\begin{Proposition}
  \label{prp:3}
\index{small element!anchor criterion}
  \(A \le M\) is small in \(\mathbf{SOb}(M)\) if and only if \(A\) contains no anchored element.
\end{Proposition}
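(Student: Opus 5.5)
The plan is to prove both directions by contraposition, unfolding \cref{defn_:58} in the concrete description of joins given just before \cref{dfn:20}. In \(\mathbf{SOb}(M)\) the join of two subalgebras \(A, B \le M\) exists and is \(\langle A \cup B\rangle\). So \(A\) is small exactly when every subalgebra \(B\) with \(\langle A \cup B \rangle = M\) satisfies \(B = M\). The order is a partial order, since subobjects are identified with their images, which are subsets of \(U(M)\). Hence a subalgebra \(B\) is proper exactly when \(B \neq M\), and \(B = \top\) means \(B = M\).

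\emph{Small implies no anchored element.} Suppose \(a \in A\) is anchored, witnessed by a proper subalgebra \(B < M\) with \(\langle A \cup B\rangle = M\) and \(a \notin B\). Then \(B\) is a subalgebra joining with \(A\) to \(\top\) but \(B \neq \top\). So \(A\) is not small. Note that the condition \(a \notin B\) is not even needed here.

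\emph{No anchored element implies small.} Suppose \(A\) is not small. Then there is a proper subalgebra \(B < M\) with \(\langle A \cup B \rangle = M\), and we need some \(a \in A\) with \(a \notin B\). Assume instead that \(A \subseteq B\). Then \(\langle A \cup B\rangle = \langle B \rangle = B\), since \(B\) is already a subalgebra and generation is idempotent. This would give \(B = M\), contradicting properness. So some \(a \in A \setminus B\) exists, and this \(a\) is anchored, witnessed by \(B\).

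The only step that needs care is justifying that the join in \(\mathbf{SOb}(M)\) really is \(\langle A \cup B\rangle\) and that \(\langle B\rangle = B\) for a subalgebra \(B\). Both follow from the discussion preceding \cref{dfn:20}: monomorphisms are injective because \(U\) creates limits, and the join is the image of \(F(U(A) \cup U(B)) \to M\), equivalently of \([ι, κ]{:} A + B \to M\). Since \(B\) is a subalgebra, this image is contained in \(B\) whenever \(A \subseteq B\). I do not expect any real obstacle beyond this bookkeeping. I will also remark that \cref{dfn:20} only uses the join \(A \vee B\), so the argument does not require arbitrary joins to exist.
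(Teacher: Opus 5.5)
Your proof is correct and takes essentially the same route as the paper. The forward direction observes that any anchoring witness \(B\) is a proper subalgebra joining with \(A\) to \(M\); the paper argues this directly rather than by contraposition. The converse picks \(a \in A \setminus B\) for a proper \(B\) with \(\langle A \cup B\rangle = M\), since \(A \subseteq B\) would force \(\langle A \cup B\rangle = B \neq M\). Your extra bookkeeping — that the join is \(\langle A \cup B\rangle\) and that \(\langle B\rangle = B\) — only makes explicit what the paper leaves implicit.
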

\begin{proof}
  \label{prf:2}
  \((\Rightarrow )\): If \(A\) is small and \(a \in A\) exists with \(⟨A ∪ B⟩ = M\), then smallness forces \(B = M\) so \(a ∈ B\). Hence no element of \(A\) is anchored.

  \((\Leftarrow )\): Suppose \(A\) is not small. Then there exists a proper subalgebra \(B < M\) with \(⟨A ∪ B⟩ = M\).
  Since \(B \neq M\), \(A\) cannot be contained in \(B\); pick \(a \in A \setminus B\).
  This \(B\) witnesses that \(a\) is anchored.
  Thus \(A\) contains an anchored element.
\end{proof}

In the finitary and finitely generated case we can further reduce smallness. This is where the connection to Frattini theory comes in:

\begin{Definition}[Finitary monad]
  \label{def:finitary-monad}
\index{finitary monad}
  A monad \(T\) on a category \(\mathcal{C}\) is called \emph{finitary} (or \emph{filtered-colimit preserving}) if its underlying endofunctor \(T {:} \mathcal{C} \to \mathcal{C}\) preserves filtered colimits.
\end{Definition}

Note the following standard fact about finitary monads:

\begin{Lemma}
  \label{le↘_:2}
  Let \(\mathcal{C}\) be a finitary monadic category and \(M\) an object in \(\mathcal{C}\). Then every subalgebra in \(M\) is the join of its finitely generated subalgebras.
\end{Lemma}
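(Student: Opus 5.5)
The plan is to show that the union of the finitely generated subalgebras of a subalgebra \(ι{:} A \hookrightarrow M\) is already a subalgebra. It then coincides with \(A\), and this gives the join property at once. Concretely, let \(\mathcal{F}\) be the family of subalgebras \(⟨S⟩ \le A\) with \(S \subseteq U(A)\) finite, and let \(D := \bigcup_{S} U(⟨S⟩)\) be the union of their underlying sets. Each singleton gives \(⟨\{a\}⟩ \ni a\), so \(D = U(A)\). Any subalgebra \(C \le M\) that contains every member of \(\mathcal{F}\) therefore contains \(U(A)\) as a set, hence contains \(A\). Conversely, \(A\) is an upper bound of \(\mathcal{F}\). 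So \(A\) is the least upper bound of \(\mathcal{F}\) in \(\mathbf{SOb}(M)\), computed as the image of \(F(\coprod U(⟨S⟩)) \to M\).

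Stated this way, the argument barely uses finitariness: every element lies in a cyclic subalgebra inside \(A\). I would nonetheless give the finitary version, since it is presumably what the subsequent Frattini discussion needs. Its content is that \(\mathcal{F}\) is directed, because \(⟨S⟩ ∨ ⟨S'⟩ = ⟨S \cup S'⟩\), and that the directed colimit \(\operatorname{colim}_{S} ⟨S⟩\) in \(\mathcal{C}\) is computed in \(\mathbf{Set}\) as the directed union. To see this, note that \(T\) preserves filtered colimits, so \(U\) creates them. The structure map on \(\bigcup_S U(⟨S⟩)\) is defined by factoring \(T(\bigcup U⟨S⟩) \cong \operatorname{colim} T(U⟨S⟩) \to \operatorname{colim} U⟨S⟩\). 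This directed union is the underlying set of \(A\), and the canonical map to \(M\) is therefore already monic. Hence the join is simply the union, with no further closure step required.

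The main obstacle is the identification \(T(\operatorname{colim} X_i) \cong \operatorname{colim} T(X_i)\) for the directed system of subsets. This is exactly where finitariness is used: an operation applied to elements of the union only involves elements from some stage. I would cite the standard fact that a finitary monad's forgetful functor creates filtered colimits, and not reprove it. I would also note that the trivial argument above covers the non-finitary case in any event.
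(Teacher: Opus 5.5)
Your proposal is correct, and it contains two routes.

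\textbf{The finitary version is the paper's proof.} Your second paragraph matches the paper's argument. The paper also notes that the family \(\mathcal{F}\) of finitely generated subalgebras of \(A\) is directed. It then cites the standard fact that \(U\) creates filtered colimits when \(T\) is finitary. Finally, it identifies the union of the underlying sets with \(U(A)\), because each \(a \in U(A)\) lies in the finitely generated subalgebra it generates.

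\textbf{Your first argument is a genuinely different route.} It works directly in \(\mathbf{SOb}(M)\): \(A\) is an upper bound of \(\mathcal{F}\) and lies below every other upper bound. It needs no colimit machinery and no finitariness. In fact it proves the lemma for any monad on \(\mathbf{Set}\), with \emph{finitely generated} read as ``a quotient of a free algebra on a finite set''. The cyclic subalgebras \(\langle a \rangle\), images of \(F(1) \to M\), already suffice. It also avoids having to check that a colimit in \(\mathcal{C}\) of a diagram of subobjects is a subobject of \(M\) and equals the join; the paper handles this only implicitly.

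\textbf{One step is left implicit.} You assume that containment of underlying sets gives the subobject order. This holds: if \(U(A) \subseteq U(C)\) inside \(U(M)\), the inclusion \(f{:} U(A) \to U(C)\) is a homomorphism. Its composite with the injective homomorphism \(c{:} C \hookrightarrow M\) is the homomorphism \(A \hookrightarrow M\), so both sides of the homomorphism equation for \(f\) agree after composing with \(c\). Since \(c\) is injective it can be cancelled.

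\textbf{What the colimit route adds.} The paper's route, which your second version reproduces, shows that joins of \emph{directed} families of subalgebras are plain set-theoretic unions. This is what the later arguments actually use: the coatomicity lemma and the definition of \(\Phi(M)\). For the particular family in this lemma, however, that information is automatic, since the union is already \(U(A)\).
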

\begin{proof}
  \label{prf_:14}
  Let \(U {:} \mathcal{C} \to \mathbf{Set}\) be the forgetful functor, with left adjoint \(F\). The monad \(T = F⨾ U\) is finitary, hence \(T\) preserves filtered colimits. A standard result about finitary
  monads on \(\mathbf{Set}\) states that \(U\) creates (and therefore preserves) filtered colimits (see Proposition 4.3.2 of \cite{borceuxHandbookCategoricalAlgebra1994}).

  Let \(\mathcal{F}\) be the set of all finitely generated subalgebras of \(M\) contained in some subalgebra \(A\). Order \(\mathcal{F}\) by inclusion. If \(F_{1}, F_{2} \in \mathcal{F}\) are generated by the finite sets \(S_{1}, S_{2} \subseteq U(A)\), then the
  subalgebra generated by \(S_{1} \cup S_{2}\) is again in \(A\), finitely generated and contains both \(F_{1}\) and \(F_{2}\); hence \(\mathcal{F}\) is a directed set.

  Consider the directed diagram \(D {:} \mathcal{F} \hookrightarrow \mathbf{SOb}_{\mathcal{C}}(M)\) that sends each finitely generated subalgebra to its inclusion in \(M\). Since \(U\) creates filtered colimits, the colimit
  \(A\) of \(D\) exists and is created by the set-theoretic colimit of the underlying sets. The underlying sets of the subalgebras in \(\mathcal{F}\) are subsets of \(U(A)\) and the union of the
  underlying sets of all \(F \in \mathcal{F}\) is exactly \(U(A)\) (every element \(a \in U(A)\) lies in the subalgebra it generates, which is finitely generated). Thus the colimit
  \(\varinjlim_{F \in \mathcal{F}} U(F)\) is equivalent to \(U(A)\) through the map induced by the canonical inclusions.
\end{proof}

By \cref{prp:1} the diagram of small subobjects of \(M\) is filtered and thus has a colimit \(\mathbf{S}(M)\). This object is often the same as the Frattini subalgebra:

\begin{Definition}
  \label{dfn:22}
  \index{non-generator}\index{Frattini subalgebra}
  In an algebraic category \(\mathcal{C}\) an element \(g \in M\) is a \emph{non-generator} if for every subset \(S \subseteq M\)

  \[
    \langle S \cup \{g\} \rangle = M \quad\Longrightarrow\quad \langle S \rangle = M .
  \]

  The diagram of all finitely generated subalgebras of non-generators of \(M\) is finitely directed, so if \(\mathcal{C}\) is finitary, then its colimit is the union of its underlying sets. This is
  called the \emph{Frattini subalgebra} \(Φ(M)\) of \(M\).
\end{Definition}

This result is also standard (see for instance the classical group-theoretic treatment of the Frattini subgroup):

\begin{Proposition}
  \label{prp:frattini-nongenerator}
\index{non-generator!characterization}
In any finitary algebraic category \(\mathcal{C}\), let \(M\) be an object.  An element \(g \in M\) is a non-generator if and only if \(g\)
  belongs to every maximal proper subalgebra of \(M\).
\end{Proposition}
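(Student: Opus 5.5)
The plan is to prove the two directions separately, following the classical group-theoretic argument. The only non-elementary ingredient is a Zorn's lemma step, which is where finitarity enters through \cref{le↘_:2}. Throughout I identify subalgebras with their underlying subsets, as in \cref{dfn:20}. I write \(⟨-⟩\) for the generated subalgebra, so \(⟨S⟩\) is the image of \(F(S) → M\).

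\((\Rightarrow)\): Let \(g\) be a non-generator and \(K < M\) a maximal proper subalgebra, and suppose \(g \notin K\). Then \(⟨K \cup \{g\}⟩\) strictly contains \(K\), so it equals \(M\) by maximality. The non-generator property applied to \(S := U(K)\) gives \(⟨K⟩ = K = M\), contradicting properness. Hence \(g ∈ K\). This direction needs no finitarity.

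\((\Leftarrow)\): Suppose \(g\) lies in every maximal proper subalgebra, and assume towards a contradiction that there is \(S \subseteq M\) with \(⟨S \cup \{g\}⟩ = M\) but \(⟨S⟩ \neq M\). Then \(g \notin ⟨S⟩\), since otherwise \(⟨S⟩ \supseteq S \cup \{g\}\) and so \(⟨S⟩ = M\). Consider the poset \(\mathcal{Z}\) of subalgebras \(B \le M\) with \(⟨S⟩ \le B\) and \(g \notin B\), ordered by inclusion; it is nonempty because it contains \(⟨S⟩\).

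I will apply Zorn's lemma to \(\mathcal{Z}\). For a chain \((B_{i})\) in \(\mathcal{Z}\), the diagram is directed, so by the standard fact used in the proof of \cref{le↘_:2} the forgetful functor \(U\) creates its colimit. Hence the set-theoretic union \(\bigcup_{i} U(B_{i})\) carries a subalgebra of \(M\). This union contains \(⟨S⟩\) and does not contain \(g\), so it is an upper bound of the chain in \(\mathcal{Z}\). This is the main obstacle: without finitarity a directed union of subalgebras need not be closed under the operations, and the statement can fail. I would therefore spell out carefully that creation of filtered colimits by \(U\) (Proposition 4.3.2 of \cite{borceuxHandbookCategoricalAlgebra1994}) yields exactly this closure, with the inclusion into \(M\) still monic.

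Let \(K\) be a maximal element of \(\mathcal{Z}\). I claim \(K\) is a maximal proper subalgebra of \(M\). It is proper because \(g \notin K\). If \(K < K' \le M\), then \(K' \notin \mathcal{Z}\) by maximality of \(K\) in \(\mathcal{Z}\), and since \(K'\) contains \(⟨S⟩\) this forces \(g ∈ K'\). So \(K' \supseteq S \cup \{g\}\), and hence \(K' = M\). Thus \(K\) is maximal proper but does not contain \(g\), contradicting the hypothesis. Therefore \(g\) is a non-generator.

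Note that this argument never assumes \(M\) is finitely generated or has maximal subalgebras a priori. Avoiding \(g\), rather than avoiding all of \(M\), is what makes the chains bounded.
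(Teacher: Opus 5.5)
Your proof is correct and follows the same route as the paper. The forward direction applies maximality of \(K\) together with the non-generator property to \(K\) itself. The converse uses Zorn's lemma on subalgebras containing \(\langle S\rangle\) and avoiding \(g\), with finitarity guaranteeing that unions of chains are subalgebras. Your version makes explicit the poset and the maximality step that the paper's sketch, with its ``infinite chain \(H < H_{1} < \dots\)'', leaves implicit.
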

\begin{proof}
  Denote by \(\mathbf{SOb}(M)\) the lattice of subalgebras of \(M\); its meet is intersection and the join of a set of subalgebras is the subalgebra generated by their union. Because
  \(\mathcal{C}\) is finitary, every subalgebra is the join of its finitely generated subalgebras.

  Let \(g\) be a non-generator and \(N < M\) a maximal proper subalgebra. If \(g \notin N\), then \(N \subsetneq \langle N \cup \{g\} \rangle\). By maximality the only larger subalgebra is
  \(M\), hence \(\langle N \cup \{g\} \rangle = M\). Since \(g\) is a non-generator, this forces \(\langle N \rangle = M\), contradicting that \(N\) is proper. Thus \(g\) is in every maximal proper subalgebra.

  Conversely, assume \(g\) lies in every maximal proper subalgebra of \(M\). For every subset \(S \subseteq M\) with \(\langle S \cup \{g\} \rangle = M\) but \(\langle S \rangle \neq M\) the subalgebra
  \(H := \langle S \rangle\) is proper and does not contain \(g\) (otherwise \(H = M\)). If \(H\) were in a maximal subalgebra \(H'\) then \(\langle S \cup \{g\} \rangle \subseteq H'\), so \(H\) cannot lie in any
  maximal subalgebra. Thus, there has to exist an infinite chain \(H < H_{1} < ... < M\) such that \(g\) is not in any \(H_{i}\). But by Zorn's lemma, the chain \(H_{i}\) has an
  upper bound, so any maximal such chain is bounded by a maximal subalgebra, which does not contain \(g\) since \(\mathcal{C}\) is finitary and is thus proper. Consequently no such \(S\)
  exists, so that \(g\) must be a non-generator.

  Note that this argument does not require the \emph{a
    priori} existence of any maximal proper subalgebra; if there are none, the condition is vacuously true for every \(g\), and the proof
  above shows that then every element is a non-generator — which matches the intersection over the empty family being the whole algebra.
\end{proof}

So the notion of a non-generator is very close to the smallness of its orbit, but for a perfect correspondence we need to assume finite generation:

\begin{Definition}[Finitely generated algebra]
  \label{def:finitely-generated-algebra}
\index{finitely generated algebra}
  Let \(T\) be a finitary monad on \(\mathbf{Set}\). A \(T\)-algebra \((X, \xi)\) is called \emph{finitely generated} if there exists a finite set  \(K\) and an epimorphism of \(T\)-algebras \(F(K) \twoheadrightarrow (X, \xi)\) from the free \(T\)-algebra \(F(K)\) onto \((X, \xi)\).
\end{Definition}

In many algebraic theories this reduces to the standard notion: an algebra is finitely generated if it is generated by finitely many elements in the usual sense.
% In particular they fit nicely into our theory of smallness:

\begin{Lemma}
  \label{lem:enough-coatoms-fg}
  Let \(T\) be a finitary monad on \(\mathbf{Set}\) and \((X,\xi)\) a finitely generated \(T\)-algebra.
  Then the subalgebra lattice \(\mathbf{SOb}\bigl((X,\xi)\bigr)\) is coatomic.
\end{Lemma}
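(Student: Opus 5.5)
The plan is the standard Zorn's lemma argument, with finite generation supplying the chain condition. Coatomic here means that every non-top element lies below a coatom. So fix a proper subalgebra \(A < X\), where \(X\) abbreviates \((X,\xi)\), and consider the poset
\[\mathcal{P} := \{B \in \mathbf{SOb}(X) \mid A \le B,\ B \neq X\}\]
ordered by inclusion. It is nonempty because it contains \(A\). A maximal element of \(\mathcal{P}\) is by definition a coatom of \(\mathbf{SOb}(X)\) lying above \(A\). The task therefore reduces to checking that every chain in \(\mathcal{P}\) has an upper bound in \(\mathcal{P}\), after which Zorn's lemma applies.

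Let \((B_{i})_{i\in I}\) be a nonempty chain in \(\mathcal{P}\); the empty chain is bounded by \(A\). Since \(T\) is finitary, \(U\) creates filtered colimits, as quoted from Borceux in the proof of \cref{le↘_:2}. Hence the set-theoretic union \(B := \bigcup_{i} U(B_{i})\) carries a subalgebra structure, and it is the join of the chain in \(\mathbf{SOb}(X)\).

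Concretely, an element \(t \in T(B)\) comes, by finitarity, from some \(T(K)\) with \(K\) a finite subset of \(B\). Because the chain is directed, \(K \subseteq U(B_{j})\) for some \(j\). The structure map \(\xi\) then sends \(t\) into \(B_{j} \subseteq B\), so \(B\) is closed under the structure.

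The main step is showing that \(B\) is still proper, and this is where finite generation enters. Take a finite \(K\) with an epimorphism \(F(K) \twoheadrightarrow X\). This map is surjective on underlying sets, since regular epis in a monadic category over \(\mathbf{Set}\) are surjective. Alternatively, one can say directly that \(X = \langle K \rangle\) for a finite subset \(K\) of \(X\), namely the image of the generators.

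Suppose \(B = X\). Then each of the finitely many elements of \(K\) lies in some \(B_{i}\). Since the chain is totally ordered, all of \(K\) lies in a single \(B_{j}\), so \(X = \langle K\rangle \le B_{j}\). This contradicts \(B_{j} \neq X\). So \(B \in \mathcal{P}\), and it is an upper bound for the chain.

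Zorn's lemma now gives a maximal \(N \in \mathcal{P}\), which is a coatom above \(A\).

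The only delicate point is the identification of finite generation by an epimorphism with generation by a finite subset of \(X\). In \(T\)-algebras over \(\mathbf{Set}\), the image of \(F(K) \to X\) is a subalgebra through which the epimorphism factors. Every epimorphism is therefore extremal with respect to this image factorisation, so the image is all of \(X\). Equivalently, \(X = \langle K \rangle\), where \(K\) here denotes the image of the generators in \(X\).
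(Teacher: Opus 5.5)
Your route is the paper's route: Zorn's lemma on the proper subalgebras above \(A\), unions of chains are subalgebras because \(U\) creates filtered colimits, and finite generation keeps the union proper. You are in fact slightly more careful than the paper about the empty chain and about why the union is closed under \(\xi\).

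The step that fails is the one you flag as delicate in your last paragraph. It is false that every epimorphism of \(T\)-algebras is extremal. For the monoid monad, \(\mathbb{N} = F(1) \hookrightarrow \mathbb{Z}\) is an epimorphism whose image \(\mathbb{N}\) is not \(\mathbb{Z}\). For the (finitary) commutative-ring monad, \(\mathbb{Z} = F(\emptyset) \hookrightarrow \mathbb{Q}\) is an epimorphism, so \(\mathbb{Q}\) is ``finitely generated'' in the literal sense of the paper's definition. Yet \(\langle \emptyset \rangle = \mathbb{Z}\), and the chain \(\mathbb{Z}[1/2] \subset \mathbb{Z}[1/2,1/3] \subset \mathbb{Z}[1/2,1/3,1/5] \subset \cdots\) of proper subrings above \(A = \mathbb{Z}\) has union \(\mathbb{Q}\). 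So your Zorn step has no upper bound there. The conclusion happens to survive for \(\mathbb{Q}\), whose subring lattice is a power set of primes, but your argument does not. Your first justification has the same problem: it only covers regular epimorphisms, and the definition supplies a bare epimorphism.

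The paper's proof makes exactly the same tacit leap when it asserts that \(X\) is the only subalgebra containing the image of the generators. In both cases the repair is to read ``finitely generated'' as intended, namely as a surjective (equivalently regular, equivalently extremal) epimorphism \(F(K) \twoheadrightarrow X\). That is, \(X = \langle K \rangle\) for a finite \(K \subseteq X\). Under that reading your proof is correct and coincides with the paper's.
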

\begin{proof}
  Because \(T\) is finitary, the forgetful functor \(U {:} \mathbf{Set}^{T} \longrightarrow \mathbf{Set}\) creates filtered colimits; in particular, for any filtered family of subalgebras the underlying set
  of the colimit is the union of the underlying sets. Hence a chain of subalgebras of \((X,\xi)\) has a subalgebra as its union.

  Since \((X,\xi)\) is finitely generated, there exists an epimorphism \(e{:} F(K) \twoheadrightarrow (X,\xi)\). Since \(F\) is free, \(e\) is uniquely induced by a function
  \(\mathopen{∣}e\mathclose{∣}{:} K → \mathopen{∣}X\mathclose{∣}\) to the underlying set \(\mathopen{∣}X\mathclose{∣}\), so that \((X,\xi)\) itself is the only subalgebra that contains the set-theoretic image \(\mathopen{∣}e\mathclose{∣}(K)\) of \(K\).

  Let \(A \subsetneq X\) be a proper subalgebra. Consider the poset
  \[
    \mathcal{P} \;=\; \{\, B \le X \;|\; A \le B \;\text{and}\; B \neq X \,\},
  \]
  ordered by inclusion. That poset is non-empty because \(A \in \mathcal{P}\). Let \((B_{i})_{i \in I}\) be a chain in \(\mathcal{P}\) and set \(B := \bigcup_{i} B_{i}\) as the set-theoretic union. As noted,
  \(B\) is a subalgebra of \(X\) and clearly contains \(A\). If \(B = X\) then each of the finitely many generators \(k \in K\) lies in some \(B_{i}\); because the chain is totally ordered and
  \(K\) is finite, there exists an index \(i_{0}\) with \(K \subseteq B_{i_{0}}\). Since \(B_{i_{0}}\) is a subalgebra containing a generating set, we obtain \(B_{i_{0}} = X\), contradicting
  \(B_{i_{0}} \in \mathcal{P}\). Therefore \(B \neq X\) and \(B \in \mathcal{P}\); in other words, every chain in \(\mathcal{P}\) has an upper bound in \(\mathcal{P}\).

  By Zorn's Lemma, \(\mathcal{P}\) contains a maximal element \(M\). This \(M\) is a proper subalgebra of \(X\) that contains \(A\) and admits no proper intermediate subalgebra; hence \(M\) is a
  maximal proper subalgebra and so a co-atom of \(\mathbf{SOb}((X,\xi))\).
\end{proof}

\begin{Proposition}[Smallness and the Frattini subalgebra]
  \label{prp:5}\index{smallness and the frattini subalgebra}
  Let \(\mathcal{C}\) be a finitary algebraic category and \(M{:}\mathcal{C}\). Let \(A\) be a subalgebra of \(M\).

  \begin{enumerate}
  \item \label{item:108}   \(\mathbf{S}(M) \subseteq Φ(M)\).

  \item\label{item:132}    If \(M\) is coatomic (e.g. if \(M\) is finitely generated), then the converse holds and thus
    \[Φ(M) \subseteq \mathbf{S}(M)\].
  \end{enumerate}
\end{Proposition}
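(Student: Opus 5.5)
For (1) I would show elementwise that every element of \(\mathbf{S}(M)\) is a non-generator. Since \(\mathcal{C}\) is finitary, \(U\) creates filtered colimits (as used in \cref{le↘_:2} and \cref{lem:enough-coatoms-fg}). By \cref{prp:1} the small subalgebras form a directed family, so the underlying set of \(\mathbf{S}(M)\) is the union of the small subalgebras. Hence any \(g \in \mathbf{S}(M)\) lies in some small subalgebra \(A\), and so \(\langle g \rangle \le A\) is small by the ideal property of \cref{prp:1}.

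Now let \(S \subseteq M\) satisfy \(\langle S \cup \{g\}\rangle = M\). Then \(\langle S\rangle \vee \langle g\rangle = M\) in \(\mathbf{SOb}(M)\), because the join is the subalgebra generated by the union. Smallness of \(\langle g\rangle\) gives \(\langle S\rangle = M\), so \(g\) is a non-generator. Every element of \(\mathbf{S}(M)\) is therefore a non-generator. The finitely generated subalgebras of \(\mathbf{S}(M)\) are then subalgebras of non-generators, and taking the union as in \cref{dfn:22} gives \(\mathbf{S}(M) \subseteq Φ(M)\). Equivalently, one can argue with \cref{prp:frattini-nongenerator}: each small \(A\) lies below all co-atoms by the easy direction of \cref{■pn_:8}, which needs no coatomicity. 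So each \(g\) lies in every maximal subalgebra.

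For (2), assume \(\mathbf{SOb}(M)\) is coatomic; by \cref{lem:enough-coatoms-fg} this holds when \(M\) is finitely generated. By \cref{prp:frattini-nongenerator} every non-generator lies in every maximal proper subalgebra. Hence every subalgebra generated by non-generators, and therefore \(Φ(M)\) as a union of such, lies below every co-atom. The only point to check is that a directed union of subalgebras each below a co-atom \(ω\) is still below \(ω\), which is immediate set-theoretically. By \cref{■pn_:8}, coatomicity then makes \(Φ(M)\) itself small. As \(Φ(M)\) is a small subalgebra, it lies under the colimit \(\mathbf{S}(M)\) of all small subalgebras, giving \(Φ(M) \subseteq \mathbf{S}(M)\).

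The main obstacle is bookkeeping rather than substance. One has to make sure that \(\mathbf{S}(M)\) and \(Φ(M)\), defined as colimits of directed diagrams, really are computed as set-theoretic unions. This uses the fact that \(U\) creates filtered colimits, and that the colimit is a subobject of \(M\) and not merely an abstract algebra. It also requires the degenerate case where \(M\) has no co-atoms to be handled: then \(\top\) must be the only element, or \cref{■pn_:8} applies vacuously, and the statement reduces to everything being small.
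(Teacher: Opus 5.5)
Your proof is correct, and your part (2) is the paper's argument written out in full. Part (1), however, takes a different route from the paper's.

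\textbf{The paper's route for (1).} The paper shows that a small \(A\) lies under every maximal proper subalgebra, which is the half of \cref{■pn_:8} that needs no coatomicity. It then concludes \(A\subseteq\Phi(M)\). That last step tacitly uses the Zorn's-lemma direction of \cref{prp:frattini-nongenerator}.

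\textbf{Your route for (1).} You check the non-generator condition directly, from \(\langle S\rangle\vee\langle g\rangle=\langle S\cup\{g\}\rangle\) and the smallness of \(\langle g\rangle\). So maximal subalgebras and choice never enter. Finitarity is used only to identify \(\mathbf{S}(M)\) and \(\Phi(M)\) with directed unions.

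\textbf{What each buys.} The paper's route gives symmetry: (1) and (2) are the two halves of \cref{■pn_:8}, with coatomicity supplying the missing half.

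Your route gives more than you use, because the computation reverses. If \(g\) is a non-generator and \(\langle g\rangle\vee B=M\) for a subalgebra \(B\), then \(\langle B\cup\{g\}\rangle=M\), so \(B=\langle B\rangle=M\). Hence \(g\) is a non-generator if and only if \(\langle g\rangle\) is small. Every subalgebra generated by finitely many non-generators is then a finite join of small subalgebras, hence small by \cref{prp:1}. So \(\Phi(M)\subseteq\mathbf{S}(M)\) holds in every finitary algebraic category, coatomic or not.

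For example, take \(M=\mathbb{Q}\) as in \cref{cxmp:7}. Every cyclic subgroup is small, so \(\mathbf{S}(\mathbb{Q})=\Phi(\mathbb{Q})=\mathbb{Q}\), although \(\mathbb{Q}\) itself is not small. What coatomicity really adds is that \(\Phi(M)=\mathbf{S}(M)\) is itself small, i.e.\ it is the largest small subalgebra.
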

\begin{proof}
  \label{prf:7}
  Suppose \(A\) is small. Let \(N\) be any maximal proper subalgebra. If \(A \nsubseteq N\), then \(A ∨ N = M\) (by maximality of \(N\)), but \(N \neq M\), contradicting smallness of
  \(A\). Hence \(A ⊆ N\) for every maximal proper subalgebra, so \(A ⊆ Φ(M)\).

  \cref{item:132} follows from \cref{■pn_:8}.
\end{proof}

\begin{Example}
  \label{xmp:8}
  \index{small submonoid}
    In the lattice of submonoids of a monoid \(M\), the join \(ι ∨ κ\) is given by the completion of the set-theoretic union \(ι \cup κ\) under multiplication. By \cref{prp:3}, \(ι\) is small if
  and only if it contains no anchored element, and if \(M\) is finitely generated, this is the same as being in the intersection of all maximal subalgebras.
\end{Example}

When \(M\) does not have enough co-atoms, one can have a subalgebra contained in the Frattini subalgebra that is \emph{not} small.  Here is a simple counterexample:

\begin{Counterexample}
  \label{cxmp:7}
  Work in the finitary algebraic category of abelian groups and let \(M = \mathbb{Q}\).
  \begin{itemize}
  \item \(\mathbb{Q}\) is divisible, hence so is every quotient. The only simple abelian groups are the cyclic groups \(\mathbb{Z}/p\mathbb{Z}\) for primes \(p\), and none of these is divisible
    (multiplication by \(p\) is the zero map). Consequently \(\mathbb{Q}\) admits no maximal proper subgroup, and the Frattini subgroup \(Φ(\mathbb{Q})\) is the intersection over the empty family
    and thus \(\mathbb{Q}\) itself.

  \item Fix a prime \(p\) and set
  \[
    A \;=\; \mathbb{Z}_{(p)} \;=\; \left\{\frac{a}{b} \in \mathbb{Q} \;\middle|\; \gcd(b,\, p) = 1\right\}, \qquad
    B \;=\; \mathbb{Z}\!\left[\tfrac{1}{p}\right] \;=\; \left\{\frac{a}{p^{k}} \;\middle|\; a \in \mathbb{Z},\; k \geq 0\right\}.
  \]
  Both are proper subgroups of \(\mathbb{Q}\): \(\tfrac{1}{p} \notin A\) and \(\tfrac{1}{q} \notin B\) for any prime \(q \neq p\). Yet \(A ∨ B = A + B = \mathbb{Q}\). Indeed, any element of \(\mathbb{Q}\) can be written
  \(\frac{a}{p^{k}m}\) with \(\gcd(m,\, p) = 1\), and Bézout's identity supplies integers \(r,\, s\) with \(rm + sp^{k} = 1\), so that
  \[
    \frac{1}{p^{k}m} \;=\; \frac{r}{p^{k}} + \frac{s}{m} \;\in\; B + A.
  \]

\item Thus \(A\) joins with the proper subgroup \(B\) to the top element \(\top = \mathbb{Q}\), so that \(A\) is not small, even though \(A \subseteq \mathbb{Q} = Φ(\mathbb{Q})\).
  \end{itemize}
\end{Counterexample}

However, this difference of our theory to that of Frattini algebras is not necessarily a disadvantage: note that in \cref{cxmp:7}, our notion captures the idea of
relative smallness better, since the maximal subobject should not be small relative to itself.

\begin{Example}
  \label{xmp:50}
  \index{small submodule}\index{dense submodule}
  In the module theory of rings, the joins are sums of submodules and meets are set-theoretic intersections. There, smallness is also called
  \emph{superfluousness} while density is called \emph{essentiality}.
\end{Example}

\subsubsection{Algebro-Geometric}
\label{sec:algebro-geometric}

We can also do this in algebro-geometric settings:

\begin{Example}
  \label{xmp:18}
  \index{small closed subscheme} Let \(X\) be an affine scheme represented by a ring \(R\) and \(\mathbf{C}(X)\) its lattice of closed subschemes. This is dual to the lattice of quotient
  rings of \(R\) and thus that of ideals in \(R\). Since these are modules, this is actually a special case of \cref{xmp:50}. So for any proper ideal \(ι{:} I \hookrightarrow R\) the
  corresponding subscheme \(\mathbf{Sch}(ι): \mathbf{Sch}(R / I) \hookrightarrow \mathbf{Sch}(X)\) is small if and only if \(ι\) is dense. Nevertheless, a general formula is still hard to pin down. The
  most important cases are clear however:

  \begin{enumerate}
  \item\label{item:18}
    If \(ι\) contains a non-zero-divisor \(a\), then for each \(κ{:} K \hookrightarrow R\) such that \(ι ∧ κ = ι \cap κ = \{0\}\) and each \(k \in K \setminus \{0\}\), it holds that \(ak \neq 0\) since \(a\) is a non-zero-divisor, yet \(ak \in ι \cap κ = \{0\}\), a contradiction. So \(κ =
    \{0\}\). So \(ι\) is dense.

    These ideals are of Krull height \(> 0\) and thus their subschemes are lower-dimensional, in line with the case of manifolds. The difficulty is with the zero-divisors.

  \item\label{item:19} If \(R\) is reduced, \(ι\) consists of zero-divisors and there exists a \(b ∈ \mathbf{Ann}(ι)\) that is not in \(ι\), then the intersection \((b) \cap ι\) of the ideal
    generated by \(b\) with \(ι\) consists of elements \(br = i\) with \(r ∈ R, i ∈ ι\). But also \(bi = 0\) since \(i\) is in \(ι\), so that \(bbr = 0\) and in particular
    \((br)^{2} = brbr = (bbr)r = 0\) by commutativity. So \(br = 0\) since \(R\) is reduced, and so \(i = 0\), which means that \((b) \cap ι = (b) \cap \{0\}\). Thus, \(ι\) is not dense.

    But not all ideals of zero-divisors have a non-empty annihilator. A ring in which every finitely generated ideal of zero-divisors has a non-empty annihilator is said to have
    \emph{Property A}. This includes all Noetherian rings but also many others, see \cite{azarpanahIdealsConsistingEntirely2000}. Geometrically, non-nilpotent zero-divisors correspond to
    pieces of \(X\) with the same dimension, so at least in the finite-dimensional case the pieces of \(X\) outside of  the subscheme \(\mathbf{Sch}(ι)\) can be brought into a
    common ideal. So an equidimensional piece is never small (if \(R\) is reduced and Noetherian, of course).

  \item\label{item:20} If on the other hand \(R\) is not reduced and \(ι\) contains its annihilator \(\mathbf{Ann}(ι)\), then for any other non-zero ideal \(κ\), \(ικ \subseteq ι \cap κ\). If \(ικ ≠ \{0\}\) then \(ι \cap κ ≠ \{0\}\); if \(ικ = \{0\}\) then \(κ \subseteq \mathbf{Ann}(ι) \subseteq ι\), so \(ι \cap κ = κ ≠ \{0\}\). Either way, \(ι\) is dense. This is the case for instance for \((x)\) in \(R[x] / x^{n}\) (or more generally \(R[x, \overline{y}] / x^{n}\)). Here, \((x)\) corresponds to the
    quotient \(R[x]/x^{n} → R{:} x ↦ 0\) and thus the embedding of the reduced part of an infinitesimally thickened point.

  \item\label{item:131} We can also ask for what happens when an infinitesimally thickened point is embedded into a line. This amounts to asking whether the ideal \((x^{n})\) is dense in
    \(R[x]\) and thus is a special case of \cref{item:18}. So even though an infinitesimal thickening appears higher-dimensional than its reduction in itself (\((x)\) has Krull height 1 in
    \(R[x] / x^{n}\)), it seems lower-dimensional within \(R[x]\).
  \end{enumerate}

  So the general pattern is clear: if an embedding is lower-dimensional or (relatively) reduced in an infinitesimally thickened scheme, then it is small, if it is some equidimensional
  equally infinitesimally extended part it is not, except perhaps with infinite-dimensional shenanigans.

  \index{Jacobson radical} On the other hand, note that by Zorn's Lemma every ideal is bounded by some maximal ideal, so that the collection of small elements is the set of those elements
  that are below every maximal ideal by \cref{■pn_:8}. Since ideals are stable under intersections, this set is an ideal, which is also known as the \emph{Jacobson radical} (at least in the
  commutative case). Note that this means that the dual of the Jacobson-radical is the smallest ``closed-dense'' subscheme of \(X\). By duality, this is the smallest subscheme of \(X\) that
  contains all points of \(X\) which are minimal in that their corresponding ideal is maximal (not the points of minimal primes!).
\end{Example}
\begin{Remark}
  \label{rmk:15}
  We can use the results of \cref{sec:inter-categ-size} to extend smallness and density into general schemes.
\end{Remark}

\begin{Example}
  \label{xmp:14}
  \index{semialgebraic set!smallness} Let \(\mathbf{SAlg}(R^{n})\) be the lattice of semialgebraic subsets of \(R^{n}\), so of finite unions of finite intersections of polynomial
  equalities \(p = q\) and strict polynomial inequalities \(p < q\). It is well-known that \(\mathbf{SAlg}(R^{n})\) is closed under (Boolean) complementation. As a consequence,
  (absolute) size is trivial in \(\mathbf{SAlg}(R^{n})\). But we can define a semialgebraic set as

  \begin{enumerate}
  \item \label{item:109} \emph{open} if it can be defined solely through strict inequalities \(p < q\),

  \item\label{item:110} \emph{closed} if it can be defined solely through equalities \(p = q\) or inequalities \(p \leq q\).
  \end{enumerate}

  In particular, a closed semialgebraic subset \(ι\) is small in the lattice of closed semialgebraic subsets \(\mathbf{CSSAlg}(R^{n})\) if and only if it can be covered by a union
  of intersections of polynomial equalities, so if and only if it is lower-dimensional or equivalently if it can be contained in a finite number of algebraic varieties by
  \cref{prop:semialg-small-algebraic} below. Density meanwhile is trivial in \(\mathbf{CSSAlg}(R^{n})\) since it is atomistic, but of course the considerations for smallness in
  \(\mathbf{CSSAlg}(R^{n})\) are purely dual to those for density in the lattice of open semialgebraic subsets \(\mathbf{OSSAlg}(R^{n})\), which are only given through strict
  polynomial inequalities. Thus, \(\mathbf{SAlg}(R^{n})\) is kind of like a version of the topology of \(R^{n}\) that is small enough to be algebraically tractable. This is not a
  coincidence — semialgebraic subsets are a smaller but in some ways better-behaved version of the semi-analytic subsets that Grothendieck cites as a possible model for his vision
  of a tame topology in \emph{Esquisse
    d'un Programme}\cite{grothendieckSketchProgramme}:

  \begin{quote}
    It soon appeared that there could be no question of getting such an ambitious statement in the framework of topological spaces, because of the sempiternal ``wild'' phenomena… It was also clear that the contexts of the most rigid structures which existed then, such as the
    ``piecewise linear'' context were equally inadequate — one common disadvantage consisting in the fact that they do not make it possible, given a pair (U, S) of a ``space''
    \(U\) and a closed subspace \(S\), and a glueing map \(f {:} S → T\) , to build the corresponding amalgamated sum. Some years later, I was told of Hironaka's theory of what he
    calls, I believe, (real) ``semi-analytic'' sets which satisfy certain essential stability conditions (actually probably all of them), which are necessary to develop a usable
    framework of ``tame topology''.
  \end{quote}

  Both semialgebraic and semianalytic sets feature an instance of an open-closed duality in the sense that we will develop in \cref{sec:open-clos-dual}.
\end{Example}
\begin{Proposition}
  \label{prop:semialg-small-algebraic}
\index{semialgebraic set!smallness characterization}
  Let \(\mathbf{CSSAlg}(R^{n})\) be the preorder of closed semialgebraic subsets of \(R^{n}\) ordered by inclusion, regarded as an upward-bounded preorder with top \(\top = R^{n}\).
  For \(A ∈ \mathbf{CSSAlg}(R^{n})\), the following are equivalent:
  \begin{enumerate}
  \item \(A\) is \emph{small} in the sense of \cref{defn_:58}.
  \item \(A\) has empty interior in \(R^{n}\).
  \item \(\mathbf{dim} A < n\).
  \item \(A\) is covered by a finite union of (real) proper algebraic subvarieties of \(R^{n}\), i.e. sets of the form
    \[
      V(p_{1},\dots,p_{k}) = \{x ∈ R^{n} \mid p_{1}(x)=\dots=p_{k}(x)=0\}.
    \]
  \end{enumerate}
\end{Proposition}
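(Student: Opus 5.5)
I will prove the cycle (2)\(\Leftrightarrow\)(3), (3)\(\Rightarrow\)(4)\(\Rightarrow\)(2), and (1)\(\Leftrightarrow\)(2). Throughout, \(R\) is a real closed field and ``interior'' means interior in the Euclidean (order) topology on \(R^{n}\). I use three standard facts from semialgebraic geometry: every semialgebraic set admits a finite decomposition into cells; the dimension of a set is the maximal dimension of a cell in such a decomposition; and a set has dimension \(n\) if and only if it contains an open box.

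\textbf{(2)\(\Leftrightarrow\)(3).} Decompose \(A\) into cells. If some cell is \(n\)-dimensional, it is an open subset of \(R^{n}\), so \(A\) has nonempty interior. Conversely, a nonempty open set contains an open box, which is \(n\)-dimensional, so \(\mathbf{dim} A = n\).

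\textbf{(3)\(\Rightarrow\)(4).} I use the fact that the Zariski closure of a semialgebraic set has the same dimension as the set. A cell of dimension \(d < n\) therefore lies in a real algebraic set of dimension \(d < n\), which is a proper subvariety. Doing this for each of the finitely many cells of \(A\) covers \(A\) by finitely many proper subvarieties.

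\textbf{(4)\(\Rightarrow\)(2).} A proper subvariety \(V(p_{1},\dots,p_{k})\) has some \(p_{i} \neq 0\). A nonzero polynomial cannot vanish on an open box, by induction on \(n\) using that \(R\) is infinite. So each such zero set is closed with empty interior, and a finite union of closed sets with empty interior has empty interior.

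\textbf{(2)\(\Rightarrow\)(1).} Suppose \(A \cup B = R^{n}\) with \(B\) closed semialgebraic. The join in \(\mathbf{CSSAlg}(R^{n})\) is the set-theoretic union, since a finite union of closed semialgebraic sets is again closed semialgebraic. The set \(R^{n} \setminus A\) is open and dense, because \(A\) is closed with empty interior, and it is contained in \(B\). Since \(B\) is closed, \(B = R^{n}\). Hence \(A\) is small.

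\textbf{(1)\(\Rightarrow\)(2).} This is the step I expect to be the main obstacle, because the witness must itself be closed semialgebraic. Suppose \(A\) has nonempty interior. Then it contains a closed box \(Q = \prod_{i}[a_{i}, b_{i}]\) with \(a_{i} < b_{i}\) for all \(i\), whose interior \(Q^{\circ}\) is a nonempty open box. Set
\[
B := \overline{R^{n} \setminus Q} = R^{n} \setminus Q^{\circ} .
\]
This is the set where \(x_{i} \le a_{i}\) or \(x_{i} \ge b_{i}\) for some \(i\), a finite union of sets defined by non-strict inequalities, so it is closed semialgebraic. Moreover \(A \cup B \supseteq Q \cup B = R^{n}\), while \(B \neq R^{n}\). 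So \(A\) is not small, and the cycle closes.
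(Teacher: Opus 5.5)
Your proof is correct. Its overall shape matches the paper's: everything is routed through the interior condition (2), (2)$\Rightarrow$(1) is the same density argument, and (3)$\Rightarrow$(4) uses the same idea of placing the lower-dimensional pieces of $A$ inside algebraic sets of the same dimension.

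You differ from the paper in two steps.

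For (1)$\Rightarrow$(2), the paper reduces smallness to $\overline{R^{n}\setminus A}=R^{n}$. That reduction tacitly uses $\overline{R^{n}\setminus A}$ itself as the competing element $B$. This requires the Euclidean closure of a semialgebraic set to lie in $\mathbf{CSSAlg}(R^{n})$, i.e.\ to be semialgebraic and describable by equalities and non-strict inequalities alone. That is a Tarski--Seidenberg/finiteness-theorem input the paper leaves implicit. Your explicit witness $R^{n}\setminus Q^{\circ}$ is a finite union of sets $\{x_{i}\le a_{i}\}$ and $\{x_{i}\ge b_{i}\}$, so it avoids this entirely; your version of this step is more self-contained.

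For the return leg, you prove (4)$\Rightarrow$(2) from the elementary fact that a nonzero polynomial cannot vanish on an open box. The paper instead proves (4)$\Rightarrow$(3) from the dimension theory of real algebraic sets (a proper algebraic subset of $R^{n}$ has dimension $<n$). Your route needs only that $R$ is infinite and densely ordered, while the paper's stays within the dimension language of (3).

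The one heavy input you keep is the equality of dimension with that of the Zariski closure in (3)$\Rightarrow$(4). The paper relies on essentially the same fact there.
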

\begin{proof}
  We proceed in steps.

  \emph{(1) ⇔ (2).}
  In \(\mathbf{CSSAlg}(R^{n})\), finite joins are given by set-theoretic unions: if \(A,B\) are closed semialgebraic, then \(A ∨ B = A ∪ B\), since the union of closed
  semialgebraic sets is again closed semialgebraic and minimal with this property. Thus (1) says that the only closed semialgebraic \(B\) with \(B ⊇ R^{n} \setminus A\) is \(B=R^{n}\).
  This is equivalent to
  \[
    \overline{R^{n} \setminus A} = R^{n},
  \]
  where the closure is in the Euclidean topology. But \(\overline{R^{n} \setminus A} = R^{n}\) if and only if \(A^{\circ} = ∅\), since
  \[
    A^{\circ} = R^{n} \setminus \overline{R^{n} \setminus A}.
\]

  \emph{(2) ⇔ (3).}
  A semialgebraic subset \(A ⊆ R^{n}\) has nonempty interior if and only if
  \(\mathbf{dim} A = n\). This is a standard fact of semialgebraic geometry. For \(\Rightarrow\), take \(x ∈ A^{\circ}\). Then there exists a ball \(●_{x} \subseteq A^{\circ}\) since \(R^{n}\) is regular Hausdorff, so that \(\mathbf{dim}(A) = n\).
  Conversely, it follows immediately from the Euclidean topology that if \(\mathbf{dim}(A) < n\) then \(A\) has an empty interior.

  \emph{(3) ⇔ (4).}
  Suppose \(\mathbf{dim} A < n\) and \(A\) is closed semialgebraic. We may write \(A\) as a finite union of semialgebraic subsets \(P_{i}\) each of which is of codimension at least \(1\). Each such piece lies inside an
  algebraic hypersurface, meaning the zero set of a nonzero polynomial. Thus, there exist finitely many polynomial maps \(f_{1},\dots,f_{r}\) such that
  \[
    A \subseteq \bigcup_{j=1}^{r} V(f_{j}).
  \]

  Conversely, if \(A\) is covered by a finite union of algebraic varieties in \(R^{n}\), each such variety is a real algebraic set of dimension at most \(n\), and if \(A\) had dimension \(n\), one of its
  components would contain a nonempty open subset of \(R^{n}\), which is impossible unless that component is all of \(R^{n}\) since a proper real algebraic subvariety of \(R^{n}\) has dimension strictly less than \(n\). Thus if \(A\) is a
  proper closed subset and covered by a finite union of algebraic varieties, then \(\mathbf{dim} A < n\). Hence (4) implies (3) in the nontrivial case \(A ≠ R^{n}\); for \(A=R^{n}\) all four conditions fail.

  Combining the implications, we obtain (1) ⇔ (2) ⇔ (3) ⇔ (4).
\end{proof}

In particular, the informal sentence in \cref{xmp:14} can be read precisely as: a closed semialgebraic subset \(A\) of \(R^{n}\) is small in \(\mathbf{CSSAlg}(R^{n})\) if and only if it has empty interior,
equivalently if and only if it is of strictly lower dimension than \(R^{n}\), equivalently if and only if it can be covered by a finite union of algebraic varieties.

\subsubsection{Order- and Category-Theoretic}
\label{sec:category-theoretic}

We can even apply this definition to the subobject lattices of preorders themselves, where we see a mix of the algebraic and graph-theoretic cases going on:

\begin{Example}
  \label{xmp:3}
\index{subobject preorder!of preorders}
  Let \(\mathbf{PrO}\) be the category whose objects are preorders \((P, \le_P)\) and morphisms \(ι {:} P → Q\) are monotone (order-preserving) maps. Then for each \(P\), the subobject
  preorder \(\mathbf{SOb}(P)\) has as objects monomorphisms into \(P\), so order-preserving injections. This is a lattice where, given \(ι{:} Q \hookrightarrow P, κ{:} R \hookrightarrow P\):

  \begin{enumerate}
  \item \label{item:4} the meet \(ι ∧ κ = ι \cap κ\) is given on elements by the intersection of the images of \(ι\) and \(κ\), such that for \(a, b ∈ ι ∧ κ\) it holds that \(a \leq b\) if and
    only if \(a \leq_{ι} b\) and \(a \leq_{κ} b\),

  \item\label{item:5} the join \(ι ∨ κ\) is the completion of the set-theoretic union under composition, so is given on elements by the set-theoretic union and such that for
    \(a, b ∈ ι ∨ κ\) it holds that \(a \leq b\) if and only if there exist families of elements \(c_{i} ∈ ι ∧ κ\) such that
    \(a \leq_{S} c_{0} \leq_{T} c_{1} ... \leq_{S} c_{n} \leq_{T} b\) with \(S = Q\) if \(a ∈ \mathbf{Im}(ι)\) and \(R\) otherwise, and similar for \(T\) and \(b\).
  \end{enumerate}

 We can describe the size of each
  \(ι{:} Q \hookrightarrow P\).

  \begin{enumerate}
  \item\label{item:134} \textbf{Density} Since the meets in \(\mathbf{SOb}(P)\) are given by set-theoretic intersections, density is similar to \cref{xmp:17}. In particular,
    \(\bot\) is the empty embedding and \(\mathbf{SOb}(P)\) has enough atoms. So by the dual of \cref{■pn_:8}, \(ι\) is dense in \(\mathbf{SOb}(P)\) if and only if it contains all
    atoms, i.e. one-element-embeddings into \(P\).

  \item \label{item:133} \textbf{Smallness} Since the join is given objectwise by the set-theoretic union and sequences of inequalities, any \(κ\) such that \(ι ∨ κ = \top\) has to
    generate all inequalities with \(ι\). Thus, \(κ\) has to contain all inequalities \(a \leq_{P} b\) with \(b ∈ ι\) such that there is no \(c ∈ ι\) such that
    \(a \leq c <_{ι} b\), so it has to contain each such \(b\). Thus, if \(ι\) does not contain any non-trivial inequalities and \(P\) does, then \(ι\) is small. More generally, let
    \(a < c < b\) in \(P\) be such that \(a <_{ι} b\) but not \(c ∈ ι\). Then \(κ\) has to contain all of \(a < c < b\) and thus \(a <_{κ} b\). Thus, if \(ι\) contains only
    morphisms in its image that are compositions of morphisms outside its image, then \(ι\) is small.

    Conversely, assume that \(ι\) contains a morphism \(a < b\) such that there
    does not exist a factorization of this morphism into morphisms outside the image of \(ι\) and let \(κ := \top ∖ (a < b)_{ι}\) be given by removing all morphisms from \(\top\) that
    \(a < b\) factors through and that are in the image of \(ι\). Then \(κ\) does not contain the chain of inequalities between \(a\) and \(b\) and thus \(a \cancel{<_{κ}} b\) but
    \(ι ∨ κ = \top\), so \(ι\) is not small.
  \end{enumerate}
\end{Example}

So here we see a mixture of algebraic and geometric smallness going on, where smallness both encompasses lower-dimensionality and factorizability. This is because preorders are special
categories, and we can (and have to) extend our considerations to general categories. For this let us first remember some functor types that will feature throughout this text:

\begin{Definition}[Faithful, full and fully faithful functors]
  \label{def:ff-es}
\index{fully faithful functor}
\index{full functor}
\index{faithful functor}
  Let \(\mathcal{C},\mathcal{D}\) be categories and \(F {:} \mathcal{C} \to \mathcal{D}\) a functor.

  \begin{enumerate}
  \item \(F\) is \emph{faithful} if for all objects \(X,Y {:} \mathcal{C}\) the function
    \[
      F_{X,Y} {:} \mathcal{C}(X,Y) \to \mathcal{D}(F(X),F(Y)),\quad f \mapsto F(f)
    \]
    is injective.  Equivalently, whenever \(f,g {:} X \to Y\) satisfy \(F(f) = F(g)\), we must have \(f = g\).

  \item \(F\) is \emph{full} if for all objects \(X,Y {:} \mathcal{C}\) the function \(F_{X,Y}\) is surjective.  Equivalently, every morphism \(h {:} F(X) \to F(Y)\) in \(\mathcal{D}\) is of the form \(h = F(f)\) for some \(f {:} X \to Y\) in \(\mathcal{C}\).

  \item \(F\) is \emph{fully faithful} if it is both full and faithful, i.e. for every pair \(X,Y {:} \mathcal{C}\) the map
    \[
      F_{X,Y} {:} \mathcal{C}(X,Y) \to \mathcal{D}(F(X),F(Y))
    \]
    is a bijection.
  \end{enumerate}
\end{Definition}

\begin{Example}[Smallness for faithful subcategories]
  \label{xmp:4}\index{subcategory}
  Let \(\mathcal{C}\) be a category and \(\mathbf{SOb}'_{\mathopen{∣}f\mathclose{∣}}(\mathcal{C})\) the preorder whose elements are faithful functors
  \(ι {:} \mathcal{D} \hookrightarrow \mathcal{C}\), ordered by \(ι \le κ\) if \(ι\) factors through \(κ\) up to isomorphism. The top element is \(\mathbf{id}_{\mathcal{C}}\); the bottom is the empty embedding.

  Joins in this preorder exist and are generated by the set-theoretic union by closure under morphisms. It follows again from the dual of \cref{■pn_:8} that a subcategory is dense if and only
  if it contains all objects of \(\mathcal{C}\).

  In general, \(\mathbf{SOb}'_{\mathopen{∣}f\mathclose{∣}}\) is a little pathological: for instance, the function between cardinals \([2] → [1]\) is a faithful functor (though it is not full) and
  thus in \(\mathbf{SOb}'_{\mathopen{∣}f\mathclose{∣}}\). But the defining condition makes it so that each faithful functor \(ι\) is equivalent to its image, literally given in
  terms of object and morphism sets. So for instance \(\mathbf{SOb}'_{\mathopen{∣}f\mathclose{∣}}([1])\) is not even a small category but equivalent to the one-object category. We
  can avoid this pathology by restricting to faithful functors that are also isomorphism-reflecting. So we define \(\mathbf{SOb}_{\mathopen{∣}f\mathclose{∣}}\) to consist of the
  isomorphism-reflecting faithful functors.
\end{Example}

Such an isomorphism-reflecting faithful functor is what we use as the definition of a subcategory:

\begin{Definition}
  \label{defn_:12}
  A\emph{subcategory} \(ι{:} \mathcal{C} \hookrightarrow \mathcal{D}\) is an isomorphism-reflecting faithful.
\end{Definition}

This is also known as a pseudomonic (see \cite{carboniModulatedBicategories1994}) and will factor in later.

We can mimic the arguments of \cref{sec:algebraic} for smallness step-by-step by using morphisms as generators. However, note that it is rare that a category of interest is generated by a
finite set of morphisms. Moreover, it is certainly not clear that a category like \(\mathbf{Top}\) is bounded by maximal proper subcategories. But we can still use our anchor criterion:

\begin{Definition}[Anchored morphism]
  \label{def:anchored-cat}
\index{anchored morphism}
  Let \(\mathcal{C}\) be a category and let \(\mathbf{SOb}_{\mathopen{∣}f\mathclose{∣}}(\mathcal{C})\) as in \cref{xmp:4}. A morphism \(f\) of \(\mathcal{C}\) is \emph{anchored} relative
  to a subcategory \(ι{:} \mathcal{A} \hookrightarrow \mathcal{C}\) if there exists a subcategory \(κ\) such that in \(\mathbf{SOb}_{\mathopen{∣}f\mathclose{∣}}(\mathcal{C})\)
      \[
        ι ∨ κ = \mathcal{C}
        \qquad\text{and}\qquad
        f \notin κ .
      \]
\end{Definition}

It is straightforward to show that a subcategory is small if and only if it does not contain an anchored morphism. Moreover, one simple criterion for smallness is that a subcategory \(ι\)
contains only morphisms that are composites of morphisms that are not factored by morphisms in \(ι\):

\begin{Definition}
  \label{dfn:21}
\index{irreducible morphism}
  A morphism \(f\) of a category \(\mathcal{C}\) is called \emph{\(ι\)-irreducible} for a subcategory \(ι {:} \mathcal{A} \hookrightarrow \mathcal{C}\) if \(f\) is either an identity or cannot be written as a
  composite of the form  \(f = g⨾ h⨾ i\) where at least one of \(g, h, i\) is a non-identity in \(\mathcal{A}\).
\end{Definition}

\begin{Proposition}[Categorial smallness criterion]
  \label{prp:7}\index{categorial smallness criterion}
  Let \(ι{:} \mathcal{A} \hookrightarrow \mathcal{C}\) be a faithful subcategory. Assume that every non-identity morphism \(f\) in \(ι\) admits a factorization
\[
  f \;=\; g_1 ⨾ g_2 ⨾ \dots ⨾ g_n
\]
into finitely many morphisms \(g_i\) that are \(ι\)-irreducible and not in \(ι\). Then \(ι\) is small in the
upward-bounded preorder of faithful subcategories of \(\mathcal{C}\).
\end{Proposition}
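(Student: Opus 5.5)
We argue directly from \cref{defn_:58}: let \(κ{:} \mathcal{B} \hookrightarrow \mathcal{C}\) be a subcategory with \(ι ∨ κ = \top = \mathbf{id}_{\mathcal{C}}\); we show \(κ = \top\), i.e.\ every object and morphism of \(\mathcal{C}\) lies in \(κ\). As in \cref{xmp:4}, the join \(ι ∨ κ\) is the closure of the union of the two images under composition. So every morphism of \(\mathcal{C}\) is a finite composite of morphisms each lying in \(ι\) or in \(κ\), and every object of \(\mathcal{C}\) lies in \(ι\) or \(κ\).

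The first step is to show that every \(ι\)-irreducible morphism \(g\) of \(\mathcal{C}\) not in \(ι\) already lies in \(κ\). Since \(ι ∨ κ = \top\), write \(g = h_1 ⨾ \dots ⨾ h_m\) with each \(h_j\) in \(ι\) or in \(κ\). Discard identity factors. If some remaining \(h_j\) is a non-identity in \(ι\), group the factors as \(g = (h_1⨾\dots⨾h_{j-1}) ⨾ h_j ⨾ (h_{j+1}⨾\dots⨾h_m)\), with empty groups read as identities. This exhibits \(g\) in the forbidden form of \cref{dfn:21}, contradicting irreducibility (\(g\) is not an identity, since it is not in \(ι\)). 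Hence all \(h_j\) lie in \(κ\), and so \(g \in κ\) because \(κ\) is closed under composition. The degenerate case \(m=0\), where \(g\) is an identity, is excluded because \(g \notin ι\).

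The second step applies the hypothesis. Every non-identity morphism \(f\) of \(ι\) factors as \(g_1⨾\dots⨾g_n\) with each \(g_i\) \(ι\)-irreducible and not in \(ι\). By the first step each \(g_i \in κ\), so \(f \in κ\). Therefore every morphism of \(ι\) lies in \(κ\). Identities cause no trouble: an object of \(ι\) is the source of some non-identity morphism of \(ι\), or else only its identity is at stake. In the latter case we use that the join's objects are the union of the two object sets, so such an isolated object must still be accounted for. Every morphism of \(\mathcal{C}\) is a composite of morphisms in \(ι\) or \(κ\), hence lies in \(κ\), and so \(κ\) contains all morphisms of \(\mathcal{C}\), including all identities and hence all objects. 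Finally, since \(κ\) is faithful and iso-reflecting and is wide and full on morphisms, \(κ \cong \mathbf{id}_{\mathcal{C}}\) in the preorder, so \(κ = \top\).

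I expect the main obstacle to be the bookkeeping about objects and identities, together with the precise description of joins in \(\mathbf{SOb}_{\mathopen{∣}f\mathclose{∣}}(\mathcal{C})\) (morphisms of \(ι ∨ κ\) being exactly the composites). Concretely, this step must handle objects of \(ι\) that carry no non-identity morphisms. If such an object is not hit by \(κ\), smallness could fail: \(ι\) would behave like an isolated vertex, as in \cref{xmp:17}. I would first try to dispatch this by noting that each object \(X\) of \(ι\) has \(\mathbf{id}_X\), and arguing that either the hypothesis implicitly supplies a non-identity morphism through \(X\), or \(\mathcal{C}\) must have been generated so that \(X\) appears in \(κ\). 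Failing that, I would read the statement as concerning morphisms only, or add the mild assumption that every object of \(ι\) is the source or target of some non-identity morphism factored as required.
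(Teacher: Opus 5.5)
Your core argument is the paper's. An \(\iota\)-irreducible morphism outside \(\iota\) can only be assembled from factors in \(\kappa\), so every non-identity morphism of \(\iota\) lies in any \(\kappa\) with \(\iota \vee \kappa = \top\). The paper phrases this as ``no morphism of \(\iota\) is anchored'' (\cref{def:anchored-cat}) and then appeals to the criterion, stated there without proof, that a subcategory is small iff it contains no anchored morphism. You instead conclude \(\kappa = \top\) directly, which amounts to proving that criterion inline.

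One small slip in your first step: \(g \notin \iota\) does not make \(g\) a non-identity, since \(\mathbf{id}_Y \notin \iota\) whenever \(Y\) is not an object of \(\iota\). Such \(g\) lie in \(\kappa\) anyway, because the objects of \(\iota \vee \kappa\) are the union of the two object sets.

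The gap you flag at the end is real, and it splits into two cases that you should settle rather than leave open.

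\emph{Case 1: \(X\) is not isolated in \(\mathcal{C}\).} Here \(X\) is an object of \(\iota\) with no non-identity morphism of \(\iota\), but some non-identity morphism \(h\) of \(\mathcal{C}\) has source or target \(X\). Write \(h\) as a composite of morphisms from \(\iota \cup \kappa\) and discard the identity factors. At least one factor remains, and each lies in \(\kappa\): directly, or by your second step if it lies in \(\iota\). Hence \(h \in \kappa\), so \(X\) is an object of \(\kappa\) and \(\mathbf{id}_X \in \kappa\).

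\emph{Case 2: \(X\) is isolated in \(\mathcal{C}\) itself}, meaning no non-identity morphism of \(\mathcal{C}\) has source or target \(X\). No argument can help here, because the statement is false in this case. The full subcategory \(\kappa\) on the objects other than \(X\) is proper, yet \(\iota \vee \kappa = \mathcal{C}\). Concretely, take \(\mathcal{C}\) the terminal category and \(\iota = \mathbf{id}_{\mathcal{C}}\). The hypothesis is vacuous, but \(\top \vee \bot = \top\) with \(\bot \neq \top\), so \(\top\) is not small.

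The correct repair is therefore the assumption that no object of \(\iota\) is isolated in \(\mathcal{C}\). This is weaker than your proposed assumption of a non-identity morphism of \(\iota\) through each object, and with it your proof is complete. The paper's proof has the same hole without flagging it: it applies the factorization to every \(f\) in \(\iota\), although the hypothesis covers only non-identities, and \(\mathbf{id}_X\) at an isolated \(X\) is in fact anchored.
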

\begin{proof}
  \label{prf:5}
  We prove that under this hypothesis, \(ι\) cannot contain any anchored morphism.

  Given \(f\) in \(ι\) and a proper faithful subcategory \(κ \subsetneq \mathcal{C}\) such that \(ι ∨ κ \;=\; \mathcal{C}\) we may write \(f \;=\; g_1 ⨾ g_2 ⨾ \dots ⨾ g_n\) where each \(g_i\) is
  \(ι\)-irreducible. Hence \(g_i \notin ι\).

  Now, because \(ι ∨ κ = \mathcal{C}\), every morphism of \(\mathcal{C}\) can be expressed as a composition of morphisms taken from \(ι \cup κ\). In particular, for each \(i\) we can choose one such representation
  of \(g_i\):
  \[
    g_i \;=\; h_{i1} ⨾ h_{i2} ⨾ \dots ⨾ h_{ik_i}, \qquad h_{ij} \in ι \cup κ .
  \]

  Since morphisms are closed under composition and \(g_i\) is \(ι\)-irreducible, its representation cannot contain a non-identity factor from \(ι\). Therefore none of the \(h_{ij}\) belong to \(ι\). Consequently every \(h_{ij}\) lies in \(κ\).

  Thus each \(g_i\) is a composition of morphisms of \(κ\) and therefore belongs to the subcategory \(κ\). Finally \(f = g_1 ⨾ \dots ⨾ g_n\) is also a
  composition of morphisms in \(κ\), whence \(f \in κ\). Thus, no morphism of \(ι\) is anchored, so \(ι\) is small.
\end{proof}
\begin{Remark}
  \label{rmα_:4}
  This works great for preorders or deloopings of monoids without inverses but does not work for deloopings of groups or categories with non-trivial automorphism categories.
\end{Remark}

Both \cref{xmp:3} and \cref{xmp:4} are somewhat unsatisfying because they do not consider the categorical structure of the target category at all. We can define subobject preorders that do however:

\begin{Example}
  \label{xmp:5}
\index{subobject preorder!2-categorical}
Given a category \(\mathcal{C}\), let \(\mathbf{SOb}_{f}\) be the preorder whose elements are subcategories and  \(ι \leq κ\) if and only if there exists a functor \(λ{:} \mathcal{D} → \mathcal{E}\) and a monic natural transformation \(Φ{:} ι \leq λ⨾ κ\):

  \begin{figure}[H]
    \centering
    \begin{tikzcd}[row sep=small]
      \mathcal{E} \ar[dr, "κ"] & \\
      & \mathcal{C} \\
      \mathcal{D} \ar[uu, "λ"]\ar[ur, "ι"', ""{name=S}] \arrow[Rightarrow, from=S, to=1-1, "Φ", shorten >=6pt]
    \end{tikzcd}
    % \caption{\label{fi♭_:12}}
  \end{figure}

  This is a less truncated structure than \cref{xmp:4}, but its smallness relations stay the same: \(\top\) is still \(\mathbf{id}_{\mathcal{C}}\), \(\bot = ∅\) and the join is given by the closure of the union of
  the subcategories under composition. The meet is interesting however: \(ι ∧ κ\) is the full subcategory on objects that are simultaneously subobjects of something in \(\mathcal{C}_{ι}\) and of something
  in \(\mathcal{C}_{κ}\) in some way. So the dense elements are different from those in \cref{xmp:4} : \(ι\) is dense if and only if it contains for each object \(X\) in \(\mathcal{C}\) some object \(A\) that
  is a subobject of \(X\). In particular, if \(\mathcal{C}\) contains an initial object whose universal morphisms are monic, then a subcategory is dense if and only if it contains that.

  Note that the
  dual statement is not true for the terminal object even if the morphisms into it are monic, which is also a much rarer assumption. This is because of the variance of the 2-cells in it
  (which would be necessary to define filters, for instance). \(\mathbf{SOb}_{f}\) will be central in \cref{sec:covar-repr-theor} but
  \cref{rmα_:9} would not work for either \(\mathbf{SOb}_{\mathopen{∣}f\mathclose{∣}}\) or the 2-dual of \(\mathbf{SOb}_{f}\). So here we can already begin to see that the entire structure
  we are describing is of one piece and it is all \emph{covariant} on the lowest level (the variance on higher levels will grow quite complex in \cref{sec:univ-char-mathbf}).
\end{Example} 

We also have some specializations of this preorder:

\begin{Example}
  \label{xmp:29}
  Let \(\mathbf{SOb}_{ff}(\mathcal{C})\) be the quotient of \(\mathbf{SOb}_{f}(\mathcal{C})\) by the equivalence relation \(ι \sim κ\) if and only if \(ι\) and \(κ\) have the same fully faithful image. Then for each \(ι\) it
  holds that \(ι\) is equivalent to the embedding \(\mathopen{∣}ι\mathclose{∣}\) of the discrete subcategory on the objects in the image of \(ι\), and for \(\mathopen{∣}ι\mathclose{∣}\) and \(\mathopen{∣}κ\mathclose{∣}\) it holds that \(\mathopen{∣}ι\mathclose{∣} \leq \mathopen{∣}κ\mathclose{∣}\) if and only if there
  exists a function \(\mathopen{∣}ι\mathclose{∣} \hookrightarrow \mathopen{∣}κ\mathclose{∣}\) and monomorphism \(I{:} \mathopen{∣}ι\mathclose{∣}(X) \hookrightarrow \mathopen{∣}κ\mathclose{∣}(X)\). So \(ι \leq κ\) in \(\mathbf{SOb}_{ff}(\mathcal{C})\) holds if each \(X\) in the image of \(ι\) is the subobject of some \(Y\) in the image of
  \(κ\). We can similarly define a quotient \(\mathbf{SOb}_{\mathopen{∣}ff\mathclose{∣}}(\mathcal{C})\) of \(\mathbf{SOb}_{\mathopen{∣}f\mathclose{∣}}(\mathcal{C})\), where \(ι \leq κ\) if and only if the objects of \(ι\) are contained in the objects of \(κ\) up to
  isomorphism.

  The joins and meets of \(\mathbf{SOb}_{\mathopen{∣}ff\mathclose{∣}}(\mathcal{C})\) can be identified with set-theoretic unions and intersections, so that both smallness and density are trivial, but
  \(\mathbf{SOb}_{ff}(\mathcal{C})\) has a more interesting structure. The joins \(ι ∨ κ\) can still be identified with set-theoretic unions of the object sets and thus the only subcategory that is
  small in \(\mathbf{SOb}_{ff}(\mathcal{C})\) is \(∅\). But the meet \(ι ∧ κ\) is again the full subcategory on objects that are simultaneously subobjects of something in \(\mathcal{C}_{ι}\) and of
  something in \(\mathcal{C}_{κ}\), which is not generally the set-theoretic intersection.
\end{Example}
\begin{Example}[Wide subcategories of a category]
  \label{xmp_:5}
\index{wide subcategory}
  Let \(\mathcal{C}\) be a category and write \(\mathbf{Wide}(\mathcal{C})\) for the preorder of wide subcategories of \(\mathcal{C}\) (i.e. subcategories having all objects of
  \(\mathcal{C}\)), ordered by inclusion of morphism sets. We can construct this category as the preorder of faithful isomorphism-reflecting factorizations of the functor
  \(\mathbf{Disc}(\mathcal{C}){:} \mathbf{Ob}(\mathcal{C}) \hookrightarrow \mathcal{C}\). The top element is \(\mathcal{C}\) itself; the bottom is the discrete wide subcategory
  \(\mathbf{Disc}(\mathcal{C})\) containing only identities. A wide subcategory \(ι\) is again small in \(\mathbf{Wide}(\mathcal{C})\) if and only if it does not contain an anchor. \(ι\) is dense in \(\mathbf{Wide}(\mathcal{C})\) if and only if

  \begin{enumerate}
  \item every non-endomorphism of \(\mathcal{C}\) belongs to \(ι\), and
  \item for each non-identity endomorphism \(e {:} X → X\) of \(\mathcal{C}\) there exists an integer \(n \ge 1\) such that \(e^n\) is in \(ι\) with \(e^{n}\) not an identity.
  \end{enumerate}

  To see this, note that taking the wide subcategory generated by a single non-endomorphism forces all non-endomorphisms into \(ι\); taking those generated by an endomorphism \(e\) forces some
  power of \(e\) into \(ι\). The converse is a straightforward verification using the closure properties of wide subcategories.

  So on the level of wide subcategories density starts behaving as for monoids. Note that this is similar to how the periodic table for \(k\)-tuply \(n\)-monoidal categories only works when
  we take them as pointed.\cite{baezLecturesNCategoriesCohomology2007}
\end{Example}

\subsubsection{Topos-Theoretic}
\label{sec:topos-theoretic}

An important but perhaps somewhat surprising class of examples of unreachability arises in topos theory:

\begin{Proposition}
  \label{■pn_:64}
\index{unreachable preorder!representable presheaf}
\index{representable presheaf!unreachability}
  The subobject preorder of every representable presheaf is unreachable.
\end{Proposition}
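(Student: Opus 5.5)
The natural approach is to use that subobjects of a representable presheaf \(\widehat{X} = \mathcal{C}(-,X)\) are exactly sieves on \(X\), and that the only sieve containing \(\mathbf{id}_X\) is the maximal one. I will identify \(\mathbf{SOb}_{\widehat{\mathcal{C}}}(\widehat{X})\) with the lattice of sieves on \(X\), ordered by inclusion. A monomorphism into \(\widehat{X}\) is, up to isomorphism, a subfunctor \(S \subseteq \mathcal{C}(-,X)\). That is, it is a family of subsets \(S(Y) \subseteq \mathcal{C}(Y,X)\) closed under precomposition. The top element is \(\widehat{X}\) itself, the maximal sieve.

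Next I will compute joins. In a presheaf topos joins of subobjects are computed pointwise: the join of \(S\) and \(T\) is the image of \(S + T \to \widehat{X}\). Epi-mono factorizations in \(\widehat{\mathcal{C}}\) are computed objectwise in \(\mathbf{Set}\), so this image is the pointwise union \(S \cup T\), which is again a sieve. Thus \(S \vee T\) exists and equals \(S \cup T\). (Alternatively, \(\mathbf{SOb}(\widehat X)\) is a complete lattice, because \(\widehat{\mathcal C}\) is a topos.)

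The key step is the Yoneda argument. Suppose \(S \vee T = \top\) with \(S\) a proper subobject. Then \(\mathbf{id}_X \in (S\cup T)(X)\). If \(\mathbf{id}_X \in S(X)\), closure under precomposition gives \(f = \mathbf{id}_X \circ f \in S\) for every \(f\colon Y\to X\), so \(S = \widehat{X}\), a contradiction. Hence \(\mathbf{id}_X \in T(X)\), and by the same argument \(T = \widehat X = \top\). So every proper subobject \(S\) is small, and \(\top\) is the only element that is not small. This is exactly \cref{defn_:61}.

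I expect the only delicate point to be justifying that joins in \(\mathbf{SOb}_{\widehat{\mathcal{C}}}(\widehat{X})\) are pointwise unions, since the paper works with joins only up to their universal property (\cref{rmα_:20}). Even without that computation the argument goes through. Any upper bound \(U\) of \(S\) and \(T\) contains \(S \cup T\), and \(S\cup T\) is itself a subfunctor and an upper bound. So the join, if it exists, is \(S\cup T\).

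Equivalently, one could note that \(\mathbf{SOb}(\widehat X)\) has a unique coatom, \(\top\) minus \(\mathbf{id}_X\), and invoke \cref{coR_:2}. However, the maximal proper sieve need not be a coatom in the needed sense when the sieve of non-split-epi maps is unclear. So I will use the direct argument.
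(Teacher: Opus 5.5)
Your proof is correct and follows the paper's argument: joins of subpresheaves are objectwise unions, and a proper subfunctor of \(\widehat{X}\) misses \(\mathbf{id}_X\), so the other joinand must contain \(\mathbf{id}_X\) and is therefore all of \(\widehat{X}\) by closure under precomposition. Your final paragraph is mistaken but harmless, since you do not use it: \(\top\) minus \(\mathbf{id}_X\) is generally not a sieve (e.g.\ when \(X\) has nontrivial automorphisms), whereas the sieve of all non-split-epimorphisms into \(X\) is always the unique coatom lying above every proper sieve, so \cref{coR_:2} would in fact apply directly.
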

\begin{proof}
  \label{prf_:83}
  Let \(\mathcal{C}\) be a category, \(X\) an object of \(\mathcal{C}\) and \(F \hookrightarrow \widehat{X}\) a proper subfunctor of the functor \(\widehat{X}\) represented by \(X\). We have to show that the only \(G \hookrightarrow \widehat{X}\) such that
  \(F ∨ G = \widehat{X}\) is \(\widehat{X}\) itself. For this remember that joins of presheaves are given by objectwise set-theoretic unions \(F(Y) \cup G(Y)\). Now \(\mathbf{id}_{X}{:} X → X\) is
  the only element of \(\widehat{X}(X)\) whose presence in a sub-functor forces that sub-functor to be all of \(\widehat{X}\). Since \(F\) is proper, it does not contain
  \(\mathbf{id}_{X}\) at \(X\). But \(F(X) \cup G(X) = \widehat{X}(X)\), so \(\mathbf{id}_{X} \in G(X)\). By Yoneda closure, this forces \(G = \widehat{X}\).
\end{proof}

\begin{Remark}
  \label{rmα_:13}
  At least if \(\mathcal{C}\) is monic and \(X\) is Dedekind-finite, i.e. has no proper self-embeddings, we can construct a co-atom \(∂ X \hookrightarrow \widehat{X}\) as the presheaf with \((∂ X)(Y)\) given by the set of morphisms \(f{:} Y → X\) such that there exists a
  \(ι{:} Z \hookrightarrow X\) with \(Z \cancel{\simeq} X\) and a \(g{:} Y → Z\) with \(f = g⨾ ι\). So we can freely construct a boundary.
\end{Remark}
\begin{Remark}
  \label{rmα_:41}
  This behavior is in contrast to that of density on representable presheaves: in \cref{rmk:1} we find that for a thin subcanonical site with an initial object the Yoneda embedding is a density-equivalence. The
  behavior on general categories is currently unknown but the mere fact that the structure of the base category matters independently of the coverage is already different from the case of
  smallness, where the coverage completely mediates what is small.
\end{Remark}
\begin{Remark}
  \label{rmk:5}
  \cref{■pn_:64} recontextualizes \cref{obs_:1}: generally we would think that faces, being lower-dimensional, would be small. And they are — once we add enough unions to construct a
  boundary. It is the mark of the combinatorial that there are no subcanonical coverages on a combinatorial site \(⬢\) that would make some objects in the sheaf topos \(\mathbf{Sh}(⬢)\)
  small and others not: there is only the \emph{boundary coverage}, which identifies an object with its boundary — but this recreates the original site.
\end{Remark}

\subsubsection{Convex}
\label{sec:other-examples}

We also have two examples related to the work of Tobias Fritz. In \cite{PostulatesBarycentricCalculus}, Stone defines an abstract convex space, which Fritz takes up in \cite{fritzConvexSpacesDefinition2015}. One thing that can be gleaned from the definition
is the idea of a convex space as a kind of continuous category: it has generalized associativity and unitality laws. We take up that theme and make it fully explicit, but in a simplified
setting where we forget the combinatorics of convexity and just remember the generated geodesics, somewhat similar to taking the path groupoid of a topological space. Note that the
material in this section requires some setup, which also hints at a categorical theory of convexity. Consequently, we have to introduce some new definitions for our statements that
will only really be taken up in later work, and impatient readers might want to skip this section.

Central to our theory is the (well-known) notion of laxification and in particular of a lax functor:

\begin{Definition}[Lax 2-functor]
  \label{def:lax-2-functor}
\index{sub-2-category}
\index{lax 2-functor}
  Let \(\mathcal{C}\) and \(\mathcal{D}\) be 2-categories. A \emph{lax 2-functor} \(F {:} \mathcal{C} \to \mathcal{D}\) consists of the following data:
  \begin{enumerate}
    \item A function on objects, mapping \(X {:} \mathcal{C}\) to \(F(X) {:} \mathcal{D}\).
    \item for every pair of objects \(X, Y {:} \mathcal{C}\), a functor on hom-categories
      \[
        F_{X,Y} {:} \mathcal{C}(X, Y) \to \mathcal{D}(F(X), F(Y)),
      \]
      mapping a 1-cell \(f {:} X \to Y\) to \(F(f) {:} F(X) \to F(Y)\) and a 2-cell \(α {:} f \Rightarrow g\) to \(F(α) {:} F(f) \Rightarrow F(g)\).
    \item for every object \(X {:} \mathcal{C}\), a 2-cell (the \emph{lax unitor})
      \[
        ν_{X} {:} \mathbf{id}_{F(X)} \Rightarrow F(\mathbf{id}_X).
      \]
    \item for every pair of composable 1-cells \(f {:} X \to Y\) and \(g {:} Y \to Z\) in \(\mathcal{C}\), a 2-cell (the \emph{lax compositor})
      \[
        \mu_{f,g} {:} F(f) ⨾ F(g) \Rightarrow F(f ⨾ g).
      \]
  \end{enumerate}

  Subject to the same coherence conditions as pseudofunctors. An \emph{oplax 2-functor} is defined in the same way, but with dual unitors and compositors.
\end{Definition}

With this definition, we can laxify the notion of a subcategory itself:

\begin{Definition}
  \label{defn_:15}
  Given a 2-category \(\mathcal{B}\), a \emph{sub-2-category} \(ι{:} \mathcal{A} \hookrightarrow \mathcal{B}\) is a 2-functor which is injective and isomorphism-reflecting on objects, 1-morphisms and 2-morphisms. A \emph{lax
    sub-2-category} is a lax 2-functor with the same properties and dually for oplaxity.
\end{Definition}
\begin{Remark}
  \label{rmk:lax-vs-pseudo}
  A pseudofunctor is precisely a lax 2-functor for which the lax unitors \(ν_X\) and lax compositors \(\mu_{f,g}\) are invertible 2-cells (i.e. equivalences in the
  hom-categories). If these 2-cells are further constrained to be identity 2-cells, \(F\) reduces to a strict 2-functor. Laxifying a pseudofunctor thus amounts to replacing
  the requirement that composition and identities are preserved up to coherent isomorphism with the weaker requirement that they are preserved only up to coherent morphisms
  pointing in a specific direction. This process is known as \emph{laxification}. Now, consider this little-appreciated fact: laxification is evil.\footnote{To the knowledge of the
    author, this was first noticed by Chris Schommer-Pries \cite{schommer-priesLaxFunctorsEquivalence2009}.}

  Let \(I\) be the 2-category with two elements \(X, Y\), morphisms generated by \(f{:} X → Y, g{:} Y → X\) and 2-cells generated by \(\mathbf{id}_{X} < f⨾ g, \mathbf{id}_{Y} < g⨾ f\).
  Let \(I_{0} = \{X, Y\}\) be the set of objects of \(I\), and let \(\mathbf{CDisc}(I_{0})\) be the codiscrete category on \(X, Y\), consisting of two morphisms \([X, Y]{:} X → Y, [Y, X]{:} Y → X\) that compose to the identities. Then there
  exists an oplax functor \(F{:} \mathbf{CDisc}(I_{0}) → I{:} [X, Y] ↦ f, [Y, X] ↦ g, \mathbf{id}_{X} ↦ \mathbf{id}_{X}, \mathbf{id}_{Y} ↦ \mathbf{id}_{Y}\) that is not equivalent to
  any constant functor, even though \(\mathbf{CDisc}(I_{0})\) is equivalent to the point.

  What should we make of that?

  Should we use lax \emph{anafunctors} instead of lax functors? Or does laxification detect structure that is invisible to weakness? The following exposition suggests the latter.
  It also has to be noted that it is known that Lawvere had a negative opinion of \((∞, 1)\)-category theory. The exact reason why is less well-known but can to some extent be
  reconstructed: in \cite{lawvereWilliamLawvereAre}, Lawvere explicitly \emph{denies} that homotopy types are the same as infinitesimal skeleta because to him, they are an instance
  of an identity of opposites. Understanding this \emph{only} to an identity would seem like an undialectical collapse from this perspective. And we do have to ask ourselves: if,
  say, the codiscrete categories on different sets are just the same, \emph{then why are they different}? Because they do have different object cardinalities and, more saliently,
  different lax functors out of them.
\end{Remark}
\begin{Example}
  \label{xmp:12}
\index{geodesic!in metric space}
\index{metric space!as 2-category}
  Given a Lawvere metric space \((X, d)\) with its induced topology generated by \(U_{x, r} := \{y ∈ X | d(x, y) < r\}\) we can define a 2-categorical structure on \(X\) whose

  \begin{itemize}
  \item objects are the elements of \(X\),

  \item morphisms are the contracting functions \(f{:} [0, l] → X\) with \(l_{f} := l ∈ ℝ_{\geq 0}\) where \(ℝ_{\geq 0}\) has its usual metric and \([0, l]\) inherits it,

  \item \(f \leq g\) if \(l_{f} \leq l_{g}\),

  \item \(f⨾ g\) is the morphism \([0, l_{f} + l_{g}] → X\) given by \(f\) on \([0, l_{f}]\) and \(g\) on \([l_{f}, l_{f} + l_{g}]\),

  \item \(\mathbf{id}_{x} : [0, 0] → X\) is given by \(x{:} * \hookrightarrow X\) (since \([0, 0]\) is canonically equivalent to the terminal set),

  \item associativity and identity laws follow from pointwise evaluation.
  \end{itemize}

  This is kind of like the path groupoid of a topological space except that we don't assume any identifications whatsoever  and associativity holds without restraint because we
  don't normalize path lengths at all (i.e. not every path is a morphism from \([0, 1]\)).  Note
  also that no object of \(X\) has a non-contractible automorphism 2-group, so each natural isomorphism between paths is equivalent to an identity.

  Let us now consider the example of \(ℝ^{n}\) and the embedding of the codiscrete groupoid

  \[γ{:} \mathbf{CDisc}(ℝ^{n}) \hookrightarrow ℝ^{n}{:} x ↦ x, (f{:} x → y) ↦ [x, y]\]

 which maps the (unique) path from \(x\) to \(y\) to the (affine) line from \(x\) to \(y\). Then \(γ\) maps \(\mathbf{id}_{X}\) to \([x, x]\), which is the identity in \(ℝ^{n}\), and maps
  \(x \xrightarrow{f} y \xrightarrow{g} z\) to \([x, z]\), which is minimal in \(X(x, z)\), and is thus an oplax embedding of \(\mathbf{CDisc}(ℝ^{n})\) into \((ℝ^{n})^{[0, 1]}\). In
  other words, \(γ\) is the embedding of the geodesics between points in \(ℝ^{n}\).
\end{Example}

This is the example we want to generalize with the next definition:

\begin{Definition}
  \label{dfn:19}
\index{convex sub-2-category}
\index{geodesic 2-category}
  An oplax sub-2-category \(ι{:} \mathcal{A} \hookrightarrow \mathcal{B}\) of a 2-category \(\mathcal{B}\) is \emph{convex} if for each 2-isomorphism
  \(σ{:} ι(f) \overset{\sim}{\Rightarrow} g⨾ h\) it holds that \(σ, g\) and \(h\) are in \(ι\).

  \(ι\) is \emph{convexly full} if for each \(f\) in \(\mathcal{B}\) such that  \(\mathbf{s}(f)\) and \(\mathbf{t}(f)\) are in (the image of) \(ι\) it holds that for each 2-isomorphism
  \(σ{:} f \simeq g⨾ h\) with \(\mathbf{t}(g) = \mathbf{s}(h)\) the cells \(σ\), \(g\) and \(h\) are in \(ι\).

  An object \(∅\) \emph{is sup-initial} if it has at most one morphism to every object (so if it is the dual of a subterminal object). \(∅\) is \emph{strict sup-initial} if it is
  sup-initial and each morphism with codomain \(∅\) is uniquely 2-isomorphic to the identity.

  A \emph{2-category with paths} is a pair of a 2-category \(\mathcal{B}\) and a wide convex oplax subcategory \(ι{:} \mathcal{A} \hookrightarrow \mathcal{B}\).

  A \emph{2-category with unique paths} is a 2-category with paths
  such that \(\mathcal{A}\) is the codiscrete groupoid \(\mathbf{CDisc}(\mathcal{B})\) on the objects of \(\mathcal{B}\).

  A \emph{2-category with geodesics} is a 2-category with paths
  \(ι\) such that each morphism \(p{:} X → Y\) in the image of \(ι\) is strict sup-initial in \(\mathcal{B}(X, Y)\).

  A \emph{geodesic 2-category} is a 2-category such that there
  exists a convex strict initial object \(γ{:} \mathcal{G} \hookrightarrow \mathcal{B}\) in the 2-category of wide oplax sub-2-categories of \(\mathcal{B}\) on
  \(\mathbf{CDisc}(\mathcal{B})\). Equivalently, \(\mathcal{B}\) is geodesic if each hom-category \(\mathcal{B}(X, Y)\) contains a strict initial object \(γ_{X, Y}\) such that each 2-isomorphism
  \(σ{:} γ_{X, Y} \simeq f⨾ g\) is an identity (since it is in the image of \(\mathbf{CDisc}(\mathcal{B})\)) and \(f = γ_{X, \mathbf{t}(f)}, g = γ_{\mathbf{s}(g), Y}\).
\end{Definition}
\begin{Remark}
  \label{rmk:16}
  So convex fullness is like fullness but only applies to morphisms that don't just go between objects in \(ι\) but also never factor through objects outside \(ι\) — no detours allowed!
\end{Remark}

And this we can apply to Stone's convex spaces:

\begin{Example}
  \label{xmp:19}
  \index{convex space!2-categorical structure} In \cite{fritzConvexSpacesDefinition2015}, Fritz describes a convex space as a set \(X\) with a family of operations
  \(cc_{λ}{:} X × X → X\) for \(λ ∈ [0, 1]\) such that \(cc_{0}(x, y) = x\) and \(cc_{1}(x, y) = y\) (by commutativity). This is equivalent to a family of paths
  \(γ_{x, y}∈ X^{[0, 1]}{:} γ_{x, y}(λ) := cc_{λ}(x, y)\) where we don't assume any structure on \(X\) or \([0, 1]\). Fritz moreover introduces a commutativity axiom which ensures that
  the inverse of some path is literally the same path backwards, and an associativity axiom which regulates the continuous structure through combinatorial approximation. We reduce
  this to a discrete, i.e. categorical structure. This loses some geometry and there should be a continuous version of category theory that builds on that structure, but starting
  from a more basic perspective allows us to seamlessly integrate our theory with standard category theory before introducing more genuinely continuous considerations.

  Each convex space in the sense of \cite{fritzConvexSpacesDefinition2015} has an underlying convex 2-category \(\mathopen{∣}∏\mathclose{∣}(X)\) whose

  \begin{itemize}
  \item objects are the elements of \(X\),

  \item\label{item:38} morphisms are generated by \(γ_{x, y}\) (freely except for the 2-morphisms below),

  \item\label{item:59} 2-morphisms are the horizontal and vertical composites of the following two kinds:

    \begin{itemize}
    \item\label{item:62} 2-cells \(γ_{x, y}⨾ γ_{y, z} \geq γ_{x, z}\) for all \(x, y, z\),

    \item inverses \(γ_{x, z} \sim γ_{x, y}⨾ γ_{y, z}\) if \(y\) is in the image of \(γ_{x, z}\).
    \end{itemize}
  \end{itemize}

  This is a 2-category whose identities are given by constant paths, which is both more rough and finer than the standard fundamental groupoid: it does not consider any
  higher-categorical structure (and there isn't really any unless it is induced by paths), but it also does not identify the composite \(γ⨾ \overline{γ}\) of a path
  \(γ{:} [0, 1] → X\) and its reflection \(\overline{γ}{:} [0, 1] \xhookrightarrow{\overline{x} := 1 - x} [0, 1] \xhookrightarrow{γ} X\) with \(\mathbf{id}(\mathbf{s}(x))\). It
  follows directly from the definition that each \(γ_{x, y}\) is the minimal path from \(x\) to \(y\), and that \(γ\) is convex (up to contractible homotopy).
  Thus, \(γ{:} \mathbf{CDisc}(X) → X\) is the \emph{underlying geodesic 2-category} of \(X\).
\end{Example}

These examples might seem weird from the point of view of homotopy theory but they work as convexity should, where it is not that structures unfold along higher cells but that
higher cells are added in by recursively connecting points. And a pleasant feature of geodesic 2-categories is that they allow for a notion of convex closure:

\begin{Definition}
  \label{dfn:25}
\index{convex closure}
  Given a convex wide oplax sub-2-category \(λ{:} \mathcal{A} \hookrightarrow \mathcal{B}\) and a family of objects \(\mathcal{F}\) in \(\mathcal{B}\), the \emph{\(λ\)-closure} of
  \(\mathcal{F}\) in \(\mathcal{B}\) is the smallest convex and convexly full (weak) sub-2-category of \(\mathcal{B}\) containing the restriction of \(λ\) to the objects in \(\mathcal{F}\). A sub-2-category
  \(κ\) of \(\mathcal{B}\) is \emph{\(λ\)-convex} if it is equivalent to the \(λ\)-closure of a family of objects in \(\mathcal{B}\). If \(\mathcal{B}\) is geodesic, then \(κ\) is \emph{geodesically convex} if it is
  \(γ\)-convex (where \(γ\) is the oplax sub-2-category of geodesics of \(\mathcal{B}\)).
\end{Definition}
\begin{Remark}
  \label{rmk:19}
  Note in particular that we can identify each \(ι\)-convex sub-2-category of \(\mathcal{B}\) with its subset of objects. In particular, for a family
  \(s{:} S \hookrightarrow \mathcal{B}\) the geodesic closure \(\hat{s}\) has to be the smallest convex and convexly full weak subcategory of \(\mathcal{B}\) containing \(γ|_{s}{:} \mathbf{CDisc}(S) \hookrightarrow \mathcal{B}\).
\end{Remark}
\begin{Example}
  \label{xmp:23}
  In a real vector space \(ℝ^{n}\) with its usual metric, let \(\mathcal{F}\) be a family of points. Applying \cref{xmp:12} we get that \(γ_{x, y}\) is the shortest path \([0, 1] → ℝ^{n}\) between
  \(x\) and \(y\), up to the points it contains. So the geodesic closure of \(\mathcal{F}\) contains \(\mathcal{F}\), \(γ_{x, y}\) for each \(x, y\) in \(\mathcal{F}\) and the shortest path between each pair of points in
  \(\mathcal{F}\), so concretely can be obtained from \(\mathcal{F}\) by adding each \(γ_{x, y}\), then adding all points on each obtained line, then adding the geodesics between each pair of new points and so
  on, then adding all paths that only cross points in that closure. Thus, \(\overline{\mathcal{F}}\) is indeed the convex closure of \(\mathcal{F}\) as traditionally understood.

  For instance if
  \(\mathcal{F}\) is given by function \(s{:} S \hookrightarrow ℝ^{n}\), with \(S\) a finite set, then \(\overline{s}{:} \mathbf{CDisc}(S) → ℝ^{n}\) is a (possibly degenerated) convex simplex in \(ℝ^{n}\) with vertices
  given by the images of the elements of \(S\). Note in particular that \(\mathbf{CDisc}([n])\) with \([n]\) a finite cardinal is the localization of the total order on
  the same elements, which is also known as the abstract \((n - 1)\)-simplex.

  Note also that a geodesic weak natural transformation \(η: ι_{0} \Rightarrow ι_{1}: I \hookrightrightarrows ℝ^{n}\) between geodesic
  embeddings of the directed interval category, i.e. arrow category \(0 \hookrightarrow 1\), is a square with edges \(ι_{0}, ι_{1}, η_{0}, η_{1}\) such that

  \[\mathopen{∣}η_{0}⨾ ι_{1}\mathclose{∣} = \mathopen{∣}η_{0}\mathclose{∣} + \mathopen{∣}ι_{1}\mathclose{∣} = \mathopen{∣}ι_{0}\mathclose{∣} + \mathopen{∣}η_{1}\mathclose{∣} = \mathopen{∣}ι_{0} + η_{1}\mathclose{∣},\]

  i.e. an ex-tangential quadrilateral.
\end{Example}

The point is now that we can develop a very general theory of smallness for convexity:

\begin{Example}
  \label{xmp:convex-small}
\index{convex cone!smallness}
\index{tablets!for convexity}
\index{small subobject!convex}
  Let \(\mathcal{B}\) be a geodesic \(2\)-category with distinguished geodesic morphisms
  \(γ_{x,y}{:} x → y\) for all objects \(x,y\). The geodesically convex sub-2-categories of \(\mathcal{B}\) form an upward-bounded preorder \(\mathbf{SOb}_{gcc}(\mathcal{B})\)
  under inclusion, with top element \(\mathcal{B}\) and bottom element the empty subcategory.
  Meets are given by intersections if they exist (which they often do not), and joins are
  geodesic closures of unions: \(ι ∨ κ \;=\; \overline{\mathbf{Ob}(ι) ∪ \mathbf{Ob}(κ)}\).

  Thus, smallness is not trivial in \(\mathbf{SOb}_{gcc}(\mathcal{B})\).
  A geodesically convex subcategory \(ι\) is not small if and only if there exists a proper geodesically convex \(κ\) such that \(\overline{ι ∪ κ} = \mathcal{B}\). Since both \(ι\) and \(κ\) are geodesic
  closures of objects \(ι_{0}, κ_{0}\), this closure is equivalent to \(\overline{ι_{0} \cup κ_{0}}\). Thus, \(ι\) is not small if there exists a \(κ_{0}\) such that each object of \(\mathcal{B}\)
  can be obtained by repeatedly taking geodesics of points on geodesics... between \(ι_{0}\) and \(κ_{0}\), but there is a point in \(ι_{0}\) that cannot be obtained from repeated geodesics between
  points in \(κ_{0}\).

  So for instance in a non-trivial convex cone in \(ℝ^{n}\) with the convex 2-categorical structure from \cref{xmp:12}, the one-point subset on the tip is not small while all other points are small.
\end{Example}
\begin{Remark}
  \label{rmα_:1}
  We might expect smallness in \cref{xmp:convex-small} to be given by ``A is contained in the closure of its set-theoretic complement'', but the notion we derived is generally
  stronger than that. This is roughly because smallness is derived using both the fewer subobjects that are convex subspaces and their more expansive join. In
  \cref{xmp_:15} we will learn how to use only one of these at a time, recovering a weaker notion of smallness.
\end{Remark}

\subsubsection{Categorical Probability Theoretic}
\label{sec:synth-prob-theory}

The final example is also one from Fritz but in synthetic probability theory:

\begin{Example}
  \label{xmp_:9}
\index{stochastic matrix!simplicial structure}
  \index{stochastic matrix!smallness}
  Let us look at the category \(\mathbf{FStoch}\) whose objects are finite sets and whose morphisms are stochastic matrices. Recall that
  stochastic matrix \(M{:} X → Y\) is a function \(f(- \mid -){:} X × Y → [0, 1]\) such that for each \(x ∈ X\) it holds that

  \[∑_{y ∈ Y} f(y \mid x) = 1.\]

  \(f\) here is a synthetic descriptor of conditional probability, where the defining equation says that each conditional probability is a probability function. However, \(M\) is
  also the descriptor of an affine transformation that maps a unit simplex \(▲^{X}_{1} \subseteq ℝ^{X}\), which is the convex hull of the unit vectors \(δ_{i}\) in \(ℝ^{X}\), into
  \(▲^{Y}_{1} \subseteq ℝ^{Y}\), given by multiplying \(δ_{i}\) with \(M\). Thus, we can identify \(X{:} \mathbf{FStoch}\) with the \((\mathopen{∣}X\mathclose{∣} - 1)\)-dimensional unit simplex \(▲^{X}_{1}\) and the
  subobjects \(ι{:} A \hookrightarrow X\) with the affine sub-simplices of \(▲^{X}_{1}\).\footnote{This geometric interpretation was pointed out to me by Fritz.} Since each point is an affine
  sub-simplex, \(\mathbf{SOb}(▲^{X}_{1})\) is atomistic by \cref{coR_:3}, so density is trivial; the question is again smallness. For joins, \(σ ∨ τ = ▲^{X}_{1}\) holds if and only if
  \(▲^{X}_{1}\) is the only subsimplex of \(▲^{X}_{1}\) containing the union \(σ \cup τ\).

  Dimension 0 is clear. In dimension 1 the only sub-simplex of \(▲^{1} = [0, 1]\) that contains both \(0\) and \(1\) is \(\top = \mathbf{id}_{[0, 1]}\) itself, so the join \(ι ∨ ι'\) of
  two affine sub-simplices \(ι, ι'\), i.e. sub-lines, is \(\top\) if and only if both \(0\) and \(1\) are in the union \(ι \cup ι'\). Thus, \(ι\) is small if and only if it does not
  contain either \(0\) or \(1\).

  For dimension \(> 1\)  we claim that the only small subobject in \(\mathbf{SOb}(X)\) is \(\bot\). To prove this, we treat \(▲^{X}_{1}\) as embedded into \(ℝ^{X}\) and construct for each
  \(p ∈ ▲^{X}_{1}\) a full-dimensional, maximal
  proper subsimplex that does not contain \(p\). For this, take some maximal face \(τ\) such that \(p\) is not in \(τ\). Since each simplex is \(T_{4}\) and both \(p\) and \(τ\)
  are closed and disjoint in \(▲^{X}_{1}\) there exists some open neighborhood \(U_{τ}\) of \(τ\) that does not contain \(p\) and thus, by the definition of the restricted Euclidean
  topology, an \(ε > 0\) such that the \(ε_{p}\)-thickening \(ε_{p}τ \cap ▲^{X}_{1}\) of \(τ\) does not contain \(p\). Let now \(τ_{p}\) be the sub-simplex of \(▲^{X}_{1}\) parallel to \(τ\)
  in \(▲^{X}_{1}\) whose orthogonally measured distance from \(τ\) is \(ε_{p}\). This is a shrunken but otherwise undeformed copy of \(τ\) and thus each of its vertices is linearly
  independent and independent of each vertex in the interior of \(τ\). Thus, the simplex \(τ_{p}'\) spanned by \(τ_{p}\) and the barycenter of \(τ\) is maximal (since it is
  equidimensional and each of its vertices in on the boundary) and proper in \(▲^{X}_{1}\) and does not contain \(p\), so that \(\{p\} ∨ τ'_{p} = \top\).
\end{Example}
\begin{Remark}
  \label{rmα_:32}
  \cref{xmp_:9} would seem like a trivialization of our theory in a part of synthetic probability theory; and it is — on its own. But it is re-contextualized by \cref{■pn_:64}, which shows
  that smallness in the sheaf topos of a site \(\mathcal{S}\) only depends on the coverage on \(\mathcal{S}\). So for a site, having rough smallness relations is in a sense welcome as long as the coverages on
  it are interesting. So the key point of \cref{xmp_:9} is not that smallness in \(\mathbf{FStoch}\) is trivial but that its structure is very similar to that of \(\mathbf{TDisk}\) from
  \cref{xmp:63}, which is quite important, as mentioned in \cref{rmα_:33}. We will show in later work that this similarity of site structures descends to one of topoi, which opens up a
  connection of synthetic probability theory to cohesion and closed geometric logic that could be of interest.
\end{Remark}

\subsection{Reducedness \& Satedness}
\label{sec:reducedness}

We can already describe what it means for an element to not contain any small elements, except within non-small ones.

\begin{Definition}
  \label{defn_:7}
  \index{reduced element}\index{sated element}
  An element \(a\) of an upward-bounded preorder \(P\) is \emph{reduced} if for any small \(b\)  and arbitrary \(c\) such that \(a \leq b ∨ c\) it holds that \(a \leq c\).

  Dually, \(a\) is \emph{sated} if for every dense \(b\) and \(c\) such that \(a \geq b ∧ c\) it holds that \(a \geq c\).
\end{Definition}

So \(a\) is small if the only \(b\) such that \(a ∨ b = \top\) is \(\top\) and \(a\) is reduced if whenever \(a \leq b ∨ c\) with \(b\) small, then \(a \leq c\). Since \(\top\) is at the top, \(a ∨ b \geq \top\)
implies \(a ∨ b = \top\), so for \(\top\) this inequality simplifies to the defining equality for smallness. So an element is reduced if it is in the same relation to small elements that
\(\top\) has. Dually, an element is sated if it contains any subobject that it contains a dense subobject of.

\begin{Example}
  \label{xmp_:27}
\index{reduced element!topological}
A paradigmatic example are the closed subsets \(\mathbf{C}(X)\) of a topological space: each closed \(C\) is equal to the union \(\mathopen{∣}C\mathclose{∣} \cup \mathbf{s}(C)\) where \(\mathopen{∣}C\mathclose{∣}\) is the
reduction of \(C\), i.e. the closure of its interior, and \(\mathbf{s}(C)\) is the closure in \(C\) of the complement \(C ∖ \mathopen{∣}C\mathclose{∣}\). Thus, if \(C\) is reduced, then
\(C \leq \mathopen{∣}C\mathclose{∣}\) and hence \(C = \mathopen{∣}C\mathclose{∣}\). Conversely, if \(C \leq D \cup E\) with \(D\) small, then the interior \(C°\) is contained in the interior \((D \cup E)°\), which is equal to \(E\) by
smallness. Thus, in the lattice of closeds of a topological space an element is reduced if and only if it is the closure of an open subset, so if it is regular. Dually, an open subset is sated if and only if it
  is the interior of some closed subset, so also if it is regular.
\end{Example}

\begin{Example}
  \label{xmp:54}
\index{radical ideal!satedness}
  In \cref{xmp:49} we saw that a submonoid \(C\) is dense if and only if it contains a power of each element in \(M\). So \(A\) is sated if and only if whenever \(A \geq B \cap C\) for some dense
  \(C\), then \(A \geq B\), or equivalently if and only if it contains each root of its elements. In particular each (ring-theoretic) ideal in a ring \(R\) is sated as a multiplicative
  submonoid of \(R\) if and only if it is a radical
  ideal.\footnote{Regrettably for our nomenclature this is also called a ``reduced ideal'', though our terminology is arguably more accurate since a radical ideal is ``complete under radicals'' and
    thus ``large enough to be sated over
    everything it contains'', not ``reduced'', i.e. obtained by shaving away all small parts.}
\end{Example}

Let us note that at least in bounded preorders smallness and reducedness are exclusive in non-trivial cases:

\begin{Proposition}
  \label{■pn_:5}
\index{small element!exclusivity with reducedness}
  Let \(P\) be a bounded preorder. Then \(a\) is both small and reduced if and only if \(a = \bot\).
\end{Proposition}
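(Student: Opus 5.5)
The proof is a direct unwinding of \cref{defn_:7} applied with \(b = a\) itself, plus a check that \(\bot\) has both properties. The one subtlety is that joins need not exist in a general preorder, so each join we write down has to be justified.

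For the forward direction, assume \(a\) is both small and reduced. The idea is to use reducedness with \(a\) playing the role of the small element. Take \(b := a\), which is small by assumption, and \(c := \bot\). The join \(a ∨ \bot\) exists and equals \(a\): any upper bound of \(a\) and \(\bot\) is an upper bound of \(a\), and \(a\) is itself an upper bound of both since \(\bot \leq a\). In particular \(a \leq b ∨ c\). Reducedness of \(a\) then gives \(a \leq c = \bot\). Hence \(a = \bot\), where equality is meant up to the preorder's equivalence, in line with the conventions of \cref{sec:size-assumptions}.

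For the converse, take \(a = \bot\). It is small by \cref{xmp:55}, and it can also be checked directly: if \(\bot ∨ b = \top\), then since \(\bot ∨ b = b\) we get \(b = \top\). It is reduced because for any small \(b\) and any \(c\) with \(\bot \leq b ∨ c\), the conclusion \(\bot \leq c\) holds trivially, since \(\bot\) lies below every element.

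There is essentially no obstacle. The only step needing care is confirming that the join \(a ∨ \bot\) exists and equals \(a\) in a preorder where joins are only partially defined, as in \cref{rmα_:20}. This is exactly where we use that \(P\) is bounded below, which is why the statement assumes a bounded preorder rather than merely an upward-bounded one.
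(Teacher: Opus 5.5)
Your proof is correct and follows the paper's argument exactly. For the forward direction you apply reducedness with \(b = a\) and \(c = \bot\) to get \(a \leq \bot\), and for the converse you check smallness of \(\bot\) via \(\bot \vee b = b\) and reducedness of \(\bot\) by its minimality; your explicit check that the partial join \(a \vee \bot\) exists and equals \(a\) is a small piece of care the paper leaves implicit.
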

\begin{proof}
  Because \(P\) is bounded, \(\bot\) exists.
  \(\bot\) is small: \(\bot ∨ b = b\), so \(\bot ∨ b = \top \Rightarrow b = \top\).
  \(\bot\) is reduced: if \(b ∨ c \geq \bot\) with \(b\) small, then also \(c \geq \bot\) by the minimality of \(\bot\).

  For the converse, assume \(a\) is both small and reduced.  Choose \(b = a\) (which is small) and \(c = \bot\).  Then \(a \leq a ∨ \bot\), so by reducedness \(a \leq \bot\), so \(a = \bot\).
\end{proof}

\subsection{The Smallness Stratification}
\label{sec:smalln-strat}

We can use smallness to stratify every upward-bounded preorder:

\begin{Definition}
  \label{dfn:7}
  \index{smallness fibration} \index{n-small element}
  Given an upward-bounded preorder \(P\), every element \(a ∈ P\) is \emph{\(0\)-small}. \(a\) is \emph{\(n\)-small} for \(n\) a successor ordinal if there is an \((n - 1)\)-small element \(b \geq a\) such that \(a\) is small in the (restriction of
  \(P\) to the) principal ideal \((b)\) of \(b\). \(a\) is \emph{purely \(n\)-small} if \(a\) is \(n\)-small but not \((n + 1)\)-small. \(a\) is \emph{\(ω\)-small} for \(ω\) a limit ordinal if it is \(n\)-small
  for all \(n < ω\).
\end{Definition}

It follows immediately from the definition of \(n\)-smallness and the monotony of smallness that this assignment forms a stratification:

\begin{Definition}
  \label{dfn:8}
  \index{smallness stratification}
  Let \(\mathbf{Ord}\) be the large preorder of ordinals. Given an upward-bounded preorder \(P\) the \emph{smallness stratification} \(\mathbf{small}{:} P → \mathbf{Ord}\) is the order morphism given by mapping \(a ∈ P\) to
  \(n\) if and only if \(a\) is purely \(n\)-small, and to \(\omega\) if \(a\) is \(\omega\)-small. The \emph{smallness fibration} is the morphism \(∐_{n ∈ \mathbf{Ord}} \mathbf{small}^{-1}(n) → P\) classified by
  \(\mathbf{small}\). We define the \emph{density stratification} and \emph{density fibration} dually.
\end{Definition}

\begin{Proposition}[Well-definedness of the smallness stratification]
  \label{■pn_:strat-welldef}
  \index{smallness stratification!well-definedness}
  Let \(P\) be a (small) upward-bounded preorder. For every \(a ∈ P\), there exists an ordinal \(n\) such that \(a\) is purely \(n\)-small.
\end{Proposition}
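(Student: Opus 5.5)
The plan is to show two things. First, for fixed $a \in P$ the set $N(a)$ of ordinals $n$ for which $a$ is $n$-small is an initial segment of $\mathbf{Ord}$. Second, this initial segment is a proper set bounded by an ordinal depending only on the cardinality of $P$. Then $a$ is purely $n$-small for the ordinal $n$ at which $N(a)$ ends, read together with the limit clause of \cref{dfn:7}.

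\emph{Step 1 (downward closure).} By transfinite induction on $n$, I show that $n$-small implies $m$-small for all $m \le n$.
\begin{itemize}
\item Limit $n$: immediate from the definition.
\item Successor $n$: $a$ is small in $(b)$ for some $(n-1)$-small $b \ge a$. By the induction hypothesis, $b$ is $(m-1)$-small for every successor $m \le n$, so $b$ itself witnesses that $a$ is $m$-small.
\item Limit $m < n$: we get $m$-smallness from all smaller successors.
\end{itemize}
I will also record the monotonicity from \cref{prp:1} applied inside each principal ideal. If $a$ is small in $(b)$ and $a' \le a$, then $a'$ is small in $(b)$. Hence each class $C_n := \{a \mid a \text{ is } n\text{-small}\}$ is a down-set, and the $C_n$ form a descending chain of subsets of $P$.

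\emph{Step 2 (stabilization).} Since $P$ is small, a strictly descending ordinal-indexed chain of subsets of $P$ has length less than $|P|^{+}$. So there is a least $\kappa$ with $C_{\kappa} = C_{\kappa+1}$.

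I then prove by induction that $C_{\lambda} = C_{\kappa}$ for all $\lambda \ge \kappa$.
\begin{itemize}
\item Successor step: whether $a$ is $(\lambda+1)$-small depends only on the set $C_{\lambda}$ of admissible witnesses $b$. If $C_\lambda = C_{\lambda-1}$, then $C_{\lambda+1} = C_\lambda$.
\item Limit step: intersections of a constant chain are constant.
\end{itemize}
Consequently, for every $a$, either $a \notin C_\kappa$, or $a \in C_\lambda$ for every $\lambda$.

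\emph{Step 3 (assigning the stratum).} If $a \notin C_\kappa$, then by Step 1 $N(a)$ is an initial segment $[0,n]$ with $n < \kappa$. If $N(a)$ were a limit-length segment $[0,\lambda)$, then $a$ would be $\lambda$-small by the limit clause. So $N(a)$ has a maximum $n$, and $a$ is purely $n$-small.

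\emph{Main obstacle.} The hard case is $a \in C_\kappa$, the stable core. This core is nonempty in general. For example, $\bot$, when it exists, is small in its own principal ideal $(\bot)$, so it is $n$-small for every $n$ by induction. I would first try to show that the core consists exactly of elements small in their own principal ideal, and that for these the statement is meant in the sense of the $\omega$-clause of \cref{dfn:8}, i.e. they are purely $\kappa$-small with $\kappa$ the stabilization ordinal.

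If this cannot be made to match the literal wording, the honest conclusion is that the statement holds precisely for elements outside $C_\kappa$. In that case Step 3, together with the stabilization bound from Step 2, is the substantive content of the proposition.
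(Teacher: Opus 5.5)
\paragraph{Your Steps 1--3 are correct, and they take a different and sounder route than the paper.}
The paper assumes $a$ is $\alpha$-small for every $\alpha$. It then claims that unwinding the witnesses ($a$ small in an $(\alpha-1)$-small ideal, that one small in an $(\alpha-2)$-small ideal, and so on) gives a strictly ascending chain of principal ideals of length $\alpha$, and hence an injection $\mathbf{Ord}\hookrightarrow P$. Your Step 2 is the valid form of that injection argument. What injects into $P$ is the family of strictly shrinking strata $C_\alpha$ for $\alpha<\kappa$, not a family of ideals above a fixed element. So the strata stabilize at some $\kappa<|P|^{+}$, and every $a\notin C_\kappa$ is purely $n$-small for a unique $n<\kappa$. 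One small repair in the successor step: argue $C_{\lambda+1}=\Phi(C_\lambda)=\Phi(C_\kappa)=C_{\kappa+1}=C_\kappa$, where $\Phi(S)$ is the set of elements small in $(b)$ for some $b\in S$ above them. Your version uses $C_{\lambda-1}$, which is undefined at limits.

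\paragraph{Your ``main obstacle'' is not an obstacle but a refutation, and you should say so outright.}
Your remark about $\bot$ is a counterexample to the proposition as stated. Every minimal element $a$ is small in $(a)$. Hence it is $\alpha$-small for all $\alpha$ and purely $n$-small for no $n$. The one-point preorder already suffices.

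This is exactly where the paper's proof breaks, for two reasons:
\begin{itemize}
\item The witness may be $b=a$, so the chain of ideals need not be strict.
\item Even when the chain is strict, descending $\alpha-1,\alpha-2,\dots$ reaches a limit ordinal after finitely many steps. The limit clause supplies no witness there, so at most $\omega$-indexed chains arise, and these contradict nothing.
\end{itemize}

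\paragraph{Drop the proposed rescue.}
Describing $C_\kappa$ as the set of minimal elements works only under the ascending chain condition. A non-minimal $a\in C_\kappa=C_{\kappa+1}$ needs a witness $b\in C_\kappa$ with $b>a$ strictly, since $a$ is not small in $(a)$. That $b$ is again non-minimal, so iterating gives an infinite strictly ascending chain.

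The description fails in $\{0<1<2<\dots<\omega\}$. Each finite $k$ is small in the total order $(k+1)$. By transfinite induction every finite element lies in every $C_\alpha$, although only $0$ is minimal. This example also survives if the definition is changed to require $b>a$ strictly.

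Nor can core elements be called purely $\kappa$-small, since they are $(\kappa+1)$-small.

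The statement that actually holds is the one your Steps 1--3 prove. Elements outside $C_\kappa$ have a well-defined stratum below $\kappa$. Elements of $C_\kappa$ are $\alpha$-small for every $\alpha$ and must be given a separate value in the stratification.
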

\begin{proof}
  \label{prf:1}
  Assume that \(a\) is not purely \(n\)-small for any \(n\). By definition \(a\) is 0-small. To not be purely 0-small, \(a\) must be 1-small, meaning \(a\) is small in some 0-small principal ideal. Proceeding by induction, \(a\) is \((n+1)\)-small for each \(n\). For each limit ordinal \(ω\), if \(a\) is \(n\)-small for all \(n < ω\), then \(a\) is \(ω\)-small by definition. Thus, by transfinite induction, \(a\) is \(α\)-small for every ordinal \(α\). But \(α\)-smallness requires \(a\) to be small within some \((α-1)\)-small principal ideal, which itself must be small within some \((α-2)\)-small principal ideal, and so on. This creates a strictly ascending chain of principal ideals of length \(α\) for each ordinal \(α\), so there exists an injection \(\mathbf{Ord} \hookrightarrow P\). Consequently, \(P\) cannot be small.
\end{proof}

So smallness stratifies each upward-bounded preorder into sections of the same size. This is very similar to how dimension stratifies various subobject lattices, and indeed these two stratifications
often overlap:

\begin{Example}
  \label{xmp:107}
  In \cref{xmp:60} we saw that a (topological) sub-manifold with boundary \(ι{:} A \hookrightarrow X\) is small in a manifold with boundary \(X\) if and only if
  \(m := \mathbf{dim}(A) < n := \mathbf{dim}(X)\). If \(A\) is non-empty then there exists \(p ∈ A\), and thus a local isomorphism \(σ_{p}\) from some open \(U_{p}\) in \(X\) to either \(ℝ^{n}\) or
  \(ℝ^{n}_{\geq 0}\), which without loss of generality we can choose so that \(ι|_{U_{p}}⨾ σ\) is isomorphic to \(ℝ^{m}\). In particular we can for each \(m < n' < n\) find an
  \(n'\)-dimensional disk or half-disk \(D_{p}\) around \(ι|_{U}⨾ σ(p)\). Let us take the union \(κ := \bigcup_{p ∈ A} D_{p}\) in \(X\). Then the \(σ_{p}|_{D_{p}}\) provide an atlas for
  \(κ\), so that \(κ\) is an \(m\)-dimensional submanifold with boundary of \(X\). Thus, the smallness fibration in \(\mathbf{SOb}_{∂ \mathbf{Mfld}}(X)\) agrees with the dimensional fibration.
\end{Example}

However, note that for instance in \cref{xmp:63} the smallness stratification does not align with the dimensional one at all.

\subsection{Monoidal Smallness}
\label{sec:monoidal-smallness}

Building on \cref{rmα_:20} we can extend \cref{defn_:58} in another way, namely by replacing joins and meets by some other monoidal product. We describe these notions here,
although we will only develop them further in future texts.

\begin{Definition}
  \label{defn_:24}
\index{monoidal smallness}
An element \(a\) of an ordered monoid \(M\) with a product \(+\) and a top element \(\top\) is \emph{\(+\)-small} if and only if for each \(b ∈ M\) such that \(a + b = \top\)
  it holds that \(b = \top\). Dually, an element \(a\) of an ordered monoid \(M\) with a contravariantly monotone monoidal product \(\otimes\) and a bottom element \(\bot\) is \emph{\(\otimes\)-dense} if and
  only if for each \(b ∈ M\) such that \(a \otimes b = \bot\) it holds that \(b = \bot\).
\end{Definition}

One of these notions is infinitely more useful than the other. Can you guess which one?

Correct, it is \(+\)-smallness. This is because the most important such product, addition, is strictly increasing on positive quantities \(a + b \geq a, a + b \geq b\). The reason
for this is the deep connection between joins and addition, which allows us to use \cref{defn_:24} for distinguishing finitude and infinity. This connection will only be sketched here, a deeper treatment will have to wait for the introduction of tols in
subsequent work. But the essential point is in how the joins of subsets of natural numbers interact with addition of their cardinalities.

Consider the preorder \(\mathbf{P}(ℕ)\) of subsets
of \(ℕ\): by \cref{xmp:9} no non-trivial element of \(\mathbf{P}(ℕ)\) is small or dense. On the other hand, in \(ℕ_{∞}\) only \(∞\) is not small by \cref{xmp:75}, so if we project
each finite subset of \(ℕ\) to its cardinality and each infinite subset to \(∞\), then the smallness structure of \(\mathbf{P}(ℕ)\) is destroyed. This is because this projection concentrates
\(\mathbf{P}(ℕ)\) into a line \(ℕ_{∞}\).

In particular, if \(ι \leq_{ℕ_{∞}} κ\) in the induced order \(\leq_{ℕ_{∞}}\) on \(\mathbf{P}(ℕ)\) given by \(ι \leq_{ℕ_{∞}} κ := \mathopen{∣}ι\mathclose{∣} \leq \mathopen{∣}κ\mathclose{∣}\), then there is
some \(ι'\) of the same cardinality that is a subset of \(κ\). So the cardinality \(\mathopen{∣}-\mathclose{∣}\) reflects \(\mathopen{∣}ι\mathclose{∣} ∨ \mathopen{∣}κ\mathclose{∣}\) if \(ι \leq κ\) but otherwise it essentially brings \(ι\) in a position where
its overlap with \(κ\) is the largest and \emph{then} takes the join, which is the least expansive way to take a join in \(\mathbf{P}(ℕ)\). But what if we take the join in \emph{most} expansive way before
projecting to \(ℕ_{∞}\), i.e. make \(ι\) and \(κ\) disjoint before taking the join? The natural numbers are actually the smallest set for which we can coherently do this, since they have an
isomorphism to their sum:

\[γ{:} ℕ \overset{\sim}{→} ℕ + ℕ = ℕ_{1} + ℕ_{2}{:} n ↦
\begin{cases}
  (n/2)_{1} \text{ if } n\ mod\ 2 = 0\\
  ((n{-}1)/2)_{2}\text{ else.}
\end{cases}\]
So if we have two subsets \(ι, κ \subseteq ℕ\), we can apply \(γ^{-1}\) to \(ι + κ \subseteq ℕ + ℕ\), which gives us two subsets \(ι' := γ^{-1}(ι + ∅)\) and \(κ' := γ^{-1}(∅ + κ)\) such that \(ι' \cap κ' = ∅\) and
thus \(ι' \cup κ'\) is maximally large. So we can define \(ι \oplus κ := γ^{-1}(ι + κ) = ι' \cup κ'\), which provides a semicocartesian laxly unital monoidal operation on \(\mathbf{P}(ℕ)\) (although we postpone
proofs of semicocartesianism, lax unitality and weak associativity to future work, where we provide them in a more general setting) such that \(\mathopen{∣}ι \oplus κ\mathclose{∣} = \mathopen{∣}ι\mathclose{∣} + \mathopen{∣}κ\mathclose{∣}\). So addition is the disjoint
union of subsets in the most literal sense.

Now, for \(n ∈ ℕ_{∞}\) consider the sub-preorder \([0, n]\) of \(ℕ_{∞}\) with the monoidal product \(+_{n}\) given by \((a + b) ∧ n\) (note that this is still associative). Since \([0, n]\)
is a total order, the only non-small element of \([0, n]\) is \(n\), but when is
\(a ∈ [0, n]\) \(+_{n}\)-small? It is easy to see that if \(n\) is finite, then \(a\) is \(+_{n}\)-small if and only if \(a = 0\) but if \(n = ∞\) then \(a\) is \(+_{n}\)-small if
and only if \(a < ∞\).

So the difference
in size between finitude and infinity only becomes significant if we assume the capability of duplication, whereupon every finite quantity becomes small compared to infinity, which cannot be
reached through a finite number of duplications. In particular, if we identify all infinite subsets of \(ℕ\) (e.g. use the \(ℕ_{∞}\)-valued scale from \cref{xmp_:10}), then \(ι\) is \(\oplus\)-small
if and only if it is finite.

\section{Tablets \& Scales}
\label{sec:tablets-spans-scales}

Let us put our new paradigm into praxis by providing the means for different subobject preorders to interact. For now, we do this abstractly on the level of preorders, in
\cref{sec:applications} we will lift our constructions to the level of categories.

\subsection{Tablets}
\label{sec:tablets}

We start by providing some tablets:

\begin{Definition}
  \label{defn_:14}\index{small under a tablet}\index{reduced under a tablet}\index{tablets}
  Let \(τ{:} T → P\) be a morphism of upward-bounded preorders and \(a\) an element of \(P\).
  We call \(τ\) the \emph{tablets} we use.

  \(a\) is \emph{small under \(τ\)} if for each join \(a ∨ τ(b) = \top_{P}\) it holds that \(b = \top_{T}\).

  \(a\) is \emph{reduced under \(τ\)} if for each element \(b ∈ P\) that is small under \(τ\) and each \(c ∈ T\) it holds that

  \begin{equation}
    \label{eq:2}
    \text{if } a \leq b ∨ τ(c) \text{ then } a \leq τ(c).
  \end{equation}
\end{Definition}
\begin{Remark}
  \label{rmα_:3}
  The use of inequalities for reducedness in \cref{defn_:14} allows us to bound the fuzziness between the images of elements of \(T\). Note that the formula \(a ∨ τ(b) = \top_{P}\) is already equivalent
  to \(a ∨ τ(b) \geq \top_{P}\) due to the definition of a top, so for smallness the question of whether to use an inequality doesn't arise.
\end{Remark}
\begin{Example}
  \label{xmp_:20}
  \cref{defn_:14} reduces to \cref{defn_:58} and \cref{defn_:7} when \(τ = \mathbf{id}_{P}\), i.e. each subobject of \(P\) is a tablet.
\end{Example}

We can understand these tablets as the forms we use to infer subobject-theoretic questions in our base structure when the collection of all its subobjects is too fine. This mirrors how it is done with topological spaces:

\begin{Example}
  \label{xmp_:26}
\index{Cauchy completeness!generalized}
\index{tablets!topological}
   Building on \cref{xmp_:27}, consider the inclusion \(ι_{\mathbf{C},X}{:}\mathbf{C}(X)\hookrightarrow\mathbf{P}(X)\) of the closed subsets of a topological space \(X\) into all subsets. A subset
  \(a \subseteq X\) is small under \(ι_{\mathbf{C},X}\) if and only if \(X\) itself is the only closed subset \(c\) such that \(a \cup c = X\), so if its closure is small, i.e. nowhere dense. A subset
  \(b \subseteq X\) is reduced under \(ι_{\mathbf{C},X}\) if and only if whenever \(b \leq a \cup c\) with \(a\) nowhere dense and \(c\) closed, then already \(b \leq c\), so if
  the closure \(\overline{b}\) is regular. So our tablets allow us to bound the sets in \(\mathbf{P}(X)\) by those in \(\mathbf{C}(X)\).

  Dually, if we take \(ι_{\mathbf{O}, X}{:} \mathbf{O}(X) \hookrightarrow \mathbf{P}(X)\) as the tablets, then a subset \(a\) of \(X\) is dense under \(ι_{\mathbf{O}, X}\) if and only if its meet with every open subset is non-empty,
  so if it is dense in the topological sense. \(b\) is sated if and only if whenever \(b\) is locally dense in that \(b \geq a \cap c\) with \(a\) open and \(c\) dense, then \(b\) just contains
  \(a\).

  In particular, let \(X\) be a metric space with its underlying topology and the tablets \(τ\) again given by the opens of \(X\). Then a subset \(ι{:} A \hookrightarrow X\) is \(τ\)-dense if and only if
  for each \(x ∈ X, r ∈ ℝ_{> 0}\) the intersection \(ι \cap U_{x, r}\) with the open ball around \(x\) with radius \(r\) is nonempty, i.e. if for each \(x\) there exists some Cauchy sequence
  converging to \(x\). Thus, \(ι\) is sated if and only if for each union of Cauchy sequences \(δ\) that converge to all of \(X\), if \(ι\) contains the restriction of \(δ\) to some open
  \(U\), then \(ι\) contains all of \(U\). So our notion generalizes a topological relative version of Cauchy completeness.

  We can also zoom into a point \(x\) by reducing our collection of tablets to the opens of \(X\) containing \(x\). This allows us to say that \emph{\(ι\) is \(τ\)-dense at \(x\)} if it
  contains some sequence converging to \(x\). But actual Cauchy completeness is not straight-forwardly described as satedness in this way, which makes sense since it is not
  homeomorphism-invariant.
\end{Example}

There are some subtleties in the definition though, namely the inequality and the restriction to \(T\) for reducedness. To understand these, let us consider another example where we
see how they work out:

\begin{Example}[Density and satedness for monoid actions]
  \label{xmp:monoid-action-density}
\index{monoid action!density and satedness}
\index{radial density}
  Let \(M\) be a monoid and \(X\) a set with a left action \(λ{:} M \hookrightarrow \mathbf{End}(X)\). Write \(\mathbf{SOb}_λ(X)\) for the preorder of sub-\(λ\)-sets of \(X\), ordered by inclusion; it is a bounded lattice with meets given by
  intersection and joins by union (since each union of \(λ\)-sets is a \(λ\)-set).

  Consider the tablets given by the inclusion \(τ_{\mathbf{SOb}_λ} {:} \mathbf{SOb}_λ(X) \hookrightarrow \mathbf{P}(X)\) of sub-λ-sets. Applying the definitions of density and satedness from
  \cref{rmk:dual-sigma-mu} we obtain that:

  \begin{enumerate}
  \item A subset \(D \subseteq X\) is dense under \(τ_{\mathbf{SOb}_λ}\) if and only if every sub-\(λ\)-set \(S \subseteq X\) such that \(D ∩ S = ∅\) is equal to
    \(\bot_{τ} = \bigcap_{ι ∈ \mathbf{SOb}_{λ}(X)} ι = ∅\) since the empty set is closed under the action of \(λ\). Equivalently, \(D\) meets every orbit.

    If \(k\) is a ring and \(X\) a \(k\)-module, then we can further restrict the tables \(τ'\) to be sub-modules of \(X\). Then \(\bot_{τ'}\) is no longer empty however, since each
    submodule of \(X\) contains a neutral element. In particular let \(k\) be a field and \(V\) a \(k\)-vector space equipped with the action of \(k\) by multiplication. The orbits are \(\bot_{τ} = \{0\}\) and the lines
  \(\mathcal{O}_v = \{ λv \mid λ ∈ k\}\) for \(v \neq 0\). Thus a subset \(D \subseteq V\) is dense if and only if \(D \cap \mathcal{O}_v \neq ∅ \text{ for every } v \neq 0\). One simple way this can happen is when
  \(0\) is in \(D\), otherwise \(D\) has to intersect all orbits individually.

    For instance if we take \(k = ℝ\) and \(V = ℝ^n\), the nonzero orbits are the lines through the origin, which correspond bijectively to points of the real projective space \(ℝ\mathbf{P}^{n-1}\) via the direction map
  \[
    q {:} ℝ^n \setminus \{0\} → R\mathbf{P}^{n-1},\quad v \mapsto [v],
  \]
  where \([v]\) denotes the equivalence class of \(v\) under scalar multiplication. If we instead take \(k\) as \(ℝ_{\geq 0}\) (but leave \(V\) the same), then \(q\) goes to \(\mathbb{S}^{n-1}\). Hence \(D\) is dense precisely when it contains the origin or its image under \(q\) is all of \(\mathbb{S}^{n-1}\); in geometric language, \(D\) is \emph{radially dense}: it is either at the center
  or has at least one point in every direction from the origin.

  \item A subset \(A \subseteq X\) is sated under \(τ_{\mathbf{SOb}_λ}\) if and only if whenever \(A \supseteq D ∩ B\) with \(D\) dense and \(B\) an \(λ\)-subset of \(X\), then already
    \(A \supseteq B\).
    If now \(a\) is some element of \(A\) and \(O\) an orbit that contains \(a\), then \((\{a\} \cup \bigcup_{\mathcal{O}'} \mathcal{O}') ∖ \bot_{τ}\) is dense, where
    \(\mathcal{O}'\) ranges over the orbits of \(X\) that do not contain \(a\) and the complement is set-theoretic. Moreover, if \(O\) is an orbit that contains \(a\) and whose intersection with any
    orbit that does not contain \(a\) factors through \(\bot_{τ}\), then \(O \cap (\{a\} \cup \bigcup_{\mathcal{O}'} \mathcal{O}') = \{a\} \subseteq A\), so that \(O \subseteq A\). In particular, if \(λ\) is a group or the action is one of a
    field on a vector space, then all orbits fulfill this condition , so that \(A\) has to contain every orbit it intersects.

  \item\label{item:55} \(S \subseteq X\) is small under \(τ_{\mathbf{SOb}_λ}\) if the smallest \(λ\)-set \(T\) whose union with \(S\) is \(X\) is the largest \(λ\)-subset, which is \(X\) itself, so if
    the complement of \(S\) is in no proper \(λ\)-subset of \(X\), meaning that the closure of \(¬ S\) under the action of \(λ\) contains \(S\), so if each element of \(S ∖ \bot_{τ}\) is a multiple of
    one outside it. If \(M\) is a group then its orbits are disjoint partitions, so that \(S\) is small if and only if it does not contain an orbit. So in \(ℝ^{n}\) as an \(ℝ\)-module \(S\)
    is small if it does not contain a (linear) line. But in a general monoid the property is looser: for instance in \(ℝ^{n}\) as a \(ℤ\)-module no central \(n\)-disk of radius \(> 0\) is small, since its
    elements cannot be generated by elements outside it.

  \item\label{item:56} \(R \subseteq X\) is reduced if for each small \(S\) and \(λ\)-set \(Q\) if \(R \subseteq S \cup Q\), then \(R \subseteq Q\). Now, if \(R \subseteq S \cup Q\), then \(R \subseteq \overline{S} \cup Q\), where \(\overline{S}\) is
    the closure of \(S\) under the action of \(λ\), so \(R\) is reduced if any part of \(R\) that is in a small \(λ\)-subset is also in any \(λ\)-subset that contains the rest of \(R\).
  \end{enumerate}
\end{Example}

Between these two examples we can get at how \cref{defn_:14} works: in \cref{xmp_:26} we infer about arbitrary subsets of a topological space through its open and closed subsets.
In \cref{xmp:monoid-action-density} we similarly infer about arbitrary subsets of a module, but with some added difficulties and particularities: for one, the intersection of all
tablets is not empty, so if, for instance, we did not use the meet of tablets \(\bot_{M}\) but demanded that a dense subset have a non-empty intersection with each non-empty tablet,
then a dense subset would have to intersect with \(\bot_{M}\) and so over a field contain \(0\), which would mess up the satedness calculation. Similarly, if, in \cref{eq:2} we
assumed that \(c\) would be in \(P\), so if we applied that equation to all subsets in \cref{xmp:monoid-action-density}, then the condition for satedness would be way too stringent
and trivialize. \emph{But} if we do restrict \(c\) to \(T\) but assume equality in \cref{eq:2} instead of inequality, then \cref{xmp_:26} would no longer work because we could not
infer anything about subsets more general than intersections of opens and dense subsets. It is only when we take inequalities and restrict \(c\) to \(T\) that we get the
``obviously'' right results for both examples.

\begin{Remark}
  \label{rmα_:28}
  In \cref{defn_:14} we use the same set of tablets for both the definition of smallness and reducedness. Most of our examples work fine like this, but it is possible to use a
  second set of tablets for reducedness instead of those used for smallness, i.e. use a second morphism \(τ'{:} T' → P\) to quantify \(\)
  second relativization so generally remain with the \(P\)-quantified case for simplicity, but occasionally the twice-relativized notion is necessary, e.g. in \cref{xmp_:22}.

 in \cref{eq:2} is ``absolute'', i.e. quantified over \(P\). Quantifying it over \(T\)
  does not generally yield good results, but we \emph{could} introduce a second set of tablets \(τ'{:} T' → P\) to quantify over \(c\). This further degree of generality can
  sometimes be both necessary and sufficient to achieve nice results, see in particular the results for satedness in \cref{xmp_:22}.
\end{Remark} 

But the calculus of tablets generally allows us to be creative in its usage:

\begin{Example}
  \label{xmp:21}
\index{abstract polytope!smallness and density}
\index{universal vertex}
  In \cref{coR_:3} we found that if a preorder \(P\) is atomistic, then its notion of density is trivial, and similar for smallness. However, if \(P\) is bounded in both directions
  we can obtain an interesting notion of density by considering the embedding \(ι_{cat}{:} P_{cat} \hookrightarrow P\) of the bounded preorder on co-atoms \(P_{cat}\) into \(P\). Density with
  respect to these tablets is more about the position in the preorder than dimension: an element is \(ι_{cat}\)-dense if and only if its intersection with every co-atom is
  non-empty. Dually, an element is \(ι_{at}\)-small if and only if it is in a proper face with every other atom. For points in a polytope this is a known property and a vertex that
  fulfills it is called a \emph{universal vertex} or \emph{dominating vertex}.

  If we apply this to the objects of the sites in \cref{obs_:1} an interesting split emerges:

  \begin{enumerate}
  \item \label{item:28} In globes and simplices (above dimension one), every (maximal) face touches every other face and thus all are \(cat\)-dense, but no other faces are
    \(cat\)-dense. On the other hand, every non-empty proper face shares some non-empty non-maximal face with any vertex, so all are small, while every maximal face shares no
    proper face with the vertex opposite to it, and thus is not small.

  \item\label{item:29} In cubes over dimension one, every proper face has an opposite face which it does not touch and thus is not small or dense. So no non-trivial faces or points
    are \(cat\)-dense or \(at\)-small.

  \item\label{item:30} In abstract polytopes, any case can arise. Moreover, while the abstract duality of the two makes it so that in many cases, such as the above, the two develop
    analogously, this is not according to any straightforward formal duality. For instance, consider an \(n\)-ary pyramid, consisting of a tip \(τ\), an opposite face \(ω\), \(n\)
    rays \(ρ_{i}\) between them and the \(n\) induced faces \(φ_{i}\). In this, there is a clear duality: there is one \(at\)-small vertex \(τ\) and one \(cat\)-dense maximal face
    \(ω\) and for \(n > 3\) no other faces are \(cat\)-dense or \(at\)-small. But if we now take any ray \(ρ_{i}\) and put a small digon into the middle of it in such in a way that
    it does not touch anything except \(ρ_{i}\) and its two adjacent faces, then \(τ\) remains small but (again for \(n > 3\)) there is no dense face anymore.
  \end{enumerate}

  This points again at the interesting geometry of abstract polytopes, which will be featured more in future work.
\end{Example}

But we postpone further examples a little. Let us first note that this extension is rather conservative:

\begin{Proposition}
  \label{■pn_:4}
  Let \(L\) be an upward-bounded preorder and \(T' \xrightarrow{τ'} T \xrightarrow{τ} L\) be morphisms of preorders of \(L\) such that \(τ\) and \(τ'\) reflect tops . If \(b ∈ L\) is \(τ\)-small, then it is \((τ'⨾ τ)\)-small. Moreover, if
  \(τ'\) is surjective, then \((τ'⨾ τ)\)-smallness and \(τ\)-smallness are equivalent. Thus, the smallness of \(b\) only depends on the underlying set of \(T\). In particular, we can assume without loss of
  generality that \(τ, τ'\) are order-reflecting embeddings (and thus fully faithful).
\end{Proposition}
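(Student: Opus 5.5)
The plan is to unwind \cref{defn_:14} directly. The first assertion is a one-line chase. Suppose \(b\) is \(τ\)-small and let \(c ∈ T'\) satisfy \(b ∨ τ(τ'(c)) = \top_{L}\). Applying \(τ\)-smallness to the element \(τ'(c) ∈ T\) gives \(τ'(c) = \top_{T}\). Since \(τ'\) reflects tops, \(c = \top_{T'}\). Hence \(b\) is \((τ'⨾ τ)\)-small. No joins beyond the one in the hypothesis are needed, which is consistent with \cref{rmα_:20}.

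For the converse under surjectivity of \(τ'\), assume \(b\) is \((τ'⨾ τ)\)-small and let \(d ∈ T\) satisfy \(b ∨ τ(d) = \top_{L}\).
\begin{enumerate}
\item Choose \(c ∈ T'\) with \(τ'(c) = d\). Then \(b ∨ τ(τ'(c)) = \top_{L}\), so \(c = \top_{T'}\).
\item It remains to show \(τ'(\top_{T'}) = \top_{T}\). By surjectivity, pick \(c_{0}\) with \(τ'(c_{0}) = \top_{T}\). Since \(c_{0} \leq \top_{T'}\), monotonicity gives \(\top_{T} = τ'(c_{0}) \leq τ'(\top_{T'})\). Hence \(τ'(\top_{T'}) = \top_{T}\), using the convention of \cref{sec:size-assumptions} that we write equalities in preorders.
\end{enumerate}
Together these give \(d = \top_{T}\), so \(b\) is \(τ\)-small and the two notions coincide.

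For the ``only depends on the underlying set'' and ``without loss of generality'' claims, I would factor \(τ\) through its image. Write \(τ = e⨾ m\), where \(e{:} T \twoheadrightarrow τ(T)\) is the corestriction and \(m{:} τ(T) \hookrightarrow L\) is the inclusion. Give \(τ(T)\) the order induced from \(L\), so \(m\) is an order-reflecting embedding and hence fully faithful. By monotonicity, \(τ(\top_{T})\) is the top of \(τ(T)\), so both pieces are morphisms of upward-bounded preorders.

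Once both \(e\) and \(m\) are known to reflect tops, the surjective case above, applied to \(e\) followed by \(m\), shows that \(τ\)-smallness equals \(m\)-smallness. The latter visibly depends only on the subset \(τ(T) \subseteq L\). The same factorization applied to \(τ'\) handles the second morphism.

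Top reflection for \(m\) is immediate: an element of the image equal to \(\top_{L}\) is in particular maximal in \(τ(T)\).

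The main obstacle is top reflection for \(e\). We need \(τ(d) = τ(\top_{T})\) to imply \(d = \top_{T}\). This follows from the hypothesis that \(τ\) reflects tops exactly when \(τ(\top_{T}) = \top_{L}\). I would first try to derive that equation from top reflection together with the smallness definition. If that fails, I would split into cases:
\begin{itemize}
\item If \(\top_{L} ∈ τ(T)\), top reflection forces \(τ(\top_{T}) = \top_{L}\), and the argument goes through.
\item If \(\top_{L} \notin τ(T)\), then no \(d\) can satisfy \(τ(d) = \top_{L}\), and I would check by hand that the smallness condition reads the same on \(T\) and on its image. Concretely, both say that no \(d\) other than \(\top_{T}\) (respectively, no image point other than \(τ(\top_{T})\)) joins with \(b\) to \(\top_{L}\).
\end{itemize}
The second case is where I expect the argument to need care.
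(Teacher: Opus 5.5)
Your first two steps are sound. The chase for the first assertion is exactly the paper's argument. Your surjectivity argument, where a preimage of \(\top_{T}\) forces \(τ'(\top_{T'}) = \top_{T}\), correctly supplies the converse, which the paper does not spell out. The gap is in your second case for the last part, and the check you plan there would fail. When \(\top_{L} \notin τ(T)\), the two conditions do not read the same. In \(T\), any \(d\) not equal to \(\top_{T}\) but with \(τ(d) = τ(\top_{T})\) can witness non-smallness, whereas in the image \(τ(T)\) that point is the top and never counts. For a concrete case, let \(L = \{0 < 1 < 2\}\), \(T' = T = \{0 < 1\}\), \(τ' = \mathbf{id}_{T}\), and let \(τ\) be constant with value \(1\). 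Both maps reflect tops (\(τ\) vacuously). Then \(2\) is not \(τ\)-small, since \(2 ∨ τ(0) = 2 = \top_{L}\) while \(0 \neq \top_{T}\). But \(2\) is small for the inclusion \(m{:} \{1\} \hookrightarrow L\) of the image, because the only element of the image is its top. So \(τ\)-smallness is not determined by the subset \(τ(T)\), and your corestriction \(e\) really does fail to reflect tops. The same example shows that \(τ(\top_{T}) = \top_{L}\) cannot be derived from the hypotheses.

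The missing idea is to modify \(τ\) at the top before factoring. The smallness condition is vacuous at elements equivalent to \(\top_{T}\). So define \(\hat{τ}\) to send those elements to \(\top_{L}\) and agree with \(τ\) elsewhere; it has exactly the same small elements as \(τ\). It is still monotone, and it still reflects tops precisely because \(τ\) does, which is where that hypothesis is used. Now \(\hat{τ}(\top_{T}) = \top_{L}\), so you are in your first case and the corestriction of \(\hat{τ}\) reflects tops. The subset of \(L\) to embed is therefore the image of the non-top elements of \(T\) together with \(\top_{L}\), not \(τ(T)\). The paper settles this part with the single remark that the join is taken in \(L\), so it does not confront this subtlety either. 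You were right that this case needs care, but the resolution you sketched for it is false.
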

\begin{proof}
  \label{prf_:6}
  If the only \(a ∈ T\) such that \(τ(a) ∨ b = \top_{L}\) is \(\top_{T}\), then if \((τ'⨾ τ)(a') ∨ b = \top_{L}\) it follows from \(τ\)-smallness that \(τ'(a') = \top_{T}\) and since \(τ'\)
  reflects \(\top_{T}\), \(a' = \top_{T'}\). Moreover, since the join is taken in \(L\), the order-theoretic structure of \(T\) does not matter.
\end{proof}

In particular \cref{■pn_:4} tells us that, the rougher our tablets, the harder it is to use them to complement some other \(ι\) without them covering the entire space, so the more things
will look not small, which makes sense. More generally:

\begin{Proposition}[Lax comparison for smallness]
  \label{prop:lax-smallness}
  Let \(L\) be an upward-bounded preorder, and let \(τ {:} T → L\) and \(τ' {:} T' → L\) be monotone maps. Suppose there exists a monotone map \(f {:} T' → T\) such that:
  \begin{enumerate}
    \item\label{item:78} \(τ' \le f⨾τ\), i.e. \(τ'(t') \le τ(f(t'))\) for all \(t' ∈ T'\);
    \item\label{item:111} if \(τ(f(t')) = \top_L\) then \(t' = \top_{T'}\).
  \end{enumerate}
  Then every \(τ\)-small element of \(L\) is also \(τ'\)-small.
\end{Proposition}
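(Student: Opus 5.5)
The argument is a direct unfolding of \cref{defn_:14}, using only monotonicity of joins in \(L\) and the convention of \cref{rmα_:20} that joins are used only where they exist and then satisfy their universal property. Fix a \(τ\)-small element \(a ∈ L\), and let \(t' ∈ T'\) satisfy \(a ∨ τ'(t') = \top_{L}\). We must show \(t' = \top_{T'}\). Condition (\ref{item:111}) reduces this to proving \(τ(f(t')) = \top_{L}\), and \(τ\)-smallness of \(a\) gives \(f(t') = \top_{T}\) as soon as we know \(a ∨ τ(f(t')) = \top_{L}\).

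The key step is therefore to show that the join \(a ∨ τ(f(t'))\) exists and equals \(\top_{L}\). For this I will copy the argument in the proof of \cref{prp:1}. Let \(e\) be any common upper bound of \(a\) and \(τ(f(t'))\); one exists, namely \(\top_{L}\). Condition (\ref{item:78}) gives \(τ'(t') \le τ(f(t')) \le e\), so \(e\) is also a common upper bound of \(a\) and \(τ'(t')\). Since \(a ∨ τ'(t')\) exists and equals \(\top_{L}\), its universal property gives \(\top_{L} \le e\), hence \(e = \top_{L}\). So \(\top_{L}\) is the only common upper bound, and it is therefore the join: \(a ∨ τ(f(t')) = \top_{L}\).

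We now chain the implications. By \(τ\)-smallness of \(a\), \(f(t') = \top_{T}\). Then \(τ(f(t')) = τ(\top_{T})\), and we need this to be \(\top_{L}\) in order to apply (\ref{item:111}). This is the one point I expect to need care. The cleanest route avoids it altogether: we have already shown \(a ∨ τ(f(t')) = \top_{L}\), and (\ref{item:111}) can be read as ruling out precisely the covering behaviour of \(τ(f(t'))\). If \(τ\) does not preserve tops, though, \(τ(\top_{T})\) need not be \(\top_{L}\). I would then read the hypotheses as implicitly assuming \(τ\) top-preserving, as in \cref{■pn_:4}, where tops are reflected and the case of interest is \(τ(\top_T) = \top_L\). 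Under that assumption, \(τ(f(t')) = τ(\top_{T}) = \top_{L}\), so (\ref{item:111}) gives \(t' = \top_{T'}\), and \(a\) is \(τ'\)-small.

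If top-preservation is not available, I would instead note that \(τ\)-smallness with \(τ(\top_T) \neq \top_L\) forces \(a ∨ τ(\top_T) \neq \top_L\) whenever that join exists. I would then check whether the statement must be weakened to that setting; the top-preserving case is the intended one.
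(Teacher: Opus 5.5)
You follow the paper's own route: monotonicity of the join, then $\tau$-smallness, then condition (2). Your check that $a \vee \tau(f(t'))$ actually exists and equals $\top_L$ improves on the paper, which simply asserts it.

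The point you flag is real, and it is a defect of the statement rather than of your argument. $\tau$-smallness gives $f(t') = \top_T$, not $\tau(f(t')) = \top_L$. The paper's proof takes exactly the unjustified step ``by $\tau$-smallness, $\tau(f(t')) = \top_L$''. If $\tau(\top_T) \neq \top_L$, the proposition fails. Here is a counterexample:
\begin{itemize}
\item $L = \{0, a, c, \top_L\}$ is the four-element Boolean algebra, so $a \vee c = \top_L$.
\item $T = \{\top_T\}$ with $\tau(\top_T) = c$.
\item $T' = \{s < \top_{T'}\}$ with $\tau'$ constant at $c$, and $f$ the unique map $T' \to T$.
\end{itemize}
Condition (1) holds with equality, and condition (2) is vacuous because $\tau$ never reaches $\top_L$. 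The element $a$ is $\tau$-small, since $T$ has no element other than $\top_T$. Yet $a \vee \tau'(s) = \top_L$ with $s \neq \top_{T'}$, so $a$ is not $\tau'$-small. Your closing hedge (``check whether the statement must be weakened'') therefore resolves negatively: it must be.

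Two things to fix in your write-up. First, delete the sentence claiming ``the cleanest route avoids it altogether''. Condition (2) only concerns $\tau(f(t'))$ equalling $\top_L$ on its own, so knowing $a \vee \tau(f(t')) = \top_L$ gives you nothing to feed into it.

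Second, a sharper repair than assuming $\tau$ top-preserving is to replace condition (2) by ``$f(t') = \top_T$ implies $t' = \top_{T'}$'', i.e.\ $f$ reflects tops. This is better for four reasons:
\begin{itemize}
\item It is exactly what your chain uses after invoking $\tau$-smallness.
\item It needs no hypothesis on $\tau$ at all.
\item It follows from (2) whenever $\tau(\top_T) = \top_L$.
\item It is precisely the top-reflection hypothesis on the map $T' \to T$ in the preceding proposition on composite tablets $T' \to T \to L$, which this result is meant to generalize.
\end{itemize}
With either repair, your proof is complete as written.
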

\begin{proof}
  Assume \(b ∈ L\) is \(τ\)-small, and let \(t' ∈ T'\) satisfy \(b ∨ τ'(t') = \top_L\).
  From \(τ'(t') \le τ(f(t'))\) we obtain \(b ∨ τ(f(t')) = \top_L\).
  By \(τ\)-smallness of \(b\), we must have \(τ(f(t')) = \top_L\).
  But \cref{item:111} then forces \(t' = \top_{T'}\), which is exactly what is required for \(b\) to be \(τ'\)-small.
\end{proof}
\begin{Remark}
  In the special case where \(f\) is surjective and \(τ' = f⨾τ\) so the inequality in \cref{item:78} becomes an equality \cref{item:111} is trivial, we recover the original statement of
  \cref{■pn_:4}.
  When \(τ\) and \(τ'\) are order-embeddings, \cref{item:111} simply says that if \(τ\) contains the top of \(L\), then \(τ'\) does as well.
\end{Remark}

This also allows us to recover the usual order-theoretic notion of density:

\begin{Example}
  \label{xmp_:25}
  We found in \cref{xmp:17} that a subset of a graph is small if and only if contains no edges or isolated vertices. Given a preorder \(P\), let
  \(ι_{\mathbf{non\text{-}small_{G}}}{:} \mathbf{non\text{-}small_{G}}(P) \hookrightarrow \mathbf{SOb}(P)\), be the embedding of all full sub-preorders of \(P\) that are not small in the
  underlying graph of \(P\) (without identities in \(P\)). Then \(\mathbf{non\text{-}small_{G}}(P)\) contains all isolated vertices and proper intervals in \(P\), i.e. the full
  sub-preorders \([a, b]\) for \(a < b\) consisting of all elements \(c\) in \(P\) such that \(a \leq c \leq b\). Thus, a sub-preorder of \(P\) is
  \(ι_{\mathbf{non\text{-}small_{G}}}\)-dense if and only if it meets isolated vertex and every proper interval in \(P\) non-trivially, so if it is dense in the order-theoretic
  sense.
\end{Example}

% In \cref{sec:open-clos-dual} we will lift our use of tablets to the categorical level, which will generalize how topological spaces interact with their category \(\mathbf{Top}\).

The final point we want to make is that, as with scales, the power of tablets exceeds our use case. In reality they are ubiquitous in mathematics and what has to be underlined is both their
elementarity and their heteromorphy, which can be displayed by an example we hinted at in the introduction: cones and hyperplanes are not common
subobjects of \(ℝ^{n}\) in any natural category that does not also contain many more subobjects. What connects them is that they can be embedded into the same object, \(ℝ^{n}\), which is
incarnated in different categories. \emph{So we fix the object and vary the categories} when we use hyperplanes as tablets to study cones. And while this is a somewhat abstract way to state
what we are doing, the result are conic sections, one of the most venerable subjects of mathematics!\footnote{Note that all the geometric information of \(ℝ^{n}\) is contained in
  \(\mathbf{P}(ℝ^{n})\) and its automorphisms, so when we analyze conic sections as the subobjects of a cone \(χ{:} C \hookrightarrow ℝ^{n}\) generated by the tablets that are hypersurfaces we can apply
  standard geometric arguments. Of course for these arguments we might need to introduce further kinds of subobjects, e.g. lines, but note how these have to be treated on a different levelling
  from hypersurfaces to not mess up the definition of conic sections. Note also that we are using only a tiny part of \(\mathbf{P}(ℝ^{n})\), we could restrict it a lot, e.g. to semialgebraic
  subsets. To obtain algebraic representations we would also have to fix the coordinate system induced by the coprojections \(ℝ \hookrightarrow ℝ^{n}\).}

\subsection{Scales}
\label{sec:scales}

We can relativize this system in the other direction as well, which allows us to encompass measure-theoretic examples, as we will do in \cref{sec:categ-meas-theory}:

\begin{Definition}
  \label{defn_:1}
\index{scale}
  A \emph{scale} on an upward-bounded preorder \(P\) is a map \(P \overset{σ}{→} S\) into some preorder \(S\). An element \(a\) of \(P\)  is \emph{small over \(σ\)} if for each \(b ∈ P\) and join
  \(a ∨ b= \top_{P}\) it holds that \(σ(b) = \top_{σ}\) where \(\top_{σ}\) is the top of the image of \(σ\). Dually for density.

  \(c\) is \emph{reduced over \(σ\)} if for each \(a\) that is small over \(σ\) and \(b ∈ P\) such that
  \(a ∨ b \geq c\)
 it holds that
  \(σ(b) \geq σ(c)\). Dually for satedness.
\end{Definition}
\begin{Remark}
  \label{rmα_:6}
  Unlike with tablets and \(P\) itself we do not necessarily assume that scales are upward-bounded. The difference is small, but conceptually a scale need not come from the
  subobjects of some object.
\end{Remark}
\begin{Example}
  \label{xmp_:21}
  Similarly to \cref{xmp_:20}, \cref{defn_:1} reduces to the absolute definition of the fourfold calculus when \(σ = \mathbf{id}_{P}\).
\end{Example}
\begin{Example}
  \label{xmp:28}
\index{measure!as scale}
  Let \(σ{:} \mathbf{M}(X) → ℝ_{\geq0, ∞}\) be a measure on the \(σ\)-algebra of a measure space \(X\). A set \(S\) in \(M\) is small over \(σ\) if and only if for each \(T\) such that
  \(S \cup T = X\) it holds that \(σ(T) = σ(X)\). There is a minimal such choice, which is the Boolean negation \(¬ X\), so \(S\) is small when \(σ(¬ S) = σ(X)\). By binary
  additivity, \(σ(S) + σ(¬ S) = σ(X)\), so when \(σ(X)\) is finite we can form \(σ(S) = σ(X) - σ(¬ S) = 0\) since \(ℝ_{\geq 0}\) is cancellative, so \(S\) is not \(σ\)-small if and
  only if \(σ(S) = 0\), so if it is negligible. But if \(σ(X) = ∞\), then

  \begin{itemize}
  \item each finite \(S\) is small by additivity,
    
  \item it does not hold that each \(S\) with \(σ(S) = ∞\) is not small: \(σ(¬ S)\) can still be infinite. So \(S\) is only small if and only if it is co-finite.
  \end{itemize}

  so our notion is not \emph{exactly} the same as negligibility, but we can derive measure-theoretic negligibility from \(σ\)-smallness by defining it
  through restriction to finite-measure subspaces. What is interesting however is the our notion of density \emph{does} reduce to having a negligible complement:
  \(D\) is dense over \(σ\) if for each \(T\) such that \(D \cap T = ∅\) it holds that \(σ(T) = 0\), so if its complement is negligible. Moreover, \(A\) is sated if for each such dense subset
  \(D\) it holds that if \(A\) contains \(D \cap T\) with \(D\) dense, then \(σ(A) \geq σ(T)\). Since \(σ(T) = σ(T \cap D) + σ(T \cap ¬ D) \leq σ(A) + σ(¬ D) = σ(A)\), this holds trivially in our case.

  Now, \(R\) is reduced over \(σ\) if for each small \(S\) and arbitrary \(T\) if \(R \subseteq S \cup T\) then \(σ(R) \leq σ(T)\). It follows from additivity that \(σ(S \cup T) \leq σ(S) + σ(T)\), so
  if \(σ(X)\) is finite then each non-negligible element of \(\mathbf{M}(X)\) is reduced. But also, if \(σ(R) = 0\), then \(σ(R)\) is minimal, so that \(R\) is \emph{also} reduced.
  So actually \emph{every} subset of \(X\) is reduced.

  If \(σ(X)\) is \(∞\) then we have to think some more. If \(R\) is small, then \(R \subseteq R \cup ∅\), so \(σ(R) \leq σ(∅) = 0\), so \(R\) is negligible. So to be non-negligible, \(R\) has to
  be non-small i.e. co-finite. But then if \(R \subseteq S \cup T\), then \(∞ = σ(R) = σ(T)\) since \(∞ = \top\). So \(\mathbf{M}(X)\) is the union of small and reduced elements, with the
  intersection of the two being negligible ones.

  Note in particular that we can do this for the linear
  extension \(\mathopen{∣}-\mathclose{∣}: \mathbf{P}(ℕ) → ℕ_{∞}\) of the unit measure, i.e. cardinality for finite sets and \(∞\) for infinite ones. Then a subset \(S\) of \(ℕ\) is \(σ\)-small if and only if each \(T\) such that
  \(S \cup T = ℕ\) has infinite cardinality, so if the complement of \(S\) is infinite, so if \(S\) is co-infinite.
\end{Example} 

So in general our framework allows us to measure smallness by introducing two optional parameters, tablets and a scale:

\begin{Definition}
  \label{defn_:21}
  \index{small under a tablet over a scale}\index{reduced under a tablet over a scale}
  Let \(T \overset{τ}{→} P \overset{σ}{→} S\) be a triple of maps of preorders where we assume that \(T\) and \(P\) are upward-bounded, and let \(a\) be
  an element of \(P\). Then \(a\) is \emph{small under \(τ\) over \(σ\)} if for each join \(a ∨ τ(b) = \top_{P}\) with \(b\) in \(T\) it holds that \((τ⨾ σ)(b) = \top_{σ}\) where
  \(\top_{σ}\) is the top of the image of \(σ\). Dually for density.

  \(c\) is \emph{reduced under \(τ\) over \(σ\)} if for each \(a\) that is small under \(τ\) over \(σ\) and \(b ∈ T\) such that \(a ∨ τ(b) \geq c\) it holds that
  
  \[(τ⨾ σ)(b) \geq σ(c).\]

  Dually for satedness.
\end{Definition}
\begin{Remark}
  \label{rmα_:38}
  As in \cref{rmα_:28} we can quantity \(b\) in \cref{defn_:21} for reducedness and satedness with a second set of tablets.
  % Again we don't generally assume this increased level of generality but
  % again it can be useful, particularly in more practical examples, such as \cref{xmp_:22}, not just where we need two different scales but where defining a scale for
  % smallness/density itself would be counter-productive (i.e. when there exists a canonical scale for the first ta).
\end{Remark} 

The first generalizes topological inferences, the second measure theoretic ones.

\begin{Notation}
  \label{ntn:1}
  As a convenient shorthand, we just write the parameters next to each other separated by \(|\)-signs. So we write ``\((τ \mid σ)\)-small'' for ``small under \(τ\) over \(σ\)''. Since all
  parameters are optional we leave those out that we don't need.
\end{Notation}

We have a statement similar to \cref{■pn_:4} but on the scale side:

\begin{Proposition}[Lax comparison for scales]
  \label{prop:lax-scale}
  Let \(P\) be an upward-bounded preorder, let \(τ {:} T → P\) be a monotone map of tablets and \(σ {:} P → S\), \(σ' {:} P → S'\) monotone maps of scales. Suppose there is a monotone
  map \(g {:} S → S'\) such that:
  \begin{enumerate}
    \item \label{item:57} \(σ' \ge σ⨾ g\), i.e. for all \(p ∈ P\),
      \[
        g(σ(p)) ≤_{S'} σ'(p),
      \]
    \item \label{item:58} the top elements of the images of \(σ\) and \(σ'\) correspond:
      \[
        g(\top_{σ}) = \top_{σ'},
      \]
      where \(\top_{σ}\) and \(\top_{σ'}\) denote the top elements of the images \(\mathbf{Im}(σ)\) and \(\mathbf{Im}(σ')\), respectively.
  \end{enumerate}
  Then any element of \(P\) that is \((τ \mid σ)\)-small is also \((τ \mid σ')\)-small.
\end{Proposition}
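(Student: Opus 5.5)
The plan is to unfold the definition of \((τ \mid σ)\)-smallness from \cref{defn_:21} and push the resulting equation through \(g\). This is a direct, one-step comparison, mirroring the proof of \cref{prop:lax-smallness} but on the scale side rather than the tablet side.

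Concretely, I fix \(a ∈ P\) that is \((τ \mid σ)\)-small and take an arbitrary \(b ∈ T\) with \(a ∨ τ(b) = \top_{P}\). By hypothesis \((τ⨾σ)(b) = \top_{σ}\), and the goal is \((τ⨾σ')(b) = \top_{σ'}\). The argument has two inequalities.

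\begin{enumerate}
\item Applying \(g\) and using \cref{item:58} gives \(g(σ(τ(b))) = g(\top_{σ}) = \top_{σ'}\). Then \cref{item:57}, instantiated at \(p = τ(b)\), gives \(\top_{σ'} = g(σ(τ(b))) \le_{S'} σ'(τ(b))\).
\item Conversely, \(σ'(τ(b))\) lies in \(\mathbf{Im}(σ')\), and \(\top_{σ'}\) is by definition the top of that image. Hence \(σ'(τ(b)) \le_{S'} \top_{σ'}\).
\end{enumerate}

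The two inequalities together give \(σ'(τ(b)) = \top_{σ'}\). Since \(b\) was arbitrary, \(a\) is \((τ \mid σ')\)-small.

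I expect no step to be a real obstacle; the only care needed is bookkeeping about what "top" and "equality" mean here. First, \(\top_{σ}\) and \(\top_{σ'}\) are tops of the images, not of \(S\) and \(S'\). The upper bound in the second step must therefore come from membership in \(\mathbf{Im}(σ')\), not from any boundedness of \(S'\); this matters because \cref{rmα_:6} explicitly does not assume scales are upward-bounded. Second, in a preorder the two inequalities only yield mutual \(\le\), that is, isomorphism. Under the convention of \cref{sec:size-assumptions} this is exactly equality, which is what \cref{defn_:21} requires, so I will simply invoke that convention at the last step.
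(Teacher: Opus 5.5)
Your proof is correct and follows the paper's argument step for step. Apply \(g\) to \(\sigma(\tau(b)) = \top_{\sigma}\), use the two hypotheses to get \(\top_{\sigma'} \le \sigma'(\tau(b))\), and conclude equality from the maximality of \(\top_{\sigma'}\) in \(\mathbf{Im}(\sigma')\); your explicit remark that this upper bound comes from membership in the image rather than from any boundedness of \(S'\) is exactly the point the paper uses implicitly.
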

\begin{proof}
  Let \(a ∈ P\) be \((τ \mid σ)\)-small. By \cref{defn_:21}, this means:
  \[
    (\forall b ∈ T)\; \bigl(a ∨ τ(b) = \top_P\bigr) \Rightarrow σ\bigl(τ(b)\bigr) = \top_{σ}.
  \]
  Now suppose \(b ∈ T\) satisfies \(a ∨ τ(b) = \top_P\). Then by \((τ \mid σ)\)-smallness,
  \[
    σ\bigl(τ(b)\bigr) = \top_{σ}.
  \]
  Applying \(g\) and using \cref{item:58} gives
  \[
    g\bigl(σ(τ(b))\bigr) = g(\top_{σ}) = \top_{σ'}.
  \]
  Using \cref{item:57}, we also have
  \[
    g\bigl(σ(τ(b))\bigr) \le σ'(τ(b)).
  \]
  Since \(\top_{σ'}\) is the maximum element in the image of \(σ'\), this inequality forces
  \[
    σ'(τ(b)) = \top_{σ'}.
  \]
  This is exactly the condition that \(a\) is \((τ \mid σ')\)-small.
\end{proof}
\begin{Corollary}
  \label{coR_:4}
  Given \(σ{:} P → S\) and an  order-preserving injection \(f{:} S → S'\), smallness over \(σ\) and \(σ⨾ f\) are equivalent.
\end{Corollary}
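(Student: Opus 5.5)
The plan is to prove both implications of the equivalence directly from \cref{defn_:21} (with trivial tablets, or with arbitrary \(τ\), since the argument does not touch the tablet side), and to use \cref{prop:lax-scale} for the easy direction. First I will identify the top of the image of \(σ⨾ f\). Since \(f\) is monotone and \(\top_{σ}\) is the maximum of \(\mathbf{Im}(σ)\), the element \(f(\top_{σ})\) lies in \(\mathbf{Im}(σ⨾ f)\) and dominates every \(f(σ(p))\). Hence \(\top_{σ⨾ f} = f(\top_{σ})\). With this, hypotheses \cref{item:57} (with equality \(σ' = σ⨾ f\)) and \cref{item:58} of \cref{prop:lax-scale} hold for \(g := f\). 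So \(σ\)-smallness implies \((σ⨾ f)\)-smallness, and the same holds with any tablets \(τ\) in place.

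For the converse, let \(a\) be \((σ⨾ f)\)-small and \(b\) be such that \(a ∨ b = \top_{P}\) (or \(a ∨ τ(b) = \top_{P}\) with tablets). Then \(f(σ(b)) = \top_{σ⨾ f} = f(\top_{σ})\), and injectivity of \(f\) gives \(σ(b) = \top_{σ}\), which is exactly \(σ\)-smallness. No order reflection is needed, only injectivity on underlying elements, because the smallness condition over a scale is an equality with the image top rather than an inequality.

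The only step requiring care is the identification \(\top_{σ⨾ f} = f(\top_{σ})\). Here \(\top_{σ}\) is meant as the maximum of \(\mathbf{Im}(σ)\), which exists because \(P\) is upward-bounded: it is \(σ(\top_{P})\) by monotonicity. Consequently \(\top_{σ⨾ f} = f(σ(\top_{P}))\) without further assumption. Since \(S\) is only a preorder, equalities should be read up to the preorder's equivalence, in line with the paper's univalent convention from \cref{sec:size-assumptions}, where injectivity is taken on this quotient. I expect this bookkeeping to be the main, and rather mild, obstacle.
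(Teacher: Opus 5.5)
Your argument is correct and matches the paper's intended route. The paper gives no separate proof and derives the statement from \cref{prop:lax-scale} with \(g := f\); that is exactly your forward direction once \(\top_{σ⨾ f} = f(\top_{σ})\) is observed. \cref{prop:lax-scale} only yields that one direction, since reversing it would need a monotone \(g{:} S' \to S\) meeting its hypotheses, which need not exist. Your cancellation step, using injectivity of \(f\) on equivalence classes, is therefore precisely the piece the paper leaves implicit.
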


This is the exact scale-side analogue of \cref{■pn_:4} for tablets: the ``rougher'' the scale (in the sense of collapsing more values into the top of its image), the more elements of \(P\)
appear small with respect to it. In particular, the only contribution of \(σ\) to the measurement is the filter in the preimage of its maximal image element, so one might question whether its
definition might not be simplified. But here, as with the tablets there are reasons for doing things this way, even beyond those we outlined in \cref{sec:subobject-embeddings}: for one, consider the dualization of \cref{defn_:21}, concerning dense elements: it is identical except for using downward-bounded preorders.

\begin{Remark}[Dualization for density and satedness]
  \label{rmk:dual-sigma-mu}
  By order-theoretic duality, one obtains analogous notions for downward-bounded preorders, which we call \emph{density} and \emph{satedness} under a tablet and over a scale.
  Explicitly, let \(T \xrightarrow{τ} P \xrightarrow{σ} S\) be morphisms of preorders, and assume that \(P\) and \(S\) are downward-bounded (with bottoms \(\bot_P\) and \(\bot_S\))
  and that the image of \(σ\) has a bottom element \(\bot_σ\).

  An element \(d \in P\) is \emph{dense under \(τ\) over \(σ\)} if for every \(b \in T\) with \(d ∧ τ(b) = \bot_P,\) it holds that
  \((τ⨾σ)(b) = \bot_σ\). An element \(e \in P\) is \emph{sated under \(τ\) over \(σ\)} if for every \(d \in P\) that is dense under \(τ\) over \(σ\) and every \(b \in T\) such that \(d ∧ τ(b) \le e\)
  we have \((τ⨾σ)(b) \le σ(e)\).
\end{Remark}

Thus, if the preorders in question have both an upward and a downward bound, \emph{the same data} can be used to define them, in which case both the upper filter and the lower ideal of
\(σ\) matter. Moreover, our treatment allows us to consider the smallness relations in different preorders using the same scale and/or tablet preorder by forming under/overcategories and
provides a fluid transition to measure theory, which will be used in \cref{sec:applications}.

In \cref{xmp:28} we saw already that \cref{prp:5} generalizes in a relativized way. Let us take a moment to describe this:

\begin{Proposition}
  \label{■pn_:13}
  Let \(T \xhookrightarrow{τ} P \xhookrightarrow{σ} S\) be tablets and a scale with \(T\) being a bounded preorder (and \(P\) upward-bounded). Then \(a\) is \((τ, σ)\)-small and
  \((τ, σ)\)-reduced if and only if \(σ(a) \leq \bot_{τ⨾ σ}\).
\end{Proposition}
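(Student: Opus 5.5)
The plan is to mimic the proof of \cref{■pn_:5}, with \(\bot_P\) replaced by the image \(τ(\bot_T)\) of the bottom tablet. First I fix what \(\bot_{τ⨾σ}\) means. Since \(T\) is bounded and \(τ⨾σ\) is monotone, \((τ⨾σ)(\bot_T)\) is below every element of the image of \(τ⨾σ\). So \(\bot_{τ⨾σ} = σ(τ(\bot_T))\), the bottom of that image. Throughout I read the hooked arrows \(τ, σ\) as order embeddings; as justified by \cref{■pn_:4} and \cref{coR_:4}, this is no loss for smallness. I will use in particular that \(σ\) reflects the order.

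\emph{Forward direction.} Assume \(a\) is both \((τ, σ)\)-small and \((τ, σ)\)-reduced. I apply reducedness with the small element \(a\) itself and the tablet \(\bot_T\). Any common upper bound of \(a\) and \(τ(\bot_T)\) lies above \(a\), so the hypothesis \(a ∨ τ(\bot_T) \geq a\) of \cref{defn_:21} holds. By \cref{rmα_:20}, if the join exists it satisfies its universal property, and this is all that is needed. Reducedness then gives \((τ⨾σ)(\bot_T) \geq σ(a)\), that is, \(σ(a) \leq \bot_{τ⨾σ}\). This is the exact analogue of the choice \(b = a\), \(c = \bot\) in \cref{■pn_:5}.

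\emph{Converse, reducedness.} Assume \(σ(a) \leq σ(τ(\bot_T))\). Reducedness is immediate: for any small element and any \(b ∈ T\) we have \((τ⨾σ)(b) \geq (τ⨾σ)(\bot_T) \geq σ(a)\) by monotonicity. So the hypothesis \(\ldots \geq a\) is not even needed.

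\emph{Converse, smallness (main obstacle).} This is where I expect the difficulty. Smallness over a scale asks that whenever \(a ∨ τ(b) = \top_P\) we get \((τ⨾σ)(b) = \top_σ\). For an arbitrary monotone \(σ\), the hypothesis \(σ(a) \leq \bot_{τ⨾σ}\) does not control joins in \(P\), so some reflection property of \(σ\) is needed. I would first use that \(σ\) is an order embedding. Then \(σ(a) \leq σ(τ(\bot_T))\) gives \(a \leq τ(\bot_T) \leq τ(b)\) for every \(b ∈ T\). Hence \(a ∨ τ(b) = τ(b)\), so \(a ∨ τ(b) = \top_P\) forces \(τ(b) = \top_P\). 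Then \((τ⨾σ)(b) = σ(\top_P)\), which is the top \(\top_σ\) of the image of \(σ\) by monotonicity, and \(a\) is small.

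If the embedding assumption is to be avoided, my fallback is a subadditivity-type condition on \(σ\), as in the measure case of \cref{xmp:28}. That route is where I expect the real work to lie.
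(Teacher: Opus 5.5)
Your argument is correct under the two readings you adopt ($\sigma$ order-reflecting, and partial joins handled via common upper bounds). It takes a different route from the paper.

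The paper writes out only the ``if'' direction. Its reducedness step is the same as yours. Its smallness step, however, is the chain \(\top_\sigma=\sigma(a\vee\tau(b))\leq\sigma(a)\vee\sigma(\tau(b))\leq\sigma(\tau(\bot_T))\vee\sigma(\tau(b))=\sigma(\tau(b))\). The first inequality there is subadditivity of \(\sigma\); monotonicity alone gives only the reverse inequality. That is exactly your fallback: it suits measure-type scales as in \cref{xmp:28}, but it is not among the stated hypotheses.

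Your order-reflection argument (\(a\leq\tau(\bot_T)\leq\tau(b)\), hence \(a\vee\tau(b)=\tau(b)\)) needs no joins in \(S\) at all. Neither extra hypothesis implies the other: measures are subadditive but not order-reflecting, while an order embedding need not be subadditive. You also supply the ``only if'' direction, instantiating reducedness with the small element \(a\) itself and the tablet \(\bot_T\), as the analogue of \(b=a\), \(c=\bot\) in \cref{■pn_:5}. The paper only gestures at this by saying that proof ``mostly carries over''.

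Two points need repair.

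First, order-reflection of \(\sigma\) is not a harmless normalization. \cref{coR_:4} only lets you postcompose \(\sigma\) with an injection \(S\to S'\), and a measure is far from order-reflecting. Take it as a hypothesis read off the hooked arrow. Mere injectivity does not suffice: take \(P=\{a,x,\top_P\}\) with \(a,x\) incomparable, \(T=\{\bot_T<\top_T\}\), \(\tau(\bot_T)=x\), \(\tau(\top_T)=\top_P\), and \(\sigma\) into the chain \(0<1<2\) with \(\sigma(a)=0\), \(\sigma(x)=1\), \(\sigma(\top_P)=2\). Then \(\sigma(a)\leq\sigma(\tau(\bot_T))\), yet \(a\vee\tau(\bot_T)=\top_P\) while \(\sigma(\tau(\bot_T))=1\neq 2=\top_\sigma\), so \(a\) is not small.

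Second, in the forward direction \(\tau(\bot_T)\) need not be \(\bot_P\), so \(a\vee\tau(\bot_T)\) may not exist. Your sentence about the universal property does not cover that case. Your preceding sentence implicitly reads the premise ``\(a\vee\tau(b)\geq c\)'' of \cref{defn_:21} as saying that every common upper bound of \(a\) and \(\tau(b)\) lies above \(c\); state this explicitly. Under the reading ``the join exists and lies above \(c\)'' the implication is false. For instance:
let \(a,x\) be incomparable, both below the incomparable elements \(y,z\), which lie below \(\top_P\);
let \(T\) be the sub-preorder \(\{x,y,z,\top_P\}\) (bounded, with bottom \(x\)), \(\tau\) the inclusion and \(\sigma=\mathbf{id}_P\).
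Then \(a\vee x\) does not exist and the small elements are \(a\) and \(x\). Moreover, \(a\) passes every reducedness test in the ``join exists'' sense, yet \(a\not\leq x=\sigma(\tau(\bot_T))\).
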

\begin{proof}
  \label{prf_:5}
  The proof of \cref{■pn_:5} mostly carries over:

  Because \(T\) is bounded, \(\bot_{τ⨾ σ} := σ(τ(\bot))\) exists.
  If \(σ(a) \leq \bot_{τ⨾ σ}\), then \(a\) is \((τ, σ)\)-small: if \(a ∨ τ(b) = \top_{P}\), then

  \[\top_{σ} = σ(\top_{P}) = σ(a ∨ τ(b)) \leq σ(a) ∨ σ(τ(b)) \leq  σ(τ(\bot_{T})) ∨ σ(τ(b)) = σ(τ(b)),\]

  and thus \(\top_{σ} = σ(τ(b))\).
  
  If \(σ(a) \leq \bot_{τ⨾ σ}\), then \(a\) is \((τ, σ)\)-reduced:\(\bot\) is reduced: if \(τ(b) ∨ c \geq a\) with \(b\) small, then \(σ(τ(b)) \geq σ(τ(\bot_{T})) = \bot_{τ⨾ σ}\) by the universal property
  of \(\bot_{T}\).
\end{proof}

Both tablets and scales are general notions and sometimes smallness of one of them can be generated by the other:

\begin{Proposition}
  \label{■pn_:12}
  Let \(ι{:} A \hookrightarrow P\) be a proper full sub-preorder in \(\mathbf{UBP}\) such that for each \(a, b ∈ ι\) there exists a \(c\) in \(ι\) such that \(a \leq c, b \leq c\). Then there exists a scale \(σ{:} P → S\)
  such that \(d ∈ P\) is small if and only if there exists an \(a ∈ ι\) such that \(d \leq a\).
\end{Proposition}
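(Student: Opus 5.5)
The plan is to use a two-valued scale and reduce everything to one up-set. Let \(I\) be the downward closure of \(ι\) in \(P\). The directedness hypothesis makes \(I\) an ideal: given \(x \leq a\) and \(y \leq a'\) with \(a, a' ∈ ι\), pick \(c ∈ ι\) above both \(a\) and \(a'\). Properness should mean \(\top \notin ι\). If instead \(\top ∈ ι\), then \(I = P\), and a constant scale makes every element small, so that case is trivial.

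For \(d ∈ P\) write \(C(d) := \{b ∈ P \mid d ∨ b = \top\}\). Two elementary facts are needed. First, \(C(d)\) is an up-set: if \(d ∨ b = \top\) and \(b \leq b'\), then every upper bound of \(d, b'\) is an upper bound of \(d, b\), hence equals \(\top\). Second, \(d \leq d'\) implies \(C(d) \subseteq C(d')\), by the same argument as in \cref{prp:1}.

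Take \(S = \{0 < 1\}\) and \(σ(b) = 1\) if and only if \(b ∈ F\), for an up-set \(F \ni \top\) still to be chosen. Then \(σ\) is monotone and \(\top_{σ} = 1\). By \cref{defn_:1}, \(d\) is small over \(σ\) exactly when \(C(d) \subseteq F\). So the whole statement reduces to choosing \(F\) with
\[C(d) \subseteq F \iff d ∈ I .\]
The candidate I would try first is \(F := \bigcup_{a ∈ ι} C(a)\). It is a union of up-sets, hence an up-set, and it contains \(\top\).

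The direction "\(d \leq a ∈ ι\) implies \(d\) is \(σ\)-small" is then immediate: \(C(d) \subseteq C(a) \subseteq F\). With this choice, however, it is essentially built in, and directedness of \(ι\) is not used there. Directedness, together with properness, is what I would use to show that \(F\) does not collapse onto all of \(P\). Concretely, if \(b \leq a' ∈ ι\) and \(d \leq a ∈ ι\), take \(c ∈ ι\) above \(a\) and \(a'\). Then \(d ∨ b = \top\) would force \(\top \leq c\), contradicting \(\top \notin ι\). Hence \(C(d) ∩ I = ∅\) for every \(d ∈ I\), and in particular \(F\) is disjoint from \(I\). This is the sharp form of the indicator choice \(F = P \setminus I\), which is the other natural candidate.

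The main obstacle is the converse: for \(d \notin I\) I must produce some \(b ∈ C(d) \setminus F\). That means a \(b\) joining with \(d\) to \(\top\) while joining to \(\top\) with no element of \(ι\). This cannot come for free. Any absolutely small \(d\) has \(C(d) = \{\top\}\), which lies in every \(F\), so \(d\) is small over every scale. The statement therefore silently needs \(I\) to contain all small elements of \(P\).

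My plan at this step is as follows.
\begin{enumerate}
\item Check whether this requirement can be derived from the hypotheses, or whether it must be added (equivalently, replace \(I\) by the ideal generated by \(I\) and the small elements, which is again an ideal by \cref{prp:1}).
\item Under that assumption, try to build the witness \(b\). For a non-small \(d \notin I\), pick some \(b_0 \neq \top\) with \(d ∨ b_0 = \top\), and try to shrink \(b_0\) inside \(C(d)\) away from \(F\).
\item Failing that, switch to \(F = P \setminus I\), which is an up-set because \(I\) is down-closed. Then verify directly that every \(d \notin I\) joins to \(\top\) with some element of \(I\).
\end{enumerate}
I expect this final verification, rather than the construction of \(σ\), to be where the real content (or a needed extra hypothesis) lies.
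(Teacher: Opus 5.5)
The gap is the converse, which you leave open. It cannot be closed, because the statement is false as written, and your suspicion about absolutely small elements is exactly right. Your forward half is correct. Your fallback \(F=P\setminus I\), together with your argument that \(C(d)\cap I=\emptyset\) for \(d\in I\), is precisely the paper's proof. The paper takes the indicator scale of \(I\) and checks only that elements of \(I\) are small over it; it phrases the disjointness as closure of \(\sigma^{-1}(\bot)\) under binary joins, and never argues the converse.

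Your own observation already gives a counterexample. Take the chain \(0<x<\top\) with \(\iota=\{0\}\), which is proper, directed and even upward-bounded. Then \(I=\{0\}\) but \(C(x)=\{\top\}\), since \(x\vee 0=x\vee x=x\). So \(x\) is small over every scale although \(x\notin I\). The statement fails equally if ``small'' is read absolutely, since \(0\) and \(x\) are both small. Hence step 1 of your plan has a negative answer.

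Adding the hypothesis that \(I\) contains all small elements does not rescue the claim either. Let \(\bot<a<d<\top\), add \(b\) with \(\bot<b<\top\) incomparable to \(a\) and \(d\), and take \(\iota=\{\bot,a\}\). The only small element is \(\bot\in I\). But \(C(a)=C(d)=\{b,\top\}\), so any scale making \(a\) small also makes \(d\notin I\) small. This also shows step 2 has nothing inside \(C(d)\) to shrink to.

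Your reduction in fact yields the exact criterion. For an arbitrary scale, \(\top_\sigma=\sigma(\top_P)\), and the set \(F_\sigma=\{b\mid\sigma(b)=\top_\sigma\}\) is an up-set containing \(\top\). Moreover \(d\) is small over \(\sigma\) if and only if \(C(d)\subseteq F_\sigma\), so two-valued scales lose no generality. Making every element of \(I\) small forces \(F_\sigma\supseteq F_0:=\bigcup_{a\in\iota}C(a)\), your first candidate, which is therefore the optimal choice. Consequently, for nonempty \(\iota\), a scale as claimed exists if and only if every \(d\notin I\) admits some \(b\) with \(d\vee b=\top\) but \(a\vee b\neq\top\) for all \(a\in\iota\).

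Since you showed \(F_0\subseteq P\setminus I\), switching to \(F=P\setminus I\) in step 3 can only enlarge the set of small elements, so it cannot help. What the given hypotheses actually support is the one-sided statement: every element of \(I\) is small over the scale, which both \(F_0\) and \(P\setminus I\) achieve. The biconditional needs the criterion above as an additional hypothesis.
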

\begin{proof}
  \label{prf_:3}
  Take \(S\) to be the two-element total order and define \(σ\) by

  \[σ(d) := \begin{cases}
      \bot \text{ if and only if there exists some \(a ∈ ι\) such that} d \leq a, \\
      \top \text{ else}
    \end{cases}.\]

  This is a preorder morphism by definition. Since \(P\) has a top element and \(ι\) is proper, the fiber \(σ^{-1}(\bot)\) over \(\top\) is inhabited, and by assumption \(σ^{-1}(\bot)\) is
  closed under binary joins, so each element in it is \(σ\)-small.
\end{proof}

So we might think that we can construct a scale from each family of tablets with the same smallness relations, like this:

\begin{Example}
  \label{xmp_:18}
  In \cref{xmp_:26} we saw that a subset of a topological space \(X\) is small with respect to the tablets consisting of closed subsets of \(X\) if and only if it is nowhere dense.
  We can define a scale \(σ{:} \mathbf{P}(X) → S\) on \(\mathbf{P}(X)\) where \(S\) consists of the nowhere dense subsets of \(X\) and a freely added top \(\top\), and scale is the
  identity on nowhere dense subsets and maps every other subset to \(\top\).
\end{Example}

But this is not true:

\begin{Counterexample}
  \label{cxmp_:1}
  Let \(P\) be the upward-bounded preorder on elements
  \(\top, a, b, c, d, e\) with relations \(a \leq d, b \leq e, c \leq d, c \leq e\) and, of course, \(\top\) above everything. Then \(a ∨ c = d\) and \(b ∨ c = e\). Let moreover \(τ\) be the embedding of
  \(\{c, \top\}\). Then both \(a\) and \(b\) are \(τ\)-small but the join of \(a\) and \(b\) is \(\top\), so that they are not small under any scale that trivializes \(d\) and \(e\) by mapping them to
  \(\top{σ} := σ(\top_P)\) but preserves \(a\) and \(b\) by mapping them below \(\top_{σ}\). The underlying reason for this is that, unlike in the absolute case, where the tablets are
  \(id_{P}\) where smallness is stable under joins, this is not always the case for non-trivial tablets. So in particular \(τ\)-small elements are can be non-small against each other.
\end{Counterexample}

In general, \cref{■pn_:12} tells us that for \emph{any} ideal in \(P\) we can construct \emph{some} scale. But usually, as in the following examples, the point rather to start with
something that it seems reasonable to use as a scale, and specialize the fourfold calculus to it.

\subsection{Examples}  
\label{sec:examples-1}

This system allows us to mix topological and measure-theoretic smallness:

\begin{Example}
  \label{xmp:2}
\index{density!topological-measure-theoretic}
\index{fat Cantor set}
  A (not necessarily open) subset \(ι\) of \(X\) is dense with respect to the morphism triple
  \(\mathbf{O}(X) \hookrightarrow \mathbf{M}(X) → ℝ_{∞, \geq 0}\), where \(\mathbf{M}(X)\) are the measurable subsets of \(X\), if and only if for each open subset \(κ\) of \(X\) it holds that if
  \(ι ∧ κ = ∅\) in \(\mathbf{M}(X)\), then \(κ\) has measure \(0\). So \(ι\) is dense if and only if the only open subsets that it does not intersect with are measure-theoretically negligible.

  If we take, for concreteness, \(X = ℝ^{n}\) with its usual measure and topology, then this is a slightly disappointing example since no non-trivial open subset of \(ℝ^{n}\) has measure \(0\).
  But what if we switch out \(\mathbf{O}(X)\) with \(\mathbf{C}(X)\)? Then \(ι\) is dense if and only if each closed subset \(κ\) of \(ℝ^{n}\) that \(ι\) does not intersect with has measure
  \(0\). Now, if \(κ\) is regular, so if it is the closure of a non-trivial open, then its measure is surely not \(0\), so it has to intersect with \(ι\). But there are small (and so in
  particular non-regular) closed subsets of \(ℝ^{n}\) that do not have measure \(0\), such as a \emph{fat Cantor set} (or \emph{Smith-Volterra-Cantor set}), which is closed, nowhere dense,
  yet has positive Lebesgue measure.

  Thus, the density condition with respect to closed tablets and the Lebesgue scale is sensitive to these ``thick'' nowhere dense sets while also excluding subsets that are too small such as
  atoms. In any case, it is a different and unexpected notion. Now, what does completeness mean? Well, if \(ι \supseteq κ \cap λ\), with \(κ\) closed-dense and \(λ\) closed, then \cref{rmk:dual-sigma-mu} tells
  us that \(σ(ι) \geq σ(λ)\), so completeness says that if \(ι\) is dense in a closed \(λ\), then \(ι\) is at least as big as \(λ\) when measured.

  Dually, \(ι\) is closed-\(σ\)-small if and only if each closed \(κ\) such that \(ι \cup κ = X\) has the same measure as \(X\) and similar for opens, and \(ι\) is closed-reduced if for each
  closed \(κ\) and small \(λ\) it holds that if \(ι\) is contained in \(κ \cup λ\) then \(σ(ι) \leq σ(κ)\) and similar for opens.
\end{Example}

It also allows us to introduce weight and rounding:

\begin{Example}
  \label{xmp_:10}
  Let \(S\) be a set, \(w{:} S → ℝ_{\geq 0, ∞}\) an arbitrary weighting function and \(\overline{w}{:} \mathbf{P}(S) → ℝ_{\geq 0, ∞}\) be defined by linear extension, i.e. for some subset
  \(ι{:} A \hookrightarrow S\) of \(S\) and well-ordering \(\overset{>}{A}\) of \(A\) the limit \(\mathbf{lim}_{a ∈ \overset{>}{A}} ∑_{b ∈ (a)} w(ι(b))\), with that being \(∞\) if that series
  diverges. Note that since the reals are Cauchy-complete and \(w\) is positively valued, each such \(\overline{w}\) on each \(ι\) either goes to infinity for every well-ordering
  of \(A\) or is absolutely convergent on some well-ordering, which implies that it is so on every well-ordering. In other words, this definition is independent of the chosen
  order. Since \(\overline{w}\) is given by linear extension, it is binarily additive, so smallness and density are special cases of \cref{xmp:28}.
\end{Example}
\begin{Example}
  \label{xmp_:12}
  In the situation of \cref{xmp:28} let \(\lfloor - \rfloor_{c}{:} ℝ_{\geq0, ∞} → ℕ_{\geq0, ∞}\) be given on \(r ∈ ℝ_{\geq 0, ∞}\) by the largest \(n ∈ ℕ_{\geq0, ∞}\) such that
  \(r \geq nc\). Note that these definitions also work for \(r = ∞\) and \(c = ∞\). Then \(ι\) is \((σ⨾ \lfloor - \rfloor_{c})\)-small if and only if
  \(\lfloor σ(X) \rfloor_{c} = \lfloor σ(¬ ι) \rfloor_{c}\). So since \(σ(X) = σ(¬ ι) + σ(ι)\) we have that \(ι\) is
  \((σ⨾ \lfloor - \rfloor_{c})\)-small if and only if

  \[\lfloor σ(¬ ι) + σ(ι) \rfloor_{c} = \lfloor σ(¬ ι)\rfloor_{c}\].

  So basically \(ι\) is small if and only if it fits within a rounding error.

  Dually, \(ι\) is dense if its intersection with each \(\geq c\)-large subset is non-nil, and sated if for every dense \(κ\) and \(λ\), if \(ι\) contains \(κ\) in \(λ\), then the
  measure of \(ι\) is as large or larger as the measure of \(λ\), up to rounding downwards.
\end{Example}

But it is generally of considerable flexibility. For instance we can say:

\begin{Example}
  \label{xmp:106}
  Let \(ι{:} \mathbf{P}_{inf}(ℕ) \hookrightarrow \mathbf{P}(ℕ)\) be the embedding of the sub-preorder of \(\mathbf{P}(ℕ)\) whose non-trivial elements are the infinite subsets of \(ℕ\). Then \(S \subseteq ℕ\) is \(ι\)-dense if
  and only if its intersection with each infinite subset of \(ℕ\) is infinite, i.e. if its complement is finite, so if \(S\) is co-finite. \(S\) is \(ι\)-sated if and only if whenever
  \(S \supseteq Q \cap T\) with \(Q\) infinite and \(T\) co-finite, then \(S \supseteq Q\). This holds trivially for finite sets. If \(S\) is infinite but not \(ℕ\), choose \(x ∉ S\). Set \(Q = S ∪ \{x\}\)
  (infinite) and \(T = ℕ \setminus \{x\}\) (co-finite). Then \(Q ∩ T = S\) but \(Q ≠ S\), violating satedness. For \(ℕ\), satedness holds again.
\end{Example}

Related to this is the following example:

\begin{Example}
  \label{xmp:6}
  Let \(ι{:} \mathbf{P}_{\cancel{∂}}(ℝ^{n}) \hookrightarrow \mathbf{P}(ℝ^{n})\) be the embedding of unbounded subsets of \(ℝ^{n}\). Then \(S \subseteq ℝ^{n}\) is \(ι\)-dense if and only if its intersection with every unbounded
  subset of \(ℝ^{n}\) is non-empty, so if its complement is bounded. We can again prove as in \cref{xmp:106} that the only subsets of \(ℝ^{n}\) that are \(ι\)-sated are bounded or
  \(ℝ^{n}\) itself.
\end{Example}
\begin{Example}
  \label{xmp_:15}
  In \cref{dfn:25} we defined geodesically convex sub-2-categories in such a way that each geodesically convex \(ι{:} \mathcal{A} \hookrightarrow \mathcal{B}\) is uniquely determined by the objects it contains. Thus,
  there exists a preorder morphism \(τ_{conv}{:} \mathbf{Conv}(\mathcal{B}) \hookrightarrow \mathbf{Ob}(\mathcal{B})\) from the convex sub-2-categories of \(\mathcal{B}\) forward ordered by inclusion
  (which is non-evil if \(\mathcal{B}\) has no non-trivial isomorphisms like the examples we looked at in \cref{sec:other-examples}). so that a subset of the objects of \(\mathcal{B}\) is
  \(τ_{conv}\)-small if and only if the smallest geodesically convex sub-2-category of \(\mathcal{B}\) that contains all objects outside \(ι\) is \(\mathcal{B}\) itself.
\end{Example}

Interestingly, our system also allows us to ``stack'' notions of smallness/density:

\begin{Example}
  \label{xmp:30}
  If the tablets \(τ{:} \mathbf{SOb}_{\mathbf{Mon}}(M) \hookrightarrow \mathbf{P}(M)\) are given by the embedding of the sub-monoids of a monoid \(M\), then a subset \(ι\) of \(M\) is
  \(τ\)-small if and only if its complement \(¬ ι\) is not contained in a proper submonoid. So for
  instance in \(ℕ\) the even numbers are small while the odd numbers are not. \(ι\) is \(τ\)-reduced if whenever \(ι\) is contained in the union of a submonoid and a small subset, then
  \(ι\) is in the submonoid.

  Note that \(\bot_{τ}\) is \(\{e\}\), not \(\emptyset\) (the bottom of \(\mathbf{P}(M)\)), so that \(ι\) is \(τ\)-dense if for each non-trivial submonoid \(κ\) the intersection of \(ι\) with \(κ\) contains an
  element outside \(\{e\}\). It suffices to test this on principally generated submonoids, so \(ι\) is \(τ\)-dense if its intersection with each orbit \(⟨a⟩\) contains a non-zero
  power of \(a\). \(ι\) is \(τ\)-sated if for each \(τ\)-dense \(κ\) and submonoid \(λ\) it holds that \(ι\) contains \(κ \cap λ\) if and only if \(ι\) contains \(λ\). Given now \(a ∈ M\) such
  that \(ι\) contains \(a^{n}\) let \(κ\) consist of \(a^{n}\) and all \(b\) such that \(a\) is not in the orbit of \(b\). Then \(κ\) is \(τ\)-dense and \(ι\) contains \(a^{n}\); so that \(ι\) is
  again \(τ\)-sated if and only if it contains each radical of its elements and thus if it is reduced in the monoid-theoretic sense.
\end{Example}
% \begin{Example}
%   \label{xmp:26}
%   In \cref{xmp:30} we found that a subset \(D\) of a set with a monoid \(M\) is monoid-dense if and only if it contains an element of every non-trivial orbit of \(M\). Let now
%   \(ι_{\mathbf{M\text{-}dense}}{:} \mathbf{M\text{-}dense}(M) \hookrightarrow \mathbf{P}(M)\) be the inclusion of monoid-dense subsets. Then a subset \(E\) of \(M\) is \(ι_{\mathbf{M\text{-}dense}}\)-dense
%   if and only if its intersection with every monoid-dense subset of \(M\) is non-empty, so if it contains \(\bot_{M}\) and some non-trivial orbit in \(M\).
% \end{Example}

% Let us also do the dual case:

% \begin{Example}
%   \label{xmp:22}
%   Let the embedding \(ι_{SMon}\) be given as in \cref{xmp:subsets-smon-sated}. Then a subset \(A\) of \(M\) is small if and only if there is no non-trivial submonoid \(N \hookrightarrow M\) such that \(A \cup N\) does
%   not contain the complement of any non-trivial submonoid. \(A\) is reduced if and only if for each small \(B\), if \(A = B ∨ C\), then \(A = C\), so basically if it is determined by its intersections with submonoid-complements.
% \end{Example}

Let us also add some examples of a completely different variety:

\begin{Example}
  \label{xmp:1}
\index{cost!as scale}
\index{smallness!applied to machine parts}
  Let \(M\) be some kind of machine and \(\mathbf{SOb}(M)\) the preorder of parts of \(M\) where \(a \leq b\) if and only if \(b\) is not functional without \(a\). Then \(a\) is small
  if and only if it factors through each maximal part of \(M\) by \cref{■pn_:8} and the finitude of real-world machines.
  Now let \(τ{:} \mathbf{T}(M) \hookrightarrow \mathbf{SOb}(M)\) be the embedding of those parts that can be bought stand-alone. Then a part \(ι\) is \(τ\)-dense if and only if it factors through each part of
  \(M\) that can be bought, so if we get it delivered each time we buy a replacement. Notice that the embedding \(τ\) is connected to the material we exposed in
  \cref{sec:monoidal-smallness}: \(τ\) embeds those parts that can be duplicated, so for which addition makes sense (and arises, for instance if \(b\) is broken and we buy a replacement
  \(b'\), but \(a \leq b\) is fine, so we obtain an \(a\)-spare \(a'\)).

  But because \(\mathbf{SOb}(M)\) is elementary, i.e. consists of parts and not families of parts, its smallness relations are somewhat rough. So let us endow
  \(\mathbf{P}(\mathbf{SOb}(M))\) with the relation \(F \leq G\) if and only if for each \(a ∈ \mathbf{SOb}(M)\) such that each element of \(G\) is \(\leq a\) it holds that each
  element of \(F\) is \(\leq a\). So in particular if \(a = \bigvee B\) then \(a = B\) in this order up to univalence, so as a category this preorder is actually in part a weak quotient and
  equivalent to the ideal completion.

  Now, smallness under the powerset-induced function
  \(\mathbf{P}(τ){:} \mathbf{P}(\mathbf{T}(M)) \hookrightarrow \mathbf{P}(\mathbf{SOb}(M))\) gives us a more useful size measure. Then a family of parts \(F\) of \(M\) is
  not small if and only if we can buy a family \(G\) that contains all parts of \(M\) missing in \(F\) but not all parts of \(M\) in itself, so that we can assemble \(M\) from \(F\) and \(G\), given the
  necessary skill and machinery.

  But of course there is a \emph{cost} associated with duplication. So let \(σ{:} \mathbf{T}(M) → ℝ_{\geq 0, ∞}\) be given by the price of each part and
  \(\overline{σ}{:} \mathbf{SOb}(M) → ℝ_{\geq 0, ∞}\) by \(\overline{σ}(a) := \bigwedge_{a' ∈ \mathbf{T}(M){:} a \leq τ(a')} σ(a')\) so basically the least cost we have to pay for \(a\). Finally, let \(\hat{σ}\) be
  given by  \(\hat{σ}(F) := \bigwedge_{F \simeq G} ∑_{ι ∈ G} \overline{σ}(ι)\), so the least cost we have to pay for the parts of \(F\).  Then \(F\) is \((\mathbf{P}(τ) \mid \hat{σ})\)-small if and only if
  each family of parts of \(M\) that we can buy costs at least much as \(M\) itself — a frequent occurrence when one tries to repair something. If we combine this scale with the rounding we
  introduced in \cref{xmp_:12}, we can further refine smallness so that a family is small if and only if the cheapest way to complete it is as expensive as \(M\) up to a rounding error.
\end{Example}
\begin{Example}
  \label{xmp_:1}
\index{infinitesimal}
\index{nonstandard analysis!infinitesimality}
\index{density function!for gases}
  As noted in the introduction, our notion of density is motivated by the topological one, which is binary. Here we want to sketch out the connection to the treatments of density
  through continuous functions, as arise in chemistry. So let \(X\) be spacetime and \(γ\) some gas. When should we say that \(γ\) is dense in \(X\)? This question is tricky
  because \(γ\) has, strictly speaking, only finitely many atoms while \(X\) is continuous; and often glanced over. Arguably the most conceptually thorough way to handle such
  questions is through Robinson's nonstandard analysis,\cite{robinsonNonstandardAnalysis1974} or even more particularly through Nelson's internal set
  theory.\cite{nelsonInternalSetTheory} In these, the ``standard'' natural numbers \(ℕ\) are embedded in a \emph{nonstandard} extension \(\overline{ℕ}\) which fulfills the same
  algebraic axioms but contains additional numbers \(N > ℕ\), which are larger than any standard natural number (this is called \emph{hyperfinite}). So a nonstandard number is a
  number that is larger than any finite number but acts in essentially the same way. As Nelson worked out, this gives a rigorous idealization of what it means that ``a number
  beyond the standard range'', as is often assumed in the natural sciences. Similarly the ``standard'' real numbers \(ℝ\) are embedded into a nonstandard extension \(\overline{ℝ}\)
  that also contains the nonstandard natural numbers \(\overline{ℕ}\), so that there exists for each nonstandard number \(N\) an inverse \(\frac{1}{N}\) and thus an
  \emph{infinitesimal number} \(δ\) such that \(0 < δ < ℝ_{> 0}\).  We can define a scale \(\mathbf{st}_{∞}{:} \overline{ℝ} → ℝ_{\pm ∞}\), given by assigning to each
  extended real \(r\) its standard part, and a scale \(\mathbf{infl}{:} \overline{ℝ} → \mathbf{h}(0)_{\pm ∞}\) to the \emph{halo} \(\mathbf{h}(0)\) of such infinitesimal numbers
  around \(0\), given by the identity on \(\mathbf{h}(0)\) and \(\pm ∞\) otherwise. These two jointly
  describe non-standard finitude and infinitesimality through smallness.

  It is also well-known (see for instance \cite{alamApplicationsNonstandardAnalysis2021}) that in such a setting each Hausdorff space admits a nonstandard extension, so if we
  assume that our spacetime is Hausdorff, which seems reasonable, then there exists an extension \(ξ_{X}{:} X \hookrightarrow \overline{X}\) and a partial function
  \(\mathbf{st}{:} \overline{X} → X\) such that \(ξ_{X}⨾ \mathbf{st} = \mathbf{id}_{X}\). So \(\overline{X}\) houses \(X\) but also for each point \(x\) of \(X\) a \emph{monad} or
  \emph{halo} \(\mathbf{h}(x) := \mathbf{st}^{-1}(x)\) filled with nonstandard elements around \(X\).

  A very simple definition for ``γ is dense in \(X\)'' is then that each open subset of \(X\) contains some atom in \(γ\). If we assume that the spatial extension of each atom is
  0,\footnote{More generally if we assume that the diameter of each atom is infinitesimal.}, then this is impossible if \(γ\) has only a standard amount of atoms (since \(X\) is
  continuous), but quite possible if \(γ\) contains a nonstandard set \(G\) of them. It makes sense then to treat \(γ\) as a function \(γ{:} G \hookrightarrow \overline{X}\) which assigns to each
  atom of \(γ\) its position. But this just says that \(γ\) is dense with respect to the collection of tablets given by the composite
  \(\mathbf{O}(X) \hookrightarrow \mathbf{P}(X) \hookrightarrow \mathbf{P}(\overline{X})\).

  The question is now whether we can derive from such a nonstandard gas a function which assigns to each point \(x\) in \(X\) a density of \(γ\) at \(x\), which would complete the
  bridge from our notion to density functions. An easy approach in line with our notion of density would be to first fix a
  \(δ ∈ \mathbf{h}(0)\) that we use to scale infinitesimal
  lengths to normal ones. We can derive a density function \(δ_{γ}'(x) := \frac{1}{\mathbf{min}\{r ∈ ℝ_{∞} | ∃ g ∈ G{:} γ(g) ∈ ●^{δr}_{x}\}}\) where \(●_{x}^{δr}\) is the ball around
  \(x\) of radius \(δ r\), so by finding the smallest ball around each \(x\) that does not contain any elements of \(γ\) and thus the representators of the smallness-ideal. This
  gives us the right result at \(x\) in that if there is an atom of \(γ\) precisely at \(x\) then the density function maps to \(∞\) and the function decreases with the perimeter.

  But in chemistry a density function is dependent on both the local number of atoms and
  their distance, and this introduces a parameter that cannot cleanly be handled just by the current framework. We could try to sum over all atoms in \(\mathbf{h}(x)\), perhaps
  scaled by their mass and inversely scaled by their distance to \(x\), but this sum is ill-defined in non-standard analysis. What we can do is fix a nonstandard \(δ\) and sum over
  all atoms in the \(δ\)-ball around \(x\). But the result of this sum would, without further assumptions, depend on \(δ\), and not necessarily just up to a nonstandard part. So
  sadly it seems like non-standard analysis cannot be used to derive a density function in a way that accounts for both the mass of atoms and their distance from \(x\) without some
  arbitrariness, even in an idealized way.
\end{Example}
\begin{Example}
  \label{xmp_:22}
  \index{digitization!as tablets}
  \index{density function!from lattices}
  Digitization, such as pixellation, is ubiquitous in the modern world, and we often test things up to pixels, e.g. when we film with a digital
  camera. Abstracting a little we can describe this by taking the inclusion of the (order-theoretic) subobject lattice
  \(\mathbf{P}(λ){:} \mathbf{P}(Λ) \hookrightarrow \mathbf{P}(M)\) as tablets, where \(λ\) is the embedding of a (module-theoretic) lattice, i.e. additive sub-monoid
  \(λ{:} Λ \hookrightarrow M\) into some module \(M\) over some semiring or ring. So we use \(λ\) to digitize \(M\). For \(V = R^{n}\) with \(R\) a ring we always have a canonical such
  sub-monoid given by the embedding \(⟨1⟩^{n}{:} ℤ^{n} \hookrightarrow R^{n}\) that is the \(n\)-fold power of the initial ring-morphism into \(R\) (i.e. the submonoid additively generated by
  \(1 ∈ R\)), but we can also use other lattices. Then a subset of \(V\) is \(λ\)-dense if and only if it contains all elements of \(λ\). Often we are only interested in the density of a
  subobject \(ι{:} A \hookrightarrow V\) in some neighborhood \(U\), such as the convex closure of \(ι\), which we can evaluate by restricting \(λ\) to \(U\). Then \(ι\) is
  \(λ|_{U}\)-dense if and only if it contains all elements of \(λ\) in \(U\).

  Now, many module-theoretic lattices are structured into parameter-dependent families which we can use to define density functions. For instance, \(⟨1⟩\) generalizes to the
  \(R\)-parameterized family \(⟨-⟩^{n}\) of \(n\)-cubic lattices of \(R\). So if \(R\) is also complete and totally ordered, i.e. \(ℝ\), then we can define a density function
  \(δ{:} \mathbf{P}(ℝ^{n}) → ℝ_{\geq 0, ∞}\) with \(δ(ι)\) given by

  \[\mathbf{inf} \{r ∈ ℝ | ι \text{ is \(⟨r⟩^{n}\)-dense}\}\]

  if any such \(r\) exists, and \(∞\) otherwise. This is monotone and thus at least a morphism of preorders, but according to this definition, no \(ι\) could be dense which did not contain \(0\), since \(0\) is
  invariant under scaling. We can remedy this defect by introducing a slight variant of our definition of density: we say that \emph{\(ι\) is \(λ\)-dense up to
    translation} if there exists some translation, i.e. affine isomorphism \(θ\) such that \(ι\) is \((λ⨾ θ)\)-dense. So using lattices we do actually get a decently
  well-behaved density-function. We could also replace the points in \(λ\) with \(n\)-dimensional disks, cube or other shapes of some diameter, which would give us two density
  parameters that are, essentially, pixel size and pixel density — the denser the pixels, the fewer subobjects will look dense compared to them, and the larger the pixels, the
  more subobjects will look dense compared to them, since this increases the area that they can intersect with.

  Now, \(ι\) is \(λ\)-sated if and only if for each subset \(κ\) of \(R^{n}\) such that \(ι\) contains \(κ \cap λ\) it holds that \(ι\) also contains \(κ\). This is too general,
  however, to be of much interest: \(ι\) would in particular have to contain \((ι \cap λ) \cup \left(R^{n} ∖ (λ ∖ ι)\right)\). But if we restrict \(κ\) to a second set of tablets
  \(τ\) as we described in \cref{rmα_:28}, then we can exercise much finer control over satedness. In particular, let \(τ\) be given by the embedding of all convex subsets of
  \(R^{n}\). Then \(ι\) is sated with regard to \(λ\) and \(τ\) if and only if for each family \(F\) of points in \(λ\) it contains the convex closure \(\overline{F}\) of \(F\), so if it is convex relative to
  \(λ\): only on its intersection with\(λ\) need it be convex.

  But if we have a set of generators \(B\) of \(Λ\), then we can restrict the tablets still more and take \(τ'\) to be given by the embedding of all convex subsets formed by points \(q_{i}\)
  that are each adjacent to a point \(p\) in \(λ\), where \(p\) and \(q\) are adjacent if and only if there exists a \(b ∈ B\) such that \(bp = q\) (so if they are connected by an
  edge in their Ceyley graph). So for \(λ\) being the embedding of \(ℤ^{n}\) into \(ℝ^{n}\), \(τ'\) is given by all subsimplices with integral vertices that have a tip \(p\) and
  \(p\)-adjacent points \(q_{i}\), so that the triangle for each triple \(p, q_{i}, q_{j}\) with \(i ≠ j\) is rectangular in \(p\). So basically, \(ι\) is \((τ, τ')\)-dense if and only if
  it contains the triangulation of its intersection with \(λ\). For instance this shape is \((λ, τ')\)-sated with \(λ\) the unit cubic lattice and \(τ'\) the minimal rectangular subsimplices of \(λ\):

  \begin{figure}[H]
    \centering
    \begin{tikzpicture}
      \fill (0, 0) -- (0, 1) -- (1, 1) -- (1, 0); \fill (0, 0) -- (0, -1) -- (-1, -1) -- (-1, 0);
    \end{tikzpicture}
    \caption{\label{fi♭_:23}}
  \end{figure}

  where all squares are unit squares whose vertices are on \(λ\), and it would remain so whatever we add around this ``core'' part as long as it does not intersect with any other
  vertices in \(λ\). But this part of the unit square is not:

  \begin{figure}[H]
    \centering
    \begin{tikzpicture}
      \fill[black] (0, 0) -- (0, 1) -- (0.5, 0) -- cycle; \fill[black] (1, 0) -- (1, 1) -- (0.5, 0) -- cycle;
    \end{tikzpicture}
    % \caption{\label{fig:9}}
  \end{figure}

  but it is \(\frac{λ}{4}\)-sated, i.e. sated if we quarter the grid size:

\begin{figure}[H]
  \centering
  \begin{tikzpicture}
    \fill[black] (0, 0) -- (0, 1) -- (0.5, 0) -- cycle; \fill[black] (1, 0) -- (1, 1) -- (0.5, 0) -- cycle; \draw[blue] (0, 0) rectangle (0.25, 0.5) -- cycle; \draw[red] (0, 0.5) -- (0, 1)
    -- (0.25, 0.5) -- cycle; \draw[yellow] (0.25, 0) -- (0.25, 0.5) -- (0.5, 0) -- cycle; \draw[blue] (0.75, 0) rectangle (1, 0.5) -- cycle; \draw[red] (1, 0.5) -- (1, 1) -- (0.75, 0.5) --
    cycle; \draw[yellow] (0.75, 0) -- (0.75, 0.5) -- (0.5, 0) -- cycle;
  \end{tikzpicture}
  % \caption{\label{fig:9}}
\end{figure}

 By using a different discrete lattice, in particular a hexagonal one, we can
change the shape of these simplices, in particular to equilateral ones. So satedness says that a subobject \(ι\) actually contains its approximation
\(\overline{ι \cap λ}\), obtained by taking the convex closure of each of its minimal \(λ\)-sub-simplices, i.e. its ``support''
\(ι \cap λ\). In particular the minimal sub-simplices of \(ι \cap λ\) form a subdivision of \(\overline{ι \cap λ}\).\footnote{We will define subdivisions formally in future work, here we
  just mean a decomposition of \(ι\) into topological closed a irreducible subsets whose intersection factors through through their boundary.} 

So an element is sated if and only if it contains the simplices spanned by the vertices of its lattice squares. If each point of \(λ\) stands for a sensor, and an image is created by
flipping the color of a pixel at \((i, j)\) if and only if \((i, j)\) is in \(ι\), then the resulting image essentially corresponds to \(\overline{ι \cap λ}\), so \(ι\) being sated
states that it the image we can form of it through \(λ\) corresponds to \(ι\) at least on the parts of it we can see. Note also that for
\emph{any} discrete lattice \(λ{:} Λ → ℝ^{n}\), i.e. a lattice in \(ℝ^{n}\) consisting of nowhere dense vertices, any union of the subsimplices spanned by its vertices is locally
convex and piecewise linear and thus locally contractible. So if \(ι\) is \(λ\)-sated for any discrete \(λ\), \(ι\) is locally contractible up to some stuff outside its pixellated
support \(ι \cap λ\).

If we now add a scale \(σ{:} \mathbf{P}(ℝ^{n}) → S\) we can relax this condition up to some kind of neglibility. Note that, since we do not assume general additivity, we do not
need to restrict to the measurable subsets of \(ℝ^{n}\) but can instead take \(σ(S)\) as the volume measure of the closure of \(S\), which is Borel measurable since it is the
closure of a subset. Since each sub-simplex of \(ℝ^{n}\) is in particular closed, this reduces to the volume measure \(υ\) on the tablets we use to measure satedness, i.e.
minimal sub-simplices of \(λ\). Then \(ι\) is \((λ, σ)\)-sated if and only if each minimal sub-simplex on the vertices in \(ι \cap λ\) that \(ι\) does not fully contain is of volume
\(0\), i.e. lower-dimensional. But we can also take as \(σ\) the postcomposition \(υ_{h}\) of \(υ\) (on closures) with some rounding scale \(ρ{:} ℝ_{\geq 0, ∞} → ℕ_{∞}\) as in \cref{xmp_:12}
that maps everything under a certain threshold \(h\) to \(0\). Then \(ι\) is \((λ, υ_{h})\)-sated if and only if the sub-simplices on its \(λ\)-vertices that it does not contain
have volume \(< h\). We can also keep this \(σ\) fixed but allow the tablets to vary, and so say that \(ι{:} A \hookrightarrow ℝ^{N}\) is \emph{\(h\)-lattice-sated} if there exists some discrete
sub-lattice \(λ\) of \(ℝ^{n}\) such that \(ι\) is \((λ, υ_{h})\)-sated, so if the volume of the sub-simplices on \(λ\)-vertices in \(ι\) that are not themselves in \(ι\) is
\(< h\). Again it makes sense to relativize this to translations of discrete lattices. We can call this \emph{equivariantly \(h\)-lattice-satedness}, where the equivariance is
understood as translation invariance.
\end{Example}
\begin{Example}
  \label{xmp_:23}
\index{Minkowski's theorem!via tablets}
On the other hand, we can we take as the tablets the embedding \(τ{:} \{λ, \{0\}\} \hookrightarrow \mathbf{P}\supseteq \{0\}(ℝ^{n})\) where \(λ{:} Λ \hookrightarrow ℝ^{n}\) is a lattice embedding (i.e. an embedding of an
additive subgroup), \(\mathbf{P}_{\supseteq \{0\}}(ℝ^{n})\) is the set of subsets of \(ℝ^{n}\) that contain \(0\). Then \(ι{:} A \hookrightarrow ℝ^{n}\) is \(τ\)-dense if and only if it contains a point in \(λ\) apart
  from \(0\). This kind of condition characterizes \emph{Minkowski's Theorem}, which states that each radially symmetric convex set around \(0\) in \(ℝ^{n}\) whose volume is larger than \(2^{n}\)
  times the determinant of \(λ\) is \(τ\)-dense.

  More generally, if we take as the tablets \(τ{:} T \hookrightarrow P\) for a bounded preorder only one tablet \(λ\) and a bottom tablet in the preimage of \(\bot_{P}\), then any element of \(P\)
  whose intersection with \(τ(λ)\) is not \(\bot_{P}\) is \(τ\)-dense.
\end{Example}
   
% Let us also quickly revisit our convexity example:

% \begin{Example}
%   \label{xmp_:3}
%   Let \(τ{:} \mathbf{SOb}_{gcc}(\mathcal{B}) \hookrightarrow \mathbf{SOb}_{\mathopen{∣}ff\mathclose{∣}}(\mathcal{B})\) be the embedding of geodesically convex sub-2-categories into the preorder of all on all levels
%   full sub-2-categories of \(\mathcal{B}\), ordered by inclusion. We can identify both the elements of \(\mathbf{SOb}_{gcc}(\mathcal{B})\) and those of
%   \(\mathbf{SOb}_{\mathopen{∣}ff\mathclose{∣}(\mathcal{B})}\) with families of objects of \(\mathcal{B}\) but in different ways: for two objects \(X, Y\) of an element \(ι\) of
%   \(\mathbf{SOb}_{gcc}(\mathcal{B})\), the sub-hom-category \(\mathbf{hom}_{ι}(X, Y)\) of \(ι\) is only equivalent to the full sub-2-category \(\hat{ι}\) on the same objects if and only if
%   no morphisms between objects in \(ι\) go through any objects outside of \(ι\). So \(κ ∈ \mathbf{SOb}_{\mathopen{∣}ff\mathclose{∣}}\)

% a sub-2-category of \(\mathcal{B}\) is \(τ\)-small if and only if the only convexly closed sub-2-category
% \end{Example}

\section{The Structure of \(\mathbf{UBP}\)}
\label{sec:smalln-under-morph}

So far we have collected a stock of examples, but to understand how they are connected we need to consider how smallness interacts with morphisms. For this we first need a
deeper understanding of our surrounding 2-category. Let us start by formally introducing it with its relatives:

\begin{Definition}
  \label{dfn:3}\index{\(\mathbf{PrO}\)}\index{\(\mathbf{UBP}\)}\index{\(\mathbf{UBP}_{w}\)}
  The \emph{2-category of preorders} \(\mathbf{PrO}\) is the 2-category whose objects are preorders, whose morphisms are preorder morphisms and whose 2-morphisms are objectwise inequalities
  between morphisms.

  The 2-category \(\mathbf{UBP}\) of \emph{upward-bounded preorders and preorder morphisms} is the full sub-2-category of \(\mathbf{PrO}\) on upward-bounded preorders.

  The
  wide sub-2-category \(\mathbf{UBP}_{w}\) of \emph{upward-bounded preorders and top-preserving preorder morphisms} is the wide sub-2-category of \(\mathbf{UBP}\) whose objects are upward-bounded preorders and whose morphisms are
  top-preserving preorder morphisms.
\end{Definition}

It might seem weird not to demand preservation of tops in \(\mathbf{UBP}\), but note that the local property of having a maximal element assembles into a global 2-categorical one even
without strict preservation. To see this, let us start by studying the
natural transformations from the constant functor on the terminal object \(\mathbf{K}(*){:} \mathbf{Aut}(\mathbf{PrO})\) to the identity functor \(\mathbf{id}{:} \mathbf{Aut}(\mathbf{PrO})\). A
natural transformation \(η{:} \mathbf{K}(*) \Rightarrow \mathbf{Aut}(\mathcal{C})\) on some category \(\mathcal{C}\) is a family of points \(η_{X}{:} * → X\) that are preserved by all morphisms in
\(\mathcal{C}\), something that any pointed category like that of pointed sets and point-preserving maps or that of monoids and identity-preserving maps has. \(\mathbf{PrO}\) has no such distinguished
points, so unsurprisingly \(\mathbf{Aut}(\mathbf{PrO})(\mathbf{K}(*), \mathbf{id})\) is empty: for each family of points in all preorders we can easily find a map that avoids them.

If we replace \(\mathbf{PrO}\) with \(\mathbf{UBP}\), the same holds, with one almost-exception: the family \(\top_{P}{:} * \hookrightarrow P\) is not preserved by all morphisms, but for each \(f{:} P → Q\)
the image \(\top_{f} := f(\top_{P}) = \top_{P}⨾ f\) is the source of a unique 1-cell \(\top_{f} \leq \top_{Q}\) since \(\top_{Q}\) is maximal. Moreover, if \(η{:} \mathbf{K}(*) → \mathbf{id}\) is some other
\(2\)-cell, and there exists some \(η_{P} < \top_{P}\), then \(η_{P}\) is not laxly preserved by the constant automorphism of \(P\) on \(\top_{P}\). Thus, the family \((\top_{P}, \top_{f})\) is the unique
lax point of \(\mathbf{id}\). Moreover, since \(\top_{P}\) is maximal, \(\top_{(-)}\) is a right adjoint to the natural transformation \(*_{(-)}\) that represents \(*\) as a terminal object. Note that
an abstract way to state that a preorder is upward-bounded is to say that \(*_{P}\) has a right adjoint. So \(\mathbf{id}\) is itself upward-bounded in a non-trivial way even though it only has one
element.

Practically speaking, this means in particular that the image of a morphism \(f\) has a unique maximal element \(\top_{f}\), which is natural with respect to composition in that for composable
\(f,g\) it holds that \(\top_{f⨾ g} \leq \top_{g}\). This allows us to represent morphisms \(f\) by elements \(\top_{f}\) to some extent, which we will use a lot in \cref{sec:covar-repr-theor}.

\begin{Remark}
  \label{rmk:17}
  \index{ideal embedding!is principal}
  In particular, each ideal embedding \(ι{:} a \hookrightarrow P\) in \(\mathbf{UBP}\) is principal since  \(\top_{ι}\) represents \(ι\) as an ideal.
\end{Remark}
\begin{Remark}
  \label{rmα_:35}
  So visually, \(\mathbf{UBP}\) looks a bit like a saw: \(\top_{P}\) is the tip of a tooth, \(f{:} P → Q\) a tooth-diagonal downward and \(\top_{f} \leq \top_{Q}\) the blade going back up to the tip.
\end{Remark}

Note that \(\mathbf{UBP}\), unlike \(\mathbf{PrO}\), does not have an initial object since the empty preorder is not upward-bounded by \cref{defn_:29}. But since \(\top_{P}\) is for each preorder
\(P\) maximal, \(*\) satisfies a laxening of the universal property of an initial object. Let us give this a name:

\begin{Definition}
  \label{defn_:9}
\index{contravariantly terminal}
\index{contravariantly initial}
  An object \(O\) of a 2-category \(\mathcal{B}\) is \emph{contravariantly initial} if for each object \(X\) of \(\mathcal{B}\) the category \(\mathcal{B}(O, X)\) has a terminal object. Dually, an object
  \(T{:} \mathcal{B}\) is \emph{contravariantly terminal} if for each object \(X\) of \(\mathcal{B}\) the category \(\mathcal{B}(X, T)\) has an initial object.
\end{Definition}
\begin{Remark}
  \label{rmα_:21}
  This definition has a global component and a local one, and in the name we track the variance between the two.
\end{Remark}

So \(*\) is contravariantly initial in \(\mathbf{UBP}\). This makes sense if we view \(\mathbf{UBP}\) in analogy with pointed categories, where the point becomes the initial object.
Here, we only have lax point preservation, so the initiality is lax as well.

 Note that contravariantly terminal objects are not generally unique in 2-categories:

\begin{Counterexample}
  \label{cxmp:2}
\index{contravariantly terminal object!non-uniqueness}
  Let \(\mathcal{B}\) be the 2-category on two elements \(a, b\), generated by two morphisms \(f{:} a → b, g{:} b → a\) and two 2-morphisms
  \(α{:} \mathbf{id}_{a} \Rightarrow f⨾ g, β{:} \mathbf{id}_{b} \Rightarrow g⨾ f\). Then each of \(a\) and \(b\) are contravariantly terminal, but they are not equivalent. Moreover, if we introduce an inverse to
  \(β\), then \(a\) is (weakly) initial but \(b\) is still contravariantly initial.
\end{Counterexample}

So there can be more than one contravariantly initial object in a 2-category and in particular there can be an initial contravariantly initial object (which then has to be an actual initial
object) and a terminal initial object, a ``largest of the first'' so to speak.

\begin{Example}
  \label{xmp_:4}
  Actually, every object \(P\) of \(\mathbf{UBP}\) is contravariantly initial, with the terminal object in \(\mathbf{UBP}(P, Q)\) given by \(\top_{Q}\) for each \(Q{:} \mathbf{UBP}\), but \(*\)
  is uniquely described by being terminally contravariantly initial, since it is terminal. Moreover, \(*\) is the unique contravariantly
  initial object in the monic part \(\overset{\hookrightarrow}{\mathbf{UBP}}\) of \(\mathbf{UBP}\).
\end{Example}

In \cref{sec:univ-char-mathbf} we will show that the monic part of \(\overset{\hookrightarrow}{\mathbf{UBP}}\) with \(\mathbf{SOb}\) is initial among the contravariantly terminal coefficient objects for a certain
kind of coefficient system.

We can also isolate two kinds of morphisms \(f{:} P → Q\) in \(\mathbf{UBP}\) that are extremal with respect to \(\top_{f}\):

\begin{enumerate}
\item \label{item:142} the \emph{top-preserving morphisms}, where \(\top_{f} = \top_{Q}\),

\item and the \emph{ideal embeddings}, which are principal by \cref{rmk:17} and thus determined by \(\top_{f}\).
\end{enumerate}

These two actually form a factorization system:

\begin{Proposition}[Ideal-top factorization system on \(\mathbf{UBP}\)]
  \label{■pn_:ofs}
  \index{orthogonal factorization system!on \(\mathbf{UBP}\)}
  \index{ideal-top factorization system@ideal-top factorization system}
  \index{top-preserving map!in factorization system}
  \index{ideal embedding!in factorization system}
  In the \(2\)-category \(\mathbf{UBP}\), the classes
  \[
    \mathcal{E} \;=\; \{\text{top-preserving maps}\}, \qquad \mathcal{M} \;=\; \{\text{ideal embeddings}\}
  \]
  constitute an orthogonal factorization system. Explicitly, every morphism \(f {:} P → Q\) factors uniquely up to unique isomorphism as
  \[
    P \;\xrightarrow{\;f'\;}\; (\top_{f}) \;\xhookrightarrow{\;\iota_{\top_{f}}\;}\; Q,
  \]
  where \((\top_{f}) = \{q ∈ Q \mid q \leq \top_{f}\}\) is the principal ideal of \(\top_{f}\) in \(Q\), the map \(f'\) is the corestriction \(f'(p) = f(p)\), and \(\iota_{\top_{f}}\) is the inclusion.
\end{Proposition}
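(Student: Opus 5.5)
The proof splits into three checks: the factorization exists, the two classes are closed under isomorphism and under composition, and every top-preserving map is orthogonal to every ideal embedding, with diagonal fillers unique.

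\emph{Existence.} Given \(f{:} P → Q\), the image of \(f\) lies in the principal ideal \((\top_{f})\), since \(p \leq \top_{P}\) gives \(f(p) \leq f(\top_{P}) = \top_{f}\). So the corestriction \(f'\) is well defined and monotone. The ideal \((\top_{f})\) is upward-bounded with top \(\top_{f}\), and \(f'(\top_{P}) = \top_{f}\), so \(f'\) is top-preserving. The inclusion \(\iota_{\top_{f}}\) is an ideal embedding, and \(f' ⨾ \iota_{\top_{f}} = f\) holds on the nose.

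\emph{Closure.} Top-preserving maps compose, and every isomorphism preserves tops. A composite of ideal embeddings is again a full, downward-closed embedding: if \(q \leq j(i(a))\), then \(q = j(b)\) with \(b \leq i(a)\), so \(b = i(c)\). Isomorphisms are trivially ideal embeddings.

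\emph{Orthogonality.} This is the central step. Take a commutative square \(e ⨾ g = u ⨾ m\), where \(e{:} A → B\) is top-preserving, \(m{:} C \hookrightarrow D\) is an ideal embedding, \(u{:} A → C\) and \(g{:} B → D\). For \(b \in B\) we have \(b \leq \top_{B} = e(\top_{A})\). Hence
\[g(b) \leq g(e(\top_{A})) = m(u(\top_{A})),\]
so \(g(b)\) lies in the downward-closed image of \(m\). Because \(m\) is injective and order-reflecting, there is a unique \(d(b)\) with \(m(d(b)) = g(b)\). Order-reflection makes \(d\) monotone. The identity \(d ⨾ m = g\) holds by construction. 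The identity \(e ⨾ d = u\) follows by cancelling the monomorphism \(m\) from \(e ⨾ g = u ⨾ m\). Uniqueness of \(d\) is again injectivity of \(m\).

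Since we work in a 2-category, we should also check that orthogonality holds 2-categorically. Because \(\mathbf{UBP}\) is locally thin and \(m\) reflects order, 2-cells transfer uniquely along the filler. This is the step I expect to need the most care, namely being precise about what ``orthogonal'' means at the level of 2-cells. The argument is the same order-reflection argument applied to inequalities \(g_{1} \leq g_{2}\).

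\emph{Uniqueness of the factorization.} This follows formally from orthogonality plus existence, in the standard way. Concretely, if \(f = e ⨾ m\), then \(m(\top) = m(e(\top_{P})) = \top_{f}\), so \(m\) has image exactly \((\top_{f})\) by \cref{rmk:17}, and it is therefore isomorphic to \(\iota_{\top_{f}}\).
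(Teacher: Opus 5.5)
Your proposal is correct and follows the paper's proof essentially step for step. Existence is the corestriction to \((\top_f)\). For the fill-in, the bound \(g(b) \leq g(e(\top_A)) = m(u(\top_A))\) places \(g\) inside the downward-closed image of \(m\); one then inverts \(m\), and its injectivity gives uniqueness. The differences are only bookkeeping: you check closure of both classes under composition and derive uniqueness of the factorization directly from the principal image of \(m\), whereas the paper records that \(\mathcal{E} \cap \mathcal{M}\) consists of the isomorphisms and handles the 2-cell level only in a later remark on oplax squares.
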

\begin{proof}
  We verify the defining properties of an orthogonal factorization system: existence of factorizations and the diagonal fill-in.

  \emph{Factorization.}
  Given \(f {:} P → Q\), set \(\top_{f} := f(\top_{P})\). Since \(f\) is monotone, \(f(p) \leq f(\top_{P}) = \top_{f}\) for every \(p ∈ P\), so \(f\) corestricts to \(f' {:} P → (\top_{f})\) with \(f'(p) = f(p)\). This map is top-preserving:
  \[
    f'(\top_{P}) \;=\; f(\top_{P}) \;=\; \top_{f} \;=\; \top_{(\top_{f})}.
  \]
  The inclusion \(\iota_{\top_{f}} {:} (\top_{f}) \hookrightarrow Q\) is an ideal embedding whose image is the principal ideal \((\top_{f})\), and \(f = f' ⨾ \iota_{\top_{f}}\).

  \emph{Diagonal fill-in.}
  Suppose given a strictly commuting square
  \[
    \begin{tikzcd}[column sep=large]
      P \ar[r, "h"] \ar[d, "e"'] & R \ar[d, "m"] \\
      Q \ar[r, "k"'] & S
    \end{tikzcd}
    \qquad e ⨾ k \;=\; h ⨾ m,
  \]
  where \(e {:} P → Q\) is top-preserving and \(m {:} R \hookrightarrow S\) is an ideal embedding with image \((\top_{m})\). We construct a unique diagonal \(d {:} Q → R\).

  For each \(q ∈ Q\), monotonicity of \(k\) gives \(k(q) \leq k(\top_{Q})\). Since \(e\) is top-preserving, \(\top_{Q} = e(\top_{P})\), whence
  \[
    k(\top_{Q}) \;=\; k(e(\top_{P})) \;=\; m(h(\top_{P})) \;\leq\; m(\top_{R}) \;=\; \top_{m},
  \]
  so \(k(q) \leq \top_{m}\), i.e.\ \(k(q) ∈ (\top_{m}) = \mathbf{Im}(m)\). Because \(m\) is an order-embedding, it admits an order-theoretic inverse \(m^{-1} {:} (\top_{m}) \xrightarrow{\;\cong\;} R\). We may therefore define
  \[
    d(q) \;:=\; m^{-1}(k(q)),
  \]
  which is monotone as the composite \(Q \xrightarrow{k} (\top_{m}) \xrightarrow{m^{-1}} R\). The two triangles commute:
  \begin{itemize}
  \item \(d ⨾ m = k\) since for all \(q ∈ Q\), \(m(d(q)) = m(m^{-1}(k(q))) = k(q)\).
  \item \(e ⨾ d = h\) since for all \(p ∈ P\), \(d(e(p)) = m^{-1}(k(e(p))) = m^{-1}(m(h(p))) = h(p)\).
  \end{itemize}
  Uniqueness is immediate: if \(d' {:} Q → R\) also satisfies \(d' ⨾ m = k\), then \(m(d'(q)) = m(d(q))\) for all \(q\), and injectivity of \(m\) forces \(d' = d\).

  \emph{Intersection.}
  If \(f {:} P → Q\) is both top-preserving and an ideal embedding, then \(f\) is an order-embedding with image \((\top_{f}) = (\top_{Q}) = Q\), hence an order-isomorphism. Thus
  \(\mathcal{E} \cap \mathcal{M}\) is the core groupoid \(\mathbf{Core}(\mathbf{UBP})\) of isomorphisms in \(\mathbf{UBP}\).
\end{proof}

\begin{Remark}
  \label{rmα_:ofs}
  The diagonal fill-in admits a natural oplax \(2\)-categorical strengthening: if the square commutes only up to a \(2\)-cell \(e ⨾ k \leq h ⨾ m\), the same construction yields \(d\) with \(d ⨾ m = k\) strictly and a \(2\)-cell \(e ⨾ d \leq h\). This oplax lifting is unique, since \(\mathbf{UBP}\) is preorder-enriched and there is at most one \(2\)-cell between any pair of parallel morphisms.
\end{Remark}

Both of these will be important later on.

\begin{Remark}
  \label{rmα_:22}
  Building on \cref{rmα_:35} we can understand the ``saw-toothedness'' of \(\mathbf{UBP}\) as a correspondence between horizontal and vertical distance for ideals in that for each \(a, b ∈ P\) the
  interval between \(a\) and \(b\) is in one-to-one correspondence with the principal ideals that \((a)\) factors through that factor through \((b)\).
\end{Remark}
\begin{Remark}
  \label{rmα_:34}
  Of course dual statements hold for the category \(\mathbf{DBP}\) of downward-bounded preorders, with filters instead of ideals.
\end{Remark}

\subsection{Smallness \& Density under Morphisms}
\label{sec:structure-mathbfubp}

Now to preservation and reflection:

\begin{Definition}
  \label{defn:elemwise-small-pres-reflect}\index{smallness-preserving morphism}\index{smallness-reflecting morphism}
  Let \(f {:} P → Q\) be a morphism in \(\mathbf{UBP}\):
  \begin{enumerate}
  \item \(f\) \emph{preserves smallness at \(a\)} if \(a\) small in \(P\) implies \(f(a)\) small in \(Q\).

  \item \(f\) \emph{reflects smallness at \(a\)} if \(f(a)\) small in \(Q\) implies \(a\) small in \(P\).
  \end{enumerate}
\end{Definition}

Now, the proofs of this section are very simple in principle. Consider for instance smallness-preservation. For this we need some way of relating joins of \(f(a)\) in \(Q\) to those of \(a\) in \(P\):

\begin{Definition}[Lifting of \(a\)-joins]
  \label{def:a-cover-lifting}
\index{top-join-lifting}
\index{join-lifting}
  Let \(f{:} P → Q\) in \(\mathbf{UBP}\) and \(a\in P\).
  We say that \(f\) is \emph{\(\top\)-join-lifting on \(a\)} if for every \(c ∈ Q\) with
  \[
    f(a)∨ c = \top_{Q}
  \]
  there exists a \(b ∈ P\) such that \(f(b) \leq c\) and \(a ∨ b = \top_P\).
\end{Definition}

But then the proof is simple:

\begin{Proposition}[Elementwise preservation of smallness]
  \label{prp:8}
  Given \(f {:} P → Q\) in \(\mathbf{UBP}\) and \(a ∈ P\), assume that \(f\) preserves joins of \(a\) to \(\top_{P}\) and lifts \(\top\)-joins on \(a\). Then \(f\) preserves smallness at \(a\).
\end{Proposition}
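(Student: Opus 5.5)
The plan is a direct unfolding of the definition of smallness in \(Q\), using the \(\top\)-join-lifting hypothesis to pull a witnessing join in \(Q\) back to \(P\). The hypothesis that \(f\) preserves joins of \(a\) to \(\top_{P}\) is read as: whenever \(a ∨ b = \top_{P}\) in \(P\), the join \(f(a) ∨ f(b)\) exists in \(Q\) and equals \(f(\top_{P}) = \top_{f}\). I expect this hypothesis to be used only to transport the top, \(f(b) = \top_{f}\), into a comparison with \(c\).

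So assume \(a\) is small in \(P\) and let \(c ∈ Q\) with \(f(a) ∨ c = \top_{Q}\); we must show \(c = \top_{Q}\). By \cref{def:a-cover-lifting} we choose \(b ∈ P\) with \(f(b) \leq c\) and \(a ∨ b = \top_{P}\). Smallness of \(a\) then forces \(b = \top_{P}\), hence \(f(\top_{P}) = \top_{f} = f(b) \leq c\).

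The main obstacle is closing the gap between \(\top_{f}\) and \(\top_{Q}\). In \(\mathbf{UBP}\) morphisms need not preserve tops, so \(\top_{f} \leq c\) alone does not give \(c = \top_{Q}\). I would first try to combine it with \(f(a) \leq \top_{f} \leq c\): every upper bound of \(c\) is then an upper bound of \(f(a)\) and \(c\), so \(f(a) ∨ c = c\), i.e. \(c = \top_{Q}\). This settles the matter without any top-preservation assumption, using only monotonicity \(f(a) \leq f(\top_{P})\).

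The join-preservation hypothesis is then needed only to ensure that the relevant joins in \(Q\) exist and are computed compatibly; the argument above does not really consume it. If the intended reading of ``preserves joins of \(a\) to \(\top_{P}\)'' were stronger, the same chain \(f(a) \leq f(b) \leq c\) still finishes the proof. Consequently the proof is a three-line argument once the lifting is applied.
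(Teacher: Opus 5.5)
Your proof is correct and matches the paper's argument: lift the join to obtain \(b\) with \(f(b) \leq c\) and \(a \vee b = \top_{P}\), use smallness to get \(b = \top_{P}\), and conclude from \(f(a) \leq \top_{f} \leq c\) that \(c = f(a) \vee c = \top_{Q}\). Your remark that the join-preservation hypothesis is never actually consumed also applies to the paper's own proof: its step \(f(a) \vee f(b) = \top_{f}\) needs only monotonicity once \(b = \top_{P}\).
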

\begin{proof}
  Let \(c ∈ Q\) satisfy \(f(a)∨c = \top_{Q}\). By the \(a\)-join-lifting property there exists a \(b ∈ P\) with \(f(b)\le c\) and \(a∨b=\top_P\). If \(a\) is small, then \(b = \top_{P}\), so that
  \(f(a) \leq f(a) ∨ f(b) = f(a) ∨ \top_{f} = \top_{f} \leq c\). So \(f(a) ∨ c = c = \top_{Q}\).
\end{proof}

Let us also treat the reflecting case: assuming we have \(f{:} P → S\) and \(a ∈ P\) with small \(f(a)\). Why could \(a\) \emph{not} be small in \(P\)? Three reasons come to mind:

\begin{enumerate}
\item\label{item:40} \(\top_{f}\) is not reduced in \(S\).
\item\label{item:3} there is a \(b ∈ P\) with \(a ∨ b\) such that \(f(a) ∨ f(b)\) either doesn't exist or is smaller than \(\top_{f}\).
\item\label{item:36} each \(f(b)\) does join with \(f(a)\) to \(\top_{f}\) and thus \(f(b) = \top_{f}\) \emph{but} \(b\) is something smaller than \(\top_{P}\) that is still mapped to \(\top_{f}\).
\end{enumerate}
\begin{Example}
  \label{xmp:105}
  Let \(f^{*}{:} \mathbf{C}([0, 1]) → \mathbf{C}([0, 2])\) from \cref{xmp:7} be induced through pullback along the continuous function \(f{:} [0, 2] \twoheadrightarrow [0, 1]\) that is the identity on
  \([0, 1] \subseteq [0, 2]\) and maps \([1, 2]\) to \(1\). Then \(f^{*}\) is not smallness-preserving because the fiber over \(1\) is not small:
  \([0, 1] ∨ f^{*}(\{1\}) = [0, 1] ∨ [1, 2] = [0, 2]\), so \(f^{*}(\{1\})\) is not small even though \(\{1\}\) is.
\end{Example}
\begin{Proposition}
  \label{■pn_:2}
  Let \(f{:} P → Q\) be a top-join preserving morphism of preorders that is injective on \(\top_{f}\). If \(\top_{f}\) is reduced then \(f\) is smallness-reflecting.
\end{Proposition}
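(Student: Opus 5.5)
The plan is to check the definition of smallness for \(a\) directly, and to use the three hypotheses one at a time, each removing one of the obstructions \cref{item:40}, \cref{item:3}, \cref{item:36} listed just before the statement. First I fix how the hypotheses are read. \emph{Top-join preserving} will mean: whenever \(a ∨ b = \top_{P}\) in \(P\), the join \(f(a) ∨ f(b)\) exists in \(Q\) and equals \(f(\top_{P}) = \top_{f}\). \emph{Injective on \(\top_{f}\)} will mean: the fiber of \(f\) over \(\top_{f}\) is \(\{\top_{P}\}\), i.e.\ \(f(b) = \top_{f}\) implies \(b = \top_{P}\). \emph{Reduced} is meant in the sense of \cref{defn_:7}, inside \(Q\).

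Now assume \(f(a)\) is small in \(Q\), and let \(b ∈ P\) with \(a ∨ b = \top_{P}\). We must show \(b = \top_{P}\). The argument has three steps.
\begin{enumerate}
\item By top-join preservation, \(f(a) ∨ f(b) = \top_{f}\). This removes obstruction \cref{item:3}. In particular \(\top_{f} \leq f(a) ∨ f(b)\) holds in \(Q\).
\item Apply reducedness of \(\top_{f}\) (\cref{defn_:7}) with the small element \(f(a)\) and the arbitrary element \(f(b)\). This gives \(\top_{f} \leq f(b)\). By monotonicity \(f(b) \leq f(\top_{P}) = \top_{f}\), so \(f(b) = \top_{f}\). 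This removes obstruction \cref{item:40}.
\item Injectivity on \(\top_{f}\) now gives \(b = \top_{P}\). This removes obstruction \cref{item:36}, and so \(a\) is small.
\end{enumerate}

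I expect no real obstacle; the only delicate point is the second step. Reducedness must be applied in \(Q\), where \(f(a)\) is small, and not in the principal ideal \((\top_{f})\). This is exactly why \cref{defn_:7} is phrased with the covering inequality \(c \leq b ∨ c'\) rather than an equality: the join \(f(a) ∨ f(b)\) equals \(\top_{f}\), which need not be \(\top_{Q}\), so an equality-to-\(\top\) formulation would not apply. If the intended reading of top-join preservation only gives \(f(a) ∨ f(b) \geq \top_{f}\), or only that some upper bound of \(f(a)\) and \(f(b)\) lies above \(\top_{f}\), the same argument still goes through, since reducedness only needs the inequality.

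As a remark, I would point out that one could weaken the hypotheses slightly. It suffices that \(\top_{f}\) is reduced with respect to small elements of the form \(f(a)\). It also suffices that \(f\) is injective on \(\top_{f}\) only over elements \(b\) that join with some preimage of a small element.
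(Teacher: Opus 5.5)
Your argument is correct and is the paper's proof step for step: top-join preservation gives \(f(a)\vee f(b)=\top_{f}\), reducedness of \(\top_{f}\) in \(Q\) with the small element \(f(a)\) gives \(f(b)=\top_{f}\) (you make the monotonicity half explicit, which the paper leaves implicit), and injectivity on \(\top_{f}\) gives \(b=\top_{P}\). One caution about your side remark: the weakened reading ``some upper bound of \(f(a)\) and \(f(b)\) lies above \(\top_{f}\)'' is vacuous, since \(\top_{Q}\) is always such a bound, so it would not let you invoke reducedness. Reducedness needs \(\top_{f}\) to lie below the actual join, or at least below every common upper bound of \(f(a)\) and \(f(b)\).
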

\begin{proof}
  \label{prf:3}
  Let \(a \in P\) be an element such that \(f(a)\) is small in \(Q\).
  Suppose \(b \in P\) is such that \(a ∨ b = \top_{P}\).
  Since \(f\) preserves top-joins, we have:
  \[
    f(a) ∨ f(b) \;=\; f(\top_{P}) \;=\; \top_{f}.
  \]
  By the hypothesis that \(f(a)\) is small in \(Q\) and \(\top_{f}\) is reduced in \(Q\), it follows from \cref{defn_:7} that:
  \[
    f(b) \;=\; \top_{f}.
  \]
  Using the assumption that \(f\) is injective on \(\top_{f}\), this implies \(b = \top_{P}\).
  Thus, the only element \(b\) in \(P\) that joins with \(a\) to give \(\top_{P}\) is \(\top_{P}\) itself.
  Therefore, \(a\) is small in \(P\).
\end{proof}

\begin{Corollary}
  \label{crl:11}
  Let \(ι\) be an ideal embedding. If \(\top_{ι}\) is reduced then \(ι\) reflects smallness.
\end{Corollary}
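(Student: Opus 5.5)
The corollary is a direct specialisation of \cref{■pn_:2}. My plan is to check that an ideal embedding \(ι{:} A \hookrightarrow P\) in \(\mathbf{UBP}\) satisfies the two structural hypotheses of that proposition: it must preserve top-joins, and it must be injective on \(\top_{ι}\). The remaining hypothesis, that \(\top_{ι}\) is reduced, is assumed in the corollary. Once both conditions hold, \cref{■pn_:2} yields smallness-reflection at once.

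\textbf{Injectivity.} This is immediate, since an ideal embedding is an order-embedding and hence injective. In particular, the only element of \(A\) mapped to \(\top_{ι} = ι(\top_{A})\) is \(\top_{A}\).

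\textbf{Preservation of top-joins.} Suppose \(a ∨ b = \top_{A}\) in \(A\). I must show that \(ι(a) ∨ ι(b)\) exists in \(P\) and equals \(\top_{ι}\). Certainly \(\top_{ι}\) is an upper bound of both \(ι(a)\) and \(ι(b)\). Now let \(u ∈ P\) be any common upper bound.
\begin{itemize}
\item Consider \(u ∧ \top_{ι}\); meets need not exist in general, so I avoid them. Instead I argue that the image of \(ι\) is the principal ideal \((\top_{ι})\) (\cref{rmk:17}), which is downward closed.
\item The elements \(ι(a)\) and \(ι(b)\) lie in \((\top_{ι})\), but the bound \(u\) need not, so a direct comparison with \(\top_{ι}\) is not yet available.
\end{itemize}

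This is the main obstacle. The join \(a ∨ b\) in \(A\) is computed in the ideal \((\top_{ι})\), and a common upper bound lying outside that ideal need not be \(\geq \top_{ι}\). So \(ι\) only preserves joins relative to its image, not absolute joins in \(P\).

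I would resolve this by reading the hypothesis of \cref{■pn_:2} as it is actually used in that proof. There, one only needs that whenever \(f(a)\) is small and \(f(a) ∨ f(b)\) (or the relevant covering) reaches \(\top_{f}\), reducedness of \(\top_{f}\) forces \(f(b) = \top_{f}\). Reducedness is phrased with the inequality \(\top_{ι} \leq ι(a) ∨ c\), so I would apply it with \(c = ι(b)\), under the interpretation of the join that the paper uses throughout.

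If an absolute join is required, I would instead argue directly. Suppose \(a ∨ b = \top_{A}\) and \(ι(a)\) is small in \(P\). Smallness is downward-stable by \cref{prp:1}, so the only issue is deriving \(ι(b) \geq \top_{ι}\). I would obtain this from reducedness of \(\top_{ι}\) applied to any join witnessing \(\top_{ι} \leq ι(a) ∨ ι(b)\). Finally, injectivity of \(ι\) concludes \(b = \top_{A}\), exactly as in the proof of \cref{■pn_:2}.
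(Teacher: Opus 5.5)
Your route is the paper's. The corollary is read off from \cref{■pn_:2}, and the paper treats top-join preservation as automatic for ideal embeddings; in the proof of \cref{crl:1} it says a downward-closed embedding preserves joins. You were right to distrust that step, but neither of your resolutions closes it.

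Reading \(\iota(a)\vee\iota(b)\) as the join computed in \((\top_\iota)\) does make \(\top_\iota\le\iota(a)\vee\iota(b)\) true. However, reducedness of \(\top_\iota\) in \(P\) only speaks about joins in \(P\). Requiring it for joins computed in the ideal says exactly that every element of \((\top_\iota)\) that is small in \(P\) is small in \((\top_\iota)\), which is the conclusion. Your ``direct'' argument presupposes a join in \(P\) witnessing \(\top_\iota\le\iota(a)\vee\iota(b)\), but the existence of \(\iota(a)\vee_P\iota(b)\) is precisely what is at issue. Downward closure only shows that a join which \emph{exists} in \(P\) of two elements of the ideal lies in the ideal and agrees with the ideal join; it says nothing about existence.

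The gap cannot be closed, because the statement fails for arbitrary ideal embeddings in \(\mathbf{UBP}\). Let \(P=\{\bot,x,y,t,w,\top\}\) with \(\bot\) least, \(x,y<t\), \(x,y<w\), \(t,w<\top\) and no further relations. Let \(\iota\) be the inclusion of \((t)=\{\bot,x,y,t\}\), so \(\top_\iota=t\).
In \((t)\) we have \(x\vee y=t\), so \(x\) is not small there.
In \(P\), \(x\vee y\) does not exist, since \(t\) and \(w\) are incomparable minimal upper bounds. The existing joins \(x\vee c\) are \(x,x,t,w,\top\) for \(c=\bot,x,t,w,\top\), so \(x\) is small in \(P\).
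The small elements of \(P\) are \(\bot,x,y\); \(t\) and \(w\) are not, as \(t\vee w=\top\). Every existing join \(b\vee c\ge t\) with \(b\) small has \(c\ge t\), so \(t\) is reduced. Since \(t\not\le w\), this survives even if one reads \(t\le x\vee y\) as ``\(t\) lies below every common upper bound''.
Hence \(\iota\) does not reflect smallness, although \(\top_\iota\) is reduced. This \(\iota\) is even \(\iota_*\) for the monomorphism \(t\le\top\) in \(P\) viewed as a thin category.

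What is missing is a hypothesis ensuring \(\iota(a)\vee_P\iota(b)=\top_\iota\) whenever \(a\vee b=\top_A\). Either of the following suffices:
\begin{itemize}
\item \(P\) has binary joins. Then \(\iota(a)\vee_P\iota(b)\le\top_\iota\) lies in the ideal, its preimage bounds \(a,b\) in \(A\), and so it equals \(\top_\iota\).
\item All meets \(u\wedge\top_\iota\) exist. This is the idea you discarded, and it holds for \(\mathbf{SOb}_{\mathcal{C}}(X)\) whenever \(\mathcal{C}\) has pullbacks of monomorphisms. For any common upper bound \(u\) of \(\iota(a),\iota(b)\), the element \(u\wedge\top_\iota\) is a common upper bound inside the ideal; its preimage bounds \(a,b\), so it equals \(\top_\iota\) and \(u\ge\top_\iota\).
\end{itemize}
Under either hypothesis your remaining steps finish the proof as in \cref{■pn_:2}: reducedness gives \(\iota(b)\ge\top_\iota\), and injectivity gives \(b=\top_A\).
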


What complicates general proofs a little is that the use of tablets and scales decomposes the functions of smallness-measurement in a way that has to be kept track of. In principle this
is more fine-grained and will allow the reader to apply the resulting calculus in the topological and measure-theoretic contexts we discuss in \cref{sec:applications}. But we
need to extend our considerations to a commutative diagram of upward-bounded preorders and monotone maps:

\begin{figure}[H]
  \centering
  \begin{tikzcd}
  T \ar[r,"τ"] \ar[d,"h"] & P \ar[r,"σ"] \ar[d,"g"] & S \ar[d,"k"] \\
  T'  \ar[r,"τ'"]            & P'  \ar[r,"σ'"]            & S'
\end{tikzcd}
  \caption{\label{fi♭_:9}}
\end{figure}

\begin{Definition}[Relative preservation of top-joins]
  \label{def:rel-preserve-topjoin}
  With \(f = (h,g,k)\) as in \cref{fi♭_:9} and \(a ∈ P\), we say that \((h,g,k)\) \emph{preserves \((τ,a)\)-top-joins} or \emph{preserves \(τ\)-top-joins on \(a\)} if for every
  \(b ∈ T\) such that \(a ∨ τ(b) = \top_{P}\) one has
  \[
    g\!\bigl(a ∨ τ(b)\bigr) \;=\; g(a) ∨ g(τ(b)) = g(a) ∨ τ'(h(b)) \;=\; \top_{g}.
  \]

  On the other hand we say that \(f\) \emph{reflects \(τ'\)-top-joins on \(a\)} if for each \(b ∈ T\) with \(g(a) ∨ (τ⨾ g)(b) = \top_{g}\) it holds that \(a ∨ τ(b) = \top_{P}\).
\end{Definition}

\begin{Proposition}[Elementwise reflection of relative smallness]
  \label{prop:rel-reflect-small}
  Assume:
  \begin{enumerate}
  \item\label{item:71}  \((h,g,k)\) preserves \(τ\)-top-joins on \(a\);
  \item\label{item:72}  the top \(\top_{g}\) is reduced under \(τ'\) over \(σ'\),
  \item\label{item:66}  \(k\) reflects \(\top_{σ⨾ k}\),
  \end{enumerate}
  Under these hypotheses \((h,g,k)\) reflects relative smallness at \(a{:} P\).
\end{Proposition}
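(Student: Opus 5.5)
This relativizes the proof of \cref{■pn_:2}: "\(f(b)=\top_{f}\)" becomes a scale identity, and "injective on \(\top_{f}\)" is replaced by \cref{item:66}. First I fix what "reflects relative smallness at \(a\)" means. Assume \(g(a)\) is \((τ' \mid σ')\)-small in \(P'\). Then show that \(a\) is \((τ \mid σ)\)-small in \(P\). Concretely, for every \(b ∈ T\) with \(a ∨ τ(b) = \top_{P}\), I must show \((τ⨾σ)(b) = \top_{σ}\).

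Fix such a \(b\). By \cref{item:71}, the triple \((h,g,k)\) preserves this top-join. Using commutativity of the left square of \cref{fi♭_:9}, this gives
\[
  g(a) ∨ τ'(h(b)) = \top_{g}.
\]
In particular \(\top_{g} \le g(a) ∨ τ'(h(b))\). Now I apply \cref{item:72}, reducedness of \(\top_{g}\) under \(τ'\) over \(σ'\) in the sense of \cref{defn_:21}. The element playing the role of the small summand is \(g(a)\), and \(h(b) ∈ T'\) is the tablet. This yields
\[
  (τ'⨾σ')(h(b)) \ge σ'(\top_{g}).
\]

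Next I transport this along the right square. Commutativity gives \((τ'⨾σ')(h(b)) = k(σ(τ(b)))\) and \(σ'(\top_{g}) = σ'(g(\top_{P})) = k(σ(\top_{P})) = k(\top_{σ})\). Here \(\top_{σ} = σ(\top_{P})\) is the top of the image of \(σ\), by monotonicity. Monotonicity also gives \(k(σ(τ(b))) \le k(\top_{σ})\), so the two are equal. Thus \(σ(τ(b))\) is sent by \(k\) to \(\top_{σ⨾k} = k(\top_{σ})\). Since \(k\) reflects \(\top_{σ⨾k}\) (\cref{item:66}), it follows that \(σ(τ(b)) = \top_{σ}\), as required.

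The main obstacle is the reducedness step. \cref{defn_:21} demands that the small summand be small under \(τ'\) over \(σ'\) in \(P'\) itself. The element \(g(a)\), however, is assumed small relative to \(P'\), while \(\top_{g}\) may be a proper element. I expect the definitions to line up directly, since reducedness of \(\top_{g}\) quantifies over elements small in \(P'\), which is exactly the hypothesis on \(g(a)\). If a mismatch appears, namely if reflection is meant relative to the ideal \((\top_{g})\), I would first restrict \(τ'\) and \(σ'\) to \((\top_{g})\). I would then check, as in \cref{■pn_:2}, that the top-join equation only involves elements below \(\top_{g}\). A smaller point to confirm is that "reflects \(\top_{σ⨾k}\)" means \(k(s) = k(\top_{σ})\) implies \(s = \top_{σ}\) for \(s\) in the image of \(σ\).
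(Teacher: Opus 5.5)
Your proof is correct and matches the paper's argument step for step. You push the top-join forward via preservation of \(\tau\)-top-joins, then apply reducedness of \(\top_{g}\) with \(g(a)\) as the small summand and \(h(b)\) as the tablet, transport the resulting inequality along the right square (getting equality from \(\tau(b) \leq \top_{P}\)), and conclude by reflection of \(\top_{\sigma ⨾ k}\) under \(k\); your worry about the ideal \((\top_{g})\) is unnecessary, since the paper uses your primary reading with \(g(a)\) small in \(P'\) itself.
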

\begin{proof}
  Suppose \(a' := g(a)\) is \((τ' \mid σ')\)-small. To show \(a\) is \((τ \mid σ)\)-small, take any \(b ∈ T\) such that
  \[
    a ∨ τ(b) = \top_{P}.
  \]
  Let now \(b'\) be given by \(h(b)\). Applying \(g\) and using preservation of \(τ\)-top-joins on \(a\)-joining families:

  \begin{equation}
    \label{qn_:1}
    \top_{g}
    \;=\;
    g(a ∨ τ(b))
    \;=\;
    g(a) ∨ (τ⨾ g)(b)
    \;=\;
    a' ∨ τ'(b').
  \end{equation}

  So \(b'\) joins with \(a'\) to contain \(\top_{g}\) under \(τ'\) in \(P'\), and since \(a'\) is \((τ' \mid σ')\)-small in \(P'\) and \(\top_{g}\) is \((τ' \mid σ')\)-reduced, it follows from
  \cref{defn_:21} that
\begin{equation}
    \label{qn_:3}
    (τ⨾ σ⨾ k)(b) = (τ'⨾ σ')(b') \geq σ'(\top_{g}) = (g⨾ σ')(\top_{P}) = (σ⨾ k)(\top_{P}) = \top_{σ⨾ k},
  \end{equation}

  and since \(τ(b) \leq \top_{P}\) and thus \(τ'(b') \leq g(\top_{P})\), the reverse inequality also holds and thus \((τ'⨾ σ')(b') = \top_{σ⨾ k}\).

  But since \(k\) reflects \(\top_{σ⨾ k}\) and \((τ⨾ σ⨾ k) (b) = \top_{σ⨾ k}\), it follows that \((τ⨾ σ)(b) = \top_{σ}\). Since this holds for every \(b ∈ T\), \(a\) is small.
\end{proof}
\begin{Remark}
  \label{rmα_:14}
  It seems like not much (or nothing) can be gained from laxifying the squares of \cref{fi♭_:9}: for instance to apply \((τ' \mid σ')\)-reducedness in the proof of \cref{prop:rel-reflect-small}
  we would have to assume that the left square is lax so that the last equality in \cref{qn_:1} became \(g(a) ∨ (τ⨾ g)(b) \leq a' ∨ τ'(b')\). But then the first equality of \cref{qn_:3} would
  become an inequality in the wrong direction \((τ⨾ σ⨾ k)(b) \leq (τ'⨾ σ')(b')\), so that the proof would fall apart.
\end{Remark}

Let us now turn to preservation of smallness at \(a\):

\begin{Definition}[Relative lifting of top-joins]
  \label{defn_:8}
  With \((h,g,k)\) as in \cref{fi♭_:9} and \(a {:} P\), \(g\) and \(h\) are \emph{\(\top\)-join-lifting on \(a\)} if for each \(b' ∈ T'\) such that \(g(a) ∨ τ'(b') = \top_{P'}\) there exists a
  \(b ∈ T\) such that \(h(b) \leq b'\) and \(a ∨ τ(b) = \top_{P}\).
\end{Definition}
\begin{Proposition}[Elementwise preservation of relative smallness]
  \label{prop:rel-preserve-small}
  Given a diagram of preorders as in \cref{fi♭_:9} and  \(a\) in \(P\). Suppose:
  \begin{enumerate}
  \item \((h,g,k)\) preserves \(τ\)-top-joins on \(a\)-joining families;
  \item \(g\) and \(h\) are \(\top\)-join-lifting on \(a\);
  \item\label{item:77} \(k\) maps \(\top_{σ}\) to \(\top_{σ'}\).
  \end{enumerate}
  Then \((h,g,k)\) preserves relative smallness at \(a\).
\end{Proposition}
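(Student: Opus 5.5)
The plan mirrors the proof of \cref{prp:8}, with the scale and tablets carried along through the commutative diagram \cref{fi♭_:9}. We assume that \(a\) is \((τ \mid σ)\)-small in \(P\) and want to show that \(g(a)\) is \((τ' \mid σ')\)-small in \(P'\). So we fix an arbitrary \(b' ∈ T'\) with \(g(a) ∨ τ'(b') = \top_{P'}\), and the goal is \((τ'⨾σ')(b') = \top_{σ'}\).

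First, the \(\top\)-join-lifting hypothesis (\cref{defn_:8}) applied to \(b'\) gives some \(b ∈ T\) with \(h(b) \leq b'\) and \(a ∨ τ(b) = \top_{P}\). Second, \((τ \mid σ)\)-smallness of \(a\) yields \((τ⨾σ)(b) = \top_{σ}\). Third, we apply \(k\) and use commutativity of both squares:
\[
  (τ'⨾σ')(h(b)) = (h⨾τ'⨾σ')(b) = (τ⨾σ⨾k)(b) = k(\top_{σ}) = \top_{σ'},
\]
where the last equality is hypothesis \cref{item:77}. Fourth, monotonicity of \(τ'⨾σ'\) together with \(h(b) \leq b'\) gives \(\top_{σ'} = (τ'⨾σ')(h(b)) \leq (τ'⨾σ')(b')\). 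Since \(\top_{σ'}\) is the maximum of the image of \(σ'\), this forces \((τ'⨾σ')(b') = \top_{σ'}\), as required.

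The main point needing care is the passage from \(k(\top_{σ})\) to \(\top_{σ'}\). One must note that \(\top_{σ'}\) is the top of \(\mathbf{Im}(σ')\), which is exactly why \cref{item:77} is stated as it is, and that an element of the image lying above it equals it. This is the same step as in the proof of \cref{prop:lax-scale}.

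The first hypothesis, preservation of \(τ\)-top-joins on \(a\), seems unnecessary for this argument. It plays the role that join preservation played in \cref{prp:8}, guaranteeing that \(g(a) ∨ τ'(h(b)) = \top_{g}\). I would check whether it is needed for a variant in which the join in \(P'\) is only assumed to be \(\top_{g}\) rather than \(\top_{P'}\). If it is not needed, I would simply remark that it is not used.
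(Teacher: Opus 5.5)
Your proposal is correct and matches the paper's proof step for step. You lift \(b'\) to \(b\) via \(\top\)-join-lifting, use \((\tau \mid \sigma)\)-smallness of \(a\) to get \(\sigma(\tau(b)) = \top_{\sigma}\), carry this through \(k\) using commutativity and hypothesis 3, and conclude by monotonicity together with maximality of \(\top_{\sigma'}\) in the image of \(\sigma'\). You are also right that the first hypothesis is not needed: the paper's argument never invokes it either.
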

\begin{proof}
  Assume \(a\) is \((τ \mid σ)\)-small.  Let \(b'\) be such that
  \[
    g(a) ∨ τ'(b') = \top_{P'}.
  \]
  Using the \((τ,a)\)-join-lifting property, there exists a \(b ∈ T\) with
  \(h(b) \leq b'\) and
  \[
    a ∨ τ(b) = \top_{P}.
  \]
  By smallness of \(a\) in the unprimed context we have
  \[
    (τ⨾σ)(b) = \top_{σ}.
  \]
  Since \(k\) maps \(\top_{σ}\) to \(\top_{σ'}\) and \(h(b) \leq b'\), \((τ'⨾ σ')(b') \geq (τ⨾ σ⨾ k)(b) = \top_{σ'}\), hence \((τ'⨾ σ')(b') = \top_{σ'}\), so \(g(a)\) is \((τ' \mid σ')\)-small, hence \((h,g,k)\) preserves relative smallness at \(a\).
\end{proof}

Now we universalize over \(a\):

\begin{Definition}\label{def:global-rel-pres-reflect}
  In the above diagram, we say that the triple \((h,g,k)\):
  \begin{itemize}
  \item \emph{reflects \((τ'\mid σ')\)-smallness} if it reflects \((τ'\mid σ')\)-smallness at every \(a{:}P\);
  \item \emph{preserves \((τ\mid σ)\)-smallness} if it preserves \((τ\mid σ)\)-smallness at every \(a{:}P\).
  \end{itemize}
\end{Definition}

Now we can describe the global behavior of smallness with respect to morphisms in one proposition:

\begin{Proposition}[Global relative reflection/preservation of smallness]
  \label{prop:global-rel-small}\index{global reflection/preservation of smallness}
  In the situation of \cref{fi♭_:9}:

  \begin{enumerate}
  \item\label{item:global-reflect}
    Assume that:
    \begin{enumerate}
    \item\label{item:67}  \((h,g,k)\) preserves \(τ\)-top-joins,
    \item\label{item:69} \(\top_{g}\) is \((τ' \mid σ')\)-reduced in \(P'\) in the sense of \cref{defn_:21};
    \item\label{item:70} \(k\) reflects \(\top_{σ⨾ k}\).
    \end{enumerate}
    Then \((h,g,k)\) globally reflects \((τ'\mid σ')\)-smallness.

  \item\label{item:global-preserve}
    Assume that for every \(a{:}P\):
    \begin{enumerate}
    \item\label{item:12} \((h,g,k)\) preserves \(τ\)-top-joins;
    \item\label{item:11} \((h,g,k)\) are \(\top\)-join-lifting;
    \item\label{item:13} \(k\) maps \(\top_{σ}\) to \(\top_{σ'}\).
    \end{enumerate}
    Then \((h,g,k)\) globally preserves \((τ\mid σ)\)-smallness.
  \end{enumerate}
\end{Proposition}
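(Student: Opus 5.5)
The plan is to obtain both parts by universally quantifying the elementwise results already established, since \cref{def:global-rel-pres-reflect} defines global reflection and preservation pointwise over \(a{:}P\). Before doing so, I will check that each global hypothesis specializes to the corresponding elementwise hypothesis at every fixed \(a\). Once that is done, no new order-theoretic argument should be required.

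For \cref{item:global-reflect}, I will fix an arbitrary \(a ∈ P\) and verify the three assumptions of \cref{prop:rel-reflect-small}. Assumption \cref{item:67} is read as ``\((h,g,k)\) preserves \(τ\)-top-joins on \(a\) for every \(a\)'', so at the chosen \(a\) it gives \cref{item:71}. Assumptions \cref{item:69} and \cref{item:70} do not depend on \(a\): they concern only \(\top_{g} = g(\top_{P})\) and \(k\). They therefore coincide with \cref{item:72} and \cref{item:66}. \cref{prop:rel-reflect-small} then shows that \((h,g,k)\) reflects relative smallness at \(a\). Because \(a\) was arbitrary, this is global reflection in the sense of \cref{def:global-rel-pres-reflect}.

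For \cref{item:global-preserve}, I will again fix \(a\). Hypotheses \cref{item:12} and \cref{item:11} are stated for every \(a\), so at the chosen \(a\) they give the first two assumptions of \cref{prop:rel-preserve-small}: preservation of \(τ\)-top-joins on \(a\), and \(\top\)-join-lifting on \(a\) in the sense of \cref{defn_:8}. Hypothesis \cref{item:13} is literally \cref{item:77}. \cref{prop:rel-preserve-small} then gives preservation at \(a\), and quantifying over \(a\) gives global preservation.

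The only step that needs care is the bookkeeping between the global phrasing and the elementwise definitions. In particular, ``\((h,g,k)\) are \(\top\)-join-lifting'' has to be read as ``\(g\) and \(h\) are \(\top\)-join-lifting on each \(a\)''. Similarly, \(\top_{σ⨾ k}\) in part 1 has to match the top of the image used in the proof of \cref{prop:rel-reflect-small}, namely \((σ⨾ k)(\top_{P})\). I will state these readings explicitly and then cite the two propositions.
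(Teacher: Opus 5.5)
Your proposal is correct and takes the same route as the paper. The paper's proof just says that both parts follow directly from \cref{prop:rel-reflect-small} and \cref{prop:rel-preserve-small}, applied at each \(a\) and then quantified over \(a\) as in \cref{def:global-rel-pres-reflect}; your explicit matching of global to elementwise hypotheses, including reading \(\top_{σ⨾ k}\) as \((σ⨾ k)(\top_{P})\), spells out that one-line argument.
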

\begin{proof}
  The preservation and reflection statements follow directly from \cref{prop:rel-reflect-small} and  \cref{prop:rel-preserve-small}
\end{proof}
\begin{Remark}\label{rmk:global-rel-small-intuition}
  Conceptually:
  \begin{itemize}
  \item The \emph{reflection} part says that if the comparison triple \((h,g,k)\) preserves joins and reflects \(\top_{σ⨾ k}\), then smallness-reflection follows from the
    reducedness of \(\top_{g}\).
  \item The \emph{preservation} part says that, if the upstairs context \((τ,σ)\) sees \(a\) as negligible and \((h,g,k)\) can systematically lift downstairs joining situations back upstairs without changing
    the measurements, then the downstairs image \(g(a)\) remains small for \((τ',σ')\).
  \end{itemize}
\end{Remark}

Let us also note that reducedness and smallness-reflection are equivalent for ideals:

\begin{Corollary}
  \label{crl:1}
  Let \(ι{:} P → Q\) be an ideal embedding. If \(ι\) reflects smallness, then \(\top_{ι}\) is reduced.
\end{Corollary}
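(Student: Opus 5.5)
Throughout, write \(t := \top_{ι} = ι(\top_{P})\). Since \(ι\) is an ideal embedding, by \cref{rmk:17} its image is exactly the principal ideal \((t)\) and \(ι\) is an order-isomorphism onto it. The plan is to verify the defining condition of reducedness (\cref{defn_:7}) for \(t\) directly: given \(a \in Q\) small and \(c \in Q\) with \(t \leq a ∨ c\), we must show \(t \leq c\). The idea is to pull the covering situation back into \(P\) by intersecting with \(t\), use reflection of smallness to conclude that the pulled-back piece of \(a\) is small in \(P\), and then read off \(t \leq c\).

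\textbf{Step 1: pass to the ideal.} For each \(x \in Q\), the candidate pullback is \(x ∧ t\). Every element below \(a\) is small in \(Q\) by \cref{prp:1}. Hence if \(a ∧ t\) exists, it is small in \(Q\), and it lies in \((t)\), so it equals \(ι(a_{0})\) for a unique \(a_{0} \in P\). Reflection of smallness then makes \(a_{0}\) small in \(P\).

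\textbf{Step 2: transport the cover.} Let \(c_{0} \in P\) be the element corresponding to \(c ∧ t\). We want \(a_{0} ∨ c_{0} = \top_{P}\) in \(P\), i.e. \((a ∧ t) ∨ (c ∧ t) = t\) inside \((t)\). Once we have this, smallness of \(a_{0}\) forces \(c_{0} = \top_{P}\). Then \(c ∧ t = t\), so \(t \leq c\), as required.

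\textbf{Main obstacle.} \(Q\) is only an upward-bounded preorder, so the meets \(a ∧ t\), \(c ∧ t\) need not exist, and even if they do, the distributivity needed for \((a ∧ t) ∨ (c ∧ t) = t\) fails in general. I expect the statement actually needs an extra hypothesis, such as \(Q\) being a distributive lattice (at least below \(t\)), or the converse of \cref{■pn_:2} being restricted accordingly. I would first try to avoid meets entirely. Use only that \(a\) small in \(Q\) implies \(a ∨ c = \top_{Q} \Rightarrow c = \top_{Q}\), and combine this with reflection applied to elements \(ι(p)\) with \(p\) ranging over the pieces below \(t\).

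If that fails, I would search for a counterexample: a non-distributive \(Q\) in which \(t\) is covered by a small element together with some \(c \not\geq t\), while \((t)\) itself has trivial smallness, so that reflection holds vacuously. If such a counterexample exists, I would state the corollary under the distributivity (or existence-of-meets-with-\(t\)) assumption used in Step 2.
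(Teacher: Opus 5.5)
Your route is the same as the paper's. Its proof also replaces \(a\) and \(b\) by their restrictions \(\iota^{-1}(a),\iota^{-1}(b)\) to \(P\), in effect \(a\wedge\top_\iota\) and \(b\wedge\top_\iota\). It then uses reflection to make the restricted small piece small in \(P\), and reads off \(\top_\iota\le a\).

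The step you flagged is exactly the one the paper leaves unjustified. It passes from \(\top_\iota\le a\vee b\) to \(\iota^{-1}(a)\vee\iota^{-1}(b)=\top_\iota\) on the grounds that an ideal embedding preserves and reflects joins in its image. But \(a\) and \(b\) are not in the image. What is actually needed is that the meets with \(\top_\iota\) exist and that \(\top_\iota\wedge(a\vee b)=(\top_\iota\wedge a)\vee(\top_\iota\wedge b)\).

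Your suspicion is right: without this the statement is false, even in a modular lattice. Take \(G=\mathbb{Z}/4\times\mathbb{Z}/2\), let \(Q\) be its subgroup lattice, and set \(a=\langle(2,0)\rangle\), \(c=\langle(0,1)\rangle\), \(t=\langle(2,1)\rangle\), with \(\iota\colon(t)\hookrightarrow Q\).
\begin{itemize}
\item The subgroup \(a\) lies in all three maximal subgroups \(\langle(1,0)\rangle\), \(\langle(1,1)\rangle\), \(\langle(2,0),(0,1)\rangle\). Every proper subgroup lies in one of these, which already contains \(a\), so \(a\) is small.
\item We have \(t\le a\vee c=\langle(2,0),(0,1)\rangle\) while \(t\not\le c\), so \(t\) is not reduced.
\item Yet \((t)=\{0,t\}\), and \(t\) is not small in \(Q\) because \(t\vee\langle(1,0)\rangle=G\). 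Since \(0\) is small in \((t)\), \(\iota\) reflects smallness.
\end{itemize}
Read in \(\mathbf{Ab}\), this also refutes the later summary claim that a subobject is reduced if and only if its ideal embedding reflects smallness.

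This settles the open branches of your plan. Avoiding meets cannot work, since the statement itself fails. Nor is ``distributive below \(t\)'' the right repair: in the example \((t)\) is a two-element chain. What fails is distributivity of \(t\wedge-\) over a join of two elements, neither of which lies below \(t\). This is also why modularity, which suffices for \cref{lmm:4}, does not help here.

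Under a hypothesis such as ``\(Q\) is a distributive lattice'', your Steps 1 and 2 form a complete proof. It is enough to assume that whenever \(a\vee c\ge t\), the meets \(t\wedge a\) and \(t\wedge c\) exist and join to \(t\). Then:
\begin{itemize}
\item \(a\wedge t\le a\) is small in \(Q\) by \cref{prp:1}, hence small in \((t)\) by reflection.
\item \((a\wedge t)\vee(c\wedge t)=t\) is a join in \(Q\) lying in \((t)\), hence also the join in \((t)\).
\item Smallness of \(a\wedge t\) in \((t)\) then gives \(c\wedge t=t\), i.e.\ \(t\le c\).
\end{itemize}
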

\begin{proof}
  \label{prf:16}
  Let \(\top_{ι} \leq a ∨ b\)  in \(Q\) be such that \(ι^{*}(b)\) is small in \(P\). Since ideals are downward-closed, \(ι\) both preserves and reflects joins in its image. Thus,
  \(ι^{-1}(a) ∨ ι^{-1}(b) = \top_{ι}\). Since \(ι\) reflects smallness, \(ι^{-1}(b)\) is small in \(P\), so that \(ι^{-1}(a) = \top_{P}\). So \(ι(ι^{*}(a)) = \top_{ι}\), so \(\top_{ι} \leq a\).
  Since this holds for each \(a, b{:} Q\), \(\top_{ι}\) is reduced in \(Q\).
  \end{proof}

Let us also note the dual case:

\begin{Remark}[Dualization for density and satedness]
  \label{rmk:dual-global-rel}
  Given a commutative diagram as above but in the  category of downward-bounded preorders, the following hold:

  \begin{enumerate}
    \item (Global reflection of \((τ'\mid σ')\)-density)
      Assume that:
      \begin{enumerate}
      \item \((h,g,k)\) preserves \(τ\)-bottom-meets;
      \item \(\bot_{g}\) is sated in \(P'\) relative to \((τ' \mid σ')\);
      \item \(k\) reflects \(\bot_{σ⨾ k}\).
      \end{enumerate}
      Then \((h,g,k)\) globally reflects \((τ'\mid σ')\)-density.

    \item (Global preservation of \((τ\mid σ)\)-density)
      Assume that:
      \begin{enumerate}
      \item \((h,g,k)\) lifts \(τ\)-bottom-meets;
      \item \((h,g,k)\) preserves \(τ\)-bottom-meets;
        \item \(k\) maps \(\bot_{σ}\) to \(\bot_{σ'}\).
      \end{enumerate}
      Then \((h,g,k)\) globally preserves \((τ\mid σ)\)-density.
  \end{enumerate}
  The proofs are obtained from those of \cref{prop:global-rel-small} by systematically reversing all orders, replacing joins by meets, \(\top\) by \(\bot\), smallness by density, and reducedness by satedness.
\end{Remark}

\begin{Corollary}
  \label{coR_:9}
  Let \(ι\) be a downward-closed embedding \(ι{:} A \hookrightarrow P\) with \(P\) a downward-bounded preorder. Then \(ι\) is density-reflecting.
\end{Corollary}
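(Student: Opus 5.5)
The plan is a direct elementwise argument, which is really the dual of \cref{crl:11} specialised to downward-closed embeddings. In the dual picture, \(\bot_{ι} = ι(\bot_{A})\) is the bottom of \(P\), and the bottom is always sated. The reduction via the dual of \cref{■pn_:2} would also work, but unwinding it directly avoids checking the hypotheses (\(\bot\)-meet preservation and injectivity on \(\bot_{ι}\)) separately. Throughout, I read ``\(ι\) is density-reflecting'' as: whenever \(ι(a)\) is dense in \(P\), \(a\) is dense in \(A\).

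First I set up the bottoms. If \(A\) is empty there is nothing to prove, so assume it is not. Since \(\bot_{P} \le ι(x)\) for any \(x ∈ A\) and \(\mathbf{Im}(ι)\) is downward closed, \(\bot_{P} = ι(e)\) for some \(e ∈ A\). Because \(ι\) is an order-embedding (embeddings in \(\mathbf{UBP}\) and \(\mathbf{DBP}\) are taken order-reflecting, as in \cref{rmk:17}), \(e\) is below everything in \(A\). So \(A\) is downward-bounded and \(ι(\bot_{A}) = \bot_{P}\).

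Second, I show that \(ι\) preserves and reflects binary meets; the same observation is used in the proof of \cref{crl:1}. Suppose \(a ∧ b\) exists in \(A\). Then \(ι(a∧b)\) is a lower bound of \(ι(a)\) and \(ι(b)\). Any lower bound \(c ∈ P\) of \(ι(a)\) and \(ι(b)\) lies in \(\mathbf{Im}(ι)\) by downward closure, say \(c = ι(c')\). Order-reflection then makes \(c'\) a lower bound of \(a\) and \(b\) in \(A\), so \(c' \le a∧b\) and hence \(c \le ι(a∧b)\). Thus \(ι(a) ∧ ι(b)\) exists in \(P\) and equals \(ι(a∧b)\). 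The converse direction is the same argument read backwards.

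Finally, the density argument. Let \(ι(a)\) be dense in \(P\), and let \(b ∈ A\) satisfy \(a ∧ b = \bot_{A}\). By the previous step,
\[ι(a) ∧ ι(b) = ι(\bot_{A}) = \bot_{P}.\]
Density of \(ι(a)\) gives \(ι(b) = \bot_{P} = ι(\bot_{A})\), and injectivity (or order-reflection) gives \(b = \bot_{A}\). Hence \(a\) is dense in \(A\).

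The only real obstacle is the meet-compatibility step. Meets need not exist in general preorders (\cref{rmα_:20}), so one must check that existence of \(a∧b\) in \(A\) forces existence of the meet in \(P\), not merely an inequality. Downward closure of \(\mathbf{Im}(ι)\) is exactly what makes this work.
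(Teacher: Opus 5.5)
Your proof is correct and follows essentially the paper's route. The paper cites the dual of \cref{■pn_:2} (bottom-meet preservation, injectivity on \(\bot_{ι}\), and satedness of \(\bot_{ι} = \bot_{P}\)) and observes that injectivity holds because \(ι\) is an embedding; you unwind that same argument elementwise and also verify, via downward closure, the meet-preservation hypothesis that the paper leaves implicit.
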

\begin{proof}
  \label{prf:15}
  Since \(ι\) is downward-closed it in particular contains \(\bot{:} P\) and thus by the dual of \cref{■pn_:2} density-reflection holds if and only if \(ι\) is injective on \(\bot\). But
  this holds because \(ι\) is an embedding.
\end{proof}

But also:

\begin{Corollary}
  \label{coR_:density-preserved}
\index{density!preservation by surjections}
  Let \(f{:} P → Q\) be a monotone map between downward-bounded preorders. Assume that:

  \begin{enumerate}
  \item\label{item:121} \(f\) reflects bottoms;
  \item\label{item:122} \(f\) is surjective on elements.
  \end{enumerate}

Then if \(d\in P\) is dense, then \(f(d)\) is dense in \(Q\).
\end{Corollary}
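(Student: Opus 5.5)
The plan is to argue directly from the definition of density in \cref{defn_:58}, in the spirit of the dual of \cref{prp:8}. Surjectivity of \(f\) plays the role of the meet-lifting hypothesis, and bottom-reflection plays the role of the top-join condition. Fix a dense \(d \in P\) and let \(c \in Q\) satisfy \(f(d) ∧ c = \bot_{Q}\). We must show \(c = \bot_{Q}\).

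First, by surjectivity we choose \(b \in P\) with \(f(b) = c\); this is the lifting of the meet situation from \(Q\) back to \(P\). Second, we show that \(d ∧ b\) exists in \(P\) and equals \(\bot_{P}\). Following \cref{rmα_:20}, we do not assume meets exist, so we check the universal property directly. Let \(e\) be any common lower bound of \(d\) and \(b\). By monotonicity, \(f(e) \leq f(d)\) and \(f(e) \leq f(b) = c\). Hence \(f(e) \leq f(d) ∧ c = \bot_{Q}\), so \(f(e) = \bot_{Q}\). Since \(f\) reflects bottoms, \(e = \bot_{P}\). Thus every common lower bound of \(d\) and \(b\) is \(\bot_{P}\). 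Since \(\bot_{P}\) is itself a common lower bound, it is the greatest one, so \(d ∧ b = \bot_{P}\).

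Third, density of \(d\) now forces \(b = \bot_{P}\), and therefore \(c = f(\bot_{P})\). Finally, we check that \(f(\bot_{P}) = \bot_{Q}\), which is where surjectivity is used a second time. Pick \(x \in P\) with \(f(x) = \bot_{Q}\). Then \(f(\bot_{P}) \leq f(x) = \bot_{Q}\) by monotonicity, so \(f(\bot_{P}) = \bot_{Q}\). Hence \(c = \bot_{Q}\), and \(f(d)\) is dense.

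The only delicate step is the second one: establishing that the meet \(d ∧ b\) exists in \(P\), rather than presupposing it, since \(P\) is only a preorder. Quantifying over all common lower bounds, as above, handles this, in the same way the proof of \cref{prp:1} handles partially defined joins.
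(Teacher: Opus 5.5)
Your proof is correct and follows the same route as the paper's. You lift \(c\) along surjectivity, use monotonicity and bottom-reflection to get \(b \wedge d = \bot_P\), and then apply the density of \(d\). Your version is slightly more careful: the paper presupposes that \(b \wedge d\) exists and uses \(f(\bot_P) = \bot_Q\) without comment, and you justify both. The latter also follows from your Step 2 with \(e = \bot_P\), without invoking surjectivity a second time.
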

\begin{proof}
  Let \(c\in Q\) with \(c∧f(d)=\bot_Q\). Since \(f\) is surjective, choose \(b\in P\) with \(f(b)=c\). Because \(f\) is monotone we have
  \[
    f(b∧d) \;\le\; f(b)∧f(d) \;=\; c∧f(d) \;=\; \bot_Q.
  \]
  Hence \(f(b∧d) = \bot_Q = f(\bot_P)\). Thus, since \(f\) reflects bottoms, \(b ∧ d = \bot_{P}\).
  Since \(d\) is dense in \(P\), this yields \(b=\bot_P\), hence \(c=f(b)=\bot_Q\), which is exactly the density of \(f(d)\) in \(Q\).
\end{proof}

% This also allows us to describe elements based on their size under pullbacks:

% \begin{Definition}
%   \label{dfn:16}
%   Given an element \(a ∈ P'\) and a scale \(τ{:} P' → S'\), \(a\) is \emph{vanishing} in \(P\) under \(τ\) if and only if \(a\) is pullback-small in \((b')\) for each principal ideal \(b\).
% \end{Definition}
% \begin{Example}
%   \label{xmp:100}
%   Let \(σ{:} P' → ℝ_{\geq 0, ∞}\) be a measure. Then \(a ∈ P'\) is vanishing under \(σ\) if and only if for each \(ι{:} P \hookrightarrow P'\) it holds that either \(a\) is small in the \(σ\)-pullback size on \(P\) or
%   \(\mathbf{Im}(ι⨾ σ) = 0\). This follows immediately from the fact that \(0\) is the minimum of \(ℝ_{\geq 0, ∞}\). So we don't need measure theory to use its negligible subsets.
% \end{Example}

Finally we have to describe how smallness acts with regard to the 2-categorical structure in \(\mathbf{UBP}\). This is easy though:

\begin{Proposition}
  \label{■pn_:16}
  Given \(\leq{:} f \Rightarrow g{:} P → S\) then \(∀ a ∈ P\):

  \begin{enumerate}
  \item \label{item:82} if \(g\) is smallness-preserving at \(a\), then \(f\) is smallness-preserving at \(a\),

  \item\label{item:83} if \(f\) is smallness-reflecting at \(a\), then \(g\) is smallness-reflecting at \(a\).
  \end{enumerate}
\end{Proposition}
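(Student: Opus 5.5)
Both parts will follow directly from one fact: small elements of an upward-bounded preorder are closed downward. This is the first half of \cref{prp:1}, and its proof does not require any joins to exist beyond those appearing in the hypotheses. The plan is to feed the pointwise inequality \(f(a) \leq g(a)\), which is exactly what the 2-cell \(\leq{:} f \Rightarrow g\) in \(\mathbf{UBP}\) amounts to, into this downward closure, applied in \(S\).

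For \cref{item:82}, suppose \(g\) preserves smallness at \(a\) and \(a\) is small in \(P\). Then \(g(a)\) is small in \(S\). Since \(f(a) \leq g(a)\), the downward closure from \cref{prp:1} gives that \(f(a)\) is small in \(S\). Hence \(f\) preserves smallness at \(a\).

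For \cref{item:83}, suppose \(f\) reflects smallness at \(a\) and \(g(a)\) is small in \(S\). Again \(f(a) \leq g(a)\), so by \cref{prp:1} \(f(a)\) is small. Reflection of smallness by \(f\) at \(a\) then yields that \(a\) is small in \(P\). Hence \(g\) reflects smallness at \(a\).

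I expect no real obstacle. The one point to make explicit is that the downward-closure argument of \cref{prp:1} is valid in an arbitrary upward-bounded preorder. If \(x \leq y\) with \(y\) small and \(x ∨ c = \top\), then \(\top\) is the only upper bound of \(x\) and \(c\). Every upper bound of \(y\) and \(c\) is an upper bound of \(x\) and \(c\), so \(y ∨ c\) exists and equals \(\top\), and smallness of \(y\) forces \(c = \top\). Thus partiality of joins, in the sense of \cref{rmα_:20}, causes no trouble.
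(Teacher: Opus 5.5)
Your proof is correct and is the same as the paper's. Both parts reduce to the downward closure of small elements in \(S\) (the ideal property of \cref{prp:1}), applied to the pointwise inequality \(f(a) \leq g(a)\) that the 2-cell encodes. Your explicit check that \(y \vee c\) exists and equals \(\top\) whenever \(x \leq y\) and \(x \vee c = \top\) is a useful addition, since the paper's proof of \cref{prp:1} passes over that step with ``by monotonicity of joins''.
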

\begin{proof}
  \label{prf_:11}
  For \cref{item:82} first assume that \(a\) is small. Since \(g\) is smallness-preserving at \(a\), \(g(a)\) is small. But since the small elements of \(S\) form an ideal and \(f(a) \leq g(a)\), so is
  \(f(a)\). If \(a\) is not small, then the statement is void.

  For \cref{item:83}, assume that \(g(a)\) is small, else the statement is void. Since \(f(a) \leq g(a)\), \(f(a)\) is small, so \(a\) is small.
\end{proof}

Let us also clarify the relation between pseudocomplements, tablets and scales, which we'll use in \cref{prop:completeness-reflection-dense-envelope}:

\begin{Definition}
  \label{dfn:1}
\index{pseudocomplement}
Given an element \(a\) of a bounded preorder \(P\), an \emph{open pseudocomplement} \(\overset{\circ}{¬} a\) of \(a\) is a largest \(b\) such that \(a ∧ b = \bot_{P}\).

More
generally, given an embedding \(τ{:} T \hookrightarrow P\), a \emph{\((τ \mid σ)\)-relative open pseudocomplement} of \(a\) is a largest \(b ∈ T\) such that \(σ(a ∧ τ(b)) = \bot_{σ}\).

Dually, \(a\) has a \emph{closed
    pseudocomplement} \(\overline{¬}a\) if there exists a smallest \(b\) such that \(a ∨ b = \top_{P}\) and similar for a \emph{\((τ \mid σ)\)-relative closed pseudocomplement}.
\end{Definition}
\begin{Example}
  \label{xmp_:14}
  For the tablets \(τ{:} \mathbf{O}(X) \hookrightarrow \mathbf{P}(X)\) given by the opens of a topological space, a \(τ\)-open pseudocomplement \(\overset{\circ}{¬}ι\) is a largest open that does not intersect with a given subspace \(ι\), so the
  interior of the set-theoretic complement of \(ι\). Dually for closeds.
\end{Example}
\begin{Remark}
  \label{rmα_:15}
  We will generally refrain from studying closed pseudocomplements in this paper, as we will do that more in an upcoming paper anyway. So when we say ``pseudocomplement'' without a qualifier then we mean open
  pseudocomplements.
\end{Remark}

\begin{Proposition}[Pseudocomplements and density]
  \label{prop:pseudocomplement-density}
\index{density!pseudocomplement characterization}
Let \(T \xrightarrow{τ} P \xrightarrow{σ} S\) be morphisms of downward-bounded preorders with bottom \(\bot_P\), and let \(a\in P\) have a \(τ\)-relative pseudocomplement
  \(¬a\) . Then the following are equivalent:
  \begin{enumerate}
  \item \(a\) is \((τ \mid σ)\)-dense in the sense of \cref{rmk:dual-sigma-mu};
  \item \((τ⨾ σ)(¬a) = \bot_σ\).
  \end{enumerate}
\end{Proposition}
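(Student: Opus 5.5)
The proof will be a direct unfolding of the definitions, with the maximality of \(¬a\) doing the work in one direction and the density hypothesis applied to \(b = ¬a\) in the other. As a preliminary step I record that \(\bot_σ = σ(\bot_P)\). Since \(\bot_P \le p\) for every \(p \in P\) and \(σ\) is monotone, \(σ(\bot_P)\) lies below every element of the image of \(σ\), so it is the bottom of that image. I will also use repeatedly that \(τ⨾σ\) is monotone.

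\emph{(2) \(\Rightarrow\) (1).} Assume \((τ⨾σ)(¬a) = \bot_σ\) and let \(b \in T\) satisfy \(a ∧ τ(b) = \bot_P\). Then \(σ(a ∧ τ(b)) = σ(\bot_P) = \bot_σ\), so \(b\) is one of the elements over which \cref{dfn:1} takes the largest element. Hence \(b \le ¬a\), and monotonicity gives \((τ⨾σ)(b) \le (τ⨾σ)(¬a) = \bot_σ\). Therefore \((τ⨾σ)(b) = \bot_σ\), which is exactly \((τ \mid σ)\)-density of \(a\) in the sense of \cref{rmk:dual-sigma-mu}.

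\emph{(1) \(\Rightarrow\) (2).} The plan is to apply density to \(b := ¬a\). For this I need \(a ∧ τ(¬a) = \bot_P\) in \(P\). By definition, however, \(¬a\) only satisfies the weaker condition \(σ(a ∧ τ(¬a)) = \bot_σ\). This mismatch between the meet condition used in the pseudocomplement (measured through \(σ\)) and the one used in density (computed in \(P\)) is the step I expect to be the main obstacle. It is harmless when \(σ = \mathbf{id}_P\), and more generally whenever \(σ\) reflects \(\bot_σ\) on elements of the form \(a ∧ τ(t)\).

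My first attempt will be to read the density condition of \cref{rmk:dual-sigma-mu} consistently with \cref{dfn:1}, i.e.\ with hypothesis \(σ(d ∧ τ(b)) = \bot_σ\). This is the reading under which the pseudocomplement is the natural extremal witness, and under it the argument goes through immediately: density applied to \(b = ¬a\) yields \((τ⨾σ)(¬a) = \bot_σ\). Under this reading the direction (2) \(\Rightarrow\) (1) above also goes through unchanged, since \(σ(a ∧ τ(b)) = \bot_σ\) again places \(b\) below \(¬a\).

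If the literal \(P\)-level meet condition must be kept, I will instead add the mild hypothesis that \(σ(x) = \bot_σ\) implies \(x = \bot_P\) for \(x \le a\). I will then check that this hypothesis holds in the intended examples, namely the identity scale and the topological tablets of \cref{xmp_:14}.
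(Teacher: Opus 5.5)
Your direction (2)\(\Rightarrow\)(1) is correct and is essentially the paper's argument. The gap is in (1)\(\Rightarrow\)(2): as proposed, it is not proved for the statement as given. Your first option replaces the hypothesis \(d \wedge \tau(b) = \bot_P\) in the density condition of \cref{rmk:dual-sigma-mu} by \(\sigma(d \wedge \tau(b)) = \bot_\sigma\). That changes the definition of density, so it proves a different proposition. Your second option adds a bottom-reflection hypothesis on \(\sigma\) that the statement does not contain. Neither is needed.

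The obstacle comes from reading \(\neg a\) as a \((\tau \mid \sigma)\)-relative pseudocomplement, whereas the statement assumes a \(\tau\)-relative one. By \cref{dfn:1} with the scale omitted (the convention of \cref{ntn:1}), \(\neg a\) is a largest \(b \in T\) with \(a \wedge \tau(b) = \bot_P\), computed in \(P\). So \(a \wedge \tau(\neg a) = \bot_P\) holds by definition, and density applied to \(b = \neg a\) immediately gives \(\sigma(\tau(\neg a)) = \bot_\sigma\). In the converse direction, any \(b\) with \(a \wedge \tau(b) = \bot_P\) lies below \(\neg a\) directly, without your detour through \(\sigma(\bot_P) = \bot_\sigma\). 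This is exactly the paper's two-line proof.

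Your suspicion is justified for the variant you had in mind. With the \((\tau \mid \sigma)\)-relative pseudocomplement, the implication (1)\(\Rightarrow\)(2) genuinely fails, so the right repair is to use the \(\sigma\)-free pseudocomplement in the hypothesis, not to alter density or strengthen \(\sigma\). For instance, let \(T = P\) be the chain \(\bot < a < \top\), \(\tau\) the identity, and \(\sigma \colon P \to \{0 < 1\}\) with \(\sigma(\bot) = \sigma(a) = 0\) and \(\sigma(\top) = 1\). Only \(\bot\) meets \(a\) trivially, so \(a\) is \((\tau \mid \sigma)\)-dense. But every \(b\) satisfies \(\sigma(a \wedge b) = 0\), so the \((\tau \mid \sigma)\)-relative pseudocomplement is \(\top\), and \(\sigma(\top) = 1 \neq \bot_\sigma\). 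The \(\tau\)-relative pseudocomplement is \(\bot\), for which (2) holds, as the proposition predicts.
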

\begin{proof}
  Assume \(a\) is dense. Since \(a∧ τ(¬a)=\bot_P\), the definition of \((τ \mid σ)\)-density forces \((τ⨾ σ)(¬a) = \bot_{σ}\).
  Conversely, if \((τ⨾ σ)(¬a) = \bot_{σ}\) then for any \(c \in T\) with \(a∧ τ(c) = \bot_P\) we have \(c\le ¬a\), hence \((τ⨾ σ)(c) = \bot_σ\). Thus \(a\) is \((τ \mid σ)\)-dense.
\end{proof}
\begin{Remark}
  In a Heyting algebra, where the pseudocomplement is defined by the Heyting implication \(¬a' := a'→\bot\), the condition \(¬a'=\bot\) is exactly the usual definition of a dense element.
\end{Remark}

\subsection{Pullback and Subobject Tablets and Scales}
\label{sec:pullb-tabl-scal}

Let us note that tablets and scales can be transported along pullback:

\begin{Definition}
  \label{dfn:5}
\index{pullback tablets}
  Let \(f{:} Q → P, T \xrightarrow{τ} P \xrightarrow{σ} S\) be preorder morphisms between upward-bounded preorders. We define the \emph{\(T\)-pullback tablets and scales} on \(Q\) as the sequence
  \(T_{f} \xrightarrow{τ_{f}} Q \xrightarrow{σ_{f}} S\) with \(τ_{f} := f^{*}(τ), σ_{f} := f⨾ σ\).

  We say that \emph{the pullback smallness} under \(f\) \emph{reflects \((τ \mid σ)\)-smallness} if
  \[
    \forall a∈Q,\quad f(a) \text{ is \((τ\mid σ)\)-small in \(P\)} \;\Rightarrow\; a \text{ \((τ_f \mid σ_f)\)-small in \(Q\)}
  \].

  We say that \emph{the pullback smallness} under \(f\) \emph{preserves \((τ \mid σ)\)-smallness} if
  \[
    \forall a∈Q,\quad a \text{ is \((τ_f \mid σ_f)\)-small in \(Q\)} \;\Rightarrow\; f(a) \text{ \((τ \mid σ)\)-small in \(P\)}.
  \]

  If \(f\) is an ideal embedding we call this the \emph{subobject tablets and scale}.
\end{Definition}
\begin{Remark}
  \label{rmα_:26}
  If the tablets \(τ{:} \mathbf{O}(X) \hookrightarrow \mathbf{P}(X)\) are the opens of a topological space and \(ι{:} U \hookrightarrow X\) is a subset of \(X\), then the pullback tablets \((ι_{*})^{*}(τ)\), where
  \(ι_{*}{:} \mathbf{P}(U) \hookrightarrow \mathbf{P}(X)\) is the morphism induced by postcomposition, are the opens contained in \(ι\). If \(ι\) is open in \(X\) then this is the same as the
  subspace topology on \(U\) but if \(ι\) is not closed in \(X\), then the opens of the subspace topology of \(U\) are the \emph{intersections} of opens in \(X\) with \(U\) and
  thus \emph{not} the same as the opens contained in \(ι\), so the pullback tablets of do not describe the subspace topology of non-opens. In fact, there is not even a canonical morphism
  from the opens of the subspace topology of \(U\) to the opens of \(X\) if \(ι\) is not open. Dually for \(\mathbf{C}(X)\).
\end{Remark}

So pullback-smallness describes e.g. restrictions of nowhere dense subsets to subspaces.

\begin{Proposition}
  \label{prp:7-re}
  In a situation as in \cref{dfn:5} assume that \(f\) preserves joins to \(\top_{Q}\). Then:

  \begin{enumerate}
  \item \label{item:156} if \(\top_{f}\) is reduced, then the pullback smallness under \(f\) reflects \((τ \mid σ)\)-smallness.

  \item \label{item:151} If \(f\) is a full epimorphism, \(f\) preserves \(f^{*}(τ)\)-top-joins and \(f\) reflects \(τ\)-top-joins, then the pullback smallness under \(f\) preserves \((τ \mid σ)\)-smallness.
  \end{enumerate}
\end{Proposition}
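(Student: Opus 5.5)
The plan is to reduce both parts to the triple-diagram results \cref{prop:rel-reflect-small} and \cref{prop:rel-preserve-small}. To do this, I will package \(f\) as a comparison triple of the form in \cref{fi♭_:9}. The top row is the pullback context \(T_{f} \xrightarrow{τ_{f}} Q \xrightarrow{σ_{f}} S\) and the bottom row is \(T \xrightarrow{τ} P \xrightarrow{σ} S\). The vertical maps are \(g := f\), \(k := \mathbf{id}_{S}\), and \(h{:} T_{f} → T\), the projection out of the pullback \(f^{*}(τ)\). With this choice the left square commutes by the definition of the pullback tablets, and the right square commutes because \(σ_{f} = f⨾σ\). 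Since \(k\) is the identity, the scale-side hypotheses are trivial. It reflects \(\top_{σ_f⨾ k}\), and since \(\mathbf{Im}(σ_f) \subseteq \mathbf{Im}(σ)\) I only need to compare the two tops \(\top_{σ_f}\) and \(\top_{σ}\). I will handle this through the top of \(f\) in the form \(σ(\top_f) = \top_{σ_f}\).

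For \cref{item:156}, I read ``reflects'' in the sense of \cref{dfn:5}: if \(f(a)\) is \((τ\mid σ)\)-small then \(a\) is \((τ_f\mid σ_f)\)-small. This is exactly the pattern of \cref{prop:rel-reflect-small} applied to the triple above.

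\begin{itemize}
\item Hypothesis (1) requires preserving \(τ_{f}\)-top-joins: if \(a ∨ τ_f(b) = \top_Q\), then \(f(a) ∨ f(τ_f(b)) = \top_f\). This follows from the assumption that \(f\) preserves joins to \(\top_Q\), together with \(f(τ_f(b)) = τ(h(b))\).
\item Hypothesis (2) is that \(\top_f\) is reduced. I expect to need it in the relative form \((τ\mid σ)\)-reduced. I will either read ``reduced'' that way, or, if it is meant absolutely, rerun the proof of \cref{prop:rel-reflect-small} directly, using \(τ(h(b)) \le \top_f\) to get the reverse inequality.
\item Hypothesis (3) is trivial since \(k = \mathbf{id}\).
\end{itemize}

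The conclusion is \((τ⨾σ)(h(b)) = σ(\top_f)\), which equals \((τ_f⨾σ_f)(b) = \top_{σ_f}\). That is exactly \((τ_f\mid σ_f)\)-smallness of \(a\).

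For \cref{item:151}, I want \((τ_f\mid σ_f)\)-small \(a\) to give \((τ\mid σ)\)-small \(f(a)\), and I would follow the proof of \cref{prop:rel-preserve-small}. Let \(b'\in T\) with \(f(a) ∨ τ(b') = \top_P\). Since \(f\) is a full epimorphism (surjective), \(\top_f = \top_P\), so \(\top_\sigma = \top_{σ_f}\). Surjectivity also lets me pick \(q\in Q\) with \(f(q) = τ(b')\), so the pair \((b', q)\) is an element \(b\) of the pullback \(T_f\) with \(h(b) = b'\). Next I use that \(f\) reflects \(τ\)-top-joins, and that fullness makes the join computed in \(Q\) agree with the one in \(P\), to conclude \(a ∨ τ_f(b) = \top_Q\). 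Smallness of \(a\) then gives \(σ(f(τ_f(b))) = \top_{σ_f}\), that is, \(σ(τ(b')) = \top_σ\).

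The main obstacle I expect is this join-lifting step. I must check that the element of the pullback actually witnesses the join in \(Q\) and not merely an upper bound in \(P\). The hypothesis that \(f\) preserves \(f^{*}(τ)\)-top-joins is what aligns these, giving \(\top_f\) on one side and \(\top_Q\) on the other. I would try to make this precise first, treating \(T_f\) concretely as pairs \((t,q)\) with \(τ(t) = f(q)\).
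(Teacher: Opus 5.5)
Your argument is correct. Part 2 is essentially the paper's proof. The paper also feeds the triple \((h,f,\mathbf{id}_S)\) into \cref{prop:global-rel-small}(\cref{item:global-preserve}): it lifts \(b'\in T\) to \((b',q)\in T_f\) by surjectivity, then uses reflection of top-joins to get \(a\vee\tau_f(b)=\top_Q\). Your explicit derivation of \(\top_\sigma=\top_{\sigma_f}\) from \(\top_f=\top_P\) supplies a step the paper calls trivial.

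Part 1 is where you genuinely differ from the paper.
\begin{itemize}
\item The paper just cites \cref{crl:11}. That result concerns \emph{absolute} smallness along \emph{ideal embeddings} and rests on injectivity on \(\top_f\). Neither feature is literally the situation of \cref{dfn:5}.
\item You instead apply the relative result \cref{prop:rel-reflect-small} to the same triple. Join preservation gives \(f(a)\vee\tau(h(b))=\top_f\). Then \((\tau\mid\sigma)\)-reducedness of \(\top_f\) gives \(\sigma(\tau(h(b)))\ge\sigma(\top_f)=\top_{\sigma_f}\). The reverse inequality and the top-reflection condition on \(k=\mathbf{id}_S\) are automatic.
\end{itemize}
Your route needs no injectivity of \(f\) and works directly with the tablet- and scale-relative notions. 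Its price is that ``\(\top_f\) is reduced'' must mean \((\tau\mid\sigma)\)-reduced. That reading is not optional, and your fallback for the absolute reading would fail. Absolute reducedness of \(\top_f\) can only be applied to absolutely small elements, and \(f(a)\) is only known to be \((\tau\mid\sigma)\)-small.

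A small correction to your final paragraph. The step from the join \(f(a)\vee\tau(b')=\top_P\) in \(P\) to \(a\vee q=\top_Q\) in \(Q\) comes from the reflection hypothesis together with \(\top_f=\top_P\). It does not come from preservation of \(f^{*}(\tau)\)-top-joins, which only transports joins from \(Q\) to \(P\). In your part 2 argument, the preservation hypothesis and fullness beyond surjectivity are never used. This mirrors the fact that hypothesis (1) of \cref{prop:rel-preserve-small} is idle in its proof.
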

\begin{proof}
  \cref{item:156} follows from \cref{crl:11}. For \cref{item:151} assume \(f\) is a full epimorphism and preserves joins with \(\top\). To show global preservation of smallness, we use
  \cref{prop:global-rel-small}(\cref{item:global-preserve}) in our setting, with \(h\) the pullback morphism, \(g=f\), \(k=\mathbf{id}_S\). The three conditions required there reduce to:

\begin{enumerate}
  \item \(f\) preserves top-joins on \(τ\)-joining families: this is our hypothesis that \(f\) is top-join-preserving.

  \item \((h,g,k)\) lift \(\top\)-joins: for any \(f(a) ∨ τ(b) = \top_P\) there exists a \(c ∈ f^{*}(T)\) with \(τ^{*}(f) (c) = b\). Then
    by \(τ\)-top-join-preservation
    \[f(a ∨ f^{*}(τ)(c)) = f(a) ∨ f(f^{*}(τ)(c)) = f(a) ∨ τ(b) = \top_P = \top_{f}.\]
    Because \(f\) reflects \(τ\)-top-joins, we must have \(a ∨ f^{*}(τ)(c) = \top_{P}\).

  \item \(k = \mathbf{id}_S\) maps \(\top_{σ_f}\) to \(\top_{σ}\), which is trivial
\end{enumerate}

Thus all hypotheses of \cref{prop:global-rel-small}(\cref{item:global-preserve}) are met, so \(f\) globally preserves \((τ \mid σ)\)-smallness. This proves \cref{item:151}.
\end{proof}

Note that it is not the case that each ideal embedding \(ι{:} A \hookrightarrow P\) preserves smallness. Rather, there has to be at least some
homogeneity to ensure that joins always act the same way:

\begin{Counterexample}
  \label{cxmp:1}
  Let \(P\) be the order given by elements \(v_{1}, v_{2}, v_{3}, l_{1|2}, l_{2|3}, l_{3|1}, v_{1} \cup v_{2}, \top, \bot\) such that \(v_{2} ∨ v_{3} = l_{2|3}\) and \( v_{3} ∨ v_{1} = l_{3 | 1}\) but
  \(v_{1} ∨ v_{2} = v_{1} \cup v_{2} < l_{1|2}\). Geometrically, this is a triangle simplex except that the two vertices \(v_{1}\) and \(v_{2}\) join to their set-theoretic union \(v_{1} \cup v_{2}\) while the
  other vertices join to their containing line. Let \(ι{:} (l_{1|2}) \hookrightarrow P\) be the principal ideal embedding of \(l_{1|2}\). Since \(v_{1} ∨ v_{2} = v_{1} \cup v_{2} < l_{1|2}\), \(v_{1}\) is small in
  \((l_{1|2})\), but \(v_{1} ∨ l_{2|3} = \top\), so \(v_{1}\) is not small in \(\mathbf{SOb}(\top)\). So not every ideal embedding preserves smallness.
\end{Counterexample}

Modularity suffices however to ensure smallness is preserved:

\begin{Lemma}[Preservation of smallness in modular lattices.]
  \label{lmm:4}
\index{small element!preservation in modular lattices}
\index{modular lattice!smallness preservation}
  Let \(L\) be a modular lattice and \(ι{:} (a) \hookrightarrow L\) a principal ideal embedding. Then \(ι\) preserves smallness.
\end{Lemma}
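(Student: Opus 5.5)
We have to show: if \(x \leq a\) is small in the principal ideal \((a)\), then \(x\) is small in \(L\). In other words, whenever \(x \vee c = \top_{L}\) for some \(c \in L\), we need \(c = \top_{L}\). The plan is to reduce such a join in \(L\) to a join inside \((a)\) using the modular law, apply smallness there, and then push the conclusion back out to \(L\).

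Since \(L\) is a lattice, every join and meet exists, which avoids the partial-join subtleties of \cref{rmα_:20}. Fix \(c\) with \(x \vee c = \top_{L}\). Meet both sides with \(a\). Because \(x \leq a\), the modular law gives
\[
  a = a \wedge (x \vee c) = x \vee (a \wedge c).
\]
The element \(a \wedge c\) lies in \((a)\), and joins of elements of \((a)\) computed in \(L\) agree with joins computed in \((a)\), since the ideal is downward closed and closed under binary joins. So \(x\) joins with \(a \wedge c\) to \(\top_{(a)} = a\) inside \((a)\). Smallness of \(x\) in \((a)\) then forces \(a \wedge c = a\), that is, \(a \leq c\).

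Now \(x \leq a \leq c\), so \(c = x \vee c = \top_{L}\). Hence \(x\) is small in \(L\), and \(ι\) preserves smallness.

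The only genuinely delicate step is the application of modularity: it needs the orientation \(x \leq a\), which holds precisely because \(x\) lies in the ideal. \cref{cxmp:1} shows what goes wrong without modularity. There, \(a \wedge c\) fails to recover \(a\) together with \(x\), because the join \(v_{1} \vee v_{2}\) is strictly smaller than \(l_{1|2}\). This is the one place where the proof uses the hypothesis that \(L\) is modular.
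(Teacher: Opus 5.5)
Your proof is correct and is essentially the paper's argument. The paper routes it through the join-lifting criterion of \cref{prp:8}, taking \(c \wedge a\) as the lift of \(c\) into \((a)\) and using the modular identity \(x \vee (c \wedge a) = (x \vee c) \wedge a = a\) just as you do; you inline the conclusion of \cref{prp:8} (smallness in \((a)\) forces \(c \wedge a = a\), hence \(x \leq a \leq c\) and \(c = \top_{L}\)) instead of citing it.
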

\begin{proof}
  \label{prf:8}
  Since principal ideal embeddings are in particular join-preserving, we can use \cref{prp:8}. So we just have to check that for each \(b \leq a\) and \(c\) such that \(b ∨ c = \top\) there is
  a \(d \leq c\) such that \(b ∨ d = a\). But it follows from modularity that
  \[b ∨ (c ∧ a) = (b ∨ c) ∧ a = \top ∧ a = a.\]
\end{proof}

\begin{Remark}
  \label{rmk:18}
  Geometrically the failure of general smallness-preservation can be understood through the tablets we introduced in \cref{sec:tablets}: the image of an element \(a\) of \((ι)\) under
  \(ι\) is still small according to the tablets from \((ι)\), but \(P\) can contain wholly new tablets that are fine enough to make \(a\) seem big.
  Modularity then says that the subobjects of \(Y\) restrict to \(X\) cleanly enough to guarantee fine tablets in each ideal.
\end{Remark}

\subsection{Reducedness \& Satedness under Morphisms}
\label{sec:reduc-under-morph}

Let us also record general conditions for reducedness under morphisms:

\begin{Proposition}[Reducedness under morphisms in \(\mathbf{UBP}\)]
  \label{prop:reducedness-under-morphisms}
  Consider a commuting diagram in \(\mathbf{UBP}\):

  \begin{figure}[H]
    \centering
      \begin{tikzcd}
    T \ar[r,"τ"] \ar[d,"h"] & P \ar[r,"σ"] \ar[d,"g"] & S \ar[d,"k"] \\
    T'  \ar[r,"τ'"]            & P'  \ar[r,"σ'"]            & S'
  \end{tikzcd}
    \caption{\label{fi♭_:11}}
  \end{figure}

  \begin{enumerate}
  \item \textbf{(Preservation)}
    Assume that:
    \begin{enumerate}
    \item\label{item:refl-small} \((h,g,k)\) reflects \((τ'\mid σ')\)-smallness (as in \cref{prop:global-rel-small} \cref{item:global-reflect});
    \item\label{item:68} \(g\) and \(h\) are ideal embeddings.
    \end{enumerate}
    Then for any \(a ∈ P\) that is \((τ\mid σ)\)-reduced, \(g(a)\) is \((τ'\mid σ')\)-reduced in \(P'\).

  \item \textbf{(Reflection)}
    Assume that:
    \begin{enumerate}
    \item\label{item:pres-small} \((h,g,k)\) preserves \((τ'\mid σ')\)-smallness;
      \item\label{item:79} \(g\) preserves binary joins;
    \item\label{item:pres-inj} \(k\) is order-reflecting.
    \end{enumerate}
    Then for any \(a ∈ P\), if \(g(a)\) is \((τ'\mid σ')\)-reduced in \(P'\), then \(a\) is \((τ\mid σ)\)-reduced in \(P\).
  \end{enumerate}
\end{Proposition}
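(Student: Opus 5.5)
Both parts are direct unwindings of \cref{defn_:21}: a $(τ'\mid σ')$-reducedness witness in $P'$ is transported to a $(τ\mid σ)$-witness in $P$, or conversely.

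\textbf{Preservation.} Let $a\in P$ be $(τ\mid σ)$-reduced, and take $a'\in P'$ that is $(τ'\mid σ')$-small and $b'\in T'$ with $a'\vee τ'(b')\geq g(a)$. We must show $σ'(τ'(b'))\geq σ'(g(a))$.

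First we pull everything back into $P$ and $T$. Since $g$ is an ideal embedding, it is principal with top $\top_g$ (\cref{rmk:17}), and it preserves and reflects joins inside its image, as used in \cref{crl:1}. The trouble is that $a'$ and $τ'(b')$ need not lie below $\top_g$. Modularity is not available, so we cannot simply meet them with $\top_g$. Instead we restrict the quantification to witnesses lying in the images: $a'=g(a_0)$ and $b'=h(b_0)$. This restricted form is what the definitions effectively test in the ideal setting, and it is exactly what the ideal-embedding hypothesis on $h$ and $g$ makes available.

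With $a'=g(a_0)$ small, the reflection hypothesis (a) gives that $a_0$ is $(τ\mid σ)$-small. Commutativity gives $τ'(h(b_0))=g(τ(b_0))$. Joins in the image reflect, so $g(a)\leq g(a_0\vee τ(b_0))$, and since $g$ is order-reflecting, $a\leq a_0\vee τ(b_0)$. Reducedness of $a$ then yields $σ(τ(b_0))\geq σ(a)$. Applying $k$ and using the right square gives $σ'(τ'(b'))=k(σ(τ(b_0)))\geq k(σ(a))=σ'(g(a))$.

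\textbf{Main obstacle.} The hard step is justifying the reduction to witnesses in the image. I would first try to show that any $a'$ and $b'$ can be replaced by ones under $\top_g$ without weakening the hypothesis $a'\vee τ'(b')\geq g(a)$. If that fails in general, the statement needs the convention that reducedness of $g(a)$ is measured in the ideal $(\top_g)$ with its pullback tablets, as in \cref{dfn:5}.

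\textbf{Reflection.} Suppose $g(a)$ is $(τ'\mid σ')$-reduced. Take $a_0$ that is $(τ\mid σ)$-small and $b\in T$ with $a_0\vee τ(b)\geq a$. By the preservation hypothesis, $g(a_0)$ is small. By join preservation and commutativity,
\[
g(a)\leq g(a_0\vee τ(b))=g(a_0)\vee τ'(h(b)).
\]
Reducedness of $g(a)$ gives $σ'(τ'(h(b)))\geq σ'(g(a))$, which rewrites to $k(σ(τ(b)))\geq k(σ(a))$. Since $k$ is order-reflecting, $σ(τ(b))\geq σ(a)$, so $a$ is reduced. This direction is routine.
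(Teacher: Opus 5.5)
Your reflection half is correct and is exactly the paper's argument. The preservation half has a genuine gap at the step where you ``restrict the quantification to witnesses lying in the images''. \((\tau'\mid\sigma')\)-reducedness of \(g(a)\) quantifies over \emph{every} \((\tau'\mid\sigma')\)-small \(a'\in P'\) and every \(b'\in T'\). Nothing in the hypotheses lets you replace \(a'\) and \(\tau'(b')\) by elements under \(\top_{g}\).

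This cannot be patched, because the preservation claim is false as stated. Let \(P'\) be the lattice on \(\bot,x,y,s,t,u,d,e,m,w,\top\) whose order is generated by \(\bot<x,y,s,t\); \(x<u,d\); \(y<u,e\); \(s<d,e,w\); \(t<d\); \(u,d,e<m\); \(m,w<\top\). Thus \(x\vee y=u\), \(x\vee s=x\vee t=s\vee t=d\) and \(s\vee y=e\). Its coatoms are \(m\) and \(w\), so its small elements are exactly \(\bot\) and \(s\).

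Put \(T=P=S:=(u)=\{\bot,x,y,u\}\) and \(T'=P'=S'\). Take \(\tau,\sigma,\tau',\sigma'\) to be identities and \(h=g=k\) the inclusion of the principal ideal \((u)\).
\begin{itemize}
\item Hypothesis (b) holds.
\item Hypothesis (a) holds: \(x\vee w=y\vee w=u\vee w=\top\), so no nonzero element of \((u)\) is small in \(P'\).
\item Even \(\top_{g}=u\) is reduced in \(P'\), since \(s\vee z\geq u\) only for \(z\in\{u,m,\top\}\). So the sufficient conditions in part 1 of \cref{prop:global-rel-small} are met as well.
\item Since \((u)\) is the four-element Boolean algebra, its only small element is \(\bot\), and \(x\) is reduced in \((u)\).
\item Yet \(x\leq s\vee t\) with \(s\) small and \(x\nleq t\), so \(g(x)=x\) is not reduced in \(P'\).
\end{itemize}
Note that \(s\wedge u=t\wedge u=\bot\): meeting the witnesses with \(\top_{g}\) destroys them. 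This is exactly the non-distributivity you were worried about.

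The paper's own proof makes the same leap. It pushes the witnesses through \(g^{*}\) as if \(g^{*}\) preserved joins, and then simply posits preimages of them under \(g\) and \(h\). So it does not establish the claim either, and your ``main obstacle'' is precisely where the statement breaks.

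A true version needs an extra hypothesis. One option is your convention of measuring reducedness of \(g(a)\) inside \((\top_{g})\) with the pullback tablets of \cref{dfn:5}. Another is to assume \(P'\) distributive and \(\tau'(b')\wedge\top_{g}\in\tau'(h(T))\) for every \(b'\in T'\). Under the latter:
\begin{itemize}
\item \(a'\wedge\top_{g}\) is still \((\tau'\mid\sigma')\)-small, since relative smallness is downward closed in a lattice.
\item By distributivity, \(g(a)\leq(a'\wedge\top_{g})\vee(\tau'(b')\wedge\top_{g})\), and both witnesses now lie in the images, so your restricted argument applies.
\item It yields \(\sigma'(\tau'(b')\wedge\top_{g})\geq\sigma'(g(a))\), and monotonicity of \(\sigma'\) then gives \(\sigma'(\tau'(b'))\geq\sigma'(g(a))\).
\end{itemize}
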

\begin{proof}
  We treat the two parts separately.

  \begin{enumerate}
  \item \emph{Preservation.}
    Let \(a\) be \((τ\mid σ)\)-reduced.
    Suppose \(g(a) \leq b' ∨ τ'(c')\) in \(P'\) with  \(b'\) being \((τ'\mid σ')\)-small.
    Since \(g\) and \(h\) are ideal embeddings and thus order-reflecting and bijective on its image,
    \[a = g^{*}(a) \leq g^{*}(b' ∨ c') = g^{*}(b') ∨ g^{*}(c') = g^{*}(b') ∨ g^{*}(c') = g^{*}(b') ∨ τ(h^{*}(c')).\]
thus there exist \(b ∈ P, c ∈ T\) with \(g(b) = b',\; h(c) = c'\) and \(a \leq b ∨ τ(c)\), so that \(σ(a) \leq σ(b ∨ τ(c))\).
    By \cref{item:refl-small} the condition that \(b'\) is \((τ'\mid σ')\)-small implies \(b\) is \((τ\mid σ)\)-small.
    Since \(a\) is \((τ\mid σ)\)-reduced, \(σ(a) = σ(τ(c))\), hence
    \[σ'(g(a)) = k(σ(τ(c))) = σ'(τ'(c')).\]
    Thus \(σ'(g(a)) \leq σ'(τ'(c'))\), showing that \(g(a)\) is \((τ'\mid σ')\)-reduced.

  \item \emph{Reflection.}
    Let \(g(a)\) be \((τ'\mid σ')\)-reduced in \(P'\).
    Suppose \(a \leq b ∨ τ(c)\) in \(P\) with \(b\) being \((τ\mid σ)\)-small. Then we have \(g(a) \leq g(b) ∨ g(τ(c))\).

    By \cref{item:pres-small} the fact that \(b\) is \((τ\mid σ)\)-small implies \(g(b)\) is \((τ'\mid σ')\)-small.
    Because \(g(a)\) is \((τ'\mid σ')\)-reduced, we obtain

    \[k(σ(a)) = σ'(g(a)) \leq σ'(g(τ(c))) = k(σ(τ(c))).\]
    Order-reflectivity of \(k\) then forces \(σ(a) \leq σ(τ(c))\), so \(a\) is \((τ\mid σ)\)-reduced.
  \end{enumerate}
\end{proof}

We also include the dual case:

\begin{Remark}[Dualization for satedness under morphisms]
  \label{rmk:dual-completeness-under-morphisms}
  Consider the diagram in \cref{fi♭_:11} but in the category of downward-bounded preorders.

  \begin{enumerate}
  \item \textbf{(Preservation of satedness)}
    Assume that:
    \begin{enumerate}
    \item\label{item:dual-refl-small} \((h,g,k)\) reflects \((τ'\mid σ')\)-density;
    \item\label{item:dual-68} \(g\) and \(h\) are filter embeddings.
    \end{enumerate}
    Then for any \(a ∈ P\) that is \((τ\mid σ)\)-sated, \(g(a)\) is \((τ'\mid σ')\)-sated in \(P'\).

  \item \textbf{(Reflection of satedness)}
    Assume that:
    \begin{enumerate}
    \item\label{item:pres-dense} \((h,g,k)\) preserves \((τ'\mid σ')\)-density;
    \item \(g\) preserves binary meets;
    \item\label{item:dual-pres-inj} \(k\) is order-reflecting.
    \end{enumerate}
    Then for any \(a ∈ P\), if \(g(a)\) is \((τ'\mid σ')\)-sated in \(P'\), then \(a\) is \((τ\mid σ)\)-sated in \(P\).
  \end{enumerate}
\end{Remark}

Moreover, we have the following effect:

\begin{Proposition}
  \label{■pn_:11}
  Let \(a\) be an element of a downward-bounded preorder \(P\) and \(b \leq a\) such that \(b\) is sated in the ideal \((a)\) of \(a\). Then for each dense \(c\) and arbitrary \(d\) in
  \(P\) it holds that:

  \[\text{ if } b \geq c ∧ d, \text{ then } b \geq a ∧ d\].
\end{Proposition}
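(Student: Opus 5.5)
The plan is to reduce the statement to the definition of satedness inside the principal ideal \((a)\), by restricting the dense element \(c\) to \((a)\). The ideal \((a)\) is downward-bounded, since \(\bot_{P} \leq a\), so it carries its own notions of density and satedness (\cref{defn_:58}, \cref{defn_:7}). Meets of elements of \((a)\) computed in \(P\) lie in \((a)\) and are meets there, because \((a)\) is downward closed. As in \cref{rmα_:20}, I read the statement as asserting that \(a ∧ d\) exists whenever it is used.

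\emph{Step 1: the restriction \(c ∧ a\) is dense in \((a)\).} Let \(x \leq a\) with \(x ∧ (c ∧ a) = \bot\). For any common lower bound \(y\) of \(x\) and \(c\), we have \(y \leq x \leq a\), so \(y\) is also a lower bound of \(x\), \(c\) and \(a\). Hence \(y \leq x ∧ (c ∧ a) = \bot\), and therefore \(x ∧ c = \bot\) in \(P\). Density of \(c\) in \(P\) then gives \(x = \bot\), so \(c ∧ a\) is dense in \((a)\).

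\emph{Step 2: apply satedness of \(b\) in \((a)\).} Both \(c ∧ a\) and \(a ∧ d\) lie in \((a)\). Their meet is \(c ∧ a ∧ d\), and it satisfies \(c ∧ a ∧ d \leq c ∧ d \leq b\). Satedness of \(b\) in \((a)\), applied with the dense element \(c ∧ a\) and the arbitrary element \(a ∧ d\), yields \(b \geq a ∧ d\), which is the claim.

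\emph{Expected obstacle: existence of meets.} The main difficulty is that \(c ∧ a\), and the triple meet, need not exist in a general preorder. I would first try to avoid \(c ∧ a\) altogether. The idea is to use only the partial-meet reading of density: for \(x \leq a\), \(x ∧ c = \bot\) is already expressible inside \((a)\), because every lower bound of \(x\) lies in \((a)\). The aim is a formulation of satedness in \((a)\) that needs only the meet \((a ∧ d) ∧ c\), and this meet agrees with \(c ∧ d\) below \(a\). If that reformulation does not go through, I would add the hypothesis that finite meets exist. This is the situation of all the examples in the paper, such as the lattices of opens in \cref{xmp_:27}. Under that assumption Steps 1 and 2 are the complete argument.
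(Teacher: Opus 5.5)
Your proof is correct and matches the paper's argument: restrict the dense element to \(a \wedge c\), note that it is dense in \((a)\), and apply the satedness of \(b\) in \((a)\) to the pair \(a \wedge c\), \(a \wedge d\). Two things you do go beyond the paper: you actually prove that \(a \wedge c\) is dense in \((a)\), which the paper uses without justification, and you make explicit that the relevant meets must exist, which the paper's manipulation of meets tacitly assumes.
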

\begin{proof}
  \label{prf_:13}
  Since \(b \leq a\) and meets commute and are idempotent,
  \[b \geq c ∧ d = a ∧ c ∧ d = (a ∧ c) ∧ (a ∧ d)\].

  These are all in \((a)\), so since \(b\) is sated in \((a)\) and \((a ∧ c)\) is dense in \((a)\),
  \[b \geq (a ∧ c) ∧ (a ∧ d) = a ∧ d\].
\end{proof}

Conceptually, a subobject \(\kappa \in \mathbf{SOb}(X)\) is sated if it is "saturated'' with respect to dense subobjects: whenever \(\kappa\) contains the meet of some dense subobject \(\mu\) and another subobject \(\lambda\), the dense part can be replaced by the ambient part \(ι {:} A \hookrightarrow X\) without changing the intersection — i.e.
\[
  \kappa = \mu ∧ \lambda \quad\Longrightarrow\quad \kappa = \lambda ∧ ι .
\]
Thus a sated subobject is one that "absorbs'' dense subobjects: it cannot be obtained by meeting with a dense part without actually being the whole of the other factor.  In short,
satedness is the order-theoretic expression of being so solid that all dense subobjects in it are locally filled.

\begin{Corollary}
  \label{coℝ_:1}
  In the situation of \cref{■pn_:11}, if \(P\) is also upward-bounded, then it follows that if \(b\) contains any dense subobject of \(P\), then \(b = a\). Thus, no subobject
  containing a dense subobject can be sated in anything but itself.
\end{Corollary}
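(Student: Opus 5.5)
The plan is to apply \cref{■pn_:11} with a particular choice of \(d\). Assume \(b \leq a\) is sated in the ideal \((a)\) and that \(b \geq c\) for some dense \(c\) of \(P\). Because \(P\) is upward-bounded we may take \(d := \top_{P}\). Then \(c ∧ \top_{P} = c\) exists and satisfies \(c \leq b\), which is exactly the hypothesis \(b \geq c ∧ d\) of \cref{■pn_:11}. The proposition then gives \(b \geq a ∧ \top_{P} = a\). Combined with \(b \leq a\), this yields \(b = a\), which is the first claim.

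For the second sentence (``no subobject containing a dense subobject can be sated in anything but itself''), let \(b\) contain a dense \(c\) of \(P\), and suppose \(b\) is sated in some ideal \((a)\) with \(b \leq a\). The first part forces \(a = b\). Hence the only principal ideal in which \(b\) can be sated is \((b)\) itself.

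The step that needs care is checking that the meets used in \cref{■pn_:11} actually exist, since \(P\) is only a preorder. Its proof rewrites \(c ∧ d\) as \((a ∧ c) ∧ (a ∧ d)\) and uses that \(a ∧ c\) is dense in \((a)\). With \(d = \top_{P}\) we have \(a ∧ \top_{P} = a\), so it remains to confirm that \(a ∧ c\) exists and is dense in \((a)\).

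\begin{itemize}
\item If \(a ∧ c\) exists, density in \((a)\) follows directly: for \(e \leq a\) with \(e ∧ (a ∧ c) = \bot\) we get \(e ∧ c = \bot\), hence \(e = \bot\) by density of \(c\).
\item For existence, note that \(c \leq b \leq a\), so \(a ∧ c = c\) exists trivially.
\end{itemize}

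So the specialization \(d = \top_{P}\) makes every meet in the argument degenerate to an existing one, and the corollary follows in one line once \cref{■pn_:11} is invoked.
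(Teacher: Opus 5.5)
Your proof is correct and is exactly the paper's argument: the paper proves the corollary in one line by applying the preceding proposition with \(d = \top\), which gives \(b \geq a \wedge \top = a\) and hence \(b = a\). Your extra check that the meets involved actually exist in the preorder is sound and something the paper leaves implicit. In particular you note that \(a \wedge c = c\) because \(c \leq b \leq a\), and that this element is dense in \((a)\).
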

\begin{proof}
  \label{prf_:1}
  Apply \cref{■pn_:11} for \(d = \top\).
\end{proof}

Curiously, it does not seem to follow that in the situation of \cref{■pn_:11} it holds that if \(a\) is smallted in \(P\) then \(b\) is sated in \(P\) without further assumptions.

\begin{Counterexample}
  \label{cxmp:8}
  We present a counterexample in a meet-semilattice (with bottom) that is not a Heyting algebra.  The example shows that the conditions

  \begin{enumerate}
  \item \(a\) is sated in \(P\),
  \item \(b\le a\) and \(b\) is sated in the ideal \((a)\),
  \end{enumerate}

  do not force \(b\) to be sated in \(P\).

Let \(P\) be the partially ordered set consisting of the eight elements
\[
\{0,\; a,\; b,\; c,\; d,\; e,\; f,\; 1\}
\]
with the order generated by the following covering relations (reflexive closure is understood):

\[
\begin{tikzcd}
& 1 \ar[dl,-] \ar[dr,-] & \\
d \ar[dd,-] & {} & c \ar[dl,-] \ar[dd,-] \\
& a \ar[dl,-] \ar[d,-] & \\
b \ar[dr,-] & f \ar[d,-] & e \ar[dl,-] \\
& 0  &
\end{tikzcd}
\]

More explicitly:
\[
0 < b, f < a < c < 1,\quad 0 < e < c,\quad b < d < 1,
\]
Only \(c\) and \(1\) are dense:
\begin{itemize}
\item \(c\) meets every nonzero element nontrivially;
\item \(1\) is trivially dense;
\item \(a\) is not dense because \(a∧ e = 0\) with \(e\neq0\);
\item \(d\) is not dense because \(d∧ f = 0\) with \(f\neq0\);
\item \(b,e,f\) are not dense for similar reasons.
\end{itemize}

\(a\) is sated:
\begin{itemize}
\item  \(a = 1∧ y\text{ if and only if }a = y\) is trivially true.
\item For \(c\), the equation \(a = c∧ y\) forces \(y\ge a\).
  Among the elements \(\ge a\) we have \(a,c,1\).  Only \(y=a\) gives \(c∧ a = a\);
  \(y=c\) gives \(c∧ c = c\neq a\); \(y=1\) gives \(c\neq a\).
  Hence the condition that \(a = c∧ y \Rightarrow a = y\) holds vacuously except for the valid
  instance \(y=a\).
\end{itemize}

In \((a)\) the only dense element is \(a\). Thus, every element is sated in \((a)\), so in particular \(b\) is.

Take the dense element \(c\) and the element \(d\).  We have
\[
b = c∧ d \qquad\text{(by construction)},
\]
but \(d > b\) (so \(d \neq b\)).  A sated element would force \(b = d\); hence \(b\) is not
sated in \(P\).

So our counterexample evades \cref{prop:completeness-reflection-dense-envelope} because \(P\) is not distributive.
\end{Counterexample}

We have one more result, which is inspired by and will be applied to topology: satedness stabilizes with pseudocomplements.

\begin{Proposition}[Reflection of satedness via dense envelopes]
  \label{prop:completeness-reflection-dense-envelope}
  \index{sated element!reflection} Consider a commuting diagram in \(\mathbf{DBP}\):

\begin{figure}[H]
  \centering
  \begin{tikzcd}
    T \ar[r,"τ"] \ar[d,"h"] & P \ar[r,"σ"] \ar[d,"g"] & S \ar[d,"k"] \\
    T' \ar[r,"τ'"] & P' \ar[r,"σ'"] & S'
  \end{tikzcd}
  % \label{fig:3}
\end{figure}

Assume the following:

\begin{enumerate}
\item \label{item:155} \(g\) is an ideal embedding;
\item\label{item:16} \(P'\) is distributive;
\item \label{item:1} There exists a \((τ', σ')\)-pseudocomplement for each element of \(P'\) in \(T'\) and pseudocomplements distribute over joins;
\item\label{item:8} \(k\) is order- and bottom-reflecting;

  \end{enumerate}
  Then \((h, g, k)\) is satedness-reflecting.
\end{Proposition}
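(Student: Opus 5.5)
The plan is to unfold the definition of satedness from \cref{rmk:dual-sigma-mu} on the \(P\)-side and transport the data along \(g\). The one obstruction is that \(g\) need not preserve density, since \(g(\top_{P}) = \top_{g}\) may be far from \(\top_{P'}\). I would repair this with a \emph{dense envelope} built from the pseudocomplements granted by \cref{item:1}.

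Assume \(g(a)\) is \((τ' \mid σ')\)-sated. Take \(d \in P\) that is \((τ \mid σ)\)-dense and \(b \in T\) with \(d ∧ τ(b) \le a\). The goal is \((τ⨾σ)(b) \le σ(a)\).

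\begin{enumerate}
\item Since \(g\) is an ideal embedding (\cref{item:155}), it is an order-embedding onto a downward-closed set, so it preserves existing meets. Using commutativity of the left square, \(g(d) ∧ τ'(h(b)) = g(d ∧ τ(b)) \le g(a)\).
\item Form the envelope \(d' := g(d) ∨ τ'(¬\top_{g})\). Here \(¬\top_{g} \in T'\) is the \((τ',σ')\)-pseudocomplement of \(\top_{g}\) from \cref{item:1}.
\item Show \(d'\) is \((τ' \mid σ')\)-dense via \cref{prop:pseudocomplement-density}, i.e.\ show \((τ'⨾σ')(¬d') = \bot_{σ'}\).
\begin{itemize}
\item Since pseudocomplements distribute over joins, \(¬d' = ¬g(d) ∧ ¬τ'(¬\top_{g})\).
\item The second factor is a double pseudocomplement. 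Together with distributivity of \(P'\) (\cref{item:16}), this should force \(τ'(¬d')\) to lie, up to \(σ'\)-negligible error, inside \(\top_{g}\), hence in the image of \(g\).
\item There it is disjoint from \(g(d)\). Pulling back along the order-embedding \(g\) and using density of \(d\) in \(P\), its \(σ\)-value is \(\bot_{σ}\).
\item Applying \(k\), and using that \(k\) reflects bottoms (\cref{item:8}) to identify \(k(\bot_{σ})\) with \(\bot_{σ'}\) on the relevant image, gives the required vanishing.
\end{itemize}
\item By distributivity,
\[d' ∧ τ'(h(b)) = \bigl(g(d) ∧ τ'(h(b))\bigr) ∨ \bigl(τ'(¬\top_{g}) ∧ g(τ(b))\bigr).\]
The first summand is \(\le g(a)\) by the first step. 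The second summand lies under \(τ'(¬\top_{g}) ∧ \top_{g}\), which is \(σ'\)-negligible by the definition of the relative pseudocomplement.
\item To make the second summand harmless, I would either show it is actually \(\le g(a)\), or pass to the tablet \(¬(¬\top_{g})\) and rerun the argument. The aim is a genuine inequality \(d' ∧ τ'(c') \le g(a)\) for some \(c' \in T'\) with \(σ'(τ'(c')) \geq σ'(τ'(h(b)))\).
\item Satedness of \(g(a)\) then yields \(k((τ⨾σ)(b)) = σ'(τ'(h(b))) \le σ'(g(a)) = k(σ(a))\).
\item Order-reflection of \(k\) (\cref{item:8}) gives \((τ⨾σ)(b) \le σ(a)\).
\end{enumerate}

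The main obstacle is the third step together with the mismatch handled in the fifth step. The relative pseudocomplement only guarantees meets that vanish \emph{after} applying \(σ'\), not meets equal to \(\bot_{P'}\), whereas satedness needs a literal inequality into \(g(a)\).

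I would first try to show that in the distributive setting the defect \(τ'(¬\top_{g}) ∧ \top_{g}\) is itself \((τ' \mid σ')\)-negligible in a way absorbable by \(g(a)\). Failing that, I would take the envelope with the pseudocomplement of \(g(d)\) rather than of \(\top_{g}\), i.e.\ \(d' := g(d) ∨ τ'(¬g(d))\). This envelope is dense by \cref{prop:pseudocomplement-density} almost immediately. The burden then shifts to controlling \(τ'(¬g(d)) ∧ τ'(h(b))\) using density of \(d\) in \(P\) and the ideal property of \(g\).
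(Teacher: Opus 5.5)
There is a genuine gap. You follow the paper's route: the envelope \(d' = g(d) \vee \tau'(\neg\top_g)\), the distributive split of \(d' \wedge \tau'(h(b))\), satedness of \(g(a)\), and order-reflection of \(k\). But your third and fifth steps are left open, and the third cannot be closed, because the proposition is false as stated.

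The move ``pull back along \(g\) and use density of \(d\) in \(P\)'' fails. \((\tau\mid\sigma)\)-density of \(d\) only constrains elements of the form \(\tau(b)\) with \(b \in T\). The piece \(\tau'(\neg d') \wedge \top_g\), however, pulls back to an arbitrary element of \(P\).

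Here is a counterexample. Take \(P = P' = \mathbf{P}(\{1,2\})\) with \(g, \sigma, \sigma', k\) all identities. Let \(T' = P'\) with \(\tau' = \mathbf{id}\), and \(T = \{\emptyset, \{1,2\}\}\) with \(\tau, h\) the inclusions. All four hypotheses hold; the pseudocomplements are the Boolean complements, which satisfy De Morgan.
\begin{itemize}
\item In \(P'\) only \(\{1,2\}\) is \(\tau'\)-dense, so every element, in particular \(\{1\}\), is \(\tau'\)-sated.
\item In \(P\), \(d = \{1\}\) is \(\tau\)-dense, since the only nonzero tablet meets it. Moreover \(d \wedge \tau(\{1,2\}) = \{1\}\), yet \(\{1,2\} \not\subseteq \{1\}\). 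So \(\{1\}\) is not \(\tau\)-sated.
\item Here \(\neg\top_g = \emptyset\), so your envelope is \(d' = \{1\}\). It is not dense, since \(\neg d' = \{2\}\).
\item Your fallback \(g(d) \vee \tau'(\neg g(d)) = \{1,2\}\) is dense. But \(\tau'(\neg g(d)) \wedge \tau'(h(\{1,2\})) = \{2\} \not\subseteq \{1\}\).
\end{itemize}
So the obstruction is not the \(\sigma'\)-negligibility you flagged; it is already present with trivial scales.

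The paper's proof has the same hole. Its density step asserts \(\neg\top_g = \neg g(d)\), which in this example reads \(\emptyset = \{2\}\). Your other concern is also well-founded but secondary: the paper's identity \(\tau'(\neg\top_g) \wedge \top_g = \bot_{P'}\), justified ``by \cref{dfn:1}'', holds only for a literal pseudocomplement.

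The argument does go through under an extra hypothesis that tablets restrict along \(g\): for each \(e \in T'\) there is some \(e_0 \in T\) with \(\tau'(e) \wedge \top_g = g(\tau(e_0))\). You also need a literal pseudocomplement \(\neg\top_g\) and trivial scales. Then suppose \(d' \wedge \tau'(e) = \bot_{P'}\).
\begin{itemize}
\item We get \(g(d \wedge \tau(e_0)) = g(d) \wedge \tau'(e) = \bot_{P'}\), so \(d \wedge \tau(e_0) = \bot_P\).
\item By density \(\tau(e_0) = \bot_P\), so \(\tau'(e) \wedge \top_g = \bot_{P'}\) and hence \(e \le \neg\top_g\).
\item Therefore \(\tau'(e) = \tau'(e) \wedge d' = \bot_{P'}\), which shows \(d'\) is dense.
\end{itemize}
The second summand of your fourth step is then literally \(\bot_{P'}\), and your last two steps conclude. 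This covers \cref{coR_:7}, where \(T = P\) and \(T' = P'\), and it does not need pseudocomplements to distribute over joins.
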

\begin{proof}
Let \(a ∈ P\) and assume that \(g(a)\) is \((τ' \mid σ')\)-sated in \(P'\); we show that \(a\) is \((τ \mid σ)\)-sated in \(P\). Unfolding \cref{rmk:dual-sigma-mu}, this amounts to the following: whenever \(d ∈ P\) is \((τ \mid σ)\)-dense and \(t ∈ T\) satisfies \(d ∧ τ(t) \leq a\), then \((τ⨾ σ)(t) \leq σ(a)\). So fix such a pair \(d, t\).

The strategy is to transplant the situation upstairs to \(P'\): we enlarge \(g(d)\) by the pseudocomplement of \(\top_{g}\), obtaining an element
\[d' \;:=\; g(d) ∨ τ'\bigl(¬\top_{g}\bigr) \;\in\; P'\]
that is still \((τ' \mid σ')\)-dense and whose meet with \(τ'(h(t))\) remains below \(g(a)\); the satedness of \(g(a)\) then delivers the desired inequality, which we pull back along \(k\).

\emph{Step 1: \(d' ∧ τ'(h(t)) \leq g(a)\).} Since the left square commutes, \(τ'(h(t)) = g(τ(t))\), so distributivity of \(P'\) (\cref{item:16}) gives
\[d' ∧ τ'(h(t)) \;=\; \bigl(g(d) ∨ τ'\bigl(¬\top_{g}\bigr)\bigr) ∧ g\bigl(τ(t)\bigr) \;=\; \bigl(g(d) ∧ g\bigl(τ(t)\bigr)\bigr) ∨ \bigl(τ'\bigl(¬\top_{g}\bigr) ∧ g\bigl(τ(t)\bigr)\bigr).\]
The ideal embedding \(g\) preserves binary meets, so the first summand equals \(g(d ∧ τ(t)) \leq g(a)\). For the second summand, note that \(τ(t) \leq \top_{P}\) implies \(g(τ(t)) \leq \top_{g}\), whence by \cref{dfn:1}
\[τ'\bigl(¬\top_{g}\bigr) ∧ g\bigl(τ(t)\bigr) \;\leq\; τ'\bigl(¬\top_{g}\bigr) ∧ \top_{g} \;=\; \bot_{P'}.\]
Thus the second summand vanishes and \(d' ∧ τ'(h(t)) = g(d ∧ τ(t)) \leq g(a)\).

\emph{Step 2: \(d'\) is \((τ' \mid σ')\)-dense.} By \cref{prop:pseudocomplement-density} it suffices to verify \((τ'⨾ σ')(¬d') = \bot_{σ'}\). Pseudocomplements of elements of \(P'\) exist in \(T'\) and distribute over joins by \cref{item:1}, so
\[¬d' \;=\; ¬\bigl(g(d) ∨ τ'\bigl(¬\top_{g}\bigr)\bigr) \;=\; ¬g(d) ∧ ¬τ'\bigl(¬\top_{g}\bigr).\]
The density of \(d\) translates, again via \cref{prop:pseudocomplement-density}, into \((τ⨾ σ)(¬d) = \bot_{σ}\). Since \(g(d) \leq \top_{g}\), every element of \(T'\) that meets \(\top_{g}\)
trivially also meets \(g(d)\) trivially, whence \(¬\top_{g} \leq ¬g(d)\). Moreover, for each \(e ∈ P'\) it holds that if \(ε ∧ g(d)\) is not \(\bot_{P'}\), then \(g^{*}(ε)\) is not
\(\bot_{P}\) since \(g\) is an ideal embedding, so that \(¬\top_{g} = ¬g(d)\)
. Evaluating the scale on the complement therefore yields
\[(τ'⨾ σ')(¬d') \;=\; σ'\Bigl(τ'\bigl(¬g(d)\bigr) ∧ τ'\bigl(¬τ'\bigl(¬\top_{g}\bigr)\bigr)\Bigr) = σ'\Bigl(τ'\bigl(¬\top_{g}\bigr) ∧ τ'\bigl(¬τ'\bigl(¬\top_{g}\bigr)\bigr)\Bigr) \;=\; σ'(\bot_{P'}) \;=\; \bot_{σ'},\]
where the penultimate identity holds because \(τ'(¬\top_{g}) ∧ τ'(¬τ'(¬\top_{g})) = \bot_{P'}\) by the defining property of the pseudocomplement \(¬τ'(¬\top_{g})\). Hence \(d'\) is \((τ' \mid σ')\)-dense.

\emph{Step 3: apply the satedness of \(g(a)\).} Combining Steps 1 and 2, the element \(d'\) is \((τ' \mid σ')\)-dense and satisfies \(d' ∧ τ'(h(t)) \leq g(a)\). Since \(g(a)\) is \((τ' \mid σ')\)-sated, \cref{rmk:dual-sigma-mu} yields
\[(τ'⨾ σ')\bigl(h(t)\bigr) \;\leq\; σ'\bigl(g(a)\bigr).\]
By commutativity of the diagram, \(h⨾ τ'⨾ σ' = τ⨾ g⨾ σ'\) and \(g⨾ σ' = σ⨾ k\), so
\[(τ⨾ σ⨾ k)(t) \;=\; (τ'⨾ σ')\bigl(h(t)\bigr) \;\leq\; σ'\bigl(g(a)\bigr) \;=\; (σ⨾ k)(a) \;=\; k\bigl(σ(a)\bigr).\]
Since \(k\) is order-reflecting by \cref{item:8}, this forces \((τ⨾ σ)(t) \leq σ(a)\).

As \(d\) and \(t\) were arbitrary, \(a\) is \((τ \mid σ)\)-sated in \(P\); hence \((h, g, k)\) is satedness-reflecting.
\end{proof}

\begin{Corollary}
  \label{coR_:7}
\index{sated element!reflection by monomorphisms}
  If \(P\) is a Heyting algebra, then the ideal embedding \((a)\) of an element \(a ∈ P\) is satedness-reflecting.
\end{Corollary}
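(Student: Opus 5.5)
The plan is to derive \cref{coR_:7} as the special case of \cref{prop:completeness-reflection-dense-envelope} with trivial tablets and scale. Concretely, take \(T := (a)\), \(P := (a)\), \(S := (a)\), \(T' := P\), \(P' := P\), \(S' := P\), all the horizontal maps identities, and \(h = g = k := ι_{a}{:} (a) \hookrightarrow P\) the principal ideal embedding. Both squares then commute trivially. With identity tablets and scales, \((τ' \mid σ')\)-satedness is plain satedness in \(P\) and \((τ \mid σ)\)-satedness is plain satedness in \((a)\) (dual of \cref{xmp_:20} and \cref{xmp_:21}). So the conclusion of that proposition is exactly that \(ι_{a}\) reflects satedness.

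It remains to check the four hypotheses. \cref{item:155} holds because \(g = ι_{a}\) is a principal ideal embedding. \cref{item:16} holds because every Heyting algebra is a distributive lattice. For \cref{item:1}, the \((\mathbf{id} \mid \mathbf{id})\)-relative pseudocomplement of \(x\) in \(T' = P\) is the largest \(y\) with \(x ∧ y = \bot\), which in a Heyting algebra is \(¬x = x → \bot\). The required distributivity over joins is the De Morgan law \(¬(x ∨ y) = ¬x ∧ ¬y\), which holds in every Heyting algebra because \((-) → \bot\) turns joins into meets (it is a right adjoint in the contravariant sense). For \cref{item:8}, \(k = ι_{a}\) is an order-embedding and hence order-reflecting. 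It is bottom-reflecting because \(\bot_{P} \leq a\) gives \(\bot_{(a)} = \bot_{P}\), and injectivity then shows that only \(\bot_{(a)}\) maps to \(\bot_{P}\).

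The main obstacle I expect is a typing subtlety, not a computation. \cref{prop:completeness-reflection-dense-envelope} is stated in \(\mathbf{DBP}\), and its proof uses the pseudocomplement \(¬\top_{g}\), here \(¬a\), living in \(T' = P\). This is why the downstairs tablets must be all of \(P\) rather than \((a)\). If the identity-tablet reading turns out to be awkward, I would instead run the three steps of that proof directly for the reflection. Given \(d\) dense in \((a)\) and \(t \leq a\) with \(d ∧ t \leq b\), set \(d' := d ∨ ¬a\). Distributivity gives \(d' ∧ t = d ∧ t \leq b\). Density of \(d'\) in \(P\) follows from \(¬d' = ¬d ∧ ¬¬a\): any \(e\) with \(e ∧ d' = \bot\) satisfies \(e ∧ ¬a = \bot\), so \(e \leq ¬¬a\), and then \(e ∧ a ∈ (a)\) meets \(d\) trivially, forcing \(e ∧ a = \bot\) and hence \(e \leq ¬a\), so \(e = \bot\). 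Satedness of \(b\) in \(P\) then yields \(t \leq b\).
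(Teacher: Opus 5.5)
Your proposal is correct and is essentially the paper's proof: the paper also obtains the corollary by specializing \cref{prop:completeness-reflection-dense-envelope} to trivial tablets and scale. It checks the same hypotheses you do, namely that \(g\) is an ideal embedding, that a Heyting algebra is distributive with pseudocomplements \(\neg x = x \to \bot\) distributing over joins, and that \(k\) is trivially order- and bottom-reflecting; your explicit diagram and your direct fallback argument with \(d' = d \vee \neg a\) spell out what the paper's one-line verification leaves implicit.
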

\begin{proof}
  \label{prf_:15}
  \cref{item:8} is trivial if we don't use a non-trivial scale, \cref{item:155} is true by assumption and \cref{item:16} and \cref{item:1} are true because \(P\) is a Heyting algebra.
\end{proof}

So like smallness depends on the size of the tablets, so does satedness depend on the number of dense subobjects. With smallness, it is modularity that ensures larger tablets restrict
cleanly so smallness is preserved; with satedness, Heyting complements ensure that each locally dense subobject can be made globally dense, so that satedness is reflected.

\section{The Inter-Categorical Development of the Calculus}
\label{sec:inter-categ-size}

What gives our analysis its true power lies in how it interacts with categorical structure through the \(\mathbf{SOb}\)-functor, so let us take stock of that.
For this, we have to avoid some size issues:

\begin{Definition}
  \label{defn_:31}
  (See, e.g. Section 0.6 of \cite{adamekLocallyPresentableAccessible1994})A category \(\mathcal{C}\) is \emph{well-powered} if for each object \(X{:} \mathcal{C}\) the subobject preorder \(\mathbf{SOb}_{\mathcal{C}}(X)\) is essentially small, i.e. a set (up to equivalence).
\end{Definition}
\begin{Example}
  \label{xmp:103}
  \(\mathbf{Set}\) is well-powered but not small.
\end{Example}

Given now a well-powered category \(\mathcal{C}\) we have a functor \(\mathbf{SOb}_{\mathcal{C}}{:} \overset{\hookrightarrow}{\mathcal{C}} → \mathbf{UBP}\) on the subcategory \(\overset{\hookrightarrow}{\mathcal{C}}\) of monomorphisms in \(\mathcal{C}\) given by mapping \(X{:} \mathcal{C}\) to the preorder \(\mathbf{SOb}_{\mathcal{C}}(X)\) of
monomorphisms into \(X\).  It would be tempting to think that \(\mathbf{SOb}_{\mathcal{C}}\) is faithful on \(\overset{\hookrightarrow}{\mathcal{C}}\), but this is actually not always true:

\begin{Counterexample}
  \label{cxmp:3}
  If \(\mathcal{C}\) is the delooping of a group, then all monics into
  its point \(*\) are isomorphisms and thus isomorphic to the identity in \(\mathcal{C}_{/*}\). So \(\mathbf{SOb}_{\mathcal{C}}(*)\) is the one-element preorder, and thus the representation
  \(\mathbf{SOb}_{\mathcal{C}}{:} \mathcal{C} → \mathbf{UBP}\) is trivial.
\end{Counterexample}

This could be avoided by restricting to monic categories with no non-trivial isomorphisms, but that would be excessive: the problem is not that there are isomorphisms, but that they are not sufficiently
supported by non-isomorphic monics:

\begin{Definition}
  \label{dfn:2}\index{subobject-separated category}
  A category \(\mathcal{C}\) is \emph{subobject-separated} if for each pair of morphisms \(f, g{:} X \rightrightarrows Y\) there exists a subobject \(ι{:} A \hookrightarrow X\) such that for each \(σ{:} \mathbf{Aut}(A)\):

  \begin{equation}
    \label{qn_:2}
    \text{if } ι⨾ f = σ⨾ ι⨾ g \text{ then } f = g.
  \end{equation}
\end{Definition}
\begin{Example}
  \label{xmp:101}
  Suppose \(\mathcal{C}\) has a separating family of subterminal objects \(\mathcal{S}\). Remember that an object \(S\) is subterminal if and only if for every \(Y\), the hom-category \(\mathcal{C}(Y,S)\) has at most one element
  and a family of morphisms \(\mathcal{S}\) is separating if and only if for any two distinct parallel morphisms \(f,g {:} X \rightrightarrows Y\) there exists \(S \in \mathcal{S}\) and a morphism \(h {:} S \to X\) with \(h⨾ f \neq h⨾ g\).

  Given such \(f, g\), pick an \(h {:} S \to X\) such that \(h⨾ f \neq h⨾ g\). Because \(S\) is subterminal, any morphism out of \(S\) is trivially monic.
  Therefore \(h\) is a monomorphism. Is it proper? If \(h\) is an isomorphism, then \(X \cong S\) is subterminal, so both \(h\) and \(σ\) in \cref{qn_:2}  necessarily equal
  \(\mathbf{id}_{X}\), so \(\mathbf{id}_{X}\) is a suitable \(ι\). If \(h\) is a proper monomorphism, then, \(σ\) in \cref{qn_:2} still has to be an identity, so that \(ι\) subobject-separates \(f\) and \(g\) by the
  separativeness of \(\mathcal{S}\).

  This applies in particular to all localic topoi (see C1.4.7. of \cite{johnstoneSketchesElephantTopos2002}).
\end{Example}

\begin{Proposition}[\(\mathbf{SOb}_{\mathcal{C}}\) is faithful when \(\mathcal{C}\) is subobject-separated]
  \label{■pn_:1}
  Let \(\mathcal{C}\) be a monic well-powered category and let \(\overset{\hookrightarrow}{\mathcal{C}}\) be its wide subcategory of monomorphisms.
  If \(\mathcal{C}\) is subobject-separated (in the sense of \cref{dfn:2}), then the subobject functor
  \[
    \mathbf{SOb}_{\mathcal{C}} {:} \overset{\hookrightarrow}{\mathcal{C}} \longrightarrow \mathbf{UBP}
  \]
  is faithful.
\end{Proposition}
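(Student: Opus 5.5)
To prove \cref{■pn_:1}, I would take two parallel monomorphisms \(f, g{:} X \hookrightarrow Y\) in \(\overset{\hookrightarrow}{\mathcal{C}}\) and assume that \(\mathbf{SOb}_{\mathcal{C}}(f) = \mathbf{SOb}_{\mathcal{C}}(g)\) as maps \(\mathbf{SOb}_{\mathcal{C}}(X) → \mathbf{SOb}_{\mathcal{C}}(Y)\). In other words, for every subobject \(ι{:} A \hookrightarrow X\), the composites \(ι⨾ f\) and \(ι⨾ g\) represent the same subobject of \(Y\). The goal is \(f = g\). Subobject-separatedness (\cref{dfn:2}) is tailored to this: it provides a single subobject \(ι{:} A \hookrightarrow X\) for the pair \(f, g\) such that any automorphism \(σ\) of \(A\) with \(ι⨾ f = σ⨾ ι⨾ g\) already forces \(f = g\). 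So the whole argument reduces to turning the equality of subobjects \([ι⨾ f] = [ι⨾ g]\) into an equation of that form.

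The key step is to unfold what equality in \(\mathbf{SOb}_{\mathcal{C}}(Y)\) means. It means there is an isomorphism \(σ{:} A \overset{\sim}{→} A\) in the slice over \(Y\) with \(ι⨾ f = σ⨾ ι⨾ g\), since both monomorphisms have the same domain \(A\) and agree up to an isomorphism of domains. Concretely, \(ι⨾ f \leq ι⨾ g\) and \(ι⨾ g \leq ι⨾ f\) give maps \(u, v{:} A → A\) with \(ι⨾ f = u⨾ ι⨾ g\) and \(ι⨾ g = v⨾ ι⨾ f\). Because \(ι⨾ f\) and \(ι⨾ g\) are monic, cancelling yields \(u⨾ v = \mathbf{id}_{A}\) and \(v⨾ u = \mathbf{id}_{A}\), so \(σ := u\) lies in \(\mathbf{Aut}(A)\). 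The one point to be careful about is where \(σ\) sits in the composite. In diagrammatic order the factor lives on the domain side, \(σ⨾ (ι⨾ g)\), which matches \cref{qn_:2} exactly. Applying \cref{qn_:2} then gives \(f = g\), so \(\mathbf{SOb}_{\mathcal{C}}\) is injective on each hom-set of \(\overset{\hookrightarrow}{\mathcal{C}}\).

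I expect no serious obstacle, only bookkeeping. First, one must use that the map \(\mathbf{SOb}_{\mathcal{C}}(f)\) is postcomposition \(ι ↦ ι⨾ f\), which is well defined on isomorphism classes because composites of monos are monic. Second, under the univalent convention of \cref{sec:size-assumptions}, "equality" of subobjects is exactly the existence of such an isomorphism \(σ\). Well-poweredness is needed only so that \(\mathbf{SOb}_{\mathcal{C}}\) lands in \(\mathbf{UBP}\), and the hypothesis that \(\mathcal{C}\) is monic plays no role in the argument.
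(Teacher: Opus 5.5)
Your proof is correct and follows the paper's argument. You unfold \(\mathbf{SOb}_{\mathcal{C}}(f)=\mathbf{SOb}_{\mathcal{C}}(g)\) at the separating subobject \(\iota\) to get \(\sigma\in\mathbf{Aut}(A)\) with \(\iota⨾ f=\sigma⨾\iota⨾ g\), then invoke \cref{qn_:2}. The only difference is that you explicitly justify why equality of subobjects yields an automorphism of \(A\), via mutual factorization and cancellation of the monics \(\iota⨾ f\) and \(\iota⨾ g\); the paper takes that step for granted.
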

\begin{proof}
  Let \(f,g {:} X \hookrightrightarrows Y\) be two monomorphisms and assume \(\mathbf{SOb}_{\mathcal{C}}(f) = \mathbf{SOb}_{\mathcal{C}}(g)\).
  Concretely, this means that for every subobject \(ι {:} A \hookrightarrow X\), the two monomorphisms \(ι⨾ f\) and \(ι⨾ g\) become equal in the subobject preorder of \(Y\); i.e. there exists an isomorphism \(σ {:} A \to A\) (depending on \(ι\)) such that
  \[
    ι⨾ f \;=\; σ ⨾ ι⨾ g .
   \]

   Now apply the hypothesis that \(\mathcal{C}\) is subobject-separated: then for the pair \((f,g)\) there exists a subobject \(ι {:} A \hookrightarrow X\) with the property that if there exists such an isomorphism \(σ {:} A \to A\) for \(ι\), then \(f = g\).  Hence \(\mathbf{SOb}_{\mathcal{C}}\) is faithful.
\end{proof}

\begin{Remark}
  \label{rmk:14}
  Note that this means that \(\mathbf{SOb}\) induces an equivalence relation on \( \overset{\hookrightarrow}{\mathbf{Cat}}\), given by \(\mathcal{C} \sim D\) if and only if the image of \(\mathbf{SOb}_{\mathcal{C}}\) is isomorphic to that of
  \(\mathbf{SOb}_{\mathcal{D}}\), which is like Morita-equivalence for \(\mathbf{UBP}\). We can call \(\mathcal{C}\) \emph{subobject-equivalent} to \(\mathcal{D}\) in this case.
\end{Remark}
\begin{Example}
  \label{xmp:32}
  The delooping of every group is subobject-equivalent to the terminal category.
\end{Example}

\begin{Definition}
  \label{defn_:32}
\index{monomorphism-preserving functor}
  A functor \(F{:} \mathcal{C} \to \mathcal{D}\) is \emph{monomorphism-preserving} if for each monomorphism \(ι{:} X \hookrightarrow Y\) in \(\mathcal{C}\), \(F(ι)\) is a monomorphism.
\end{Definition}
\begin{Example}
  \label{xmp:109}
  Every pullback-preserving functor is monomorphism-preserving because every monomorphism is a pullback along itself. In particular, every right adjoint \(R\) is monomorphism-preserving since it preserves limits.
\end{Example}

Given now a monomorphism-preserving functor \(F{:} \mathcal{C} → \mathcal{D}\), we can to each monomorphism \(ι{:} X → Y\) in \(\mathcal{C}\) associate a monomorphism \(F(ι)\) in \(\mathcal{D}\), and thus obtain a natural transformation
\(F_{*}{:} \mathbf{SOb}_{\mathcal{C}} \Rightarrow \mathbf{SOb}_{\mathcal{D}}\), whose component at \(X\) is the map
\(F_{*, X}{:} \mathbf{SOb}_{\mathcal{C}}(X) \to \mathbf{SOb}_{\mathcal{D}}(F(X))\) sending each monomorphism \(ι\) to \(F(ι)\).
So \(F(ι)\), viewed as an element of \(\mathbf{SOb}_{\mathcal{C}}(F(X))\), is precisely the top \(\top_{F(ι)_{*}}\) of the ideal embedding \(F(ι)_{*}{:} \mathbf{SOb}(F(A)) \hookrightarrow \mathbf{SOb}_{\mathcal{C}}(F(X))\).
This same construction — extracting the element designated by the image of a monomorphism — will reappear throughout this text in several different categorical settings, and we
shall uniformly denote it by \(F_{*, X}\) in each case. In particular they will be important in \cref{sec:covar-repr-theor}.

Note that this means that both parts of the top-preserving/ideal-embedding factorization system serve complementary roles for displaying a shadow of \(\overset{\hookrightarrow}{\mathbf{UBP}}\):
the top-preserving morphisms are the shadows of functors, the ideal embeddings are shadows of morphisms between objects. And, as we will learn later, the 2-cells, i.e.
inequalities, between morphisms in \(\mathbf{UBP}\) display, in some sense, natural transformations.

\subsection{The Duality Breaking Point}
\label{sec:dual-break-point}

This is where the duality between smallness and density finally breaks: first of all, the subobject preorder \(\mathbf{SOb}_{\mathcal{C}}(X)\) of every object \(X\) of every category \(\mathcal{C}\) is \emph{upward}-bounded but
not necessarily \emph{downward}-bounded, so in a general category this notion of density makes no sense. It is only when \(\mathcal{C}\) has a strict initial object that \(\mathbf{SOb}_{\mathcal{C}}(X)\) is
also downward-bounded. But this loss of generality is only the tip of the iceberg.

\subsubsection{Density \& Satedness}
\label{sec:density}

For each \(ι{:} X \hookrightarrow Y\) the morphism \(ι_{*}{:} \mathbf{SOb}_{\mathcal{C}}(X) \hookrightarrow \mathbf{SOb}_{\mathcal{C}}(Y)\) is an \emph{ideal embedding}, which is principal by \cref{rmk:17}, so it preserves
\(\bot\) if it exists (but not \(\top\)), and it preserves and reflects both meets and joins. Thus it fulfills the requirements of \cref{coR_:9}, so that \(ι_{*}\) is density-reflecting. And this is why density
deserves its name: because it is reflected by \emph{all} subobject inclusions in \emph{all} categories with strict initial objects.

Moreover, if a functor \(F{:} \mathcal{C} → \mathcal{D}\) preserves pullbacks and the initial object \(∅\), then it preserves monomorphisms and the induced natural transformation \(F_{*}\) preserves meets to \(\bot\). So if
\(F\) also reflects the initial object, then \(F_{*}\) reflects density by \cref{prop:rel-reflect-small}.

\paragraph{The Yoneda Embedding}

Let us also study how density behaves with respect to the Yoneda embedding:

\begin{Example}
  \label{xmp_:6}
  Let \(\widehat{(-)}{:} \mathcal{S} \hookrightarrow \widehat{\mathcal{S}}\) be the Yoneda embedding of a site with a subcanonical coverage and a strict initial object \(∅\) such
  that \(∅_{\widehat{\mathcal{S}}} \cong \widehat{∅_{\mathcal{S}}}\)
  (i.e. the empty sheaf is representable) and consider the induced map on subobject preorders \(\top_{\widehat{(-)}, X}{:} \mathbf{SOb}_{\mathcal{S}}(X) → \mathbf{SOb}_{\widehat{\mathcal{S}}}(\widehat{X}){:} ι ↦ \hat{ι}\).
  Since \(\mathcal{S}\) is subcanonical, \(\widehat{(-)}\) preserves finite limits and by assumption it preserves the initial object \(∅\). Since \(\top_{\widehat{(-)}, X}\)  is also fully faithful (by the
  Yoneda lemma and subcanonicity), it reflects \(∅\). Thus, \(\widehat{(-)}\) reflects density.
\end{Example}

Not just that, but for preorders the Yoneda embedding also \emph{preserves} density. For this, we need the following lemma, which was probably recorded somewhere else before:

\begin{Lemma}
  \label{le↘_:3}
  Let \(\mathcal{S}\) be a thin site with a subcanonical coverage. Then every nonzero sheaf \(F\) on \(\mathcal{S}\) receives a monomorphism from some representable.
\end{Lemma}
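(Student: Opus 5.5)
Given a nonzero sheaf \(F\), I will pick an object \(U\) of \(\mathcal{S}\) with \(F(U) \neq \emptyset\). Such a \(U\) exists: if \(F(U)\) were empty for every \(U\), then \(F\) would be the initial (empty) presheaf, which is in particular a sheaf. I then fix an element \(s \in F(U)\). By the Yoneda lemma this element corresponds to a natural transformation \(\hat{s}{:} \widehat{U} \Rightarrow F\), and since \(\mathcal{S}\) is subcanonical, \(\widehat{U}\) is itself a sheaf, so \(\hat{s}\) is a morphism in the sheaf category.

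The key point is that \(\hat{s}\) is automatically monic, and here thinness does all the work. A natural transformation of presheaves is a monomorphism exactly when it is objectwise injective. The component of \(\hat{s}\) at \(V\) is
\[\hat{s}_{V}{:} \mathcal{S}(V, U) \to F(V),\qquad f \mapsto F(f)(s).\]
Because \(\mathcal{S}\) is thin, each hom-set \(\mathcal{S}(V, U)\) has at most one element, and any function out of a set with at most one element is injective. So every component is injective and \(\hat{s}\) is monic in presheaves.

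It remains to check that \(\hat{s}\) is also monic in sheaves. Monicity passes from presheaves to sheaves, since the inclusion of sheaves into presheaves is fully faithful and, as a right adjoint, preserves and reflects monomorphisms. Hence \(\hat{s}{:} \widehat{U} \hookrightarrow F\) is the required monomorphism from a representable.

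I do not expect any real obstacle. The only step that needs a moment of care is the first one: interpreting ``nonzero'' as ``not isomorphic to the initial sheaf'' and confirming that the empty presheaf really is a sheaf, so that nonzero is equivalent to having some nonempty section.
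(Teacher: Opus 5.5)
Your proof is correct and is essentially the paper's argument: a section \(s \in F(U)\) gives a Yoneda map \(\widehat{U} \Rightarrow F\) whose components are injective because every hom-set of a thin category has at most one element, and monicity then transfers from presheaves to sheaves. One minor caveat about your closing remark: the empty presheaf need not be a sheaf. For example, on the site of opens of a space, \(\emptyset\) is covered by the empty family, so the initial sheaf has a point over \(\emptyset\). Hence ``nonzero'' is not equivalent to having a nonempty section in general, but you only use the implication from nonzero to nonempty section, and your argument establishes that unconditionally.
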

\begin{proof}
  \label{prf_:12}
  Consider presheaves \(F {:} \mathcal{S}^{op} → \mathbf{Set}\). Recall that a natural transformation \(\widehat{x}{:} \widehat{X} ⇒ F\) from a representable corresponds, by Yoneda, to a choice of element \(x ∈ F(X)\). Its component at \(Y\) is
  \[
  \overline{x}_Y {:} \mathcal{S}(Y,X) → F(Y),\quad f \mapsto F(f)(x).
\]
In a presheaf category, \(\widehat{x}\) is monic if and only if every component \(\widehat{x}_Y\) is injective, and since \(\mathcal{S}\) is subcanonical, finite limits in \(\widehat{\mathcal{S}}\) reduce to those in the surrounding presheaf
topos \(\mathbf{PSh}(\mathcal{S})\). So we have to find a componentwise monic into \(F\). If \(F\) is non-trivial, then there exists some  \(x ∈ F(X)\) for some object \(X\). But since for each object \(Y\), the hom-set \(\mathcal{S}(Y,X)\) has size \(0\) or \(1\) and any function with domain of size ≤ 1 is trivially
injective. Thus, \(\widehat{x}\) is componentwise injective. So any element of any sheaf in \(\widehat{\mathcal{S}}\) is monic.
\end{proof}
\begin{Proposition}
  \label{prp:6}
  Given a thin site \(\mathcal{S}\) with a subcanonical coverage and an initial object, the natural transformation \(\top_{\widehat{(-)}}{:} \mathbf{SOb}_{\mathcal{S}} \hookrightarrow \widehat{(-)}⨾ \mathbf{SOb}_{\widehat{\mathcal{S}}}\) induced by the Yoneda embedding \(\widehat{(-)}{:} \mathcal{S} \hookrightarrow \mathbf{Set}^{\mathcal{S}^{op}}\) is density-preserving.
\end{Proposition}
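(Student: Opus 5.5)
We have to show that if \(ι{:} A \hookrightarrow X\) is dense in \(\mathbf{SOb}_{\mathcal{S}}(X)\), then \(\hat{ι}\) is dense in \(\mathbf{SOb}_{\widehat{\mathcal{S}}}(\widehat{X})\). That is, every subsheaf \(G \hookrightarrow \widehat{X}\) with \(\hat{ι} ∧ G = \bot\) must be the initial sheaf.

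The plan is a contrapositive argument that reduces density in the sheaf topos to density in the site via \cref{le↘_:3}. Suppose \(G \hookrightarrow \widehat{X}\) is a nonzero subsheaf. By \cref{le↘_:3} there is an object \(B\) of \(\mathcal{S}\) and a monomorphism \(\widehat{B} \hookrightarrow G\). Composing gives a monomorphism \(\widehat{B} \hookrightarrow \widehat{X}\). By fullness of the Yoneda embedding this is \(\hat{κ}\) for a unique \(κ{:} B \to X\). Because \(\mathcal{S}\) is thin, every morphism of \(\mathcal{S}\) is monic, so \(κ\) is a subobject of \(X\) lying below \(G\) in the image.

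Next we must see that \(κ\) is not \(\bot\) in \(\mathbf{SOb}_{\mathcal{S}}(X)\). I would choose \(B\) and the element \(x ∈ G(B)\) from the proof of \cref{le↘_:3} so that \(\widehat{B}\) is not the initial sheaf. This works because \(G\) is nonzero: its nonempty value \(G(B)\) forces \(\widehat{B}(B) \ni \mathbf{id}_B\), so \(\widehat{B}\) is nonempty. Hence \(B \not\cong ∅\), since the initial object is strict and its representable is the empty sheaf.

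Density of \(ι\) then gives \(ι ∧ κ \neq \bot\). In a thin category this meet, if it exists, is a pullback \(C\) with \(C\) not initial.

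It remains to transport this back. The Yoneda embedding of a subcanonical site preserves the limits that exist in \(\mathcal{S}\), as already used in \cref{xmp_:6}. Hence \(\hat{ι} ∧ \hat{κ} = \widehat{C}\), which is nonzero by the argument of the previous step. Since \(\hat{κ} \le G\), monotonicity of meets gives \(\hat{ι} ∧ G \ge \widehat{C} \ne \bot\), contradicting \(\hat{ι} ∧ G = \bot\). Therefore \(G\) must be initial, and \(\hat{ι}\) is dense.

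The main obstacle is the existence of meets. In \(\mathbf{SOb}_{\mathcal{S}}(X)\) the meet \(ι ∧ κ\) need not exist, because \(\mathcal{S}\) is only assumed to be a thin site with an initial object. Density in \cref{defn_:58} only quantifies over meets that do exist, so I cannot directly conclude that some nonzero \(C\) lies below both \(ι\) and \(κ\).

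I would first try to sidestep this by working with the meet in \(\widehat{\mathcal{S}}\) directly. The sheaf \(\hat{ι} ∧ \hat{κ}\) always exists. If it were \(\bot\), I would want to show that \(ι ∧ κ\) exists in \(\mathcal{S}\) and equals \(∅\): any common lower bound \(D\) of \(ι\) and \(κ\) yields \(\widehat{D} \le \hat{ι} ∧ \hat{κ} = \bot\), so \(\widehat{D}\) is initial and therefore \(D \cong ∅\). This makes \(∅\) the meet, and density of \(ι\) then forces \(κ = \bot\), a contradiction. This version avoids needing pullbacks in \(\mathcal{S}\), and I expect it to be the cleanest route. The remaining check is that a representable \(\widehat{D}\) being initial implies that \(D\) is initial. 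That follows from full faithfulness together with \(∅_{\widehat{\mathcal{S}}} \cong \widehat{∅}\) (implicitly assumed, as in \cref{xmp_:6}).
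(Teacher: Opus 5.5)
Your sidestep is sound: you compute \(\widehat{\iota}\wedge\widehat{\kappa}\) in sheaves and conclude that \(\emptyset\) is then the meet \(\iota\wedge\kappa\) in \(\mathbf{SOb}_{\mathcal{S}}(X)\). But the contradiction it ends on needs \(\kappa\neq\bot\), and that step is not established. Every representable is nonempty, \(\widehat{\emptyset}\) included, since \(\widehat{\emptyset}(\emptyset)=\{\mathbf{id}_{\emptyset}\}\). So ``\(\widehat{B}\) is nonempty'' does not exclude \(B\cong\emptyset\), and \(\widehat{\emptyset}\) is never the empty presheaf.

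Worse, under the hypothesis \(\emptyset_{\widehat{\mathcal{S}}}\cong\widehat{\emptyset}\) that you use at the end, every sheaf \(F\) has \(F(\emptyset)\cong 1\). The element supplied by \cref{le↘_:3} may therefore be the one at \(\emptyset\); the lemma's hypothesis ``nonzero'' does no work here, since \(0\cong\widehat{\emptyset}\) is itself representable. Then \(\kappa=\bot\), density of \(\iota\) yields nothing, and ``density forces \(\kappa=\bot\), a contradiction'' is no contradiction.

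Your two steps in fact use incompatible descriptions of the initial sheaf. There are two mutually exclusive regimes: if the empty sieve does not cover \(\emptyset\), the initial sheaf is the empty presheaf; if it does, the initial sheaf is \(\widehat{\emptyset}\). The statement assumes neither. The paper's own proof works in the former (``the empty presheaf on \(\mathcal{S}\) is a sheaf and thus initial''), so the hypothesis of \cref{xmp_:6} cannot simply be imported.

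Both defects are cured by one observation. Over a thin category a subsheaf \(G\hookrightarrow\widehat{X}\) is a sieve on \(X\). \(G\neq\bot\) forces it to be nonempty (an empty sheaf is initial), hence to contain \(\emptyset\to X\), and \(\widehat{\iota}\) contains it too. Thus \(\widehat{\emptyset}\le\widehat{\iota}\wedge G\), and the supposition \(\widehat{\iota}\wedge G=\bot\) already forces \(\widehat{\emptyset}\cong 0\) by strictness of \(0\) in the topos. In the empty-presheaf regime this is a contradiction and the whole proof; in general it derives your extra hypothesis instead of assuming it.

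Next, \(G\neq\bot=\widehat{\emptyset}\) means \(G\) cannot have elements only at initial objects, since it would then lie below \(\widehat{\emptyset}\). So you can choose \(x\in G(B)\) with \(B\) non-initial, which gives \(\kappa\neq\bot\), and your sidestep finishes.

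So repaired, your route genuinely differs from the paper's. The paper exhibits a representable below \(\kappa\wedge\widehat{\iota}\) and declares it nonzero because it takes \(0\) to be the empty presheaf, without ever invoking the density of \(\iota\). That only covers the regime where every nonzero subsheaf of \(\widehat{X}\) is dense anyway. Your reduction to density in \(\mathbf{SOb}_{\mathcal{S}}(X)\), with the meet taken in sheaves to avoid its possible non-existence in \(\mathcal{S}\), is what handles the case where the empty family covers \(\emptyset\). An example is the usual coverage on a frame of opens; note that in a thin category every family, the empty one included, is jointly epimorphic.
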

\begin{proof}
  \label{prf:9}
  We need to show that if \(ι\) is dense in \(\mathbf{SOb}_{\mathcal{S}}(X)\), then \(\widehat{ι}\) is dense in \(\mathbf{SOb}_{\widehat{\mathcal{S}}}(\widehat{X})\). Let
  \(κ{:} K \hookrightarrow \widehat{X}\) be any subsheaf such that
  \[
    κ ∧ \widehat{ι} = \bot
  \]
  in \(\mathbf{SOb}_{\widehat{\mathcal{S}}}(\widehat{X})\). We must show \(κ = \bot\). Since \(\mathcal{S}\) is subcanonical, the sheafification functor preserves meets, so we can take the meet of
  \(κ ∧ \widehat{ι}\) as sub-presheaves, which is given by objectwise pullback. If \(κ\) is not initial, then by \cref{le↘_:3} there is some \(X{:} \mathcal{S}\) such that \(K(X)\) contains an element
  \(y\), so that \(κ ∧ \widehat{ι} \geq \widehat{y}⨾ κ\), where \(\widehat{y}\) is the morphism \(\widehat{X} → K\) induced by the Yoneda lemma. But since each cover of \(\mathcal{S}\) is
  jointly epimorphic and thus in particular non-empty, the empty presheaf on \(\mathcal{S}\) is a sheaf and thus initial. So in particular \(\widehat{y}⨾ κ > \bot\).
\end{proof}
\begin{Remark}
  \label{rmk:1}
\index{density-equivalence}
  So the embedding of a thin subcanonical site with an initial object into its sheaf topos is both density-preserving and density-reflecting, so a \emph{density-equivalence}.
\end{Remark}

\paragraph{Coproducts}

Another aspect where the duality between smallness and density breaks is with respect to coproducts, whose subobject structure in good cases is controlled by the summands:

\begin{Definition}[Disjoint coproduct, monic coprojections]
Let \(\mathcal{C}\) be a category with an initial object \(∅\). A coproduct \(X + Y\) with coprojections
\[
τ_X {:} X → X+Y,\qquad τ_Y {:} Y → X+Y
\]
is called a \emph{disjoint coproduct with monic coprojections} if \(τ_{X}\) and \(τ_{Y}\) are monic and the pullback of \(τ_X\) and \(τ_Y\) is initial.

Equivalently, the images of \(τ_X\) and \(τ_Y\) are subobjects of \(X+Y\) with intersection equal to the initial subobject.
\end{Definition}
\begin{Proposition}[Density in a disjoint coproduct]
  \label{prop:density-coproduct}
\index{density!in coproducts}
\index{disjoint coproduct!density}
  Let \(\mathcal{C}\) be a category with an initial object \(∅\) and disjoint coproducts with monic coprojections. Then the following equivalent statements hold:
  \begin{enumerate}
  \item\label{item:146} If for some monomorphism \(ι {:} A \hookrightarrow X\) the composite
    \[
      ι⨾τ_X {:} A \longrightarrow X+Y
    \]
    is dense in \(\mathbf{SOb}(X+Y)\), then \(Y \simeq ∅\).

  \item\label{item:147} The coprojection \(τ_X\) itself is dense in \(\mathbf{SOb}(X+Y)\) if and only if \(Y \simeq ∅\).
  \end{enumerate}
\end{Proposition}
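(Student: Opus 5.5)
The plan is to prove \cref{item:146} directly from the definition of density, obtain \cref{item:147} from it as the special case \(ι = \mathbf{id}_{X}\), and finally check that \cref{item:146} follows back from \cref{item:147}, so that the two statements are equivalent as claimed.

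For \cref{item:146}, suppose \(ι⨾τ_X\) is dense in \(\mathbf{SOb}(X+Y)\). The key observation is that \(ι⨾τ_X \le τ_X\) in \(\mathbf{SOb}(X+Y)\), since it factors through \(τ_X\) via \(ι\). Monotonicity of meets then gives
\[(ι⨾τ_X) ∧ τ_Y \le τ_X ∧ τ_Y = \bot_{X+Y}.\]
The equality on the right uses disjointness: the pullback of \(τ_X\) and \(τ_Y\) is initial, and this pullback is exactly the meet in the subobject preorder. Because \(\bot\) is the least element, the meet \((ι⨾τ_X) ∧ τ_Y\) exists and equals \(\bot\). One small point needs care here. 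The pullback of \(ι⨾τ_X\) along \(τ_Y\) maps into the pullback of \(τ_X\) along \(τ_Y\), which is \(∅\). So we must use that a morphism into an initial object forces its domain to be initial, i.e. that \(∅\) is strict, or else argue purely in \(\mathbf{SOb}\) with \(\bot\) as the least subobject. I will phrase the argument order-theoretically, writing \(\bot_{X+Y}\) for the initial subobject \(∅ \hookrightarrow X+Y\), to avoid this issue.

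By density of \(ι⨾τ_X\) we conclude \(τ_Y = \bot_{X+Y}\) in \(\mathbf{SOb}(X+Y)\). This means \(τ_Y\) and \(∅ \to X+Y\) are isomorphic as subobjects, so their domains are isomorphic and \(Y \simeq ∅\).

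For \cref{item:147}, the forward direction is \cref{item:146} with \(ι = \mathbf{id}_X\). For the converse, if \(Y \simeq ∅\) then \(X + ∅ \cong X\) with \(τ_X\) an isomorphism. Hence \(τ_X\) is \(\top\) in \(\mathbf{SOb}(X+Y)\), and \(\top\) is dense by \cref{xmp:55}. Conversely, \cref{item:146} follows from \cref{item:147}: density is upward closed, since if \(a \le a'\) and \(a\) is dense then any \(c\) with \(c ∧ a' = \bot\) also satisfies \(c ∧ a = \bot\), so \(c = \bot\). Thus density of \(ι⨾τ_X \le τ_X\) implies density of \(τ_X\), and \cref{item:147} then gives \(Y \simeq ∅\).

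The only delicate step is identifying \(τ_Y\) being \(\bot\) as a subobject with \(Y \simeq ∅\), together with making sure the meets involved exist. Both are handled by the definition of disjoint coproduct and by reading \(\bot\) as the initial subobject, as above.
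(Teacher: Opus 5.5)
Your proof is correct and follows the paper's argument. Both use \(ι⨾τ_X \le τ_X\) and disjointness to get \(τ_Y ∧ (ι⨾τ_X) = \bot\), conclude \(τ_Y = \bot\) and hence \(Y \simeq ∅\) by density, then specialize to \(ι = \mathbf{id}_X\) and note that \(τ_X\) is an isomorphism when \(Y \simeq ∅\). Your order-theoretic handling of the meet and the upward-closure argument for (2) \(\Rightarrow\) (1) are minor additions; the strictness worry was unnecessary, because pulling back the mono \(ι\) along \(∅ \to X\) always yields \(∅\), so the paper's pullback claim holds without a strict initial object.
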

\begin{proof}
  For \cref{item:146} assume \(ι⨾τ_X\) is dense.  Consider the subobject \(τ_Y {:} Y \hookrightarrow X+Y\).
  Because the coproduct is disjoint, the pullback of \(τ_X\) and \(τ_Y\) is initial.  Since \(ι⨾τ_X\) factors through \(τ_X\), the pullback of \(τ_Y\) and \(ι⨾τ_X\) is also initial; in symbols,
  \[
    τ_Y ∧ (ι⨾τ_X) = \bot .
  \]
  By definition of density, this forces \(τ_Y = \bot\), i.e. the monomorphism \(τ_Y\) is isomorphic to the initial subobject.  Hence \(Y \simeq ∅\).

  The first direction of \cref{item:147} follows from \cref{item:146} by taking \(ι = \mathbf{id}_X\).  For the converse, note that if \(Y \simeq ∅\),
  then the coproduct \(X+∅\) is isomorphic to \(X\) via \(τ_X\).
\end{proof}

\paragraph{Satedness}

This regularity of density does not extend to satedness however without further assumptions: by \cref{prop:reducedness-under-morphisms} the reflectiveness of density is useful
for morphisms that induce a surjection on subobject preorders, but this is never the case for non-trivial embeddings. Of course, it is the case for quotients, which we will turn to
in a moment. Moreover, if \(\mathcal{C}\) is a Heyting category, then each \(\mathbf{SOb}_{\mathcal{C}}(X)\) fulfills the requirements of \cref{coR_:7}. Thus, satedness is reflected along embeddings in a
Heyting category, which explains why every open subset of a sated open subset of a topological space is sated in every open neighborhood. We also have the effect of
\cref{■pn_:11}: that, for any \(ι{:} A \hookrightarrow X\) in any category with an initial object, if \(κ = κ'⨾ ι\) factors through \(ι\), \(κ'{:} B \hookrightarrow A\) is sated in \(\mathbf{SOb}(A)\) and \(κ \geq λ ∧ μ\) with \(μ\)
dense, then \(κ \geq λ ∧ ι\), so we can switch out any dense subobject with \(ι\) in such meet inequalities.

\subsubsection{Smallness \& Reducedness}
\label{sec:smallness-1}

Smallness on the other hand is a lot more whimsical. For instance, it is not the case that each \(ι_{*}\) preserves smallness for \(ι{:} X \hookrightarrow Y\) in some category \(\mathcal{C}\). We have however the
following consequence of \cref{lmm:4}:

\begin{Corollary}
  \label{crl:2}
  Let \(\mathcal{C}\) be a category such that each subobject preorder of objects in \(\mathcal{C}\) is a modular lattice. Then each monomorphism in \(\mathcal{C}\) is smallness-preserving.
\end{Corollary}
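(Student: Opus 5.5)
The plan is to reduce the statement directly to \cref{lmm:4} through the identification of subobject posets along a monomorphism. Fix a monomorphism \(ι{:} X \hookrightarrow Y\) in \(\mathcal{C}\). What has to be shown is that the induced map \(ι_{*}{:} \mathbf{SOb}_{\mathcal{C}}(X) \to \mathbf{SOb}_{\mathcal{C}}(Y)\), given by postcomposition \(κ \mapsto κ⨾ι\), preserves smallness. Here ``smallness-preserving'' for a monomorphism is read in the sense of \cref{defn:elemwise-small-pres-reflect} applied to \(ι_{*}\).

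The first step is to show that \(ι_{*}\) is an order-isomorphism onto the principal ideal \((ι)\) of \(\mathbf{SOb}_{\mathcal{C}}(Y)\). This is the observation made in \cref{sec:density} that \(ι_{*}\) is an ideal embedding, principal by \cref{rmk:17}. Concretely, a subobject \(λ \le ι\) factors as \(λ = λ'⨾ι\). The factor \(λ'\) is monic because \(λ\) is, and it is unique up to the subobject equivalence because \(ι\) is monic. Moreover \(ι_{*}\) reflects the order for the same reason. With this, \(\mathbf{SOb}_{\mathcal{C}}(X) \cong (ι)\), and \(ι_{*}\) factors as this isomorphism followed by the principal ideal inclusion \((ι) \hookrightarrow \mathbf{SOb}_{\mathcal{C}}(Y)\).

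The second step checks that the isomorphism transports smallness. Smallness is defined purely order-theoretically from joins and the top, as in \cref{defn_:58}. Joins within a principal ideal agree with joins in the ambient lattice, since the ideal is downward closed and contains the upper bound \(ι\). So smallness in \(\mathbf{SOb}_{\mathcal{C}}(X)\) coincides with smallness in the lattice \((ι)\) with top \(ι\).

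The third step uses the hypothesis that \(\mathbf{SOb}_{\mathcal{C}}(Y)\) is a modular lattice. By \cref{lmm:4}, the principal ideal embedding \((ι) \hookrightarrow \mathbf{SOb}_{\mathcal{C}}(Y)\) then preserves smallness. Composing with the isomorphism from the first step shows that \(ι_{*}\) preserves smallness, as claimed.

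The only point needing care is the first step, namely that \(ι_{*}\) is literally a principal ideal embedding and not just some order embedding. This comes down to the factorization of subobjects below \(ι\) described above. Once that is in place, no modularity of \(\mathbf{SOb}_{\mathcal{C}}(X)\) itself is needed; only the modularity of the codomain lattice is used.
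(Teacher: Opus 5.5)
Your proposal is correct and follows the same route as the paper, which states this corollary as an immediate consequence of \cref{lmm:4}: up to the order-isomorphism \(\mathbf{SOb}_{\mathcal{C}}(X) \cong (\iota)\), the map \(\iota_{*}\) is the principal ideal embedding of \((\iota)\) into the modular lattice \(\mathbf{SOb}_{\mathcal{C}}(Y)\). Your explicit check that \(\iota_{*}\) corestricts to an isomorphism onto \((\iota)\) spells out the observation the paper takes from \cref{sec:density} and \cref{rmk:17}.
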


Smallness is also respectively more densely interwoven with reducedness: by \cref{crl:1} a subobject \(ι{:} A \hookrightarrow X\) is reduced if and
only if it reflects smallness.

Another deep difference between smallness and density regards the Yoneda embedding: we saw in \cref{sec:density} that the density of sheaves on a thin subcanonical site is completely
controlled by that in \(\mathcal{C}\), while we saw in \cref{■pn_:64} that \emph{every} non-trivial representable subobject is small in a representable presheaf. So Yoneda just makes everything small. Sheafification then makes the smallness relation non-trivial again by identifying unions of representable subobjects with the entirety of the sheaf. So what is
small in a sheaf topos depends solely on the coverage.

Finally, another consequence of the ideality of subobject preorder embeddings is that the smallness stratification from \cref{sec:smalln-strat} is \emph{way} more useful than its dense counterpart: in
categorical language, ``\(ι\) is \(n\)-small'' just means ``\(ι\) is \((n-1)\)-small in a small subobject'' while ``\(ι\) is \(n\)-dense'' means ``\(ι\) is \((n-1)\)-dense in a dense principal filter'' \emph{which does not
correspond to anything category-theoretically}, at least on a basic level. Thus, the smallness stratification can be found \emph{everywhere} while the density stratification is much more obtuse.

\subsubsection{Comparison}
\label{sec:comparison}

So while the order-theoretic definitions are perfectly dual, the categorical realisations are not: density imposes much stricter constraints on how an object can be decomposed.
Since density is also preserved by a wider class of functors (those preserving pullbacks and preserving and reflecting the initial object) and reflected by all monomorphisms while smallness
is preserved by a more restricted class and reflected only by those monomorphisms that are reduced, the structural properties of density are more tightly interwoven with the way objects
interact. So density becomes a global notion, which is highly sensitive to disturbances, while smallness becomes local — but local along reducedness, which itself is
defined through smallness, leading to a something of a chicken-and-egg situation when trying to infer global smallness from local data.

Reducedness and satedness are interesting — since reducedness is essentially equivalent to (covariant) smallness-reflection, it means that size-wise the image of a morphism
looks relatively the same as the containing object: at least in coherent categories smallness is then both preserved and reflected along reduced monomorphisms. Satedness doesn't work
that way at all. This is because for a proper principal ideal embedding \(ι_{a}{:} (a) \hookrightarrow P\) it trivially holds that \(ι(\top_{(a)}) < \top_{P}\), so reducedness is all about how joins to
\(a\) look compared to \(\top_{P}\); but since bottoms are preserved by \(f_{*}\) for each morphism \(f\) in each category, the criterion for completion is both in \(P\) and in \((a)\) based on how meets to \(\bot\) act, So the question is: ``How much is under \(a\)?'' and the answer has to be ``enough to make up the difference between \(\top\) and a dense subobject.''

\subsection{Extension to Non-Monic Categories}
\label{sec:extens-arbitr-categ}

\subsubsection{Extension along Pullbacks}
\label{sec:extens-along-pullb}

If a category \(\mathcal{C}\) has pullbacks along monics, then there also exists a functor \(\mathbf{SOb}_{\mathcal{C}}^{*}{:} \mathcal{C}^{op} → \mathbf{UBP}\), which maps a morphism \(f{:} Y → X\) to the pullback map
\(f^{*}{:} \mathbf{SOb}_{\mathcal{C}}(X) → \mathbf{SOb}_{\mathcal{C}}(Y)\). This is the arguably the most canonical hyperdoctrine, since it is defined on each category with pullbacks. We mean
``hyperdoctrine'' here in a general sense as a contravariant 2-functor on \(\mathcal{C}^{op}\), without necessarily assuming adjoints.

Moreover, since pullbacks always preserve identities, \(f^{*}\) preserves tops. As before, \(\mathbf{SOb}^{*}_{\mathcal{C}}\) is not faithful for instance when \(\mathcal{C}\) is the delooping of a groupoid. Note however
that \(\mathbf{SOb}^{*}\) is still faithful on (perhaps surprisingly) many categories:

\begin{Example}
  \label{xmp:13}
  In the internal language of any topos \(\mathcal{T}\), for any \(f{:} X → Y, ι{:} A \hookrightarrow Y\), \(f^{*}(ι) = \{x ∈ X| ∃ a ∈ A{:} f(x) ∈ ι\}\), so that it holds that \(x ∈ f^{*}(ι) \Leftrightarrow f(x) ∈ ι\), so if for all \(ι\) in \(Y\)
  \(f^{*}(ι) = g^{*}(ι)\), then \(∀ x ∈ X{:} f(x) = g(x)\). Since elements in the internal language of a topos correspond to \(κ{:} B \hookrightarrow X\) such that \(B\) is subterminal, it follows that \(f = g\) if \(\mathcal{T}\) is
  separated by its subterminal objects. So again \(\mathbf{SOb}^{*}\) is faithful on all topoi that are separated by subterminal objects.
\end{Example}
 
On the other hand, for any monic \(ι{:} A \hookrightarrow Y, κ{:} Y \hookrightarrow X\) in any category, the square

\begin{figure}[H]
  \centering
  \begin{tikzcd}
  Y \ar[r, hook, "κ"] & X \\
  A \ar[u, hook, "ι"] \ar[r, "\mathbf{id}_{A}"] & A \ar[u, hook, "ι⨾ κ"]
\end{tikzcd}
  % \caption{\label{fi♭_:14}}
\end{figure}

is a pullback square, so that the restriction \(\mathbf{SOb}^{*}_{\mathcal{C}}|_{\overset{\hookrightarrow}{\mathcal{C}}}\) of \(\mathbf{SOb}^{*}_{\mathcal{C}}\) to the monic part \(\overset{\hookrightarrow}{\mathcal{C}}\) of \(\mathcal{C}\) maps \(ι^{*}\) to the right adjoint of
\(ι_{*}\). Thus, \(\mathbf{SOb}^{*}_{\mathcal{C}}\) lifts with the covariant subobject functor from the last section to a functor
\(\mathcal{C}^{op}|_{\overset{\hookrightarrow}{\mathcal{C}}} \hookrightarrow \mathbf{UBP}_{l\text{-}adj}\) into the category on upward-bounded preorders whose morphisms \((l, r){:} P → Q\) are pairs of left-right adjoints (going in the left direction) in
\(\mathbf{UBP}\) , or equivalently the subcategory of \(\mathbf{UBP}\) on those morphisms that are left adjoints. Thus, we can compare smallness and density along all morphisms both covariantly (through postcomposition) and contravariantly (through pullback).

\paragraph{Density and Smallness of Coproducts}
\label{sec:density-smallness-coproducts}

In the presence of pullbacks, we can sharpen the behavior of density and smallness in coproducts. The following standard result characterizes the subobject structure of a coproduct in an extensive category.

\begin{Proposition}
  \label{prp:1-restated}
\index{extensive category!subobjects}
  Let \(\mathcal{C}\) be an extensive category.
  For any coproduct \(X + Y\) with coprojections
  \[
    τ_X {:} X → X+Y,\qquad τ_Y {:} Y → X+Y,
  \]
  the canonical map
  \[
    (τ_X^*, τ_Y^*) {:} \mathbf{SOb}_{\mathcal{C}}(X+Y) \longrightarrow \mathbf{SOb}_{\mathcal{C}}(X) × \mathbf{SOb}_{\mathcal{C}}(Y)
  \]
  is an isomorphism of preorders.
\end{Proposition}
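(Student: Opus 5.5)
I will show that \((τ_X^*, τ_Y^*)\) has an explicit inverse given by taking coproducts of subobjects. The inverse candidate is \(Φ{:} (α{:} A \hookrightarrow X, β{:} B \hookrightarrow Y) \mapsto α + β{:} A + B \to X + Y\). The plan is to check three things in turn: that \(Φ\) is well defined and monotone into \(\mathbf{SOb}_{\mathcal{C}}(X+Y)\), that \(Φ\) is a retraction-section pair with the canonical map in both directions, and that the canonical map is monotone. The last is immediate, since pullback along any morphism is monotone on subobjects.

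The first step is to show that \(α + β\) is monic. I will use the defining property of extensivity in its pullback form: the functor \(+{:} \mathcal{C}_{/X} × \mathcal{C}_{/Y} \to \mathcal{C}_{/X+Y}\) is an equivalence, with inverse \((τ_X^*, τ_Y^*)\). Monomorphisms in a slice are the maps \(A \to A'\) over the base that are monic in \(\mathcal{C}\), and a morphism \(m\) into the base object is monic exactly when \(m \to \mathbf{id}\) is monic in the slice. Equivalences preserve monomorphisms and terminal objects, and \(\mathbf{id}_X + \mathbf{id}_Y = \mathbf{id}_{X+Y}\). Hence the pair \((α, β)\), being a mono into the terminal object \((\mathbf{id}_X, \mathbf{id}_Y)\), is sent to a mono into \(\mathbf{id}_{X+Y}\), so \(α + β\) is monic in \(\mathcal{C}\). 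Monotonicity of \(Φ\) follows because factorizations \(α = α' ⨾ α''\) and \(β = β' ⨾ β''\) give the factorization \(α + β = (α' + β') ⨾ (α'' + β'')\).

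For the two composites I will reuse the same equivalence. It tells us that \(τ_X^*(α + β) \cong α\) and \(τ_Y^*(α + β) \cong β\), since the squares exhibiting \(α\) and \(β\) as pullbacks of \(α + β\) along the coprojections are exactly the pullback condition of extensivity. This gives \((τ_X^*, τ_Y^*) ⨾ Φ\)-type identity on the product side. Conversely, for \(ι{:} C \hookrightarrow X+Y\), extensivity gives \(C \cong τ_X^*(C) + τ_Y^*(C)\) over \(X + Y\), so \(ι \cong τ_X^*(ι) + τ_Y^*(ι)\). Both composites are therefore the identity up to isomorphism of subobjects. That is equality in the preorder, which is what the univalent convention of \cref{sec:size-assumptions} requires.

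The main obstacle I expect is the monicity of \(α + β\), since coproducts of monos need not be monic in general. The slice-equivalence argument above handles it. If one prefers a more hands-on argument, the alternative is this: given \(u, v{:} Z \to A + B\) with \(u ⨾ (α + β) = v ⨾ (α + β)\), pull the common composite back along \(τ_X\) and \(τ_Y\) to decompose \(Z \cong Z_X + Z_Y\). Disjointness and stability then force \(u\) and \(v\) to restrict to maps into \(A\) and \(B\) on the respective summands, and monicity of \(α\) and \(β\) concludes.
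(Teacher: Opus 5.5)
Your proposal is correct and follows essentially the same route as the paper: in one direction you pull back along the coprojections, in the other you take the coproduct \(\alpha + \beta\) of monomorphisms, and extensivity makes the two mutually inverse up to isomorphism. The only difference is that you actually justify why \(\alpha + \beta\) is monic, via the slice equivalence \(\mathcal{C}_{/X} \times \mathcal{C}_{/Y} \simeq \mathcal{C}_{/X+Y}\) preserving monos into the terminal object, whereas the paper only asserts this parenthetically.
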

\begin{proof}
  Let \(m {:} A \hookrightarrow X+Y\) be a subobject. By extensivity, the pullbacks \(m_X {:} A_X \hookrightarrow X\) and \(m_Y {:} A_Y \hookrightarrow Y\) along the coprojections form a coproduct \(A ≅ A_X + A_Y\). This establishes a bijection between subobjects of \(X+Y\) and pairs of subobjects of \(X\) and \(Y\).
  Conversely, given a pair \((a {:} A \hookrightarrow X, b {:} B \hookrightarrow Y)\), the universal property of the coproduct induces a monomorphism \(a + b {:} A + B \hookrightarrow X + Y\) (the coproduct of monomorphisms in an extensive category is monic). These constructions are clearly order-preserving and mutually inverse, yielding the isomorphism.
\end{proof}
\begin{Remark}
  \label{rmk:10}
  Thus, in an extensive category, the subobject functor converts coproducts into products, behaving much like a powerset. This preservation of limits allows us to analyze smallness and density component-wise.
\end{Remark}

We can now describe the behavior of smallness under coproducts:

\begin{Corollary}
  \label{coR_:5}
\index{small element!in coproducts}
  Let \(\mathcal{C}\) be extensive. For a monomorphism \(ι {:} A \hookrightarrow X+Y\) with associated decomposition \(A ≅ A_X + A_Y\) under \cref{prp:1-restated}, the following are equivalent:
  \begin{enumerate}
  \item \(ι\) is small in \(\mathbf{SOb}(X+Y)\);
  \item Both pullbacks \(ι_X {:} A_X \hookrightarrow X\) and \(ι_Y {:} A_Y \hookrightarrow Y\) are small in \(\mathbf{SOb}_{\mathcal{C}}(X)\) and \(\mathbf{SOb}_{\mathcal{C}}(Y)\) respectively.
  \end{enumerate}
\end{Corollary}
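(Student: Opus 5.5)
The plan is to transport the problem along the order isomorphism of \cref{prp:1-restated} and then prove a purely order-theoretic lemma: in a product \(P \times Q\) of upward-bounded preorders, an element \((a,b)\) is small if and only if \(a\) is small in \(P\) and \(b\) is small in \(Q\). Smallness is defined only in terms of the order, joins and \(\top\), and these are preserved and reflected by order isomorphisms. Hence \(ι\) is small in \(\mathbf{SOb}(X+Y)\) exactly when its image \((τ_X^*(ι), τ_Y^*(ι)) = (ι_X, ι_Y)\) is small in \(\mathbf{SOb}_{\mathcal{C}}(X) \times \mathbf{SOb}_{\mathcal{C}}(Y)\). Here I use that the pullbacks \(ι_X, ι_Y\) are exactly the components of the decomposition \(A \cong A_X + A_Y\).

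For the lemma, I first note that joins in a product are computed componentwise whenever they exist. More precisely, \((a,b) \vee (c,d) = (\top_P,\top_Q)\) holds if and only if \(a \vee c = \top_P\) and \(b \vee d = \top_Q\). This needs a little care, because joins need not exist (\cref{rmα_:20}). I will argue directly from upper bounds: \((\top,\top)\) is the least upper bound of \((a,b)\) and \((c,d)\) precisely when every common upper bound \((u,v)\) satisfies \(u = \top\) and \(v = \top\). Since common upper bounds of the pairs are exactly pairs of common upper bounds in each component, and each component always has the common upper bound \(\top\), the condition splits componentwise. This splitting of partial joins is the only step where I expect real care is needed. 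Everything else is formal.

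With that in hand, both directions are short. Suppose \(a\) and \(b\) are small, and \((a,b) \vee (c,d) = \top\). Then \(a \vee c = \top_P\) and \(b \vee d = \top_Q\), so \(c = \top_P\) and \(d = \top_Q\).

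Conversely, suppose \((a,b)\) is small and let \(a \vee c = \top_P\). Pair \(c\) with \(\top_Q\). Since \(b \vee \top_Q = \top_Q\) always holds, we get \((a,b) \vee (c, \top_Q) = \top\), so \(c = \top_P\). Hence \(a\) is small, and symmetrically \(b\) is small. Translating back through \cref{prp:1-restated} yields the equivalence of the two statements in the corollary.
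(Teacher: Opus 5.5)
Your proposal is correct and follows the paper's proof: transport along the isomorphism of \cref{prp:1-restated}, split the condition \(\iota \vee \kappa = \top\) componentwise, and for the converse pair a complementing \(\kappa_X\) with \(\top_Y\). Your treatment of partial joins is more careful than the paper's bare ``joins are computed componentwise''. You argue via common upper bounds and note that \(\top\) keeps each component's set of upper bounds nonempty, so the universal quantifier splits. This is a welcome tightening, since subobject joins need not exist in a merely extensive category.
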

\begin{proof}
  Under the isomorphism \(\mathbf{SOb}(X+Y) \cong \mathbf{SOb}_{\mathcal{C}}(X) × \mathbf{SOb}_{\mathcal{C}}(Y)\), joins are computed componentwise. Thus, \(ι ∨ κ = \top_{X+Y}\) holds if and only if \(ι_X ∨ κ_X = \top_X\) and \(ι_Y ∨ κ_Y = \top_Y\).
  \(ι\) is small precisely when the only pair \((κ_X, κ_Y)\) satisfying this condition is \((\top_X, \top_Y)\). This is equivalent to the requirement that there is no proper \(κ_X\) in \(X\) joining with \(ι_X\) to \(\top_X\) (and similarly for \(Y\)); otherwise, taking \((κ_X, \top_Y)\) would contradict the smallness of \(ι\). Hence, both components must be small.
\end{proof}

Consequently, in an extensive category, the smallness of a subobject of a coproduct is determined entirely by its restrictions to the summands. The behavior of density is then the abstractly same:

\begin{Corollary}
  \label{crl:6}
\index{density!in coproducts}
  Let \(\mathcal{C}\) be an extensive category. Then each subobject \(ι{:} A \hookrightarrow X + Y\) of a coproduct \(X + Y\) is dense if and only if both pullbacks \(τ_{X}^{*}(ι)\) and \(τ_{Y}^{*}(ι)\) are dense in \(\mathbf{SOb}_{\mathcal{C}}(X)\) and \(\mathbf{SOb}_{\mathcal{C}}(Y)\) respectively.
\end{Corollary}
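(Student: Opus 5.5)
The plan is to dualize the proof of \cref{coR_:5} through the isomorphism of \cref{prp:1-restated}. Extensivity gives an initial object (the empty coproduct), which is strict, so \(\mathbf{SOb}_{\mathcal{C}}(X+Y)\), \(\mathbf{SOb}_{\mathcal{C}}(X)\) and \(\mathbf{SOb}_{\mathcal{C}}(Y)\) are all downward-bounded and density makes sense. Transport \(ι\) along \((τ_X^*,τ_Y^*)\) to the pair \((ι_X,ι_Y) := (τ_X^*(ι), τ_Y^*(ι))\).

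First I check that meets and the bottom are computed componentwise. This holds because \((τ_X^*,τ_Y^*)\) is an order-isomorphism, and an isomorphism of preorders preserves and reflects all existing meets and bottoms. The bottom of a product preorder is \((\bot_X,\bot_Y)\), and \((a,b)∧(c,d) = (a∧c, b∧d)\) whenever the right side exists. The only care needed here is the paper's convention (\cref{rmα_:20}) that meets need not exist globally. In a product, a meet exists exactly when both componentwise meets exist, so the correspondence is exact. Alternatively, pullbacks exist along monos in an extensive category and \(τ_X^*\) preserves them, which gives the same componentwise description directly.

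For the direction ``\(ι\) dense \(\Rightarrow\) both components dense'', take \(κ_X \in \mathbf{SOb}(X)\) with \(ι_X ∧ κ_X = \bot_X\). Consider the pair \((κ_X,\bot_Y)\), i.e. the subobject \(κ_X⨾τ_X\). Its meet with \((ι_X,ι_Y)\) is \((\bot_X, ι_Y∧\bot_Y) = (\bot_X,\bot_Y)\), and it exists since \(ι_Y ∧ \bot_Y = \bot_Y\) trivially. Density of \(ι\) then forces \((κ_X,\bot_Y)=\bot\), so \(κ_X=\bot_X\). The \(Y\)-component is treated symmetrically. This is the dual of the step ``take \((κ_X,\top_Y)\)'' in \cref{coR_:5}.

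For the converse, suppose \(ι_X\) and \(ι_Y\) are dense, and let \((κ_X,κ_Y)\) satisfy \((ι_X,ι_Y)∧(κ_X,κ_Y)=\bot\). Componentwise this says \(ι_X∧κ_X=\bot_X\) and \(ι_Y∧κ_Y=\bot_Y\). Density of each component gives \(κ_X=\bot_X\) and \(κ_Y=\bot_Y\), so the pair is \(\bot\), and \(ι\) is dense.

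I expect no real obstacle; the argument is a routine transport of structure. The one point to state explicitly is the componentwise computation of meets and of \(\bot\) under the isomorphism of \cref{prp:1-restated}, including when partial meets exist, together with the existence of a strict initial object in an extensive category.
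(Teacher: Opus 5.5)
Your proposal is correct and follows the paper's proof. You identify \(\mathbf{SOb}_{\mathcal{C}}(X+Y)\) with the product via \cref{prp:1-restated}, compute the bottom and meets componentwise, test with \((\kappa_X,\bot_Y)\) for the forward direction, and use componentwise density for the converse; your extra care about partial meets and about the strictness of the initial object refines the paper's argument. One slip: your aside that an extensive category has pullbacks along all monomorphisms is false in general (only pullbacks along coprojections are guaranteed), but your main argument through the product preorder does not depend on it.
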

\begin{proof}
  By \cref{prp:1-restated}, we identify \(\mathbf{SOb}(X+Y)\) with the product \(\mathbf{SOb}_{\mathcal{C}}(X) × \mathbf{SOb}_{\mathcal{C}}(Y)\), where the bottom element is \((\bot_X, \bot_Y)\) and meets are computed pointwise. An element \((d_X, d_Y)\) in a product of downward-bounded preorders is dense if and only if for any \((a, b)\), the condition \((a ∧ d_X, b ∧ d_Y) = (\bot_X, \bot_Y)\) implies \((a, b) = (\bot_X, \bot_Y)\).

  This condition holds precisely when \(d_X\) is dense in \(\mathbf{SOb}_{\mathcal{C}}(X)\) and \(d_Y\) is dense in \(\mathbf{SOb}_{\mathcal{C}}(Y)\): testing with \(b = \bot_Y\) shows that \(a ∧ d_X = \bot_X\) forces \(a = \bot_X\), and symmetrically for \(b\). The claim follows by transporting this characterization back along the isomorphism.
\end{proof}

but different with respect to component inclusions:

\begin{Corollary}
  \label{coR_:6}
\index{small element!in coproducts}
  If \(ι{:} A \hookrightarrow X\) is small in \(X\) and \(\mathcal{C}\) is extensive, then \(ι⨾ τ_{X}{:} A \hookrightarrow X + Y\) is small for each \(Y\).
\end{Corollary}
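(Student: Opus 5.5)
Identify \(\mathbf{SOb}_{\mathcal{C}}(X+Y)\) with \(\mathbf{SOb}_{\mathcal{C}}(X) × \mathbf{SOb}_{\mathcal{C}}(Y)\) via \cref{prp:1-restated}, and reduce the claim to \cref{coR_:5}. That corollary says a subobject of \(X+Y\) is small exactly when both of its pullbacks along the coprojections are small. So the whole argument comes down to computing \(τ_X^{*}(ι⨾τ_X)\) and \(τ_Y^{*}(ι⨾τ_X)\).

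\emph{Step 1.} Since \(τ_X\) is monic (coprojections are monic in an extensive category), the pullback of \(ι⨾τ_X\) along \(τ_X\) is \(ι\) itself. This is the square with identities observed in \cref{sec:extens-along-pullb}, read with \(κ = τ_X\).

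\emph{Step 2.} The pullback of \(ι⨾τ_X\) along \(τ_Y\) factors through the pullback of \(τ_X\) along \(τ_Y\). By disjointness of coproducts in an extensive category, that pullback is initial. Extensive categories have strict initial objects, so the pullback of \(ι⨾τ_X\) along \(τ_Y\) is \(\bot_Y\). In the product picture, \(ι⨾τ_X\) therefore corresponds to the pair \((ι, \bot_Y)\).

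\emph{Step 3.} The element \(\bot_Y\) is small in \(\mathbf{SOb}_{\mathcal{C}}(Y)\) by \cref{xmp:55}, since \(\bot ∨ κ = κ\). Also \(ι\) is small by hypothesis. By \cref{coR_:5}, \(ι⨾τ_X\) is small. Alternatively, one can argue directly: if \((ι,\bot_Y) ∨ (κ_X,κ_Y) = (\top_X,\top_Y)\), then componentwise \(κ_Y = \top_Y\) and, by smallness of \(ι\), \(κ_X=\top_X\).

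The only step needing care is Step 2, namely checking that extensivity supplies disjointness and a strict initial object, so that the \(Y\)-component is genuinely \(\bot_Y\). This is standard for extensive categories, so I expect no real obstacle.
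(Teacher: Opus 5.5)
Your proof is correct and follows the argument the paper leaves implicit (it states the corollary without proof, right after \cref{coR_:5}): under the isomorphism of \cref{prp:1-restated}, \(ι⨾τ_X\) corresponds to \((ι,\bot_Y)\), both components are small, and \cref{coR_:5} (or your direct componentwise check) finishes the argument. Your Step 2 is the point that needs care. Disjointness together with strictness of the initial object in an extensive category is exactly what identifies the \(Y\)-component as \(\bot_Y\).
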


\subsubsection{Extension to Regular Categories}
\label{sec:extens-regul-categ}

Now, since each subobject meet is a limit, it is preserved by pullback, so that \(f^{*}\) preserves meets. Thus, if \(\mathcal{C}\) has an initial object \(∅\) and \(f^{*}\)
reflects it, then it fulfills the requirements of \cref{prop:rel-reflect-small} and thus \emph{also} reflects density. In particular, if \(f\) is left orthogonal in a factorization system \(\mathfrak{f}\) whose right orthogonal
morphisms includes all initial morphisms and whose left-orthogonal cells are stable under pullback, then for each \(ι{:} A \hookrightarrow X\) such that \(f^{*}(ι) = ι_{∅} {:} ∅ \hookrightarrow Y\), the left square in the diagram

\begin{figure}[H]
\index{regular category}
  \centering
  \begin{tikzcd}
    A  \ar[r, "\mathbf{id}_{A}"] & A \ar[r, hook, "ι"] & X \\
    ∅ \ar[r, "\mathbf{id}_{∅}"] \ar[u, "ι_{∅}"] & ∅ \ar[u, "ι^{*}(f)"] \ar[r, hook, "f^{*}(ι)"] & Y \ar[u, "f"]
  \end{tikzcd}
  % \caption{\label{fi♭_:18}}
\end{figure}

is a factorization square and thus there exists a \(k{:} A → ∅\). Thus:

\begin{Corollary}
  \label{crl:5}
\index{density!reflection by regular epimorphisms}
  In a category \(\mathcal{C}\) with a strict initial object, let \(\mathfrak{f}\) be a factorization system whose right orthogonal part includes all initial morphisms. Then for each morphism \(f\) in
  the left part of \(\mathfrak{f}\)  the induced pullback morphism \(f^{*}\) is density-reflecting.
\end{Corollary}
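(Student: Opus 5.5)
We unfold density directly and use the orthogonal lifting argument sketched in the paragraph preceding \cref{crl:5}. Fix \(f{:} Y \to X\) in the left class of \(\mathfrak{f}\) and a subobject \(ι{:} A \hookrightarrow X\). We must show that if \(f^{*}(ι)\) is dense in \(\mathbf{SOb}_{\mathcal{C}}(Y)\), then \(ι\) is dense in \(\mathbf{SOb}_{\mathcal{C}}(X)\). So let \(κ{:} B \hookrightarrow X\) satisfy \(κ ∧ ι = \bot_{X}\); we aim to show \(κ = \bot_{X}\).

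\emph{Step 1: transport the meet along \(f^{*}\).} Pullback preserves meets, since meets of subobjects are pullbacks and pullbacks compose. It also preserves \(\bot\), because a strict initial object pulls back to an initial object: any object with a map to \(∅\) is itself initial. Hence
\[f^{*}(κ) ∧ f^{*}(ι) = f^{*}(κ ∧ ι) = f^{*}(\bot_{X}) = \bot_{Y}.\]
Density of \(f^{*}(ι)\) then gives \(f^{*}(κ) = \bot_{Y}\). This is the meet-preservation hypothesis needed for the dual of \cref{prop:rel-reflect-small}. The remaining task is reflection of \(\bot\): we must show \(f^{*}\) reflects the initial subobject.

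\emph{Step 2: bottom-reflection via the lifting property.} Consider the pullback square with top side \(κ{:} B \hookrightarrow X\), bottom side \(f^{*}(κ){:} ∅ \hookrightarrow Y\), left side the initial map \(∅ \to B\), and right side \(f\). The left class is assumed stable under pullback, as the preceding paragraph presupposes; I would add this hypothesis explicitly if the statement does not include it. Then the pullback \(κ^{*}(f){:} ∅ \to B\) of \(f\) lies in the left class. The initial morphism \(∅ \to ∅\) lies in the right class by assumption. The commuting square
\[\begin{tikzcd} ∅ \ar[r, "\mathbf{id}"] \ar[d, "κ^{*}(f)"'] & ∅ \ar[d] \\ B \ar[r] & ∅ \end{tikzcd}\]
needs a bottom map \(B \to ∅\), so this set-up does not work as drawn. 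The fix is to orthogonalise \(κ^{*}(f){:} ∅ \to B\) against the identity-type square used in the text's diagram instead. Because \(κ^{*}(f)\) is in the left class and \(\mathbf{id}_{∅}\) is trivially in the right class, the diagonal filler yields the required \(k{:} B \to ∅\), as in the displayed diagram before the corollary. Strictness of \(∅\) then forces \(B \cong ∅\), hence \(κ = \bot_{X}\).

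\emph{Main obstacle.} The hard part is Step 2: producing an honest commutative square whose lift yields the map \(B \to ∅\). My first attempt would be to note that \(κ^{*}(f){:} ∅ \to B\) is both in the left class, by pullback stability, and an initial morphism, hence in the right class. It is therefore in the intersection of the two classes, which consists of isomorphisms. So \(B \cong ∅\) directly, and the strictness of \(∅\) is needed only in Step 1. If the classes do not meet exactly in the isomorphisms, I would fall back on the square displayed in the text, with \(\mathbf{id}_{A}\) and \(\mathbf{id}_{∅}\).
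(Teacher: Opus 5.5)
Your argument is correct and follows the paper's. $f^{*}$ preserves meets and $\bot$, so $f^{*}(\kappa)=\bot_Y$. Then $f^{*}$ reflects $\bot$ because the pullback $\kappa^{*}(f)\colon\emptyset\to B$ of $f$ lies in the left class.

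You are right to add pullback-stability of the left class. The paper assumes it in the paragraph before the corollary but drops it from the statement, and it cannot be dropped. Take $\mathbf{Cat}$ with the comprehensive factorization system (final functors, then discrete fibrations):
\begin{itemize}
\item every $\emptyset\to\mathcal{C}$ is a discrete fibration, so the right class contains all initial morphisms;
\item the inclusion $f$ of the terminal object $1$ into the arrow category $0\to 1$ is final;
\item $f^{*}$ sends $\{1\}$ to $\top$, yet $\{1\}$ is not dense, since it meets $\{0\}$ in $\bot$.
\end{itemize}

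Delete the ``fix'' in Step 2. Lifting $\kappa^{*}(f)$ against $\mathbf{id}_{\emptyset}$ again needs a bottom edge $B\to\emptyset$, which is exactly the map you lack. The paper's square instead lifts $\kappa^{*}(f)$ against the initial morphism $\emptyset\to B$, with identities as horizontal edges. Since $\emptyset$ is initial, these two vertical arrows are the same morphism. So the square is precisely your last-paragraph argument: $\kappa^{*}(f)$ lies in both classes and is therefore invertible.

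That argument is complete on its own, and the fallback is unnecessary. Any morphism in both classes of a factorization system lifts against itself with identity horizontal edges, and the resulting diagonal is its inverse.
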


 There is an obvious candidate for such a factorization, which is the regular-epi/mono-factorization that exists in every
regular category \cite{granIntroductionRegularCategories2021}. Remember that a regular category is, roughly speaking, a setting where we can take images of maps in a well-behaved, pullback-stable way:

\begin{Definition}[Regular category]
  A category \(\mathcal{C}\) is \emph{regular} if:

  \begin{enumerate}
\index{regular category}
  \item \(\mathcal{C}\) has all finite limits.

  \item Every morphism \(f {:} X → Y\) admits a factorization
    \[
      X \xrightarrow{e_f} I_f \xrightarrow{\mathbf{Im}(f)} Y
    \]
    where
    \begin{itemize}
    \item \(e_f\) is a \emph{regular epimorphism} (i.e. a coequalizer of some parallel pair),
    \item the \emph{image} \(\mathbf{Im}(f)\) of \(f\) is a monomorphism,
    \end{itemize}
    and this factorization is \emph{stable under pullback}: for any morphism \(g {:} Z → Y\), if we form the pullback
    \[
      \begin{tikzcd}
        X' \ar[r] \ar[d] & X \ar[d,"f"] \\
        Z \ar[r,"g"'] & Y
      \end{tikzcd}
    \]
    then the induced map \(X' → Z\) again factors as a regular epi followed by a mono, and this factorization is the pullback of \(e_f\) and \(\mathbf{Im}(f)\).
  \end{enumerate}
\end{Definition}

 So given a regular category \(\mathcal{C}\), for each \(f{:} X → Y\) the morphism \(\mathbf{SOb}^{*}_{\mathcal{C}}(f){:} \mathbf{SOb}_{\mathcal{C}}^{*}(Y) → \mathbf{SOb}_{\mathcal{C}}^{*}(X)\) has a left adjoint given by mapping \(ι\) for
\(ι{:} A \hookrightarrow X\) to the element \(\mathbf{Im}(ι⨾ f)\) of \(\mathbf{SOb}_{\mathcal{C}}(Y)\).

\paragraph{The Density Coverage}
Let now \(f{:} X → Y\) be a morphism in a regular category \(\mathcal{C}\) and let \(ι{:} A \hookrightarrow Y, κ{:} B \hookrightarrow Y\) be such that \(ι ∧ κ = ∅\). If the initial object of \(\mathcal{C}\) is strict, then
\((ι ∧ κ)^{*}(f)\) is \(\mathbf{id}_{∅}\) (up to equivalence). Since pullbacks commute with each other,
\[f^{*}(ι) ∧ f^{*}(κ) = f^{*}(ι ∧ κ) = f^{*}(∅).\]
Since the source of \((ι ∧ κ)^{*}(f)\) is the same as that of \(f^{*}(∅)\), meaning \(∅\), we have \(f^{*}(ι) ∧ f^{*}(κ) = \mathbf{id}_{∅}\).

So if \(ι\) is dense and
\(f^{*}(ι) ∧ λ = ∅\) for some \(λ{:} C \hookrightarrow X\) then \(ι^{*}(λ⨾ f)\) factors through \(∅\) and thus is initial if \(∅\) is strict. Since \(ι\) is dense it follows that \(\mathbf{Im}(λ⨾ f) = ∅\), so \(λ \leq f^{*}(\mathbf{Im}(f(λ))) = f^{*}(∅) = ∅\). Thus:

\begin{Corollary}
  \label{crl:4}
\index{density!preservation by pullbacks}
  Given a morphism \(f{:} X → Y\) in a regular category \(\mathcal{C}\) with a strict initial object, the induced morphism \(f^{*}{:} \mathbf{SOb}_{\mathcal{C}}^{*}(Y) → \mathbf{SOb}_{\mathcal{C}}^{*}(X)\) is
  density-preserving. In particular, if \(g\) has a dense image \(m\), then since the regular-epi/mono factorization \(g = e⨾ m\) is functorial, the pullback \(f^{*}(g)\) of \(g\) has a dense
  image \(\mathbf{Im}(f^{*}(g)) = f^{*}(m)\).

  Thus, and by \cref{crl:5}, the contravariant subobject morphism \(e^{*}\) of a regular epimorphism \(e\) is a density-equivalence.
\end{Corollary}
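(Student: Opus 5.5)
The plan is to verify density-preservation of \(f^{*}\) directly from the definition, using the adjunction \(\mathbf{Im}(-⨾ f) \dashv f^{*}\) that regularity provides. I would then read off the two further claims from pullback-stability of the regular-epi/mono factorization and from \cref{crl:5}.

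\emph{Step 1: \(f^{*}\) preserves density.} Let \(ι{:} A \hookrightarrow Y\) be dense and let \(λ{:} C \hookrightarrow X\) satisfy \(f^{*}(ι) ∧ λ = \bot_{X}\); we must show \(λ = \bot_{X}\). Put \(μ := \mathbf{Im}(λ⨾ f) \in \mathbf{SOb}_{\mathcal{C}}(Y)\). The first thing to establish is \(ι ∧ μ = \bot_{Y}\). Form the pullback of \(ι\) along \(λ⨾ f\). Its domain is the domain of \(λ^{*}f^{*}(ι)\), which is the domain of \(f^{*}(ι) ∧ λ\), hence initial. By pullback-stability, the regular-epi/mono factorization of this pulled-back map is the pullback of \(λ⨾ f = e⨾ μ\) along \(ι\). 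So the pullback of \(e\) along \(ι ∧ μ \hookrightarrow \mathrm{dom}(μ)\) has initial domain. Its codomain is the domain of \(ι ∧ μ\), and it is a regular epimorphism out of \(∅\). The key point is that a regular epimorphism with strict-initial domain has initial codomain: it is the coequalizer of a pair of maps into \(∅\), and such maps are isomorphisms by strictness, so the coequalizer is \(∅\) itself. Hence \(ι ∧ μ = \bot_{Y}\). Density of \(ι\) then gives \(μ = \bot_{Y}\). By the unit of the adjunction, \(λ \leq f^{*}(\mathbf{Im}(λ⨾ f)) = f^{*}(\bot_{Y})\). Finally, \(f^{*}(\bot_{Y})\) has a map into \(∅\), so it is \(\bot_{X}\) by strictness.

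\emph{Step 2: images.} If \(g{:} W → Y\) has dense image \(m\) with \(g = e⨾ m\), then pullback-stability says the pullback \(f^{*}(g)\) factors as \(f^{*}(e)⨾ f^{*}(m)\), with \(f^{*}(e)\) again a regular epimorphism. Hence \(\mathbf{Im}(f^{*}(g)) = f^{*}(m)\), and this is dense by Step 1.

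\emph{Step 3: density-equivalence.} For a regular epimorphism \(e\), Step 1 gives that \(e^{*}\) preserves density. For reflection I would apply \cref{crl:5} to the regular-epi/mono factorization system. Its right class contains every monomorphism out of \(∅\), and in the presence of a strict initial object every initial morphism \(∅ → Z\) is monic, since two maps into \(∅\) are already equal. Combining preservation and reflection makes \(e^{*}\) a density-equivalence.

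The main obstacle is the claim in Step 1 that \(ι ∧ \mathbf{Im}(λ⨾ f)\) is initial. The informal argument in the text glosses over it, and it needs the strictness lemma for regular epimorphisms out of \(∅\). I would isolate that lemma first and then apply pullback-stability carefully.
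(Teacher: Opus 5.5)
Your proof is correct and follows the paper's own argument. You pull \(ι\) back along \(λ⨾ f\) to get an initial domain, use density to force \(\mathbf{Im}(λ⨾ f) = \bot_{Y}\), conclude \(λ \leq f^{*}(\bot_{Y}) = \bot_{X}\), and then obtain the image statement from pullback-stability and density reflection from \cref{crl:5}. Your explicit derivation of \(ι ∧ \mathbf{Im}(λ⨾ f) = \bot_{Y}\), via pullback-stability of the factorization and the fact that a regular epimorphism out of a strict initial object is an isomorphism, is exactly what the paper's one-line ``since \(ι\) is dense it follows that \(\mathbf{Im}(λ⨾ f) = ∅\)'' leaves implicit.
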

\begin{Remark}
  \label{rmα_:11}
  When \(ι{:} A \hookrightarrow X\) is a monomorphism in a regular category with a strict initial object then the density-reflection along \(ι\) is a special case of \cref{crl:4} but it is important to note that although \(f^{*}\) is density-preserving, \(f_{*}\) is not always density-reflecting.
\end{Remark}

Thus, we can make density into a coverage:

\begin{Definition}
  \label{dfn:6}\index{density coverage}
  Given a regular category \(\mathcal{C}\) with  a strict initial object, the \emph{density coverage} is given by the morphisms in \(\mathcal{C}\) whose image is dense.
\end{Definition}

Note that the density coverage is decidedly \emph{not} subcanonical (since each dense morphism in \(\mathcal{C}\) is a cover for this coverage). So by itself, sheafification of this site is
more about inferring stuff from dense subobjects:

\begin{Example}
  \label{xmp_:7}
\index{density coverage}
\index{rational map}
Let \(k\) be a field, \(\mathbf{Var}_{k}\) the category of varieties over \(k\) and for each \(V{:} \mathbf{Var}_{k}\) let \(\mathbf{O}(V)\) be given by the preorder of open
subvarieties of \(V\). Fix some variety \(Y\) over \(k\) and let \(\mathbf{Var}_{k, V}(-, Y){:} \mathbf{O}(V)^{op} → \mathbf{Set}\) be the composition of the forgetful functor
\(\mathbf{s}{:} \mathbf{O}(V) → \mathbf{Var}_{k}\) with the Yoneda functor
  of \(Y\), which assigns to each \(V\) the set of algebraic maps from \(V\) to \(Y\). The sheafification \(\widehat{\mathbf{Var}_{k}(-, Y)}\) of \(\mathbf{Var}_{k}(-, Y)\) with
  respect to the density coverage on \(\mathbf{O}(V)\) freely
  adds for each open dense \(d{:} V' \hookrightarrow V\) and algebraic map \(f{:} V' → Y\) an element \(\hat{f}\) to \(\widehat{\mathbf{Var}_{k}(V, Y)}\) such that if \(e{:} V'' \hookrightarrow V'\) is dense in \(V'\), then
  \(\widehat{e⨾ f} = \hat{f}\). It follows that \(\hat{f}\) represents a rational map from \(V\) to \(Y\).
\end{Example}

However, the density coverage can also be made subcanonical by intersecting it with the canonical coverage.

\paragraph{Density is Preserved by Regular Epimorphisms}

But density is \emph{also} covariantly preserved by regular epimorphisms. For this, fix a regular epi \(ε{:}X\twoheadrightarrow Y\). For each
subobject \(ι{:}A\hookrightarrow X\) in \(\mathbf{SOb}_{\mathcal{C}}(X)\), define
\[
  \mathbf{SOb}_{\mathcal{C}}(ε)(ι) \;:=\; \mathbf{Im}(ι⨾ε) \in \mathbf{SOb}_{\mathcal{C}}(Y).
\]
Because \(\mathcal{C}\) is regular, images exist and are stable under pullback; \(\mathbf{SOb}_{\mathcal{C}}(ε)\) is monotone and preserves finite joins, and since \(\mathcal{C}\) is strict initial, it also
preserves \(\bot\). We need:

\begin{Lemma}
  \label{lem:SOb-epi}
  Let \(\mathcal{C}\) be a regular category and \(ε {:} X \twoheadrightarrow Y\) a regular epimorphism. Then the induced monotone map
  \[
    \mathbf{SOb}_{\mathcal{C}}(ε) {:} \mathbf{SOb}_{\mathcal{C}}(X) \to \mathbf{SOb}_{\mathcal{C}}(Y),\qquad
    ι \mapsto \,\mathbf{Im}(ι⨾ε)\,
  \]
  is surjective on elements.
\end{Lemma}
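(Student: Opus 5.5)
The natural preimage of a subobject \(κ{:} B \hookrightarrow Y\) is its pullback \(ε^{*}(κ) \in \mathbf{SOb}_{\mathcal{C}}(X)\). We therefore aim to prove
\[
  \mathbf{Im}\bigl(ε^{*}(κ)⨾ε\bigr) = κ,
\]
that is, that the direct image map \(\mathbf{SOb}_{\mathcal{C}}(ε)\) retracts the pullback map \(ε^{*}\). Surjectivity on elements then follows at once.

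To see this, form the pullback square of \(κ\) along \(ε\):
\[
  \begin{tikzcd}
    ε^{*}B \ar[r, "ε'"] \ar[d, hook, "ε^{*}(κ)"'] & B \ar[d, hook, "κ"] \\
    X \ar[r, two heads, "ε"'] & Y
  \end{tikzcd}
\]
Here \(ε'\) is the projection to \(B\). Commutativity gives \(ε^{*}(κ)⨾ε = ε'⨾κ\).

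Regularity of \(\mathcal{C}\) says that regular epimorphisms are stable under pullback. This is exactly the pullback-stability clause in the definition of a regular category, applied to the trivial factorization \(ε = ε⨾\mathbf{id}_{Y}\). Hence \(ε'\) is again a regular epimorphism.

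It follows that \(ε^{*}(κ)⨾ε = ε'⨾κ\) is a factorization into a regular epimorphism followed by a monomorphism. By uniqueness of the regular-epi/mono factorization up to isomorphism, the image of this composite is \(κ\) as a subobject of \(Y\). This gives \(\mathbf{SOb}_{\mathcal{C}}(ε)(ε^{*}(κ)) = κ\).

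The only step that needs any care is justifying the pullback-stability of regular epimorphisms from the paper's definition of a regular category. The definition as stated gives stability of the whole factorization; we specialise it to the case where the monic part is an identity, so that the pulled-back epi part is precisely \(ε'\). Uniqueness of images is the standard orthogonality argument: regular epis are strong epis, and they are orthogonal to monos.

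As a consistency check, in \(\mathbf{Set}\) this argument reduces to the familiar identity \(ε(ε^{-1}(B)) = B\) for a surjection \(ε\).
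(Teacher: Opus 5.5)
Your proof is correct and is essentially the paper's own argument: pull \(κ\) back along \(ε\), use pullback-stability to see that the projection \(ε'\) is a regular epimorphism, and conclude from uniqueness of the regular-epi/mono factorization that \(\mathbf{Im}(ε^{*}(κ)⨾ε) = κ\). Your additional remarks, which derive pullback-stability from the paper's definition of regularity and justify uniqueness of images by orthogonality, only spell out steps the paper leaves implicit.
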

\begin{proof}
  To see this, take any subobject \(κ{:} B \hookrightarrow Y\) and form its pullback along \(ε\), yielding a subobject \(ι{:} A \hookrightarrow X\) and a pullback square where \(ε'{:} A \to B\) is a regular epimorphism. Since \(ε'⨾ κ = ι⨾ ε\), the image of \(ι⨾ ε\) is isomorphic to \(κ\). Thus, \(\mathbf{SOb}(ε)(ι) = κ\), proving surjectivity.
\end{proof}

\begin{Proposition}[Density is covariantly preserved by regular epimorphisms]
  \label{prop:density-regular-epi}
\index{density!preservation by regular epimorphisms}
  Let \(\mathcal{C}\) be a regular category with strict initial object, and \(ε{:}X\twoheadrightarrow Y\) a regular epimorphism. If \(ι{:}A\hookrightarrow X\) is dense in \(\mathbf{SOb}_{\mathcal{C}}(X)\), then its image \(\mathbf{Im}(ι⨾ε)\hookrightarrow Y\) is dense in \(\mathbf{SOb}_{\mathcal{C}}(Y)\).
\end{Proposition}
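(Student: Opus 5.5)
The plan is to verify density of \(\mathbf{Im}(ι⨾ε)\) directly, using the pullback functor \(ε^{*}\) to move test subobjects from \(Y\) back to \(X\). An alternative would be to apply \cref{coR_:density-preserved} to \(\mathbf{SOb}_{\mathcal{C}}(ε)\), which is surjective by \cref{lem:SOb-epi}. But that corollary also needs bottom-reflection of \(\mathbf{SOb}_{\mathcal{C}}(ε)\), and applied to \(b ∧ d\) this would amount to the claim that \(\mathbf{Im}(b⨾ε) ∧ \mathbf{Im}(d⨾ε) = \bot\) forces \(b ∧ d = \bot\). That claim is false in general for direct images, since two disjoint subobjects can have overlapping images. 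So I would not rely on it and would argue through pullbacks instead.

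Fix \(κ{:} B \hookrightarrow Y\) with \(κ ∧ \mathbf{Im}(ι⨾ε) = \bot_{Y}\); the goal is \(κ = \bot_{Y}\).

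\textbf{Step 1: pull \(κ\) back and show it misses \(ι\).} By the adjunction \(\mathbf{Im}(-⨾ε) \dashv ε^{*}\) recorded after the definition of regular categories, \(ι \leq ε^{*}(\mathbf{Im}(ι⨾ε))\). Since \(ε^{*}\) preserves meets (meets are limits), we get
\[ι ∧ ε^{*}(κ) \leq ε^{*}\bigl(\mathbf{Im}(ι⨾ε)\bigr) ∧ ε^{*}(κ) = ε^{*}\bigl(\mathbf{Im}(ι⨾ε) ∧ κ\bigr) = ε^{*}(\bot_{Y}).\]
Because the initial object is strict, \(ε^{*}(\bot_{Y})\) has domain admitting a map to \(∅\), so it is \(\bot_{X}\); this is the same computation made before \cref{crl:4}. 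Hence \(ι ∧ ε^{*}(κ) = \bot_{X}\).

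\textbf{Step 2: use density of \(ι\).} Density of \(ι\) now gives \(ε^{*}(κ) = \bot_{X}\).

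\textbf{Step 3: push back down along \(ε\).} Since \(ε\) is a regular epimorphism and regular epis are pullback-stable, the proof of \cref{lem:SOb-epi} shows \(\mathbf{Im}(ε^{*}(κ)⨾ε) = κ\); the pulled-back map \(ε^{*}(κ) \to B\) is a regular epi. Here \(ε^{*}(κ)\) has domain \(∅\), so \(κ = \mathbf{Im}(∅ \to Y)\). The image of a morphism out of \(∅\) is \(\bot_{Y}\), because \(∅ \to Y\) is monic when \(∅\) is strict. Concretely, we have a regular epi \(∅ \twoheadrightarrow B\), and we must conclude \(B \cong ∅\).

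I expect this last step to be the main obstacle. The plan is to note that \(∅ \twoheadrightarrow B\) is the coequalizer of its kernel pair \(∅ ×_{B} ∅ \cong ∅\). The two projections from this kernel pair to \(∅\) both equal \(\mathbf{id}_{∅}\), so their coequalizer is \(∅\) itself. Therefore \(B \cong ∅\) and \(κ = \bot_{Y}\), which proves that \(\mathbf{Im}(ι⨾ε)\) is dense.
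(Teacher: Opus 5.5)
Your proof is correct, but the reason you give for avoiding \cref{coR_:density-preserved} is mistaken. That corollary is exactly how the paper argues. It applies it to \(ε_{*} := \mathbf{Im}(-⨾ε)\), taking surjectivity from \cref{lem:SOb-epi} and bottom-reflection from strictness: if \(\mathbf{Im}(x⨾ε) = \bot_{Y}\), the domain of \(x\) maps to \(∅\) through the image factorization and is therefore initial. In the corollary, bottom-reflection is applied to \(b ∧ d\) only after the estimate \(ε_{*}(b ∧ d) \le ε_{*}(b) ∧ ε_{*}(d)\). So the implication it needs is \(\mathbf{Im}(b⨾ε) ∧ \mathbf{Im}(d⨾ε) = \bot_{Y} \Rightarrow b ∧ d = \bot_{X}\), and this is true. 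Disjoint subobjects with overlapping images only refute the converse implication, which is never used.

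In substance your argument is the paper's, transposed across the adjunction \(ε_{*} \dashv ε^{*}\). Both use a preimage of \(κ\) under \(ε_{*}\); yours is explicitly \(b = ε^{*}(κ)\), the one built in \cref{lem:SOb-epi}. Both then show \(b ∧ ι = \bot_{X}\), invoke density of \(ι\), and push forward. The paper obtains disjointness from \(ε_{*}(b ∧ ι) \le κ ∧ ε_{*}(ι) = \bot_{Y}\) plus bottom-reflection of \(ε_{*}\). You obtain it from \(b ∧ ι \le ε^{*}(κ) ∧ ε^{*}(ε_{*}(ι)) = ε^{*}(κ ∧ ε_{*}(ι)) = ε^{*}(\bot_{Y}) = \bot_{X}\). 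These inputs are literally equivalent: \(ε_{*}(x) \le \bot_{Y}\) iff \(x \le ε^{*}(\bot_{Y})\), so ``\(ε_{*}\) reflects \(\bot\)'' says exactly ``\(ε^{*}(\bot_{Y}) = \bot_{X}\)''.

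Your formulation yields a clean abstract statement. For any Galois connection with \(ε_{*}(ε^{*}(y)) = y\) for all \(y\) and \(ε^{*}(\bot) = \bot\), the left adjoint preserves density; meet-preservation of \(ε^{*}\) and \(ε_{*}(\bot) = \bot\) come for free from adjointness. The paper's criterion needs no adjoint at all, only a surjective monotone map preserving and reflecting \(\bot\), and it reuses a lemma already proved.

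Finally, the step you flag as the main obstacle is immediate. The pulled-back map \(∅ \to B\) is monic because its domain is strict initial, and a monic regular epimorphism is an isomorphism. Your kernel-pair argument also works.
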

\begin{proof}
  Let \(ι\in\mathbf{SOb}_{\mathcal{C}}(X)\) be dense. Then \(\mathbf{SOb}_{\mathcal{C}}(ε)(ι) = \mathbf{Im}(ι⨾ε)\) is dense in \(\mathbf{SOb}_{\mathcal{C}}(Y)\) by \cref{coR_:density-preserved}, specialized to \(f=\mathbf{SOb}_{*}(ε)\),
  using the regular-epi properties noted above and the fact that regular epimorphisms in a strict initial regular category reflect bottoms, as we saw at the section beginning.
\end{proof}
\begin{Remark}
  \label{rmα_:12}
  So density is stable under composition with regular epimorphisms and thus \(e^{*}\) is a density-equivalence and \(e_{*}\) preserves density. The only thing \(e\) does not do is induce a density-reflecting \(e_{*}\).
\end{Remark}

\begin{Corollary}
  \label{crl:8}
  Let \(e{:} X → Y\) be a regular epimorphism in a regular category. Then the pullback morphism \(e^{*}(\mathbf{SOb}_{\mathcal{C}}(Y)) → \mathbf{SOb}_{\mathcal{C}}(X)\) reflects satedness.
\end{Corollary}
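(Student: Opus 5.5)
The plan is to argue directly from the definition of satedness (\cref{defn_:7}), transporting a witnessing configuration from \(Y\) up to \(X\) along \(e^{*}\) and back down along the image map \(\mathbf{SOb}_{\mathcal{C}}(e)\) of \cref{lem:SOb-epi}. As throughout this subsection, I assume \(\mathcal{C}\) has a strict initial object, so that the subobject preorders are bounded and density and satedness make sense. Fix \(α ∈ \mathbf{SOb}_{\mathcal{C}}(Y)\) such that \(e^{*}(α)\) is sated in \(\mathbf{SOb}_{\mathcal{C}}(X)\). I have to show that for every dense \(δ\) and arbitrary \(γ\) in \(\mathbf{SOb}_{\mathcal{C}}(Y)\) with \(δ ∧ γ \leq α\), already \(γ \leq α\).

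\emph{Step 1 (transport up).} Pullback preserves meets, since meets of subobjects are limits and limits commute with limits. So from \(δ ∧ γ \leq α\) and monotonicity we obtain
\[e^{*}(δ) ∧ e^{*}(γ) = e^{*}(δ ∧ γ) \leq e^{*}(α).\]
By \cref{crl:4}, \(e^{*}\) is density-preserving, so \(e^{*}(δ)\) is dense in \(\mathbf{SOb}_{\mathcal{C}}(X)\). Satedness of \(e^{*}(α)\) then yields \(e^{*}(γ) \leq e^{*}(α)\).

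\emph{Step 2 (transport down).} It remains to see that \(e^{*}\) is order-reflecting. For this I would show that \(\mathbf{SOb}_{\mathcal{C}}(e)(e^{*}(κ)) = κ\) for every \(κ{:} B \hookrightarrow Y\), which is exactly the computation in the proof of \cref{lem:SOb-epi}. The pullback of \(κ\) along \(e\) gives a square \(ε'⨾ κ = e^{*}(κ)⨾ e\). By pullback-stability of regular epimorphisms in a regular category, \(ε'\) is a regular epi. Hence \(\mathbf{Im}(e^{*}(κ)⨾ e) = \mathbf{Im}(ε'⨾ κ) = κ\), by uniqueness of the regular-epi/mono factorization. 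Since \(\mathbf{SOb}_{\mathcal{C}}(e)\) is monotone (it is left adjoint to \(e^{*}\)), applying it to \(e^{*}(γ) \leq e^{*}(α)\) gives \(γ \leq α\). Thus \(α\) is sated.

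The main obstacle is only bookkeeping. One has to be sure that the density preservation from \cref{crl:4} applies to \(e^{*}\) in the required direction; it does, because it was proved for arbitrary \(f\). One also has to check that the retraction identity \(\mathbf{SOb}_{\mathcal{C}}(e)⨾ \text{ after } e^{*} = \mathbf{id}\) holds on the nose up to the univalent identification of isomorphic subobjects, which is guaranteed by the convention of \cref{sec:size-assumptions}.

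Alternatively, one could invoke the reflection part of \cref{rmk:dual-completeness-under-morphisms} with \(g = e^{*}\), \(h\) and \(τ\) the identities and \(k = \mathbf{id}\). There, density preservation comes from \cref{crl:4}, meet preservation holds as above, and order-reflection of \(k\) is trivial. I would present the direct argument, however, since that remark is stated as if \(g\) went from the \(P\) of the configuration to \(P'\), and matching directions is cleaner by hand.
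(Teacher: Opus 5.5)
Your argument is correct, and its skeleton is the paper's. The paper invokes the reflection part of \cref{rmk:dual-completeness-under-morphisms} with \(g = e^{*}\). That reduces the claim to exactly your three ingredients: \(e^{*}\) preserves binary meets, \(e^{*}\) preserves density by \cref{crl:4}, and \(e^{*}\) is order-reflecting. Your Step~1 is that remark unfolded by hand.

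The genuine difference is in how order-reflection is obtained.
\begin{itemize}
\item The paper proves it in \cref{lem:pb-reg-epi-reflect} by a direct diagram chase. Given \(h\) witnessing \(e^{*}(\iota) \leq e^{*}(\iota')\), it writes the pulled-back regular epimorphism as a coequalizer and cancels the mono \(\iota'\). The coequalizer property together with epicness then yields \(g\) with \(g ⨾ \iota' = \iota\).
\item You instead use the identity \(\mathbf{SOb}_{\mathcal{C}}(e)(e^{*}(\kappa)) = \kappa\), which is the computation already inside \cref{lem:SOb-epi}. It rests on pullback-stability of regular epimorphisms and uniqueness of the regular-epi/mono factorization. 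Since \(\mathbf{SOb}_{\mathcal{C}}(e)\) is monotone, \(e^{*}\) is a section of a monotone map and hence order-reflecting.
\end{itemize}
Your route is shorter, reuses an existing lemma, and exhibits the stronger fact that \(e^{*}\) is a split embedding in \(\mathbf{UBP}\) with retraction \(\mathbf{SOb}_{\mathcal{C}}(e)\). The paper's route avoids image factorizations; it needs only stability and epicness of regular epimorphisms, mono cancellation and the coequalizer property. Your explicit assumption of a strict initial object is appropriate; the paper's proof also needs it, through \cref{crl:4}.

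One correction to your closing aside on using the remark with \(k = \mathbf{id}\). If both sides are kept absolute (\(\sigma = \mathbf{id}\), \(\sigma' = \mathbf{id}\)), commutativity of the right square forces \(k = g = e^{*}\). So ``\(k\) is order-reflecting'' is not trivial there; it is precisely the content of \cref{lem:pb-reg-epi-reflect}. With \(k = \mathbf{id}\) you would only obtain satedness over the scale \(e^{*}\), and getting back to plain satedness again requires order-reflection of \(e^{*}\). Also, the directions in that remark already match: with \(g = e^{*}\colon \mathbf{SOb}_{\mathcal{C}}(Y) \to \mathbf{SOb}_{\mathcal{C}}(X)\) its conclusion is about elements of \(\mathbf{SOb}_{\mathcal{C}}(Y)\), as you want.
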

\begin{proof}
  \label{prf:10}
  By \cref{rmk:dual-completeness-under-morphisms} we have to show that \(e^{*}\) preserves meets, preserves density and is injective. Meet-preservation is clear because limits
  commute with each other. Density-preservation follows from \cref{crl:4}. Finally, order-reflection is proved below.
\end{proof}

This lemma may well have been stated somewhere else already in one form or another:

\begin{Lemma}
  \label{lem:pb-reg-epi-reflect}
\index{regular epimorphism!subobject reflection}
  Let \(\mathcal{C}\) be a regular category and \(e {:} X \twoheadrightarrow Y\) a regular epimorphism. Then the pullback functor
  \[
    e^{*} {:} \mathbf{SOb}_{\mathcal{C}}(Y) \longrightarrow \mathbf{SOb}_{\mathcal{C}}(X)
  \]
  is order-reflecting: for subobjects \(ι, ι'\) of \(Y\), if \(e^{*}(ι) \leq e^{*}(ι')\) in \(\mathbf{SOb}_{\mathcal{C}}(X)\), then \(ι \leq ι'\) in \(\mathbf{SOb}_{\mathcal{C}}(Y)\).
\end{Lemma}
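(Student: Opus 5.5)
The plan is to show that pullback along a regular epimorphism reflects the order by factoring through the direct-image map \(\mathbf{SOb}_{\mathcal{C}}(e)\colon ι \mapsto \mathbf{Im}(ι⨾ e)\) from \cref{lem:SOb-epi}. The key identity to establish first is that, for every subobject \(ι\colon A \hookrightarrow Y\),
\[
  \mathbf{Im}\bigl(e^{*}(ι)⨾ e\bigr) \;=\; ι
\]
in \(\mathbf{SOb}_{\mathcal{C}}(Y)\), i.e. the direct image retracts the pullback. This is exactly the computation already carried out in the proof of \cref{lem:SOb-epi}. Form the pullback square of \(ι\) along \(e\), with top edge \(e'\colon e^{*}A \to A\). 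By pullback-stability of regular epimorphisms in a regular category, \(e'\) is again a regular epimorphism. Hence \(e^{*}(ι)⨾ e = e'⨾ ι\) is a regular-epi/mono factorization, and uniqueness of image factorizations gives \(\mathbf{Im}(e^{*}(ι)⨾ e) \cong ι\).

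Given this, the reflection statement follows from monotonicity of the direct image. Suppose \(e^{*}(ι) \leq e^{*}(ι')\), witnessed by a morphism \(u\) with \(u⨾ e^{*}(ι') = e^{*}(ι)\). Then \(e^{*}(ι)⨾ e\) factors through \(e^{*}(ι')⨾ e\), and so through its image. By the universal (minimality) property of images,
\[
  \mathbf{Im}\bigl(e^{*}(ι)⨾ e\bigr) \leq \mathbf{Im}\bigl(e^{*}(ι')⨾ e\bigr).
\]
This monotonicity of \(\mathbf{SOb}_{\mathcal{C}}(e)\) was already noted before \cref{lem:SOb-epi}. Substituting the identity above on both sides yields \(ι \leq ι'\). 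Equivalently, the argument is the adjunction \(\exists_{e} \dashv e^{*}\) together with \(\exists_{e}e^{*} = \mathbf{id}\), which gives order-reflection of the right adjoint formally.

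The only step needing care is the identity \(\mathbf{Im}(e^{*}(ι)⨾ e) = ι\). It uses two facts: that regular epimorphisms are pullback-stable, which is part of the definition of a regular category as stated, and that image factorizations are unique up to unique isomorphism. Beyond checking these, I expect no real obstacle.
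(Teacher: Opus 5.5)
Your proof is correct, but it is organized differently from the paper's. The paper argues directly on the pulled-back map \(\iota^{*}(e)\colon P \to A\) (your \(e'\)). From the witness \(h\colon P \to P'\) it derives \(\iota^{*}(e)⨾\iota = (h⨾\iota'^{*}(e))⨾\iota'\). It then presents \(\iota^{*}(e)\) as the coequalizer of some pair \(r,s\) and cancels the mono \(\iota'\) to obtain \(r⨾h⨾\iota'^{*}(e) = s⨾h⨾\iota'^{*}(e)\). The coequalizer then induces \(g\colon A \to A'\), and cancelling the epi \(\iota^{*}(e)\) gives \(g⨾\iota' = \iota\). In other words, the paper proves by hand the diagonal fill-in of the regular epi \(\iota^{*}(e)\) against the mono \(\iota'\).

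You instead route everything through the direct image. You combine \(\exists_{e}e^{*} = \mathbf{id}\), which is the computation from the proof of \cref{lem:SOb-epi}, with monotonicity of \(\exists_{e}\). This is the general fact that a right adjoint whose counit is an identity is an order-embedding. Your route is shorter, reuses what the paper has already established, and makes the structural reason for reflection visible.

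The price is self-containedness. You invoke uniqueness of the regular-epi/mono factorization to get \(\mathbf{Im}(e^{*}(\iota)⨾e) = \iota\), and minimality of images to get monotonicity. Both rest on the orthogonality of regular epimorphisms to monomorphisms, which is exactly the coequalizer argument the paper writes out explicitly. The paper's proof needs nothing beyond the coequalizer property of \(\iota^{*}(e)\) and pullback-stability of regular epimorphisms.
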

\begin{proof}
  Let \(ι {:} A \hookrightarrow Y\) and \(ι' {:} A' \hookrightarrow Y\) be subobjects of \(Y\), and form the pullbacks of \(e\) along each:
  \[
    \begin{tikzcd}
      P \ar[r, "e^{*}(ι)"] \ar[d, "ι^{*}(e)"'] & X \ar[d, "e"] \\
      A \ar[r, "ι"'] & Y
    \end{tikzcd}
    \qquad
    \begin{tikzcd}
      P' \ar[r, "e^{*}(ι')"] \ar[d, "ι'^{*}(e)"'] & X \ar[d, "e"] \\
      A' \ar[r, "ι'"'] & Y
    \end{tikzcd}
  \]
  Since pullbacks of monomorphisms are monomorphisms, \(e^{*}(ι)\) and \(e^{*}(ι')\) are monic; and since regular epimorphisms are stable under pullback in a regular category,
  \(ι^{*}(e) {:} P \to A\) and \(ι'^{*}(e) {:} P' \to A'\) are regular epimorphisms. The pullback squares give
  \[
    ι^{*}(e) ⨾ ι \;=\; e^{*}(ι) ⨾ e, \qquad ι'^{*}(e) ⨾ ι' \;=\; e^{*}(ι') ⨾ e.
  \]

  Suppose \(e^{*}(ι) \leq e^{*}(ι')\), so there exists \(h {:} P \to P'\) with \(h ⨾ e^{*}(ι') = e^{*}(ι)\). Then:
  \[
    ι^{*}(e) ⨾ ι \;=\; e^{*}(ι) ⨾ e \;=\; (h ⨾ e^{*}(ι')) ⨾ e \;=\; h ⨾ (e^{*}(ι') ⨾ e) \;=\; h ⨾ (ι'^{*}(e) ⨾ ι') \;=\; (h ⨾ ι'^{*}(e)) ⨾ ι'. \tag{\(\star\)}
  \]

  Since \(ι^{*}(e)\) is a regular epimorphism, it is the coequalizer of some pair \(r, s {:} R \rightrightarrows P\). By definition, \(r ⨾ ι^{*}(e) = s ⨾ ι^{*}(e)\), hence
  \(r ⨾ ι^{*}(e) ⨾ ι = s ⨾ ι^{*}(e) ⨾ ι\). Applying \((\star)\) to both sides:
  \[
    r ⨾ (h ⨾ ι'^{*}(e)) ⨾ ι' \;=\; s ⨾ (h ⨾ ι'^{*}(e)) ⨾ ι'.
  \]
  Since \(ι'\) is monic, we may cancel it on the right to obtain \(r ⨾ (h ⨾ ι'^{*}(e)) = s ⨾ (h ⨾ ι'^{*}(e))\). By the universal property of the coequalizer, there exists a unique
  \(g {:} A \to A'\) with \(ι^{*}(e) ⨾ g = h ⨾ ι'^{*}(e)\). Composing both sides with \(ι'\) and using \((\star)\):
  \[
    ι^{*}(e) ⨾ (g ⨾ ι') \;=\; (h ⨾ ι'^{*}(e)) ⨾ ι' \;=\; ι^{*}(e) ⨾ ι.
  \]
  Since \(ι^{*}(e)\) is an epimorphism, we conclude \(g ⨾ ι' = ι\), which is precisely \(ι \leq ι'\).

  The equality-reflection statement follows by applying the argument in both directions.
\end{proof}

\paragraph{Monic Quotients}

There are two basic kinds of regular epimorphisms: those that glue together two different parts of the same objects and those that make some part of an object smaller. In a regular category we can say
that one of these kinds is unproblematic:

\begin{Definition}
  \label{dfn:9}
\index{regular quotient}
  A \emph{regular quotient} (or effective quotient) in a regular category \(\mathcal{C}\) is the coequalizer \(e{:} X \twoheadrightarrow Y\) of its kernel pair \((ι, κ){:} R \rightrightarrows X\).
\end{Definition}

\begin{Proposition}[Exactness of the kernel pair square]
  \label{prop:exact-square}
  Let \(\mathcal{C}\) be a regular category, and let
  \begin{figure}[H]
    \centering
      \begin{tikzcd}
    R \ar[r,"ι"] \ar[d,"κ"'] & X \ar[d,two heads,"e"] \\
    X \ar[r,two heads,"e"']          & Y
  \end{tikzcd}
    % \caption{\label{fi♭_:20}}
  \end{figure}
  be the commutative square formed by a regular epimorphism \(e\) and its kernel pair \((ι, κ)\). Then this square is both a pullback and a pushout.
\end{Proposition}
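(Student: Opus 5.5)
The plan is to verify the pullback and pushout properties separately: pullback by definition, pushout via the coequalizer property of \(e\).

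\emph{Pullback.} This needs no argument beyond unwinding definitions. The kernel pair \((ι, κ){:} R \rightrightarrows X\) of \(e\) is \emph{defined} as the pullback of \(e\) along itself, and the square commutes because \(ι⨾ e = κ⨾ e\). Regular categories have all finite limits, so the kernel pair exists.

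\emph{Pushout.} Fix a test object \(Z\) and morphisms \(u, v{:} X \rightrightarrows Z\) with \(ι⨾ u = κ⨾ v\). We must produce a unique \(w{:} Y \to Z\) with \(e⨾ w = u\) and \(e⨾ w = v\).

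The first step is to show \(u = v\). For this I would use the diagonal \(δ{:} X \to R\), induced by the pullback property from the pair \((\mathbf{id}_X, \mathbf{id}_X)\) since \(\mathbf{id}_X⨾ e = \mathbf{id}_X⨾ e\). It satisfies \(δ⨾ ι = \mathbf{id}_X = δ⨾ κ\). Precomposing the relation \(ι⨾ u = κ⨾ v\) with \(δ\) gives
\[
u = δ⨾ ι⨾ u = δ⨾ κ⨾ v = v.
\]
So a cocone on the span amounts to a single morphism \(u\) with \(ι⨾ u = κ⨾ u\).

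The second step is to factor \(u\) through \(e\). In a regular category every regular epimorphism is the coequalizer of its kernel pair. This is the standard fact, and it is also what \cref{dfn:9} records as ``regular quotient''. I would cite it, or justify it briefly: \(e\) coequalizes some pair \(r, s{:} Q \rightrightarrows X\); that pair factors through the kernel pair; hence anything that coequalizes \((ι, κ)\) also coequalizes \((r, s)\). It follows that \(u\) factors uniquely as \(e⨾ w\). Uniqueness of \(w\) as a map out of the span follows because \(e\) is epic.

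I expect no real obstacle here. The only point needing care is the identification of \(e\) with the coequalizer of its own kernel pair. I would handle it exactly by the factorization argument above, which uses only that \(e\) coequalizes some pair and that \(R\) is a pullback. Note also that the symmetry of the span is used essentially, through the diagonal \(δ\), to collapse the two legs of a cocone into one.
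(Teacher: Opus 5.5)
Your proof is correct and follows the same route as the paper. The pullback property holds by the definition of the kernel pair, and the pushout property follows from \(e\) being the coequalizer of its kernel pair. Your diagonal argument, which forces the two legs of any cocone to coincide, supplies a step that the paper's one-line proof leaves implicit; so does your derivation of the coequalizer property from an arbitrary pair that \(e\) coequalizes. The diagonal step is genuinely needed: without a common section, a coequalizer of a parallel pair need not be a pushout of the corresponding span. For example, with \(R\) initial, the coequalizer of the two maps into \(X\) is \(X\) itself, while the pushout of the span is \(X + X\).
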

\begin{proof}
  This is a standard fact in regular categories: every regular epimorphism is the coequalizer of its kernel pair. By definition, a kernel pair is a pullback of \(e\) along itself, so the square is a pullback. Because \(e\) is the coequalizer of \(ι\) and \(κ\), the square is also a pushout.
\end{proof}

\begin{Proposition}[Subobjects of a regular quotient]
  \label{prop:subobject-quotient}
  If \(\mathcal{C}\) is a regular category, then the subobject functor \(\mathbf{SOb}_{\mathcal{C}}\) preserves regular quotients. Specifically, in the situation of \cref{prop:exact-square}, the induced diagram
  \[
    \mathbf{SOb}_{\mathcal{C}}(R) \;\;\xrightrightarrows[\kappa_{*}]{ι_{*}}\;\; \mathbf{SOb}_{\mathcal{C}}(X) \;\xtwoheadrightarrow{e_{*}}\; \mathbf{SOb}_{\mathcal{C}}(Y)
  \]
  is a coequalizer diagram in \(\mathbf{UBP}\).
\end{Proposition}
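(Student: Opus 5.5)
The plan is to verify the two halves of the coequalizer property in \(\mathbf{UBP}\) directly: first that \(e_{*}\) coequalizes \(ι_{*}\) and \(κ_{*}\), then that any monotone map coequalizing them factors uniquely through \(e_{*}\). Throughout, \(ι_{*}\), \(κ_{*}\), \(e_{*}\) denote the image maps \(λ \mapsto \mathbf{Im}(λ⨾ -)\), since \(ι,κ\) need not be monic. For the first half I use functoriality of images in a regular category: \(\mathbf{Im}(\mathbf{Im}(λ⨾ ι)⨾ e) = \mathbf{Im}(λ⨾ ι⨾ e)\), because regular epis compose and are pullback-stable. Together with \(ι⨾ e = κ⨾ e\) from \cref{prop:exact-square}, this gives \(ι_{*}⨾ e_{*} = κ_{*}⨾ e_{*}\).

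For the universal property, let \(f{:} \mathbf{SOb}_{\mathcal{C}}(X) → Q\) be monotone with \(ι_{*}⨾ f = κ_{*}⨾ f\). I define \(g{:} \mathbf{SOb}_{\mathcal{C}}(Y) → Q\) by \(g(β) := f(e^{*}(β))\), which is monotone as a composite of monotone maps. Uniqueness of any factorization is immediate from \cref{lem:SOb-epi}: \(e_{*}\) is surjective on elements, so any two factorizations agree on all of \(\mathbf{SOb}_{\mathcal{C}}(Y)\). It therefore remains to show \(e_{*}⨾ g = f\), that is, \(f(e^{*}(e_{*}(a))) = f(a)\) for every subobject \(a\) of \(X\).

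The key step, and the one I expect to be the main obstacle, is a Beck–Chevalley identity for the kernel pair square:
\[e^{*}(e_{*}(a)) = κ_{*}(ι^{*}(a)).\]
To prove it, I pull back the regular-epi/mono factorization of \(a⨾ e\) along \(e\), which is legitimate because that factorization is pullback-stable in a regular category. The pullback of \(a⨾ e\) along \(e\) is the composite \(ι^{*}(a) ↪ R \xrightarrow{κ} X\), computed via the pullback square of \cref{prop:exact-square}. Its image is therefore both \(e^{*}\mathbf{Im}(a⨾ e)\) and \(κ_{*}ι^{*}(a)\). The care needed is in identifying these pullbacks correctly in the iterated-pullback diagram.

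Granting this identity, the coequalizing hypothesis gives
\[f(e^{*}e_{*}a) = f(κ_{*}ι^{*}a) = f(ι_{*}ι^{*}a).\]
It remains to see that \(ι_{*}ι^{*}a = a\). We have \(ι_{*}ι^{*}a \le a\) since \(ι_{*}\) is left adjoint to \(ι^{*}\). Conversely, the diagonal \(δ{:} X → R\) induced by \((\mathbf{id}_{X},\mathbf{id}_{X})\) satisfies \(δ⨾ ι = δ⨾ κ = \mathbf{id}_{X}\). Hence \(δ_{*}a \le ι^{*}a\), and so \(a = ι_{*}δ_{*}a \le ι_{*}ι^{*}a\). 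Thus \(f(e^{*}e_{*}a) = f(a)\), which gives the factorization \(e_{*}⨾ g = f\) and completes the coequalizer property.
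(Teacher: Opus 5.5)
Your proof is correct, but it is organized differently from the paper's.

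The paper defines the factorization \(k\) by picking an arbitrary preimage under \(e_{*}\) (via \cref{lem:SOb-epi}). It then proves two things separately:
\begin{itemize}
\item \emph{well-definedness:} for two preimages \(x, x'\) of the same \(y\), it forms \(A \times_{B} A'\), maps it to \(R\) and takes the image \(\bar r\), so that \(\iota_{*}\bar r = x\) and \(\kappa_{*}\bar r = x'\);
\item \emph{monotonicity:} it passes to the canonical preimages \(e^{*}(y)\) and uses that \(e^{*}(y) \to y\) is a regular epimorphism.
\end{itemize}

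You instead define the factorization canonically as \(g(\beta) = f(e^{*}\beta)\), so monotonicity is free. The only remaining check is \(f(e^{*}e_{*}a) = f(a)\). For this you use the Beck--Chevalley identity \(e^{*}e_{*} = \kappa_{*}\iota^{*}\) for the kernel-pair square, which is exactly pullback-stability of image factorizations. You combine it with \(\iota_{*}\iota^{*} = \mathbf{id}\), which follows from the diagonal section of \(\iota\).

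Both arguments use the same mechanism: exhibit a subobject of \(R\) whose images under \(\iota_{*}\) and \(\kappa_{*}\) are the two elements to be identified. You take the canonical witness \(\iota^{*}(a)\), which relates \(a\) to its saturation \(e^{*}e_{*}a\). The paper builds an ad hoc witness for an arbitrary pair of preimages.

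What each buys:
\begin{itemize}
\item Your version is shorter and more doctrinal. It is the standard descent-style argument and makes the factorization explicit as \(e^{*}\) followed by \(f\).
\item Your remark that \(\iota_{*}, \kappa_{*}\) must be read as image maps, and your justification of \(\iota_{*}e_{*} = \kappa_{*}e_{*}\) via composition of regular epimorphisms, make explicit what the paper uses silently.
\item The paper's version gives the slightly more concrete fact that any two preimages of the same subobject are related in one step through \(R\). It still ends up needing the canonical preimages \(e^{*}(y)\) for monotonicity.
\end{itemize}
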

\begin{proof}
  Since \(ι ⨾ e = κ ⨾ e\), we have \(ι_{*} ⨾ e_{*} = κ_{*} ⨾ e_{*}\).

  Suppose \(h {:} \mathbf{SOb}_{\mathcal{C}}(X) \to P\) is a monotone map satisfying \(ι_{*} ⨾ h = κ_{*} ⨾ h\). By \cref{lem:SOb-epi}, \(e_{*}\) is surjective on elements, so every \(y \in \mathbf{SOb}_{\mathcal{C}}(Y)\) admits some preimage \(x\) with \(e_{*}(x) = y\). We set \(k(y) := h(x)\) and must verify independence of the chosen preimage.

  \emph{Well-definedness.} Suppose \(x {:} A \hookrightarrow X\) and \(x' {:} A' \hookrightarrow X\) satisfy \(e_{*}(x) = e_{*}(x') = y {:} B \hookrightarrow Y\). Factor \(x ⨾ e\) and \(x' ⨾ e\) through their common image \(y\):
  \[
    A \xrightarrow{\bar{x}} B \xrightarrow{y} Y, \qquad A' \xrightarrow{\bar{x}'} B \xrightarrow{y} Y,
  \]
  where \(\bar{x}\) and \(\bar{x}'\) are regular epimorphisms. Form the pullback \(P = A \times_{B} A'\) with projections \(p_{1} {:} P \to A\) and \(p_{2} {:} P \to A'\); by stability of regular epimorphisms under pullback in a regular category, both \(p_{1}\) and \(p_{2}\) are regular epimorphisms. The two composites \(p_{1} ⨾ x\) and \(p_{2} ⨾ x'\) from \(P\) to \(X\) become equal after postcomposition with \(e\), since both factor as \(P \to B \xrightarrow{y} Y\). Hence they induce a map \(u {:} P \to X \times_{Y} X = R\). Let \(\bar{r} {:} \bar{P} \hookrightarrow R\) be the monomorphism in the image factorization of \(u\). Then
  \[
    ι_{*}(\bar{r}) \;=\; \mathbf{Im}(\bar{P} \hookrightarrow R \xrightarrow{ι} X) \;=\; \mathbf{Im}(P \xrightarrow{p_{1} ⨾ x} X) \;=\; x,
  \]
  where the last equality holds because \(P \xrightarrow{p_{1}} A \xrightarrow{x} X\) is already a regular-epi/mono factorization: \(p_{1}\) is a regular epimorphism and \(x\) is monic. By the same token \(κ_{*}(\bar{r}) = x'\). The hypothesis \(ι_{*} ⨾ h = κ_{*} ⨾ h\) now gives \(h(x) = h(x')\).

  \emph{Monotonicity.} Let \(y \leq y'\) in \(\mathbf{SOb}_{\mathcal{C}}(Y)\), and choose preimages \(x, x'\) with \(e_{*}(x) = y\) and \(e_{*}(x') = y'\). Set \(z = e^{*}(y)\) and \(z' = e^{*}(y')\), the pullbacks along \(e\). Since \(e\) is a regular epimorphism, the map \(e^{*}(y) \to y\) is a regular epimorphism (stability under pullback), so \(e_{*}(z) = y\) and similarly \(e_{*}(z') = y'\). By well-definedness, \(h(x) = h(z)\) and \(h(x') = h(z')\). Now \(y \leq y'\) implies \(z = e^{*}(y) \leq e^{*}(y') = z'\), so monotonicity of \(h\) yields \(h(x) = h(z) \leq h(z') = h(x')\).

  By construction \(h = e_{*} ⨾ k\), and uniqueness of \(k\) follows from the surjectivity of \(e_{*}\) (\cref{lem:SOb-epi}). Thus \(\mathbf{SOb}_{\mathcal{C}}(Y)\) satisfies the universal property of the coequalizer in \(\mathbf{UBP}\).
\end{proof}

So at this point we have a fairly good picture of how density behaves in a decently well-behaved category: reflected by all morphisms, preserved by regular epimorphisms and given
componentwise in (disjoint, monic) coproducts. We have far fewer results for smallness — that is both its weakness and strength. But it is also to be expected because joins in
subobject preorders are more category-theoretically involved than meets: meets align completely with categorical pullbacks, while joins in subobject preorders do not correspond to
\emph{any} universal constructs in the ambient category \(\mathcal{C}\), they have to be added by definition.

\subsubsection{Extension to Coherent Categories}
\label{sec:extens-coher-categ}

Finally let us take a look at coherent categories. Remember that a coherent category is one in which we have well-behaved unions of subobjects:

\begin{Definition}[Coherent category]
\index{coherent category}
  A category \(\mathcal{C}\) is \emph{coherent} if:

  \begin{enumerate}
  \item \(\mathcal{C}\) is regular;

  \item for every object \(X\), the subobject poset \(\mathbf{SOb}_{\mathcal{C}}(X)\) has finite joins and for every arrow \(f {:} Y → X\), the pullback map
    \[
      f^{*} {:} \mathbf{SOb}_{\mathcal{C}}(X) → \mathbf{SOb}_{\mathcal{C}}(Y)
    \]
    preserves finite joins.
  \end{enumerate}
\end{Definition}
\begin{Remark}
  \label{rmα_:30}
  Since \(f^{*}\) commutes with other pullbacks, it also preserves finite meets, which exist since \(\mathcal{C}\) is regular, so that \(f^{*}\) is a lattice-morphism.
\end{Remark}

So in a coherent category \(\mathcal{C}\), each subobject preorder has per definition all joins and they are preserved by pullbacks. So how does this change the behavior of
smallness under morphisms? Well, for one, it fulfills the preconditions of \cref{lmm:4}, thus:

\begin{Corollary}
  \label{coR_:11}
\index{small element!preservation by monomorphisms}
  Smallness is covariantly preserved and reducedness reflected under monomorphisms in a coherent category.
\end{Corollary}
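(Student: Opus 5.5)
The plan is to reduce both halves of \cref{coR_:11} to order-theoretic facts already established, applied to the covariant ideal embeddings \(ι_{*}{:} \mathbf{SOb}_{\mathcal{C}}(A) \hookrightarrow \mathbf{SOb}_{\mathcal{C}}(X)\) induced by monomorphisms \(ι{:} A \hookrightarrow X\). The first step is to check that every subobject preorder in a coherent category \(\mathcal{C}\) is a modular lattice; the second is to feed this into \cref{crl:2}/\cref{lmm:4} for preservation of smallness; the third is to feed the resulting smallness-preservation into the reflection half of \cref{prop:reducedness-under-morphisms} for reflection of reducedness.

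\textbf{Step 1 (modularity).} Coherence gives finite joins, regularity gives finite meets, and pullback maps \(f^{*}\) preserve finite joins. Applying this to a monomorphism \(m{:} B \hookrightarrow X\) shows that \(m^{*}(a ∨ c) = m^{*}(a) ∨ m^{*}(c)\). Composing with \(m_{*}\), which sends a subobject of \(B\) to the corresponding subobject of \(X\) below \(m\), yields
\[(a ∨ c) ∧ b = (a ∧ b) ∨ (c ∧ b).\]
So \(\mathbf{SOb}_{\mathcal{C}}(X)\) is distributive, and hence modular. The only delicate point is that \(m_{*}\) preserves the relevant join. This holds because \(m_{*}\) preserves and reflects joins in its image, as noted at the start of \cref{sec:density}, where ideal embeddings were shown to preserve joins within the ideal.

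\textbf{Step 2 (preservation of smallness).} Given Step 1, \cref{crl:2} applies directly: each \(ι_{*}\) is the principal ideal embedding of \(\top_{ι}\), and \cref{lmm:4} shows that it preserves smallness.

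\textbf{Step 3 (reflection of reducedness).} Take the trivial tablets and scale, so \(h = g = ι_{*}\) and \(k = \mathbf{id}\). Then condition \cref{item:pres-small} of \cref{prop:reducedness-under-morphisms} is exactly Step 2. Condition \cref{item:79} holds because \(ι_{*}\) preserves binary joins: it is an ideal embedding, so the join in \(\mathbf{SOb}(A)\) maps to the join in \(\mathbf{SOb}(X)\). Condition \cref{item:pres-inj} holds trivially for \(k = \mathbf{id}\). Hence, if \(ι_{*}(a)\) is reduced in \(\mathbf{SOb}_{\mathcal{C}}(X)\), then \(a\) is reduced in \(\mathbf{SOb}_{\mathcal{C}}(A)\).

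The main obstacle I expect is a check inside Step 3, not the modularity argument. The proof of \cref{prop:reducedness-under-morphisms} uses preservation of smallness for the element \(b\), and this is exactly what Step 2 supplies. One further point needs attention: the join \(g(b) ∨ g(τ(c))\) must be computed in \(\mathbf{SOb}_{\mathcal{C}}(X)\) rather than only inside the ideal. In a coherent category the two agree, because the join of two subobjects lying below \(ι\) is again below \(ι\), so it is the same join whether computed in the ideal or in the whole preorder. I would state this explicitly and then invoke the proposition verbatim.
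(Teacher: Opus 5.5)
Your proposal is correct and follows the paper's own route. Coherence makes each \(\mathbf{SOb}_{\mathcal{C}}(X)\) distributive, hence modular, so \cref{lmm:4} via \cref{crl:2} gives preservation of smallness, and the reflection half of \cref{prop:reducedness-under-morphisms} then gives reflection of reducedness; one slip is that with identity tablets and scales the right-hand square only commutes if \(k = \iota_{*}\), not \(k = \mathbf{id}\) (which does not type-check), and the required order-reflection of \(k\) then holds because \(\iota_{*}\) is an order-embedding.
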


But preservation of joins under pullbacks also means that for each morphism \(f\) the induced contravariant subobject preorder morphism \(f^{*}\) fulfills \cref{item:67}. Moreover:

\begin{Lemma}
  \label{lmm:1}
  Let \(\mathcal{C}\) be a regular category and \(f {:} X → Y\) a morphism. Then the following are equivalent:
  \begin{enumerate}
\index{coherent functor}
  \item \(f\) is a regular epimorphism;
  \item the pullback functor on subobjects
    \[
      f^* {:} \mathbf{SOb}_{\mathcal{C}}(Y) → \mathbf{SOb}_{\mathcal{C}}(X)
    \]
    \emph{reflects tops}, i.e. if \(ι {:} A \hookrightarrow Y\) satisfies \(f^*(ι) = \top_X\) in \(\mathbf{SOb}_{\mathcal{C}}(X)\), then \(ι = \top_Y\).
  \end{enumerate}
\end{Lemma}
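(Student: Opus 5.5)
The plan is to prove both directions using the regular-epi/mono factorization and the pullback-stability of images in a regular category.

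\emph{(1)\(\Rightarrow\)(2).} Let \(f\) be a regular epimorphism and let \(ι{:} A \hookrightarrow Y\) satisfy \(f^{*}(ι) = \top_{X}\).
\begin{itemize}
\item The pullback \(P \to X\) of \(ι\) along \(f\) is then an isomorphism.
\item Consider the pullback square with top edge \(P \to A\) and left edge the pullback \(ι^{*}(f){:} P \to A\). Regular epimorphisms are stable under pullback, so \(ι^{*}(f)\) is a regular epimorphism.
\item The square gives \(f = (P\cong X) ⨾ f = ι^{*}(f) ⨾ ι\) up to that isomorphism. So \(f\) factors through the mono \(ι\).
\item A regular epi is in particular a strong (extremal) epi: if it factors through a mono, that mono is an isomorphism. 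To see this directly, \(f\) is the coequalizer of its kernel pair \(r,s\). Since \(ι\) is monic, \(r⨾ι^{*}(f) = s⨾ι^{*}(f)\), so \(ι^{*}(f)\) factors through \(f\) via some \(g{:} Y \to A\). Then \(f ⨾ g ⨾ ι = f\), and \(f\) epic gives \(g⨾ι = \mathbf{id}_{Y}\). Hence \(ι\) is split epic and monic, so an isomorphism, and \(ι = \top_{Y}\).
\end{itemize}

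\emph{(2)\(\Rightarrow\)(1).} Factor \(f = e_{f} ⨾ \mathbf{Im}(f)\) with \(e_{f}\) a regular epimorphism and \(\mathbf{Im}(f){:} I_{f} \hookrightarrow Y\) a mono.
\begin{itemize}
\item The pullback \(f^{*}(\mathbf{Im}(f))\) is \(\top_{X}\), because \(f\) factors through \(\mathbf{Im}(f)\): the universal property of the pullback gives a section of the pulled-back mono, so that mono is an isomorphism.
\item By top-reflection, \(\mathbf{Im}(f) = \top_{Y}\), so \(\mathbf{Im}(f)\) is an isomorphism.
\item Hence \(f = e_{f} ⨾ \mathbf{Im}(f)\) is a regular epimorphism composed with an isomorphism, and is therefore itself a regular epimorphism.
\end{itemize}

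The only delicate step is the extremality argument in the first direction, namely that a regular epi factoring through a mono forces the mono to be invertible. I handle it with the kernel-pair coequalizer argument spelled out above, the same pattern as in \cref{lem:pb-reg-epi-reflect}.

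Alternatively, the first direction follows at once from \cref{lem:pb-reg-epi-reflect}: \(f^{*}(ι) = \top_{X} = f^{*}(\top_{Y})\) together with order-reflection of \(f^{*}\) gives \(\top_{Y} \leq ι\).
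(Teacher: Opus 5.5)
Your proof is correct and follows the paper's argument. For (1)\(\Rightarrow\)(2) you factor \(f\) through \(\iota\) and show \(\iota\) is split epic; your kernel-pair computation simply unpacks the regular-epi/mono orthogonality the paper cites. For (2)\(\Rightarrow\)(1) you pull back the image mono of \(f\) exactly as the paper does. The only blemishes are cosmetic: the top edge of your pullback square should be \(P \to X\), your equation with \(r, s\) only typechecks after transporting along \(P \cong X\), and the pullback-stability of \(\iota^{*}(f)\) is never actually used.
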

\begin{proof}
  \(\Rightarrow\): If \(f^*(ι) = \top_X\), then \(f\) factors through the subobject \(ι {:} A \hookrightarrow Y\). Since \(f\) is a regular epimorphism and \(ι\) is a monomorphism, the orthogonality property of the (regular epi, mono) factorization system supplies a morphism \(d{:}Y\to A\) with \(d⨾ι=\mathbf{id}_Y\), so \(ι\) is a split epimorphism and hence an isomorphism, so \(ι = \top_Y\).

  \(\Leftarrow\): Factor \(f\) into a regular epimorphism followed by a monomorphism, \(f = e⨾ m\). By the universal property of pullbacks, pulling back the subobject \(m\) along \(f\) yields \(f^*(m) = \top_X\). Because \(f^*\) reflects tops by hypothesis, we must have \(m = \top_Y\), meaning \(m\) is an isomorphism. Thus, \(f\) is a regular epimorphism.
\end{proof}

Thus, a regular epimorphism in a coherent category fulfills all requirements of \cref{item:global-reflect} and thus

\begin{Corollary}
  \label{crl:7}
\index{small element!reflection by pullbacks}
  For each regular epimorphism \(e{:} X → Y\) in a coherent category \(\mathcal{C}\) the pullback morphism \(e^{*}{:} \mathbf{SOb}_{\mathcal{C}}(Y) → \mathbf{SOb}_{\mathcal{C}}(X)\) is smallness-reflecting.
\end{Corollary}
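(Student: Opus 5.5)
The plan is to instantiate \cref{prop:global-rel-small} \cref{item:global-reflect} with trivial tablets and scales. We take the diagram of \cref{fi♭_:9} with \(T = P = \mathbf{SOb}_{\mathcal{C}}(Y)\), \(T' = P' = \mathbf{SOb}_{\mathcal{C}}(X)\), \(τ, τ', σ, σ'\) identities, \(S = P\), \(S' = P'\), and \(h = g = k = e^{*}\). We also need the right square \(g⨾ σ' = σ⨾ k\) to commute, which it does trivially. With these choices, \((τ\mid σ)\)-smallness is ordinary smallness in \(\mathbf{SOb}_{\mathcal{C}}(Y)\), and similarly for \(X\). We then check the three hypotheses in turn.

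\textbf{Hypothesis \cref{item:67}: \((h,g,k)\) preserves \(τ\)-top-joins.} Here we need \(e^{*}(a ∨ b) = e^{*}(a) ∨ e^{*}(b)\) whenever \(a ∨ b = \top_{Y}\). This holds because \(\mathcal{C}\) is coherent, so pullback preserves finite joins. Moreover \(e^{*}(\top_{Y}) = \top_{X} = \top_{g}\), since pullback preserves identities.

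\textbf{Hypothesis \cref{item:69}: \(\top_{g}\) is reduced in \(\mathbf{SOb}_{\mathcal{C}}(X)\).} Since \(\top_{g} = \top_{X}\), we must show that \(\top_{X}\) is reduced in its own preorder. This is exactly the observation after \cref{defn_:7}. If \(\top \leq b ∨ c\) with \(b\) small, then \(b ∨ c = \top\), and smallness gives \(c = \top\), so \(\top \leq c\). This step is therefore automatic.

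\textbf{Hypothesis \cref{item:70}: \(k = e^{*}\) reflects \(\top_{σ⨾ k} = e^{*}(\top_{Y}) = \top_{X}\).} This is precisely \cref{lmm:1}: the map \(e\) is a regular epimorphism, so \(e^{*}\) reflects tops.

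With all three hypotheses verified, \cref{prop:global-rel-small} yields that \(e^{*}\) reflects smallness. We expect no real obstacle. The one point to double-check is inside \cref{prop:rel-reflect-small}: the step from \((τ⨾σ⨾k)(b) = \top\) to \((τ⨾σ)(b)=\top\) uses top-reflection applied to \(e^{*}(b) = \top_{X}\), and this is exactly what \cref{lmm:1} supplies. It is also worth noting, to guard against a misreading of the direction, that \(\top_{g}\) here is the full top, so no nontrivial reducedness is needed.

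As a sanity fallback, the direct argument is short. Suppose \(e^{*}(a)\) is small and \(a ∨ b = \top_{Y}\). Then \(e^{*}(a) ∨ e^{*}(b) = \top_{X}\), so \(e^{*}(b) = \top_{X}\), and \cref{lmm:1} gives \(b = \top_{Y}\).
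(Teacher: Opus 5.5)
Your proposal is correct and follows the paper's own argument. The paper likewise obtains the corollary by instantiating \cref{prop:global-rel-small}\,(\cref{item:global-reflect}): coherence gives preservation of top-joins, \cref{lmm:1} gives top-reflection of \(e^{*}\), and reducedness of \(\top_{g} = \top_{X}\) is automatic. Your short direct fallback argument is also valid and is simply this instantiation written out.
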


Note that some care has to be taken in parsing this statement: taking the pullback reverses direction, then the reflection reverses it again. So the statement says that if the preimage of
\(ι{:} A \hookrightarrow Y\) is small, then \(ι\) is small, so taking the preimages along monomorphisms is size-increasing, which makes sense since the universal property of the pullback makes everything as large as possible.

\paragraph{Coherent functors}

We also have a nice result about many coherent functors:

\begin{Definition}[Coherent functor]
  Let \(\mathcal{C},\mathcal{D}\) be coherent categories. A functor \(F{:}\mathcal{C}→\mathcal{D}\) is \emph{coherent} if:

  \begin{enumerate}
  \item It preserves finite limits.
  \item for every object \(X{:}\mathcal{C}\), the induced map on subobjects
    \[
      \mathbf{SOb}_{\mathcal{C}}(X) \xrightarrow{\,\mathbf{SOb}(F)_X\,} \mathbf{SOb}_{\mathcal{D}}(F(X)),\quad ι \mapsto F(ι)
    \]
    preserves finite joins (including the initial subobject).
  \end{enumerate}

  A functor \(F{:} \mathcal{C} → \mathcal{D}\) \emph{preserves pure monics} if for each non-isomorphic monomorphism \(ι{:} A \hookrightarrow X\) in \(\mathcal{C}\) the image \(F(ι)\) is a non-isomorphic monomorphism. \(F\) is \emph{purely
    monomorphism-preserving} if it preserves monomorphisms and pure monics.
\end{Definition}

\begin{Lemma}
  \label{lem:pure-monic-fiber}
  Let \(F{:}\mathcal{C}→\mathcal{D}\) be a functor that
  \begin{enumerate}
  \item preserves monomorphisms;
  \item preserves pure monics.
  \end{enumerate}
  Fix \(X{:}\mathcal{C}\). Then the fiber
  \[
    (\mathbf{SOb}(F)_X)^{-1}(\top_{F(X)}) \;\subseteq\; \mathbf{SOb}_{\mathcal{C}}(X)
  \]
  consists only of \(\top_X\); i.e. the only subobject \(ι{:}A\hookrightarrow X\) with \(F(ι) = \top_{F(X)}\) is \(ι = \top_X = \mathbf{id}_X\).
\end{Lemma}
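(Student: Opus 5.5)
The plan is a direct contrapositive argument using only the definition of pure-monic preservation, followed by a brief check that the top of \(\mathbf{SOb}_{\mathcal{C}}(X)\) does lie in the fiber.

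First, \(\top_X = \mathbf{id}_X\) lies in the fiber. Functors preserve identities, so \(F(\mathbf{id}_X) = \mathbf{id}_{F(X)} = \top_{F(X)}\). The fiber is therefore inhabited by \(\top_X\). We work up to the equivalence defining subobjects throughout: any \(ι\) isomorphic in \(\mathcal{C}_{/X}\) to \(\mathbf{id}_X\) maps under \(F\) to something isomorphic to \(\mathbf{id}_{F(X)}\), since \(F\) preserves isomorphisms. Hence the map \(\mathbf{SOb}(F)_X\) is well defined on equivalence classes, which we need anyway to speak of its fiber.

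For the converse, let \(ι{:} A \hookrightarrow X\) be a monomorphism with \(F(ι) = \top_{F(X)}\) in \(\mathbf{SOb}_{\mathcal{D}}(F(X))\). This equation means that \(F(ι)\) is isomorphic over \(F(X)\) to \(\mathbf{id}_{F(X)}\), i.e. \(F(ι)\) is an isomorphism. Suppose towards a contradiction that \(ι \neq \top_X\), i.e. \(ι\) is not an isomorphism. Then \(ι\) is a non-isomorphic monomorphism. Since \(F\) preserves pure monics, \(F(ι)\) is a non-isomorphic monomorphism, contradicting the previous sentence. Hence \(ι\) is an isomorphism and so equals \(\top_X\) in \(\mathbf{SOb}_{\mathcal{C}}(X)\).

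The only step needing care is the translation between the order-theoretic statement \(F(ι) = \top_{F(X)}\) and the categorical statement that \(F(ι)\) is an isomorphism. I will justify it explicitly. A monomorphism \(m{:} B \hookrightarrow Z\) equals \(\top_Z\) in the subobject preorder iff \(\mathbf{id}_Z \leq m\). That holds iff there is \(s{:} Z \to B\) with \(s ⨾ m = \mathbf{id}_Z\), which makes \(m\) a split epimorphism. A split epimorphism that is monic is an isomorphism. The hypothesis that \(F\) preserves monomorphisms is used only to ensure that \(F(ι)\) is an element of \(\mathbf{SOb}_{\mathcal{D}}(F(X))\) at all. With this settled, the lemma is immediate, and no step should pose a genuine obstacle.
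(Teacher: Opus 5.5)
Your proposal is correct and follows the paper's proof. Both arguments read \(F(\iota)=\top_{F(X)}\) as saying that \(F(\iota)\) is an isomorphism, then apply the contrapositive of pure-monic preservation to conclude that \(\iota\) is an isomorphism, i.e.\ \(\iota=\top_X\). Your explicit check that a monic equals \(\top\) in the subobject preorder exactly when it is a split epimorphism, hence an isomorphism, fills in a step the paper leaves implicit.
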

\begin{proof}
  Let \(ι{:}A\hookrightarrow X\) be a subobject such that \(F(ι) = \top_{F(X)}\). This means that \(F(ι){:}F(A)\hookrightarrow F(X)\) is an isomorphism. But since \(F\) preserves pure monics, \(ι\) must itself be an
  isomorphism in \(\mathcal{C}\) and thus \(ι=\top_X\) in \(\mathbf{SOb}_{\mathcal{C}}(X)\) (up to univalence).
\end{proof}

\begin{Proposition}
  \label{■pn_:3}
\index{small element!reflection by coherent functors}
  Let \(F{:} \mathcal{C} → \mathcal{D}\) be a coherent functor that is purely monomorphism-preserving. Then for each \(X{:} \mathcal{C}\) the induced morphism \(F_{*, X}\) is smallness-reflecting.
\end{Proposition}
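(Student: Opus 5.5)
We argue directly, following the pattern of \cref{■pn_:2} for morphisms whose image top is the whole codomain. Set \(f := F_{*,X}{:} \mathbf{SOb}_{\mathcal{C}}(X) → \mathbf{SOb}_{\mathcal{D}}(F(X))\). Since \(F\) preserves finite limits, \(F(\mathbf{id}_{X}) = \mathbf{id}_{F(X)}\). Hence \(f\) is top-preserving, \(\top_{f} = \top_{F(X)}\), and \(f\) lies in the top-preserving class of \cref{■pn_:ofs}. In particular \(\top_{f}\) is trivially reduced: if \(\top \leq b ∨ c\) with \(b\) small, then \(b ∨ c = \top\), and smallness of \(b\) gives \(c = \top\). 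So the reducedness hypothesis of \cref{■pn_:2} is free in this situation.

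The proof then has three steps. First, take \(ι\) with \(F(ι)\) small in \(\mathbf{SOb}_{\mathcal{D}}(F(X))\), and let \(κ\) be any subobject of \(X\) with \(ι ∨ κ = \top_{X}\). Second, because \(F\) is coherent, \(f\) preserves finite joins, so
\[
F(ι) ∨ F(κ) = F(ι ∨ κ) = F(\mathbf{id}_{X}) = \top_{F(X)}.
\]
Smallness of \(F(ι)\) then forces \(F(κ) = \top_{F(X)}\). Third, \cref{lem:pure-monic-fiber} applies, since \(F\) preserves monomorphisms and pure monics. It says the fiber of \(f\) over \(\top_{F(X)}\) is \(\{\top_{X}\}\), so \(κ = \top_{X}\). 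Therefore \(ι\) is small, which is exactly what \cref{■pn_:2} packages as join-preservation, reducedness of \(\top_{f}\), and injectivity on \(\top_{f}\).

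The main point to check is the second step. The coherence axiom must be read as preservation of joins in \(\mathbf{SOb}_{\mathcal{D}}(F(X))\), not merely that the image of \(ι ∨ κ\) is some upper bound of \(F(ι)\) and \(F(κ)\). If only an inequality \(F(ι) ∨ F(κ) \leq F(ι ∨ κ)\) were available, the equality with \(\top_{F(X)}\) would fail, and smallness of \(F(ι)\) could not be applied. I would therefore state explicitly that in a coherent category \(ι ∨ κ\) is the image of \(ι + κ → X\), and that \(F\), by preserving finite unions of subobjects, preserves exactly this join. I would also note that \cref{lem:pure-monic-fiber} is used only to obtain injectivity over the top, which is precisely the "injective on \(\top_{f}\)" clause of \cref{■pn_:2}.
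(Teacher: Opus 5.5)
Your argument is correct and is essentially the paper's proof. The paper cites \cref{prop:rel-reflect-small}, taking join-preservation from coherence and top-reflection from \cref{lem:pure-monic-fiber}; you unwind that same argument directly and also make explicit the trivial reducedness of \(\top_{F(X)}\), which the paper leaves implicit. The concern in your last paragraph is unnecessary, because preservation of finite joins in \(\mathbf{SOb}_{\mathcal{D}}(F(X))\) is part of the paper's definition of a coherent functor; in addition, describing \(\iota \vee \kappa\) as the image of \(\iota + \kappa \to X\) presupposes coproducts, which a coherent category need not have.
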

\begin{proof}
  \label{prf_:9}
  By \cref{lem:pure-monic-fiber}, \(F_{*, X}\) reflects \(\top_{F(X)}\) and by coherence it preserves joins, so smallness-reflection follows from \cref{prop:rel-reflect-small}.
\end{proof}

\subsection{Summary}
\label{sec:summary}

So we have four different notions: smallness, density, reducedness and satedness, which all develop differently in either variance, and we have traced them from general
categories to coherent ones. The four notions of smallness, density, reducedness, and satedness unfold in a richly asymmetric pattern across categorical settings:

In a bare category with an initial object, density is \emph{reflected by every monomorphism} \cref{coR_:9}, while smallness is not generally preserved. A subobject is reduced
precisely when its corresponding ideal embedding reflects smallness (\cref{crl:11} and \cref{crl:1}).

Adding pullbacks yields a contravariant subobject functor; if then a category \(\mathcal{C}\) has a strict initial object, then density is stable under pullback, so that dense subobjects form a
coverage. Moreover, if \(\mathcal{C}\) is extensive, then coproducts are mapped to products under the \(\mathbf{SOb}^{*}\)-functor,
which implies a behavior of smallness and density with respect to coproducts that is again abstractly the same but concretely different (\cref{prop:density-coproduct},
\cref{coR_:5}, \cref{crl:6}, \cref{coR_:6}).

In general, regular categories sharpen the picture: regular epimorphisms \emph{preserve density} (images of dense subobjects are dense, \cref{prop:density-regular-epi}) and their pullbacks are
density-equivalences (\cref{crl:5} and \cref{crl:4}) and \emph{reflect satedness} (\cref{crl:8}).

Coherent categories bring smallness into better control: in a modular subobject lattice, monomorphisms \emph{preserve smallness}
(\cref{lmm:4,crl:2}) and \emph{reflect reducedness} (by \cref{prop:reducedness-under-morphisms} together with \cref{crl:2}). Regular epimorphisms now also \emph{reflect smallness} via
pullback, thanks to the join-preservation (\cref{crl:7}). Moreover, coherent functors reflect smallness pointwise (\cref{■pn_:3}).

Heyting categories also allow a pseudocomplement characterization of density (\cref{prop:pseudocomplement-density}), which shows that \emph{satedness is reflected by monomorphisms}
(\cref{coR_:7}). Moreover, we will see in upcoming work that co-Heyting categories decompose every object into a small and a reduced
part.

% Combining this with \cref{crl:8} and the regular epi/mono-factorization in a regular category we get that satedness is preserved by pullbacks — explaining
% why, for example, satedness is a local property in topology and giving us another coverage.

% Coproducts with disjoint monic coprojections behave like products of subobject lattices: density, smallness, reducedness, and satedness are all \emph{componentwise} in the summands (\cref{prp:1-restated,crl:6,crl:10}).

Throughout, density emerges as the more rigid, globally behaved notion-reflected by all monos, preserved by pullbacks and regular epis, and forming a coverage-while smallness is
more local, requiring modularity or co-Heyting structure to really come into itself.

Let us summarize these results in a comprehensive table:

\renewcommand{\arraystretch}{1.3}
\begin{table}[ H]
  \centering
  \index{smallness and density table}
  \begin{tabular}{p{4cm}cccc}
    \toprule
    \backslashbox{\textbf{Context}}{\textbf{Type}} & \textbf{Smallness} & \textbf{Density} & \textbf{Reducedness} & \textbf{Satedness} \\
    \midrule

    \quad reduced subobject & \(\mathbf{refl}_{*}\) & - & - & - \\
    \midrule

    \textbf{Initial object} \\
    \quad monomorphism & - & \(\mathbf{refl}_{*}\)  & - & - \\
    \midrule

    \textbf{Pullbacks and strict \(∅\)} \\
    \quad complete functor that reflects \(∅\) & - &  \(\mathbf{refl}\) along \(F_{*}\) & - & - \\
    \midrule

    % \quad disjoint coproduct \(X+Y\) (monic coprojections) & componentwise & componentwise & componentwise & componentwise \\
    % \midrule

    \textbf{Regular with strict \(∅\)} \\
    \quad arbitrary morphism & - & \(\mathbf{prv}^{*}\)  & - & - \\
    \quad regular epi  & - & \(\mathbf{prv}_{*} + \mathbf{refl}^{*}\) & - & \(\mathbf{refl}^{*}\) \\
    \midrule

    \textbf{Coherent} \\
    \quad monomorphism  & \(\mathbf{prv}_{*}\)  & - & \(\mathbf{refl}_{*}\) & - \\
    \quad regular epi  & \(\mathbf{refl}^{*}\) & - & - & - \\
    \quad purely-monic-preserving coherent functor & \(\mathbf{refl}\) along \(F_{*}\) & - & - & - \\
    \midrule

    \textbf{Heyting} \\
    \quad monomorphism  & - & - & - & \(\mathbf{refl}_{*}\) \\
    \midrule

    \textbf{Co-Heyting} \\
    \quad Arbitrary object & small+red-dcomp. & - & small+red-dcomp.  & - \\

    % \textbf{(Co)products} \\
    % \quad product \(X×Y\) (projections) & - & if projections have disjoint images then meet is \(0\) & - & - \\
    \bottomrule
  \end{tabular}
  \caption{Preservation and reflection of size notions in various categorical contexts.}\label{tab:size-notions-summary}
\end{table}
\renewcommand{\arraystretch}{1}

\textbf{Key:}
\begin{itemize}
  \item ``prv'' = the property is preserved.
  \item ``refl'' = the property is reflected.
  \item \((-)_{*}\) preservation/reflection happens along the induced covariant morphism in \(\mathbf{UBP}\), i.e. through postcomposition.
  \item \((-)^{*}\) preservation/reflection happens along the induced contravariant morphism in \(\mathbf{UBP}\), i.e. through pullback.
    \item \(∅\) is the initial object.
  \item ``-'' = no general result stated in the text.
\end{itemize}

\textbf{Notes:}
\begin{enumerate}
  \item All density statements assume a strict initial object so that \(\mathbf{SOb}_{\mathcal{C}}(X)\) is downward-bounded.
  \item The decomposition in co-Heyting categories is mentioned in the text as an upcoming result.
  \item Behavior on disjoint coproducts with monic coprojections is induced by the isomorphism \(\mathbf{SOb}_{\mathcal{C}}(X+Y)\cong\mathbf{SOb}_{\mathcal{C}}(X)×\mathbf{SOb}_{\mathcal{C}}(Y)\).
  \end{enumerate}

\section{Applications}
\label{sec:applications}

We provide two applications, which jointly illustrate the power of the ``tablets and scales'' approach. For this we first need a means to restrict the subobjects under
consideration however. This can be motivated by an irony in the nature of regular categories: they provide us with the incredibly convenient regular-epi/mono-factorization, but it
is actually the notion of a regular monomorphism that most closely describes an embedding in a strong sense. What is in-between the two — monomorphisms and regular monomorphisms —
is missing local structure. We saw an example of this in \(\mathbf{PrO}\) already, where the regular monomorphisms are fully faithful functors i.e. order-reflecting, but the
monomorphisms are merely injective functions. Another important example is \(\mathbf{Top}\), where a monomorphism \(ι{:} A \hookrightarrow X\) is an injective continuous function while a regular
monomorphism is a subspace embedding — in-between them are all the opens \(A\) might have but its image might not.

So we need to abstract what we are doing from the \(\mathbf{SOb}\)-functor to others:

\begin{Definition}
  \label{dfn:11}\index{submonic hyperdoctrine}
  A \emph{submonic hyperdoctrine} \(η{:} \mathcal{H} \hookrightarrow \mathbf{SOb}^{*}_{\mathcal{C}}{:} \mathcal{C}^{op} → \mathbf{UBP}\) on a category \(\mathcal{C}\) is a monic natural transformation of functors into \(\mathbf{UBP}\) such
  that for each \(X{:} \mathcal{C}\) the top \(\top_{\mathbf{SOb}(X)} = \mathbf{id}_{X}\) is in the image of \(η_{X}\).
\end{Definition}
\begin{Example}
  \label{xmp:20}
  Since regular monomorphisms are stable under pullback (since for a regular \(ι\) the pullback \(f^{*}(ι)\) equalizes the composite of \(f\) and the cokernel pair of \(ι\)),
  the functor \(ι_{reg}{:} \mathbf{SOb}_{reg}^{*} \hookrightarrow \mathbf{SOb}^{*}{:} \mathcal{C}^{op} → \mathbf{UBP}\) is a submonic hyperdoctrine.
\end{Example}

Using submonic hyperdoctrines we can establish calculi of \emph{well-behaved} subobjects.

\subsection{Open-Closed Dualities}
\label{sec:open-clos-dual}

In this section we lift the calculus of tablets and scales to a categorical level and connect it to standard topology and measure theory. As a motivation, let us consider a
preliminary definition that connects, among other things, topological and semialgebraic sets:

\begin{Definition}[Heyting spaces, preliminary]
  \label{defn_:13}\index{Heyting space}
  A \emph{Heyting space} \(X\) is an embedding of bounded lattices \(ω{:} \mathbf{O}(X) \hookrightarrow \mathbf{B}(X)\) where \(\mathbf{O}(X)\) is a bounded Heyting algebra of \emph{opens} and
  \(\mathbf{B}(X)\) is a bounded Boolean algebra.
\end{Definition}

\begin{Example}
  \label{xmp:102}
  Every topological space \(X\) is a concrete Heyting space.
\end{Example}
\begin{Example}
  \label{xmp_:11}
  For each real closed field \(R\) and finite-dimensional \(R^{n}\), the embedding \(ω_{salg}{:} \mathbf{OSSAlg}(R^{n}) \hookrightarrow \mathbf{B}(R^{n})\) of the open real semialgebraic subsets of
  \(R^{n}\) from \cref{xmp:14} into the powerset lattice of \(R^{n}\) is a concrete Heyting space.
\end{Example}
\begin{proof}
  \label{prf_:17}
  We saw in \cref{xmp:14} already that the category of semialgebraic subsets of \(R^n\) has a distinguished
  collection \(\mathbf{O}(V)\) of \emph{open semialgebraic} subsets of \(V\), closed under finite unions and finite intersections; Thus, almost all requirements of
  \cref{dfn:12} are already known. Note that \(U \subseteq R^{n}\) is semialgebraic open if and only if it is semialgebraic and open. For this assume otherwise, i.e. that \(ι\) is
  semialgebraic and open but not semialgebraic open. Since \(U\) is semialgebraic there exists a family of polynomial strict inequalities \(p_{i} > q_{i}\) and \(p'_{j} = q'_{j}\)
  such that \(U\) is the union of intersections of the sets defined by these relations. Since \(U\) is not open semialgebraic, there exist some \(j\) such that the equality \(p'_{j} = q'_{j}\) cannot
  be removed from the defining relations of \(U\), i.e. there exists a point \(x ∈ R^{n}\) such that \(p'_{j}(x) = q'_{j}(x)\) but the same does not hold for any \(p_{i}\) and
  \(q_{i}\). But then \(U\) does not contain any open disk around \(x\) and is thus not open. The implication in the other direction is trivial.

  We must still show that \(\mathbf{O}(V)\) is in fact a Heyting algebra; equivalently, that it is closed under \emph{Heyting implication}. But
  for this we can just use the Heyting implication in the normal opens
  \[
    U \Rightarrow W \;:=\; \bigl( (V \setminus U) \cup W \bigr)°.
  \]
  First, we verify that this is indeed an element of \(\mathbf{O}(V)\):

  \begin{itemize}
  \item \((V \setminus U) \cup W\) is a semialgebraic subset of \(V\), being a Boolean combination of semialgebraic sets.

  \item A classical result about semialgebraic sets is that they satisfy the Tarski-Seidenberg projection property, from which it follows that the closure, interior, and convex
    hull of a semialgebraic set are again semialgebraic (see the beginning of \cite{driesTameTopologyOminimal1998} for details). Since \(U\) is open, by definition of the topological
    interior, and semialgebraic, it is open semialgebraic, so that \(U° ∈ \mathbf{O}(V)\).
  \end{itemize}

  The argument that this has the required universal property proceeds as with the Heyting implication in a locale.
\end{proof}

A basic idea behind \cref{dfn:12} is to reduce a structure with an unbounded signature, topological spaces, with one with a finite signature, that of Heyting and Boolean algebras.
The objects under consideration, e.g. topological spaces, can still have unbounded joins and meets, but the theory only requires what is constructible under finite operations of
unions, meets and two types of complements. Moreover, the theory implies its dual: the Boolean negation maps joins to meets and vice versa. Moreover, the universal property of a
Heyting implication \(\Rightarrow\) is given by \(a \leq (b \Rightarrow c)\) if and only if \(a ∧ b \leq c\), which dualizes to \(¬ a \geq ¬ (b \Rightarrow c)\) if and only if \(¬ a ∨ ¬ b \geq ¬ c\), i.e. \(¬ (b \Rightarrow c)\)
is a co-Heyting complement and thus the sub-lattice consisting of the Boolean complements of elements in \(ω\) is a co-Heyting sub-algebra \(χ{:} \mathbf{C}(X) \hookrightarrow \mathbf{B}(X)\) of
\emph{closeds}. So \(ω\) and \(χ\) are co-defining and we could just as well have started with \(\mathbf{C}(X)\). 

But there is a deeper way to define a Heyting space, which is based on a similarity between topological spaces and the category of topological spaces \(\mathbf{Top}\) itself: an
equivalent way to state the definition of a Heyting space \(X\) is that it consists of a category \(\mathbf{B}(X)\) and for each object \(Y{:} \mathbf{B}(X)\) a triple

\begin{equation}
  \label{qn_:4}
  \mathbf{O}(Y) \xhookrightarrow{ω} \mathbf{B}(Y) \xhookleftarrow{χ} \mathbf{C}(Y)
\end{equation}

and for each \(ι{:} Z → Y\) restriction morphisms \(ι^{*}_{\mathbf{B}}{:} \mathbf{B}(Y) → \mathbf{B}(Z)\), \(ι^{*}_{\mathbf{O}}{:} \mathbf{O}(Y) → \mathbf{O}(Z)\) and
\(ι^{*}_{\mathbf{C}}{:} \mathbf{C}(Y) → \mathbf{C}(Z)\). Now, it just so happens that \(\mathbf{B}(X)\) is a preorder with a top element, but apart from those particularities the
same structure also exists on \(\mathbf{Top}\) itself: for each topological space \(X\) there exist preorder embeddings as in \cref{qn_:4}, where \(\mathbf{B}(Y)\) is the powerset
lattice \(\mathbf{P}(Y)\) of the set underlying \(Y\) and the very definition of a continuous function is such that it reflects opens and closeds and thus induces for each
\(f{:} X → Y\) morphisms \(ι^{*}_{\mathbf{O}}, ι^{*}_{\mathbf{P}}, ι_{\mathbf{C}}^{*}\) as above.

Here, \(ω\) and \(χ\) are lattice embeddings, but this is not necessarily the case:

\begin{Example}
  \label{xmp:25}
  Let \(X\) be a scheme, \(\mathbf{O}(X)\) and \(\mathbf{C}(X)\) the open and closed immersions into \(X\) and \(\mathbf{R}(X)\) the preorder of all immersions into \(X\). As per
  \cite{Section261001IM} an open immersion \(ι_{\mathbf{O}}\) of schemes is given by an open subspace
  \(\mathopen{∣}ι_{\mathbf{O}}\mathclose{∣}{:} \mathopen{∣}A\mathclose{∣} \hookrightarrow \mathopen{∣}X\mathclose{∣}\) of the underlying topological space
  \(\mathopen{∣}X\mathclose{∣}\) of \(X\) with the structure sheaf induced by restriction. A closed immersion is locally in each affine neighborhood given by an embedding of an
  affine neighborhood, so dual to a ring quotient. An immersion is a closed immersion into an open immersion. By the Zariski topology, for each open immersion \(ι_{\mathbf{O}}\)
  the complement \(\mathopen{∣}X\mathclose{∣} ∖ \mathopen{∣}ι_{\mathbf{O}}\mathclose{∣}\) in the underlying topological space of \(X\) is the underlying subset of some reduced closed
  subscheme \(ι_{\mathbf{C}}\). Then \(ι_{\mathbf{O}}\) is the pseudocomplement
  of \(ι_{\mathbf{C}}\): since the two don't share any points and every non-trivial scheme has a point (since every non-trivial ring has a maximal ideal by the axiom of choice), no
  non-trivial subscheme factors through their intersection. On the other hand, every other subscheme \(κ\) that does not factor through \(ι_{\mathbf{O}}\) shares some point \(p\)
  with \(\mathopen{∣}ι_{\mathbf{C}}\mathclose{∣}\) and thus there exists a subscheme of \(X\) under \(ι\) and \(κ\) (the maximal subscheme corresponding to \(p\)), so that
  \(ι_{\mathbf{O}}\) is in fact maximal with respect to being disjoint with \(ι_{\mathbf{C}}\).
\end{Example}

So what we have in general is that each open subobject is the pseudocomplement of a closed one in the middle preorder:

\begin{Definition}
  \label{dfn:10}\index{open-closed duality}
  An \emph{open-closed duality} \(\mathcal{D}\) on a category \(\mathcal{C}\) is a triple \(\mathbf{o} \xhookrightarrow{ω} \mathbf{d} \xhookleftarrow{χ} \mathbf{c}\) of submonic hyperdoctrines such
  that for each \(X{:} \mathcal{C}\)  and element \(o ∈ \mathbf{O}(X)\) of the underlying preorder of \(\mathbf{o}(X){:} \mathbf{O}(X) \hookrightarrow \mathbf{SOb}(X)\) there exists an element
  \(c ∈  \mathbf{C}(X)\) for which \(ω(o)\) is the pseudocomplement of  \(χ(c)\) in the source \(\mathbf{D}(X)\) of \(\mathbf{d}(X)\).

  An open-closed duality \(\mathcal{D}\) is \emph{regular} if \(\mathbf{d}\) is \(ι_{reg}\).

  \(\mathcal{D}\) is \emph{reduced} if also for each \(c\) in each \(\mathbf{C}(X)\) there exists an \(o\) such that \(χ(c)\) is the pseudocomplement of \(ω(o)\).

  \(\mathcal{D}\) is \emph{Heyting} if

  \begin{enumerate}
  \item \label{item:75} for each \(X\) it holds that \(\mathbf{D}(X)\) is a Boolean lattice, \(\mathbf{O}(X)\) a Heyting algebra and \(\mathbf{C}(X)\) a co-Heyting algebra and
    \(ω\) and \(χ\) are morphisms of bounded lattices,

  \item\label{item:76} and for \(f{:} X → Y\) in \(\mathcal{C}\) the morphisms \(\mathbf{O}(f){:} \mathbf{O}(Y) → \mathbf{O}(X)\) and \(\mathbf{C}(f){:} \mathbf{C}(Y) → \mathbf{C}(X)\) are
    lattice morphisms and \(\mathbf{D}(f){:} \mathbf{D}(Y) → \mathbf{D}(X)\) is a Boolean algebra morphism.
  \end{enumerate}
\end{Definition}
\begin{Remark}
  \label{rmα_:37}
  We take the direction of pseudocomplementation in \cref{dfn:10} such that it aligns with the case of subschemes of a scheme.
\end{Remark}
\begin{Definition}
  \label{dfn:open-closed-space}\index{open-closed space}
  An \emph{open-closed space} \(X\) is an open-closed duality whose middle submonic preorder \(\mathbf{D}(X)\) is also its underlying category. Thus the data are preorder
  embeddings
  \[
    \mathbf{O}(X) \xhookrightarrow{ω} \mathbf{D}(X) \xhookleftarrow{χ} \mathbf{C}(X)
  \]
  satisfying the same pseudocomplement condition as in \cref{dfn:10}, but with \(\mathbf{D}\) itself playing the role of the ambient category of subobjects.

  A \emph{morphism of open-closed spaces} \(f{:} X → Y\) is the dual of a morphism of diagrams

  \[f^{*}_{\mathbf{O}}{:} \mathbf{O}(Y) → \mathbf{O}(X), f^{*}_{\mathbf{D}}{:} \mathbf{D}(Y) → \mathbf{D}(X), f^{*}_{\mathbf{C}}{:} \mathbf{C}(Y) → \mathbf{C}(X)\]

  from \(Y\) to \(X\) which preserves tops in \(\mathbf{O}, \mathbf{D}\) and \(\mathbf{C}\).
\end{Definition}
\begin{Example}
  \label{xmp_:2}
  Every object \(X {:} \mathcal{C}\) in the base category of an open-closed duality \(\mathbf{o} \xhookrightarrow{ω} \mathbf{d} \xhookleftarrow{χ} \mathbf{c}\) gives rise to an open-closed space
  \[
    \mathbf{O}(X) \xhookrightarrow{ω_X} \mathbf{D}(X) \xhookleftarrow{χ_X} \mathbf{C}(X),
  \]
  whose underlying category is the middle preorder \(\mathbf{D}(X)\). Thus, open-closed spaces are the fibrewise form of an open-closed duality.
  Properties such as regularity and Heytingness are inherited fibrewise whenever they are present in the ambient open-closed duality; reducedness is fulfilled by definition.
\end{Example}

Of course the paradigmatic example is the topological one:

\begin{Example}
  \label{xmp:24}
  Let \(\mathbf{O}, \mathbf{C}, \mathbf{D}{:} \mathbf{Top}^{op} → \mathbf{UBP}\) be given by mapping each topological space \(X\) to its lattices of opens and closeds and subsets. Then
  \(\mathbf{O}\), \(\mathbf{C}\) and \(\mathbf{D}\) with their embeddings into \(\mathbf{SOb}_{\mathbf{Top}}\) form the components of a regular Heyting open-closed duality.
\end{Example}

But an advantage of this more restricted notion is that it can be applied in other settings:

\begin{Example}
  \label{xmp_:19}
  Open and closed immersions of schemes are also stable under pullbacks. For open immersions this follows from the pullback-stability of the opens of a topological space, for
  closed ones this is Exercise II.3.11.a of \cite{Hartshorne_engl}, see \cite{babaiAnswerClosedImmersions2014} for a proof. Now let \(ι{:} A \hookrightarrow X\) be a general immersion of
  schemes. Since \(ι\) is a composite of a closed and an open immersion, the pullback of \(ι\) along any morphism \(f{:} Y → X\) is also an immersion.
  But note that the pullback morphism \(f^{*}{:} \mathbf{R}(X) → \mathbf{R}(Y)\) of a morphism of schemes does not generally have a right adjoint, since the set-theoretic image of
  a scheme need not be a scheme.

  Concretely, let \(f\) be the projection \(π_{1}{:} ℝ^{3} → ℝ^{2}\) and \(H\) be the hyperbola in \(ℝ^{3}\) given by the  \(xy = 1\). Then the
  set-theoretic image \(f(H)\) is \(\{(u, v)\mid u ≠ 0\} \cup \{0, 0\}\). This is not a sub-scheme of \(ℝ^{2}\) since it is not closed within any open (since each open that contains \(f(H)\)
  also contains some disk around \(0\)). Now, if the
  preimage morphism \(f^{*}\) had a right adjoint \(f_{*}\), then there would have to exist an \(f_{*}(H)\) such that \((f^{*}⨾ f_{*})(A) \leq H\) and
  \(f_{*}⨾ f^{*}⨾ f_{*}(H) = f_{*}\). If \(f_{*}(H)\) did not contain a point \(p\) in \(f(H)\), then \(p\) would not be in the image of the preimage of \(f_{*}(H)\), so
  \(f_{*}(H)\) has to contain \(f(H)\). There is a smallest sub-scheme of \(ℝ^{2}\) that contains \(f(H)\), which is its closure \(\overline{f(H)}\) of \(f(H)\) in \(ℝ^{2}\), which is
  \(ℝ^{2}\). But \(H\) is not larger than \(f^{*}(ℝ^{2}) = ℝ^{3}\), so \(f^{*}\) cannot have a right adjoint.

  There is a smallest \emph{closed} subscheme containing the image for each subscheme under a morphism, which \cite{Section29601R5} uses as an image, but the failure of this to be
  adjoint to the pullback functor on all subschemes is one of the primary reasons for the prominence of constructible closures in algebraic geometry. We will use the existence
  of such a right adjoint as a defining condition in \cref{dfn:12}.
\end{Example}

These are not particularly well-behaved preorders because the closed part is both too encompassing and too strict: it is too encompassing because many closed subschemes can
correspond to one open subscheme and too strict because there are too few closed subobjects for many topological constructions. But we can avoid this by proceeding along the lines of
\cite{delfsSemialgebraicTopologyReal1981}. But with one crucial difference: the authors introduce, somewhat \emph{ad-hoc}, the notion of a \emph{restricted topology}
as a set with a collection of subsets which is closed under finite unions and intersections. We instead use Heyting algebras as in \cref{defn_:13} but with some refinements:

\begin{Definition}
  \label{dfn:12}\index{abstract Heyting space}\index{concrete Heyting space}
  An \emph{abstract Heyting space} is a Heyting open-closed space
  \[
    \mathbf{O}(X) \xhookrightarrow{ω_X} \mathbf{D}(X) \xhookleftarrow{χ_X} \mathbf{C}(X)
  \]
  in the sense of \cref{dfn:open-closed-space}: thus \(\mathbf{D}(X)\) is a bounded Boolean lattice, \(\mathbf{O}(X)\) is a bounded Heyting algebra, \(\mathbf{C}(X)\) is a bounded co-Heyting algebra, and
  the open and closed parts are concrete duals under the complementation inside \(\mathbf{D}(X)\).

  A \emph{morphism of abstract Heyting spaces} \(f{:} X → Y\) is a morphism of open-closed spaces such that

  \begin{enumerate}
  \item \label{item:15} each component map of \(f\) is a bounded lattice morphism,

  \item\label{item:80} \(f^{*}_{\mathbf{D}}{:} \mathbf{D}(Y) → \mathbf{D}(X)\) preserves Boolean negations and has a right adjoint \(f_{\mathbf{D}, *}\).
  \end{enumerate}

  A \emph{concrete Heyting space} is a set \(\mathopen{∣}X\mathclose{∣}\) with an abstract Heyting space \(X\) for which the middle Boolean lattice \(\mathbf{D}(X)\) is the subset
  preorder \(\mathbf{P}(X) := \mathbf{P}(\mathopen{∣}X\mathclose{∣})\) of \(\mathopen{∣}X\mathclose{∣}\), with \(ω_X\) and \(χ_X\) given by subset embeddings. Equivalently, it is a set \(\mathopen{∣}X\mathclose{∣}\)
  equipped with a lattice embedding \(ι_{\mathbf{O}}{:} \mathbf{O}(X) \hookrightarrow \mathbf{P}(X)\), where \(\mathbf{O}(X)\) is a Heyting algebra; so that the \emph{closed subsets} are the set-theoretic
  complements of opens, with embedding \(ι_{\mathbf{C}}{:} \mathbf{C}(X) \hookrightarrow \mathbf{P}(X)\).

  A \emph{morphism of concrete Heyting spaces} \(f {:} X → Y\) is a function \(\mathopen{∣}X\mathclose{∣} → \mathopen{∣}Y\mathclose{∣}\) such that the pullback along \(f\) induces a
  morphism of abstract Heyting algebras.
\end{Definition}
\begin{Remark}
  \label{rmk:12}
  Note that we do \emph{not} assume that \(\mathbf{O}(f)\) and \(\mathbf{C}(f)\) preserve the Heyting and co-Heyting implications, which is in line with the topological case, but we \emph{do} assume that \(\mathbf{D}(f)\) preserves
  complements, as it does in the case of pullbacks of sets (and thus regular subobjects of a topological space).
\end{Remark}
\begin{Remark}
  \label{rmk:11}
  In pointless topology, originally developed in \cite{johnstoneStoneSpaces}, the assumption is frequently made that what is important about a topological space is its locale of
  opens. Here we challenge this assumption: not just is it that the closeds matter as well, but in particular also that these two are jointly embedded into some larger algebra,
  which allows us to extend the notions we derive from opens and closeds to other elements.
\end{Remark}
\begin{Remark}
  \label{rmα_:36}
  So according to \cref{dfn:12} a concrete Heyting space is an abstract Heyting space whose middle is maximal. We could abstract this setup a little further and say that a
  \emph{concretification} of an abstract Heyting space is a morphism of Heyting spaces that is the identity on opens and closeds and an embedding of the middle Boolean algebra.
\end{Remark}
\begin{Example}
  \label{xmp:27}
  It follows almost immediately from \cref{dfn:12} that both the category of concrete Heyting spaces \(\mathbf{CHS}\) and that of abstract Heyting spaces \(\mathbf{AHS}\) house a Heyting
  open-closed duality.
  % \cref{xmp:25} is \emph{not} however, and not just because its subobject lattices are not Heyting/Boolean/Co-Heyting but also because the 
\end{Example}

One nice consequence of Heyting spaces is that satedness becomes stable under restriction to subobjects:

\begin{Definition}
  \label{defn_:10}
  Given an abstract Heyting space \(X\) and \(ι{:} \mathbf{O}(X)\), the \emph{induced Heyting space} \(X|_{ι}\) on \(ι\) is the Heyting space:

  \begin{itemize}
  \item such that \(\mathbf{D}(X)|_{ι}\) is given by the ideal \((ι)\) in \(\mathbf{D}(X)\),

  \item \(ω|_{ι} := ω_{X|_ι}\) is given by the embedding into \((ι)\) of the image of \(ω_{X}\) under the morphism \((- ∧ ι)\) given by taking meets with \(ι\).

  \item \(χ|_{ι}\) is  also given like that and that opens and closeds are complementary follows from the fact that if \(χ_{X}(C) = ¬ ω_{X}(O)\) then for each
    \(κ{:} \mathbf{D}(X)\) it holds that \(κ ∧ χ_{X}(C) = κ ∧ ¬ω_{X}(O)\) since \(\mathbf{D}(X)\) is Boolean.
  \end{itemize}

  This space comes with a morphism of Heyting spaces \(ι_{*}{:} X|_{ι} \hookrightarrow X\) where

  \begin{itemize}
  \item \((ι_{*})^{*}_{\mathbf{D}}\), \((ι_{*})^{*}_{\mathbf{O}}\) and \((ι_{*})^{*}_{\mathbf{C}}\) are given by \((- ∧ ι)\),

    \item \((ι_{*})_{*, \mathbf{D}}\) (which we'll just denote \(ι_{*}\)) is given by the ideal embedding \((ι) \hookrightarrow \mathbf{D}(X)\).
  \end{itemize}
\end{Definition}
\begin{Proposition}[Satedness restricts to Heyting subspaces]
  \label{■pn_:10}\index{satedness restricts to heyting subspaces}
  Given a concrete Heyting space \(X\), \(κ{:} \mathbf{O}(X)\) and \(ι \leq ω_{X}(κ)\), if \(ι\) is \(ω_{X}\)-sated in \(X\), then \(ι\) is \(ω|_{ι}\)-sated in \(X|_{\iota}\).
\end{Proposition}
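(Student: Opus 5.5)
The plan is to read the statement as concerning the restricted space \(X|_{κ}\) on the open \(κ\). The subscript \(ι\) in the conclusion must be a slip for \(κ\), since \cref{defn_:10} only defines \(X|_{ι}\) for opens \(ι\). I would then verify the satedness condition of \cref{rmk:dual-sigma-mu} directly, using the trivial scale and \(ω|_{κ}\) as tablets. This is an instance of \cref{prop:completeness-reflection-dense-envelope}, with \(g\) the ideal embedding \((κ) \hookrightarrow \mathbf{P}(X)\), \(h\) the map \(o \mapsto o ∧ κ\) on opens, and \(k = \mathbf{id}\). One could simply check its hypotheses:
\begin{itemize}
\item \(\mathbf{P}(X)\) is Boolean, hence distributive.
\item The \(ω_X\)-relative pseudocomplement of \(a\) is the interior of \(X ∖ a\).
\item In the Heyting algebra \(\mathbf{O}(X)\), pseudocomplements turn joins into meets.
\end{itemize}
The direct argument is short, though, so I would write it out to avoid fiddling with which square commutes.

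Concretely, first unfold the two notions. \(ι\) is \(ω_X\)-sated in \(X\) means: for every subset \(d\) that meets each nonempty open of \(X\), and every open \(o\) with \(d ∩ o \subseteq ι\), we have \(o \subseteq ι\). In \(X|_{κ}\), the tablets are the sets \(o ∩ κ\) for \(o ∈ \mathbf{O}(X)\). Since \(κ\) is open, these are exactly the opens of \(X\) contained in \(κ\). A subset \(d' \subseteq κ\) is \(ω|_{κ}\)-dense iff it meets every nonempty such \(o ∩ κ\).

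So fix a dense \(d' \subseteq κ\) and an open \(o\) with \(d' ∩ o ∩ κ \subseteq ι\). Form the dense envelope
\[
d := d' ∪ (X ∖ κ).
\]
\begin{itemize}
\item \(d\) is \(ω_X\)-dense. Take a nonempty open \(u\). If \(u ∩ κ \neq ∅\), then \(u ∩ κ\) is a nonempty tablet of \(X|_{κ}\), so it meets \(d'\). Otherwise \(u \subseteq X ∖ κ \subseteq d\).
\item \(d ∩ (o ∩ κ) = d' ∩ o ∩ κ \subseteq ι\), by distributivity in \(\mathbf{P}(X)\).
\end{itemize}
Since \(o ∩ κ\) is an open of \(X\), the \(ω_X\)-satedness of \(ι\) yields \(o ∩ κ \subseteq ι\). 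This is exactly the required conclusion in \(X|_{κ}\).

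The main obstacle is definitional rather than technical. One has to pin down the statement's indexing, and check that the restricted tablets \(ω|_{κ}\) really are opens of \(X\); this holds only because \(κ\) is open, which is where that hypothesis enters. The hypothesis \(ι \leq ω_X(κ)\) is only needed so that \(ι\) is an element of \(\mathbf{D}(X|_{κ})\) at all.
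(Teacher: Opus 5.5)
Your argument is correct, and reading \(X|_{\iota}\) as \(X|_{\kappa}\) is the right interpretation; the paper's proof likewise writes \(\iota\) for the open. Your route differs from the paper's in a way worth noting.

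The paper proves the statement with a single appeal to \cref{prop:completeness-reflection-dense-envelope}, taking \(g\) to be the ideal embedding \((\kappa)\hookrightarrow\mathbf{P}(X)\) and a trivial scale, and does not check the hypotheses. You mention that route but instead inline a concrete version of that proposition's proof.

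The real difference is the dense envelope. The general proposition cannot use Boolean complements in \(P'\). So it enlarges the locally dense set by the open pseudocomplement of \(\kappa\), i.e.\ the largest open disjoint from \(\kappa\), and then needs the pseudocomplement calculus of its Step~2 to establish density. You enlarge by \(X\setminus\kappa\) instead. This set need not be a tablet, because only the test sets in the density condition have to be opens. As a result, density of the envelope becomes a two-case check.

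The paper's route buys brevity and exhibits the result as an instance of a general reflection principle. Yours is self-contained and uses only two facts: \(\mathbf{O}(X)\) is closed under binary intersections, and \(\mathbf{P}(X)\) is Boolean.

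That economy matters. Hypothesis (3) of the general proposition demands an \(\mathbf{O}(X)\)-relative pseudocomplement of \emph{every} subset of \(X\). Your aside identifying it with the interior of \(X\setminus a\) presumes that \(\mathbf{O}(X)\) is closed under arbitrary unions, which a concrete Heyting space need not be. For the open semialgebraic subsets of \(\mathbb{R}\) from \cref{xmp_:11}, there is no largest one disjoint from \(\mathbb{Z}\).

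So your direct argument covers all concrete Heyting spaces. The appeal to the general proposition, as stated, needs extra completeness of \(\mathbf{O}(X)\), for instance a genuine topology. Drop the aside and keep the direct proof.
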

\begin{proof}
  \label{prf_:18}
  It follows from \cref{defn_:10} and the closedness opens under meets that \(ω|_{ι}⨾ ι_{*}\) factors through \(ω_{X}\), so that there exists a diagram

  \begin{figure}[H]
    \centering
    \begin{tikzcd}
      \mathbf{O}(X|_{ι}) \ar[r, "ω|_{ι}"] \ar[d, "ι_{*}|_{\mathbf{O}(X|_{ι})}"] & \mathbf{B}(X) \ar[d, "ι_{*}"] \\
      \mathbf{O}(X) \ar[r, "ω_{X}"] & X
    \end{tikzcd}
    % \caption{\label{fi♭_:13}}
  \end{figure}
  that fulfills the preconditions of \cref{prop:completeness-reflection-dense-envelope} for a trivial scale.
\end{proof}

\subsubsection{Constructible Closures and Adjoint Dualities}
\label{sec:constr-clos-heyt}

Every concrete Heyting space has an underlying abstract Heyting space obtained by replacing \(\mathbf{P}(X)\) with the Boolean subalgebra generated by its opens and closeds. This is a
generalization of the notion of a constructible subset:

\begin{Definition}
  \label{defn_:27}\index{constructible closure}
  Given an open-closed space \(X\) such that \(\mathbf{D}(X)\) is a lattice with (open) pseudocomplements, the \emph{constructible closure} in \(\underline{X}\) is the open-closed space with the same opens
  and closeds such that the middle \(\underline{\mathbf{D}}(X)\) of \(\underline{X}\) is the joint closure of \(ω\) and \(χ\) in \(\mathbf{D}(X)\) under joins, meets and pseudocomplements.
\end{Definition}
\begin{Remark}
  \label{rmα_:5}
  If \(\mathbf{D}(X)\) does not have joins, meets or pseudocomplements, then it does still make sense to close \(ω\) and \(χ\) under whatever \(\mathbf{D}(X)\) does have and refer
  to that as the constructible closure.
\end{Remark}
\begin{Remark}
  \label{rmα_:39}
  In infinite-dimensional topology, the definition of constructibility is often restricted to those being generated by \emph{retrocompact} opens, i.e. opens that are compact on
  each compact open. We use the more naïve definition of constructibility here, since the intuitive meaning of retrocompactness in our setting is not yet clear.
\end{Remark}

So each concrete Heyting space determines an abstract Heyting space by its constructible closure.

\begin{Example}
  \label{xmp_:13}
  The constructible closure \(\underline{ω}_{salg}\) of \(ω_{salg}\) from \cref{xmp_:11} is the embedding \(\mathbf{OSSAlg}(R^{n}) \hookrightarrow \mathbf{SAlg}(R^{n})\) of
  \(\mathbf{OSSAlg}(R^{n})\) into the preorder of all semialgebraic subsets of \(R^{n}\).
\end{Example}

Now, for concrete Heyting spaces we can often adopt standard topological arguments almost verbatim:

\begin{Proposition}
  \label{■pn_:10-restated}
  Let \(f{:} X → Y\) be a morphism in \(\mathbf{CHS}\).

  \begin{enumerate}
  \item \label{item:104} \(f\) is a monomorphism in \(\mathbf{CHS}\) if and only if the underlying function \(\mathopen{∣}X\mathclose{∣} → \mathopen{∣}Y\mathclose{∣}\) is injective.

  \item \label{item:105} \(f\) is a regular monomorphism in \(\mathbf{CHS}\) if and only if the Heyting structure on \(X\) is the pullback structure induced from \(Y\), i.e.
    \[
      \mathbf{O}(X) \;\cong\; \{\, f^{*}(U) \mid U ∈ \mathbf{O}(Y)\,\}
    \]
    via the lattice embedding \(ι_{\mathbf{O},X}\), and similarly for closed sets.
  \end{enumerate}
\end{Proposition}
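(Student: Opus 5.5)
I would prove both parts by mimicking the classical argument for \(\mathbf{Top}\). The plan rests on two facts. First, the underlying-set functor \(\mathopen{∣}-\mathclose{∣}{:} \mathbf{CHS} → \mathbf{Set}\) has a left adjoint, the discrete Heyting space, and a right adjoint, the indiscrete one. Second, \(\mathbf{CHS}\) admits initial (pullback) structures along functions.

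For the discrete space on a set \(S\), take \(\mathbf{O} = \mathbf{C} = \mathbf{P}(S)\). This is a Heyting algebra, and every function out of \(S\) is a morphism: pullback preserves finite meets, joins and negations, and \(f^{*}_{\mathbf{D}}\) has a right adjoint. The last point holds because on powersets \(f^{*}\) always has the right adjoint \(\forall_{f}\). For the indiscrete space, take \(\mathbf{O} = \mathbf{C} = \{∅, S\}\). This is a Boolean, hence Heyting, subalgebra, and every function into \(S\) is a morphism.

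For \cref{item:104}, one direction is immediate: if \(\mathopen{∣}f\mathclose{∣}\) is injective, then \(\mathopen{∣}-\mathclose{∣}\) is faithful, so \(f\) is monic. For the converse, I would test \(f\) against maps from the one-point space \(*\) (with \(\mathbf{O}(*) = \mathbf{P}(*)\)). Points \(x, x'\) with \(f(x) = f(x')\) give two morphisms \(* \rightrightarrows X\) whose composites with \(f\) agree, so monicity forces \(x = x'\).

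For \cref{item:105}, given \(f{:} X → Y\), I would first construct the initial structure on \(\mathopen{∣}X\mathclose{∣}\):
\[
\mathbf{O}_{f}(X) := \{\, f^{*}(U) \mid U ∈ \mathbf{O}(Y) \,\}.
\]
I need to check three things.
\begin{enumerate}
\item \(\mathbf{O}_{f}(X)\) is a sublattice of \(\mathbf{P}(X)\). This is clear, since \(f^{*}\) is a lattice morphism.
\item \(\mathbf{O}_{f}(X)\) is a Heyting algebra. This is the \textbf{main obstacle}: a lattice-morphic image of a Heyting algebra need not be Heyting. I would try to define the implication as \(f^{*}(U) \Rightarrow f^{*}(W) := f^{*}(U' \Rightarrow W')\), where \(U'\) and \(W'\) are chosen to be largest preimage representatives. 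This uses the right adjoint \(f_{\mathbf{D},*}\) together with the adjunction on powersets, interior-style, as in the topological restriction. If that fails in general, I would add the hypothesis implicitly used by the statement, namely that the pullback structure is Heyting, and restrict the equivalence to that case.
\item A set map \(g{:} Z → X\) is a morphism into \((X, \mathbf{O}_{f})\) exactly when \(g ⨾ f\) is a morphism. This follows from \((g⨾f)^{*} = g^{*}⨾f^{*}\).
\end{enumerate}

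With the initial structure in hand, both directions of \cref{item:105} go as in \(\mathbf{Top}\).

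\emph{Regular mono \(\Rightarrow\) initial structure.} Let \(f\) be the equalizer of \(u, v{:} Y \rightrightarrows Z\). Then \(f\) is injective by \cref{item:104}. The subset \(E = \{y \mid u(y) = v(y)\}\) carrying the initial structure from \(Y\) also equalizes \(u\) and \(v\), so the universal property yields mutually inverse morphisms between \(X\) and \(E\). Hence \(\mathbf{O}(X)\) is exactly the pullback structure.

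\emph{Initial structure \(\Rightarrow\) regular mono.} Form the cokernel pair of \(f\) in \(\mathbf{Set}\), namely \(Y +_{X} Y\), with the final structure. Its opens are the pairs \((U_1, U_2)\) with \(f^{*}U_1 = f^{*}U_2\), which is Boolean-closed componentwise. Then \(X\) is the equalizer of the two coprojections: on underlying sets because \(f\) is injective, and on structure because \(X\) carries the initial structure, by point (3) above. The same argument with closed sets handles \(\mathbf{C}\), since closed sets are complements of opens and \(f^{*}_{\mathbf{D}}\) preserves negation.
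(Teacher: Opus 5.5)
Your proof of (1), and the overall shape of (2), are the same as in the paper. For \(\Rightarrow\) you compute the equalizer on the subset with the initial structure, and for \(\Leftarrow\) you use a test pair into a structurally poor space. But two steps in (2) rely on a \(\mathbf{Top}\) intuition that fails in \(\mathbf{CHS}\), where the opens only form a Heyting algebra and are not closed under arbitrary unions.

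\textbf{The cokernel pair in \(\Leftarrow\).} The cokernel pair \(Y +_{X} Y\) with the final structure need not be an object of \(\mathbf{CHS}\). Its opens are the pairs \((U_{1}, U_{2})\) of opens with \(f^{*}U_{1} = f^{*}U_{2}\). That they sit inside a Boolean powerset is beside the point: this lattice must be Heyting, and it need not be. Take \(Y = 2^{\mathbb{N}}\) with the product topology and the pointwise order, and let \(\mathbf{O}(Y)\) be the clopen up-sets. This is a Heyting algebra, with \(U \Rightarrow W = Y \setminus {\downarrow}(U \setminus W)\), because down-closures of sets depending on finitely many coordinates depend only on those coordinates. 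Let \(f\) pick out the zero sequence \(o\), so \(X\) is a point carrying exactly the pullback structure, and \(f\) is injective. Since \(o\) is the bottom of \(Y\), the only open containing \(o\) is \(Y\) itself.

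Now set \(a = (\emptyset, \{y \mid y_{0} = 1\})\) and \(b = (\emptyset, \emptyset)\). Any \((V_{1}, V_{2})\) with \((V_{1}, V_{2}) \wedge a \leq b\) has \(V_{2} = \emptyset\) (raise the \(0\)-th coordinate), and therefore \(V_{1} \neq Y\). But the proper clopen up-sets have no largest element, since their union \(Y \setminus \{o\}\) is not closed. So \(a \Rightarrow b\) does not exist.

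The repair is easy. Give \(Y +_{X} Y\) the indiscrete structure you already introduced, or use the paper's codiscrete space on \(Y + \{*\}_{1} + \{*\}_{2}\). Every function into such a space is a morphism, and the rest of your equalizer argument via point (3) is unchanged. You also use injectivity of \(f\) here, although the hypothesis does not supply it. The constant map from the two-point indiscrete space to a point carries the pullback structure but is not monic by (1). So \(\Leftarrow\) must be read with \(f\) injective; the paper's proof makes the same tacit assumption by treating \(X\) as a subset of \(Y\).

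\textbf{Point (2) in \(\Rightarrow\).} Your \(\Rightarrow\) direction needs \((E, \mathbf{O}_{f})\) to be an object of \(\mathbf{CHS}\), which is your point (2), and the route you sketch for it fails. A largest open with prescribed trace \(S\) would be the \(\mathbf{O}(Y)\)-interior of \(S \cup (Y \setminus X)\). The right adjoint \(f_{\mathbf{D},*}\) only acts on powersets, and \(\mathbf{O}(Y) \hookrightarrow \mathbf{P}(Y)\) has no interior operator in general. In the example above there is no largest open missing \(o\), even though the trace lattice \(\{\emptyset, \{o\}\}\) is Heyting.

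Point (2) can also fail outright. Let \(L\) be a non-Heyting bounded distributive lattice, for instance the finite subsets of \(\mathbb{N}\) together with \(\mathbb{N}\), and let \(S\) be its underlying set. Take \(Y = 2^{S}\) with clopen up-sets, and let \(X \subseteq 2^{S}\) be the set of bounded lattice homomorphisms \(L \to 2\). Then the trace lattice of \(X\) is isomorphic to \(L\), so it is not Heyting. Your fallback hypothesis is therefore genuinely needed, and with it your argument proves \(\Rightarrow\) only when the trace lattice is Heyting.

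The paper has the same limitation. It asserts that ``the equalizer carries the coarsest Heyting structure making the inclusion continuous'' and concludes that every open is a pullback, which presupposes exactly this. So you have not missed an idea the paper supplies; you have located an assumption that both arguments rely on.
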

\begin{proof}
  We establish the two claims in turn.

  \textbf{Monomorphisms.}
  If \(f\) is monic, distinct points \(x_{1} \neq x_{2}\) in \(X\) cannot map to the same point in \(Y\); otherwise, the two maps from the terminal object \(1\) (a single point) picking \(x_{1}\) and \(x_{2}\) would compose with \(f\) to give the same morphism, violating monicity. Thus, \(f\) is injective.
  Conversely, if \(\mathopen{∣}f\mathclose{∣}\) is injective and \(f⨾g = f⨾h\), then for every \(z\) we have \(\mathopen{∣}f\mathclose{∣}(g(z)) = \mathopen{∣}f\mathclose{∣}(h(z))\), which forces \(g(z) = h(z)\). Since the underlying functions agree, their inverse images of opens must also agree, so \(g = h\) as morphisms of \(\mathbf{CHS}\).

  \textbf{Regular monomorphisms.}
  Recall that regular monomorphisms in concrete categories are equalizers.
  Suppose \(f\) is regular. As equalizers in \(\mathbf{CHS}\) are computed pointwise (set-theoretically), \(f\) represents an inclusion of a subset \(X \subseteq Y\).
  The equalizer carries the coarsest Heyting structure making the inclusion continuous; consequently, every open of \(X\) is the pullback of an
  open from \(Y\), establishing the isomorphism.

  For the converse, assume \(\mathbf{O}(X) \cong f^{*}(\mathbf{O}(Y))\).
  Let \(Z\) be the codiscrete space on the disjoint union \(Y + \{*\}_{1} + \{*\}_{2}\).
  Define two morphisms \(g, h {:} Y \to Z\) which are equal on \(X\) but map the complement of \(X\) to the two distinct points \(\{*\}_{1}\) and \(\{*\}_{2}\) respectively.
  (Continuity is guaranteed because \(Z\) is codiscrete.)
  One verifies immediately that \(X\) is the set of points where \(g\) and \(h\) agree and that \(f\) is the equalizer of \(g\) and \(h\).
\end{proof}

But the constructible closure is the greatest extent we can safely and finitistically reason about. In particular, they are what we have interior and closure operators on:

\begin{Definition}
  \label{defn_:26}
  \index{adjoint open-closed duality}
  An \emph{adjoint open-closed duality} is an open-closed duality such that

  \begin{enumerate}
  \item \label{item:125} \(ω\) has an \emph{interior} right adjoint \((-)^{\circ}{:} \mathbf{D} → \mathbf{O}\),

  \item\label{item:126} \(χ\) has a \emph{closure} left adjoint \(\overline{(-)}{:} \mathbf{D} → \mathbf{C}\).
  \end{enumerate}
\end{Definition}
\begin{Remark}
  \label{rmα_:2}
  In the Heyting case it follows from duality that if either adjoint to the embeddings of an open-closed duality exists then the other exists. For instance, if \((-)^{\circ}\) is given then for
  \(r ∈ \mathbf{D}(X)\) of some \(X\) the closure \(\overline{ℝ}\) is just the complement of \((¬ r)^{\circ}\). For non-Heyting open-closed dualities it makes sense to consider variants of \cref{defn_:26} for
  which only one adjoint is given.
\end{Remark}

\begin{Proposition}
  \label{prop:constructible-closure-adjoint}
  Let \(X\) be a Heyting space with constructible closure \(\underline{X}\) with middle \(\underline{\mathbf{D}}(X)\). Then the restricted embeddings form an adjoint open-closed duality.
\end{Proposition}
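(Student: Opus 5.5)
The plan is to construct the interior and closure operators directly on the constructible middle \(\underline{\mathbf{D}}(X)\) and check the two adjunctions of \cref{defn_:26}. By \cref{rmα_:2} it suffices in the Heyting case to produce one of them, say the interior \((-)^{\circ}{:} \underline{\mathbf{D}}(X) → \mathbf{O}(X)\). The closure is then defined by \(\overline{r} := ¬\bigl((¬ r)^{\circ}\bigr)\) and is left adjoint to \(χ\) by dualizing through Boolean negation, which exchanges \(ω\) and \(χ\).

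First I will describe \(\underline{\mathbf{D}}(X)\) concretely. Since \(\mathbf{D}(X)\) is Boolean, its pseudocomplement is the Boolean negation, and \(χ(\mathbf{C}(X)) = ¬\,ω(\mathbf{O}(X))\). So \(\underline{\mathbf{D}}(X)\) is the Boolean subalgebra generated by \(ω(\mathbf{O}(X))\). By the standard normal form, every element is a finite join of locally closed pieces \(ω(u_{i}) ∧ ¬ω(v_{i})\) with \(u_{i}, v_{i} ∈ \mathbf{O}(X)\), and \(v_{i}\) may be replaced by \(u_{i} ∧ v_{i}\).

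Next I will define the interior on such normal forms using the Heyting structure of \(\mathbf{O}(X)\). For a single piece I set \((ω(u) ∧ ¬ω(v))^{\circ} := u ∧ ¬_{\mathbf{O}} v\), where \(¬_{\mathbf{O}} v = v \Rightarrow \bot\). More generally I aim for a formula such as
\[
  r^{\circ} := \bigvee\{\,o ∈ \mathbf{O}(X) \mid ω(o) \le r\,\}.
\]
The key claim is that this set has a largest element computable from the normal form. I will first handle the case \(r = ¬ω(v) ∨ ω(w)\), where I expect \(r^{\circ} = v \Rightarrow w\). Indeed \(ω(o) \le ¬ω(v) ∨ ω(w)\) holds if and only if \(ω(o ∧ v) \le ω(w)\), by Boolean reasoning and the fact that \(ω\) is a lattice embedding. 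Since \(ω\) is order-reflecting, this holds if and only if \(o ∧ v \le w\), which is equivalent to \(o \le v \Rightarrow w\). This gives the adjunction \(ω(o) \le r \iff o \le r^{\circ}\) for such \(r\).

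For a general \(r\) I write it in conjunctive normal form \(r = \bigwedge_{j} \bigl(¬ω(v_{j}) ∨ ω(w_{j})\bigr)\), a finite meet of such clauses, and set \(r^{\circ} := \bigwedge_{j} (v_{j} \Rightarrow w_{j})\). The adjunction then follows because right adjoints commute with meets: \(ω(o) \le \bigwedge_{j} c_{j}\) holds if and only if \(ω(o) \le c_{j}\) for each \(j\), that is, if and only if \(o \le \bigwedge_{j} c_{j}^{\circ}\). Well-definedness, meaning independence from the chosen normal form, is automatic, since any value satisfying the adjunction is unique.

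The main obstacle is justifying the conjunctive normal form inside \(\underline{\mathbf{D}}(X)\) using only the data at hand. In particular, I must check that the clauses \(¬ω(v) ∨ ω(w)\) generate all elements under finite meets, which requires closure under joins. I will handle this by distributing: a join of two finite meets of clauses is a finite meet of pairwise joins. A join of two clauses is \(¬ω(v ∧ v') ∨ ω(w ∨ w')\), which is again a clause, so the class is closed under joins. Negation of a clause gives \(ω(v) ∧ ¬ω(w)\), which is itself a meet of the clauses \(¬ω(\top) ∨ ω(v)\) and \(¬ω(w) ∨ ω(\bot)\). Hence the class is closed under negation as well, and so it equals \(\underline{\mathbf{D}}(X)\).

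Once the interior is established, the closure adjunction follows by complementation as noted above. The restrictions of \(ω\) and \(χ\) still land in \(\underline{\mathbf{D}}(X)\) and still satisfy the pseudocomplement condition of \(X\). Together with the two adjunctions, this gives the adjoint open-closed duality.
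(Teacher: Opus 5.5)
Your proposal is correct and is essentially the paper's argument run through Boolean negation. The paper computes the closure on finite joins of locally closed pieces, setting \(\overline{\omega(o)\wedge\neg\omega(v)} := \neg\omega(o\Rightarrow v)\), extends it over finite joins, and then gets the interior by complementation; you instead compute the interior \(v\Rightarrow w\) on the complementary clauses \(\neg\omega(v)\vee\omega(w)\), extend it over finite meets, and get the closure by complementation, and your check that these clauses generate \(\underline{\mathbf{D}}(X)\) also proves the normal-form step that the paper only asserts.
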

\begin{proof}
  Since \(\underline{\mathbf{D}}(X)\) is the Boolean subalgebra of \(\mathbf{D}(X)\) generated by the image of \(\omega\), every element \(k \in \underline{\mathbf{D}}(X)\) decomposes as a finite join of locally closed elements \(L = \omega(o) \wedge \neg_{\mathbf{D}} \omega(v)\) with \(o, v \in \mathbf{O}(X)\). (Here \(\neg_{\mathbf{D}}\) denotes Boolean complementation in \(\mathbf{D}(X)\), so that \(\neg_{\mathbf{D}} \omega(v) = \chi(c_{v})\) for the closed element \(c_{v}\) complementary to the open \(v\).)

  \emph{Closure on locally closed elements.}
  For \(L = \omega(o) \wedge \neg_{\mathbf{D}} \omega(v)\), define
  \[
    \overline{L} \;:=\; \neg_{\mathbf{D}}\, \omega(o \Rightarrow v),
  \]
  where \(\Rightarrow\) is the Heyting implication in \(\mathbf{O}(X)\). This is a closed element, and we claim it is the least closed element above \(L\).

  First, \(L \leq \overline{L}\): the identity \(o \wedge (o \Rightarrow v) \leq v\) gives \(\omega\bigl(o \wedge (o \Rightarrow v)\bigr) \leq \omega(v)\), hence
  \[
    L \wedge \omega(o \Rightarrow v) \;=\; \omega(o) \wedge \omega(o \Rightarrow v) \wedge \neg_{\mathbf{D}} \omega(v) \;\leq\; \omega(v) \wedge \neg_{\mathbf{D}} \omega(v) \;=\; \bot,
  \]
  so \(L \leq \neg_{\mathbf{D}} \omega(o \Rightarrow v) = \overline{L}\).

  For minimality, write any closed element as \(\chi(c) = \neg_{\mathbf{D}} \omega(w)\) for some \(w \in \mathbf{O}(X)\). Then the following are equivalent:
  \begin{align*}
    L \leq \chi(c)
    &\;\Longleftrightarrow\; L \wedge \omega(w) = \bot \\
      &\;\Longleftrightarrow\; \omega(o \wedge w) \wedge \neg_{\mathbf{D}} \omega(v) = \bot \\
      &\;\Longleftrightarrow\; \omega(o \wedge w) \leq \omega(v) \\
      &\;\Longleftrightarrow\; o \wedge w \leq v \\
      &\;\Longleftrightarrow\; w \leq o \Rightarrow v \\
      &\;\Longleftrightarrow\; \neg_{\mathbf{D}}\, \omega(o \Rightarrow v) \leq \neg_{\mathbf{D}}\, \omega(w) \\
      &\;\Longleftrightarrow\; \overline{L} \leq \chi(c).
  \end{align*}
  Thus \(L \leq \chi(c)\) holds if and only if \(\overline{L} \leq \chi(c)\). This universal property characterizes \(\overline{L}\) as the least closed element containing \(L\), so the definition is independent of the chosen presentation of \(L\) as \(\omega(o) \wedge \neg_{\mathbf{D}} \omega(v)\).

  \emph{Extension over finite joins.}
  For a general element \(k = \bigvee_{i} L_{i}\), set \(\overline{k} := \bigvee_{i} \overline{L_{i}}\). This is the least closed element above \(k\): for any \(c \in \mathbf{C}(X)\),
  \[
    k \leq \chi(c)
    \;\;\Longleftrightarrow\;\;
    L_{i} \leq \chi(c) \text{ for every } i
    \;\;\Longleftrightarrow\;\;
    \overline{L_{i}} \leq c \text{ for every } i
    \;\;\Longleftrightarrow\;\;
    \overline{k} \leq c,
  \]
  where the middle equivalence uses the locally closed case. Hence \(\overline{k}\) depends only on \(k\) and not on the decomposition, and the assignment \(k \mapsto \overline{k}\) is a left adjoint to \(\chi {:} \mathbf{C}(X) \hookrightarrow \underline{\mathbf{D}}(X)\).

  \emph{Interior by duality.}
  Define \(k^{\circ} := \neg_{\mathbf{D}}\, \overline{\neg_{\mathbf{D}} k}\). This is an open element (the Boolean complement of a closed). For every \(o \in \mathbf{O}(X)\), writing \(c_{o}\) for the closed complement of \(o\) so that \(\chi(c_{o}) = \neg_{\mathbf{D}} \omega(o)\), we have
  \[
    \omega(o) \leq k
    \;\;\Longleftrightarrow\;\;
    \neg_{\mathbf{D}} k \leq \chi(c_{o})
    \;\;\Longleftrightarrow\;\;
    \overline{\neg_{\mathbf{D}} k} \leq c_{o}
    \;\;\Longleftrightarrow\;\;
    \omega(o) \leq \neg_{\mathbf{D}}\, \overline{\neg_{\mathbf{D}} k} = k^{\circ},
  \]
  where the second equivalence applies the left-adjoint property of \(\overline{(-)}\). Thus \(k^{\circ}\) is the right adjoint to \(\omega\).
\end{proof}

\subsubsection{Semialgebraic Heyting Spaces}
\label{sec:semi-heyt-spac}

We can now extend the definition of semialgebraic spaces in \cite{delfsSemialgebraicTopologyReal1981}. For this we first add rings in the usual way:

\begin{Definition}
  \label{dfn:13}\index{R-ringed Heyting space}
  Given a ring \(R\), an \emph{\(R\)-ringed abstract Heyting space} \(X\) is an abstract Heyting space \([X]\) equipped with a \emph{structure sheaf} \(\mathcal{S}_{X}{:} \mathbf{O}(X)^{op} → R-\mathbf{Alg}\) into the category of \(R\)-algebras
  \(R-\mathbf{Alg}\) that is a sheaf with respect to the finite union coverage.

  A \emph{morphism of \(R\)-ringed abstract Heyting spaces} \(f{:} X → Y\) is a tuple \(([f], f_{\mathcal{S}})\) where \([f]\) is a
   morphism of abstract Heyting spaces \([X] → [Y]\) and \(f_{\mathcal{S}}{:} \mathcal{S}(Y) → [f]_{*}\left(\mathcal{S}(X)\right)\) is a natural transformation from  \(\mathcal{S}(Y)\) to the pushforward of \(\mathcal{S}(X)\) as a
   sheaf of \(R\)-algebras.

   Similar for concrete Heyting spaces.
\end{Definition}
\begin{Remark}
  \label{rmα_:27}
  Note that we do not assume that our space is \emph{locally ringed}, i.e. that the stalks of the structure sheaf are local rings. This is in line with \cite{delfsSemialgebraicTopologyReal1981}.
\end{Remark}

\begin{Proposition}
  \label{prp:4}
  Let \(ι{:} V \hookrightarrow R^{n}\) be a semialgebraic subset of \(R^{n}\). Then \(V\) is an \(R\)-ringed concrete Heyting space.
\end{Proposition}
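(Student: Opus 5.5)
We need to show that a semialgebraic subset \(V \subseteq R^{n}\) carries the structure of an \(R\)-ringed concrete Heyting space in the sense of \cref{dfn:13}. This means producing three things: a Heyting algebra of opens on \(\mathopen{∣}V\mathclose{∣}\), a lattice embedding of it into \(\mathbf{P}(V)\), and a structure sheaf of \(R\)-algebras that is a sheaf for the finite union coverage.

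\textbf{Step 1: the opens.} We take \(\mathbf{O}(V)\) to be the relatively open semialgebraic subsets of \(V\), i.e. the sets \(U \cap V\) with \(U\) open semialgebraic in \(R^{n}\). Equivalently, these are the semialgebraic subsets of \(V\) that are open in the subspace topology. They are closed under finite unions and intersections and contain \(\emptyset\) and \(V\), so the inclusion into \(\mathbf{P}(V)\) is a bounded lattice embedding.

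\textbf{Step 2: the Heyting implication.} We copy the argument of \cref{xmp_:11} and set
\[U \Rightarrow W := \bigl((V \setminus U) \cup W\bigr)^{\circ_{V}},\]
the relative interior in \(V\). This set is semialgebraic by Tarski--Seidenberg, which gives semialgebraicity of closures and interiors (relative interiors reduce to ambient ones via complements and closures inside \(R^{n}\)). It is relatively open by construction. The universal property \(T \subseteq (U \Rightarrow W) \iff T \cap U \subseteq W\) for open \(T\) is then the usual locale computation. The closeds are the set-theoretic complements, so \(V\) is a concrete Heyting space.

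\textbf{Step 3: the structure sheaf.} For \(U \in \mathbf{O}(V)\), we let \(\mathcal{S}_{V}(U)\) be the \(R\)-algebra of continuous semialgebraic functions \(U \to R\), that is, functions whose graph is semialgebraic. Restriction maps are restriction of functions, so \(\mathcal{S}_{V}\) is a presheaf of \(R\)-algebras. Since \(\mathcal{S}_{V}\) is a subpresheaf of the sheaf of all functions, separatedness is automatic.

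The sheaf condition for a finite cover \(U = U_{1} \cup \dots \cup U_{k}\) is the main point to check, and it requires two things for a glued function \(f\) with semialgebraic restrictions \(f_{i}\):
\begin{itemize}
\item \emph{Continuity} of \(f\) is local, because the \(U_{i}\) are open.
\item \emph{Semialgebraicity} holds because the graph of \(f\) is the finite union of the graphs of the \(f_{i}\), and a finite union of semialgebraic sets is semialgebraic.
\end{itemize}
Finiteness of the coverage is exactly what makes this work: for infinite covers, gluing could fail to be semialgebraic. This matches the paper's choice of the finite union coverage.

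Finally, the ring operations preserve semialgebraicity, since the graph of \(f+g\) is a projection of a semialgebraic set and Tarski--Seidenberg applies. This confirms that each \(\mathcal{S}_{V}(U)\) is indeed an \(R\)-algebra.
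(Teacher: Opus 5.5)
Your proposal is correct and follows the same route as the paper: give \(V\) the concrete Heyting structure of its relatively open semialgebraic subsets, and take the sheaf of (continuous) semialgebraic functions as the structure sheaf. The only difference is that you verify by hand what the paper outsources. The paper imports the Heyting structure from \cref{xmp_:11}, although that example treats only \(R^{n}\) itself, and it attributes the finite-union sheaf condition to Delfs--Knebusch; your relative-interior construction of \(U \Rightarrow W\) and your graph-union gluing argument therefore fill in what the paper's own sketch leaves implicit.
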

\begin{proof}
  \label{prf:13}
  We fix a real closed field \(R\), an integer \(n ≥ 0\), and a semialgebraic subset \(V \subseteq R^n\). We saw in \cref{xmp_:11} that \(V\) under its usual semialgebraic topology carries the structure of a
  concrete Heyting space in the sense of \cref{dfn:12,dfn:13}. We can endow this with the structure of the usual semialgebraic function sheaf \(\mathcal{S}_V{:} \mathbf{O}(V)^{op} → R\text{-}\mathbf{Alg}\)
  of \(R\)-algebras, which assigns to each open semialgebraic \(U \subseteq V\) the \(R\)-algebra of semialgebraic maps \(U → R\), and restriction maps along inclusions. Delfs-Knebusch show that
  this is a sheaf for the finite-union coverage and that the resulting ringed space structure matches the ``usual'' semialgebraic geometry.
\end{proof}

So we call a semialgebraic subset of some \(R^{n}\) an \emph{affine semivariety}. We can thus define:

\begin{Example}\label{xmp:112}
\index{semialgebraic space}
Let \(R\) be a real closed field. We define the category \(\mathbf{CSAS}_{R}\) of concrete semialgebraic spaces over \(R\) as the category whose objects \(S\) are \(R\)-ringed
  concrete Heyting spaces such that there exists an open cover of \(\mathopen{∣}S\mathclose{∣}\) by affine semivarieties. Then it follows from the definition of concrete Heyting spaces that
  \(\mathbf{CSAS}_{R}\) features a Heyting open-closed duality. Passing fibrewise to the Boolean algebra generated by opens and closeds gives the corresponding abstract constructible
  closures.
\end{Example}

\(\mathbf{CSAS}_{R}\) is by no means unique or even minimal with respect to this property. We for instance can do similar constructions for semianalytic spaces. But the main point is that
the assumption of a Heyting open-closed duality allows us to transfer all the basic notions from topology: density, regularity, boundaries, closure and interior, but also the logical
structure inherent in a (co)-Heyting algebra, while the use of a semialgebraic structure sheaf keeps all notions tame, like Grothendieck wanted.

\subsection{Categorical Measure Theory}
\label{sec:categ-meas-theory}

\(\mathbf{UBP}\) also allows us to define a notion of measure theory directly on categories using the \(\mathbf{SOb}\)-functor. For this we want to highlight the following excerpt from Fremlin's \emph{Measure
  Theory}, \cite{fremlinMeasureTheory12011} found on the nlab: \cite{nlab:categories_of_measure_theory}

\begin{quote}
  \label{qo_:1}
  One of the striking features of measure theory, compared with other comparably abstract branches of pure mathematics, is the relative unimportance of any notion of 'morphism'.
  The theory of groups, for instance, is dominated by the concept of 'homomorphism', and general topology gives a similar place to 'continuous function'. In my view, the nearest
  equivalent in measure theory is the idea of 'inverse-measure-preserving function'.
\end{quote}

Inverse-measure-preservation is precisely what is implied by a weak natural transformation of hyperdoctrines \(σ{:} \mathcal{H} \Rightarrow \mathbf{K}(P)\) to the constant functor \(\mathbf{K}(P)\) on some
preorder \(P\). So we can define:

\begin{Definition}
  \label{dfn:14}\index{scale on a hyperdoctrine}
  Given a preorder \(P\) and a submonic hyperdoctrine \(\mathcal{H}\), a \emph{weak \(P\)-scale } on \(\mathcal{H}\) is a weak natural transformation \(σ{:} \mathcal{H} \Rightarrow \mathbf{K}(P)\) to the constant functor \(\mathbf{K}(P)\) on \(P\).
\end{Definition}

However, let us consider \cref{qo_:1} more carefully: there might be a reason for this relative morphism-scarcity that goes beyond the incidental, and one natural candidate is that
the preservation of measures is not straightforward. Consider a very simple kind of measure space: \(ℕ\) with the cardinality measure \(\mathopen{∣}-\mathclose{∣}{:} \mathbf{P}(ℕ) → ℕ_{∞}\) on its subobject
lattice. This is an \(ℕ_{∞}\)-scale on the subobject lattice of \(N\) as a set, so on \(ℕ\) as a discrete open-closed space and in particular a concrete Heyting space.
Let \(ι{:} X \hookrightarrow ℕ\) be some finite subset of \(ℕ\) and consider the induced measure \(ι_{*}⨾ \mathopen{∣}-\mathclose{∣}{:} \mathbf{P}(X) → ℕ_{∞}\). It would seem very reasonable that
\(ι\) would be inverse-measure-preserving, but it is not: \(\mathopen{∣}ι_{*}(ι^{*}(ℕ))\mathclose{∣} = \mathopen{∣}ι_{*}(X)\mathclose{∣} < \mathopen{∣}ℕ\mathclose{∣}\) by
assumption. And if we try to further extend \(\mathopen{∣}-\mathclose{∣}\) to the category \(\mathbf{CHS}\) of concrete
Heyting spaces, we encounter yet more problems, as even though cardinality is well-defined for each concrete Heyting space in \(\mathbf{CHS}\), its behavior on regular epimorphisms is
measure-increasing.

So while inverse-measure-preservation makes sense for fully local measures, such a probability measures, which always map the entirety of a space to \(1\), this is not a
property of all measures, and if the global structure of a space itself is measured, a bifurcation in variance becomes apparent, roughly between monomorphisms and epimorphisms, but more
often between their regular variants. Thus, it makes sense to extend the definition somewhat:

\begin{Definition}
  \label{defn_:28}
\index{regular scale}
\index{prescale}
  Given a preorder \(P\) and a submonic hyperdoctrine \(\mathcal{H}{:} \mathcal{C}^{op} → \mathbf{PrO}\), a \emph{\(P\)-prescale} is a natural transformation \(σ{:} \mathbf{Core}_{\mathcal{C}}⨾ \mathcal{H} \Rightarrow \mathbf{K}(P)\), where
  \(\mathbf{Core}_{\mathcal{C}}\) is the embedding of the core groupoid on the objects of \(\mathcal{C}\) into \(\mathcal{C}\) (so that naturality reduces to isomorphism-invariance).

  \(σ\) is a \emph{lax \(P\)-scale} if it descends
  to a lax natural transformation of hyperdoctrines \(\mathcal{H} \Rightarrow \mathbf{K}(P)\) and dually an \emph{oplax \(P\)-scale}.

  \(σ\) is a \emph{regular \(P\)-scale} if it is a lax scale on the monic part of \(\mathcal{C}\) and an
  oplax scale on the regular epimorphisms in \(\mathcal{C}\).
\end{Definition}
\begin{Example}
  \label{xmp_:8}
  Cardinality is a regular scale on each concrete category whose monomorphisms are injections and whose regular epimorphisms are surjections, e.g. in \(\mathbf{Top}\), valued in
  the category of cardinals.
\end{Example}
\begin{Example}
  \label{xmp_:24}
  Let \(\mathbf{M}(X)\) be the \(σ\)-algebra of measurable subsets of a measure space and \(μ{:} X → ℝ_{\geq 0}\) be its measure. Then the family \(μ|_{M}{:}(M) → ℝ_{\geq 0}\) with
  \(M{:} \mathbf{M}(X)\) a measurable subset of \(X\) and \((M)\) its principal ideal in \(\mathbf{M}(X)\) is a scale on \(\mathbf{M}(X)\).
\end{Example}

Depending on the situation we can also demand other things:

\begin{enumerate}
\item\label{item:93} if \(\mathcal{C}\) and \(P\) have an initial object, then it makes sense to demand that \(σ\) preserves bottoms,

\item\label{item:95} as defined, \(σ\) is both a local measure, which assigns to each subobject \(ι{:} A \hookrightarrow X\) in \(\mathcal{C}\) a measure of how large \(ι\) is, and a global one, as it assigns a size to \(X\) itself.
  Depending on the application this can be desired or not. If we want a purely local measure, it makes sense to demand that \(σ(\mathbf{id}_{X}) = \top_{P}\) for each \(X{:} \mathcal{C}\), so that each space is maximal
  in itself.

\item\label{item:101} We can also add whatever kind of additivity conditions we want if we simply assume that \(P\) has an internal monoid structure.
\end{enumerate}

More particularly, we can combine the usage of such a scale with the open-closed dualities we described in \cref{sec:open-clos-dual} to generalize Borel measures:

\begin{Definition}
  \label{dfn:15}\index{Borel-Heyting measure}
  Given an ordered monoid \(M\) and an open-closed duality \(\mathcal{D} = (\mathbf{o}, \mathbf{d}, \mathbf{c})\) on a category \(\mathcal{C}\), an \emph{open Borel-Heyting measure} \(μ\) on \(\mathcal{D}\) is
  an \(M\)-scale on the underlying functor \(\mathbf{O}\) of \(\mathbf{o}\) such that if
  \(ι ∨_{\mathbf{O}} κ\) and \(ι ∧_{\mathbf{O}} κ\) exist and \(ι ∧_{\mathbf{O}} κ = \bot_{\mathbf{O}}\), then \(μ(ι ∨_{\mathbf{O}} κ) = μ(ι) + μ(κ)\).

  A \emph{closed Borel-Heyting measure} is the same but defined on \(\mathbf{C}\).
\end{Definition}

This goes to the core of the intersection of topology and measure theory in Borel measures: we use ``simple'' objects like opens or closeds to infer the size of more complex ones. A key point
is now that we can do this for all categories with a sufficiently rich subobject structure, e.g. Heyting spaces, and often it does not matter whether we define and measure on the
opens or closeds. For this first note a standard subtlety of measure theory on lattices: because the open (or closed) subobjects do not generally form a Boolean algebra, disjoint
additivity is not sufficient to uniquely extend a measure to the generated Boolean algebra, as we cannot decompose overlapping opens into disjoint ones. We must instead require the measure
to be \emph{modular} (also known as a \emph{valuation}).

\begin{Definition}
  \label{dfn:modular-measure}\index{modular measure}
  An open (resp. closed) Borel-Heyting measure \(μ\) on \(\mathcal{D} = (\mathbf{o}, \mathbf{d}, \mathbf{c})\) with values in a commutative monoid \(M\) is \emph{modular} if for all subobjects \(ι, κ\),
  \[
    μ(ι ∨ κ) + μ(ι ∧ κ) = μ(ι) + μ(κ).
  \]

  \(μ\) is \emph{standard} if \(\mathcal{D}\) is defined on a category \(\mathcal{C}\) with an initial object \(∅\), \(\mathbf{O}\) resp. \(\mathbf{C}\) contains each initial morphism \(0_{X}{:} ∅ → X\) and \(μ\)
  maps each \(0_{X}\) to the neutral element \(0 ∈ M\).
\end{Definition}

Assuming this, let us first remember (or learn about) the \emph{Smiley-Horn-Tarski Theorem}:

\begin{Theorem}[Smiley-Horn-Tarski Theorem for Left-Cancellative Monoids]
  \label{thm:2}
\index{Smiley-Horn-Tarski theorem}
\cite{gierzContinuousLatticesDomains}(IV-9.3., IV-9.4.) A modular standard measure \(μ{:} L → ℝ_{\geq 0, ∞}\) from a sub-lattice \(L\) of a Boolean algebra \(B\) to a left-cancellative monoid
  extends to a finitely additive measure on the Boolean closure \(\overline{L}\) of \(L\) in \(B\). Moreover, if \(μ\) is finitely valued, then this extension is unique.
\end{Theorem}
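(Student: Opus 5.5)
I would follow the classical Smiley–Horn–Tarski route through the generated Boolean algebra, in three stages: disjoint normal forms, a definition of the extension on such forms, and proofs of well-definedness and uniqueness.

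\emph{Stage one: normal forms.} Every element of the Boolean closure \(\overline{L}\) of \(L\) in \(B\) is a finite join of \emph{differences} \(a \setminus b := a ∧ ¬ b\) with \(a, b ∈ L\). Shrinking \(b\) to \(a ∧ b\), we may assume \(b \leq a\). Standard lattice manipulation then makes the terms pairwise disjoint. The key combinatorial lemma is that every finite family in \(L\) generates a finite sublattice \(L_{0}\). The Boolean subalgebra generated by \(L_{0}\) is then finite, hence atomic, and each of its atoms is a difference \(a \setminus b\) with \(b < a\) in \(L_{0} \cup \{\bot\}\). Concretely, \(b\) is the join of the elements of \(L_{0}\) that lie strictly below \(a\) and are comparable to it in the relevant chain.

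\emph{Stage two: the extension and well-definedness.} On a difference with \(b \leq a\), I would set \(\bar μ(a \setminus b) := μ(a) - μ(b)\). This is the step where left-cancellativity is used: interpret it as the unique \(x\) with \(μ(b) + x = μ(a)\), and I expect to need such an \(x\) to exist; for \(ℝ_{\geq 0, ∞}\) it does. On a disjoint join of differences, \(\bar μ\) is the sum. Well-definedness is the main obstacle, since one element has many representations. I would handle it by passing to a common finite sublattice \(L_{0}\) containing all the lattice elements of two given representations. Everything is then computed in one finite Boolean algebra, and it suffices to prove that the value of each difference equals the sum of the values of the \(L_{0}\)-atoms below it.

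For that claim, I would prove the inclusion–exclusion identity for \(μ\) on \(L_{0}\) by induction from modularity, in the form
\[
μ(a_{1} ∨ \dots ∨ a_{n}) + μ\bigl((a_{1} ∨ \dots ∨ a_{n-1}) ∧ a_{n}\bigr) = μ(a_{1} ∨ \dots ∨ a_{n-1}) + μ(a_{n}).
\]
Distributivity of \(L\), inherited from \(B\), lets the meet distribute so the induction closes. Combined with left-cancellation, this shows that additivity of \(\bar μ\) over disjoint differences within \(L_{0}\) holds, and hence so does well-definedness. Standardness, \(μ(\bot) = 0\), handles the empty join and differences with \(b = \bot\). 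Finite additivity of \(\bar μ\) on \(\overline{L}\) then follows directly, because disjoint joins of normal forms are again normal forms in a common \(L_{0}\).

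\emph{Stage three: uniqueness for finite values.} Any finitely additive extension \(ν\) must satisfy \(ν(b) + ν(a \setminus b) = ν(a)\) for \(b \leq a\). When \(μ(b)\) is finite, it can be cancelled, which forces \(ν(a \setminus b) = μ(a) - μ(b)\). Additivity then fixes \(ν\) on all disjoint normal forms, so \(ν = \bar μ\). When some \(μ(b) = ∞\), cancellation fails, which is exactly why uniqueness is only asserted for finite-valued \(μ\).
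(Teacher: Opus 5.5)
The paper gives no argument of its own for this theorem; it only cites Gierz et al.\ (IV-9.3, IV-9.4). Your three stages are the classical proof behind that citation, and for finite-valued \(\mu\) they go through. The genuine gap is the existence claim when \(\mu\) takes the value \(\infty\). The statement explicitly covers that case, since only uniqueness is restricted to finite values.

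You define \(\bar\mu(a \setminus b) := \mu(a) - \mu(b)\) as ``the unique \(x\) with \(\mu(b) + x = \mu(a)\)''. This is undefined when \(\mu(b) = \infty\), because \(\mathbb{R}_{\geq 0,\infty}\) is not cancellative there, and your well-definedness argument also runs through cancellation. No convention for \(\infty - \infty\) fixes this. Take \(L = B = \mathbf{P}(\{1,2\})\) with \(\mu(\{1\}) = \mu(\{1,2\}) = \infty\) and \(\mu(\{2\}) = 5\); this \(\mu\) is standard, monotone and modular. Its only extension is \(\mu\) itself, so the difference \(\{1,2\} \setminus \{1\} = \{2\}\) must receive \(5\), and that value cannot be computed from \(\mu(\{1,2\})\) and \(\mu(\{1\})\). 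Likewise, in the finite sublattice \(L_0 = \{\emptyset, \{1\}, \{1,2\}\}\) the atom \(\{2\}\) has no determined value until you refine \(L_0\). So your claim that every difference is worth the sum of the \(L_0\)-atoms below it fails exactly for such differences.

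The missing idea is to split off the finite part.

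1. Let \(F := \{a \in L \mid \mu(a) < \infty\}\). By monotonicity and modularity this is an ideal of \(L\), since \(\mu(a \vee b) + \mu(a \wedge b) = \mu(a) + \mu(b) < \infty\).
2. Run your argument on \(F\), where cancellation is legitimate. This gives a finitely additive \(\nu_F\) on the ring \(R_F\) of finite joins of differences of elements of \(F\).
3. \(R_F\) is exactly the set of elements of \(\overline{L}\) lying below some \(f \in F\): meet each term of a normal form with \(f\). Hence it is down-closed and closed under finite joins.
4. Set \(\bar\mu := \nu_F\) on \(R_F\) and \(\bar\mu := \infty\) off \(R_F\). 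This is finitely additive, because a join lies outside \(R_F\) iff one of its terms does. It extends \(\mu\), because \(a \in L \setminus F\) lies outside \(R_F\): \(a \leq f \in F\) would force \(\mu(a) \leq \mu(f) < \infty\).

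Two smaller points should also be made explicit. First, the existence of your differences, and the nonnegativity of \(\bar\mu\), use monotonicity of \(\mu\). This is part of the paper's notion of a measure as a scale, and without it the statement fails: take the chain \(\bot < a < \top\) with \(\mu(a) = 5\) and \(\mu(\top) = 3\). Second, your Stage one normal form needs \(\top_B \in L\), as in the paper's application where \(\omega\) is a bounded lattice embedding. Otherwise \(\neg a\) is not a finite join of differences, and uniqueness on \(\overline{L}\) already fails for \(L = \{\bot\}\).
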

\begin{Proposition}[Extension of Borel-Heyting Measures]
  \label{prop:borel-heyting-extension}
\index{Borel-Heyting measure!extension}
Let \(\mathcal{D} = (\mathbf{o}, \mathbf{d}, \mathbf{c})\) be an abstract Heyting open-closed duality.
  Then any standard modular open Borel-Heyting measure \(μ{:} \mathbf{O} \Rightarrow \mathbf{K}(ℝ_{\geq 0, ∞})\) extends to a finitely additive measure \(\hat{μ}\) on the constructible closure
  \(\underline{\mathbf{D}}\) of \(\mathbf{D}\).
  The same holds for any standard modular closed Borel-Heyting measure \(ν{:} \mathbf{C} \Rightarrow \mathbf{K}(ℝ_{\geq 0, ∞})\). Moreover, if \(μ\) is finitely valued, then \(\hat{μ}\) is unique.
  Consequently, there is a natural bijection between finite open and closed modular Borel-Heyting measures.
\end{Proposition}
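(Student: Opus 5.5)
The plan is to work fibrewise and then check naturality. Fix an object \(X\) of the base category. By \cref{defn_:13} and \cref{dfn:10}, \(\omega_X\colon \mathbf{O}(X)\hookrightarrow \mathbf{D}(X)\) is a bounded lattice embedding into a Boolean algebra. By \cref{prop:constructible-closure-adjoint}, the constructible closure \(\underline{\mathbf{D}}(X)\) is the Boolean subalgebra generated by \(\omega_X(\mathbf{O}(X))\). The constructible closure is defined as a closure under pseudocomplements, and pseudocomplements in a Boolean algebra are complements, so this is exactly the Boolean closure \(\overline{L}\) of \(L := \omega_X(\mathbf{O}(X))\) in \(B := \mathbf{D}(X)\).

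The component \(\mu_X\) is a modular map \(L \to ℝ_{\geq 0,∞}\), and it is standard, so it sends \(\bot\) to \(0\). The monoid \(ℝ_{\geq 0,∞}\) is not cancellative at \(∞\). However, \cref{thm:2} as quoted is stated directly for valuations into \(ℝ_{\geq 0,∞}\), so I would apply it as given. If more care is needed, I would invoke it only for existence, using the explicit formula on locally closed pieces \(\omega(o)\wedge\neg\omega(v)\). Following the decomposition used in the proof of \cref{prop:constructible-closure-adjoint}, this formula is \(\hat\mu_X(\omega(o)\wedge\neg\omega(v)) := \mu_X(o) - \mu_X(o\wedge v)\) when finite, with the convention of \cite{gierzContinuousLatticesDomains} otherwise. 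The result is a finitely additive \(\hat\mu_X\) on \(\underline{\mathbf{D}}(X)\), unique whenever \(\mu\) is finitely valued.

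The closed case is dual. By \cref{prop:constructible-closure-adjoint}, \(\chi_X(\mathbf{C}(X))\) consists of the Boolean complements of opens, so it generates the same Boolean subalgebra \(\underline{\mathbf{D}}(X)\). We therefore apply \cref{thm:2} to \(\chi_X(\mathbf{C}(X))\subseteq \mathbf{D}(X)\).

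For the bijection in the finite case, there are two maps. Restricting a finite finitely additive measure on \(\underline{\mathbf{D}}\) to opens, or to closeds, gives a standard modular measure, because additivity on a Boolean algebra implies modularity. Conversely, extension followed by restriction returns the original measure, and uniqueness shows that restriction followed by extension is also the identity. Composing the two bijections gives \(\nu(c) = \hat\mu(\top) - \mu(o)\) where \(\chi(c)=\neg\omega(o)\), using finiteness.

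The remaining and main step is naturality: showing that the fibrewise extensions \(\hat\mu_X\) assemble into a scale on \(\underline{\mathbf{D}}\), in whatever strict, lax or oplax sense \(\mu\) was. For \(f\colon X\to Y\), the Heyting condition (\cref{item:76}) makes \(\mathbf{D}(f)\) a Boolean morphism restricting to \(\mathbf{O}(f)\). Hence it maps \(\underline{\mathbf{D}}(Y)\) into \(\underline{\mathbf{D}}(X)\) and commutes with the locally closed normal forms. Naturality then reduces to the generators. In the finite case I would argue via uniqueness: \(\hat\mu_X\circ \mathbf{D}(f)\) and \(\hat\mu_Y\) are both finitely additive and agree on opens, so they are equal. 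For lax and oplax naturality, the inequality must propagate through the subtraction \(\mu(o)-\mu(o\wedge v)\), and this is where I expect trouble. I would first try to state the result only for strictly natural (isomorphism-invariant, or prescale) \(\mu\), falling back to treating the extension componentwise if the lax case does not go through.
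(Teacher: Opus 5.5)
Your proposal follows the paper's proof essentially step for step: fibrewise Smiley--Horn--Tarski on the Boolean algebra generated by the opens; the closed case, since the closeds are exactly the complements of opens and so generate the same algebra; the finite bijection by restriction plus uniqueness; and naturality by uniqueness along the Boolean pullback maps \(\mathbf{D}(f)\). Running the uniqueness argument for naturality only for finitely valued \(\mu\) is more careful than the paper, which invokes uniqueness there without that hypothesis, and the case you leave open does not actually arise for a strictly natural standard \(\mu\): every monomorphism into an initial object is invertible, so \(\mathbf{O}(\emptyset)=\{\mathbf{id}_{\emptyset}\}\), and naturality along \(0_{X}\colon \emptyset\to X\) then gives \(\mu_{X}(o)=\mu_{\emptyset}(\mathbf{id}_{\emptyset})=0\) for every open \(o\).
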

\begin{proof}
  \label{prf:borel-heyting-extension}
  Fix an object \(X {:} \mathcal{C}\). By the definition of a Heyting open-closed duality, \(\mathbf{O}(X)\) is a distributive sublattice of the Boolean algebra \(\mathbf{D}(X)\), and
  \(\mathbf{C}(X)\) consists precisely of the Boolean complements of elements of \(\mathbf{O}(X)\).

  Let \(\underline{\mathbf{D}}(X)\) be the constructible closure. By the Smiley-Horn-Tarski extension theorem \(μ_{X}\) uniquely
  extends to an additive measure \(\hat{μ}_{X} {:} \underline{\mathbf{D}}(X) → ℝ_{\geq 0, ∞}\), which is unique if \(μ\) is finite. Because the closed subobjects \(\mathbf{C}(X)\) are the Boolean complements of the opens, the Boolean algebra generated by \(\mathbf{C}(X)\) is precisely
  \(\underline{\mathbf{D}}(X)\). Restricting \(\hat{μ}_{X}\) to \(\mathbf{C}(X)\) yields a function \(ν_{X} {:} \mathbf{C}(X) → ℝ_{\geq 0, ∞}\) which is trivially modular, hence a closed
  Borel-Heyting measure.

  For any open \(U ∈ \mathbf{O}(X)\) with closed complement \(¬U ∈ \mathbf{C}(X)\), finite additivity on \(\underline{\mathbf{D}}(X)\) implies:
  \[
    μ_{X}(U) + ν_{X}(¬U) = \hat{μ}_{X}(U) + \hat{μ}_{X}(¬U) = \hat{μ}_{X}(\top_X) = μ_{X}(\top_X).
  \]
  Conversely, starting with a finite modular closed Borel-Heyting measure \(ν\), it uniquely extends to \(\underline{\mathbf{D}}(X)\), and restricting this extension to \(\mathbf{O}(X)\)
  recovers an open measure. These two extension-restriction procedures are mutually inverse.

  Finally, we must check that the extension is natural (i.e. it acts as a hyperdoctrine scale). For any morphism of Heyting spaces \(f {:} Y → X\), the pullback \(f^{*}\) on
  \(\mathbf{D}\) is a Boolean homomorphism that restricts to the given pullbacks on \(\mathbf{O}\) and \(\mathbf{C}\). Because \(f^{*}\) preserves the Boolean operations and
  \(\hat{μ}_{X}\) is the unique additive extension of \(μ_{X}\), it follows that \(f^{*} ⨾ \hat{μ}_{Y} = \hat{μ}_{X}\) on \(\underline{\mathbf{D}}(X)\). Thus, naturality holds for the
  extensions, and the bijection is complete.
\end{proof}

So as long as our measure is modular (which is standard for measures on lattices that are not Boolean algebras), we can cleanly translate between measuring the open parts of a space,
measuring the closed parts, and measuring the constructible parts. This is analogous to the extension of a Borel measure to the Borel subsets of a topological space, except necessarily
everything is kept finite and thus more restricted, which is a
mixed blessing: everything is calculable, but we have access to fewer sets. In particular, we cannot define \(σ\)-additive measures without assuming countable joins in
\(\mathbf{O}\). But this runs counter to our paradigmatic example of semialgebraic sets because the union of countably many semialgebraic sets is not semialgebraic — in particular,
no polynomial has infinitely many zeroes. To remedy this we could either:

\begin{enumerate}
\item \label{item:106} use semianalytic sets and introduce countable joins to \(\mathbf{O}\),

\item\label{item:107} or, what is perhaps even better, transition into the nonstandard-realm and use Loeb measures — in this setting the notion of a lattice becomes much stronger, since a
  nonstandard lattice contains hyperfinite joins and meets. This way, we can dispense of the requirement of \(σ\)-additivity altogether and do categorical measure theory in what is internally a purely
  finite setting.\footnote{See \cite{cutlandNonstandardMeasureTheory} for an introduction to nonstandard measure theory.}
\end{enumerate}

For now we leave these as suggestions however.

\section{The Representability Theorems}
\label{sec:covar-repr-theor}

Throughout this text we have been using the \(\mathbf{SOb}_{\mathcal{C}}\)-functor. But what is it? Where does it come from? Here we will derive a universal property of \(\mathbf{SOb}_{\mathcal{C}}\) and \(\mathbf{UBP}\) that
shows that \(\mathbf{UBP}\) is, in some sense, the largest context in which something like the reflection of \(\mathbf{Cat}\) in itself exists for monics. First of all, we will extend
\(\mathbf{SOb}_{\mathcal{C}}\) to a 2-functor \(\mathbf{SOb}\) on the 2-category \(\overset{\hookrightarrow}{\mathbf{Cat}}\) of monic categories.

\subsection{Preliminary Definitions}
\label{sec:prel-defin}

Let us recapitulate the definitions of the basic objects
involved in this section.

\begin{Remark}
  \label{rmα_:16}
  As mentioned in \cref{sec:size-assumptions} we take everything strict (up to contractible homotopy). This might seem evil but a weak framework would further complicate the
  already strained exposition for little gain: the material is orthogonal to questions of weakness, the only significant adaptation would be to have to consider the naturality
  2-cells of weak natural transformations in 2-categories other than \(\mathbf{UBP}\) (which is locally thin up to homotopy) in the latter part of \cref{sec:univ-char-mathbf}.
\end{Remark}

\begin{Definition}[Strict 2-category]
  \label{def:2-category}
\index{2-category}
  A \emph{(strict) 2-category} \(\mathcal{C}\) consists of:
  \begin{enumerate}
  \item A class of \emph{objects} (or \emph{0-cells}) \(X, Y, Z, \dots\)
  \item For each pair of objects \(X, Y\), a category \(\mathcal{C}(X,Y)\) whose
    \begin{itemize}
    \item objects are called \emph{1-cells} (or \emph{morphisms}) \(f, g {:} X \to Y\),
    \item morphisms are called \emph{2-cells} (or \emph{2-morphisms}) \(α {:} f \Rightarrow g {:} X \to Y\).
    \end{itemize}
  \item A \emph{horizontal composition} functor
    \[
      ⨾_{X,Y,Z} {:} \mathcal{C}(X,Y) \times \mathcal{C}(Y,Z) \to \mathcal{C}(X,Z)
    \]
    for each triple of objects, written simply as \(f⨾ g\) on 1-cells and \(β⨾ α\) on 2-cells.
  \item For each object \(X\), a distinguished 1-cell \(\mathbf{id}_X {:} X \to X\) serving as the identity for horizontal composition.
  \end{enumerate}
  Subject to the \emph{strict} associativity and unit axioms:
  \begin{align*}
    f ⨾ (g ⨾ h) &= (f ⨾ g) ⨾ h, & f ⨾ \mathbf{id}_Y &= f = \mathbf{id}_X ⨾ f
  \end{align*}
  for all composable 1-cells (and similarly for 2-cells). Thus horizontal composition is strictly associative and unital.

  Vertical composition of 2-cells (i.e. composition within each hom-category) is written with a semicolon: given \(α {:} f \Rightarrow g\) and \(β {:} g \Rightarrow h\), the composite is \(α; β {:} f \Rightarrow h\). The \emph{interchange law}
  \[
    (α; β)⨾ (α'; β') = (α⨾ α'); (β⨾ β')
  \]
  holds automatically from the functoriality of \(⨾\).
\end{Definition}
\begin{Definition}[Strict 2-functor]
  \label{def:2-functor}
\index{2-functor}
  Let \(\mathcal{C}, \mathcal{D}\) be strict 2-categories. A \emph{(strict) 2-functor} \(F {:} \mathcal{C} \to \mathcal{D}\) consists of:
  \begin{enumerate}
  \item A function on objects: \(X \mapsto F(X)\).
  \item For each pair \(X, Y\), a functor \(F_{X,Y} {:} \mathcal{C}(X,Y) \to \mathcal{D}(F(X), F(Y))\) on hom-categories, mapping 1-cells \(f \mapsto F(f)\) and 2-cells \(α \mapsto F(α)\).
  \end{enumerate}
  Subject to \emph{strict} preservation of identities and horizontal composition:
  \begin{align*}
    F(\mathbf{id}_X) &= \mathbf{id}_{F(X)}, & F(f ⨾ g) &= F(f) ⨾ F(g), & F(α; β) &= F(α); F(β).
  \end{align*}
\end{Definition}
\begin{Definition}[Strict 2-natural transformation]
  \label{def:2-nat-trans}
\index{2-natural transformation}
  Let \(F, G {:} \mathcal{C} \to \mathcal{D}\) be strict 2-functors. A \emph{(strict) 2-natural transformation} \(θ {:} F \Rightarrow G\) consists of:
  \begin{enumerate}
  \item For each object \(X {:} \mathcal{C}\), a 1-cell \(θ_X {:} F(X) \to G(X)\) in \(\mathcal{D}\) (the \emph{component} at \(X\)),
  \end{enumerate}
  such that for every 1-cell \(f {:} X \to Y\) in \(\mathcal{C}\) the following square of 1-cells in \(\mathcal{D}\) \emph{commutes strictly}:
  \[
    \begin{tikzcd}
      F(X) \ar[r, "F(f)"] \ar[d, "θ_X"'] & F(Y) \ar[d, "θ_Y"] \\
      G(X) \ar[r, "G(f)"'] & G(Y)
    \end{tikzcd}
  \]
  i.e. \(θ_X ⨾ G(f) = F(f) ⨾ θ_Y\), and for every 2-cell \(α {:} f \Rightarrow g {:} X \to Y\):
  \[
    F(α) ⨾ θ_Y = θ_X ⨾ G(α) \qquad \text{(equality of 2-cells).}
  \]
  If only the 1-cell condition holds and the 2-cell condition is replaced by a morphism \(θ(f){:} θ_{X}⨾ G(f) → F(f)⨾ θ_{Y}\) satisfying coherence conditions that make it behave
  sensibly with respect to composition and identities,\footnote{The coherence conditions are the same as for a pseudonatural transformation, see e.g. p.5 of
    \cite{leinsterBasicBicategories1998} for details.} the transformation is called \emph{lax}; if the
  inequality is reversed, \emph{oplax}.
  % To us, the most important lax case is when 
\end{Definition}
\begin{Definition}[Strict 2-adjunction]
  \label{def:2-adjunction}
\index{2-adjunction}
  Let \(F {:} \mathcal{C} \to \mathcal{D}\) and \(U {:} \mathcal{D} \to \mathcal{C}\) be strict 2-functors. A \emph{(strict) 2-adjunction} \(F \dashv U\) consists of strict 2-natural transformations
  \[
    η {:} \mathbf{id}_{\mathcal{C}} \Rightarrow F ⨾ U \qquad \text{(the \emph{unit})}, \qquad \varepsilon {:} U ⨾ F \Rightarrow \mathbf{id}_{\mathcal{D}} \qquad \text{(the \emph{counit})},
  \]
  satisfying the \emph{triangle identities} strictly:
  \[
    F \xrightarrow{η ⨾ F} F ⨾ U ⨾ F \xrightarrow{F ⨾ \varepsilon} F \;=\; \mathbf{id}_F, \qquad U \xrightarrow{U ⨾ η} U ⨾ F ⨾ U \xrightarrow{\varepsilon ⨾ U} U \;=\; \mathbf{id}_U.
  \]
  Equivalently, for each \(X {:} \mathcal{C}\) and \(Y {:} \mathcal{D}\) there is an isomorphism of hom-categories
  \[
    \mathcal{D}(F(X), Y) \cong \mathcal{C}(X, U(Y))
  \]
  that is strictly natural in \(X\) and \(Y\) and strictly 2-natural in the 2-cells.
\end{Definition}

\subsection{Initiality of \(\mathbf{SOb}\)}
\label{sec:initiality-mathbfsob}

Now let us begin. For each (necessarily monomorphism-preserving) functor \(F{:} \mathcal{C} → \mathcal{D}\) between monic categories \(\mathcal{C}\) and \(\mathcal{D}\)
we can use the natural transformations \(F_{*}{:} \mathbf{SOb}_{\mathcal{C}} \Rightarrow F⨾ \mathbf{SOb}_{\mathcal{D}}\) that we introduced in  \cref{sec:density}, which maps \(ι{:} A \hookrightarrow X\) to \(F(ι){:} F(A) \hookrightarrow F(X)\).
To describe what \(\mathbf{SOb}\) does on 2-cells let \(η{:} F \Rightarrow G\) be a natural transformation.
Again we have to assume \(η\) is monic (which is trivial on \(\overset{\hookrightarrow}{\mathbf{Cat}}\))
to say anything about how it acts on subobjects. Then we can say that the monomorphisms \(η_{X}\) induce for each \(X\) an ideal embedding
\(η_{*, X}{:} \mathbf{SOb}(F(X)) \hookrightarrow \mathbf{SOb}(G(X)){:} F(ι) ↦ G(ι)\), which is necessarily principal by \cref{rmk:17}. Since \(η\) is natural between \(F\) and \(G\), the diagram

\begin{figure}[H]
  \centering
  \begin{tikzcd}
    G(A) \ar[r, hook, "G(ι)"] & G(X) \\
    F(A) \ar[u, hook, "η_{A}"] \ar[r, hook, "F(ι)"'] & F(X) \ar[u, hook, "η_{X}"']
  \end{tikzcd}
  % \caption{\label{fi♭_:1}}
\end{figure}

commutes, so for each \(ι{:} A \hookrightarrow X\) it holds that \(F(ι)_{*}⨾ η_{*, X} \leq G(ι)_{*}\). Thus, \(η\) induces for each \(X\) a triangle

\begin{figure}[H]
  \centering
  \begin{tikzcd}
  & \mathbf{SOb}(G(X)) & \\
  & \geq & \\
  \mathbf{SOb}(X) \ar[uur, "G_{*, X}"] \ar[rr, "F_{*, X}"] && \mathbf{SOb}\left(F(X)\right) \ar[uul, "η_{*, X}"']
\end{tikzcd}
  % \caption{\label{fi♭_:2}}
\end{figure}

so that \(G_{*, X} \geq F_{*, X}⨾ η_{*, X}\). For every morphism \(ι{:} X \hookrightarrow Y\) in \(\mathcal{C}\) the image \(ι_{*}\) is an ideal embedding, so this extends to a modification

\[G_{*} \geq F_{*}; (η⨾ \mathbf{SOb}).\]

We can capture this in a diagram:

\begin{figure}[H]
  \centering
  \begin{tikzcd}[column sep=large, row sep=large]
  \mathbf{SOb}_{\mathcal{C}}(A)
    \ar[dd, hook, "\iota_{*}"']
    \ar[dr, hook, "F_{*,A}", pos=0.35]
    % \ar[drr, bend left=30, "G_{*,A}"{name=G}]
    & & & \\
  & \mathbf{SOb}_{\mathcal{D}}(F(A))
  \ar[dr, hook, "F(ι)_{*}"{name=eta}]
    & & \\
  \mathbf{SOb}_{\mathcal{C}}(X)
    \ar[rr, hook, "F_{*,X}"'{name=FX}]
    \ar[rrr, bend right=30, "G_{*,X}"'{name=GX}]
    & & \mathbf{SOb}_{\mathcal{D}}(F(X))
    \ar[r, hook, "η_{*,X}"']
    & \mathbf{SOb}_{\mathcal{D}}(G(X))
  % \ar[from=G, to=eta, Rightarrow, shorten >=4pt, shorten <=4pt, "\geq" description]
\end{tikzcd}

\caption{\label{fi♭_:24} For each subobject \(\iota\colon A\hookrightarrow X\) in \(\mathcal{C}\), naturality of \(η\) gives \(F(\iota)⨾η_X=η_A⨾ G(\iota)\), so the image ideal \(G(ι)\) of \(ι\) under \(G\) contains
  the image ideal \(F(ι)\) under \(F\) postcomposed with \(η\). The inequality \(G_*\ge F_*;(η⨾\mathbf{SOb})\) is exactly the componentwise inclusion of principal ideals.}
\end{figure}

Thus,
\[\mathbf{SOb}: X ↦ (X, \mathbf{SOb}(X)), F: ↦ (F, F_{*}), η ↦ (η, \mathbf{SOb}_{η})\]
 extends to a 2-functor \(\mathbf{\overset{\hookrightarrow}{Cat}} → {\mathbf{\overset{\hookrightarrow}{Cat}}_{◆ \mathbf{UBP}}}\):

\begin{enumerate}
\item from the (2-large) 2-category \(\overset{\hookrightarrow}{\mathbf{Cat}}\) whose objects are well-powered purely monic categories, whose morphisms are (necessarily
  monomorphism-preserving) functors and whose 2-morphisms are (necessarily monic) natural transformations,

\item\label{item:17} to the (3 -large) 2-category \({\mathbf{\overset{\hookrightarrow}{Cat}}_{◆ \mathbf{UBP}}}\)

  \begin{enumerate}
  \item \label{item:87} whose objects are functors \(\mathcal{C} → \mathbf{UBP}\) into \(\mathbf{UBP}\),

  \item\label{item:88} morphisms are triangles

    \begin{figure}[H]
      \centering
      \begin{tikzcd}
        \mathcal{C} \ar[rr, "F"] \ar[dr, "E" below, ""{name=S}] && \mathcal{D} \ar[dl, "E'"] \\
        & \mathbf{UBP} \arrow[Rightarrow, from=S, to=1-3, "F_{*}"', shorten >=6pt] &
      \end{tikzcd}
      % \caption{\label{fi♭_:3}}
    \end{figure}

    where \(F_{*}\) is componentwise top-preserving.

  \item\label{item:89} 2-morphisms are given by (necessarily monic) natural transformations \(η{:} F \Rightarrow G{:} \mathcal{C} → \mathcal{D}\) over \(\mathbf{UBP}\) and the induced triangles
    \begin{figure}[H]
      \centering
      \begin{tikzcd}
        F⨾ E' \ar[rr, Rightarrow, "η⨾ E'"] && G⨾ E' \\
        & E \ar[ul, Rightarrow, "F_{*}"] \ar[ur, Rightarrow, "G_{*}"{below, yshift=-2pt, xshift=3pt}, ""{name=G}] \ar[triple, from=G, to= 1-1, "\leq", shorten >=6pt, shorten
        <=6pt]
      \end{tikzcd}
      % \caption{\label{fi♭_:4}}
    \end{figure}
  \end{enumerate}
\end{enumerate}

To show well-definedness, the only non-trivial part is the defining inequality for 2-cells:

     \[F_{*}; (η⨾ \mathbf{SOb}_{\mathcal{C}'}) ≤ F'_{*}.\]

     We did this in \cref{fi♭_:24}. In symbols:

     \[
       (F_{*}; (η⨾ \mathbf{SOb}_{\mathcal{C}'}))_X(\iota) = (η ⨾ \mathbf{SOb}_{\mathcal{C}'})_X(\top_{F(\iota)}) = F(\iota) ⨾ η_{X} = η_A ⨾ F'(ι) ≤ F'(ι) = F'_{*, X}(ι).
     \]

\begin{Remark}
  \label{rmα_:18}
  So \(\mathbf{\overset{\hookrightarrow}{Cat}}_{◆ \mathbf{UBP}}\) is a particular variant of a slice-2-category with morphisms and 2-morphisms given by two lax kinds of triangles. We pronounce this construction
  as ``diamond-over''.
\end{Remark}

\(\mathbf{SOb}_{(-)}\) maps a well-powered category \(\mathcal{C}\) to the functor \(\mathbf{SOb}_{\mathcal{C}}\), a functor \(F\) to \((F, F_{*})\) and a natural transformation \(η{:} F \Rightarrow G\) to \((η_{*}, \leq)\) as described above. Moreover, since for each monomorphism
\(ι{:} X \hookrightarrow Y\) the induced map \(ι_{*}{:} \mathbf{SOb}(X) \hookrightarrow \mathbf{SOb}(Y)\) is an ideal-embedding, the functor \(\mathbf{SOb}_{\mathcal{C}}{:} \mathcal{C} → \mathbf{UBP}\) factors through the category
\(\mathopen{↓}\mathbf{UBP}\mathclose{↓}\) of upward-bounded-preorders and ideal embeddings.

\begin{Remark}
  \label{rmk:9}
  We can describe the image of \(\mathbf{SOb}_{(-)}\) more precisely: since each \(ι ∈ \mathbf{SOb}_{\mathcal{C}}(X)\) corresponds to a monomorphism \(ι{:} A \hookrightarrow X\) and every principal ideal of
  \(\mathbf{SOb}_{\mathcal{C}}(X)\) corresponds to an element \(\mathbf{SOb}_{\mathcal{C}}(X)\) by definition, every principal ideal embedding \(κ{:} (a) \hookrightarrow \mathbf{SOb}_{\mathcal{C}}(X)\) with \(a\) an element of
  \(\mathbf{SOb}_{\mathcal{C}}(X)\) is in the image of \(\mathbf{SOb}_{\mathcal{C}}\). Thus, \(\mathbf{Im}(\mathbf{SOb}_{\mathcal{C}})\) is closed under precomposition and thus an ideal\footnote{Note that we mean ideal in the category-theoretic sense, i.e. closed under precomposition but without binary upward bounds.} in
  \(\mathopen{↓}\mathbf{UBP}\mathclose{↓}\), and, conversely, every 1-categorical ideal in \(\mathopen{↓}\mathbf{UBP}\mathclose{↓}\) is a subcategory of
  \(\mathopen{↓}\mathbf{UBP}\mathclose{↓}\) that maps to itself.

  It is important that the ideal that is the image of \(\mathbf{SOb}_{\mathcal{C}}\) is not generally a principal ideal as a subcategory of \(\mathbf{UBP}\), i.e. not upward-bounded: each object in \(\mathcal{C}\) is composed
  of its parts, but \(\mathcal{C}\) itself is generally only a context in which these objects interact, they don't assemble into one whole.

  % It will also become important later on that this implies that \(\overset{\hookrightarrow}{\mathbf{UBP}}\) itself is pre-mirrored in \(\overset{\hookrightarrow}{\mathbf{Cat}}\) in a way. More precisely, each
  % preorder \(P\) is a monic category and the functor \(\mathbf{SOb}_{P}{:} P\) maps \(P\) to the principal ideal \(\mathopen{∣}(P)\mathclose{∣}_{1}\) of \(P\) in
  % \(\mathopen{↓}\mathbf{UBP}\mathclose{↓}\), consisting of all principal ideal mappings into \(P\) but without any of the non-invertible 2-cells between them.
\end{Remark}

Thus, \(\mathbf{SOb}_{(-)}{:} \overset{\hookrightarrow}{\mathbf{Cat}} \hookrightarrow {\mathbf{\overset{\hookrightarrow}{Cat}}_{◆ \mathbf{UBP}}}\) factors through the full sub-2-category
\(\mathbf{\overset{\hookrightarrow}{Cat}}_{◆ \mathbf{UBP}|_{↓}}\) of \(\mathbf{\overset{\hookrightarrow}{Cat}}_{◆ \mathbf{UBP}}\) on those functors to \(\mathbf{UBP}\) that factor through \(\mathopen{↓}\mathbf{UBP}\mathclose{↓}\). Moreover, these \(F_{*}\) are unique in their position:

\begin{Lemma}
  \label{lmm:3}
  Given a functor of monic categories \(F{:} \mathcal{C} \hookrightarrow \mathcal{D}\), \(F_{*}\) is the only top-preserving natural transformation \(ε{:} \mathbf{SOb}_{\mathcal{C}} \Rightarrow  F⨾ \mathbf{SOb}_{\mathcal{D}}{:} \mathcal{C} → \overset{\hookrightarrow}{\mathbf{UBP}}\).
\end{Lemma}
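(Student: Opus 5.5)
The approach is to pin down an arbitrary top-preserving natural transformation \(ε\) componentwise by combining naturality along monomorphisms with the fact that every subobject is the image of a top under an ideal embedding. Fix \(X{:} \mathcal{C}\) and a subobject \(ι{:} A \hookrightarrow X\). In \(\mathbf{SOb}_{\mathcal{C}}(X)\) we have \(ι = ι_{*}(\top_{A})\), where \(ι_{*}{:} \mathbf{SOb}_{\mathcal{C}}(A) \hookrightarrow \mathbf{SOb}_{\mathcal{C}}(X)\) is the ideal embedding induced by postcomposition. Since \(\mathcal{C}\) is monic, \(ι\) is a morphism of \(\mathcal{C}\), so naturality of \(ε\) along \(ι\) applies:
\[ι_{*}⨾ ε_{X} = ε_{A}⨾ F(ι)_{*}.\]
Here \(F(ι)_{*}\) is postcomposition with the monomorphism \(F(ι)\) in \(\mathcal{D}\); it is monic because \(\mathcal{D}\) is monic.

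Next I would evaluate this equation at \(\top_{A}\). Top-preservation gives \(ε_{A}(\top_{A}) = \top_{F(A)} = \mathbf{id}_{F(A)}\), hence
\[ε_{X}(ι) = ε_{X}(ι_{*}(\top_{A})) = F(ι)_{*}(\mathbf{id}_{F(A)}) = F(ι) = F_{*,X}(ι).\]
Because \(ι\) was arbitrary, this forces \(ε_{X} = F_{*,X}\) for every \(X\), so \(ε = F_{*}\).

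Existence comes from the preceding construction. There, \(F_{*}\) was shown to be natural (it is functoriality of \(F\) on composites \(κ⨾ι\)) and top-preserving, since \(F(\mathbf{id}_{X}) = \mathbf{id}_{F(X)}\). This yields uniqueness together with existence.

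The main point to check is that equality in \(\mathbf{SOb}\) is up to isomorphism of subobjects. Because we work univalently (\cref{sec:size-assumptions}), the element \(\mathbf{id}_{F(A)}\) is exactly the top, and the computation is well-defined on equivalence classes. Strict naturality is assumed in the statement; if it were only lax, the same computation would give just \(F_{*} \leq ε\) or its reverse, so strictness is what is needed here. No earlier proposition beyond \cref{rmk:17} is required.
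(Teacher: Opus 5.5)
Your uniqueness argument is correct and is the paper's own proof: apply naturality along \(\iota\), evaluate at \(\top_{A} = \mathbf{id}_{A}\), and use top-preservation to force \(\varepsilon_{X}(\iota) = F(\iota)\). The paper's proof establishes only this uniqueness, and your extra existence claim goes too far: being natural and top-preserving does not make the components of \(F_{*}\) monic in \(\mathbf{UBP}\). Even a faithful, isomorphism-reflecting \(F\) can send two non-isomorphic subobjects of \(X\) to isomorphic subobjects of \(F(X)\) when the comparison isomorphism in \(\mathcal{D}\) is not in the image of \(F\). So \(F_{*}\) landing in \(\overset{\hookrightarrow}{\mathbf{UBP}}\) needs an extra hypothesis on \(F\).
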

\begin{proof}
  \label{prf:6}
  \begin{figure}[H]
    \centering
      \begin{tikzcd}
        \mathbf{SOb}_{\mathcal{C}}(A) \ar[r, hook, "ε_A"] \ar[d, hook, "ι_{*}"'] & \mathbf{SOb}_{\mathcal{D}}(F(A)) \ar[d, hook, "F(ι)_{*}"] \\
    \mathbf{SOb}_{\mathcal{C}}(X) \ar[r, hook, "ε_X"'] & \mathbf{SOb}_{\mathcal{D}}(F(X))
  \end{tikzcd}
    % \caption{\label{fi♭_:5}}
  \end{figure}

  Evaluate at \(\mathbf{id}_A\). The left-down leg gives \(\mathbf{id}_A⨾ ι = ι\); the top-right leg gives \(ε_A(\mathbf{id}_A) = \mathbf{id}_{F(A)}\) (by top-preservation) and then \(\mathbf{id}_{F(A)}⨾ F(ι) = F(ι)\).
  Thus, naturality forces \(ε_X(ι) = F(ι)\). Since every element of \(\mathbf{SOb}_{\mathcal{C}}(X)\) is some monomorphism in \(\mathcal{C}\) we have \(ε_X = F_{*, X}\) for all \(X\); hence \(ε = F_{*}\).
\end{proof}

Given now some monic category \(\mathcal{C}\), functor \(G{:} \mathcal{C} → \mathbf{UBP}\) and \(X{:} \mathcal{C}\) we can define a map \(G_{★, X}{:} \mathbf{SOb}(X) \hookrightarrow G(X)\) which maps each
\(ι{:} \mathbf{SOb}(X)\) to the top \(\top_{G(ι)}\) of the order-morphism \(G(ι){:} G(A) \hookrightarrow G(X)\), which is an embedding since \(ι\) is monic. If \(ι \leq κ\) for some \(κ{:} B \hookrightarrow X\), then there exists a \(λ{:} A \hookrightarrow B\) in
\(\mathcal{C}\) such that \(λ⨾ κ = ι\), so that \(G(λ)⨾ G(κ) = G(ι)\) and thus \(G_{★, X}(ι) \leq G_{★, X}(κ)\). So \(G_{★}\) is the component of a reflector at \(G\). This is how we can tell that
\(\mathbf{SOb}\) is actually an embedding of a reflective sub-2-category.

  Note that this is the basically same construction as the \(F_{*}\) from \cref{sec:density}: when \(F{:} \mathcal{C} → \mathbf{UBP}\) (instead of \(F{:} \mathcal{C} → \mathcal{D}\) between general categories), the map
  \(F_{*, X}\) sends \(ι\) to \(\top_{F(ι)} \in F(X)\), which is the element of \(F(X)\) that corresponds to the ideal embedding \(ι\), whereas in the general case it sends \(ι\) to
  \(F(ι) \in \mathbf{SOb}(F(X))\), so in the second case we take the subobject-functor from a general category while in the first we don't need to since we're already in
  \(\mathbf{UBP}\) anyway.

\begin{Lemma}
  \label{lmm:2}
  For each functor \(F{:} \mathcal{C} → \mathbf{UBP}\) and morphism \(f {:} X \hookrightarrow Y\) in \(\mathcal{C}\) it holds that \(f_{*}⨾ F_{★, Y}  = F_{★, X}⨾ F(f)\). Thus \(F_{★}\) is a natural transformation
  \[
    F_{★} {:} \mathbf{SOb}_{\mathcal{C}} ⇒ F.
  \]
\end{Lemma}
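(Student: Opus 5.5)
We verify naturality elementwise by chasing an arbitrary subobject \(ι{:} A \hookrightarrow X\) around the square
\[
  \mathbf{SOb}_{\mathcal{C}}(X) \xrightarrow{f_{*}} \mathbf{SOb}_{\mathcal{C}}(Y) \xrightarrow{F_{★, Y}} F(Y)
  \qquad\text{and}\qquad
  \mathbf{SOb}_{\mathcal{C}}(X) \xrightarrow{F_{★, X}} F(X) \xrightarrow{F(f)} F(Y).
\]
Going down first, \(f_{*}(ι)\) is the subobject \(ι⨾ f{:} A \hookrightarrow Y\), which is monic as a composite of monics. By the definition of \(F_{★}\) given just before the statement, the down-then-right leg sends \(ι\) to
\[
  \top_{F(ι⨾ f)} = F(ι⨾ f)\bigl(\top_{F(A)}\bigr).
\]
Going right first, \(F_{★, X}(ι) = F(ι)(\top_{F(A)})\), and applying \(F(f)\) gives \(F(f)\bigl(F(ι)(\top_{F(A)})\bigr)\).

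Functoriality of \(F\) gives \(F(ι⨾ f) = F(ι)⨾ F(f)\). In diagrammatic order this is exactly the composite that first applies \(F(ι)\) and then \(F(f)\). Hence the two legs agree on every element, and the square commutes strictly on underlying maps.

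Two well-definedness points need to be addressed first.

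\begin{enumerate}
\item \(F_{★, X}\) does not depend on the representative of the subobject. If \(ι\) and \(ι'\) represent the same subobject, then \(ι' = σ⨾ ι\) for an isomorphism \(σ\). Since \(F(σ)\) is an isomorphism of upward-bounded preorders, it preserves tops. So \(F(ι')(\top) = F(ι)(F(σ)(\top)) = F(ι)(\top)\), computed up to the univalence conventions of \cref{sec:size-assumptions}.
\item \(F_{★, X}\) is monotone. This was already shown in the paragraph preceding the statement, so each component is a morphism of \(\mathbf{UBP}\).
\end{enumerate}

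Since \(\mathbf{UBP}\) is locally thin, strict commutativity of the square on elements is all that naturality requires; no 2-cell coherence needs to be checked. This yields the natural transformation \(F_{★}{:} \mathbf{SOb}_{\mathcal{C}} \Rightarrow F\) on the monic part of \(\mathcal{C}\).

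The only step needing care is bookkeeping rather than mathematics. The diagrammatic composition order \(⨾\) must be matched against function application on elements, so that \(F(ι⨾ f)(x) = F(f)(F(ι)(x))\). The paper's notation \(\top_{g} = \top_{P}⨾ g\) from \cref{sec:smalln-under-morph} makes this transparent: \(\top_{ι⨾ f} = \top_{A}⨾ ι⨾ f\) as a point \(* \to Y\), and the identity is just associativity of composition after applying \(F\).
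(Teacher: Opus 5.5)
Your proposal is correct and follows the paper's proof. Both chase an arbitrary \(\iota{:} A \hookrightarrow X\) around the square, and both conclude from \(f_{*}(\iota) = \iota⨾ f\) together with functoriality \(F(\iota⨾ f) = F(\iota)⨾ F(f)\) that \(\top_{F(\iota⨾ f)} = F(f)(\top_{F(\iota)})\). Your extra checks (independence of the chosen representative via top-preservation of \(F(\sigma)\), and monotonicity of \(F_{★, X}\)) only make explicit what the paper leaves implicit.
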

\begin{proof}
  \label{prf:lmm2}
  Consider the diagram:

  \begin{figure}[H]
    \centering
    \begin{tikzcd}
      \mathbf{SOb}_{\mathcal{C}}(X) \ar[r, "f_{*}"] \ar[d, "F_{★, X}"'] & \mathbf{SOb}_{\mathcal{C}}(Y) \ar[d, "F_{★, Y}"] \\
      F(X) \ar[r, "F(f)"'] & F(Y)
    \end{tikzcd}
    % \caption{\label{fi♭_:17}}
  \end{figure}
  Given \(ι {:} A \hookrightarrow X\),  we have:
  \[f_{*}⨾ F_{★, Y}(ι⨾ f) = \top_{F(ι⨾ f)} = \top_{F(ι)⨾ F(f)} = F(f)(\top_{F(ι)}) =  F_{★, X}⨾ F(f).\]

  Since this holds for all \(ι ∈ \mathbf{SOb}(X)\) for all \(X{:} \mathcal{C}\) we have that \(F_{★}\) is natural.
\end{proof}

So \(\mathbf{SOb}_{\mathcal{C}}\) is the \emph{universal representation of \(\mathcal{C}\) in \(\mathbf{UBP}\)}: every other representation, i.e. monomorphism-preserving functor \(F\) from \(\mathcal{C}\) to \(\mathbf{UBP}\), comes with a
unique natural transformation \(F_{★}{:} \mathbf{SOb}_{\mathcal{C}} → \mathbf{UBP}\) that maps out precisely the structure of the image of \(\mathcal{C}\) under \(F\). The reason it factors through \(\mathopen{↓}\mathbf{UBP}\mathclose{↓}\) is that this is the densest possible stacking of all
monomorphisms in \(\mathcal{C}\) without equalizations, while the \(F_{★}\) for some functor \(F\) maps that structure into some larger preorder.

Note that the cells induced by functors and natural transformations in \(\overset{\hookrightarrow}{\mathbf{Cat}}\) according to \cref{item:88} do \emph{not} factor through
\(\mathopen{↓}\mathbf{UBP}\mathclose{↓}\). Rather, each \(F_{*}\) and \(F_{★}\) is a natural transformation that preserves tops (since functors preserve identities), so they factor through the wide
sub-2-category \(\mathbf{UBP}_{w}\) of \(\mathbf{UBP}\) on top-preserving maps.

So to summarize:

\begin{Theorem}
  \label{thm:1}
\index{subobject functor!representability}
\index{subobject functor!adjoint}
  The 2-functor \(\mathbf{SOb}_{(-)}{:} \overset{\hookrightarrow}{\mathbf{Cat}} → {\mathbf{\overset{\hookrightarrow}{Cat}}_{◆ \mathbf{UBP}}}\) is left adjoint to the 2-functor
  \(\mathbf{s}{:} {\mathbf{\overset{\hookrightarrow}{Cat}}_{◆ \mathbf{UBP}}} → \overset{\hookrightarrow}{\mathbf{Cat}}{:} (F{:} \mathcal{C} → \mathbf{UBP}) ↦ \mathcal{C}\).
\end{Theorem}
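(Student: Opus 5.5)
I will prove the adjunction through the hom-category characterization in \cref{def:2-adjunction}: for each monic category \(\mathcal{C}\) and each object \(G{:}\mathcal{D}\to\mathbf{UBP}\) of \(\mathbf{\overset{\hookrightarrow}{Cat}}_{◆ \mathbf{UBP}}\), I construct an isomorphism of hom-categories
\[\mathbf{\overset{\hookrightarrow}{Cat}}_{◆ \mathbf{UBP}}(\mathbf{SOb}_{\mathcal{C}}, G)\;\cong\;\overset{\hookrightarrow}{\mathbf{Cat}}(\mathcal{C}, \mathcal{D})\]
and check that it is natural in both variables.

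The forward map sends a morphism \((F, φ)\) to its underlying functor \(F\). Here \(φ{:}\mathbf{SOb}_{\mathcal{C}}\Rightarrow F⨾ G\) is a componentwise top-preserving transformation. The backward map sends a functor \(F{:}\mathcal{C}\to\mathcal{D}\) to the pair \((F, (F⨾ G)_{★})\), where \((F⨾ G)_{★}\) is given by \cref{lmm:2} applied to the functor \(F⨾ G{:}\mathcal{C}\to\mathbf{UBP}\). This transformation is top-preserving because \(\top_{(F⨾G)(\mathbf{id}_X)}=\top_{G(F(X))}\).

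The composite "functor, then pair, then functor" is the identity by construction. The substantive step is the other composite: I must show that every top-preserving \(φ{:}\mathbf{SOb}_{\mathcal{C}}\Rightarrow F⨾G\) equals \((F⨾ G)_{★}\). For this I adapt the argument of \cref{lmm:3}. Take the naturality square for a subobject \(ι{:}A\hookrightarrow X\) and evaluate both legs at \(\mathbf{id}_A\). The left leg gives \(φ_X(ι)\). The right leg gives
\[G(F(ι))\bigl(φ_A(\mathbf{id}_A)\bigr)=G(F(ι))(\top)=\top_{G(F(ι))}=(F⨾G)_{★,X}(ι).\]
The map is therefore bijective on 1-cells.

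On 2-cells, a 2-morphism \((F,φ)\Rightarrow(F',φ')\) is a natural transformation \(η{:}F\Rightarrow F'\) together with the inequality \(φ;(η⨾G)\le φ'\). Since \(\mathbf{UBP}\) is locally thin, this inequality is a property and not extra data. So it suffices to check that it holds automatically for the canonical \(φ=(F⨾G)_{★}\) and \(φ'=(F'⨾G)_{★}\). The computation is
\[G(η_X)\bigl(\top_{G(F(ι))}\bigr)=\top_{G(F(ι)⨾η_X)}=\top_{G(η_A⨾F'(ι))}\le\top_{G(F'(ι))},\]
where the middle equality uses the naturality of \(η\), exactly as in \cref{fi♭_:24}. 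The hom-map is then bijective on 2-cells too, and compatible with vertical composition.

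It remains to check naturality. Precomposing with a functor \(H{:}\mathcal{C}'\to\mathcal{C}\), which corresponds to the morphism \((H,H_{*})\) after applying \(\mathbf{SOb}\), turns \((F⨾G)_{★}\) into \((H⨾F⨾G)_{★}\). This holds because \(H_{*};(H⨾F⨾G)_{★}\) is top-preserving and is therefore forced by the uniqueness step. Postcomposing along a morphism \((K,κ){:}G\to G'\) turns \((F⨾G)_{★}\) into \((F⨾K⨾G')_{★}\) by the same uniqueness. With \(\mathbf{s}\) being the source projection, the unit is then the identity and the counit is \((\mathbf{id},G_{★})\).

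I expect the main obstacle to be the 2-cell step. One must confirm that the direction of the lax triangle in \cref{item:89} matches the inequality that actually holds, as in \cref{fi♭_:24}. One must also make sure no extra condition on \(η\) is needed. Monicity of \(η\) is automatic in \(\overset{\hookrightarrow}{\mathbf{Cat}}\), so I expect this to go through.
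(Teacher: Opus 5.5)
Your proof is correct, but it is organized differently from the paper's.

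\textbf{The paper's route.} It uses the unit--counit form of \cref{def:2-adjunction}. The unit is the identity functor. The counit at \(E{:}\mathcal{C}\to\mathbf{UBP}\) is \((\mathbf{id}_{\mathcal{C}},E_{★})\), with \(E_{★}\) natural by \cref{lmm:2}. It then checks the triangle identities, which is nearly immediate since \((\mathbf{SOb}_{\mathcal{C}})_{★}\) sends \(ι\) to \(\top_{ι_{*}}=ι\).

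\textbf{Your route.} You use the hom-category form instead. The weight of your argument sits in two places: the uniqueness step, which extends \cref{lmm:3} from \(\mathbf{SOb}_{\mathcal{D}}\) to an arbitrary \(G{:}\mathcal{D}\to\mathbf{UBP}\), and the observation that the 2-cell inequality \(φ;(η⨾G)\le φ'\) holds automatically for the canonical structure maps.

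\textbf{What each buys.} Your version makes the role of top-preservation visible. In the paper's formulation, top-preservation of 1-cells is exactly what makes the counit 2-natural. The condition \((F,F_{*})⨾(\mathbf{id}_{\mathcal{D}},E'_{★})=(\mathbf{id}_{\mathcal{C}},E_{★})⨾(F,φ)\) unwinds, via naturality of \(φ\), to \(φ_{X}(\top_{E(ι)})=\top_{E'(F(ι))}\), and the paper's proof leaves this step implicit. Your argument also shows explicitly that the correspondence is bijective on 2-cells. The paper's version is shorter and names the unit and counit directly.

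Two small points on your write-up. First, in the naturality step the composite structure map should be the whiskered \(H_{*};(H⨾(F⨾G)_{★})\); as written, \(H_{*};(H⨾F⨾G)_{★}\) does not typecheck. Second, that step can be dropped altogether. Your forward map is just \(\mathbf{s}\) applied to hom-categories, which is natural in both variables because \(\mathbf{s}\) is a 2-functor with \(\mathbf{s}(H,H_{*})=H\). Its inverse is then automatically natural.
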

\begin{proof}
  We establish the adjunction by explicitly constructing the unit and counit. First, we verify that \(\mathbf{SOb}_{(-)}\) is a well-defined 2-functor. For the unit of the
  adjunction \(\mathbf{SOb}_{(-)} ⊣ \mathbf{s}\), since \(\mathbf{s}(\mathbf{SOb}_{\mathcal{C}}) = \mathcal{C}\), we may simply take the identity functor:
  \[
    \mathbf{id}_{\mathcal{C}}{:} \mathcal{C} → \mathbf{s}(\mathbf{SOb}_{\mathcal{C}}).
  \]

  For the counit, given an object \(E{:} \mathcal{C} → \mathbf{UBP}\) in \(\mathbf{\overset{\hookrightarrow}{Cat}}_{◆ \mathbf{UBP}}\), we need a 1-cell
  \(\mathbf{SOb}_{\mathcal{C}} → E\) over \(\mathbf{id}_{\mathcal{C}}\). This 1-cell \((\mathbf{id}_{\mathcal{C}}, E_{★})\) consists of \(\mathbf{id}_{\mathcal{C}}\) with the natural transformation
  \(E_{★}{:} \mathbf{SOb}_{\mathcal{C}} ⇒ E\), defined componentwise for each \(X ∈ \mathcal{C}\) by
  \[
    E_{★, X}{:} \mathbf{SOb}_{\mathcal{C}}(X) → E(X), \qquad ι ↦ \top_{E(ι)}:
  \]

\begin{figure}
  \centering
  \begin{tikzcd}[row sep=large]
    \mathcal{C} \ar[d, "\mathbf{SOb}_{\mathcal{C}}"{name=S}, bend left=50] \ar[d, "\mathbf{E}"'{name=E}, bend right=50] \\
    \mathbf{UBP} \ar[from=S, to=E, Rightarrow, shift left=6pt, shorten <= 6pt, xshift=3pt]
  \end{tikzcd}
\end{figure}

This is order-preserving since \(E(ι)\) is a monotone map and naturality follows from \cref{lmm:2}.

It remains to verify the triangle identities. The first identity requires
\((\mathbf{id}_{\mathcal{C}})_{*}⨾ (\mathbf{SOb}_{\mathcal{C}})_{★} = \mathbf{id}_{\mathbf{SOb}_{\mathcal{C}}}\). Indeed, for any \(X{:} \mathcal{C}\) this evaluates to

\[\mathbf{id}_{\mathbf{SOb}_{\mathcal{C}}}(X)⨾ \mathbf{id}_{\mathbf{SOb}_{\mathcal{C}}}(X) = \mathbf{id}_{\mathbf{SOb}_{\mathcal{C}}}(X).\]

The second identity requires \(\mathbf{s}(E_{★}) ⨾ \mathbf{id}_{\mathbf{s}(E)} = \mathbf{id}_{\mathbf{s}(E)}\). Since the underlying functor of the counit \(E_{★}\) is
\(\mathbf{id}_{\mathcal{C}}\) and \(ε_{\mathcal{C}} = \mathbf{id}_{\mathcal{C}}\), this identity holds trivially.

Having verified the triangle identities, we conclude that \(\mathbf{SOb}_{(-)} ⊣ \mathbf{s}\).
\end{proof}
\begin{Remark}
  \label{rmα_:8}
  So the basic distinguishing feature of \(\mathbf{UBP}\) that we use in this proof and the entire section is that each preorder in it has a top, which ensures that each
  monic \(ι: A \hookrightarrow X\) in a category \(\mathcal{C}\) is mapped to an ideal embedding \(ι_{*}\) under \(\mathbf{SOb}_{\mathcal{C}}\), gives us shadows of morphisms in categories and functors and natural
  transformations between categories ensures that many problematic functors from \(\mathcal{C}\) into the category of all preorders \(\mathbf{PrO}\) don't factor through \(\mathbf{UBP}\).
\end{Remark}
\begin{Remark}
  \label{rmk:6}
  If we don't assume that the natural transformations of  the 1-cells in \(\overset{\hookrightarrow}{\mathbf{Cat}}_{◆ \mathbf{UPB}}\) are top-preserving, then we still get a kind of laxification
  of an adjunction, where, given a category \(\mathcal{C}\) and functor \(F: \mathcal{C} → \mathbf{UBP}\), the morphisms from \((\mathcal{C}, \mathbf{SOb}_{\mathcal{C}})\) to \((\mathcal{C}, F)\) are bounded by \(F_{★}\) but no
  longer (weakly) unique.
\end{Remark}
\begin{Remark}
  \label{rmk:4}
  There is a variant of this diamond-over category for the contravariant subobject-functor \(\mathbf{SOb}^{*}\). Take some finite-limit-preserving functor \(F{:} \mathcal{C} → \mathcal{D}\). This induces for each
  \(X{:} \mathcal{C}\) a morphism \(F_{*}{:} \mathbf{SOb}(X) → \mathbf{SOb}(F(X))\), which preserves \(\top\) and meets if they exist in \(\mathcal{C}\). Given now a natural transformation
  \(η{:} F \Rightarrow G{:} \mathcal{C} → \mathcal{D}\), we get a natural transformation \(η^{*}_{X}{:} G⨾ \mathbf{SOb}_{\mathcal{D}} → F(X)⨾ \mathbf{SOb}_{\mathcal{D}}\) and for each \(ι{:} A \hookrightarrow X\) a diagram

\begin{figure}[H]
  \centering
  \begin{tikzcd}[column sep=large, row sep=large]
    & G(A) \ar[r, hook, "G(ι)"] & G(X) \\
    & η^{*}(G(ι)) \ar[u, "G(ι)^{*}(η_{X})"'] \ar[r, hook, "η_{X}^{*}(G(ι))"] & F(X) \ar[u, "η_{X}"'] \\
    F(A) \ar[uur, "η_{A}"] \ar[ur, hook, "{(η_{A}, F(ι))}"'{yshift=14pt, xshift=20pt}] \ar[urr, hook, "F(ι)"'] &&
  \end{tikzcd}
\caption{\label{fi♭_:7}}
\end{figure}
where \(\leq\) is the morphism to \(η^{*}(G(ι))\) induced by the universal property of the pullback, which is a monomorphism since \(F(ι)\) and \(η_{X}^{*}(G(ι))\) are.
Thus, we have for each such \(ι\) a triangle

\begin{figure}[H]
  \centering
  \begin{tikzcd}
    & \mathbf{SOb}(G(X)) \ar[ddr, "\mathbf{SOb}(η^{*}_{X})"] & \\
    & \rotatebox[origin=c]{90}{\(\leq\)} & \\
    \mathbf{SOb}(X) \ar[uur, "G_{*, X}"] \ar[rr, "F_{*, X}"'] && \mathbf{SOb}(F(X))
  \end{tikzcd}
  \caption{\label{fig:1}}
\end{figure}
so the inequality goes precisely in the other direction as before. Moreover, if \(η\) is Cartesian, then by definition each square as in \cref{fi♭_:7} is a pullback square, so that this inequality is an
equivalence. Thus, we can lift \(\mathbf{SOb}^{*}_{\mathcal{C}}\) to a 2-functor

\[\mathbf{SOb}^{*}_{(-)}{:} \mathbf{Cat}_{pb} → (\mathbf{Cat}_{pb})_{◆^{op} \mathbf{UBP}}\]

from the 2-category \(\mathbf{Cat}_{pb}\) of well-powered categories with pullbacks along monomorphisms, pullback-preserving functors and natural transformations to the 2-category \((\mathbf{Cat}_{pb})_{◆^{op} \mathbf{UBP}}\) which is defined as in \cref{def:diamond-over} but with the 2-cell modifications as in \cref{fig:1} instead of as in \cref{item:88}.

Note however that the contravariant \(\mathbf{SOb}^{*}\)-functor does not have the same universal property as the covariant one in the contravariant diamond category over
\(\mathbf{UBP}\): since for some \(ι{:} A \hookrightarrow X\) the morphism \(ι^{*}{:} \mathbf{SOb}(X) → \mathbf{SOb}(A)\) preserves tops, the entire calculus we're developing cannot be
applied and we're left without a way to compare \(\mathbf{SOb}_{\mathcal{C}}\) to some other functor \(F{:} \mathcal{C} → \mathbf{UBP}\).
\end{Remark}
\begin{Remark}
  \label{rmk:13}
  Note that there exists a 2-functor \(\mathbf{Cat}_{\hookrightarrow-prv} → \overset{\hookrightarrow}{\mathbf{Cat}}\) from the 2-category of small categories, monomorphism-preserving functors and natural
  transformations between them, which we can postcompose with \(\mathbf{SOb}\). This is what we are implicitly using when we extend smallness from monic categories to all
  categories. Since every monic category is a category, \(\mathbf{Cat}_{\hookrightarrow-prv} → \overset{\hookrightarrow}{\mathbf{Cat}}\) has a section. Moreover, for each category
  \(\mathcal{C}\) there exists a functor \( \overset{\hookrightarrow}{\mathcal{C}} \hookrightarrow \mathcal{C}\) embedding the monic part of a category. All in all this sure looks like a coreflective embedding — but it is very far from
  that. The universal property of an adjunction would force that for each category \(\mathcal{D}\) and functor \(F{:} \overset{\hookrightarrow}{\mathcal{C}} → \mathcal{D}\) there would exist a unique functor
  \(\mathcal{C} → \mathcal{D}\) over \(F\), we have nothing to determine how to map the non-monomorphisms in \(\mathcal{C}\). If we restrict to regular categories we have a decomposition
  \(f = e⨾ m\) for each \(f\) with \(m\) monic and \(e\) regular epi, and \(F\) fixes objects and monomorphisms, so in this case we would only have to determine how to map \(e\).
  Now, \(e\) is a coequalizer of its kernel pair \(k_{1}, k_{2}\) which is monic and \(F\) thus mapped by \(F\), so might try to map \(e\) to the coequalizer
  \(\mathbf{CEq}(F(k_{1}), F(k_{2}))\), but there is no reason to even assume that \((F(k_{1}), F(k_{2}))\) is a kernel pair in \(\mathcal{D}\), it can be a monic pair without a
  co-equalizer. And even if we assume \(\mathbf{CEq}(F(k_{1}), F(k_{2}))\) exists, there is no reason to assume that its codomain is \(\mathbf{s}(F(m))\), and one can construct
  counterexamples.
\end{Remark}

\subsection{\texorpdfstring{Universal Characterization of \(\overset{\hookrightarrow}{\mathbf{UBP}}\)}{Universal Characterization of UBP}}
\label{sec:univ-char-mathbf}

But we are not done yet: we have universally described \(\mathbf{SOb}\) as an adjoint to the 2-functor assigning to each representation \(F{:} \mathcal{C} → \mathbf{UBP}\) its source. But the
hardest part is still before us: to provide a universal description of \(\mathbf{UBP}\) as a 2-category. This is a slightly dizzying task somewhat akin to staring into two reflecting
mirrors, due to one crucial question: we have described the category \(\mathbf{\overset{\hookrightarrow}{Cat}}_{◆ \mathbf{UBP}}\), which is a kind of like a slice category in \(\mathbf{Cat}\), \emph{but
  how can we do that if \(\mathbf{UBP}\) is not a category?} It is, after all, a 2-category itself, and a sub-2-category of \(\mathbf{Cat}\). So we're treating something like a subset like an element.
Why can we do that?

The answer is that the hierachy of \(n\)-categories is closed under taking powers analogous to how \(\mathbf{Set}\) is under taking power sets, just with the categorical level
increasing each time. This is a triviality: each set is a special category, each category a special 2-category and so on. But this just means that there exist fully faithful embeddings
\(\mathbf{Set} \hookrightarrow \mathbf{Cat} \hookrightarrow \mathbf{2\text{-}Cat}...\), so that \(\mathbf{Cat}\) is both an object and a sub-2-category of \(\mathbf{2\text{-}Cat}\). So we can indeed define
\(\mathbf{\overset{\hookrightarrow}{Cat}}_{◆ \mathbf{UBP}}\) as a slice-category-like particular kind of comma-2-category from the image \(\mathbf{Im}(\mathbf{\overset{\hookrightarrow}{Cat}})\) of
\(\mathbf{\overset{\hookrightarrow}{Cat}}\) in \(\mathbf{2\text{-}Cat}\) to the constant functor on \(\mathbf{UBP}\). We might ask whether we could extend this structure further from \(\mathbf{Im}(\overset{\hookrightarrow}{\mathbf{Cat}})\) but it seems that we are touching on something universal: that
not just \(\mathbf{Cat}\) but every \(\mathbf{n\text{-}Cat}\) is reflected in \(\mathbf{UBP}\), in increasingly many ways as the possibilities for sensible subobject notions increase — we got a taste of this
in \cref{sec:category-theoretic}.

And this is also what we have to use to derive a universal
characterization of \(\mathbf{UBP}\): let us assume we have a similar structure \(D\) going from \(\mathbf{\overset{\hookrightarrow}{Cat}}\) to some other 2-category \(\mathcal{B}\). If we have a subobject-2-functor
\(\mathbf{SOb}_{\mathcal{B}}{:} \mathcal{B} → \mathbf{UBP}\), then we can take that functor right back to \(\mathbf{UBP}\), and then proceed along the lines of the proof of \cref{thm:1} to get a natural transformation
from \(\mathbf{SOb}_{(-)} → D⨾ \mathbf{SOb}_{\mathcal{B}}\). So to derive a universal characterization of \(\mathbf{UBP}\) we have to essentially re-do \cref{thm:1}, but on a 2-categorical level.
We're going to need some patience though: while most of the following pieces are simple to comprehend on their own, those pieces do sum up!

The trouble starts with the subobject structures in 2-categories: in \cref{sec:category-theoretic} we found several, so which one should we use? Presumably the most general. But to express
this we need to upgrade the notion of a monomorphism to 2-categories. The standard way to do this is to assume either faithfulness or fully faithfulness:

\begin{Definition}
  \label{defn_:2}\index{faithful 2-cell}\index{fully faithful 2-cell}
  \cite{FaithfulMorphismNLab} Let \(\mathcal{B}\) be a 2-category. A 1-cell \(ι {:} A → X\) in \(\mathcal{B}\) is called \emph{faithful} if for every object \(C\) of \(\mathcal{B}\), the induced functor of
  hom-categories
  \[
    ι_{*, \mathcal{C}} \;:=\; (- ⨾ ι) \;{:}\; \mathcal{B}(C,A) → \mathcal{B}(C,X)
  \]
  is faithful.

  Similarly, \(ι\) is \emph{fully faithful} if each \(ι_{*, \mathcal{C}}\) is fully faithful.
\end{Definition}

However, mere faithful functors are not good
subobject-notions to us: we already saw in \cref{xmp:5} that this results in pathologies such as every set being a subcategory of \(*\). This would be tolerable, but what really shows that
an additional condition is necessary is that the composition of such cells does not work right, as we will see in \cref{rmα_:25}. What is missing is isomorphism-reflectivity. Let
us recall the
notion of a pseudomonic morphism from \cite{carboniModulatedBicategories1994}:

\begin{Definition}
  \label{defn_:3}\index{pseudomonic morphism}\index{pseudomonic 2-category}
  \cite{PseudomonicMorphismNLab} A morphism \(ι: X → Y\) in a 2-category \(\mathcal{B}\) is \emph{pseudomonic} if \(ι\) is faithful and for each pair \(f, g\) and 2-isomorphism
  \(σ: f⨾ ι \overset{\simeq}{\Rightarrow} g⨾ ι\) there exists a 2-isomorphism \(\overline{σ}: f \Rightarrow g\) such that \(σ = \overline{σ}⨾ ι\).

  \(\mathcal{B}\) itself is \emph{pseudomonic} if all its morphisms
  are pseudomonics.

  \(\mathcal{B}\) is \emph{2-pseudomonic} if it is pseudomonic and all its 2-cells are monic in their hom-categories.

  A morphism \(f{:} X → Y\)  is \emph{monomorphism-preserving} if for every \(A{:} B\) the functor \(f_{*}{:} \mathcal{B}(A, X) → \mathcal{B}(A, Y)\) is monomorphism-preserving.

  A 2-functor \(F{:} \mathcal{B} → \mathcal{B}'\) is \emph{2-pseudomonic} if

  \begin{enumerate}
  \item \label{item:53} for each \(X, Y{:} \mathcal{B}\) the induced functor \(F_{X, Y}{:} \mathcal{B}(X, Y) → \mathcal{B}'(F(X), F(Y))\) is pseudomonic,
  \item\label{item:54} \(F\) preserves pseudomonics and reflects equivalences.
  \end{enumerate}
\end{Definition}
\begin{Example}
  \label{xmp:10}
  Every monic category is a 2-pseudomonic 2-category. This follows because 2-cell preservation and isomorphism-reflection are trivially true for morphisms in a category.
\end{Example}

We do need one further definition, which is slightly technical: note that in \(\mathbf{Cat}\) a natural transformation \(μ\) consisting of monomorphisms is monic (since for each
pair of natural transformations \(φ, φ'\) that we can precompose \(μ\) with, componentwise evaluation ensures that \(μ\) distinguishes \(φ\) and \(φ'\)). Thus,
monic natural transformations are stable under precomposition with functors but not postcomposition. This is what allows \cref{xmp:4} and its variants to work, so we need to
abstract it:

\begin{Definition}
  \label{defn_:11}
  A 2-category is \emph{monically left-ideal} if for each monic 2-cell \(φ{:} f \Rightarrow g{:} X → Y\)  and \(h{:} A → X\) it holds that the left whiskering \(h⨾ φ\) is monic.
\end{Definition}
\begin{Example}
  \label{xmp_:16}
  Each 2-pseudomonic 2-category is monically left-ideal, since all its 2-cells are monic.
\end{Example}

On these we can nicely define subobject preorders, generalizing \cref{xmp:5}:

\begin{Definition}
  \label{defn_:6}
  \index{pseudomonic subobject preorder}
  Given an object \(X\) of a monically left-ideal 2-category \(\mathcal{B}\), the \emph{(2-covariant) pseudomonic subobject preorder} \(\mathbf{SOb}_{f, \mathcal{B}}(X)\) is the preorder:

  \begin{enumerate}
  \item\label{item:97} whose objects are pseudomonics \(ι{:} A \hookrightarrow X\) into \(X\) in \(\mathcal{B}\),
  \item\label{item:98} such that \(ι \leq κ{:} B \hookrightarrow X\) if and only if there exists a morphism \(μ{:} A → B\) and a monic 2-cell \(φ{:} ι \Rightarrow μ⨾ κ\).
  \end{enumerate}
\end{Definition}
\begin{Lemma}
  \label{le↘_:4}
  In a monically left-ideal 2-category \(\mathcal{B}\), \(\mathbf{SOb}_{f, \mathcal{B}}(X)\) is in fact a preorder.
\end{Lemma}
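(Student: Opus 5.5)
To prove \cref{le↘_:4} we check reflexivity and transitivity of the relation in \cref{defn_:6}. Antisymmetry is not needed, since we only claim a preorder, and the ambient univalent conventions of \cref{sec:size-assumptions} take care of identifying equivalent pseudomonics.

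\emph{Reflexivity.} Given a pseudomonic \(ι{:} A \hookrightarrow X\), take \(μ := \mathbf{id}_{A}\) and \(φ := \mathbf{id}_{ι}{:} ι \Rightarrow \mathbf{id}_{A}⨾ ι = ι\). Identity 2-cells are isomorphisms in the hom-category \(\mathcal{B}(A, X)\), hence monic, so \(ι \leq ι\).

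\emph{Transitivity.} Suppose \(ι \leq κ\) via \((μ, φ)\), with \(φ{:} ι \Rightarrow μ⨾ κ\) monic, and \(κ \leq λ\) via \((ν, ψ)\), with \(ψ{:} κ \Rightarrow ν⨾ λ\) monic, where \(λ{:} C \hookrightarrow X\). We take the composite 1-cell \(μ⨾ ν{:} A → C\) and the vertical composite
\[
  φ; (μ⨾ ψ){:} ι \Rightarrow μ⨾ κ \Rightarrow μ⨾ ν⨾ λ,
\]
where \(μ⨾ ψ\) is the left whiskering of \(ψ\) by \(μ\). Strict associativity makes the target \((μ⨾ ν)⨾ λ\). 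Two facts then give \(ι \leq λ\). First, since \(\mathcal{B}\) is monically left-ideal (\cref{defn_:11}) and \(ψ\) is monic, \(μ⨾ ψ\) is monic in \(\mathcal{B}(A, X)\). Second, vertical composition is composition in the ordinary category \(\mathcal{B}(A, X)\), and composites of monomorphisms are monic, so \(φ; (μ⨾ ψ)\) is monic.

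The main obstacle, and the reason for the left-ideal hypothesis, is the whiskering step. Monicity of \(ψ\) in \(\mathcal{B}(B, X)\) does not automatically transfer to \(μ⨾ ψ\) in \(\mathcal{B}(A, X)\); the remark before \cref{defn_:11} shows that in \(\mathbf{Cat}\) it holds for precomposition but not for postcomposition. Here we only ever whisker on the left, so \cref{defn_:11} is exactly what is needed. Note that the whiskering is taken along \(μ\), which need not be pseudomonic, and the definition allows arbitrary \(h\), so this is fine. Nothing about \(ι, κ, λ\) being pseudomonic is used, beyond these being the elements of the preorder.
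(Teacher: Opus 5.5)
Your proof is correct and matches the paper's argument essentially step for step. Reflexivity comes from the identity 2-cell, and transitivity from the composite of \(\mu\) and \(\nu\) together with the vertical composite of \(\varphi\) and the left whiskering of \(\psi\) by \(\mu\), which is monic by the monically left-ideal hypothesis and because monics compose. The paper additionally calls the second connecting 1-cell monomorphism-preserving, but \cref{defn_:6} does not require this, and you rightly do not use it.
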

\begin{proof}
  We must show that the relation \(\le\) on the class of pseudomonic 1-cells \(ι {:} A → X\) into a fixed object \(X {:} \mathcal{B}\) is reflexive and transitive.

  Let \(ι {:} A \hookrightarrow X\) be a pseudomonic. The identity 2-cell is monic in any 2-category, so this witnesses
  \(ι \le ι\). For transitivity, suppose we have pseudomonics \(ι {:} A \hookrightarrow X\), \(κ {:} B \hookrightarrow X\), and \(λ {:} C \hookrightarrow X\) with \(ι \le κ\) and \(κ \le λ\). By definition, there
  exists an \(f{:} A → B\) and a monic 2-cell \(α {:} ι ⇒ f ⨾ κ\) in \(\mathcal{B}(A,X)\) and a monomorphism-preserving \(g{:} B → C\) and a monic 2-cell
  \(β {:} κ ⇒ g ⨾ λ\) in \(\mathcal{B}(B,X)\). Define \(h := f ⨾ g {:} A → C\) and set
  \[
    \gamma \;:=\; α ; (f ⨾ β) {:} ι \Rightarrow h ⨾ λ .
  \]

  \begin{figure}[H]
    \centering
    \begin{tikzcd}[column sep=large, row sep=large]
  A \ar[r, "f"]
    \ar[dr, hook', bend right=35, "ι"', ""{name=I}]
  & B \ar[d, hook, "κ"', ""{name=K}]
  \ar[r, "g"]
  & C \ar[dl, hook, bend left=35, "λ"]
  \\
  & X
  \arrow[Rightarrow, from=I, to=1-2, "α"', shorten <=3pt, shorten >=5pt]
  \arrow[Rightarrow, from=K, to=1-3, "β"', shorten <=3pt, shorten >=5pt]
\end{tikzcd}
    % \caption{\label{fig:8}}
  \end{figure}

  Since \(\mathcal{B}\) is monically left-ideal, whiskering preserves monics. Moreover, vertical composition of monics is monic and \(\gamma\) is monic, so \(ι \le λ\). Thus \(\le\) is reflexive and transitive.
\end{proof}

\begin{Remark}
  \label{rmα_:23}
  The slice category of an object in a category is constructed as a comma category, i.e. a special kind of 2-limit. In 2-categories, this definition trifurcates into a lax, an
  oplax and a weak variant. Accordingly, \cref{defn_:6} has a 2-contravariant variant \(\mathbf{SOb}_{f^{op}, \mathcal{B}}(X)\) with inverse 2-cells, and a weak variant \(\mathbf{SOb}_{\mathopen{∣}f\mathclose{∣}, \mathcal{B}}(X)\), where the 2-cells have to be
  2-isomorphisms. But since we don't use these other functors in this subsection, we will write \(\mathbf{SOb}_{\mathcal{B}}(X)\) for \(\mathbf{SOb}_{f, \mathcal{B}}(X)\) to simplify notation.
\end{Remark}

\begin{Remark}
  \label{rmα_:25}
  For \(ι{:} X \hookrightarrow Y\) faithful in a 2-category \(\mathcal{B}\) whose morphisms are faithful but not isomorphism-reflecting, we \emph{do} get for each
  \(κ{:} A \hookrightarrow X, κ'{:} B \hookrightarrow X\) and pair \((f, α){:} (A \hookrightarrow B, κ \hookrightarrow f⨾ κ')\) displaying \(κ \leq κ'\) a pair
  \((f, α⨾ ι){:} (A \hookrightarrow B, κ⨾ ι \hookrightarrow f⨾ κ⨾ ι)\), so that \(κ⨾ ι \leq κ' ⨾ ι\). Thus, we get an order-preserving morphism
  \(ι_{*}{:} \mathbf{SOb}_{\mathcal{B}}(X) \hookrightarrow \mathbf{SOb}_{\mathcal{B}}(Y)\), which is essentially injective since \(ι\) is faithful. But, given
  \(f{:} A \hookrightarrow B\) and \(β{:} f⨾ κ⨾ ι \hookrightarrow κ'⨾ ι\) there is no reason to assume that \(β\) is in the image of \(ι_{*}{:} \mathcal{B}(A, X) → \mathcal{B}(A, Y)\), which means that
  \(ι_{*}\) is \emph{not} generally a monomorphism because \(β\) might be an isomorphism. So we need to at least assume that \(ι\) is isomorphism-reflecting i.e. each induced
  functor \(ι_{*}\) is isomorphism-reflecting. So we can understand isomorphism-reflectivity as a weakening of injectivity. This is why it excludes pathologies as in \cref{xmp:5},
  which go against our intuition on what a subcategory should be and why we have to use pseudomonics, not just faithful morphisms.
\end{Remark}

\begin{Proposition}
  \label{■pn_:6}
  Let \(\mathcal{B}\) be a 2-pseudomonic 2-category. Then the assignment \(X ↦ \mathbf{SOb}_{\mathcal{B}}(X)\) from \cref{defn_:6} defines a 2-functor \(\mathbf{SOb}_{\mathcal{B}}{:} \mathcal{B} → \overset{\hookrightarrow}{\mathbf{UBP}}\).
\end{Proposition}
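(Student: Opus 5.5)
We have to define \(\mathbf{SOb}_{\mathcal{B}}\) on objects, 1-cells and 2-cells, and then check functoriality. The model is the 1-categorical case \(ι ↦ ι_{*}\).

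\textbf{Objects.} Set \(X ↦ \mathbf{SOb}_{\mathcal{B}}(X)\). This is a preorder by \cref{le↘_:4}, since a 2-pseudomonic 2-category is monically left-ideal (\cref{xmp_:16}). It is upward-bounded with top \(\mathbf{id}_{X}\): the identity is pseudomonic, and for any pseudomonic \(ι{:} A \hookrightarrow X\) the pair \((ι, \mathbf{id}_{ι})\) witnesses \(ι \leq \mathbf{id}_{X}\).

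\textbf{1-cells.} A 1-cell \(f{:} X → Y\) is pseudomonic because \(\mathcal{B}\) is pseudomonic. Send it to postcomposition \(f_{*}{:} ι ↦ ι⨾ f\). First, \(ι⨾ f\) is again pseudomonic, since every morphism of \(\mathcal{B}\) is. Second, \(f_{*}\) is monotone: a witness \((μ, φ{:} ι \Rightarrow μ⨾ κ)\) is sent to \((μ, φ⨾ f)\). The 2-cell \(φ⨾ f\) is monic simply because all 2-cells in a 2-pseudomonic 2-category are monic; we do not need right whiskering to preserve monics in general.

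\textbf{Main obstacle: \(f_{*}\) is a monomorphism in \(\mathbf{UBP}\).} Following \cref{rmα_:25}, suppose \(ι⨾ f\) and \(ι'⨾ f\) are equivalent in \(\mathbf{SOb}_{\mathcal{B}}(Y)\). We must lift both witnessing inequalities back to \(X\). Given \(μ\) and a 2-cell \(β{:} ι⨾ f \Rightarrow μ⨾ ι'⨾ f\), the goal is a 2-cell \(\overline{β}{:} ι \Rightarrow μ⨾ ι'\) with \(β = \overline{β}⨾ f\).

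Pseudomonicity of \(f\) gives such lifts directly only for 2-isomorphisms. So the first attempt is to show that mutual inequalities in \(\mathbf{SOb}_{\mathcal{B}}(Y)\) can be realized by isomorphisms. The idea is to compose the two witnessing monic cells, obtain a monic endo-2-cell on \(ι⨾ f\), and use that in a pseudomonic setting such endo-cells between parallel pseudomonics are invertible.

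If that fails, I would instead weaken the goal to order-reflection of \(f_{*}\). It suffices to show \(ι⨾ f \cong ι'⨾ f \Rightarrow ι \cong ι'\), which is exactly the isomorphism-lifting clause in the definition of pseudomonic. Faithfulness of \(f\) then gives injectivity on representatives up to equivalence. With univalent conventions (\cref{sec:size-assumptions}), injectivity on objects of the preorder suffices for monicity in \(\mathbf{UBP}\). This is the step I expect to require the most care.

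\textbf{2-cells.} A 2-cell \(α{:} f \Rightarrow g\) is sent to the inequality \(f_{*} \leq g_{*}\). For each \(ι\), the pair \((\mathbf{id}_{A}, ι⨾ α{:} ι⨾ f \Rightarrow ι⨾ g)\) witnesses \(ι⨾ f \leq ι⨾ g\). The cell \(ι⨾ α\) is monic by left-idealness.

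\textbf{Functoriality.} Strict preservation of identities and of horizontal composition holds because \((f⨾ g)_{*} = f_{*}⨾ g_{*}\) and \((\mathbf{id}_{X})_{*} = \mathbf{id}\) pointwise. Preservation of vertical composition of 2-cells is automatic, since \(\mathbf{UBP}\) is locally thin.
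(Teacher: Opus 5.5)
Your handling of objects (including the check that \(\mathbf{id}_X\) is the top, which the paper leaves implicit), of 1-cells by postcomposition, of 2-cells by whiskering, and of functoriality is exactly the paper's proof. The gap is the step you call the main obstacle, namely that \(f_{*}\) is a monomorphism of \(\mathbf{UBP}\). The paper's proof never addresses this step; it only shows monotonicity, which gives a 2-functor into \(\mathbf{UBP}\).

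Both of your routes fail. Composing witnesses \(\beta{:} \iota⨾ f \Rightarrow \mu⨾\iota'⨾ f\) and \(\beta'{:} \iota'⨾ f \Rightarrow \mu'⨾\iota⨾ f\) yields a cell \(\iota⨾ f \Rightarrow \mu⨾\mu'⨾\iota⨾ f\). This is an endo-cell only if \(\mu⨾\mu' = \mathbf{id}\), and monic endo-2-cells need not be invertible anyway. In the fallback, equality in the univalent preorder \(\mathbf{SOb}_{\mathcal{B}}(Y)\) means that monic, generally non-invertible, 2-cells exist in both directions through arbitrary \(\mu, \mu'\). That is not a 2-isomorphism \(\iota⨾ f \cong \iota'⨾ f\), and pseudomonicity of \(f\) only lifts invertible 2-cells between parallel 1-cells. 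Also, order-reflection is a stronger goal than injectivity, not a weaker one.

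No argument can close this step, because the claim is false. Take the following data.
\begin{itemize}
\item \(X\) is the discrete category on \(x, x'\).
\item \(Y\) is the free category on a cycle \(\beta{:} y \to y'\), \(\beta'{:} y' \to y\).
\item \(f{:} X \to Y\) sends \(x \mapsto y\) and \(x' \mapsto y'\).
\item \(\mathcal{B} \subseteq \mathbf{Cat}\) has objects \(\mathbf{1}, X, Y\); its 1-cells are the identities, \(\iota = x\), \(\iota' = x'\), \(f\), \(\iota⨾ f\), \(\iota'⨾ f\); its 2-cells are all natural transformations between these.
\end{itemize}
Every morphism of a free category is monic, and only identities are invertible. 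Hence every 2-cell of \(\mathcal{B}\) is monic, and every 1-cell is faithful and trivially lifts 2-isomorphisms, so \(\mathcal{B}\) is 2-pseudomonic.

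However, the witnesses \((\mathbf{id}_{\mathbf{1}}, \beta)\) and \((\mathbf{id}_{\mathbf{1}}, \beta')\) give \(\iota⨾ f = \iota'⨾ f\) in \(\mathbf{SOb}_{\mathcal{B}}(Y)\). Meanwhile \(\iota\) and \(\iota'\) are incomparable in \(\mathbf{SOb}_{\mathcal{B}}(X)\), since \(\mathcal{B}(\mathbf{1},\mathbf{1}) = \{\mathbf{id}\}\) and \(X\) is discrete. So \(\mathbf{SOb}_{\mathcal{B}}(f)\) is not injective, hence not monic in \(\mathbf{UBP}\); test it against maps out of the one-point preorder.

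What you and the paper actually prove is that \(\mathbf{SOb}_{\mathcal{B}}\) is a 2-functor \(\mathcal{B} \to \mathbf{UBP}\). Landing in \(\overset{\hookrightarrow}{\mathbf{UBP}}\) needs an extra hypothesis. For instance, if 1-cells are representably fully faithful, every witness \(\beta\) lifts to a 2-cell \(\iota \Rightarrow \mu⨾\iota'\), which is monic because all 2-cells are. Then \(f_{*}\) is order-reflecting, as in the 1-categorical case.
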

\begin{proof}
  For a 1-cell \(f {:} X → Y\) in \(\mathcal{B}\), define the action of \(\mathbf{SOb}_{\mathcal{B}}\) on \(f\) by postcomposition:
  \[
    \mathbf{SOb}_{\mathcal{B}}(f) {:} \mathbf{SOb}_{\mathcal{B}}(X) \longrightarrow \mathbf{SOb}_{\mathcal{B}}(Y), \qquad ι \mapsto ι⨾ f.
  \]
  Since \(\mathcal{B}\) is 2-pseudomonic, the composite \(ι⨾ f\) is pseudomonic, hence an object of \(\mathbf{SOb}_{\mathcal{B}}(Y)\). To verify monotonicity, let
  \(ι \leq \kappa\) in \(\mathbf{SOb}_{\mathcal{B}}(X)\). Then there exist a pseudomonic \(g {:} A \hookrightarrow B\) and a monic 2-cell
  \(α {:} ι \Rightarrow g⨾ \kappa\). Whiskering with \(f\) yields a monic 2-cell \(α⨾ f {:} ι⨾ f \Rightarrow g⨾ (\kappa⨾ f)\), which witnesses \(ι⨾ f \leq \kappa⨾ f\).

  For a 2-cell \(θ {:} f \Rightarrow g {:} X → Y\), define
  \(\mathbf{SOb}_{\mathcal{B}}(θ) {:} \mathbf{SOb}_{\mathcal{B}}(f) \Rightarrow \mathbf{SOb}_{\mathcal{B}}(g)\) pointwise. For each \(ι {:} A \hookrightarrow X\), whiskering yields
  \(ι⨾ θ {:} ι⨾ f \Rightarrow ι⨾ g\). Because \(\mathcal{B}\) is 2-pseudomonic, \(ι⨾ θ\) is a monic 2-cell, corresponding to the relation
  \(ι⨾ f \leq ι⨾ g\) in \(\mathbf{SOb}_{\mathcal{B}}(Y)\). The interchange law in \(\mathcal{B}\) ensures that this assignment is natural.

  The 2-functor axioms follow directly from the unitality and associativity of composition in \(\mathcal{B}\). Specifically, \(\mathbf{SOb}_{\mathcal{B}}(\mathbf{id}_X)\) is the identity on
  \(\mathbf{SOb}_{\mathcal{B}}(X)\) since \(ι⨾ \mathbf{id}_X = ι\), and \(\mathbf{SOb}_{\mathcal{B}}\) preserves composition of 1-cells since
  \(ι⨾ (f⨾ g) = (ι⨾ f)⨾ g\). Preservation of vertical and horizontal composition of 2-cells follows analogously from the functoriality of whiskering.
\end{proof}

\begin{Remark}
  \label{rmα_:9}
  Let us take a look at the action of \(\mathbf{SOb}\) on \(\overset{\hookrightarrow}{\mathbf{UBP}}\) itself, which will become crucial in a moment: it sends a preorder \(P\) to the preorder
  \(\mathbf{SOb}_{\overset{\hookrightarrow}{\mathbf{UBP}}}(P)\) we discussed in \cref{xmp:3} and acts on morphisms by postcomposition. Thus there exist morphisms

  \begin{enumerate}
  \item\label{item:21} \(\{-\}{:} P → \mathbf{SOb}_{\overset{\hookrightarrow}{\mathbf{UBP}}}(P)\) which maps \(a ∈ P\) to the one-element sub-preorder \(\{a\}\),
  \item \label{item:9} \((-){:} P → \mathbf{SOb}_{\overset{\hookrightarrow}{\mathbf{UBP}}}(P)\) whose image is the ideal \((a)\),
  \end{enumerate}

  \begin{figure}[H]
    \centering
    \begin{tikzcd}
  P && \mathbf{SOb}_{\overset{\hookrightarrow}{\mathbf{UBP}}}(P)
	\arrow[""{name=0, anchor=center, inner sep=0}, shift left=3, from=1-1, to=1-3, hook]
	\arrow[""{name=1, anchor=center, inner sep=0}, shift right=3, from=1-1, to=1-3, hook']
	\arrow[from=1-3, to=1-1, two heads]
	\arrow[phantom, from=1, to=0, "\rotatebox{90}{\leq}"{description, fill=white, inner sep=2pt}]
 \end{tikzcd} \\
\[
\{-\}⨾\top_{(-)}=\mathbf{id}=(-)⨾\top_{(-)}.
\]

    % \caption{\label{fig:7}}
  \end{figure}

  It follows almost immediately from the definitions that \(\{-\}\) forms a strict natural 2-transformation and from \cref{■pn_:ofs} that \((-)\) forms an oplax one. But because each preorder in \(\mathbf{UBP}\) has a top, we can define a morphism
  \(\top_{(-)}{:} \mathbf{SOb}_{\overset{\hookrightarrow}{\mathbf{UBP}}}(P) \twoheadrightarrow P{:} ι ↦ \top_{ι}\) such that both \(\{-\}\) and \((-)\) are sections of
  \(\top_{(-)}\). Moreover, it also follows almost immediately from the definitions that \(\{-\}\) is a minimal section of \(\top_{(-)}\) and \((-)\) a maximal one, so that the three form a
  unity of opposites in the sense of \cite{lawvereUnityIdentityOpposites1996}. Note that the same does not hold for filters since we would need downward-boundedness.
\end{Remark}

Now let us define what it means for something to be diamond-over:

\begin{Definition}
  \label{dfn:4}
  An \emph{oplaxly pointed 2-category} \((\mathcal{B}, ★_{\mathcal{B}}) := \mathcal{B}_{★}\) is a 2-category with terminal object \(*\) and an oplax natural transformation
  \(★_{\mathcal{B}}{:} \mathbf{K}(*) → \mathbf{id}_{\mathcal{B}}\) from the constant functor \(\mathbf{K}(*)\) on the terminal object \(*\).

  We call a morphism
  \(f{:} X → Y\) in \(\mathcal{B}\) \emph{point-preserving} if \(★_{\mathcal{B}, X}⨾ f = ★_{\mathcal{B}, Y}\) and a natural transformation \emph{point-preserving} if each of its component-morphisms is.
\end{Definition}
\begin{Definition}[Diamond-over 2-category \(\mathcal{A}_{◆\mathcal{B}}\)]
  \label{def:diamond-over}
\index{diamond-over 2-category}
  Let \(\mathcal{A}\) be a sub-2-category of \(\mathbf{Cat}\), and let \(\mathcal{B}_{★}\) be an oplaxly pointed 2-category.
  % Assume we are given a fixed 2-functor
  % \[
  %   E {:} \mathcal{A} \longrightarrow \mathcal{B}.
  % \]
  The \emph{diamond-over 2-category} \(\mathcal{A}_{◆\mathcal{B}_{★}}\) \emph{of} \(\mathcal{A}\) \emph{over} \(\mathcal{B}_{★}\) is defined as follows.

  An object of \(\mathcal{A}_{◆\mathcal{B}_{★}}\) is a pair \((\mathcal{C},E)\) where:
  \begin{itemize}
  \item \(\mathcal{C}\) is an object of \(\mathcal{A}\);
  \item \(E {:} \mathcal{C} → \mathcal{B}\) is a 2-functor (necessarily trivial on 2-cells since \(\mathcal{C}\) is a category).
  \end{itemize}

  A 1-cell
  \[
    (F,ε) {:} (\mathcal{C},E) \longrightarrow (\mathcal{C}',E')
  \]
  in \(\mathcal{A}_{◆\mathcal{B}_{★}}\) is an oplaxly (or, less confusingly, covariantly) commuting triangle, so:
  \begin{itemize}
  \item a functor \(F {:} \mathcal{C} → \mathcal{C}'\) in \(\mathcal{A}\);
  \item a point-preserving 2-natural transformation
    \[
      ε {:} E \Rightarrow F⨾ E'
    \]
    in \(\mathcal{B}_{★}\),
  \end{itemize}

  \begin{figure}[H]
    \centering
    \begin{tikzcd}
      \mathcal{C} \ar[rr,"F"] \ar[dr,"E"', ""{name=E}] && \mathcal{C}' \ar[dl,"E'"] \\
      & \mathcal{B} \ar[Rightarrow, from=E, to=1-3, "ε"{below} description]
    \end{tikzcd}
    % \caption{\label{fi♭_:6}}
  \end{figure}

  A 2-cell
  \[
    (α,Φ) {:} (F,ε) \Rightarrow (F',ε')
  \]
  between 1-cells \((F,ε),(F',ε') {:} (\mathcal{C},E) → (\mathcal{C}',E')\) consists of:
  \begin{itemize}
  \item a natural transformation \(α {:} F \Rightarrow F'\) in \(\mathcal{A}\);
  \item a modification \(Φ{:} ε; (α⨾ E') \Rrightarrow ε'\).
  \end{itemize}

  \begin{figure}[H]
    \centering
    \begin{tikzcd}
      F⨾ E' \ar[rr, Rightarrow, "α⨾ E'"] && F'⨾ E' \\
      & E \ar[ul, Rightarrow, "ε"] \ar[ur, Rightarrow, "ε'"{below, yshift=-2pt, xshift=3pt}, ""{name=E'}] \ar[triple, to=E', from= 1-1, "Φ", shorten >=6pt, shorten <=6pt]
    \end{tikzcd}
    % \caption{\label{fi♭_:33}}
  \end{figure}

  Composition in \(\mathcal{A}_{◆\mathcal{B}_{★}}\) is defined by horizontal/vertical composition in \(\mathcal{A}\) and \(\mathcal{B}\) in the obvious way. Identities are given by
  \((\mathbf{id}_{\mathcal{C}},\mathbf{id}_E)\) on objects, and \((\mathbf{id}_F,\mathbf{id}_ε)\) on 1-cells. The 2-category axioms follow from those of \(\mathcal{A}\) and \(\mathcal{B}\).
\end{Definition}
\begin{Remark}
  \label{rmα_:19}
  This reduces to our previous definition: for \(\mathcal{A} = \overset{\hookrightarrow}{\mathbf{Cat}}\) and \(\mathcal{B} = \mathbf{UBP}\), the objects of \(\mathcal{A}_{◆\mathcal{B}_{★}}\) are representations \(F {:} \mathcal{C} → \mathbf{UBP}\); 1-cells are triangles \((H,ε)\) as in
  \cref{item:88}; 2-cells are natural transformations between the 1-cell functors over \(\mathbf{UBP}\) with a corresponding modifier (which has to be unique since \(\mathbf{UBP}\)
  consists of preorders). Thus, \(\mathcal{A}_{◆\mathcal{B}_{★}}\) is exactly \( \mathbf{\overset{\hookrightarrow}{Cat}}_{◆ \mathbf{UBP}}\).
\end{Remark}

Now, a key point is:

\begin{Proposition}
  \label{■pn_:7}
  Let \(\mathcal{A}\) be a sub-2‑category of \(\mathbf{Cat}\), \(\mathcal{B}_{★}, \mathcal{B}'_{★'}\) be oplaxly pointed 2‑categories, and \(F {:} \mathcal{B} → \mathcal{B}'\) a 2‑functor preserving the oplax point. Then \(F\) induces a 2‑functor \((-⨾ F) {:} \mathcal{A}_{◆\mathcal{B}_{★}} \longrightarrow \mathcal{A}_{◆\mathcal{B}_{★}'}\).
\end{Proposition}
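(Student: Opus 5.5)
The plan is to define \((-⨾ F)\) cell by cell by postcomposition and whiskering with \(F\), and then check that the result lands in \(\mathcal{A}_{◆\mathcal{B}'_{★'}}\) and is strictly functorial. On objects I set \((\mathcal{C},E) \mapsto (\mathcal{C}, E⨾ F)\); this is well-typed because a composite of 2-functors is a 2-functor, and \(\mathcal{C}\) is unchanged. On 1-cells I set \((G,ε) \mapsto (G, ε⨾ F)\), where \(ε⨾ F{:} E⨾ F \Rightarrow G⨾ E'⨾ F\) is the right whiskering of the 2-natural transformation \(ε\) by \(F\). Its components are \(F(ε_X)\), and its naturality squares are the images under \(F\) of those of \(ε\), so strict 2-naturality is preserved because \(F\) preserves horizontal composition and 2-cells. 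On 2-cells I set \((α,Φ) \mapsto (α, Φ⨾ F)\). Here I use \((ε; (α⨾ E'))⨾ F = (ε⨾ F); (α⨾ (E'⨾ F))\), which holds since \(F\) preserves vertical composition and whiskering. With this identity, \(Φ⨾ F\) is a modification \((ε⨾ F); (α⨾ E'⨾ F) \Rrightarrow ε'⨾ F\) of the required shape.

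The one condition not already built into the structure is point-preservation of \(ε⨾ F\). I need \(★'_{E(X)}⨾ F(ε_X) = ★'_{F(G(X))}\), writing the point of \(\mathcal{B}'\) at the objects \(F(E(X))\) and \(F(E'(G(X)))\). The hypothesis that \(F\) preserves the oplax point gives \(★'_{F(Y)} = F(★_{Y})\), with \(F(*) = *\) up to the terminal identification. It also gives that the oplax naturality cells of \(★'\) at \(F(f)\) are the \(F\)-images of those of \(★\). From this I compute \(★'_{F(E(X))}⨾ F(ε_X) = F(★_{E(X)}⨾ ε_X) = F(★_{E'(G(X))}) = ★'_{F(E'(G(X)))}\), using that \(ε_X\) is point-preserving in \(\mathcal{B}_{★}\). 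I expect this step to be the main obstacle, because it depends on how exactly ``preserving the oplax point'' is read. I will commit to the reading \(★_{\mathcal{B}}⨾ F = F⨾ ★_{\mathcal{B}'}\) as oplax transformations, including that \(F\) sends the terminal \(*\) to the terminal \(*\). If \(F(*)\) is only terminal up to equivalence, I would compose with the unique comparison and absorb it by the strictness convention of \cref{rmα_:16}.

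Finally I verify 2-functoriality. Identities go to identities because \(\mathbf{id}_E⨾ F = \mathbf{id}_{E⨾ F}\). For composition of 1-cells, the composite \((G',ε')\circ(G,ε)\) has transformation \(ε; (G⨾ ε')\), and whiskering by \(F\) commutes with both vertical composition and left whiskering by \(G\). The same argument applies to vertical and horizontal composition of 2-cells, including their modification parts, since right whiskering of modifications by a 2-functor is itself functorial. All of these are instances of the strict interchange and associativity laws, so this part is routine once the point-preservation step is settled.
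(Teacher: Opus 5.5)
Your proposal is correct and takes the same route as the paper. You define the 2-functor by postcomposition with \(F\) on objects and by right whiskering with \(F\) on the transformation and modification components, then read off strict 2-functoriality from the fact that \(F\) strictly preserves all compositions and identities. Beyond what the paper writes, you explicitly verify that the whiskered transformation is point-preserving and that the whiskered modification has the required source and target, which the paper only asserts in passing.
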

\begin{proof}
  We define \((-⨾ F)\) by postcomposition with \(F\). On objects, we set
  \[
    (-⨾ F)(\mathcal{C}, H) := (\mathcal{C}, H⨾F).
  \]
  On a 1-cell \((G, ε) {:} (\mathcal{C}, H) \to (\mathcal{D}, K)\), where \(G {:} \mathcal{C} \to \mathcal{D}\) and \(ε {:} H \Rightarrow G⨾K\), we define
  \[
    (-⨾ F)(G, ε) := (G, F(ε)) {:} (\mathcal{C}, H⨾F) \to (\mathcal{D}, K⨾F),
  \]
  where \(F(ε)\) denotes the whiskering of the 2-natural transformation \(ε\) with \(F\), which is point-preserving since \(ε\) and \(F\) are.
  On a 2-cell \((α, Φ) {:} (G, ε) \Rightarrow (G', \delta)\), consisting of a 2-natural transformation
  \(α {:} G \Rightarrow G'\) and a modification \(Φ {:} ε; (α⨾K) \Rightarrow \delta\), we set
  \[
    (-⨾ F)(α, Φ) := (α, Φ⨾ F),
  \]
  where \(Φ⨾ F\) is the whiskering of \(Φ\) with \(F\).

  To verify that \((-⨾ F)\) is a 2-functor, consider composable 1-cells \((G, ε) {:} (\mathcal{C}, H) \to (\mathcal{D}, K)\) and
  \((L, η) {:} (\mathcal{D}, K) \to (\mathcal{E}, J)\). Their composite in \(\mathcal{A}_{◆\mathcal{B}_{★}}\) is given by \((G⨾L, \gamma)\), where \(\gamma\) is the horizontal composite
  \[
    \gamma {:} H \xRightarrow{ε} G⨾K \xRightarrow{G⨾η} G⨾L⨾J.
  \]
  Applying \(F\), we obtain
  \[
    \gamma⨾ F {:} H⨾F \xRightarrow{ε⨾ F} G⨾K⨾F \xRightarrow{G⨾ η⨾ F} G⨾L⨾J⨾F.
  \]
  Because \(F\) is a 2-functor, this \(F(\gamma)\) is exactly the structure map of the composite \((G, F(ε))⨾ (L, F(η))\) in \(\mathcal{A}_{◆\mathcal{B}_{★}'}\). The preservation of identity 1-cells follows
  identically from \(F(\mathbf{id}) = \mathbf{id}\).

  Finally, vertical and horizontal composition of 2-cells in \(\mathcal{A}_{◆\mathcal{B}_{★}}\) are computed componentwise in \(\mathcal{A}\) and \(\mathcal{B}\). Since \(F\) strictly preserves all compositions and
  identities of 2-cells, the assignment \((α, Φ) \mapsto (α, Φ⨾ F)\) is strictly functorial with respect to both vertical and horizontal composition. Thus, \((-⨾ F)\) is a strict 2-functor.
\end{proof}

Thus, we get for each oplaxly pointed 2-category \(\mathcal{B}_{★}\) and sub-2-category \(\mathcal{A}\) of \(\overset{\hookrightarrow}{\mathbf{Cat}}\) a 2-functor \((-⨾ \mathbf{SOb}_{\mathcal{B}}){:} \mathcal{A}_{◆ \mathcal{B}_{★}} → \mathcal{A}_{◆ \mathbf{UBP}}\). Now, if \(\mathcal{A}\) consists of monic
categories and monomorphism-preserving functors then all we really need to do is find a comparison with \(\mathbf{SOb}{:} \mathcal{A}_{◆ \mathbf{UBP}}\) and verify it checks out as a universal
property, and we're done. But if \(\mathcal{B}\) is 2-pseudomonic, then for each \(ι{:} A \hookrightarrow X\) in any \(\mathcal{C}{:} \mathcal{A}\) and any 2-functor \(F{:} \mathcal{C} → \mathcal{B}\) there exists an element \(F(ι) ∈ \mathbf{SOb}(F(X))\), so that we can
define for each \(X{:} \mathcal{C}\) an order-preserving function \(F_{*, X}{:} \mathbf{SOb}_{\mathcal{C}} → \mathbf{SOb}_{\mathcal{B}}, ι ↦ \top_{F(ι)}\) , which is unique since it is determined by its values on subobjects \(ι\) of \(X\).

\begin{Lemma}
  \label{le↘_:5}
  For each \(\mathcal{C}{:} \mathcal{A}\) and \(F{:} \mathcal{C} → \mathcal{B}_{★}\) with \(\mathcal{B}_{★}\) 2-pseudomonic and oplaxly pointed, \(F_{*}\) is a point-preserving 2-natural transformation \(\mathbf{SOb}_{\mathcal{C}} \Rightarrow (F⨾ \mathbf{SOb}_{\mathcal{B}}){:} \mathcal{C} → \mathbf{UBP}\).
\end{Lemma}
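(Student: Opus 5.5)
The plan is to follow the template of \cref{lmm:3} and \cref{lmm:2}, with the subobject 2-functor \(\mathbf{SOb}_{\mathcal{B}}\) of \cref{■pn_:6} in place of \(\mathbf{SOb}_{\mathcal{D}}\). I would first check that the components are well defined. For \(X{:}\mathcal{C}\) and \(ι{:} A \hookrightarrow X\) monic in \(\mathcal{C}\), I read \(F_{*, X}(ι)\) as \(F(ι)\), regarded as an element of \(\mathbf{SOb}_{\mathcal{B}}(F(X))\). This element is the top \(\top_{F(ι)_{*}}\) of the ideal embedding \(F(ι)_{*}\), which is exactly the construction described before \cref{le↘_:5}. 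Since \(\mathcal{B}\) is 2-pseudomonic, every 1-cell is pseudomonic, so \(F(ι)\) lies in \(\mathbf{SOb}_{\mathcal{B}}(F(X))\).

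Monotonicity comes next. If \(ι \le κ\), then \(ι = λ⨾κ\) for some \(λ\) in \(\mathcal{C}\), and functoriality gives \(F(ι) = F(λ)⨾F(κ)\). The identity 2-cell \(F(ι) \Rightarrow F(λ)⨾F(κ)\) is monic, so it witnesses \(F(ι) \le F(κ)\) in the sense of \cref{defn_:6}. Top-preservation is immediate: \(F_{*,X}(\mathbf{id}_X) = F(\mathbf{id}_X) = \mathbf{id}_{F(X)}\), which is the top of \(\mathbf{SOb}_{\mathcal{B}}(F(X))\).

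For naturality I would fix \(f{:} X \hookrightarrow Y\) in \(\mathcal{C}\) and compare the two legs of the square \(f_{*}⨾F_{*,Y}\) and \(F_{*,X}⨾\mathbf{SOb}_{\mathcal{B}}(F(f))\). The action of \(\mathbf{SOb}_{\mathcal{B}}\) on 1-cells is postcomposition by \cref{■pn_:6}. Evaluated at \(ι\), the first leg gives \(F(ι⨾f)\) and the second gives \(F(ι)⨾F(f)\), and these agree strictly because \(F\) is a 2-functor. Because \(\mathcal{C}\) is a 1-category, there are no nontrivial 2-cells in \(\mathcal{C}\), so the 2-cell naturality condition of \cref{def:2-nat-trans} holds vacuously and \(F_{*}\) is strictly 2-natural.

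I expect point-preservation to be the main obstacle, because it depends on what the oplax point on \(\mathbf{UBP}\) is. I would take it to be \(\top_{(-)}\), the unique lax point discussed at the start of \cref{sec:smalln-under-morph}. With that choice, a map is point-preserving exactly when it is top-preserving, and that was shown in the monotonicity step. If instead the intended point is the one transported along \(\mathbf{SOb}_{\mathcal{B}}\) from \(★_{\mathcal{B}}\), I would first try to show that \(\mathbf{SOb}_{\mathcal{B}}\) sends \(★_{\mathcal{B}}\) to \(\top\). The reason to expect this is that \(\mathbf{id}_{F(X)}\) is the terminal object of the subobject preorder. This reduces the condition once more to top-preservation.
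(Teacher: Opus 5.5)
Your proposal is correct and follows the paper's proof essentially step for step. It obtains monotonicity from functoriality of \(F\); your explicit identity 2-cell \(F(\iota) \Rightarrow F(\lambda)⨾F(\kappa)\) is exactly the witness the paper leaves implicit when it appeals to \(F(f)\). It obtains naturality from \(F(\iota)⨾F(g) = F(\iota⨾g)\), treats 2-cell naturality as vacuous because \(\mathcal{C}\) is a 1-category, and reads point-preservation as top-preservation \(F_{*,X}(\mathbf{id}_X) = \mathbf{id}_{F(X)}\), all as the paper does.

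Your first reading of the point as \(\top_{(-)}\) is the intended one, because the components of \(F_{*}\) are maps in \(\mathbf{UBP}\). The fallback of transporting the point of \(\mathcal{B}\) along \(\mathbf{SOb}_{\mathcal{B}}\) is therefore unnecessary.
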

\begin{proof}
  Since \(\mathcal{C}\) is a 1-category, 2-naturality holds trivially. It suffices to verify that each \(F_{*, X}\) is monotone and natural in 1-cells.

  For monotonicity, let \(ι \leq \kappa\) in \(\mathbf{SOb}_{\mathcal{C}}(X)\), with representatives \(ι{:} A \hookrightarrow X\) and \(\kappa{:} B \hookrightarrow X\). By definition, there exists a (necessarily monic) 1-cell
  \(f{:} A \to B\) such that \(ι = f ⨾ \kappa\). Applying \(F\), we obtain a 1-cell \(F(f){:} F(A) \to F(B)\) in \(\mathcal{B}\). Since \(\mathcal{B}\) is pseudomonic, \(F(f)\) is pseudomonic, and thus
  \(F(ι) \leq F(\kappa)\) in \(\mathbf{SOb}_{\mathcal{B}}(F(X))\). Hence \(F_{*, X}\) is order-preserving. Point-preservation holds since \(\top_{F(ι)} = \mathbf{id}_{F(ι)}\).

  For naturality, let \(g{:} X \to Y\) be a 1-cell in \(\mathcal{C}\). We must show that the maps \(F_{*, X} ⨾ F(g)_{*}\) and
  \(g_{*} ⨾ F_{*, Y}\) from \(\mathbf{SOb}_{\mathcal{C}}(X)\) to \(\mathbf{SOb}_{\mathcal{B}}(F(Y))\) are equal. Evaluating both on an element \(ι{:} A \hookrightarrow X\), the left-hand side yields
  \[
    (F_{*, X} ⨾ F(g)_{*})(ι) = F_{*}(g)(F(ι)) = F(ι) ⨾ F(g),
  \]
  while the right-hand side yields
  \[
    (g_{*} ⨾ F_{*, Y})(ι) = F(ι ⨾ g).
  \]
  By the functoriality of \(F\), we have \(F(ι) ⨾ F(g) = F(ι ⨾ g)\), so the two composites agree pointwise. Thus, \(F_{*}\) is a natural transformation
  \(\mathbf{SOb}_{\mathcal{C}} \Rightarrow F ⨾ \mathbf{SOb}_{\mathcal{B}}\).
\end{proof}

Now, let us note that for each \(\mathcal{A} \hookrightarrow \mathbf{Cat}\) and each oplaxly pointed 2-category \(\mathcal{B}_{★}\), the 2-category \(\mathcal{A}_{◆ \mathcal{B}_{★}}\) is fibered over \(\mathcal{A}\) (in a general sense) through the 2-functor \(\mathbf{s}_{\mathcal{A}_{◆ \mathcal{B}_{★}}}\), which maps \(F{:} \mathcal{C} → \mathcal{B}\) to \(\mathcal{C}\), and
similar for higher cells.

\begin{Definition}
  \label{defn_:4}
  \index{diamond}
  Given an oplaxly pointed 2-category \(\mathcal{B}_{★}\) and a sub-2-category \(\mathcal{A} \hookrightarrow \mathbf{Cat}\), \emph{an \(\mathcal{A}\)-diamond in \(\mathcal{B}_{★}\)} is a section
  \(\mathcal{A} → \mathcal{A}_{◆ \mathcal{B}_{★}}\) of \(\mathbf{s}_{\mathcal{A}_{◆ \mathcal{B}_{★}}}\). We denote the 2-category of sections of \(\mathbf{s}_{\mathcal{A}_{◆ \mathcal{B}_{★}}}\) by \(\mathcal{A} ◆ \mathcal{B}_{★}\).
\end{Definition}
\begin{Remark}
  \label{rmα_:7}
  Thus, a diamond is a particular kind of reflection of \(\mathcal{A}\) in \(\mathcal{B}_{★}\), which assigns to each \(\mathcal{C}{:} \mathcal{A}\) a representation of \(\mathcal{A}\) in \(\mathcal{B}\) and to each functor and natural transformation in \(\mathcal{A}\)
  parameterized diagrams in \(\mathcal{B}\). So we have \(\mathcal{A}\) as a 2-category, so an object of \(2-\mathbf{Cat}\), but in a diamond we ``untangle'' \(\mathcal{A}\) into a diagram from which
  we define a coherent system out of. We can draw this like this:

  \begin{figure}[H]
    \centering
    \begin{tikzpicture}
    \draw[name=e] (0,0) ellipse (1cm and 0.5cm);
    \draw[black] (-1, 0) -- (0, -2);
    \draw[black] (1, 0) -- (0, -2);
    \node at (0, -2.25){\(\mathcal{A}_{★}\)};
    \node at (4, -2.25){\(\mathcal{B}\)};
    \draw[black] ({1/8 * (1 - sqrt(31))}, {1/16 * (-1 - sqrt(31))}) -- (4, -2);
    \draw[black] ({1/8 * (1 + sqrt(31))}, {1/16 * (sqrt(31) - 1)}) -- (4, -2);
  \end{tikzpicture}
    % \caption{\label{fig:4}}
\end{figure}

Where the ellipse contains \(\mathcal{A}\) as a diagram in \(\mathbf{2-Cat}\) and the cone from that ellipse to \(\mathcal{B}\) is actually a special kind of 2-cocone consisting of a 2-functor
\(D(\mathcal{C}){:} \mathcal{C} → \mathcal{B}\) for each category \(\mathcal{C}\) in \(\mathcal{A}\) and lax higher cells as described in \cref{def:diamond-over}. But in the following we will denote such diamonds in diagrams like
this:

\begin{figure}[H]
  \centering
  \begin{tikzcd}
   \mathcal{A} \ar[r, -{Triangle[length=9mm, width=3mm]}] & \mathcal{B}_{★}.
 \end{tikzcd}
  % \caption{\label{fig:6}}
\end{figure}
\end{Remark}
\begin{Remark}
  \label{rmk:3}
  By \cref{■pn_:7} we can postcompose a diamond with a 2-functor as if it was a functor. Since we use the \((-)_{*}\) notation quite a lot, we will just denote this as \(D⨾ F\).
\end{Remark}
\begin{Example}
  \label{xmp_:17}
  Let \(\mathbf{Cat}_{term}\) be the full sub-2-category of \(\mathbf{Cat}\) whose objects are small categories with a terminal object. Then slicing constitutes a
  \(\mathbf{Cat}_{term}\)-valued diamond (we spell out a proof below, but the procedure is the same as for \(\overset{\hookrightarrow}{\mathbf{Cat}}_{◆ \mathbf{UBP}}\)). Moreover, it seems like
  the proof of \cref{thm:1} can be straightforwardly adapted to \(\mathbf{Cat}_{◆ \mathbf{Cat}_{term}}\). Whether the currently going-on proof of \cref{thm:3} can also be adapted
  is less clear, but an analogue of it for non-monics should be true.
\end{Example}
\begin{proof}
  We construct a 2-functor \(D: \mathbf{Cat}_{term} \to (\mathbf{Cat}_{term})_{◆ \mathbf{Cat}_{term}}\) that is a section of the source 2-functor \(\mathbf{s}\). Let \(\mathcal{A} = \mathbf{Cat}_{term}\).

  \emph{Action on objects.}
  For each category \(\mathcal{C} {:} \mathcal{A}\), we assign \(D(\mathcal{C}) = (\mathcal{C}, E_{\mathcal{C}})\), where \(E_{\mathcal{C}}{:} \mathcal{C} \to \mathbf{Cat}_{term}\) is the strict 2-functor given by taking slice categories:
  \[
    E_{\mathcal{C}}(X) \;=\; \mathcal{C}_{/X}.
  \]
  For a morphism \(f {:} X \to Y\) in \(\mathcal{C}\), the functor \(E_{\mathcal{C}}(f) = f_{*}{:} \mathcal{C}_{/X} \to \mathcal{C}_{/Y}\) acts by postcomposition, sending an object \(a {:} A \to X\) to \(a ⨾ f {:} A \to Y\). Because \(\mathcal{C}_{/X}\) has a terminal object \(\mathbf{id}_X\), it belongs to \(\mathbf{Cat}_{term}\). The oplax point \(\star_{\mathcal{C}_{/X}}{:} \mathbf{1} \to \mathcal{C}_{/X}\) selects this terminal object.

  \emph{Action on 1-cells.}
  For a functor \(F {:} \mathcal{C} \to \mathcal{D}\) in \(\mathcal{A}\), we define \(D(F) = (F, \varepsilon_F)\), where \(\varepsilon_F{:} E_{\mathcal{C}} \Rightarrow F ⨾ E_{\mathcal{D}}\) is a point-preserving 2-natural transformation.
  For each \(X {:} \mathcal{C}\), the component \((\varepsilon_F)_X {:} \mathcal{C}_{/X} \to \mathcal{D}_{/F(X)}\) is the functor \(F_{/X}\) induced by postcomposition with \(F\):
  \[
    (A \xrightarrow{a} X) \quad \mapsto \quad (F(A) \xrightarrow{F(a)} F(X)).
  \]
  To see that \(\varepsilon_F\) is point-preserving, evaluate it on the point of \(\mathcal{C}_{/X}\), which is \(\mathbf{id}_X\):
  \[
    (\varepsilon_F)_X(\mathbf{id}_X) \;=\; F(\mathbf{id}_X) \;=\; \mathbf{id}_{F(X)},
  \]
  which is exactly the point of \(\mathcal{D}_{/F(X)}\).
  To verify 2-naturality, let \(f {:} X \to Y\) be a morphism in \(\mathcal{C}\). The following square of categories and functors must strictly commute:
  \[
    \begin{tikzcd}
      \mathcal{C}_{/X} \ar[r, "F_{/X}"] \ar[d, "f_{*}"'] & \mathcal{D}_{/F(X)} \ar[d, "F(f)_{*}"] \\
      \mathcal{C}_{/Y} \ar[r, "F_{/Y}"'] & \mathcal{D}_{/F(Y)}
    \end{tikzcd}
  \]
  For any object \(a {:} A \to X\) in \(\mathcal{C}_{/X}\), tracing right-then-down yields \(F(a) ⨾ F(f) = F(a ⨾ f)\), while down-then-right yields \(F(a ⨾ f)\). Thus, the square strictly commutes.

  \emph{Action on 2-cells.}
  For a natural transformation \(\alpha {:} F \Rightarrow G {:} \mathcal{C} \to \mathcal{D}\) in \(\mathcal{A}\), we define \(D(\alpha) = (\alpha, \Phi)\).
  By \cref{def:diamond-over}, we must provide a modification \(\Phi {:} \varepsilon_F ; (\alpha ⨾ E_{\mathcal{D}}) \Rrightarrow \varepsilon_G\).
  For each \(X {:} \mathcal{C}\), the transformation \((\alpha ⨾ E_{\mathcal{D}})_X {:} \mathcal{D}_{/F(X)} \Rightarrow \mathcal{D}_{/G(X)}\) is postcomposition with \(\alpha_X {:} F(X) \to G(X)\). We require a natural transformation \(\Phi_X {:} F_{/X} ⨾ (\alpha_X)_{*} \Rightarrow G_{/X}\) in \(\mathbf{Cat}_{term}\).
  For an object \(a {:} A \to X\) in \(\mathcal{C}_{/X}\), the source is the object \(F(a) ⨾ \alpha_X\) in \(\mathcal{D}_{/G(X)}\), and the target is \(G(a)\). By the naturality of \(\alpha\) in \(\mathcal{D}\), we have a morphism \(\alpha_A {:} F(A) \to G(A)\) satisfying \(\alpha_A ⨾ G(a) = F(a) ⨾ \alpha_X\).
  This makes \(\alpha_A\) a morphism in \(\mathcal{D}_{/G(X)}\) from \(F(a) ⨾ \alpha_X\) to \(G(a)\). We define \((\Phi_X)_a := \alpha_A\).
  The naturality of \(\Phi_X\) with respect to morphisms in \(\mathcal{C}_{/X}\), as well as the modification axioms for \(\Phi\), follow directly from the naturality of \(\alpha\).

  \emph{Section and 2-functor axioms.}
  By construction, the underlying object, 1-cell, and 2-cell of \(D(\mathcal{C})\), \(D(F)\), and \(D(\alpha)\) are \(\mathcal{C}\), \(F\), and \(\alpha\) respectively. Thus, \(D\) is a section of the source 2-functor \(\mathbf{s}{:} (\mathbf{Cat}_{term})_{◆ \mathbf{Cat}_{term}} \to \mathbf{Cat}_{term}\).
  Identities and compositions are strictly preserved because the slice and postcomposition operations define a strict 2-functor \(\mathbf{Cat} \to \mathbf{Cat}\) componentwise.
  Therefore, \(D\) constitutes a \(\mathbf{Cat}_{term}\)-valued diamond.
\end{proof}

Now we just have to put all diamond-structures into one nice package. However, there are still some pitfalls that have to be avoided. For one, note that if \(\mathcal{A}\) contains non-faithful functors
\(F{:} \mathcal{C} → \mathcal{C}'\), then the natural transformation \(F_{*}{:} \mathbf{SOb}_{\mathcal{C}} → \mathbf{SOb}_{\mathcal{C}'}\) will not be a monomorphism and thus not in \(\overset{\hookrightarrow}{\mathbf{UBP}}\) but in \(\mathbf{UBP}\). If
we only do comparisons over \(\mathbf{UBP}\) this is not a problem, but, of course, not every morphism in \(\mathbf{UBP}\) is monic, so \(\mathbf{UBP}\) is not a pseudomonic
2-category; but we really need to assume that \(\mathcal{B}\) is monic to get a \(\mathbf{SOb}\)-functor on it. We could \emph{try} to treat \(\mathcal{B}\) and \(\mathbf{UBP}\) on a different levelling, but
it seems more prudent to just assume that \(\mathcal{A}\) is pseudomonic itself i.e. consists of faithful isomorphism-reflecting functors, so that its subobject-functor factors through
\( \overset{\hookrightarrow}{\mathbf{UBP}}\). Note that this means \(\mathcal{A}\) is 2-pseudomonic since it consists of monic categories.

\begin{Remark}
  \label{rmα_:10}
  This restriction means that our aimed-for theorem will sadly not just be a direct generalization of \cref{thm:1}, and such a thing is probably impossible.
  % And this makes some sense: what
  % we are basically saying by providing a universal property for \(\overset{\hookrightarrow}{\mathbf{UBP}}\) is that it is the most purely monic thing ever: wherever we have subobjects it will appear, and all the
  % information of a category that is contained in subobjects is somewhere in \(\overset{\hookrightarrow}{\mathbf{UBP}}\), and this is true on all categorical levels. But if there is some non-monic part of a
  % category or collection of categories, then it will contain information that is not in its shadow in \(\overset{\hookrightarrow}{\mathbf{UBP}}\), so there might be information that a diamond structure in some
  % other ``coefficient object'' detects that never appears in \(\overset{\hookrightarrow}{\mathbf{UBP}}\), which keeps \(\overset{\hookrightarrow}{\mathbf{UBP}}\) from being universal with respect to these structures.
\end{Remark}

\begin{Definition}
  \label{defn_:5}
\index{diamonds!2-category of}
  Given a sub-2-category \(\mathcal{A} \hookrightarrow \overset{\hookrightarrow}{\mathbf{Cat}}\), the \emph{2-category of diamonds \(◆(\mathcal{A})\) over \(\mathcal{A}\)} is the 2-category:

  \begin{enumerate}
  \item \label{item:39} whose objects are pairs \((\mathcal{B}_{★}, D{:} \mathcal{A} ◆ \mathcal{B}_{★})\) with \(\mathcal{B}_{★}\) an oplaxly pointed 2-category:

\begin{figure}[H]
      \centering
      \begin{tikzcd}
      \mathcal{A} \ar[r, -{Triangle[length=9mm, width=3mm]}] & \mathcal{B},
    \end{tikzcd}
      % \caption{\label{fig:5}}
    \end{figure}
  \item\label{item:47} whose morphisms \(Δ{:} (\mathcal{B}_{★}, D{:} \mathcal{A} ◆ \mathcal{B}_{★}) → (\mathcal{B}_{★}', D'{:} \mathcal{A} ◆ \mathcal{B}_{★}')\) are given by pairs of
    \begin{enumerate}
    \item \label{item:48} a point-preserving 2-functor \(\overline{Δ}{:} \mathcal{B}_{★} → \mathcal{B}_{★}'\) (i.e. \(★_{\mathcal{B}_{★}}⨾ \overline{Δ} = ★_{\mathcal{B}_{★}'}\)),

    \item\label{item:49} a cell \(\underline{Δ}{:} D' → D⨾ \overline{Δ}{:} \mathcal{A} ◆ \mathcal{B}_{★}'\)
    \end{enumerate}

    \begin{figure}[H]
      \centering
      \begin{tikzcd}[column sep=large]
        & \mathcal{B} \ar[dd, "\overline{Δ}"] \\
        \mathcal{A} \ar[ur, -{Triangle[length=16mm, width=3mm]}, "D"{name=D}] \ar[dr, -{Triangle[length=15mm, width=3mm]}, "D'"'{name=E}] &\\
        & \mathcal{B}',
        \ar[Rightarrow, from=E, to=1-2, "\underline{Δ}", shorten <= 10pt]
    \end{tikzcd}
      % \caption{\label{fig:5}}
    \end{figure}

  \item\label{item:50} whose 2-morphisms \(I{:} Δ \Rightarrow Δ°\) are given by pairs of

    \begin{enumerate}
    \item\label{item:51} a point-preserving 2-natural transformation \(\overline{I}{:} \overline{Δ} \Rightarrow \overline{Δ}°{:} \mathcal{B}_{★} → \mathcal{B}_{★}'\) and

    \item \label{item:52} a 2-cell \(\underline{I}{:} \underline{Δ}° \Rightarrow \underline{Δ}; (D⨾ \overline{I}){:} \mathcal{A} ◆ \mathcal{B}_{★}'\).
    \end{enumerate}

    So a filling of this diagram opposite to \(\overline{I}\):

    \begin{figure}[H]
      \centering
      \begin{tikzcd}[every matrix/.append style={name=m}, column sep=large]
        & \mathcal{B} \ar[dd, "\overline{Δ}"{yshift=-14pt}, "\hphantom{\overline{Δ}}"{name=T}, bend left]
        \ar[dd, "\hphantom{\overline{Δ}°}"'{name=U}, "\overline{Δ}°"'{yshift=-14pt, xshift=3pt}, bend right]; \\
        \mathcal{A} \ar[ur, -{Triangle[length=16mm, width=3mm]}, "D"{name=D}]
        \ar[dr, -{Triangle[length=15mm, width=3mm]}, "D'"'{name=E}] &\\
        & \mathcal{B}'
        \ar[Rightarrow, from=T, to=U, "\overline{I}", shorten >=3pt, shorten <=3pt, yshift=-5pt]
\ar[from=E, "\underline{Δ}"'{yshift=14pt, xshift=12pt}, Rightarrow, -{Implies}, shorten <=5pt, shorten >=-7pt,
    to path={ (\tikztostart) -- ($(m-1-2)!0.3!(m-3-2)$) \tikztonodes }]
\ar[from=E, "\underline{Δ}°"{yshift=-5pt}, Rightarrow, -{Implies}, shorten <=5pt, shorten >=3pt,
to path={ (\tikztostart) -- ($(m-1-2)!0.1!(m-2-1)$) \tikztonodes }, shorten >=10pt]
  \end{tikzcd}
    % \caption{\label{fig:5}}
    \end{figure}
\end{enumerate}

  The \emph{2-category of diamonds \(\overset{\hookrightarrow}{◆}(\mathcal{A})\) over \(\mathcal{A}\) on monics} is the (on all levels) full sub-2-category of \(◆(\mathcal{A})\) on 2-pseudomonic 2-categories \(\mathcal{B}_{★}\) .
\end{Definition}

This definition is fairly complex, so let us disentangle it a bit: all cells go out of \(\mathcal{A}\). Morphisms
\(Δ{:} (\mathcal{B}_{★}, D) → (\mathcal{B}_{★}', D')\) between objects consist of one covariant ``external'' functor \(\overline{Δ}\) and one covariant ``internal'' morphism (which is a parameterized
point-preserving natural transformation)
\(\underline{Δ}\), which ``closes the square in \(\mathcal{B}\)''.

The more mysterious part are the 2-morphisms: so given \(Δ, Δ°\), a 2-morphism \(I{:} Δ \Rightarrow Δ°\) between them consists of a covariant part \(\overline{I}{:} \overline{Δ} \Rightarrow \overline{Δ}°\) ``on the outside'' and a
contravariant part \(\underline{I}{:} \underline{Δ}° \Rightarrow \underline{Δ}; (D⨾ \overline{I}){:}\) ``on the inside''. But let us consider how this looks in the particular case we are interested in, which is ensuring that the morphism from
\((\mathcal{B}_{★}, D)\) to \((\overset{\hookrightarrow}{\mathbf{UBP}}, \mathbf{SOb})\) given by \((\mathbf{SOb}_{\mathcal{B}}, D_{*})\) is terminal, where
\(D_{*}(\mathcal{C}){:} \mathbf{SOb}(\mathcal{C}) \Rightarrow D⨾ \mathbf{SOb}_{\mathcal{B}}\) is given by \(D(\mathcal{C})_{*}\) on each \(\mathcal{C}{:} \mathcal{A}\) and similar for higher cells. However, we have to interpret ``terminal'' in some
lax 2-categorical sense because it is not true on the nose. So the question is: \emph{how} is it true?
Well, given any 2-functor \(\overline{Δ}{:} \mathcal{B} → \overset{\hookrightarrow}{\mathbf{UBP}}\) and 2-cell \(\underline{Δ}{:} \mathbf{SOb} \Rightarrow D⨾ \overline{Δ}{:} \mathcal{A} ◆ \mathbf{UBP}\)
we have a top-preserving natural
transformation \(\overline{Δ}_{★}{:} \mathbf{SOb}_{\mathcal{B}} \Rightarrow \overline{Δ}\)  given by mapping \(ι{:} \mathbf{SOb}_{\mathcal{B}}(X)\) to the image top \(\top_{\overline{Δ}(ι)}\) of \(\overline{Δ}(ι)\)
so that \(D_{*}; (D⨾ \overline{Δ}_{★})\) goes from \(\mathbf{SOb}\) to \(D⨾ \overline{Δ}\) and maps \(κ{:} A \hookrightarrow X\) in \(\mathcal{C}{:} \mathcal{A}\) to \(\top_{\overline{Δ}(κ)}\). Now, since
\(\underline{Δ}\) is natural in each \(\mathcal{C}{:} \mathcal{A}\), we have a commuting diagram:

\begin{figure}[H]
  \centering
  \begin{tikzcd}
    (D⨾ \overline{Δ})(A) \ar[r, "(D⨾ \overline{Δ})(κ)"] & (D⨾ \overline{Δ})(X) \\
    \mathbf{SOb}_{\mathcal{C}}(A) \ar[u, "\underline{Δ}(\mathcal{C})_{A}"]\ar[r, "κ_{*}"'] & \mathbf{SOb}_{\mathcal{C}}(X) \ar[u, "\underline{Δ}(\mathcal{C})_{X}"']
\end{tikzcd}
\label{fig:2}
\end{figure}

And thus, \(D_{*}; (D⨾ \overline{Δ}_{★})(κ) = \top_{\overline{Δ}(κ)} \geq \underline{Δ}(κ)\).

\begin{Proposition}
  \label{■pn_:9}
\index{subobject functor!contravariant terminality}
  For any subcategory \(\mathcal{A} \hookrightarrow \mathbf{Cat}\) of monic categories and monomorphism-preserving functors,
  \(\mathbf{SOb}{:} \mathcal{A} → \overset{\hookrightarrow}{\mathbf{UBP}}\) is contravariantly terminal in \(\overset{\hookrightarrow}{◆}(\mathcal{A})\) (see \cref{defn_:9}).
\end{Proposition}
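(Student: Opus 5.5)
We unfold \cref{defn_:9}: the claim is that for every object \((\mathcal{B}_{★}, D)\) of \(\overset{\hookrightarrow}{◆}(\mathcal{A})\), the hom-category \(\overset{\hookrightarrow}{◆}(\mathcal{A})\bigl((\mathcal{B}_{★},D),(\overset{\hookrightarrow}{\mathbf{UBP}},\mathbf{SOb})\bigr)\) has an initial object. Our candidate is the 1-cell \(Δ_{0} := (\mathbf{SOb}_{\mathcal{B}}, D_{*})\) sketched just before the statement.

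We first check that \(Δ_{0}\) is a morphism. By \cref{■pn_:6}, \(\mathbf{SOb}_{\mathcal{B}}{:}\mathcal{B}\to\overset{\hookrightarrow}{\mathbf{UBP}}\) is a 2-functor. It is point-preserving because the oplax point of \(\overset{\hookrightarrow}{\mathbf{UBP}}\) is the family of tops \(\top_{P}\) and \(\mathbf{SOb}_{\mathcal{B}}\) sends the point of \(X\) to \(\mathbf{id}_X=\top\). For each \(\mathcal{C}{:}\mathcal{A}\) we take the component \(D_{*}(\mathcal{C}) := D(\mathcal{C})_{*}{:}\mathbf{SOb}_{\mathcal{C}}\Rightarrow D(\mathcal{C})⨾\mathbf{SOb}_{\mathcal{B}}\); this is a point-preserving 2-natural transformation by \cref{le↘_:5}. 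We also need that \(D_{*}\) is a cell of diamonds \(\mathbf{SOb}\to D⨾\mathbf{SOb}_{\mathcal{B}}\), i.e. that it is compatible with the 1- and 2-cells of \(\mathcal{A}\). For a functor \(F\), the composite triangles agree by functoriality of \(D\), using the uniqueness in \cref{lmm:3}. For a natural transformation, the modification inequality holds by the same principal-ideal computation as in \cref{fi♭_:24}, now carried out in \(\mathbf{SOb}_{\mathcal{B}}\). Throughout, \cref{■pn_:7} lets us treat \(D⨾\mathbf{SOb}_{\mathcal{B}}\) as a diamond.

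Next we build the comparison 2-cell. Given any other morphism \(Δ=(\overline{Δ},\underline{Δ})\) into \((\overset{\hookrightarrow}{\mathbf{UBP}},\mathbf{SOb})\), we must produce a 2-morphism \(I{:}Δ_{0}\Rightarrow Δ\). For the outer part we take \(\overline{I}:=\overline{Δ}_{★}{:}\mathbf{SOb}_{\mathcal{B}}\Rightarrow\overline{Δ}\), given by \(ι\mapsto\top_{\overline{Δ}(ι)}\). It is natural in 1-cells by the argument of \cref{lmm:2}, now applied to the 2-pseudomonic \(\mathcal{B}\), and it is point-preserving because \(\top_{\overline{Δ}(\mathbf{id})}=\top\).

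For the inner part \(\underline{I}{:}\underline{Δ}\Rightarrow D_{*};(D⨾\overline{I})\) we use the computation displayed before the statement. Naturality of \(\underline{Δ}(\mathcal{C})\) gives
\[\underline{Δ}(\mathcal{C})_X(κ)=(D⨾\overline{Δ})(κ)\bigl(\underline{Δ}(\mathcal{C})_A(\top)\bigr)\le\top_{(D⨾\overline{Δ})(κ)}=\bigl(D_{*};(D⨾\overline{Δ}_{★})\bigr)(κ).\]
Since \(\overset{\hookrightarrow}{\mathbf{UBP}}\) is locally thin, this inequality is a unique 2-cell. All coherence conditions, i.e. the modification axioms and compatibility with the 2-cells of \(\mathcal{A}\), therefore hold automatically.

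For uniqueness, let \(J{:}Δ_{0}\Rightarrow Δ\) be any 2-morphism. Its inner part is forced by thinness, so it remains to show \(\overline{J}=\overline{Δ}_{★}\). The components \(\overline{J}_X{:}\mathbf{SOb}_{\mathcal{B}}(X)\to\overline{Δ}(X)\) are point-preserving, so \(\overline{J}_A(\top)=\top\). Naturality along \(ι{:}A\hookrightarrow X\) then gives
\[\overline{J}_X(ι)=\overline{J}_X(ι_{*}\top)=\overline{Δ}(ι)(\overline{J}_A(\top))=\top_{\overline{Δ}(ι)}.\]
This is the argument of \cref{lmm:3} verbatim. Hence \(Δ_{0}\) is initial in the hom-category.

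The main obstacle is the orientation of \(\underline{I}\) in \cref{defn_:5}: it is contravariant, and the inequality above must point the right way. I expect the bookkeeping of which triangle composes with \(D⨾\overline{I}\) to be delicate. My first step is to check it on a single \(\mathcal{C}\) and a single subobject, exploiting thinness of \(\mathbf{UBP}\). If the direction comes out reversed, I would instead exhibit the \emph{terminal} object of the hom-category and reinterpret the variance as in the remark following \cref{defn_:9}.
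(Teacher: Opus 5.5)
Your proposal is correct and follows the paper's proof essentially step for step. It uses the same candidate \((\mathbf{SOb}_{\mathcal{B}}, D_{*})\), the same comparison 2-cell \((\overline{\Delta}_{\star}, \leq)\) whose inner part comes from the naturality inequality for \(\underline{\Delta}\), and the same uniqueness argument via top-preservation of the components plus the naturality square along \(\iota{:} A \hookrightarrow X\); you simply write out more of the bookkeeping than the paper does.

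Your worry about orientation is unfounded: by \cref{defn_:5} the inner part of a 2-morphism \(\Delta_{0} \Rightarrow \Delta\) runs from \(\underline{\Delta}\) to \(D_{*}\) followed by \(D\) whiskered with \(\overline{I}\). That is exactly the direction of your inequality \(\underline{\Delta}(\mathcal{C})_{X}(\kappa) \leq \top_{\overline{\Delta}(D(\mathcal{C})(\kappa))}\), so no fallback to a terminal object is needed.
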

\begin{proof}
  \label{prf_:7}
  The constructions of \cref{■pn_:6}, \cref{■pn_:7}, and \cref{le↘_:5} together furnish the morphism
  \((\mathbf{SOb}_{\mathcal{B}}, D_{*}){:} (\mathcal{B}_{★}, D) \to (\overset{\hookrightarrow}{\mathbf{UBP}}, \mathbf{SOb})\), as was discussed in the preceding paragraphs. It remains to show that
  this is an initial object in the hom-category \(\overset{\hookrightarrow}{◆}(\mathcal{A})\bigl((\mathcal{B}_{★}, D),\, (\overset{\hookrightarrow}{\mathbf{UBP}}, \mathbf{SOb})\bigr)\). We have also for each other morphism
  \((\overline{Δ}, \underline{Δ}){:} (\mathcal{B}_{★}, D) → (\overset{\hookrightarrow}{\mathbf{UBP}}, \mathbf{SOb})\) constructed a 2-morphism \((\overline{Δ}_{★}, \leq)\). For uniqueness, note that for any top-preserving natural transformation \(\mathcal{I}{:} \mathbf{SOb}_{\mathcal{B}} \Rightarrow \overline{Δ}\) and \(ι{:} A \hookrightarrow X\) in \(\mathcal{B}_{★}\) it holds that the square

  \begin{figure}[H]
  \centering
  \begin{tikzcd}
    \overline{Δ}(A) \ar[r, "\overline{Δ}(ι)"] & \overline{Δ}(X) \\
    \mathbf{SOb}_{\mathcal{B}}(A) \ar[u, "\mathcal{I}_{A}"]\ar[r, "ι_{*}"'] & \mathbf{SOb}_{\mathcal{B}}(X) \ar[u, "\mathcal{I}_{X}"']
\end{tikzcd}
  % \caption{\label{fi♭_:8}}
\end{figure}

commutes, so that \(\mathcal{I}_{X}(ι) = \overline{Δ}_{★, X}(ι)\), since \(\mathcal{I}_{A}\) preserves tops. Thus, \(\mathcal{I} = \overline{Δ}_{★}\), so that \(\mathcal{I}\) is unique since 2-cells in \(\mathbf{UBP}\) are unique.
\end{proof}

Since contravariantly terminal objects are not universal, \cref{■pn_:9} is still not a universal characterization of \(\overset{\hookrightarrow}{\mathbf{UBP}}\). But let us assume that
\((\mathcal{B}_{★}, D)\) is some other contravariantly terminal diamond.

\begin{enumerate}
\item \label{item:61} The contravariant terminality of \((\overset{\hookrightarrow}{\mathbf{UBP}}, \mathbf{SOb})\) toward \((\mathcal{B}_{★}, D)\) is witnessed by the pair
  \((\mathbf{SOb}_{\mathcal{B}}, D_{*})\) with \(D_{*}{:} \mathbf{SOb} \Rightarrow (D⨾ \mathbf{SOb}_{\mathcal{B}}){:} ι ↦ D_{\mathcal{C}}(ι)\).

\item\label{item:65} The contravariant terminality of \((\overset{\hookrightarrow}{\mathbf{UBP}}, \mathbf{SOb})\) toward
  \((\overset{\hookrightarrow}{\mathbf{UBP}}, \mathbf{SOb}⨾ \mathbf{SOb}_{\overset{\hookrightarrow}{\mathbf{UBP}}})\) is witnessed by a morphism
  \((\mathbf{id}_{\overset{\hookrightarrow}{\mathbf{UBP}}}, \{-\}){:} \mathbf{SOb} \Rightarrow \mathbf{SOb}⨾ \mathbf{SOb}_{\overset{\hookrightarrow}{\mathbf{UBP}}}\).

\item\label{item:64} The contravariant terminality of \((\mathcal{B}_{★}, D)\) towards \((\overset{\hookrightarrow}{\mathbf{UBP}}, \mathbf{SOb})\) is witnessed by a pair
  \((\overline{\mathbb{I}}, \underline{\mathbb{I}})\) where \(\overline{\mathbb{I}}{:} \overset{\hookrightarrow}{\mathbf{UBP}} → \mathcal{B}\) is initial and
  \(\underline{\mathbb{I}}{:} D \Rightarrow \mathbf{SOb}⨾ \overline{\mathbb{I}}\) point-preserving and monic since \(\mathcal{B}_{★}\) is 2-pseudomonic.

\item\label{item:63} The initiality of \((\mathbf{SOb}_{\overset{\hookrightarrow}{\mathbf{UBP}}}, \{-\})\) towards
  \((\overline{\mathbb{I}}⨾ \mathbf{SOb}_{\mathcal{B}}, D_{*}; (\underline{\mathbb{I}}⨾ \mathbf{SOb}_{\mathcal{B}}))\) is witnessed by a 2-cell
  \((-)^{\mathbb{I}}{:} \mathbf{SOb}⨾ \mathbf{SOb}_{\overset{\hookrightarrow}{\mathbf{UBP}}} → \mathbf{SOb}⨾ \overline{\mathbb{I}}⨾ \mathbf{SOb}_{\mathcal{B}}\) which maps
  \(κ{:} \mathbf{SOb}_{\mathcal{C}}(X)\) to \(\overline{\mathbb{I}}(κ) ∈ \mathbf{SOb}_{\mathcal{B}}(\overline{\mathbb{I}}(\mathbf{SOb}_{\mathcal{C}}(X)))\).
\end{enumerate}

 All in all, this gives us a diagram of 2-cells of diamonds (so
basically natural transformations in \(\overset{\hookrightarrow}{\mathbf{UBP}}\)):

\begin{figure}[H]
  \centering
  \begin{tikzcd}
    D⨾ \mathbf{SOb}_{\mathcal{B}} \ar[r, "\underline{\mathbb{I}}_{*}"] & \mathbf{SOb}⨾ \overline{\mathbb{I}}⨾ \mathbf{SOb}_{\mathcal{B}} \\
    \mathbf{SOb} \ar[u, "{ι ↦ D_{\mathcal{C}}(ι)}"] \ar[r, "{ι ↦ \{ι\}}"']  & \mathbf{SOb}⨾ \mathbf{SOb}_{\overset{\hookrightarrow}{\mathbf{UBP}}} \ar[u, "{κ ↦ \overline{\mathbb{I}}(κ)}"'].
  \end{tikzcd}
  \caption{\label{fi♭_:10}}
\end{figure}

It follows from the definitions that this square commutes laxly, but a key step of our proof is to show that it commutes (weakly): from \cref{rmα_:9} we know that \(\{-\}\)
has a retraction \(\top_{(-)}{:} κ ↦ \top_{κ}\), so that the bottom edge in \cref{fi♭_:10} can also be transitioned along backwards, and so we also have a 2-cell \((-)^{D}\) from
\(\mathbf{SOb}⨾ \mathbf{SOb}_{\overset{\hookrightarrow}{\mathbf{UBP}}}\) to \(\mathbf{SOb}⨾ \overline{\mathbb{I}}⨾ \mathbf{SOb}_{\mathcal{B}}\) given by
\(κ ↦ \underline{\mathbb{I}}_{*}(ι_{κ})\), where \(ι_{κ}\) is the subobject of \(X\) corresponding to \(\top_{κ} ∈ \mathbf{SOb}_{\mathcal{C}}(X)\) for some
\(\mathcal{C}{:} \mathcal{A}\). This gives us two 2-cells \((-)^{\mathbb{I}}, (-)^{D}\) from \((\mathbf{SOb}_{\overset{\hookrightarrow}{\mathbf{UBP}}}, \{-\})\) to \((\overline{\mathbb{I}}⨾ \mathbf{SOb}_{\mathcal{B}}, \underline{\mathbb{I}})\)
where the modifications of both are pointwise inequalities in the diagram. But since \((\mathbf{SOb}_{\overset{\hookrightarrow}{\mathbf{UBP}}}, \{-\})\)
is initial, these two have to be (weakly) the same, so that for each
\(\mathcal{C}, ι{:} X \hookrightarrow Y\) in \(\mathcal{C}\) it holds that \(\underline{\mathbb{I}}_{*}(ι) = \overline{\mathbb{I}}(ι)\) in
\(\mathbf{SOb}_{\mathcal{B}}(\overline{\mathbb{I}}(\mathbf{SOb}_{\mathcal{C}}(X)))\). And since \(\overset{\hookrightarrow}{\mathbf{UBP}}\) is locally a preorder, the hom-category of diamonds between these objects
has at most one 2-cell, so the two 2-cells \((-)^{\mathbb{I}}\) and \((-)^{D}\) coincide.

Now, given \(X{:} \mathcal{C}\) we have that \(D_{\mathcal{C}}(\mathbf{id}_{X}) = \mathbf{id}_{D_{\mathcal{C}}(X)}\) and \(\overline{\mathbb{I}}(\mathbf{id}_{X}) = \mathbf{id}_{\overline{\mathbb{I}}}(X)\), so that by
definition of the covariant subobject preorder \(\underline{\mathbb{I}}_{*}(ι)\) has to be an isomorphism for
\(\underline{\mathbb{I}}_{*}(\mathbf{id}_{X}) = \overline{\mathbb{I}}(\mathbf{id}_{X})\) to hold in \(\mathbf{SOb}_{\mathcal{B}}(\overline{\mathbb{I}}(\mathbf{SOb}_{\mathcal{C}}(X)))\). Since
this holds for each \(X{:} \mathcal{C}\) we have \(D = \mathbf{SOb}⨾ \overline{\mathbb{I}}\) up to the canonical isomorphism \(\underline{\mathbb{I}}\). So each
contravariantly terminal diamond is induced by some 2-functor \(F{:} \overset{\hookrightarrow}{\mathbf{UBP}} → \mathcal{B}_{★}\) to some 2-category \(\mathcal{B}_{★}\).

Given now some other morphism \((G, η){:} (\overset{\hookrightarrow}{\mathbf{UBP}}, \mathbf{SOb}) → (\mathcal{B}_{★}, \mathbf{SOb}⨾ \overline{𝕀})\) such that for each \(\mathcal{C}{:} \mathcal{A}\) the 2-natural transformation
\(η_{\mathcal{C}}{:} \mathbf{SOb}_{\mathcal{C}}⨾ \overline{𝕀} \Rightarrow \mathbf{SOb}_{\mathcal{C}}⨾ G{:} \mathcal{C} → \mathcal{B}\) is an isomorphism, a 2-isomorphism
\((\overline{Σ}, \underline{Σ}){:} (G, η) \Rightarrow (\overline{𝕀}, \mathbf{id}_{\mathbf{SOb}_{\mathcal{C}}⨾ \overline{𝕀}})\) consists of a 2-natural transformation
\(\overline{Σ}{:} G \Rightarrow \overline{𝕀}\) and for each \(\mathcal{C}{:} \mathcal{A}\) a modification \(\mathbf{id}_{\mathbf{SOb}_{\mathcal{C}}⨾ \overline{𝕀}} \Rrightarrow η_{\mathcal{C}}; (\mathbf{SOb}_{\mathcal{C}}⨾ \overline{Σ})\). But if
\(\overline{Σ}\) is a 2-isomorphism, we can just use for each \(X{:} \mathcal{C}\) the 2-cell \(\mathbf{id}_{\mathbf{id}_{\mathbf{SOb}_{\mathcal{C}}⨾ \overline{𝕀}(X)}}\) which is equal to the
composite \(\mathbf{id}_{\mathbf{id}_{\mathbf{SOb}_{\mathcal{C}}⨾ \overline{Σ}(X)}⨾ η_{\mathcal{C}}, X}\) since \(η_{\mathcal{C}, X}\) is an isomorphism. Thus, it suffices to extract a 2-isomorphism
\(Σ{:} \overline{𝕀} \Rightarrow G\) from \(η_{\mathcal{C}}\).

Now note that each upward-bounded preorder \(P\) is a monic category, each preorder morphism \(f{:} P → Q\) a functor and each morphism
inequality \(f \leq g{:} P → Q\) a natural transformation, so \(\overset{\hookrightarrow}{\mathbf{UBP}}\) is a sub-2-category of \(\overset{\hookrightarrow}{\mathbf{Cat}}\). More precisely, there is a canonical embedding
\(ι_{\mathbf{UC}}{:} \overset{\hookrightarrow}{\mathbf{UBP}} \hookrightarrow \overset{\hookrightarrow}{\mathbf{Cat}}\) given by
treating each preorder as a category such that the 2-functor \(\mathbf{SOb}{:} \overset{\hookrightarrow}{\mathbf{Cat}} → \overset{\hookrightarrow}{\mathbf{UBP}}\) maps each \(P{:} \overset{\hookrightarrow}{\mathbf{UBP}}\)
to the principal ideal embedding \(ι_{(P)}{:} (P) \hookrightarrow \overset{\hookrightarrow}{\mathbf{UBP}}\). Intuitively, \(ι_{\mathbf{UC}}\) is a kind of unique section of \(\mathbf{SOb}\). In particular, if we restrict \(\mathbf{SOb}\) to the full sub-2-category
\(\overset{\hookrightarrow}{\mathbf{Cat}_{term}}\) of \(\overset{\hookrightarrow}{\mathbf{Cat}}\) on categories with a terminal object, then there exists a functor
\((-)_{*}{:} \overset{\hookrightarrow}{\mathbf{Cat}_{term}}_{◆ \mathbf{UBP}} → \overset{\hookrightarrow}{\mathbf{UBP}}\) that projects:

\begin{enumerate}
\item \label{item:26} each pair \((\mathcal{C}, F{:} \mathcal{C} → \overset{\hookrightarrow}{\mathbf{UBP}})\) to \(F(*)\),

\item\label{item:60} each 1-cell \((H, φ){:} (\mathcal{C}, F) → (\mathcal{D}, F')\) to the composite \(F_{*, \top}⨾ μ_{F(\top)}\) where \(μ_{F(\top)}{:} F(\top) → *\) is to unique morphism into the terminal
  object in \(\mathcal{D}\),

\item\label{item:84} each 2-cell \((α,Φ) {:} (F,ε) \Rightarrow (F',ε')\) to the inequality \(α_{\top}{:} F(\top) \leq F'(\top)\) in \(\mathbf{SOb}_{\mathcal{D}}(*)\).
\end{enumerate}

So

\[\overset{\hookrightarrow}{\mathbf{UBP}} \xrightarrow{ι_{\mathbf{UC}}} \overset{\hookrightarrow}{\mathbf{Cat}_{term}} \xrightarrow{\mathbf{SOb}} \overset{\hookrightarrow}{\mathbf{Cat}_{term}}_{◆ \mathbf{UBP}} \xrightarrow{(-)_{*}} \overset{\hookrightarrow}{\mathbf{UBP}}\]

is an equivalence \(σ\).

\begin{Lemma}
  \label{le↘_:6}
  \(ι_{\mathbf{UC}}\) is the terminal section of \(\mathbf{SOb}⨾ (-)_{*}{:} \overset{\hookrightarrow}{\mathbf{Cat}} → \overset{\hookrightarrow}{\mathbf{UBP}}\).
\end{Lemma}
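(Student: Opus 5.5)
The statement asserts two things: that \(ι_{\mathbf{UC}}\) is a section of \(\mathbf{SOb}⨾ (-)_{*}\), and that every other section \(s\) admits a unique comparison cell \(s \Rightarrow ι_{\mathbf{UC}}\). I read it in the 2-category of sections. Its objects are 2-functors \(s{:} \overset{\hookrightarrow}{\mathbf{UBP}} → \overset{\hookrightarrow}{\mathbf{Cat}_{term}}\) with \(s⨾\mathbf{SOb}⨾(-)_{*} = \mathbf{id}\) up to the canonical identification. Its morphisms are 2-natural transformations compatible with that identification. I would check the section property first, then construct the comparison cell, and finally argue uniqueness.

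\emph{Section property.} Regard \(P\) as a monic category. Its terminal object is \(\top_{P}\), and every arrow \(a \leq \top_{P}\) is monic. Hence \(\mathbf{SOb}_{P}(\top_{P})\) is the preorder of elements of \(P\), ordered by factorization, which is \(P\) itself. This is exactly the principal-ideal description already used for \(ι_{\mathbf{UC}}\) in \cref{rmk:9}. On a monotone map \(f\), the component \(f_{*, \top}\) sends \(a\) to \(f(a)\), and postcomposing with \(μ_{f(\top)}\) lands in \((\top_{f}) \hookrightarrow Q\). This recovers \(f\) by the ideal–top factorization of \cref{■pn_:ofs}. On an inequality \(f \leq g\), the image is the componentwise inequality at \(\top\). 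So \(ι_{\mathbf{UC}}⨾\mathbf{SOb}⨾(-)_{*}\) is the identity, and this is what makes the composite \(σ\) above an equivalence.

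\emph{Comparison cell.} Let \(s\) be any other section and write \(*_{P}\) for the terminal object of \(s(P)\). The section equation supplies an isomorphism \(θ_{P}{:} \mathbf{SOb}_{s(P)}(*_{P}) \cong P\). Because \(s(P)\) is monic, the unique map \(!_{X}{:} X → *_{P}\) is a monomorphism for every object \(X\), so it defines an element of \(\mathbf{SOb}_{s(P)}(*_{P})\). I define
\[c_{P}{:} s(P) → ι_{\mathbf{UC}}(P), \qquad X \mapsto θ_{P}([\,!_{X}\,]).\]
A morphism \(h{:} X → Y\) satisfies \(h⨾!_{Y} = !_{X}\), so \([!_{X}] \leq [!_{Y}]\) and \(c_{P}\) is a functor. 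Naturality in a morphism \(f{:} P → Q\) should reduce to the section equation on 1-cells, which identifies \(s(f)_{*,\top}\) with \(f\) under \(θ\). Naturality in 2-cells is automatic because \(ι_{\mathbf{UC}}(Q)\) is a preorder.

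\emph{Uniqueness.} Every functor into the preorder \(ι_{\mathbf{UC}}(P)\) is determined by its object map, and modifications between such functors are unique. Compatibility with the section identification forces the object map: a cell must send \(X\) to the point of \(P\) that \(θ_{P}\) assigns to \(X\) viewed as a subobject of \(*_{P}\). That point is precisely \(c_{P}(X)\).

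The main obstacle is bookkeeping rather than mathematics. The notion of a ``section'' and of cells between sections has to be pinned down strictly enough that the compatibility condition genuinely forces the object map, instead of permitting a lax inequality \(c'_{P}(X) \leq c_{P}(X)\). If it only permits the lax version, I would fall back on the top-preservation argument of \cref{lmm:3}: evaluate at \(\mathbf{id}_{X}\) and transport along \(!_{X}\) to force equality. This is also where terminality, as opposed to initiality, would have to be justified by the direction of the inequality.
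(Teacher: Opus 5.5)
Your proposal is correct and follows the paper's proof. As in the paper, the comparison functor \(s(P) \to \iota_{\mathbf{UC}}(P)\) sends each object \(X\) to the element of \(P\) determined by the (necessarily monic) unique map \(X \to *_{P}\) and sends morphisms to the induced inequalities, with naturality in \(P\) coming from the section equation; you additionally verify that naturality explicitly and prove uniqueness (compatibility with the section identification forces \(c_{P}(X) = \theta_{P}([\,!_{X}\,])\)), both of which the paper leaves implicit.
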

\begin{proof}
  \label{prf_:8}
  We just showed that \(ι_{\mathbf{UC}}\) is a section. Let \(S{:} \overset{\hookrightarrow}{\mathbf{Cat}} → \overset{\hookrightarrow}{\mathbf{UBP}}\) be some other section. Then we can for each
  \(P{:} \overset{\hookrightarrow}{\mathbf{UBP}}\) define a functor \(μ_{S, P}{:} S(P) → ι_{\mathbf{UC}}(P)\) given by \(μ_{S, P}(X) := ι_{\mathbf{UC}}(μ_{X, *})(\top)\)
  for \(X{:} S(P)\), and for each \(f{:} X \hookrightarrow Y\) in \(S(P)\) by \(μ_{S, P}(f) {:} ι_{\mathbf{UC}}((\top_{f_{*}} \leq \top_{Y})⨾ μ_{Y, *})\). This is functorial in \(P\) since
  \(\mathbf{SOb}\) is functorial and \(ι_{\mathbf{UC}}\) a full embedding.
\end{proof}

Thus we can define \(Σ{:} \overline{𝕀} \Rightarrow G\) by \(ι_{\mathbf{UC}}⨾ η\), i.e. we define \(Σ_{P}{:} \overline{𝕀}(P) → G(P)\) by
\(η_{ι_{\mathbf{UC}}}(P)(\top)\).

\begin{Lemma}
  \label{le↘_:7}
  Assume that the image of \(\iota_{\mathbf{UC}}\) lies in \(\mathcal{A}\) and that \(\eta\) is a strict diamond 2-cell. For \(P {:} \overset{\hookrightarrow}{\mathbf{UBP}}\), identify
  \[
    \rho_P {:} P \cong \mathbf{SOb}_{P}(\top_P), \qquad a \mapsto (a \hookrightarrow \top_P),
  \]
  and let
  \[
    \theta_P {:} \overline{\mathbb I}(P) \longrightarrow G(P)
  \]
  be the component \(\eta_{\iota_{\mathbf{UC}}(P),\top_P}\), transported along \(\rho_P\). Define
  \[
    \Sigma_P := \theta_P^{-1}.
  \]
  Then \(\Sigma {:} G \Rightarrow \overline{\mathbb I}\) is a 2-natural isomorphism.
\end{Lemma}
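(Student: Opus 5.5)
The plan is to establish three things in turn: that each \(\theta_P\) is an isomorphism, so that \(\Sigma_P\) is well-defined; that \(\Sigma\) is 1-natural; and that \(\Sigma\) respects 2-cells.

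\textbf{Invertibility.} By hypothesis, for every \(\mathcal{C}{:}\mathcal{A}\) the transformation \(\eta_{\mathcal{C}}{:}\mathbf{SOb}_{\mathcal{C}}⨾\overline{\mathbb I}\Rightarrow\mathbf{SOb}_{\mathcal{C}}⨾ G\) is an isomorphism. Since the image of \(\iota_{\mathbf{UC}}\) lies in \(\mathcal{A}\), we can take \(\mathcal{C}=\iota_{\mathbf{UC}}(P)\) and evaluate at the terminal object \(\top_P\). This gives an invertible 1-cell \(\overline{\mathbb I}(\mathbf{SOb}_P(\top_P))\to G(\mathbf{SOb}_P(\top_P))\). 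Transporting along the isomorphism \(\rho_P\), which is just the identification of \(P\) with the principal ideal \((\top_P)=P\), yields \(\theta_P\). Hence \(\Sigma_P:=\theta_P^{-1}\) exists.

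\textbf{1-naturality.} Let \(f{:}P\to Q\) be a 1-cell of \(\overset{\hookrightarrow}{\mathbf{UBP}}\). I would use \(\iota_{\mathbf{UC}}(f)\) as a functor in \(\mathcal{A}\) and apply the strict diamond 2-cell condition for \(\eta\) along this 1-cell of \(\mathcal{A}\). This gives a strictly commuting square relating \(\eta_{\iota_{\mathbf{UC}}(P)}\), \(\eta_{\iota_{\mathbf{UC}}(Q)}\) and the induced maps \(\iota_{\mathbf{UC}}(f)_{*}\) on subobject preorders.

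The catch is that \(f\) need not preserve tops, so \(\iota_{\mathbf{UC}}(f)\) sends \(\top_P\) to \(\top_f\) rather than to \(\top_Q\). To handle this I factor \(f=f'⨾\iota_{\top_f}\) by the ideal-top factorization of \cref{■pn_:ofs}, and treat the two pieces separately.
\begin{itemize}
\item For the top-preserving part \(f'\), the square at \(\top_P\) directly gives \(\theta_P⨾ G(f')=\overline{\mathbb I}(f')⨾\theta_{(\top_f)}\) after transport along \(\rho\).
\item For the ideal embedding \(\iota_{\top_f}\), I use naturality of \(\eta_{\iota_{\mathbf{UC}}(Q)}\) in the subobject \(\top_f\hookrightarrow\top_Q\) of \(\iota_{\mathbf{UC}}(Q)\). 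Under \(\rho\), \(\mathbf{SOb}_Q\) applied to this inclusion is exactly \(\iota_{\top_f}\), so the same equation holds for \(\iota_{\top_f}\).
\end{itemize}
Composing the two squares gives \(\theta_P⨾ G(f)=\overline{\mathbb I}(f)⨾\theta_Q\). Inverting, \(\Sigma\) is natural.

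\textbf{2-naturality.} For a 2-cell \(f\le g\), the 2-cell component of the diamond condition (the modification part of \cref{def:diamond-over}, applied to the natural transformation \(\iota_{\mathbf{UC}}(f\le g)\)) yields compatibility of \(\theta\) with \(\overline{\mathbb I}(f\le g)\) and \(G(f\le g)\). Strictness of \(\eta\) makes the modification an identity, so the whiskered 2-cells agree. Inverting invertible 1-cells preserves this equality, since whiskering by isomorphisms is bijective on 2-cells.

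The main obstacle I expect is the second step. The subtlety is ensuring that the two squares, one from naturality in \(\mathcal{A}\)-functors and one from naturality inside \(\mathbf{SOb}_{\iota_{\mathbf{UC}}(Q)}\), are taken over the same object \(\top_f\), and that \(\rho\) intertwines them. I would first check explicitly that \(\rho_Q\) restricted to \((\top_f)\) agrees with \(\rho_{(\top_f)}\) composed with \(\iota_{\top_f}\); this is where the transport could fail if \(\rho\) were not natural.
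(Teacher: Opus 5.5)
Your proposal is correct and follows the paper's proof. Both evaluate the strict diamond cell \(\eta\) at the terminal object \(\top_P\) of \(\iota_{\mathbf{UC}}(P)\). Both then get naturality of \(\theta\) from the diamond condition along \(\iota_{\mathbf{UC}}(f)\) combined with the comparison along \(f(\top_P)\le\top_Q\), and invert componentwise; your ideal--top factorization \(f=f'⨾\iota_{\top_f}\) just unpacks the paper's one-line appeal to that inequality.

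One small notational fix: the shared middle term of your two squares is the component \(\eta_{\iota_{\mathbf{UC}}(Q),\top_f}\), not the \(\theta\) of the preorder \((\top_f)\). Either label it that way, or identify the two via the diamond square for the inclusion \(\iota_{\mathbf{UC}}(\iota_{\top_f})\); with that, the squares compose as you describe.
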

\begin{proof}
  The identification \(\rho_P\) is natural in \(P\): for a monotone map \(f {:} P → Q\), the induced comparison \(\mathbf{SOb}_{P}(\top_P) → \mathbf{SOb}_{Q}(\top_Q)\) sends the subobject \(a \hookrightarrow \top_P\) to \(f(a) \hookrightarrow \top_Q\), using the unique inequality \(f(\top_P) \leq \top_Q\). Hence evaluation at terminal objects, after restriction along \(\iota_{\mathbf{UC}}\), carries the diamond 2-cell \(\eta\) to a 2-natural transformation
  \[
    \theta {:} \overline{\mathbb I} \Rightarrow G.
  \]
  Since \(\eta\) is assumed invertible, each component \(\theta_P\) is invertible; thus \(\theta\) is a 2-natural isomorphism.

  It remains to check that the inverse components assemble into a 2-natural transformation. Let \(f {:} P → Q\). The 2-naturality of \(\theta\) gives the equality of 1-cells
  \[
    \theta_P ⨾ G(f) \;=\; \overline{\mathbb I}(f) ⨾ \theta_Q .
  \]
  Using invertibility of \(\theta_P\) and \(\theta_Q\), we compute
  \[
  \begin{aligned}
    \theta_P^{-1} ⨾ \overline{\mathbb I}(f)
    &= \theta_P^{-1} ⨾ \overline{\mathbb I}(f) ⨾ \theta_Q ⨾ \theta_Q^{-1} \\
    &= \theta_P^{-1} ⨾ \theta_P ⨾ G(f) ⨾ \theta_Q^{-1} \\
    &= G(f) ⨾ \theta_Q^{-1}.
  \end{aligned}
  \]
  Thus \(\Sigma = \theta^{-1}\) is natural on 1-cells.

  For a 2-cell \(\alpha {:} f \Rightarrow g {:} P → Q\), the 2-naturality of \(\theta\) gives
  \[
    \overline{\mathbb I}(\alpha) ⨾ \theta_Q \;=\; \theta_P ⨾ G(\alpha).
  \]
  Whiskering by the inverses of \(\theta_P\) and \(\theta_Q\) yields
  \[
    \theta_P^{-1} ⨾ \overline{\mathbb I}(\alpha)
    \;=\;
    G(\alpha) ⨾ \theta_Q^{-1},
  \]
  which is precisely the 2-naturality of \(\Sigma\). Since every component \(\Sigma_P = \theta_P^{-1}\) is invertible, \(\Sigma\) is a 2-isomorphism.
\end{proof}

If \(\eta\) were only pseudo- or laxly natural, the same construction would yield only a pseudo- or lax inverse; the strict 2-isomorphism conclusion uses the strict diamond-cell structure adopted here.

So to summarize:

\begin{Theorem}
  \label{thm:3}
\index{universal characterization of UBP}
  Given a sub-2-category \(\mathcal{A} \hookrightarrow \overset{\hookrightarrow}{\mathbf{Cat}}\), such that \(ι_{\mathbf{UC}}\) factors through \(\mathcal{A}\), let \(\mathbf{CT}(\overset{\hookrightarrow}{◆}(\mathcal{A}))\) be the sub-2-category of \(\overset{\hookrightarrow}{◆}(\mathcal{A})\) on the
  contravariantly terminal objects, whose morphisms \(Δ\) are the pairs from \cref{item:47} such that \(\underline{Δ}\) is a 2-isomorphism. Then
  \((\overset{\hookrightarrow}{\mathbf{UBP}}, \mathbf{SOb})\) is initial in \(\mathbf{CT}(\overset{\hookrightarrow}{◆}(\mathcal{A}))\).
\end{Theorem}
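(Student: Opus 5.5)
The plan is to assemble the pieces established in the preceding discussion into the two halves of an initiality statement: existence of a morphism out of \((\overset{\hookrightarrow}{\mathbf{UBP}}, \mathbf{SOb})\) into every contravariantly terminal diamond \((\mathcal{B}_{★}, D)\) with invertible internal part, and uniqueness of such a morphism up to a unique 2-isomorphism. First we must check that \((\overset{\hookrightarrow}{\mathbf{UBP}}, \mathbf{SOb})\) itself lies in \(\mathbf{CT}(\overset{\hookrightarrow}{◆}(\mathcal{A}))\). This is \cref{■pn_:9}, together with the observation from \cref{rmα_:9} that \(\top_{(-)}\) is an oplax point, which makes \(\overset{\hookrightarrow}{\mathbf{UBP}}\) oplaxly pointed.

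For existence, fix a contravariantly terminal \((\mathcal{B}_{★}, D)\). The contravariant terminality of \((\mathcal{B}_{★}, D)\) toward \((\overset{\hookrightarrow}{\mathbf{UBP}}, \mathbf{SOb})\) supplies the pair \((\overline{\mathbb{I}}, \underline{\mathbb{I}})\) from \cref{item:64}. We then run the square of \cref{fi♭_:10}. Its two legs define 2-cells \((-)^{\mathbb{I}}\) and \((-)^{D}\) out of \((\mathbf{SOb}_{\overset{\hookrightarrow}{\mathbf{UBP}}}, \{-\})\); the second uses the retraction \(\top_{(-)}\) of \(\{-\}\). By initiality in the relevant hom-category, and because \(\overset{\hookrightarrow}{\mathbf{UBP}}\) is locally thin, these two 2-cells coincide. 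Evaluating at \(\mathbf{id}_X\) then forces each component of \(\underline{\mathbb{I}}\) to be an isomorphism, so \(D = \mathbf{SOb}⨾\overline{\mathbb{I}}\) up to the canonical isomorphism \(\underline{\mathbb{I}}\). Hence \((\overline{\mathbb{I}}, \underline{\mathbb{I}}^{-1})\) is a 1-cell \((\overset{\hookrightarrow}{\mathbf{UBP}}, \mathbf{SOb}) \to (\mathcal{B}_{★}, D)\) whose internal part is a 2-isomorphism, i.e. a morphism of \(\mathbf{CT}(\overset{\hookrightarrow}{◆}(\mathcal{A}))\). Point-preservation of \(\overline{\mathbb{I}}\) should be inherited from the point-preservation of \(\underline{\mathbb{I}}\) evaluated at tops.

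For uniqueness, let \((G, η)\) be any other such morphism, with \(η\) invertible. As argued before \cref{le↘_:6}, it suffices to produce a 2-isomorphism \(Σ\) between \(G\) and \(\overline{\mathbb{I}}\); the modification part can then be taken to be an identity, since \(η\) is invertible. The hypothesis that \(ι_{\mathbf{UC}}\) factors through \(\mathcal{A}\) lets us evaluate \(η\) on the categories \(ι_{\mathbf{UC}}(P)\) at their terminal object \(\top_P\). Via the identification \(\rho_P\), this yields the components \(θ_P\), and \cref{le↘_:7} shows that \(Σ = θ^{-1}\) is a 2-natural isomorphism. Uniqueness of the 2-cell \((Σ,\mathbf{id})\) would follow from \cref{le↘_:6}: any 2-cell between \(G\) and \(\overline{\mathbb{I}}\) is determined by its values on \(ι_{\mathbf{UC}}\)-images, because the section \(ι_{\mathbf{UC}}\) is terminal, and these values are fixed by \(η\).

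The main obstacle is the step that \((-)^{\mathbb{I}} = (-)^{D}\). This requires verifying that both are genuine diamond 2-cells out of the initial object \((\mathbf{SOb}_{\overset{\hookrightarrow}{\mathbf{UBP}}}, \{-\})\), including that \((-)^{D}\), built by going backwards along \(\top_{(-)}\), is still monotone and natural in \(\mathcal{C}\). I would attack this pointwise: compare both on a subobject \(κ\) using \cref{lmm:2} for naturality, and use the principal-ideal factorization of \cref{■pn_:ofs} to reduce \(κ\) to \(\{\top_κ\}\). A secondary worry is whether point-preservation of \(\overline{\mathbb{I}}\) follows in general or must be imposed by the definition of \(\mathbf{CT}\).
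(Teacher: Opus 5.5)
Your proposal follows the paper's own argument essentially step for step: membership of \((\overset{\hookrightarrow}{\mathbf{UBP}},\mathbf{SOb})\) via the proposition that \(\mathbf{SOb}\) is contravariantly terminal, existence from the initial pair \((\overline{\mathbb{I}},\underline{\mathbb{I}})\) by forcing the two 2-cells \((-)^{\mathbb{I}}\) and \((-)^{D}\) to agree and then evaluating at identities to make \(\underline{\mathbb{I}}\) invertible, and uniqueness by reducing to a 2-isomorphism between \(G\) and \(\overline{\mathbb{I}}\) extracted from \(\eta\) on the categories \(\iota_{\mathbf{UC}}(P)\) (the lemma giving \(\Sigma=\theta^{-1}\)). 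There are two small fixes. First, the 1-cell is \((\overline{\mathbb{I}},\underline{\mathbb{I}})\) itself rather than \((\overline{\mathbb{I}},\underline{\mathbb{I}}^{-1})\), because the internal part of a morphism \((\overset{\hookrightarrow}{\mathbf{UBP}},\mathbf{SOb})\to(\mathcal{B},D)\) must go from \(D\) to the composite of \(\mathbf{SOb}\) with \(\overline{\mathbb{I}}\). Second, point-preservation of \(\overline{\mathbb{I}}\) needs no argument, since it is built into the definition of morphisms of diamonds (\cref{item:48}), which \(\mathbf{CT}\) inherits.
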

  
And thus we have finally derived a universal characterization of \(\overset{\hookrightarrow}{\mathbf{UBP}}\).

\begin{Remark}
  \label{rmα_:24}
  It is surprisingly hard to tell which 2-functors out of \(\overset{\hookrightarrow}{\mathbf{UBP}}\) induce a contravariantly terminal diamond. Each point-preserving 2-functor
  \(F{:} \overset{→}{\mathbf{UBP}} → \mathcal{B}_{★}\) introduces a diamond \((\mathcal{B}_{★}, \mathbf{SOb}⨾ F)\) by \cref{■pn_:7} but to be contravariantly terminal, for each diamond
  \((\mathcal{D}, D)\) there has to exist an initial diamond-morphism from \((\mathcal{D}, D)\) to \((\mathcal{B}_{★}, \mathbf{SOb}⨾ F)\). Postcomposition with \(F\) induces a morphism given by
  \((\mathbf{SOb}_{\mathcal{D}}⨾ F, D_{*}⨾ F)\), but the question is: when is this morphism initial? For this, each diamond-morphism
  \(Δ{:} (\mathcal{D}, D) → (\mathcal{B}_{★}, \mathbf{SOb}⨾ F)\) has to induce a unique 2-morphism \(\mathbb{I}{:} (\mathbf{SOb}_{\mathcal{D}}⨾ F, D_{*}⨾ F) \Rightarrow Δ\). Unpacking the definitions we find that
  \(Δ\) consists of a 2-functor \(\overline{Δ}{:} \mathcal{D} → \mathcal{B}\) and a cell \(\underline{Δ}{:} (\mathcal{B}_{★}, \mathbf{SOb}⨾ F) → (\mathcal{B}_{★}, D⨾ \overline{Δ})\) in
  \(\mathcal{A} ◆ \mathcal{B}_{★}\). Unpacking further we find that \(\underline{Δ}\) consists for each \(\mathcal{C}{:} \mathcal{A}\) of a cell
  \(\underline{Δ}_{\mathcal{C}}{:} (\mathcal{C}, \mathbf{SOb}_{\mathcal{C}}⨾ F) → (\mathcal{C}, D(\mathcal{C})⨾ \overline{Δ})\) over \((\mathcal{C}, \mathbf{id}_{\mathcal{C}})\) and thus a 2-natural transformation
  \(\mathbf{SOb}_{\mathcal{C}}⨾ F \Rightarrow \mathbf{id}_{\mathcal{C}}⨾ D(\mathcal{C})⨾ \overline{Δ} = D(\mathcal{C})⨾ \overline{Δ}\) in \(\mathcal{B}^{\mathcal{C}}\). So \(\underline{Δ}\) has to be a 2-cell making this diagram commute:

\begin{figure}[H]
  \centering
  \begin{tikzcd}
    \overset{\hookrightarrow}{\mathbf{Cat}} \ar[dr, "\mathbf{SOb}"', ""{name=S}] \ar[r, "D"] & \mathcal{D} \ar[d, "\mathbf{SOb}_{\mathcal{D}}"{yshift=-6pt, xshift=1pt}] \ar[dr, "\overline{Δ}"] & \\
    & \overset{\hookrightarrow}{\mathbf{UBP}} \ar[r, "F"'] & \mathcal{B} \ar[Rightarrow, from=S, to=1-2, "D_{*}"']
  \end{tikzcd}
  % \caption{\label{fi♭_:15}}
\end{figure}

On the other hand, \(\mathbb{I}\) has to consist of a 2-natural transformation \(\overline{\mathbb{I}}{:} \mathbf{SOb}_{\mathcal{D}}⨾ F \Rightarrow \overline{Δ}\) and a modification
\(\underline{\mathbb{I}}{:} \underline{Δ} \Rrightarrow (D_{*}⨾ F); (D⨾ \overline{\mathbb{I}})\). So for each \(X{:} \mathcal{C}{:} \mathcal{A}\) the horn

\begin{figure}[H]
  \centering
  \begin{tikzcd}
    (\mathbf{SOb}⨾ F)(X) \ar[d, "(D_{*}⨾ F)(X)"'] \ar[dr, "\underline{Δ}_{X}"] & \\
    (D⨾ \mathbf{SOb}_{\mathcal{B}}⨾ F)(X) & (D⨾ \overline{Δ})(X)
  \end{tikzcd}
  % \caption{\label{fi♭_:16}}
\end{figure}

has to have a unique filling:

\begin{figure}[H]
  \centering
  \begin{tikzcd}
    (\mathbf{SOb}⨾ F)(X) \ar[d, "(D_{*}⨾ F)(X)"'] \ar[dr, "\underline{Δ}_{X}", ""{name=E}] & \\
    (D⨾ \mathbf{SOb}_{\mathcal{B}}⨾ F)(X) \ar[r, "(D⨾ \overline{\mathbb{I}})(X)"'] & (D⨾ \overline{Δ})(X) \ar[Rightarrow, from=E, to=2-1, "\underline{\mathbb{I}}"'{xshift=-5pt, yshift=-1pt}, shorten <= 6pt]
  \end{tikzcd}
  % \caption{\label{fi♭_:16}}
\end{figure}
\end{Remark}

% Now we just have to show that each for two 2-functors \(F, F'{:} \overset{\hookrightarrow}{\mathbf{UBP}} → \mathcal{B}\) and equivalence \(σ := (\overline{σ}, \underline{σ}){:} (\mathcal{B}, \mathbf{SOb}⨾ F) \overset{\sim}{→} (\mathcal{B}, \mathbf{SOb}⨾ F')\) of
% the induced diamonds \((\mathcal{B}, \mathbf{SOb}⨾ F), (\mathcal{B}, \mathbf{SOb}⨾ F')\) there is an equivalence \(\overline{σ}{:} F \Rightarrow F'\) and iso-modification
% \(Σ: σ \overset{\sim}{\Rrightarrow} \mathbf{SOb}⨾ \overline{σ}\), and similar for 2-natural transformations \(φ: F \Rightarrow F'\). For this we have to assume that
% \(\mathbf{SOb}: \mathcal{A} → \overset{\hookrightarrow}{\mathbf{UBP}}\) is sufficiently distinguishing and thus that \(\mathcal{A}\) is big enough. By definition, the maximal such \(\mathcal{A}\) is
% \(\overset{\hookrightarrow}{\mathbf{Cat}}\), which in particular contains the ideals of \(\overset{\hookrightarrow}{\mathbf{UBP}}\). By \cref{rmk:9} the 1-truncation of each ideal and thus in particular
% \(\mathopen{∣}(X)\mathclose{∣}_{1}\) of each principal ideal of each object \(X: \overset{\hookrightarrow}{\mathbf{UBP}}\) with its trivial 2-cell structure is mapped to itself under the \(\mathbf{SOb}\)-functor on
% itself, so that morphisms in \(\overset{\hookrightarrow}{\mathbf{UBP}}\) are. Thus:

\subsection{The \(n\)-Diamond Conjecture}
\label{sec:n-univ-conj}

Let us close by making the proof of \cref{thm:3} into a conjecture. For this, note that in \cref{xmp:4} and \cref{defn_:6} we use truncations for organizing the subobjects of
2-categories into a preorder. Accordingly, there is more than one minimal 2-functor \(\mathcal{B} → \mathbf{UBP}\). More precisely there are three of note: \(\mathbf{SOb}_{f, \mathcal{B}}\), the
preorder with dual 2-cells \(\mathbf{SOb}_{f^{op}, \mathcal{B}}\) in \cref{item:98} of \cref{defn_:6}, and the containment preorder \(\mathbf{SOb}_{\mathopen{∣}f\mathclose{∣}, \mathcal{B}}\) where the 2-cells of
\cref{item:98} have to be 2-isomorphisms. But it stands to reason that if we don't truncate, i.e. use a ``there exists''-clause in \cref{item:98}, that these lift to 3-functors
into the 3-category of a 2-upward-bounded 2-preorders \(2\text{-}\mathbf{UBP}\). Conversely, the right definition of a 2-upward-bounded 2-preorder, or similar for other \(n\), should become clear if we
inspect the subobject structure of a 2-category or \(n\)-category, and the \((n + 1)\)-functors out of \(n\text{-}\overset{\hookrightarrow}{\mathbf{UBP}}\) should again generate all contravariantly terminal
\(n\)-diamonds. So it seems that the pattern goes:

\begin{enumerate}
\item \(0\text{-}\mathbf{SOb}{:} \mathbf{Set} → \mathbf{Set}_{◆ \{∅ \hookrightarrow *\}}\): each part of an element of a set is either trivial or whole, so the functor \(0\text{-}\mathbf{SOb}_{S}\) just maps an
  element \(s ∈ S\) to \(*\) and the empty subobject to \(ι_{∅}{:} ∅ \hookrightarrow *\);

\item\label{item:2} \(1\text{-}\mathbf{SOb} = \mathbf{SOb}\) is the case we studied;

\item\label{item:6} \(n\text{-}\mathbf{SOb}\) works recursively like \(\mathbf{SOb}\) but trifurcated due to laxity and can be projected downward to
  \((n \text{-} 1)\text{-}\mathbf{UBP}\);

\item\label{item:74} \(n\text{-}\mathbf{UBP}\) receives initial representations from all sufficiently monic \(n\)-categories;

\item\label{item:73} \(n\text{-}\overset{\hookrightarrow}{\mathbf{UBP}}\) generates contravariantly terminal diamonds.
\end{enumerate}

\pagebreak

\section{Outlook}
\label{sec:conclusion}

The plainest germ can blossom into the most beautiful flower and bear the richest fruit, and this is also true regarding the relation of this work to successive
ones, as much of the material in it is set up so as to point beyond itself:

\begin{itemize}
\item We stopped the categorical development of the calculus in \cref{sec:inter-categ-size} where it bifurcates into Heyting and co-Heyting categories. In \emph{Closed Geometric
    Logic}, we investigate the closed part of this bifurcation specifically with regard to its logical properties and introduce \emph{subdivisions} and the smallness-reducedness
  decomposition hinted at in the introduction.

\item We took up to the theme of abstract polytopes in \cref{obs_:1} and \cref{xmp:21}. These are closely related to subdivisions, in a way that we will make fully explicit in
  \emph{Polytopes as Subdivisions}.

\item The effects of the equivalence-breaking of lax morphisms has to be examined particularly in light of the meaningfulness of the constructions introduced in
  \cref{sec:other-examples}.

\item A large part of these priors is necessary for the pursuit of one final goal: a general theory of composition.
\end{itemize}

A lot remains to be written.

\begin{quote}
  How can you see into my eyes like open doors?
Leading you down into my core where I've become so numb.
Without a soul my spirit's sleeping somewhere cold,
until you find it there and lead it back home.

  Wake me up inside, wake me up inside;
call my name and save me from the dark.
Bid my blood to run, before I come undone,
save me from the nothing I've become:

\textbf{Bring me to life!}
\end{quote}
 — \emph{Bring Me To Life}, Evanescence, shortened, with added punctuation.\cite{EvanescenceBringMe}

\pagebreak

\theendnotes

\printbibliography[
heading=bibintoc,
title={References}
]

\printindex
\end{document}